\definecolor{modelcolor}{RGB}{215,236,255}
\definecolor{predcolor}{RGB}{255,238,204}
\definecolor{rigorcolor}{RGB}{225,245,225}
\definecolor{validcolor}{RGB}{245,220,255}
\begin{document}
\title{\textbf{High-Dimensional Analysis of Gradient Flow for Extensive-Width Quadratic Neural Networks}}
\date{}

\author[1,2]{Simon Martin\thanks{To whom correspondence should be sent: \texttt{smartin@di.ens.fr}.}}
\author[1]{Giulio Biroli}
\author[2]{Francis Bach}

\affil[1]{Laboratoire de Physique de l’\'Ecole Normale Sup\'erieure, ENS, Universit\'e PSL, CNRS, Sorbonne Universit\'e, Universit\'e de Paris, F-75005 Paris, France.}
\affil[2]{INRIA - \'Ecole Normale Sup\'erieure, PSL Research University, Paris, France.}

\maketitle

\begin{abstract}
We study the high-dimensional training dynamics of a shallow neural network with quadratic activation in a teacher--student setup. We focus on the extensive-width regime, where the teacher and student network widths scale proportionally with the input dimension, and the sample size grows quadratically. This scaling aims to describe overparameterized neural networks in which feature learning still plays a central role. In the high-dimensional limit, we derive a dynamical characterization of the gradient flow, in the spirit of dynamical mean-field theory (DMFT). Under $\ell_2$-regularization, we analyze these equations at long times and characterize the performance and spectral properties of the resulting estimator. This result provides a quantitative understanding of the effect of overparameterization on learning and generalization, and reveals a double descent phenomenon in the presence of label noise, where generalization improves beyond interpolation. In the small regularization limit, we obtain an exact expression for the perfect recovery threshold as a function of the network widths, providing a precise characterization of how overparameterization influences recovery.
\end{abstract}

\tableofcontents

\newpage

\section{Introduction} \label{Sec:Intro}
Deep neural networks have achieved remarkable success across many domains such as image and speech recognition \citep{krizhevsky2012imagenet, hinton2012deep}, protein structure prediction \citep{jumper2021highly}, natural language processing \citep{vaswani2017attention, brown2020language} and autonomous systems \citep{bojarski2016end}. On the theoretical side, while the expressive power of neural architectures has been clarified \citep{cybenko1989approximation, hornik1989multilayer}, many fundamental challenges related to their loss landscapes and training dynamics remain open. The main obstacles to developing a thorough and general analysis are the high-dimensionality and the non-convexity of the loss landscape.  

\paragraph{Overparameterized neural networks.} This gap has motivated a line of work focused on gradient-based training and its success, despite the non-convexity of neural networks objectives. In theory, gradient descent could get stuck in poor local minima, even for relatively simple architectures. Indeed, several works have shown that loss landscapes exhibit spurious local minima in underparameterized networks \citep{christof2023omnipresence} and shallow ReLU models \citep{safran2018spurious, yun2018small}. However, recent works suggest that in some regimes, optimization algorithms often find global minimizers of their training loss. For instance, in the overparameterized regime, \citet{chizat2018global} have shown that two-layer networks converge globally, while \citet{jacot2018neural} and \citet{chizat2019lazy} have described the lazy and neural tangent kernel (NTK) regimes, in which the network behaves nearly linearly during training. In addition, in heavily overparameterized regimes, several works have shown that deep architectures enjoy polynomial-time convergence guarantees \citep{allen2019convergence, kawaguchi2019gradient, du2019gradient}. Most of these investigations focus on the case in which the number of hidden nodes diverges but the dimension of data is kept fixed. 

\paragraph{High-dimensional proportional regime.} In contrast to overparameterized networks, the proportional regime corresponds to a fixed number of hidden units while the dimension and the number of samples diverge proportionally. In this regime, a long line of work has analyzed the high-dimensional learning abilities of simple models such as multi-index and shallow neural networks. More precisely, tools from statistical physics have been applied to predict Bayes-optimal errors, phase transitions, and information-theoretic limits \citep{loureiro2022fluctuations, aubin2019committee, maillard2020phase, maillard2020landscape}. Many of these predictions have since been confirmed through rigorous approaches such as adaptive interpolation \citep{barbier2019adaptive, barbier2019optimal, barbier2020mutual} and approximate message passing \citep{donoho2009message, gerbelot2023graph, cornacchia2023learning, gerbelot2022asymptotic}. Beyond these static analyses, dynamical mean-field methods have studied gradient-based trajectories \citep{mignacco2020dynamical, sarao2019afraid, agoritsas2018out}, and more recent rigorous works have clarified the high-dimensional behavior of stochastic gradient descent \citep{ben2022high, gerbelot2024rigorous} and Langevin dynamics \citep{fan2025dynamical}. This line of work has built a precise understanding of learning in the regime of high-dimensional data and finite number of hidden nodes. 

\paragraph{Quadratic networks and matrix sensing.} Shallow neural networks with quadratic activation functions provide a simple model to study nonconvex optimization, as they reduce learning to the estimation of a positive semidefinite matrix. In this setting, several works studied the impact of overparameterization on the loss landscape \citep{soltanolkotabi2018theoretical, du2018power,gamarnik2019stationary,venturi2019spurious} and consequences of the learning dynamics with gradient-based methods, with an emphasis on population dynamics \citep{sarao2020optimization, martin2024impact}, and stochastic gradient descent \citep{arous2025learning}. In addition, this setting is equivalent to a low-rank matrix sensing problem, with rank-one measurements: in this more general framework, convex relaxation guarantees recovery \citep{recht2010guaranteed, candes2011tight, gross2011recovering}. In the case of nonconvex factorizations, several results established the absence of spurious local minima and global convergence of gradient descent under restricted isometry properties \citep{zheng2015convergent, ge2017no, park2017non, li2018algorithmic}, with an implicit bias toward minimal nuclear norm solutions \citep{gunasekar2017implicit, arora2019implicit}. As we shall show, quadratic networks offer a complex but tractable framework to investigate the challenging regime in which the number of hidden nodes, the dimension of the data and the number of data points diverge (in fixed ratios to be precised below).  

\subsection{Related Works}

Quadratic networks have been the object of recent attention: they provide a framework allowing to study the role of overparameterization in a nonconvex setting. Several works have already clarified convergence properties of gradient-based methods in the highly overparameterized regime, where the problem enjoys a convex relaxation. In this regime, it has been shown that gradient flow converges globally and several works studied the generalization properties of the minimizers found by the dynamics (\citet{sarao2020optimization} for a rank-one teacher, \citet{gamarnik2019stationary} for a full-rank teacher and \citet{soltanolkotabi2018theoretical} when the output weights are also optimized). However, in the case where the network has fewer neurons than the dimension, existing dynamical studies only include population dynamics, i.e., the infinite sample limit, with recent works by \citet{martin2024impact} for gradient flow and \citet{arous2025learning} for stochastic gradient descent (SGD). This paper aims to go beyond these results by providing an exact description of gradient flow dynamics on the empirical loss for an arbitrary, yet extensive, number of neurons. 

The extensive-width regime, in which the number of hidden nodes, the input dimension and the number of data points diverge, has recently been studied with the goal of describing overparameterized neural networks in a setting where feature learning is still present. In this setting, several works already clarified Bayes-optimal learning and empirical risk minimization: \citet{maillard2024bayes, erba2025nuclear, defilippis2025scaling} in the case of quadratic networks, \citet{cui2023bayes, barbier2025statistical} for deep networks, \citet{erba2025bilinear} for bilinear regression, and \citet{boncoraglio2025inductive, boncoraglio2025bayes} for attention networks. Additionally, note that similar scalings were originally studied in the context of matrix denoising \citep{maillard2022perturbative, semerjian2024matrix, barbier2025phase}. However, in the dynamical setting, it is still an open question to know if gradient-based algorithms match these static predictions. In recent works, \citet{montanari2025dynamical} leverage dynamical mean-field theory (DMFT) to identify a separation of timescales between learning and overfitting in the gradient flow setting, while \citet{ren2025emergence} characterize SGD dynamics through precise scaling laws. Our work provides a complementary perspective to these studies by deriving sharp asymptotic results in the extensive-width regime. 

One goal of the present work is also to connect the dynamical perspective with the static predictions obtained in the analyses of \citet{maillard2024bayes} and \citet{erba2025nuclear}, whose settings are essentially identical to ours. More precisely, our dynamical approach is inspired by the replica calculation introduced by \citet{maillard2024bayes}, while the regularized formulation we adopt follows the framework proposed by \citet{erba2025nuclear}. Within this setting, we are able to recover several conclusions reached in these studies. Together, these observations clarify how gradient flow and Langevin dynamics connect to the underlying landscape structure and optimal learning predictions. 

\subsection{Contributions}

In this work, we analyze the learning dynamics of a shallow quadratic neural network trained with gradient flow and Langevin dynamics on a regularized empirical loss. To study the role of overparameterization on learning and generalization, we work in the \emph{extensive-width} regime, where the widths of the teacher and student networks scale proportionally with the dimension. For analytical tractability, we introduce a Gaussian surrogate model that preserves the covariance structure of the rank-one measurements associated with the quadratic network model. We have verified by numerical experiments that this simplification does not affect our predictions. Our main results are:
\begin{itemize}[label={}, leftmargin=0pt]
\item \mbox{\textbf{1. Description of the high-dimensional dynamics.}} In the high-dimensional limit, we derive a set of dynamical equations that is distributionally equivalent to gradient flow and Langevin dynamics (\cref{Subsubsec:GeneralDMFT,Subsubsec:SimplifiedDMFT}). This result provides a self-consistent description of the learning dynamics in this asymptotic regime. Our equations are formulated and derived in the spirit of DMFT equations, but still involve high-dimensional objects. We believe that a reduction to a set of low-dimensional equations is impossible while keeping track of the relevant averaged quantities of the dynamics.
\item \mbox{\textbf{2. Reduction to a denoising dynamics.}} Under $\ell_2$-regularization and an asymptotic simplification of the dynamics, we argue that the gradient flow dynamics can be reduced to a denoising gradient flow, known as the Oja flow (\cref{Subsubsec:SteadyState}). We interpret this dynamics as a noisy version of the population dynamics (the infinite-sample gradient flow), where the effective noise variance arises from finite sample complexity and label noise. We analyze the Oja flow and obtain new results that are essential for our theoretical analysis (\cref{Sec:OjaFlow}).   
\item \mbox{\textbf{3. Long-time analysis of the dynamics.}} Building on this reduction, we study the long-time behavior of the gradient flow dynamics in high dimensions. This leads to a set of scalar equations describing the performance and spectral properties of the gradient flow estimator (\cref{Subsubsec:EquationsLongTimes}). We support these results with theoretical insights (\cref{Subsubsec:Stability}) and numerical confirmations, while noting that we leave a rigorous treatment of the simplifying assumptions for further work.
\item \mbox{\textbf{4. Langevin dynamics.}} We study the Langevin dynamics on the regularized empirical loss and derive its stationary measure under similar simplifications to those used for gradient flow (\cref{Subsubsec:Langevin}). In the zero-temperature limit, the stationary measure both recovers the long-time equations of the deterministic dynamics and concentrates on global minimizers, providing evidence for convergence of gradient flow to a global minimum. 
\item \mbox{\textbf{5. Impact of overparameterization.}} The long-time equations reveal that the overparameterized regime, in which the network has more neurons than the input dimension, persists down to a smaller width that we identify (\cref{Subsubsec:Regions}). Above this threshold, gradient flow converges to the global minimizer of the regularized empirical loss over the set of positive semidefinite (PSD) matrices. Below it, the performance and behavior of the estimator depend on the effective number of neurons per dimension. These equations illustrate how overparameterization enables global convergence, whereas insufficient width traps the dynamics at higher loss solutions. 
\item \mbox{\textbf{6. Overfitting and double descent.}} We provide numerical evidence of overfitting along the training dynamics. In the presence of label noise, our long-time equations capture a double descent phenomenon in the test error as a function of the sample complexity (\cref{Subsubsec:Overfitting}). In the small regularization limit, we identify the associated interpolation threshold (\cref{Subsubsec:InterpolationThreshold}). This result constitutes a theoretical characterization of double descent based on the dynamical study of a nonlinear model.
\item \mbox{\textbf{7. Perfect recovery thresholds.}} In the small regularization limit, we derive an explicit expression of the perfect recovery threshold as a function of the teacher and student network widths, quantifying how overparameterization affects recovery (\cref{Subsubsec:PRThresholdReg}). For the unregularized case, we conjecture an expression of this threshold (\cref{Subsubsec:PRThresholdUnreg}) and provide numerical evidence supporting this conjecture. 
\item \mbox{\textbf{8. Insights into Gaussian equivalence.}} We investigate the equivalence between the Gaussian surrogate model and the rank-one formulation induced by the quadratic network model (\cref{subsec:GaussianUniversality}). We conjecture their asymptotic equivalence and provide numerical and theoretical evidence.
\end{itemize}

\begin{figure}[ht]
\centering
\begin{tikzpicture}[
    scale=0.95,
    >=Latex,
    mainnode/.style={
        draw,
        rounded corners,
        align=center,
        minimum height=1.05cm,
        text width=4.1cm,
        font=\small
    },
    sidenode/.style={
        draw,
        rounded corners,
        align=center,
        minimum height=0.95cm,
        text width=3.3cm,
        font=\small
    },
    bottomnode/.style={
        draw,
        rounded corners,
        align=center,
        minimum height=0.95cm,
        text width=4.cm,
        font=\small
    },
    mainarrow/.style={->, thick},
    sidearrow/.style={->, thick, dashed},
    edgelabel/.style={
        font=\small,
        align=center,
        inner sep=5pt
    }
]

\node[mainnode, fill=modelcolor] (qnn) at (0,0)
    {Quadratic neural networks};

\node[mainnode, fill=modelcolor] (gauss) at (0,-2.3)
    {Gaussian surrogate model};

\node[mainnode, fill=predcolor] (dmft) at (0,-5.0)
    {High-dimensional \\ dynamical equations\\ (\cref{subsec:DMFT})};

\node[mainnode, fill=predcolor] (longtime) at (0,-7.8)
    {Long-time scalar equations\\(\cref{Subsubsec:EquationsLongTimes})};

\node[sidenode, fill=predcolor] (langevin) at (-5.9,-4.2)
    {Langevin stationary \\ measure \\ (\cref{Subsubsec:Langevin})};

\node[sidenode, fill=predcolor] (aging) at (-5.9,-5.9)
    {Aging equations \\ (\cref{App:Aging})};

\node[sidenode, fill=rigorcolor] (oja) at (5.9,-7.3)
    {Oja flow results \\(\cref{Sec:OjaFlow})};

\node[bottomnode, fill=rigorcolor] (overparam) at (-5.5,-10.7)
    {Impact of \\ overparameterization \\(\cref{Subsubsec:Regions})};

\node[bottomnode, fill=validcolor] (overfit) at (0,-10.7)
    {Overfitting and double descent\\(\cref{Subsubsec:Overfitting})};

\node[bottomnode, fill=rigorcolor] (thresholds) at (5.5,-10.7)
    {Small regularization \\ limit
    and thresholds \\(\cref{subsec:SmallReg})};

\draw[mainarrow] (qnn) --
    node[right,edgelabel]
    {Gaussian equivalence\\(\cref{subsec:GaussianUniversality})}
    (gauss);

\draw[mainarrow] (gauss) --
    node[right,edgelabel]
    {DMFT formalism\\(\cref{App:DMFT})}
    (dmft);

\draw[mainarrow] (dmft) --
    node[right,edgelabel]
    {Steady-state ansatz\\(\cref{Subsubsec:SteadyState})}
    (longtime);

\draw[mainarrow] (oja.west) --
    node[midway,above,edgelabel] 
    {}
    (longtime.east);

\draw[sidearrow] (dmft.west) --
    node[pos=0.35,above,edgelabel,yshift=7pt]
    {Stationarity \\ ansatz}
    (langevin.east);

\draw[sidearrow] (dmft.west) --
    node[pos=0.3,below,edgelabel, yshift=-3pt]
    {Slow \\ relaxation}
    (aging.east);

\draw[mainarrow] (longtime) -- (overparam);
\draw[mainarrow] (longtime) -- (overfit);
\draw[mainarrow] (longtime) -- (thresholds);

\end{tikzpicture}
\caption{Logical structure of the main contributions. Solid arrows indicate the main chain of derivations and consequences, while dashed arrows represent complementary analyses. Node colors indicate the nature of the corresponding contribution: model formulation (blue), DMFT predictions and ansatz-based reductions (orange), rigorous analyses (green), and phenomena validated numerically from the theory (purple).}
\label{fig:Logicalstructure}
\end{figure}

\newpage

\section{Setting} \label{Sec:Setting}
In this section, we introduce the notation and conventions used throughout the paper and describe the model and main assumptions of our work.

\subsection{Notation and Conventions} \label{Subsub:NotationsConventions}

For a matrix $A \in \R^{d \times m}$, we denote $A^\top$ its transpose, $\tr(A)$ its trace and $\|A\|_F = \big[\tr(AA^\top) \big]^{1/2}$ its Frobenius norm. We denote $\mathcal{S}_d(\R), \mathcal{S}_d^+(\R)$ and $\mathcal{S}_d^{++}(\R)$ the sets of $d \times d$ symmetric, positive semidefinite (PSD) and positive definite matrices. For $A \in \R^{d \times d}$, we denote $\mathrm{Sym}(A) = (A + A^\top) / 2 \in \mathcal{S}_d(\R)$ its symmetrization. Given $E$ a Euclidean space and $L \colon E \to \R$ continuously differentiable, we let $\nabla L(x) \in E$ be its gradient at $x \in E$.

\subsection{Model}
 
Consider a data set composed of standard Gaussian inputs $x_1, \dots, x_n \overset{\mathrm{i.i.d.}}{\sim} \N(0, I_d)$ and labels $z_1, \dots, z_n \in \R$ generated as:
\begin{equation} \label{eq2:model}
    z_k \sim P \big( \cdot \, \big| \,  y_k^* \big), \hspace{1.5cm} y_k^* = \frac{1}{m^*} \sum_{i=1}^{m^*} \frac{(x_k^\top w_i^*)^2 - \|w_i^*\|^2}{\sqrt{d}},  
\end{equation}
where the vectors $w_1^*, \dots, w_{m^*}^* \in \R^d$ will be referred to as teacher vectors, and the distribution $P$ encodes possible nonlinearity and noise applied to the true labels $(y_k^*)_{1 \leq k \leq n}$. This structure corresponds to a two-layer neural network with a centered quadratic activation function and uniform output weights. The centering subtracts the expectation of the quadratic activation under the input distribution, since $\E \big[ (x^\top w)^2 \big] = \|w\|^2$ for $x \sim \N(0, I_d)$, therefore removing an uninformative contribution. Equivalently, the centered quadratic corresponds to the second-order Hermite polynomial, leading to an activation function with information exponent 2.  

Unlike in generalized linear models \citep{mccullagh2019generalized, barbier2019optimal}, our objective is not to recover the individual teacher vectors $w_i^*$. Indeed, the quadratic network is invariant under any orthogonal transformation of these vectors, leading to a degeneracy: many different sets of teacher vectors yield the same model output. As a result, only their second-order structure can be identified. Therefore, we instead aim to recover the \emph{teacher matrix}:
\begin{equation} \label{eq:DefTeacher}
    Z^* = \frac{1}{m^*} \sum_{i=1}^{m^*} w_i^* w_i^{*\top} \in \mathcal{S}_d^+(\R),
\end{equation}
which captures all the information about the teacher that we can recover from the observations. This matrix plays a central role in the quadratic model, as it allows one to express the true labels as:
\begin{equation} \label{eq:MatrixSensingFormulation}
    y_k^* = \tr\big( X_k Z^* \big), \hspace{1.5cm} X_k = \frac{x_kx_k^\top - I_d}{\sqrt{d}}.  
\end{equation}
This formulation reveals an underlying linear structure: each true label $y_k^*$ is a linear measurement of the teacher matrix $Z^*$ through the sensing matrices $X_k$. The problem can therefore be viewed as a generalized \emph{matrix sensing} task, where one aims to recover $Z^*$ from the observations of $z_1, \dots, z_n$ that may depend nonlinearly on the true labels. In the particular case where~$P$ is Gaussian, this reduces to the standard matrix sensing model with additive Gaussian noise on the labels. 

Under this representation, the effective parameter of the model becomes the matrix $Z^*$, that gathers all second-order information about the teacher. In the following, we will no longer refer to the individual teacher vectors, as the model's invariance makes them non-identifiable. Instead, we directly work with the matrix $Z^*$, whose structure is determined by the model: it is positive semidefinite and of rank $m^*$, with typically $m^* < d$. 

\subsection{Optimization} \label{Subsec:optimization}

Given some cost $\ell \colon \R^2 \to \R^+$, we leverage the low-rank structure of $Z^*$ and define the empirical loss function:
\begin{equation} \label{eq:Loss}
    \L(W) = \frac{1}{2n} \sum_{k=1}^n \ell \Big( \tr(X_k WW^\top), z_k \Big),
\end{equation}
for $W \in \R^{d \times m}$. We assume that $m^*$ is not known, therefore we potentially have $m \neq m^*$. While optimizing with respect to $W$ directly exploits this low-rank structure, it is often simpler to analyze optimization with respect to the matrix $Z = WW^\top$, which can lead to convex formulations. Relevant works on this problem include the results of \citet{burer2003nonlinear} and \citet{gunasekar2017implicit} on factorized approaches, and in a more general setting \citep{edelman1998geometry, journee2010low, massart2020quotient} on the geometry and optimization of functions of $WW^\top$. 

To optimize the student matrix $W$, we add a regularization $\Omega \colon \R^{d \times m} \to \R^+$ to the empirical loss and perform Langevin dynamics:
\begin{equation} \label{eq:GFdynamics}
    \d W(t) = - d \, \nabla \L \big( W(t) \big) \d t - \nabla \Omega \big( W(t) \big) \d t + \frac{1}{\sqrt{\beta d}} \d B(t), 
\end{equation}
where $B$ is a standard Brownian motion over $\R^{d \times m}$, and the inverse temperature $\beta \geq 0$ controls the intensity of the noise. When setting $\beta = \infty$, we recover the plain gradient flow dynamics, which is the setting for most of our results. The prefactor $1 / \sqrt{d}$ in front of $B$ ensures that the effect of the noise remains of order one in the high-dimensional limit, consistently with the scaling we introduce in \cref{Assumption1}. 

In addition, although our analysis is formulated in continuous time, the results presented in \cref{subsec:DMFT} remain valid for gradient descent, corresponding to the discretized version of the dynamics~\eqref{eq:GFdynamics}, with $\beta = \infty$:
\begin{equation} \label{eq:GDdynamics}
    W_{k+1} = W_k - \eta d \, \nabla \L(W_k) - \eta \nabla \Omega(W_k), 
\end{equation}
where $\eta > 0$ corresponds to the stepsize of the algorithm. 

\paragraph{Gradient descent simulations.} The gradient descent algorithm (with small stepsize) will also be used as a proxy for the gradient flow dynamics in our numerical experiments. As some of our results will be derived using non-rigorous methods, numerical simulations are essential to confirm the validity of our claims. Throughout this paper, several figures are presented in order to compare the numerical integration of equation \eqref{eq:GDdynamics} to our theoretical claims. Unless specified in the corresponding figure, all of our simulations were performed at dimension $d = 100$, with stepsize $\eta = 5 \times 10^{-3}$, and error bars indicate the standard deviation under several realizations of the random initialization, teacher matrix and sensing matrices. However our results hold for a general class of initialization and teacher distributions (see \cref{Assumption1} for more details), we have chosen the Gaussian distribution for our simulations:
\begin{equation} \label{eq:DistribInitTeacher}
    W_{ij}(t = 0) \overset{\mathrm{i.i.d.}}{\sim} \N \left( 0, \frac{1}{m} \right), \hspace{1.5cm} (w_i^*)_j \overset{\mathrm{i.i.d.}}{\sim} \N(0,1). 
\end{equation}
In this case, the teacher matrix $Z^*$, expressed as the average of the rank-one matrices $w_i^* w_i^{*\top}$ in equation \eqref{eq:DefTeacher}, is therefore distributed as a Wishart matrix (see \cref{App:Subsec:RandomMatrices}).

We give more details on our simulation setup in \cref{App:Subsubsec:SimulationsGD} and we provide a code to reproduce the numerical experiments of the paper in the freely accessible GitHub repository: \url{https://github.com/simonmartin15/QuadraticNets}.

\subsection{High-Dimensional Limit}

In this work, we analyze the gradient flow dynamics~\eqref{eq:GFdynamics} in the limit where the dimension~$d$ diverges. One key feature of our analysis is to study the \textit{extensive-width regime} where the widths $m, m^*$ of the student and teacher networks diverge proportionally to the dimension. This scaling is designed to reflect architectures with both large input dimension and hidden layers size, representing settings in which the feature space increases with the scale of the problem.

Moreover, one key feature of our analysis consists in replacing the centered rank-one matrices $(X_k)_{1 \leq k \leq n}$ in equation \eqref{eq:MatrixSensingFormulation} by symmetric Gaussian matrices with the same mean and covariance. This distribution corresponds to the Gaussian orthogonal ensemble (denoted $\mathrm{GOE}(d)$ in the following), that is, the random symmetric matrices $X \in \mathcal{S}_d(\R)$ whose coefficients are distributed as:
\begin{equation}
    \hspace{3cm} X_{ij} \overset{\mathrm{i.i.d.}}{\sim} \N \left(0, \frac{1 + \delta_{ij}}{d} \right), \hspace{2.5cm} \big(1 \leq i \leq j \leq d \big). 
\end{equation}
Replacing the centered rank-one sensing matrices by Gaussian matrices is justified by recent universality and exact-asymptotics results in static problems \citep{maillard2024bayes, erba2025nuclear, xu2025fundamental}. These works show that, for purposes of sharp risk and Bayes-optimal asymptotics, the quadratic network model can be traded for a sensing matrix problem with Gaussian observations. We claim that the same equivalence holds for the gradient flow trajectory in the high-dimensional limit, and that all our results apply to the observations introduced in equation \eqref{eq:MatrixSensingFormulation}. We refer to this property as \emph{Gaussian equivalence}, which is supported by theoretical considerations and numerical experiments, presented in \cref{subsec:GaussianUniversality}. 

\begin{assumption} \label{Assumption1}
We make the following assumptions throughout this paper:

\begin{assumpitem}[High-dimensional scaling.]\label{Assumption1:Scaling} The number of observations $n$ and the number of neurons $m,m^*$ scale with the dimension $d$ as:
\begin{equation}
    n \underset{d \to \infty}{\sim} \alpha d^2, \hspace{1.5cm} m \underset{d \to \infty}{\sim} \kappa d, \hspace{1.5cm} m^* \underset{d \to \infty}{\sim} \kappa^* d,
\end{equation}
where $\alpha, \kappa, \kappa^*$ are fixed positive constants, referred to respectively as the sample complexity and the effective width of the student and the teacher. 
\end{assumpitem}
\begin{assumpitem}[Data distribution.]\label{Assumption1:Data} The measurement matrices $X_1, \dots, X_n \in \mathcal{S}_d(\R)$ are independent and drawn from the $\mathrm{GOE}(d)$ distribution. 
\end{assumpitem}
\begin{assumpitem}[Teacher distribution.]\label{Assumption1:Teacher} The spectral distribution of the teacher matrix $Z^* \in \mathcal{S}_d^+(\R)$ almost surely converges as $d \to \infty$ to some measure $\mu^*$ with compact support. Moreover, $\|Z^*\|_\mathrm{op} \leq C$ almost surely for some $C$ independent of $d$. When necessary, we will decompose $\mu^* = (1 - \min(\kappa^*, 1)) \delta + \min(\kappa^*, 1) \nu^*$, where $\nu^*$ has a bounded support on $\R^+$ with no mass at zero, and $\delta$ is the Dirac point mass at zero. 
\end{assumpitem}
\begin{assumpitem}[Initialization.]\label{Assumption1:Initialization} The dynamics~\eqref{eq:GFdynamics} is initialized with a random matrix $W_0 \in \R^{d \times m}$ such that the empirical spectral distribution of $W_0W_0^\top \in \mathcal{S}_d^+(\R)$ almost surely converges as $d \to \infty$ to some measure $\mu_0$ with compact support. Moreover, the distribution of $W_0$ admits a density with respect to the Lebesgue measure on $\R^{d \times m}$.
\end{assumpitem}
\end{assumption}

Therefore, we let the width of the teacher and student to vary proportionally with the dimension, leading to what we refer to as the extensive-width regime. In this work, we quantify overparameterization through the width of the student network, as measured by $\kappa$, which reflects how the number of parameters grows with the dimension. Additionally, since the teacher has $m^* d \sim \kappa^* d^2$ coefficients, it is natural to require $\Theta(d^2)$ measurements in order to recover it. Together with the choice of $\kappa, \kappa^*$, this setting allows us to study the effect of overparameterization on the performance of gradient flow dynamics and signal recovery. 

From the observations~\eqref{eq2:model}, we aim to recover the teacher matrix $Z^*$. We will be interested in several scalar quantities in the high-dimensional limit, including the mean-square error (MSE) between the teacher and the student, and the empirical loss value (or training loss):
\begin{equation} \label{eq:DefMSELoss}
    \mathrm{MSE} = \frac{1}{d} \E \, \big\| Z - Z^* \big\|_F^2, \hspace{1.5cm} \mathrm{Loss}_\mathrm{train} = \frac{1}{2n} \E \sum_{k=1}^n \ell \big( \tr(X_kZ), z_k \big),
\end{equation}
with $Z = WW^\top$, and the expectation is taken with respect to the distribution of the data and the teacher. Note that, when the cost $\ell$ in equation~\eqref{eq:Loss} is quadratic, the MSE is directly proportional to the generalization error, since the matrices $X_1, \dots, X_n$ are Gaussian with isotropic covariance. When writing $\mathrm{MSE}(t)$ and $\mathrm{Loss}_\mathrm{train}(t)$ we will refer to the MSE and the loss evaluated with $W(t)$ being the solution of the gradient flow dynamics~\eqref{eq:GFdynamics}.

\section{Main Results} \label{Sec:Results}
This section is dedicated to our main results. We briefly outline its organization:
\begin{itemize}
\item In \cref{subsec:DMFT}, we derive a set of effective dynamical equations governing gradient flow and Langevin dynamics in the high-dimensional limit. These equations provide a self-consistent description of the learning dynamics and serve as the basis for the analyses that follow. 
\item In \cref{Subsec:LongTimes}, under a long-time simplification, we reduce the high-dimensional dynamics to an effective denoising gradient flow and analyze its asymptotic behavior. We characterize the spectral properties and performance of the estimator, and discuss the impact of overparameterization through the network width, as well as the emergence of a double descent phenomenon. 
\item In \cref{subsec:SmallReg}, we investigate the behavior of the long-time equations as the regularization vanishes. This leads to an exact characterization of the perfect recovery and interpolation thresholds. 
\item In \cref{subsec:Unregularized}, we study the unregularized setting. While a complete analytical characterization remains challenging, we formulate a conjecture for the perfect recovery threshold and support it with numerical evidence.
\item In \cref{subsec:GaussianUniversality}, we study the relationship between the Gaussian surrogate model and the original quadratic neural network formulation. We provide theoretical and numerical evidence supporting their asymptotic equivalence.
\end{itemize}

\subsection{High-Dimensional Dynamics} \label{subsec:DMFT}

In this section, we study the high-dimensional limit associated with the gradient flow dynamics~\eqref{eq:GFdynamics}. More precisely, we present a system of stochastic differential equations that is equivalent in distribution to the gradient flow dynamics. 

\subsubsection{General High-Dimensional Equations} \label{Subsubsec:GeneralDMFT}

We now present a high-dimensional characterization of the gradient flow dynamics. In the high-dimensional limit, the training dynamics admits an equivalent description in terms of a self-consistent stochastic system. In this description, the dynamics is driven by Gaussian processes and deterministic scalar functions, both determined self-consistently from expectations along the training trajectory.

The resulting system involves two coupled random processes. The first is a matrix-valued process $\overline W(t) \in \mathbb{R}^{d \times m}$, which can be interpreted as the student matrix and remains high-dimensional. The second is a scalar process $\overline y(t) \in \mathbb{R}$ corresponding to a typical label, defined as one of the training labels sampled uniformly at random. Together, these processes describe the evolution of the network weights and the associated labels.

For conciseness, we do not report the full system of equations here and refer to \cref{App:Subsec:SummaryDMFT} for a complete statement.

\begin{claim} \label{Result0}
Let $W(t) \in \R^{d \times m}$ be the solution of the dynamics~\eqref{eq:GFdynamics}, with observed labels $z_1, \dots, z_n$ and initial condition $W_0 \in \R^{d \times m}$. Consider an index $K$ uniformly drawn in $\{1, \dots, n\}$, and the typical label defined as:
\begin{equation}
    y(t) = \tr \big( X_K W(t)W(t)^\top \big).
\end{equation}
Then, in the $d \to \infty$ limit, under \cref{Assumption1}, $\big( W, y \big) \overset{\mathrm{distrib}}{=} \big( \overline W, \overline y \big)$, solution of the stochastic equations:
\begin{align}
    \d \overline W(t) &= \left( \mathcal{H}(t) + r(t) Z^* - \int_0^t \Gamma(t,t') \overline Z(t') \d t' \right) \overline W(t) \d t - \nabla \Omega \big( \overline W(t) \big) \d t + \frac{1}{\sqrt{\beta d}} \d B(t), \label{eq:DMFT0W} \\
    0 &= \int_0^t R(t,t') \overline y(t') \d t' + \eta(t) - m(t) y^* + \frac{2}{\alpha} \ell' \big( \overline y(t), z \big), \label{eq:DMFT0Y}
\end{align}
where $\overline Z(t) = \overline W(t) \overline W(t)^\top$, $z \sim P( \, \cdot \, | \, y^*)$, and $\overline W(t = 0) = W_0$. The functions $\mathcal{H}$ and $\eta$ are independent Gaussian processes respectively belonging to $\mathcal{S}_d(\R)$ and $\R$, with covariances:
\begin{equation}
\begin{aligned}
    \E \, \mathcal{H}_{ij}(t) \mathcal{H}_{i'j'}(t') &= \frac{1}{2d}\big( \delta_{ii'} \delta_{jj'} + \delta_{ij'} \delta_{i'j} \big) \, \mathcal{K}_Z(t,t'), \hspace{1.5cm} \E \, \eta(t) \eta(t') = \mathcal{K}_y(t,t'),
\end{aligned}
\end{equation}
and $r, \Gamma, R, m, \mathcal{K}_Z, \mathcal{K}_y$ are deterministic scalar functions expressed as expectations over the stochastic processes $\overline W, \overline y$.
\end{claim}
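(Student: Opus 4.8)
The plan is to derive the effective system~\eqref{eq:DMFT0W}--\eqref{eq:DMFT0Y} from the gradient flow~\eqref{eq:GFdynamics} by a dynamical cavity argument over the Gaussian disorder $X_1, \dots, X_n$ (the same argument covers the Langevin case $\beta < \infty$, whose extra noise term is disorder-independent, and the discrete recursion~\eqref{eq:GDdynamics}). First I would make the dynamics explicit: because $\nabla_W \tr(X_k WW^\top) = 2X_k W$, the drift of~\eqref{eq:GFdynamics} equals $-\frac dn \sum_{k=1}^n \ell'\big(y_k(t), z_k\big)X_k W(t) - \nabla\Omega(W(t))$ with $y_k(t) = \tr\big(X_k W(t)W(t)^\top\big)$, so under $n \sim \alpha d^2$ the effective single-site interaction is the symmetric matrix $-\frac{1}{\alpha d}\sum_k \ell'(y_k(t), z_k) X_k$. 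The structural difficulty that organizes the whole derivation is that each $X_k$ enters this force \emph{twice}: explicitly through the prefactor $X_k$, and implicitly through $y_k(t)$ --- hence also through $z_k$ via the teacher label $y_k^* = \tr(X_k Z^*)$.

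To disentangle this I would fix an index $K$, introduce the cavity trajectory $W^{(K)}$ solving the dynamics with data point $K$ removed, and expand $W(t) = W^{(K)}(t) + \delta W(t)$ with $\delta W$ the (linear) change caused by reinserting point $K$. For the typical label $\overline y(t) = \tr\big(X_K W(t)W(t)^\top\big)$: the cavity piece $\tr\big(X_K W^{(K)}(t)W^{(K)}(t)^\top\big)$ is a function of the \emph{independent} pair $(X_K, W^{(K)})$ and, by self-averaging of the $d\times d$ matrix $W^{(K)}(t)W^{(K)}(t)^\top$, becomes a centered Gaussian process $\eta(t)$ with covariance $\mathcal{K}_y(t,t')$ equal to the deterministic limit of $\frac 2d\tr\big(\overline Z(t)\overline Z(t')\big)$; its correlation with the teacher through $\frac 2d\tr\big(\overline Z(t)Z^*\big)$ produces the term $-m(t)y^*$ with $y^* = \tr(X_K Z^*)$; the response piece $\delta W$, through causality, yields the retarded term $\int_0^t R(t,t')\overline y(t')\d t'$ with $R$ the linear-response kernel of the cavity system; and closing the feedback loop between $\overline y$ and the back-reaction it creates on $W$ generates the Onsager reaction $\frac2\alpha\ell'(\overline y(t),z)$ --- altogether~\eqref{eq:DMFT0Y}. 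Symmetrically, in the force on $W$, applying Gaussian integration by parts (Stein's identity) to each $X_k$ in $-\frac{1}{\alpha d}\sum_{k\neq K}\ell'(y_k(t),z_k)X_k$ splits it into: its fluctuating part, a $\mathrm{GOE}$-like matrix Gaussian field $\mathcal{H}(t)$ with covariance proportional to $\frac1d\mathcal{K}_Z(t,t')$, $\mathcal{K}_Z(t,t')$ being proportional to $\E\,\ell'(\overline y(t),z)\ell'(\overline y(t'),z)$; a deterministic teacher-aligned drift $r(t)Z^*$, with $r(t)$ arising from the Stein derivative of $z_k$ through its dependence on $y_k^*$; and a memory friction $-\int_0^t\Gamma(t,t')\overline Z(t')\d t'$ acting on $\overline W(t)$, arising from the Stein derivative of $y_k(t)$ through the trajectory and again retarded by causality (equal-time contributions enter as a $\delta(t-t')$ part of $\Gamma$ or are subleading). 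The kernels $\mathcal{K}_Z, \mathcal{K}_y, \Gamma, R, r, m$ are then \emph{defined} self-consistently as the indicated averages over the law of $(\overline W, \overline y)$ solving~\eqref{eq:DMFT0W}--\eqref{eq:DMFT0Y}; this is a fixed-point system that one solves, or assumes solvable, on each compact time interval by a Picard-type iteration in the kernels. I would carry out the precise bookkeeping of prefactors in \cref{App:Subsec:SummaryDMFT}. The same system follows, as a cross-check, from the Martin--Siggia--Rose--Janssen--De Dominicis generating functional, by introducing response fields, performing the exact Gaussian integral over the $X_k$ (legitimate since they are $\mathrm{GOE}(d)$, so no universality input is needed), decoupling the resulting quartic-in-$X_k$ terms by the order parameters $\mathcal{K}_Z, \mathcal{K}_y, \Gamma, R$, and taking the $d\to\infty$ saddle point; the extremal action is quadratic in the dynamical fields, which is precisely why~\eqref{eq:DMFT0W}--\eqref{eq:DMFT0Y} is Gaussian-driven.

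The main obstacle --- and the reason the statement is phrased as a claim --- is twofold. A fully rigorous proof would require (i) concentration of the empirical correlation and response functions around their deterministic limits, uniformly on compact time windows; (ii) asymptotic Gaussianity of the cavity fields $\mathcal{H}$ and $\eta$; and (iii) tightness of the trajectories in path space together with identification of the limit through a martingale problem. The standard toolbox --- propagation-of-chaos arguments for mean-field-in-disorder systems and recent rigorous DMFT results for SGD-type dynamics --- does not transfer directly because, unlike in the proportional regime with finitely many hidden units, the order parameter here is not a finite collection of scalar overlaps. This is the second, structural, difficulty: with $m \sim \kappa d$ and $m^* \sim \kappa^* d$, the process $\overline W(t) \in \R^{d\times m}$ genuinely remains high-dimensional --- the teacher enters through the full matrix $Z^*$, the effective noise $\mathcal{H}(t)$ is matrix-valued rather than scalar, and the relevant quantities are traces such as $\frac1d\tr\big(\overline Z(t)\overline Z(t')\big)$ and $\frac1d\tr\big(\overline Z(t)Z^*\big)$ that do not close into a low-dimensional ODE system. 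Carrying the cavity expansion through while keeping these matrix-valued objects mutually consistent, rather than collapsing onto a handful of scalars, is the technically delicate step; the later sections avoid confronting it only after the additional long-time and spectral simplifications introduced in \cref{Subsec:LongTimes}.
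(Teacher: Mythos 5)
Your proposal is in the right spirit and should deliver equivalent effective equations, but it takes a genuinely different primary route from the paper's. The paper derives \cref{Result0} by the dynamical partition function / MSRJD generating functional method: it writes the functional Dirac delta in exponential form, averages the partition function over the GOE disorder (possible because the bilinear forms $\tr(X_k A)$ are jointly Gaussian), introduces the $3\times 3$ two-time overlap matrix $Q(s,t)$ of the process $\big(y(t), \hat y(t), y^*\big)$ with its conjugate $\hat Q$, and performs a saddle-point in $d\to\infty$; the label equation~\eqref{eq:DMFT0Y} then drops out from the inversion of $Q$, and the weight equation~\eqref{eq:DMFT0W} from Hubbard--Stratonovich decoupling of the $\mathcal{K}_Z$ term (see \cref{App:Subsec:DMFTDerivation}). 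You instead propose a dynamical cavity argument (remove index $K$, expand around $W^{(K)}$, Stein's identity on each $X_k$) and relegate the generating-functional route to a cross-check. The two schemes are standard duals in DMFT and should close onto the same self-consistent system; the cavity version is closer to the route that has been made rigorous in finite-width settings, whereas the saddle-point version makes the joint Gaussianity of $(y,\hat y, y^*)$ and the structure of $Q^{-1}$ (causal response in the off-diagonal, no $\hat y\hat y$ block) completely transparent, which is where the memory kernel $R$ and the coupling $m(t)$ actually come from.

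Two bookkeeping remarks, neither fatal. First, your $\eta(t)$ (the cavity field $\tr\big(X_K W^{(K)}W^{(K)\top}\big)$ with its $y^*$-component projected out) has covariance $2C_Z(t,t') - 2Q_*^{-1}m_Z(t)m_Z(t')$, which is the \emph{filtered} noise $\xi(t)$ of equation~\eqref{eq:Covariance1xi}, not the $\eta(t)$ appearing in~\eqref{eq:DMFT0Y}; the paper's $\mathcal{K}_y$ is that covariance pushed through the kernel $R$ twice (see \cref{App:Subsec:SummaryDMFT}). So your cavity construction lands on the transformed version of the label equation (equation~\eqref{eq:EvolutionLabels} in \cref{App:Subsubsec:DMFTSimple1Labels}) rather than on~\eqref{eq:DMFT0Y} directly, and converting between the two requires the identity that relates $R$, $R_y$ and $R_Z$. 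Second, $\mathcal{K}_Z$ is not simply proportional to $\E\,\ell'(\overline y(t),z)\ell'(\overline y(t'),z)$: the full expression in \cref{App:Subsec:SummaryDMFT} also contains the response kernels and the $m, m_y$ couplings, because the Stein derivatives you invoke produce the Onsager-type subtractions that cancel the naive correlator down to this more involved form. These are parametrization mismatches rather than gaps, and you flag that you would defer the precise prefactors, but they are exactly the places where a cavity computation most easily goes astray.
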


In these equations, the random variables $y^*$ and $z$ respectively correspond to the true label and its noisy version, associated with the index $K$ picked at random in \cref{Result0}. In the high-dimensional limit, as a consequence of the central-limit theorem, $y^*$ is a Gaussian variable with zero mean and variance:
\begin{equation}
    \E \, y^{*2} = \lim_{d \to \infty} \frac{2}{d} \E \, \tr\big(Z^{*2} \big),
\end{equation}
and $z \sim P\big( \, \. \big| \, y^* \big)$. 

This result suggests that, in the high-dimensional limit, the dynamics of $\big(W(t),y(t)\big)$ can be accurately described by $\big(\overline W(t),\overline y(t)\big)$. More precisely, for any fixed time horizon, averaged quantities of the original system can be computed from the previous equations with asymptotically vanishing error. To illustrate this fact, consider for instance the MSE and the empirical loss defined in~\eqref{eq:DefMSELoss}. Then, for any fixed $T > 0$, both the differences:
\begin{equation}
    \sup_{t \in [0,T]} \left| \mathrm{MSE}(t) - \frac{1}{d} \E \, \big\| \overline Z(t) - Z^* \big\|_F^2 \right|, \hspace{1cm} \sup_{t \in [0,T]} \left| \mathrm{Loss}_\mathrm{train}(t) - \frac{1}{2} \E \, \ell \big( \overline y(t),z \big) \right|,
\end{equation}
converge to zero as $d \to \infty$. 

In this set of equations, the dynamics for the student matrix $\overline W$ and the typical label $\overline y$ are driven by scalar functions. As mentioned, these functions can be themselves computed from the law of the random processes $\overline W, \overline y$, leading to a highly nonlinear and self-consistent dynamics, in the spirit of McKean--Vlasov processes \citep{chaintron2022propagation}.

We emphasize the generality of this result: the derivation of \cref{Result0} only makes use of the high-dimensional scaling chosen for the number of samples $n$ and the Gaussian distribution of the data. It remains valid for any choice of regularization $\Omega$, cost function $\ell$, and noise distribution $P$. Furthermore, since the equations we obtain still explicitly depend on the teacher matrix $Z^*$ and the initialization of the dynamics $W_0$, our result remains independent of the choice of the distributions of these matrices: the requirements made in \cref{Assumption1} are only necessary to guarantee that the dynamical equations in \cref{Result0} are well-posed.  

\paragraph{Derivation of the equations.} We derive the equations of \cref{Result0} in \cref{App:Subsec:DMFTDerivation}. Our calculation is based on the rewriting of the dynamical partition function associated with the dynamics~\eqref{eq:GFdynamics}. One key point of our analysis is the possibility to average the partition function with respect to the sensing matrices, thanks to the Gaussian distribution hypothesis in \cref{Assumption1}. Finally, a saddle-point calculation in our high-dimensional setting allows to obtain the self-consistent set of equations in \cref{Result0}. Therefore, as $d \to \infty$, we obtain the equality between the dynamical partition function of the gradient flow dynamics and that of the stochastic process $\big( \overline W, \overline y \big)$ of \cref{Result0}. This result turns out to be equivalent to equality in distribution in the high-dimensional limit. More details can be found in \cref{App:Subsec:DMFTBackground}. 

We insist on the fact that several steps in our calculation are non-rigorous, but build on objects and methods that have been applied in similar problems, see for instance \citet{agoritsas2018out} for the perceptron model, \citet{mignacco2020dynamical} for Gaussian mixture classification and \citet{bordelon2022self} for wide neural networks. More generally, our calculation falls inside the class of dynamical mean field theory (DMFT), which gathers several methods originally used to derive dynamical equations for spin glass models \citep{sompolinsky1982relaxational, cugliandolo1993analytical}. Since then, several works have succeeded in showing rigorously that these asymptotic equations are exact \citep{arous1995large, ben2006cugliandolo, celentano2021high, gerbelot2024rigorous}.

\paragraph{Dimensionality reduction.} In similar studies of the finite-rank case $m = O(1)$, a common procedure is to write a low-dimensional set of equations on the correlations and the overlaps between the neurons, leading to a finite number of summary statistics \citep[see for instance][]{celentano2021high, gerbelot2024rigorous, montanari2025dynamical}. In our case, where $m$ grows with the dimension, we believe that our system of equations cannot be reduced to a finite-dimensional one while still capturing the relevant dynamical quantities such as the MSE.

\subsubsection{Learning the Second Layer} \label{Subsubsec:SecondLayer}

The method used to derive the high-dimensional dynamics in \cref{Result0} extends to the more general setting where a second layer of weights is also learned. To be more precise, consider the following predictor:
\begin{equation} \label{eq:PredictorSecondLayer}
    \tr \big( X_k W D_a W^\top \big) = \frac{1}{m} \sum_{i=1}^m a_i \frac{(x_k^\top w_i)^2 - \|w_i\|^2}{\sqrt{d}},  \hspace{1.2cm} W = \frac{1}{\sqrt{m}} \big( w_1, \dots, w_m \big) \in \R^{d \times m},
\end{equation}
where $a \in \R^m$ and $D_a \in \R^{m \times m}$ is the diagonal matrix with the same diagonal coefficients as $a$. As it is common in machine learning settings where all layers of the network possess trainable parameters, this vector can also be optimized. We then consider the joint dynamics:
\begin{align}
    \d a(t) &= - \vartheta d \, \nabla_a \L \big( a(t), W(t) \big) \d t  - \nabla_a \Omega \big( a(t), W(t) \big) \d t + \frac{1}{\sqrt{\beta d}} \d B_a(t), \label{eq:Dynamics2LayersA} \\
    \d W(t) &= - d \, \nabla_W \L \big( a(t), W(t) \big) \d t - \nabla_W \Omega \big( a(t), W(t) \big) \d t + \frac{1}{\sqrt{\beta d}} \d B_W(t), \label{eq:Dynamics2LayersW}
\end{align}
where the loss function $\L$ from equation \eqref{eq:Loss} is considered with the predictor given in equation~\eqref{eq:PredictorSecondLayer}. The regularization $\Omega$ is now a function of $a, W$, the constant $\vartheta > 0$ controls the learning rate of the dynamics of $a(t)$ with respect to $W(t)$, and $B_a, B_W$ are independent standard Brownian motions over $\R^m$ and $\R^{d \times m}$, respectively.

Using the same method as in \cref{Result0}, we derive an equivalent description of the dynamics in the high-dimensional limit, under \cref{Assumption1}. We recover similar self-consistent equations (given in \cref{App:Subsec:SummaryDMFT}), but the effective dynamics in equation \eqref{eq:DMFT0W} is replaced by a joint stochastic evolution for two processes $\overline a, \overline W$:
\begin{align}
\begin{aligned}
    \d \overline a(t) &= \frac{\vartheta}{2} \mathrm{diag} \left( \overline W(t)^\top \left[ \H(t) + r(t) Z^* - \int_0^t \Gamma(t,t') \overline Z(t') \d t' \right] \overline W(t) \right) \d t \\
    &\hspace{5cm} - \nabla_a \Omega \big( \overline a(t), \overline W(t) \big) \d t + \frac{1}{\sqrt{\beta d}} \d B_a(t)
\end{aligned} \label{eq:DMFT2LayersA} \\
\begin{aligned}
    \d \overline W(t) &= \left( \H(t) + r(t) Z^* - \int_0^t \Gamma(t,t') \overline Z(t') \d t' \right) \overline W(t) D_{\overline a(t)} \d t  \\
    &\hspace{5cm} - \nabla_W \Omega \big( \overline a(t), \overline W(t) \big) \d t + \frac{1}{\sqrt{\beta d}} \d B_W(t),
\end{aligned} \label{eq:DMFT2LayersW}
\end{align}
where $\overline Z(t) = \overline W(t) D_{\overline a(t)} \overline W(t)^\top$, and for $A \in \R^{m \times m}$, $\mathrm{diag}(A) \in \R^m$ denotes the vector composed of the diagonal elements of $A$. We refer to \cref{App:Subsec:SecondLayer} for a derivation of these equations. In the following, we do not analyze this particular setting and leave this study for further work. 

Another option is to train the output layer only, while keeping the inner weights fixed. This corresponds to the random feature model, originally introduced by \citet{rahimi2007random}. This setting has been extensively analyzed in high dimensions \citep{gerace2020generalisation, mei2022generalization, hastie2022surprises}. In this regime, the optimization is convex and therefore simpler than when training the inner weights. 

\subsubsection{Simplified Setting: Quadratic Cost and Gaussian Label Noise} \label{Subsubsec:SimplifiedDMFT}

We now give an illustration of the system of equations of \cref{Result0} in a less general setting where we specify the cost function and the noisy channel that generates the labels. 
In this case, the full set of equations and order parameters is less cumbersome and will be presented. This setup will be studied in more detail when analyzing the gradient flow dynamics at long times, in \cref{Subsec:LongTimes}. 
\begin{assumption} \label{Assumption2}

\begin{assumpitem}[Cost function.]\label{Assumption2:Cost} Optimization is performed using the quadratic cost: $\ell(y, z) = \dfrac{1}{2} (y-z)^2$.
\end{assumpitem} 
\begin{assumpitem}[Noisy channel.]\label{Assumption2:Channel} The labels are generated using an additive Gaussian channel with variance $\Delta$:
\begin{equation}
    P \big( z \, \big| \, y \big) = \frac{1}{\sqrt{2 \pi \Delta}} \exp \left( - \frac{1}{2\Delta} (y-z)^2 \right). 
\end{equation}
\end{assumpitem}
\end{assumption}

Under this assumption, \cref{Result0} can be simplified and leads to the following set of self-consistent equations:
\begin{claim} \label{Result1} 
Under \cref{Assumption1,Assumption2}, the system of equations~\eqref{eq:DMFT0W}, \eqref{eq:DMFT0Y} can be written in the form: 
\begin{align}
    \d \overline W(t) &= 2 \left( \int_0^t R(t,t') \Big(\G(t') + Z^* - \overline Z(t') \Big) \d t' \right) \overline W(t) \d t  - \nabla \Omega \big( \overline W(t) \big) \d t + \frac{1}{\sqrt{\beta d}} \d B(t),  \label{eq:DMFT1W} \\
    \overline y(t) &= y^* + \sqrt{\Delta} \zeta + \int_0^t \d t' \, R(t,t') \left( \xi(t') - \sqrt{\Delta} \zeta + \left(\frac{m_Z(t')}{Q_*} - 1 \right) y^* \right),  \label{eq:DMFT1Y}
\end{align}
where $\overline Z(t) = \overline W(t) \overline W(t)^\top$, and $y^* \sim \N(0, 2Q_*), \zeta \sim \N(0,1)$, $\G(t) \in \mathcal{S}_d(\R)$ and $\xi(t) \in \R$ are independent centered Gaussian variables and processes with covariances:
\begin{align}
    \E \, \G_{ij}(t) \G_{i'j'}(t') &= \frac{1}{2 \alpha d} \big( \delta_{ii'} \delta_{jj'} + \delta_{ij'} \delta_{i'j} \big) \left(C_Z(t,t') - m_Z(t) - m_Z(t') + Q_* + \frac{\Delta}{2} \right), \label{eq:Covariance1G}\\
    \E \, \xi(t) \xi(t') &= 2C_Z(t,t') - \frac{2}{Q_*} m_Z(t) m_Z(t'). \label{eq:Covariance1xi} 
\end{align}
Finally, the following relationships close the system of equations:
\begin{equation} \label{eq:AveragedQuantities}
\begin{aligned}
    Q_* &= \frac{1}{d} \E \,  \tr(Z^{*2}), &\hspace{0.5cm} &C_Z(t,t') = \frac{1}{d} \E \, \tr \big( \overline Z(t) \overline Z(t') \big), \\
    m_Z(t) &= \frac{1}{d} \E \, \tr \big( \overline Z(t) Z^* \big), &\hspace{1.5cm} &R(t,t') = \delta(t-t') - \frac{1}{\alpha d^2} \tr \left( \left. \frac{\partial \, \E \, \overline Z(t)}{\partial H(t')} \right|_{H = 0} \right). 
\end{aligned}
\end{equation}
\end{claim}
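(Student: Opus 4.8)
The plan is to obtain \cref{Result1} from \cref{Result0} by inserting \cref{Assumption2} into the generic self-consistent system \eqref{eq:DMFT0W}--\eqref{eq:DMFT0Y} together with its order-parameter definitions (stated in full in \cref{App:Subsec:SummaryDMFT}). Two features make the collapse possible. First, the quadratic cost gives $\ell'(\overline y(t),z)=\overline y(t)-z$, which is affine in $\overline y$, so that \eqref{eq:DMFT0Y} becomes a \emph{linear} Volterra integral equation. Second, the additive Gaussian channel gives $z=y^*+\sqrt\Delta\,\zeta$ with $\zeta\sim\N(0,1)$ independent of $y^*$, and $\E\,y^{*2}=2Q_*$ since for $X\sim\mathrm{GOE}(d)$ one has $\E\,\tr(XZ^*)^2=\tfrac2d\tr(Z^{*2})$. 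Under these two facts every order-parameter functional $r,\Gamma,R,m,\mathcal{K}_Z,\mathcal{K}_y$ of \cref{Result0} becomes a \emph{second} moment of the label process $\overline y(t)$ (distributionally $\tr(X_K\overline Z(t))$) and of $y^*$ (distributionally $\tr(X_K Z^*)$); applying the same $\mathrm{GOE}$ second-moment identity to $\overline Z(t)$ and $Z^*$ reduces all of them to the four scalars $C_Z(t,t')$, $m_Z(t)$, $Q_*$ and $\Delta$ listed in \eqref{eq:AveragedQuantities}.

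Next I would solve the label equation. Substituting $\ell'(\overline y(t),z)=\overline y(t)-z$ into \eqref{eq:DMFT0Y}, the instantaneous term $\tfrac2\alpha\overline y(t)$ merges with the $\delta(t-t')$ component carried by the memory kernel, leaving a linear Volterra operator acting on $\overline y$ plus the Gaussian source $\eta(t)$ and the $y^*$- and $z$-terms. Inverting this operator — calling its resolvent $R(t,t')$ and bringing the bare label $z=y^*+\sqrt\Delta\,\zeta$ out in front — yields exactly \eqref{eq:DMFT1Y}, once one checks that: (i) the integrand $\xi(t')-\sqrt\Delta\,\zeta+(m_Z(t')/Q_*-1)y^*$ is the Gaussian representation of $\overline y(t')-z$, i.e. $\xi(t')$ is the component of the effective label noise orthogonal to $y^*$, with $\overline y(t')\overset{d}{=}\xi(t')+(m_Z(t')/Q_*)y^*$; and (ii) $\E\,\xi(t)\xi(t')=2C_Z(t,t')-2m_Z(t)m_Z(t')/Q_*$, which follows from $\E[\tr(X\overline Z(t))\tr(X\overline Z(t'))]=2C_Z(t,t')$ and $\E[\tr(X\overline Z(t))\tr(XZ^*)]=2m_Z(t)$ by subtracting the $y^*$-projection. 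This is \eqref{eq:Covariance1xi}. Finally, differentiating the fixed-point relations with respect to an infinitesimal source $H(t')$ identifies $R(t,t')$ with the response kernel in \eqref{eq:AveragedQuantities} — the usual fluctuation--dissipation bookkeeping of DMFT.

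With $\overline y$ solved, I would substitute it back into the drift of $\overline W$ in \eqref{eq:DMFT0W} and show that the generic reaction structure $\mathcal{H}(t)+r(t)Z^*-\int_0^t\Gamma(t,t')\overline Z(t')\,\d t'$ collapses onto the single convolution $2\int_0^t R(t,t')\big(\G(t')+Z^*-\overline Z(t')\big)\,\d t'$ of \eqref{eq:DMFT1W}: the $Z^*$- and $\overline Z$-pieces become $R$ applied to the corresponding matrices (the factor $2$ being the Jacobian of $W\mapsto WW^\top$), while the Gaussian matrix field reduces to $\G$, whose covariance \eqref{eq:Covariance1G} is read off the residual autocorrelation $\E\big[(\overline y(t)-z)(\overline y(t')-z)\big]=2\big(C_Z(t,t')-m_Z(t)-m_Z(t')+Q_*+\tfrac\Delta2\big)$, normalized by $\tfrac1{2\alpha d}$ — the $1/\alpha$ from $n\sim\alpha d^2$, the $\delta_{ii'}\delta_{jj'}+\delta_{ij'}\delta_{i'j}$ tensor from the $\mathrm{GOE}$ covariance. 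Collecting the definitions of $C_Z,m_Z,Q_*$ and $R$ then gives the closure \eqref{eq:AveragedQuantities}.

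The main obstacle is this last reorganization: the identity that, once $\ell'$ is affine, the three generic reaction terms of \cref{Result0} can be re-expressed through the single resolvent $R$. Manipulating the abstract equations in place is error-prone, so the cleanest route is to re-run the dynamical-partition-function / saddle-point derivation of \cref{App:Subsec:DMFTDerivation} with $\ell(y,z)=\tfrac12(y-z)^2$ and the Gaussian channel inserted from the outset. Then the Martin--Siggia--Rose conjugate (``hatted'') fields enter the action at most quadratically; integrating them out Gaussianly \emph{directly} produces the resolvent $R$, the single-convolution form of \eqref{eq:DMFT1W}, and the closed covariances \eqref{eq:Covariance1G}--\eqref{eq:Covariance1xi}, at the cost of repeating the saddle-point step once more — which is why I expect the authors' proof to proceed that way rather than by transforming \cref{Result0} directly.
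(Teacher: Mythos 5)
Your first three paragraphs are essentially the paper's route (Appendix~B.4): start from the full system stated in \cref{App:Subsec:SummaryDMFT}, insert $\ell'(y,z)=y-z$ and $z=y^*+\sqrt{\Delta}\zeta$, observe the label equation becomes linear and Gaussian, define $\xi$ as the part of the effective noise orthogonal to $y^*$, solve for $\overline y$ with a resolvent, and substitute back. Your last paragraph, however, predicts the wrong method: you expect the authors to re-run the entire partition-function / saddle-point derivation with the quadratic cost inserted from the outset, but they do \emph{not} — they manipulate the generic order-parameter relations of \cref{Result0} directly, exactly as you first outline.

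Two points deserve scrutiny. First, your claim that ``the factor $2$ [is] the Jacobian of $W\mapsto WW^{\top}$'' is wrong. In \cref{Result0} the drift already contains $r(t)Z^*-\int_0^t\Gamma(t,t')\overline Z(t')\d t'$ with no factor two on either term; the $2$ in \eqref{eq:DMFT1W} emerges from the identity $\Gamma=2R_y$, which the paper derives by combining the generic relation $\Gamma=\alpha(R-RR_y^{\mathrm{old}})$ from the saddle point, the change-of-response formula $R_y^{\mathrm{old}}=R_y^{\mathrm{new}}R_Z$, and the Volterra fixed-point $\delta=R_y^{\mathrm{new}}+\frac{2}{\alpha}R_Z R_y^{\mathrm{new}}$. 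The $\tfrac{2}{\alpha}$ here is what propagates into the factor $2$, so it traces back to the coefficient in front of $\ell'$ in the label equation of \cref{Result0}, not to the reparametrization $W\mapsto WW^{\top}$ (whose factor two is already absorbed into \cref{Result0}). Second, your phrase ``the usual fluctuation--dissipation bookkeeping of DMFT'' glosses over a genuine step: the paper tracks two distinct response kernels ($R_Z$ for a direct perturbation of the drift, $\tilde R_Z$ for a perturbation entering additively alongside $\G$ under the convolution with $R_y$), relates them by $\tilde R_Z(t,t')=2\int_{t'}^t R_Z(t,t'')R_y(t'',t')\d t''$, and then uses $\tilde R_Z=\alpha(\delta-R_y)$ to replace $R_y$ in the $W$-equation and obtain the closed response formula in \eqref{eq:AveragedQuantities}. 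Your computations of $\E\,\xi(t)\xi(t')$ and of the residual autocorrelation $\E[(\overline y(t)-z)(\overline y(t')-z)]$ for the $\G$-covariance are correct. The remaining work — reducing $\mathcal{K}_Z$ via the compact algebra in \cref{App:Subsubsec:DMFTSimple2Hat} to the single residual term, and checking $r=\Gamma\mathbf 1$ — is mechanical but nontrivial; you correctly flag it as error-prone without carrying it out.
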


This last set of equations defines averaged quantities with respect to $\overline W(t)$: covariances, overlaps and responses. $\delta$ is the Dirac delta distribution supported at 0 and the equation defining $R$ is to be interpreted in the sense of distributions (see \cref{App:Subsec:DMFTBackground} for more details). The response $R(t,t')$ quantifies the average change of $\overline Z(t)$ in response to an infinitesimal perturbation $H(t') \in \mathcal{S}_d(\R)$ added to the Gaussian noise $\G(t') \to \G(t') + H(t')$ into equation~\eqref{eq:DMFT1W}. More precisely, the partial derivative in equation \eqref{eq:AveragedQuantities} can be seen as the differential of the function mapping the perturbation $H(t')\in \mathcal{S}_d(\R)$ to the corresponding perturbed solution $\overline Z(t)$. Then $R$ is computed by taking the trace over linear maps acting on $\mathcal{S}_d(\R)$.

\begin{figure}[ht]
    \centering
    \includegraphics[width=\linewidth]{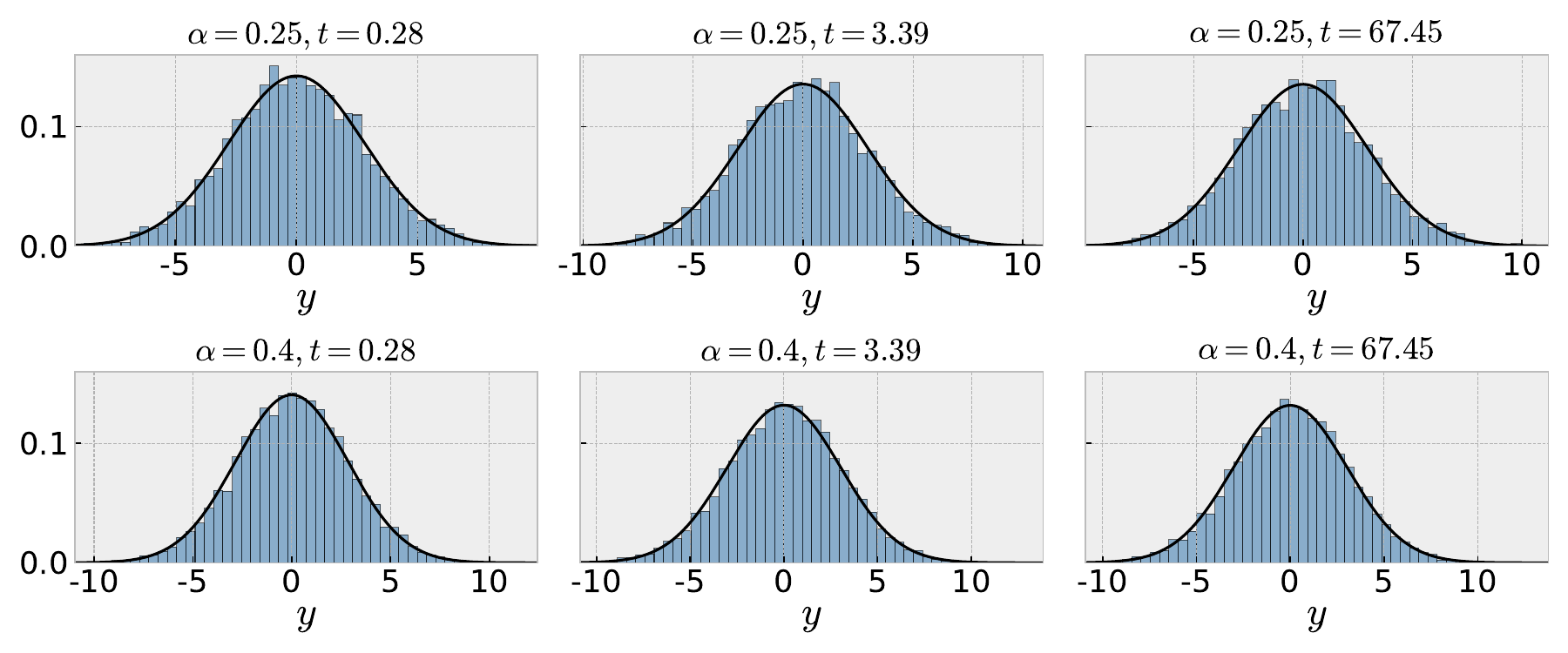}
    \vspace*{-0.7cm}
    \caption{Empirical distribution of the student labels $y_k(t) = \tr \big( W(t)W(t)^\top X_k \big)$ during optimization with gradient descent, defined in equation \eqref{eq:GDdynamics}, with parameters $\kappa = 0.4, \kappa^* = 0.3$, $d = 150$, quadratic cost and no regularization, for two values of $\alpha$ and three values of time $t$. The black curve corresponds to the Gaussian density with zero mean and variance equal to the empirical variance of the labels.}
    \label{fig:Labels}
\end{figure}

Finally, we remark that in equation \eqref{eq:DMFT1Y} the label $\overline y(t)$ is explicitly expressed as a linear combination of the independent centered Gaussian variables $y^*, \zeta, \xi(t)$. Therefore, $(\overline y(t))_{t\geq 0}$ is itself a Gaussian process with zero mean. This shows that in the high-dimensional limit, a label drawn at random remains Gaussian along the gradient flow dynamics. \cref{fig:Labels} presents a numerical confirmation of this result. 

\subsubsection{Recovering Population Dynamics} \label{Subsubsec:Population}

A natural setting to simplify the equations corresponds to the population case, where the student has access to an infinite number of observations. In the expression of the loss in equation~\eqref{eq:Loss}, this corresponds to replacing the average over the $n$ data points with the expectation over the data distribution. Regarding our high-dimensional dynamics, these equations can be recovered by taking the $\alpha \to \infty$ limit in those of \cref{Result1}. In this case we obtain that the Gaussian noise $\G(t)$ vanishes and the response $R$ simplifies to $R(t,t') = \delta(t-t')$, i.e., the memory effect disappears and equations~\eqref{eq:DMFT1W}, \eqref{eq:DMFT1Y} simply write:
\begin{align}
    \d \overline W(t) &= 2 \Big( Z^* - \overline W(t) \overline W(t)^\top \Big) \overline W(t) \d t - \nabla \Omega \big( \overline W(t) \big) \d t + \frac{1}{\sqrt{\beta d}} \d B(t), \label{eq:PopulationW} \\
    \overline y(t) &= \frac{m_Z(t)}{Q_*} y^* + \xi(t). \label{eq:Populationy}
\end{align}
This first equation on $\overline W$ is precisely the one we would obtain by writing the Langevin dynamics~\eqref{eq:GFdynamics} on the population loss. This evolution is very similar to those found in the population limit for shallow quadratic neural networks \citep{gamarnik2019stationary, sarao2020optimization, martin2024impact}. In addition, it can be shown that the equation on the typical label $\overline y(t)$ corresponds to the evolution of the student matrix $\overline W(t) \overline W(t)^\top$ projected on a random direction: indeed, when it has access to an infinite number of observations, the student does not correlate with any particular example. 

In the gradient flow setting ($\beta = \infty$), and with the choice of $\ell_2$-regularization, the dynamics~\eqref{eq:PopulationW} on $W(t)$ can be interpreted as an Oja flow (see \cref{Sec:OjaFlow} for a definition and properties). In the following section, we study the equations given in \cref{Result1} in the long-time limit. One special case of our analysis will cover this population limit $\alpha \to \infty$. Further analysis can be found in \cref{App:Subsec:Population}. 

\subsection{Long-Time Analysis of the Regularized Dynamics} \label{Subsec:LongTimes}

This section is dedicated to the study of the long-time asymptotics of the set of equations from \cref{Result1}, with the choice of $\ell_2$-regularization:
\begin{equation}
    \Omega(W) = \lambda \tr(WW^\top). 
\end{equation}
The following results focus on the gradient flow setting ($\beta = \infty$), and we refer to \cref{Subsubsec:Langevin} for an analysis of the Langevin dynamics at finite temperature.

The goal is to understand the $t \to \infty$ limit of the gradient flow dynamics~\eqref{eq:GFdynamics}, and to describe it in the high-dimensional limit. To do so, we start from the dynamical equations of \cref{Result1}. Such equations, often referred to as DMFT equations, are known to be difficult to analyze and typically require a guess on the structure of the dynamics. In what follows, we introduce such a guess, which we refer to as the \textit{steady-state assumption}.

\subsubsection{Steady-State Assumption} \label{Subsubsec:SteadyState}

The way to simplify the equations is to assume that the memory effect in equation \eqref{eq:DMFT1W} disappears at long times, i.e., the weight of the integral over $[0,t]$ concentrates on times $t'$ that are close to $t$. In this case we say that the dynamics only possesses a short-term memory. We formalize this idea through the following assumption. 

\begin{assumption} \label{Assumption3}

\begin{assumpitem}[Response decay.]\label{Assumption3:Response} The response function $R(t,t')$ decays fast enough to zero as $t-t' \to \infty$.
\end{assumpitem} 
\begin{assumpitem}[Constant noise.]\label{Assumption3:Noise} The Gaussian process $\G(t)$ converges fast enough as $t \to \infty$ so that it can be considered constant in the evolution equation for $W(t)$. 
\end{assumpitem}    
\end{assumption}

Although we do not exactly quantify what fast enough means, we assume that these conditions are strong enough to justify the following long-time approximation of the memory term in \eqref{eq:DMFT1W}:
\begin{equation}
    \int_0^t R(t,t') \Big( \G(t') + Z^* - Z(t') \Big) \d t' \underset{t \to \infty}{\approx} r_\infty \Big( \sqrt{\xi} \G + Z^* - Z(t) \Big),  
\end{equation}
where the constants $\xi, r_\infty$ are such that:
\begin{equation}
    r_\infty = \lim_{t \to \infty} \int_0^t R(t,t') \d t', \hspace{1.5cm} \lim_{t \to \infty} \E \, \G_{ij}(t) \G_{i'j'}(t) = \frac{\xi}{d} \big( \delta_{ii'} \delta_{jj'} + \delta_{i'j} \delta_{ij'} \big),
\end{equation}
and $\G \sim \mathrm{GOE}(d)$. Despite being a strong assumption on the structure of the dynamics, it is directly motivated by a line of work on DMFT and generalized Langevin equations similar to~\eqref{eq:DMFT1W}. More precisely, our approach is closely related to the \textit{time-translational invariance} (TTI) approximation, which has been used to analyze the asymptotic behavior of DMFT equations \citep{sompolinsky1982relaxational, sompolinsky1988chaos, bordelon2024dynamical}. In these works, such assumptions are introduced as physically motivated ansätze and later verified through consistency checks and numerical simulations. Also note that related approaches have been studied rigorously in simpler settings \citep{celentano2021high, fan2025dynamical, chen2025learning}.

Our assumption should be understood in the same spirit: it represents a conjectured structural property of the long-time dynamics, and is natural when describing systems that rapidly reach a steady-state regime. Importantly, the consequences of this simplification will be systematically compared to high-dimensional numerical simulations of the gradient flow dynamics~\eqref{eq:GFdynamics}. We refer to \cref{App:Subsec:SteadyState} for a more detailed discussion of this assumption. 

This assumption leads to an effective dynamics, which we adopt as the starting point for the long-time analysis:
\begin{equation} \label{eq:DynamicsSimplified}
    \dot W(t) = 2 r_\infty \Big( \sqrt{\xi} \G + Z^* - W(t) W(t)^\top \Big) W(t) - 2 \lambda W(t).
\end{equation}
In addition, the self-consistent expressions of the covariance of $\G$ and the function $R$ in equations~\eqref{eq:Covariance1G} and \eqref{eq:AveragedQuantities} lead to the equations on $\xi, r_\infty$:
\begin{align}
    \xi &= \frac{1}{2\alpha} \left( \mathrm{MSE} + \frac{\Delta}{2} \right), \label{eq:SCxi} \\
    r_\infty &= 1 - \frac{1}{\alpha d^2} \lim_{t \to \infty} \int_0^t \tr \left( \left. \frac{\partial \, \E \, Z(t)}{\partial H(t')} \right|_{H = 0} \right) \d t', \label{eq:SCr}  
\end{align}
where $H$ is a perturbation entering additively in the drift term multiplying $W(t)$ in equation~\eqref{eq:DynamicsSimplified}. Then, one can express the solution of the dynamics~\eqref{eq:DynamicsSimplified} as a function of the variables $\xi, r_\infty$. Using the expression of $\xi$ and the definition of $r_\infty$, one can deduce self-consistent equations on these two variables. 

The dynamics in equation~\eqref{eq:DynamicsSimplified} is known as an Oja flow, a nonlinear matrix flow that has been studied in prior works \citep{yan1994global, bodin2023gradient, martin2024impact}. Despite its nonlinearity, this equation admits a closed-form solution, and its convergence properties are well understood. We devote \cref{Sec:OjaFlow} to the study of this flow and provide new results that allow us to derive a closed system of finite-dimensional equations for $r_\infty$ and $\xi$. We present these equations in \cref{Result2} and derive them in \cref{App:LongTimes}. 

\paragraph{A denoising formulation.} Interestingly, the dynamics~\eqref{eq:DynamicsSimplified} can be viewed as a denoising problem of the matrix $Z^*$ corrupted by the Gaussian noise $\G$, solved through regularized gradient flow. This interpretation is similar to the one in the replica calculation done by \citet{maillard2024bayes} for the same model, where the inference problem was mapped onto a matrix denoising formulation in the Bayes-optimal setting. 

Remark that this denoising formulation is very similar to the population dynamics given in equation \eqref{eq:PopulationW} when considering $\Omega$ to be the $\ell_2$-regularization and the gradient flow setting ($\beta = \infty$). Equation \eqref{eq:DynamicsSimplified} introduces two additional parameters: an effective noise variance $\xi$ and a time reparameterization $r_\infty$. Interestingly, the expression for $\xi$ in equation \eqref{eq:SCxi} suggests that the noise arises from the finite number of training samples (through the parameter $\alpha$), and the label noise $\Delta$, which prevents a clean observation of the teacher labels.

At long times, the simplified dynamics reveal that, near the point of convergence, the landscape of the regularized empirical loss~\eqref{eq:Loss} exhibits the same landscape structure as a regularized matrix denoising problem. In this regime, the quantity $r_\infty$ plays a central role: it represents the local curvature of the landscape around the limiting point and thus quantifies the sharpness of the minimum and the associated convergence rate.

\subsubsection{Set of Equations at Long Times} \label{Subsubsec:EquationsLongTimes}

In the following, we give the set of equations resulting from the steady-state assumption (see \cref{App:LongTimes} for their derivation). Before doing so, we define the following operator: for a symmetric matrix $A \in \mathcal{S}_d(\R)$ with spectral decomposition $A = U \mathrm{diag}(\lambda_1, \dots, \lambda_d) U^\top$ and $\lambda_1 \geq \cdots \geq \lambda_d$, we define for $m \le d$:
\begin{equation} \label{eq:defPositivePart}
A_{(m)}^{+} = U \mathrm{diag}\big(\lambda_1^+, \dots, \lambda_m^+, 0,\dots, 0 \big) U^\top,
\hspace{1.2cm} \lambda^+ = \max(\lambda, 0).
\end{equation}
The matrix $A_{(m)}^{+}$ selects the $m$ largest positive eigenvalues of $A$, and is known to be the best rank-$m$ positive semidefinite approximation of $A$ for the Frobenius norm. 

As a consequence of the simplifications introduced in \cref{Subsubsec:SteadyState}, the long-time behavior of the dynamics can be characterized as follows.
    
\begin{claim} \label{Result2}
    Consider the variables $\xi, r_\infty$ defined in equation~\eqref{eq:SCxi}, \eqref{eq:SCr}, and set $q = \lambda / r_\infty$. Define $\mu_\xi$ to be the asymptotic spectral distribution of the random matrix $Z^* + \sqrt{\xi} \G$, where $\G \sim \mathrm{GOE}(d)$. Then, under \cref{Assumption1}, \ref{Assumption2} and \ref{Assumption3} in the $d \to \infty$ limit, $\xi, q$ solve the equations:
    \begin{subequations} \label{eq:SystemResult2}
    \begin{align}
        \min(\kappa, 1) &= \int_\omega \d \mu_\xi(x), \label{eq:ResultLongTimesOmega} \\
        1 &= \frac{\lambda}{q} + \frac{1}{\alpha} \int_{\max(q, \omega)} (x-q) h_\xi(x) \d \mu_\xi(x), \label{eq:ResultLongTimesIntegral} \\
        2\alpha \xi - \frac{\Delta}{2} &= Q_* + \int_{\max(q,\omega)} (q^2 - x^2) \d \mu_\xi(x) + 4 \xi \int_{\max(q,\omega)} (x-q) h_\xi(x) \d \mu_\xi(x), \label{eq:ResultLongTimesMSE} 
    \end{align}
    \end{subequations}
    where $h_\xi$ is the Hilbert transform of $\mu_\xi$ (see \cref{Def:HilbertStieltjes}) and:
    \begin{equation}
        Q_* = \lim_{d \to \infty} \frac{1}{d} \E \, \tr(Z^{*2}) = \int x^2 \d \mu^*(x). 
    \end{equation}
    Moreover, for almost all initializations, the limit of the dynamics~\eqref{eq:DMFT1W} is given by:
    \begin{equation} \label{eq:ZinfinityReg}
        Z_\infty = \Big( Z^* + \sqrt{\xi} \G - q I_d \Big)_{(m)}^+, 
    \end{equation}
    with $\G \sim \mathrm{GOE}(d)$. The MSE and the training loss are given by:
    \begin{equation} \label{eq:LossValue}
        \mathrm{MSE} = 2 \alpha \xi - \frac{\Delta}{2}, \hspace{1.5cm} \mathrm{Loss}_\mathrm{train} = \frac{\lambda^2 \alpha \xi}{q^2}.   
    \end{equation}
\end{claim}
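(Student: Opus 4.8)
The plan is to treat the effective steady-state dynamics~\eqref{eq:DynamicsSimplified} — which \cref{Assumption3} extracts from the DMFT equations of \cref{Result1} — as the starting point, and to read off from it the limiting estimator, the three scalar relations~\eqref{eq:SystemResult2}, and the MSE and training loss~\eqref{eq:LossValue}. First I would rewrite \eqref{eq:DynamicsSimplified} as $\dot W(t) = 2 r_\infty \bigl( M_\xi - q I_d - W(t) W(t)^\top \bigr) W(t)$ with $M_\xi := Z^* + \sqrt{\xi}\,\G$ and $q := \lambda / r_\infty$, so that, up to a global time rescaling by $2 r_\infty$, this is exactly the Oja flow associated with the symmetric matrix $M_\xi - q I_d$. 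I would then invoke the convergence analysis of the Oja flow developed in \cref{Sec:OjaFlow}: from an initialization whose law is absolutely continuous — which is precisely what the density hypothesis in \cref{Assumption1:Initialization} provides, ensuring a nondegenerate overlap with the top eigenvectors — the flow converges to the best rank-$m$ PSD approximation of its matrix argument, i.e.\ $Z(t) \to Z_\infty = (M_\xi - q I_d)_{(m)}^{+}$, which is~\eqref{eq:ZinfinityReg}. This reduces everything to determining the self-consistent scalars $\xi, q$ and evaluating $\tfrac1d\tr Z_\infty^2$ and $\tfrac1d\tr(Z_\infty Z^*)$ in the limit $d \to \infty$.

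For the spectral part I would use that $\G \sim \mathrm{GOE}(d)$ is asymptotically free from $Z^*$, so the empirical spectral distribution of $M_\xi$ converges to $\mu_\xi$, the free additive convolution of $\mu^*$ with the semicircle law of variance $\xi$, whose Hilbert transform (in the sense of \cref{Def:HilbertStieltjes}) is the $h_\xi$ appearing in the statement. Equation~\eqref{eq:ResultLongTimesOmega} is then a counting identity: $Z_\infty$ retains the $\min(m,d)$ largest eigenvalues of $M_\xi$ that additionally exceed $q$, so the rank cutoff $\omega$ is characterized by $\mu_\xi\bigl((\omega,\infty)\bigr) = \min(\kappa,1)$, and only eigenvalues above $\max(q,\omega)$ survive in $Z_\infty$, each shifted to $x - q$; this immediately gives $\tfrac1d\tr Z_\infty^2 \to \int_{\max(q,\omega)} (x-q)^2 \,\d \mu_\xi(x)$. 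The cross term is the one genuinely nontrivial spectral input: writing $Z_\infty = g(M_\xi)$ with $g(x) = (x-q)\,\mathbf{1}\{x > \max(q,\omega)\}$, I would apply the eigenvector-overlap (``rotationally-invariant-estimator'') formula for the deformed GOE ensemble $M_\xi = Z^* + \sqrt{\xi}\,\G$, namely $\tfrac1d\tr\bigl(f(M_\xi) Z^*\bigr) \to \int f(x)\bigl(x - 2\xi\, h_\xi(x)\bigr)\,\d \mu_\xi(x)$, the integrand being the conditional mean of a teacher eigenvalue given that the corresponding eigenvalue of $M_\xi$ equals $x$. Substituting both into $\mathrm{MSE} = \tfrac1d\tr Z_\infty^2 - \tfrac2d\tr(Z_\infty Z^*) + Q_*$ and simplifying yields~\eqref{eq:ResultLongTimesMSE}; combined with the self-consistency relation~\eqref{eq:SCxi} — which itself reads $\mathrm{MSE} = 2\alpha\xi - \tfrac\Delta2$ — this is a closed equation for $\xi$.

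The response equation~\eqref{eq:ResultLongTimesIntegral} follows from evaluating~\eqref{eq:SCr}. Under the steady-state reduction the integrated response there becomes $\tfrac1{d^2}\tr\bigl(D_{M_\xi} Z_\infty\bigr)$, the normalized trace of the differential of the spectral map $M \mapsto (M - q I_d)_{(m)}^{+}$; a Daleckii--Krein (divided-difference) computation gives $\tfrac1{d^2}\tr\bigl(D_{M_\xi} Z_\infty\bigr) \to \tfrac12 \iint \tfrac{g(x)-g(y)}{x-y}\,\d \mu_\xi(x)\,\d \mu_\xi(y)$ (the diagonal contribution $\tfrac1{d^2}\sum_i g'(\lambda_i)$ being $O(1/d)$), and a symmetrization argument collapses this double integral to $\int_{\max(q,\omega)} (x-q)\, h_\xi(x)\,\d \mu_\xi(x)$; plugging this into~\eqref{eq:SCr} and using $r_\infty = \lambda/q$ gives~\eqref{eq:ResultLongTimesIntegral}. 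For the training loss I would return to~\eqref{eq:DMFT1Y} at steady state: the typical residual $\bar y_\infty - z$ reduces to $r_\infty$ times a centered Gaussian whose variance, after substituting the covariance~\eqref{eq:Covariance1xi} and $\E\, y^{*2} = 2 Q_*$, equals $2\,\mathrm{MSE} + \Delta = 4\alpha\xi$, whence $\mathrm{Loss}_\mathrm{train} = \tfrac12 \E\,\ell(\bar y_\infty, z) = \tfrac14 \E\,(\bar y_\infty - z)^2 = r_\infty^2 \alpha\xi = \lambda^2 \alpha\xi / q^2$; together with $\mathrm{MSE} = 2\alpha\xi - \tfrac\Delta2$ this is~\eqref{eq:LossValue}. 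Note that the three relations in~\eqref{eq:SystemResult2} are implicit, since $\xi$ and $q$ enter through $\mu_\xi$ and $\omega$; the claim is the assertion that this fixed-point system is satisfied by the scalars generated by the dynamics.

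The main obstacle is twofold. Conceptually, everything downstream of~\eqref{eq:DynamicsSimplified} is conditional on \cref{Assumption3} — the short-memory / time-translation-invariant ansatz for the generalized Langevin equation~\eqref{eq:DMFT1W} — which we do not establish rigorously, and which is exactly why the statement is phrased as a claim rather than a theorem. Technically, granting that ansatz, the delicate points are the eigenvector-overlap formula for the deformed GOE used in the cross term $\tr(Z_\infty Z^*)$ — in particular its behaviour near the spectral edge and across the jump of $g$ at $\omega$ in the regime $\omega > q$ — and the identification of the DMFT response functional in~\eqref{eq:SCr} with the differential of the spectral map, together with the symmetrization identity that turns the resulting double integral into the single integral of $(x-q) h_\xi$ against $\mu_\xi$. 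By contrast, the convergence of the regularized Oja flow (once \cref{Sec:OjaFlow} is in place) and the steady-state evaluation of~\eqref{eq:DMFT1Y} should be comparatively routine.
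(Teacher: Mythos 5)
Your proposal follows essentially the same route as the paper's derivation in \cref{App:Subsec:DerivationLongTimes}: identify the steady-state dynamics as an Oja flow to get $Z_\infty$ via \cref{prop:OjaConvergence}, obtain the $\omega$-equation by eigenvalue counting, derive the response equation by applying Daleckii--Krein (\cref{Lemma:DerivativeSpectral}) to the spectral map and symmetrizing via \cref{Lemma:Hilbert}, compute the cross term $\tfrac1d\tr(Z_\infty Z^*)$ using the deformed-GOE eigenvector-overlap formula simplified through Biane's subordination identity (which is exactly the ``RIE'' relation $\tfrac1d\tr(f(M_\xi) Z^*)\to\int f(x)(x-2\xi h_\xi(x))\,\d\mu_\xi(x)$ you cite), and read off the training loss from the steady-state label equation. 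The only cosmetic difference is that you invoke the overlap formula as a packaged result while the paper re-derives it in \cref{Prop:MSEHD}; the mathematical content is identical.
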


More precisely, the measure $\mu_\xi$ corresponds to the free additive convolution between $\mu^*$, the asymptotic spectral measure of the teacher $Z^*$ and a semicircular density of variance $\xi$ (see \cite{biane1997free} and \cref{App:Subsubsec:FreeConvolution} for more details). 

In contrast with the high-dimensional formulation of \cref{Result1}, the system~\eqref{eq:SystemResult2} is now finite-dimensional and involves only scalar quantities. The ambient dimension no longer appears explicitly, and the long-time limit of the dynamics is described by a small number of order parameters. 

One of the key results is the limit of the flow found in equation~\eqref{eq:ZinfinityReg}. It shows that, asymptotically, the student matrix selects the $m$ largest positive eigenvalues of a noisy version of the teacher matrix, with an eigenvalue shift that is characteristic of the regularized dynamics. In a similar fashion, in the system of equations~\eqref{eq:SystemResult2}, the variable $\omega$ selects a mass $\kappa$ of the measure~$\mu_\xi$.

\begin{figure}[ht]
    \centering
    \includegraphics[width=\linewidth]{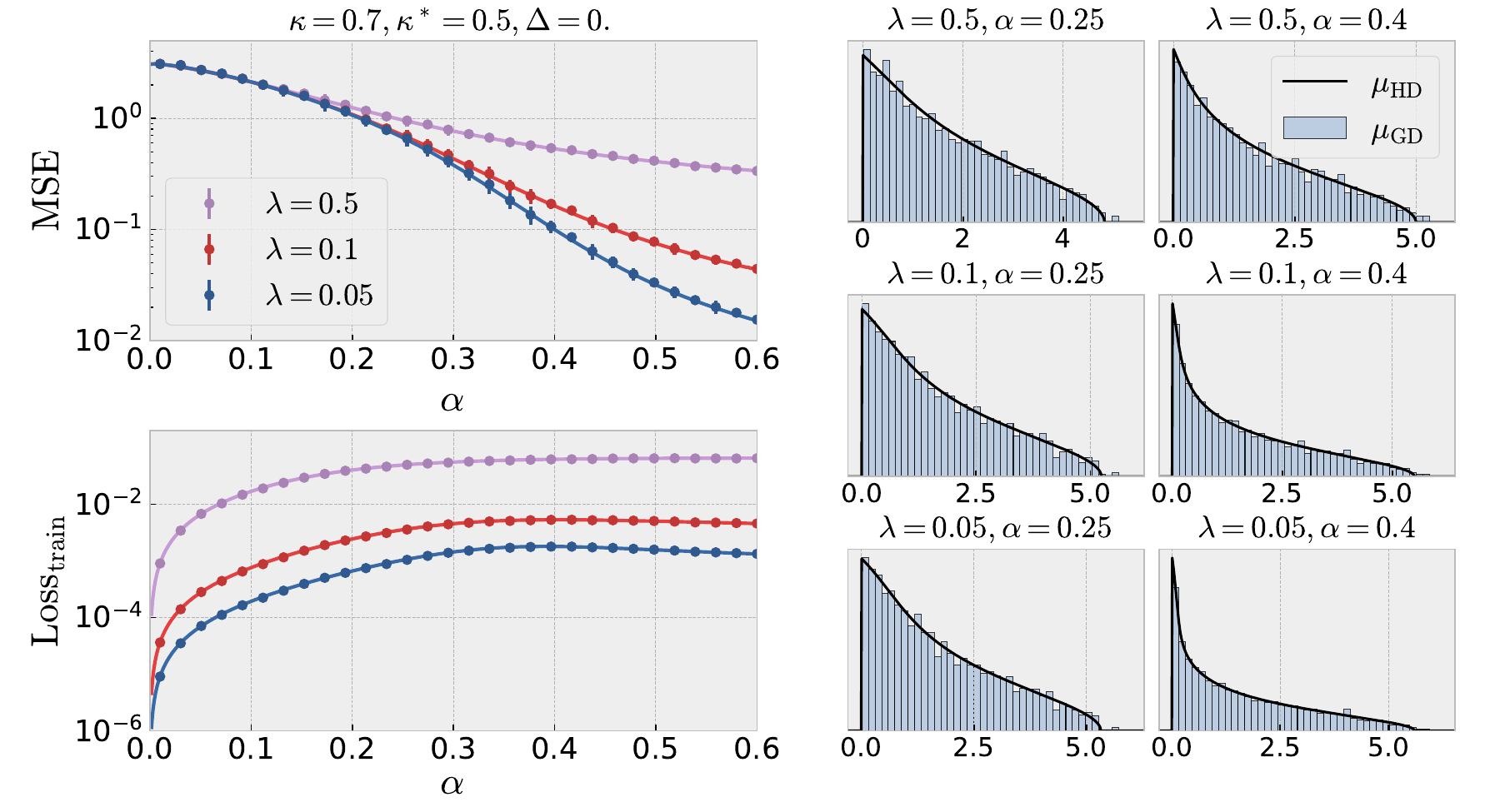}
    \vspace*{-0.6cm}
    \caption{Comparison between simulations of gradient descent, defined in equation \eqref{eq:GDdynamics}, and numerical integration of the system of equations~\eqref{eq:SystemResult2}, for $\kappa = 0.7, \kappa^* = 0.5$ and zero label noise. Gradient descent results are averaged over 10 (left) and 50 (right) realizations of the initialization, teacher and data. Left: MSE and empirical loss value as a function of $\alpha$, for different values of $\lambda$. Dots correspond to GD simulations and full lines to the solution of equations \eqref{eq:SystemResult2}. Right: eigenvalue distribution of $Z = WW^\top$ reached by GD (blue), restricted to its nonzero eigenvalues, and associated density computed with the asymptotic spectral distribution of the matrix in equation~\eqref{eq:ZinfinityReg} (black line) for three values of $\lambda$ and two values of $\alpha$.}
    \label{fig:FigReg1}
\end{figure}

\cref{Result2} is based on assumptions regarding the long-time behavior of the gradient flow dynamics: it is still an open question to prove rigorously that these assumptions are verified. However, as a result of a large number of numerical simulations, we believe that this result holds for any value in our set of parameters $\kappa, \kappa^*, \alpha, \lambda, \Delta$, as soon as the regularization strength remains positive. In addition to \cref{fig:FigReg1,fig:FigReg2} that compare the equations of \cref{Result2} with the results of gradient descent simulations and show excellent agreement, we provide additional numerical evidence in \cref{App:Subsubsec:SimusLearningCurves}. Further details on how to simulate the system of equations~\eqref{eq:SystemResult2} are given in \cref{App:Subsubsec:SimulationSystem}. 

Additionally, let us remark that the assumption we made on the dynamics may not be specific to the choice of our setting: $\ell_2$-regularization, Gaussian label noise and quadratic cost. In the general gradient flow case, the dynamics obtained in \cref{Result0} can also be approximated by a similar dynamics. This should lead to a comparable, although more complicated, set of equations as the one we present here. A sizable challenge would then be to validate numerically or theoretically these approximations in the general case.  

For completeness, and in a spirit of coherence with the results of \cref{subsec:DMFT}, in which the high-dimensional limit is first taken before the long-time limit, we also show that as soon as the dynamics is approximated by equation~\eqref{eq:DynamicsSimplified}, the set of equations we derive is the same no matter in which order the limits are taken. This result also ensures robustness regarding the behavior of the dynamics~\eqref{eq:DynamicsSimplified}: the only relevant timescale for the dynamics is the one that we study in this section. More details on this can be found in \cref{App:Subsec:ResponseInfiniteDim}.

\paragraph{Universality over the teacher's distribution.} 
Interestingly, it appears that the system of equations \eqref{eq:SystemResult2} holds no matter the choice of the teacher distribution, provided that its spectral density converges as $d \to \infty$. However, there are several settings of interest that this result does not include but could be potentially generalized to:
\begin{itemize}
    \item \textbf{Power-law teacher.} In the case where the spectrum of $Z^*$ exhibits a power-law behavior, one has to take into account finite-dimensional corrections to obtain a contribution from the large eigenvalues of the teacher. This generalization has successfully been applied by \citet{defilippis2025scaling} for the empirical risk minimization problem in the same setting as ours. 
    \item \textbf{Finite-width teacher.} Although we consider an extensive-width teacher ($m^* \sim \kappa^* d$), \cref{Result2} should remain valid when $m^*$ remains of order one, in which case the teacher's spectral distribution $\mu^*$ would collapse onto a Dirac mass at zero. As it has been shown by \citet{sarao2020optimization,bonnaire2025role} in the case $m^* = 1$, this setting only requires a number of observations proportional to $d$ (and not $d^2$ as in our case). Therefore, we conjecture that our setting is unable to capture the finite-width case. 
\end{itemize}

\subsubsection{Overparameterization and Global Optimality} \label{Subsubsec:Regions}

We shall now give some remarks on the impact of the overparameterization of the student network (which is controlled by the parameter $\kappa$) on the performance of the gradient flow estimator.

First of all, remark that our system of equations~\eqref{eq:SystemResult2} only depends on $\kappa$ through the threshold~$\omega$ that selects the $m$ largest positive eigenvalues of the noisy teacher $Z^* + \sqrt{\xi} \G$. Due to the dependence of equations~\eqref{eq:ResultLongTimesIntegral}, \eqref{eq:ResultLongTimesMSE} on $\omega$, it can be shown that for a given set of parameters $\kappa^*, \lambda, \alpha, \Delta$, there exists a value $\kappa_{\min}$ such that, as soon as $\kappa \geq \kappa_{\min}$, the solution of the system~\eqref{eq:SystemResult2} does not depend on $\kappa$ and is the same in the case $\kappa \geq 1$. In this region, gradient flow is able to converge to the global minimizer of the training loss over the set of PSD matrices. This solution has rank $\sim \kappa_{\min} d$, which can be expressed:
\begin{equation} \label{eq:kappamin}
    \kappa_{\min} = \int_q \d \mu_\xi(x).
\end{equation}
In this equation, the parameters $q$ and $\xi$ are obtained by solving equations~\eqref{eq:ResultLongTimesIntegral}, \eqref{eq:ResultLongTimesMSE} at $\kappa = 1$, for fixed values of the parameters $\alpha, \kappa^*, \lambda, \Delta$. Therefore, only a mild overparameterization ($\kappa \geq \kappa_{\min}$, compared to $\kappa \geq 1$) is necessary to reach the global minimizer of the regularized loss. This conclusion is non-trivial: for general functions of the quadratic form $WW^\top$, it is not guaranteed that the student matrix $W$ finds the global minimizer of the loss over all PSD matrices, even if it has a high-enough rank to recover it. We give more details on this result in \cref{App:Subsec:AnalysisEquations}.

On the other hand, for $\kappa \leq \kappa_{\min}$, the rank of the student is too small to recover the global minimizer (in terms of $Z = WW^\top$). Then, gradient flow seems to converge to a solution that depends on $\kappa$ (and has maximal rank). Although we cannot directly conclude that this solution corresponds to a global minimizer of the regularized loss (now expressed in terms of $W$), the analysis of the Langevin dynamics in \cref{Subsubsec:Langevin} in the low temperature regime suggests that this is indeed the case. We refer to this section for more details. 

\begin{figure}[ht]
    \centering
    \includegraphics[width=\linewidth]{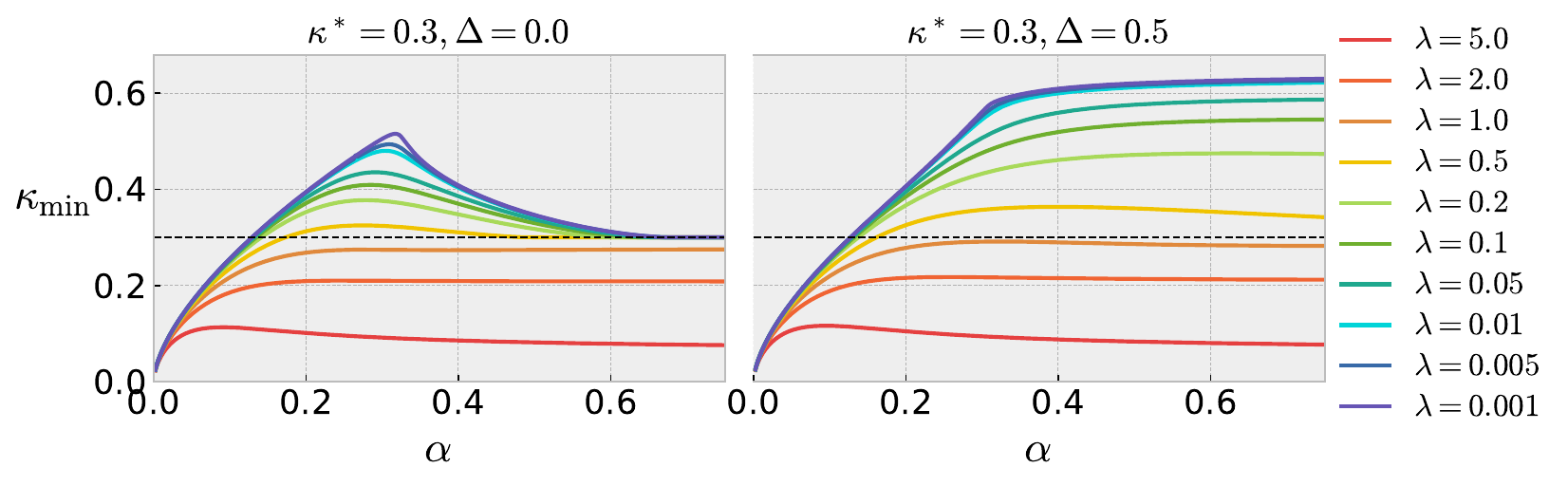}
    \vspace*{-0.7cm}
    \caption{Rank of the solution $\kappa_{\min}$ obtained with $\kappa = 1$, as a function of $\alpha$ and for different values of $\lambda$. $\kappa^* = 0.3$ (horizontal dashed line), $\Delta = 0$ (left) and $\Delta = 0.5$ (right). Curves obtained by simulating the system~\eqref{eq:SystemResult2} and using equation \eqref{eq:kappamin} to compute $\kappa_{\min}$. Above these curves, the solution found by gradient flow does not depend on $\kappa$.}
    \label{fig:FigRank}
\end{figure}

In \cref{fig:FigRank} we plot the threshold $\kappa_{\min}$ as a function of $\alpha$, for a wide range of $\lambda$, and with $\kappa^* = 0.3$. The first observation is that this function decreases when increasing $\lambda$: a stronger regularization leads to a lower-rank global minimizer. This behavior is not a surprise, since this minimizer is obtained by optimizing the regularized loss over all positive semidefinite matrices $Z$. In this case, the $\ell_2$-regularization on $W$ translates into a nuclear norm penalty for $Z = WW^\top$, which is known to favor low-rank solutions \citep{fazel2001rank, recht2010guaranteed}. In addition, in the presence of label noise (right panel), the rank of the global minimizer tends to increase, especially for larger values of $\alpha$: in this region, the model needs more degrees of freedom to compensate for the variability induced by the noise.

\paragraph{Link with empirical risk minimization.} \citet{erba2025nuclear} studied the global minimizer of the regularized empirical loss, in the case $\kappa \geq 1$. In this regime, it is known that the gradient flow always converges to the global minimizer of the loss (over all PSD matrices $Z = WW^\top$). In \cref{App:Subsec:ERM}, we show that the system of equations we derive in \cref{Result2} matches theirs. This agreement, although expected, is of interest as the two sets of equations were derived using different approaches. In their work, the result follows from an exact analysis of the fixed point equations associated with an approximate message passing (AMP) iteration. In addition, the authors provide a study of the stability of the AMP fixed point and derive a condition that coincides with ours for the steady-state assumption. We show that whenever $\kappa \geq 1$, the stability condition is met. 

\subsubsection{Stability of the Steady-State Solution} \label{Subsubsec:Stability}

In \cref{App:Stability}, we provide theoretical insights to assess whether the steady-state assumption holds. To do so, a common approach is to study the response operator (also known as the susceptibility) associated with a perturbation of the steady-state solution \citep[see for instance][]{mezard1987spin}. This operator characterizes the robustness and convergence of the dynamics under a perturbation, which is particularly relevant since the steady-state dynamics~\eqref{eq:DynamicsSimplified} was derived in a perturbative way from the high-dimensional system in \cref{Result1}. However, this approach only covers the steady-state dynamics, not its stability with respect to the high-dimensional system of equations of \cref{Result1}. 

In our case, the susceptibility operator is defined as:
\begin{equation}
    \mathcal{X} = \left. \frac{\partial Z_\infty}{\partial H} \right|_{H = 0}, \hspace{1.5cm} Z_\infty = \Big( Z^* + \sqrt{\xi} \G - q I_d + H \Big)_{(m)}^+. 
\end{equation} 
$Z_\infty$ is the limit of the steady-state dynamics obtained under a perturbation $H$, and the susceptibility can simply be interpreted as the differential of the map $H \in \mathcal{S}_d(\R) \mapsto Z_\infty \in \mathcal{S}_d(\R)$. In \cref{App:Subsec:Susceptibility}, we analyze both the spectrum and the normalized Frobenius norm of $\mathcal{X}$, that allows to investigate stability with respect to average and worst-case perturbations. Overall stability is guaranteed as soon as the spectrum of $\mathcal{X}$ (or its Frobenius norm) remains bounded. We derive the following results:
\begin{itemize}
    \item In the region where $\kappa \geq \kappa_{\min}$, the spectrum of the susceptibility remains in $[0,1]$, and it can be shown that the steady-state dynamics remains stable both at finite and infinite dimension.
    \item When $\kappa \leq \kappa_{\min}$, there exists a small proportion of unstable modes, with susceptibility eigenvalues diverging with the dimension. Overall, this leads to an average susceptibility of order $\log d$. This suggests that at finite $d$, the steady-state dynamics remains stable, but with a potential instability occurring in the high-dimensional limit. 
\end{itemize}
However, this does not imply that the earlier approximation fails. Stability should instead be evaluated with respect to the original high-dimensional dynamical equations, rather than the reduced steady-state equations alone. A more accurate analysis would require treating perturbatively the system of \cref{Result1}, and linearize the dynamics around the steady-state solution. In \cref{App:Subsec:DynamicalStability}, we explain how such a calculation can be carried out. In this setting, the specific structure of the high-dimensional perturbation may regularize the unstable modes observed in the steady-state solution. 

Based on extensive numerical simulations (see \cref{App:Subsec:AdditionalExperiments}), we believe that this is indeed what happens in practice. This conclusion is supported by systematic comparisons between the theoretical predictions and empirical learning curves for averaged quantities, such as MSE and training loss, as well as eigenvalue distributions of the gradient flow predictors. Across a wide range of parameter values, we observe an excellent agreement. 

To support this claim, we compare in \cref{fig:FigReg2} the behavior of gradient descent at convergence with the numerical integration of the system~\eqref{eq:SystemResult2}. The choice of $\kappa = 0.35$ and $\kappa^* = 0.3$ ensures that most of the values of $\alpha$ fall in the regime $\kappa \leq \kappa_{\min}$ (this can be checked in \cref{fig:FigRank}). As shown in \cref{fig:FigReg2}, the agreement between theory and simulations is excellent, both for averaged quantities (MSE and empirical loss, left panel) and spectral distribution (right panel). In addition, for large values of $\alpha$, the spectral distribution of the student matrix develops a spike away from zero. This effect can be understood from equation~\eqref{eq:ZinfinityReg}: when $\xi$ is small (corresponding to large $\alpha$ and near-zero MSE), the spectrum of $Z^* + \sqrt{\xi} \G$ splits into two bulks. Since $\kappa$ is slightly larger than $\kappa^*$, the student matrix recovers the bulk associated with $Z^*$, along with a small fraction of the second, corresponding to a Gaussian matrix with small variance, producing the observed spike. 

\begin{figure}[ht]
    \centering
    \includegraphics[width=\linewidth]{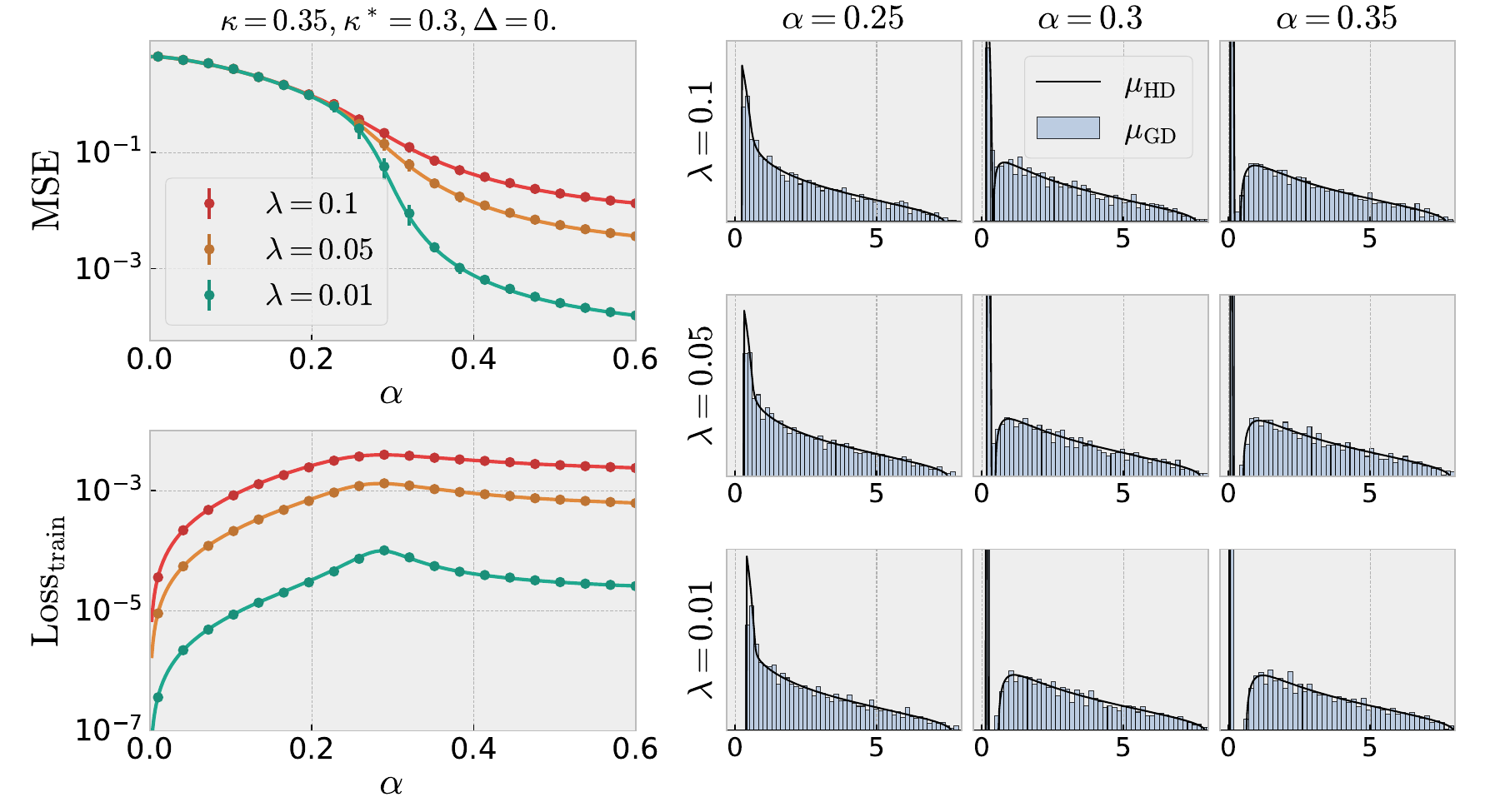}
    \vspace*{-0.7cm}
    \caption{Comparison between simulations of gradient descent, defined in equation \eqref{eq:GDdynamics}, and numerical integration of the system of equations~\eqref{eq:SystemResult2}, for $\kappa = 0.35, \kappa^* = 0.3$ and zero label noise. Gradient descent results are averaged over 10 (left) and 50 (right) realizations of the initialization, teacher and data. Left: MSE and empirical loss value as a function of $\alpha$ for different values of $\lambda$. Dots correspond to GD simulations and full lines to the solution of the system~\eqref{eq:SystemResult2}. Right: eigenvalue distribution of $Z = WW^\top$ reached by GD (blue), restricted to its nonzero eigenvalues, and associated density computed with the asymptotic spectral distribution of the matrix in equation~\eqref{eq:ZinfinityReg} (black line) for three values of $\lambda$ and $\alpha$.}
    \label{fig:FigReg2}
\end{figure}

Finally, a complementary approach to assess the validity of the steady-state approximation is to check \textit{a posteriori} whether the assumptions made in \cref{Subsubsec:SteadyState} are satisfied. In \cref{App:Subsec:SteadyState}, we qualitatively relate \cref{Assumption3} to the fast convergence of the matrix $Z(t)$. This motivates the study of the convergence rates associated with the steady-state solution. In finite dimension, we show in \cref{App:Subsubsec:FiniteDimRates} that the convergence is exponentially fast, but that there exists a few directions with a relaxation time diverging with the dimension. As a consequence, in \cref{App:Subsubsec:InfiniteDimRates}, we study these convergence rates after taking the high-dimensional limit and show that in this case they are degraded into a power-law decay. More precisely, we show that:
\begin{equation}
    \lim_{d \to \infty} \frac{1}{d} \big\| Z(t) - Z_\infty \big\|_F^2 \underset{t \to \infty}{=} \left\{ \begin{array}{cc}
        \Theta(t^{-3}), & \text{if} \ \kappa > \kappa_{\min}, \\
        \Theta(t^{-1}), & \text{if} \ \kappa < \kappa_{\min}.
    \end{array} \right. 
\end{equation}
In the underparameterized region, the convergence is much slower, hence unveiling the existence of a new dynamical regime. To the best of our knowledge, these asymptotics were not known before and come as new instances of scaling laws for optimization dynamics. 

\paragraph{Beyond the steady-state ansatz.} In a complementary approach to investigate the validity of our assumptions, we propose in \cref{App:Aging} a more general approximation of the dynamics, which is often referred to as \textit{aging} in the statistical physics literature \citep[see for instance][]{cugliandolo1993analytical, arous2001aging, sarao2019afraid, altieri2020dynamical}. The idea is to decompose the dynamics between a steady-state part and another regime which is very slow. Assuming a separation of timescales as well as a quasi-static equilibrium for the slow dynamics, we derive a more general set of self-consistent equations than the one in \cref{Result2}. However, these equations involve a matrix-valued distribution whose analysis in the high-dimensional limit is not tractable in general. While we do not analyze these equations further, the excellent agreement between our numerical simulations and the steady-state solution suggests that this more general solution coincides with the steady-state one (physically, one would say that aging is absent). Making this identification explicit from the aging equations is left for future work. 

\subsubsection{Overfitting and Double Descent} \label{Subsubsec:Overfitting}

Several numerical simulations suggest the presence of overfitting during the dynamics, that is, a positive gap:
\begin{equation}
    \delta_\text{MSE} = \mathrm{MSE}_\infty - \inf_{t \geq 0} \mathrm{MSE}(t).
\end{equation}
Our simulations of the gradient descent algorithm \eqref{eq:GDdynamics} show that this phenomenon already appears in the noiseless setting but is amplified in the presence of label noise ($\Delta > 0$). \cref{fig:FigOverfitting1} features the MSE as a function of time for two different label noises $\Delta > 0$, and for different values of $\alpha$, revealing the overfitting phenomenon. It shows that for small values of~$\alpha$, when the student learns with few data, the model exhibits mild overfitting. As the sample complexity increases, the gap between the final MSE and its time-optimal value grows, revealing a progressively stronger overfitting.

\begin{figure}[ht]
    \centering
    \includegraphics[width=\linewidth]{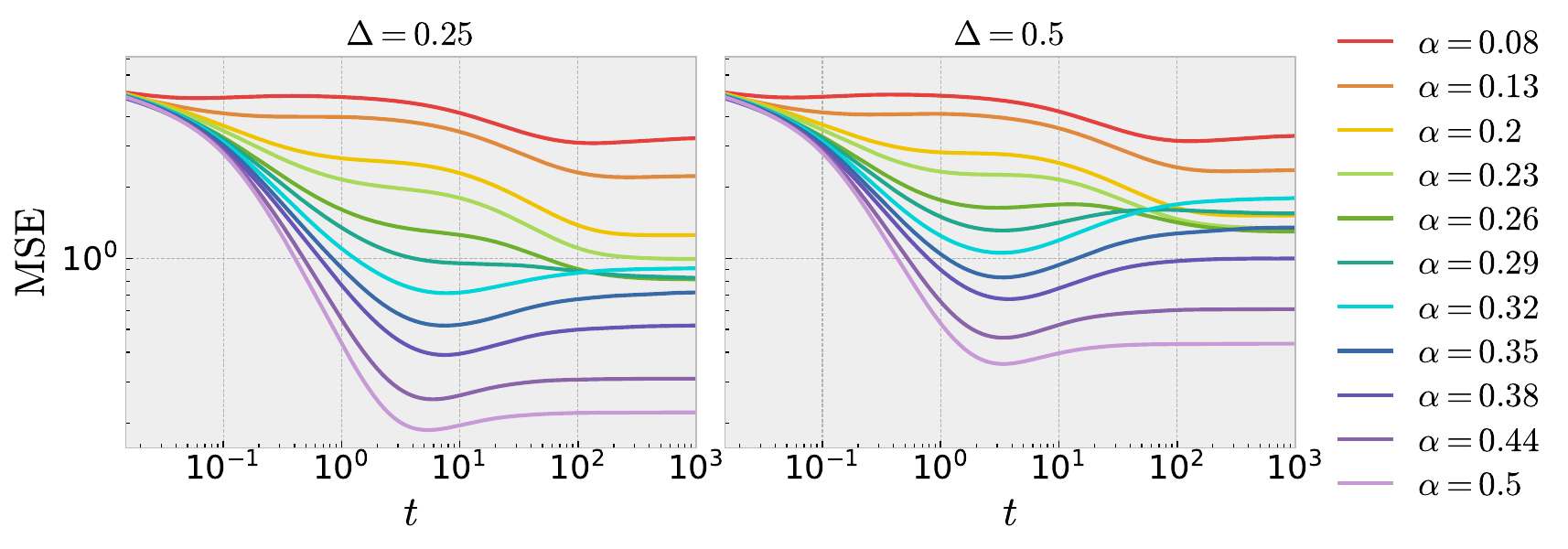}
    \vspace*{-0.7cm}
    \caption{MSE as a function of time for gradient descent trajectories (see equation \ref{eq:GDdynamics}) averaged over 10 realizations of the initialization, teacher and data. Parameters $\kappa = 0.4, \kappa^* = 0.3, \lambda = 0.01$, $\Delta = 0.25$ (left) and $0.5$ (right). Overfitting is characterized by portions where the MSE increases with time.}
    \label{fig:FigOverfitting1}
\end{figure}

In addition, for large values of $\Delta$ (right panel in \cref{fig:FigOverfitting1}), the monotonicity of the MSE with $\alpha$ breaks. This is characteristic of the double descent phenomenon \citep{belkin2019reconciling,nakkiran2021deep}: as the number of observations increases (up to a certain point), the estimator fits all the data points, leading to poor generalization. For larger values of $\alpha$, fitting is not possible anymore, and the student starts to learn the latent structure of the labels. In this double descent regime, \cref{fig:FigOverfitting2} features the dependence of the MSE on $\alpha$ for different values of label noise $\Delta$ (left panel) and regularization strength $\lambda$ (right panel). While such double descent curves are often plotted as a function of the number of parameters, we use here the sample complexity $\alpha$ to remain consistent with the rest of the paper. The figure also highlights the interpolation peak in the limit $\lambda \to 0^+$ (vertical dashed line), that we derive in \cref{subsec:SmallReg}.

Interestingly, overfitting is present even when regularizing the dynamics, but is reduced when increasing the regularization strength $\lambda$, within the range of values considered in \cref{fig:FigOverfitting2}, right panel. This observation is consistent with known results in linear problems \citep{krogh1991simple, nakkiran2020optimal, mei2022generalization} and modern neural networks \citep{nakkiran2021deep, zhang2019three, d2024we}.

\begin{figure}[ht]
    \centering
    \includegraphics[width=\linewidth]{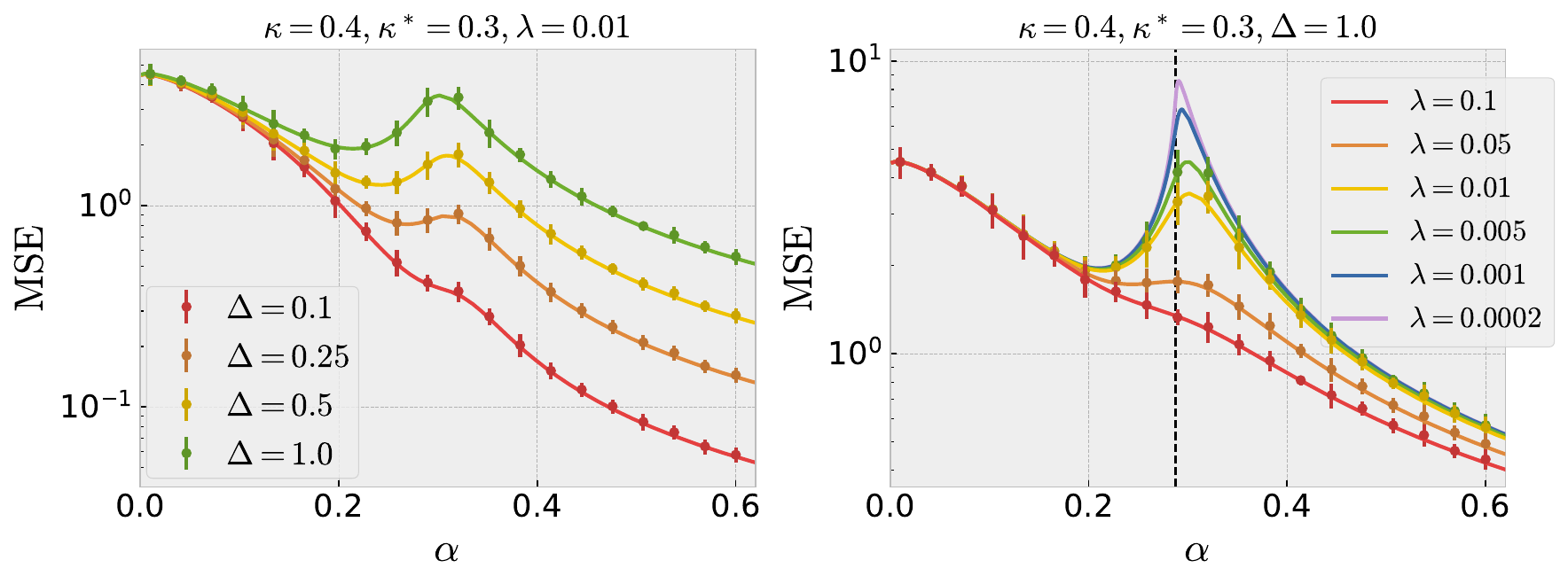}
    \vspace*{-0.6cm}
    \caption{MSE as a function of $\alpha$ for different values of label noise $\Delta$ (left) and regularization strength $\lambda$ (right). Dots: simulations of gradient descent, defined in equation \eqref{eq:GDdynamics}, averaged over 10 realizations of the initialization, teacher and data. Full lines: numerical integration of the system of equations~\eqref{eq:SystemResult2}. Vertical dashed line: interpolation threshold (see \cref{Subsubsec:InterpolationThreshold}).}
    \label{fig:FigOverfitting2}
\end{figure}

Unfortunately, we are not able to precisely characterize the parameter regime in which double descent occurs. For instance, the left panel of \cref{fig:FigOverfitting2} shows that small values of $\Delta$ do not lead to this phenomenon. This suggests that the emergence of double descent may depend on a $\Delta$-dependent scale in the regularization strength $\lambda$. However, the system of equations given in \cref{Result2} matches the empirical curves almost exactly, indicating that double descent is already encoded in the simplified dynamics of equation \eqref{eq:DynamicsSimplified}. Therefore, a more quantitative understanding of overfitting and double descent could, in principle, be obtained by analyzing the generalization properties of these denoising dynamics. 

Overall, our results provide a characterization of double descent in a genuinely nonlinear model. Previous theoretical studies have analyzed this phenomenon in linear regression and least-squares settings \citep{nakkiran2020optimal, belkin2020two, wu2020optimal, derezinski2020exact, hastie2022surprises, bach2024high} and in random feature models \citep{d2020double, d2020triple, gerace2020generalisation, adlam2020neural, adlam2020understanding, mei2022generalization}. Closest to our setting is the recent work of \citet{erba2025nuclear}, who study overparameterized quadratic networks and also observe a double descent phenomenon. However, their analysis builds on a convex relaxation, which allows the problem to be treated within a linear setting.
Our work instead goes beyond linearized models and provides, to the best of our knowledge, the first explicit dynamical characterization of double descent in a nonlinear high-dimensional model.

To conclude this part, let us compare our results with the recent and closely related work of \citet{montanari2025dynamical}, who report that learning and overfitting take place on two different timescales. In
our case, both phenomena arise at the same timescale. A key distinction lies in the choice of activation function. We consider a centered quadratic activation with information exponent two, whereas their analysis focuses on activations with information exponent one. As a consequence, this difference leads the two models to exhibit different learning dynamics. Additional differences include the teacher structure (rank one in their case versus extensive in our setting) and the use of a sequential extensive-width limit, where the number of hidden units is taken to infinity after the dimension and sample size.

\subsubsection{Population Limit}

Let us now consider the population limit, i.e., the regime $\alpha \to \infty$, corresponding to a number of observations $n \gg d^2$. In this case, we can derive the associated limit of the system of equations in \cref{Result2}.

\begin{proposition} \label{Prop:PopulationLimit}
    Consider the case $\kappa \geq \min(\kappa^*, 1)$ and the variables $q, \xi$, solutions of the system of equations~\eqref{eq:SystemResult2}. Then, as $\alpha \to \infty$, we have $q \to \lambda$ and $\xi = \Theta(\alpha^{-1})$. The limit of the gradient flow is then given by:
    \begin{equation}
        Z_\infty = \big( Z^* - \lambda I_d \big)^+,
    \end{equation}
    and the MSE and the training loss write:
    \begin{equation}
        \mathrm{MSE} = \int \min(x, \lambda)^2 \d \mu^*(x), \hspace{1.5cm} \mathrm{Loss}_\mathrm{train} = \frac{1}{2} \mathrm{MSE} + \frac{\Delta}{4}.  
    \end{equation}
\end{proposition}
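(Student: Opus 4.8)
The plan is to take the $\alpha \to \infty$ limit directly in the system of equations \eqref{eq:SystemResult2}, tracking the scaling of $\xi$ and $q$. First I would guess the correct scaling: since $\xi = \frac{1}{2\alpha}(\mathrm{MSE} + \Delta/2)$ from \eqref{eq:SCxi}, and we expect $\mathrm{MSE}$ to remain bounded, it is natural to posit $\xi = \Theta(\alpha^{-1})$, hence $\xi \to 0$. With $\xi \to 0$, the semicircular component of $\mu_\xi$ collapses and the free additive convolution degenerates: $\mu_\xi \to \mu^*$ weakly, and the Hilbert transform $h_\xi$ stays controlled on the bulk of $\mu^*$. Then I would analyze equation \eqref{eq:ResultLongTimesIntegral}: the integral term $\frac{1}{\alpha}\int_{\max(q,\omega)}(x-q)h_\xi(x)\,\d\mu_\xi(x)$ vanishes as $\alpha \to \infty$ provided the integrand does not blow up, leaving $1 = \lambda/q$, i.e. $q \to \lambda$. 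One must check that $\omega$, defined by $\min(\kappa,1) = \int_\omega \d\mu_\xi(x)$, converges to the corresponding quantile of $\mu^*$; under the hypothesis $\kappa \geq \min(\kappa^*,1)$ this quantile is $0$ (or below the bottom of $\nu^*$), so $\omega \to 0$ and $\max(q,\omega) \to \lambda$ since $\lambda > 0$. This is the step where the assumption $\kappa \geq \min(\kappa^*,1)$ is used: it guarantees the student has enough rank to absorb all the informative eigenvalues of $Z^*$, so the threshold $\omega$ does not interfere with $q$.

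Next I would substitute $q \to \lambda$, $\xi \to 0$, $\mu_\xi \to \mu^*$ into equation \eqref{eq:ResultLongTimesMSE}. The right-hand side becomes $Q_* + \int_{\lambda}(\lambda^2 - x^2)\,\d\mu^*(x)$, the last term vanishing because of the explicit $\xi$ prefactor. Using $Q_* = \int x^2 \d\mu^*(x)$, this simplifies: splitting the integral over $\{x \le \lambda\}$ and $\{x > \lambda\}$ and recombining, one gets $\int_{x\le\lambda} x^2\,\d\mu^*(x) + \int_{x>\lambda}\lambda^2\,\d\mu^*(x) = \int \min(x,\lambda)^2\,\d\mu^*(x)$. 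Since the left-hand side of \eqref{eq:ResultLongTimesMSE} equals $2\alpha\xi - \Delta/2 = \mathrm{MSE}$ by \eqref{eq:LossValue}, this gives the claimed MSE formula and simultaneously confirms, via $\xi = \frac{1}{2\alpha}(\mathrm{MSE}+\Delta/2)$, that $\xi = \Theta(\alpha^{-1})$ with the right constant, closing the self-consistency. For the estimator, equation \eqref{eq:ZinfinityReg} reads $Z_\infty = (Z^* + \sqrt{\xi}\,\G - qI_d)^+_{(m)}$; as $\sqrt{\xi}\,\G \to 0$ in operator norm (its norm is $O(\sqrt{\xi})$), $q \to \lambda$, and since $m \sim \kappa d$ with $\kappa$ large enough that the rank-$m$ truncation is inactive on the support of $(Z^*-\lambda I_d)^+$, this converges to $(Z^* - \lambda I_d)^+$. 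Finally the training loss follows from \eqref{eq:LossValue}: $\mathrm{Loss}_\mathrm{train} = \lambda^2\alpha\xi/q^2 \to \lambda^2\alpha\xi/\lambda^2 = \alpha\xi = \frac{1}{2}(\mathrm{MSE}+\Delta/2) = \frac{1}{2}\mathrm{MSE} + \frac{\Delta}{4}$.

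The main obstacle I anticipate is making the vanishing of the integral terms rigorous, i.e. controlling $h_\xi$ and the convergence $\mu_\xi \to \mu^*$ uniformly enough. The Hilbert transform of a free convolution with a small semicircle can have delicate behavior near the edges of the support and near $q = \lambda$ if $\lambda$ sits inside or at the edge of the support of $\mu^*$; one needs that $\int_{\max(q,\omega)}(x-q)h_\xi(x)\,\d\mu_\xi(x)$ stays bounded as $\alpha \to \infty$ so that the $1/\alpha$ prefactor genuinely kills it, and similarly for the $4\xi\int(\cdots)h_\xi\,\d\mu_\xi$ term in \eqref{eq:ResultLongTimesMSE}. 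A clean way around this is to invoke the regularity of $\mu_\xi$ for $\xi > 0$ (the free convolution with a semicircle has a bounded density, so $h_\xi$ is bounded on compacts) together with dominated convergence, handling the possible edge singularity of $\mu^*$ at the boundary of $\mathrm{supp}(\nu^*)$ separately. A secondary subtlety is the degenerate case $\lambda \ge \sup \mathrm{supp}(\mu^*)$, where $(Z^* - \lambda I_d)^+ = 0$ and all formulas trivially hold, and the boundary case $\kappa = \min(\kappa^*,1)$, which should follow by continuity.
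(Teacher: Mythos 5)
Your proposal follows essentially the same route as the paper's own derivation in Appendix \ref{App:Subsubsec:PopulationLongTimes}: guess the scaling $\xi = \Theta(\alpha^{-1})$, $q \to \lambda$ from equation \eqref{eq:ResultLongTimesIntegral}, note that $\omega \to 0$ under $\kappa \geq \min(\kappa^*,1)$ so $\max(q,\omega) \to \lambda$, pass to the limit in equation \eqref{eq:ResultLongTimesMSE} using $\mu_\xi \to \mu^*$, and close via $\mathrm{MSE} = 2\alpha\xi - \Delta/2$ and $\mathrm{Loss}_\mathrm{train} = \lambda^2\alpha\xi/q^2$. The only cosmetic difference is that the paper derives the scaling as a necessary condition from finiteness of the MSE and loss rather than positing it and verifying self-consistency afterwards, and it delegates the limit $\lim_{\xi\to 0}\int_\lambda(\lambda^2 - x^2)\,\d\mu_\xi(x) = \int_\lambda(\lambda^2 - x^2)\,\d\mu^*(x)$ (and boundedness of the Hilbert-transform integral) to Lemma \ref{Lemma:SmallXiFunctions}, which is the rigorous version of the "dominated convergence plus regularity of the free convolution" step you correctly identify as the technical crux.
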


We prove this result in \cref{App:Subsubsec:PopulationLongTimes}. Interestingly, in the regime where the student has enough parameters to recover the teacher $\kappa \geq \min(\kappa^*, 1)$, these population equations do not depend on $\kappa$ anymore. In addition, since $\mu^*$ is by assumption supported on $\R^+$ (due to the fact that $Z^*$ is positive semidefinite), the MSE vanishes as $\lambda \to 0$: provided that the student has enough rank, it always recovers the teacher in this limit. This result is coherent with prior works on the population loss \citep{sarao2020optimization, martin2024impact}. 

Interestingly, one recovers the same result when taking the limit $n \to \infty$ in the finite-dimensional expression of the loss~\eqref{eq:Loss}, with our choice of quadratic cost, and Gaussian label noise. Indeed, we have:
\begin{equation}
    \lim_{n \to \infty} \L_n(W) = \frac{1}{2d} \big\| WW^\top - Z^* \big\|_F^2 + \frac{\Delta}{4}. 
\end{equation}
Then, one can study the gradient flow associated with this loss, leading to a study of the Oja flow (see \cref{Sec:OjaFlow} for relevant results on this dynamics). Finally, we show that taking the high-dimensional limit in this setting recovers \cref{Prop:PopulationLimit}, leading to the equivalence between the $n, d \to \infty$ limits taken jointly before taking $\alpha = n/d^2 \to \infty$, and the sequential limit $n \to \infty$ then $d \to \infty$. More details can be found in \cref{App:Subsec:Population}.

\subsubsection{Langevin Dynamics} \label{Subsubsec:Langevin}

While the previous results focused on the gradient flow setting, it is natural to ask whether the same analysis can be performed for Langevin dynamics (in the case $\beta < \infty$). In this setting, we are interested in the stationary measure of the stochastic dynamics \eqref{eq:DMFT1W}: due to the Brownian motion, the matrix $W(t)$ does not settle at long times but keeps fluctuating. 

As a consequence, \cref{Assumption3} is not suited for the study of Langevin dynamics, since it assumes that the Gaussian noise $\G(t)$ converges as $t \to \infty$. Instead, we rely on standard assumptions inspired by statistical physics and generalized Langevin equations: time-translational invariance and fluctuation--dissipation. 
\begin{itemize}
    \item \textbf{Time-translational invariance} (TTI) ensures that the correlation of the Gaussian process~$\G$ and the memory kernel $R$ (in equation \ref{eq:DMFT1W}) depend only on time differences. It reflects that the dynamics has reached a stationary regime in which statistical properties no longer drift with time. 
    \item \textbf{Fluctuation--dissipation relation} links the covariance of the Gaussian process $\G$ to the memory kernel $R$ in a physically consistent manner, enforcing a balance between random forcing and dissipation that enables relaxation toward equilibrium. 
\end{itemize}
Together, these conditions ensure the existence of a stationary structure for the dynamics and allow one to show that it is driven toward equilibrium. Such assumptions have been commonly used, and confirmed, in spin-glass models \citep{sompolinsky1982relaxational, altieri2020dynamical} and in high-dimensional learning problems \citep{chen2025learning, fan2025dynamical}. Our precise assumption can be found in \cref{Assumption4}. 

\begin{claim} \label{Result:Langevin}
    Consider the stochastic differential equation \eqref{eq:DMFT1W}, along with the self-consistent equations on the covariance of $\G$ in \eqref{eq:Covariance1G} and the memory kernel $R$ in \eqref{eq:AveragedQuantities}. Under \cref{Assumption4}, the stationary measure of \eqref{eq:DMFT1W} is given by:
    \begin{equation} \label{eq:ResultInvariantMeasure}
        \P_\beta(W) \propto \exp \left( -r \beta d \Big\| WW^\top - Z^* - \sqrt{\xi} \G \Big\|_F^2 -2 \beta d \, \Omega(W) \right),
    \end{equation}
    where $\G \sim \mathrm{GOE}(d)$ and the variables $r, \xi$ are self-consistently computed from $\P_\beta$:
    \begin{align}
        \xi &= \frac{1}{2\alpha} \left( \frac{1}{d} \E_{\G, Z^*} \Big\| \E_\beta \big[WW^\top \big] - Z^* \Big\|_F^2 + \frac{\Delta}{2} \right), \label{eq:XiLangevin} \\
        r &= \frac{\alpha}{\alpha + 2 \beta V_\beta}, \label{eq:rLangevin} \\
        V_\beta &= \frac{1}{d} \E_{\G, Z^*} \E_\beta \Big\| \E_\beta \big[WW^\top \big] - WW^\top \Big\|_F^2,
    \end{align}
    where $\E_\beta$ denotes the expectation with respect to $\P_\beta$. 
\end{claim}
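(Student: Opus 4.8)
The plan is to analyze the generalized Langevin equation~\eqref{eq:DMFT1W} in its stationary regime, impose the time-translational-invariance (TTI) and fluctuation--dissipation (FDT) structure of \cref{Assumption5}, reduce the memory dynamics to an effective Markovian overdamped Langevin flow, identify its Gibbs measure, and then close the self-consistent relations for $\xi$, $r$ and $V_\beta$ using the self-consistency equations~\eqref{eq:Covariance1G} and~\eqref{eq:AveragedQuantities}.

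First, under TTI the response kernel $R$ and all two-time correlators depend only on time differences, and I would split the memory drift $2\big[\int_0^t R(t-t')\big(\G(t')+Z^*-\overline Z(t')\big)\,\d t'\big]\overline W(t)$ of~\eqref{eq:DMFT1W} into three contributions. The piece multiplying $Z^*$ gives $2rZ^*\overline W(t)$, with $r=\int_0^\infty R(\tau)\,\d\tau$ the zero-frequency weight of the response. Using the equilibrium FDT relation between $R$ and the correlation function to integrate by parts the $\overline Z(t')$ piece turns it into an instantaneous friction $-2r\overline Z(t)\overline W(t)$, up to boundary and subleading terms that should vanish in the stationary $t\to\infty$ limit. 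The noise piece $\int_0^t R(t-t')\G(t')\,\d t'$ is, in this regime, dominated by the static (plateau) component of the $\G$-correlation in~\eqref{eq:Covariance1G}; collecting it with the weight $r$ produces a quenched $\mathrm{GOE}(d)$ matrix $\sqrt{\xi}\,\G$, with $\xi$ the corresponding static weight, while the remaining fast part of the colored noise together with the thermal forcing recombine, again via FDT, into an effective white noise of amplitude $\propto 1/\sqrt{\beta d}$. What remains is the Markovian flow
\begin{equation*}
\dot{\overline W}(t) = -2r\big(\overline W(t)\overline W(t)^\top - Z^* - \sqrt{\xi}\,\G\big)\overline W(t) - \nabla\Omega\big(\overline W(t)\big) + \big(\text{white noise of amplitude }\propto 1/\sqrt{\beta d}\big),
\end{equation*}
that is, the regularized denoising gradient flow~\eqref{eq:DynamicsSimplified} (with $r$ in place of $r_\infty$ and general $\Omega$) supplemented with thermal noise.

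This reduced dynamics is the overdamped Langevin dynamics of the energy $U(W)=rd\,\|WW^\top-Z^*-\sqrt{\xi}\,\G\|_F^2+2d\,\Omega(W)$ at inverse temperature $\beta$ --- the regularization passing through unchanged while the empirical loss is replaced by the effective denoising energy produced by the memory reduction, the normalization being fixed by matching the noise amplitude to the friction. Its unique invariant measure is $\propto e^{-\beta U}$, which is exactly~\eqref{eq:ResultInvariantMeasure}; equivalently, one may posit~\eqref{eq:ResultInvariantMeasure} and verify stationarity directly from the associated Fokker--Planck equation. To close the system I would then evaluate the order parameters on $\P_\beta$: in the stationary regime and at large time separation, the combination $C_Z(t,t')-m_Z(t)-m_Z(t')+Q_*$ in~\eqref{eq:Covariance1G} equals $\frac{1}{d}\,\E_{\G,Z^*}\|\E_\beta[WW^\top]-Z^*\|_F^2$, since $\overline Z(t)$ and $\overline Z(t')$ decorrelate into independent samples from $\P_\beta$, which yields~\eqref{eq:XiLangevin}; and integrating the response relation for $R$ in~\eqref{eq:AveragedQuantities} over $t'$ and using FDT to express the integrated response through the within-state fluctuation $V_\beta=\frac{1}{d}\,\E_{\G,Z^*}\E_\beta\|\E_\beta[WW^\top]-WW^\top\|_F^2$ (the gap between the equal-time and plateau values of $C_Z$) gives a linear fixed-point equation for $r$ whose solution is~\eqref{eq:rLangevin}. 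Letting $\beta\to\infty$ then recovers $r\to r_\infty$ and a measure concentrating on the minimizers of $U$, consistent with \cref{Result2}.

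The main obstacle, beyond the fact that TTI and FDT are themselves the unproven physical content of \cref{Assumption5}, is the noise reduction in the first step: identifying the plateau (zero-frequency) decomposition of $\int_0^t R(t-t')\G(t')\,\d t'$, controlling the cross terms between its static and fast components, and justifying that the memory kernel relaxes on a timescale fast compared with the equilibration of $\overline W$. The FDT-based integration by parts likewise requires discarding boundary terms at $t'=0$ and $t'=t$, which I expect to vanish in the stationary limit but which a rigorous argument would have to control. A fully rigorous treatment of these points is left open, consistently with the status of the statement as a claim.
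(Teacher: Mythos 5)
Your proposal reaches the correct stationary measure and self-consistent equations, and it correctly identifies the role of TTI, FDT, the decomposition of $\G$ into a static plateau plus a fast part, and the matching of the long-time combination $C_Z(t,t')-m_Z(t)-m_Z(t')+Q_*$ to $\frac{1}{d}\E_{\G,Z^*}\|\E_\beta[WW^\top]-Z^*\|_F^2$. However, the route you take — a Markovian \emph{reduction} of the generalized Langevin equation — is genuinely different from, and weaker than, the paper's argument, which uses a Markovian \emph{dilation}.

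Concretely, the paper embeds the memory dynamics into a larger Markovian system by introducing auxiliary Ornstein--Uhlenbeck matrices $\Q_1,\dots,\Q_K$ coupled to $W$ through the quadratic potential
\begin{equation*}
U_\text{coupling}\big(W,\{\Q_k\}\big) = \Big\|WW^\top - A - \textstyle\sum_k g_k\Q_k\Big\|_F^2 + \textstyle\sum_k \theta_k\|\Q_k\|_F^2 + 2\,\Omega(W).
\end{equation*}
The joint dynamics is a plain gradient Langevin flow, whose invariant measure is the Boltzmann--Gibbs distribution $\propto e^{-\beta d\, U_\text{coupling}}$; integrating out the $\Q_k$ (via Sherman--Morrison) produces exactly $\P_\beta(W)$ with $r=\frac{1}{1+4\beta C(0)}$ and, simultaneously, generates the memory kernel and colored noise with the required covariance and FDT structure. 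No approximation or reduction is needed: FDT emerges from the choice of initial distribution for $\Q_k(0)$, and the stationary measure is exact for the joint system. Assumption~5 is used only to argue that the actual DMFT kernels are asymptotically of the form that the dilation can reproduce.

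Your reduction, by contrast, has two genuine gaps. First, the step ``using the equilibrium FDT relation between $R$ and the correlation function to integrate by parts the $\overline Z(t')$ piece turns it into an instantaneous friction $-2r\,\overline Z(t)\overline W(t)$'' is not supported: the FDT relation \eqref{eq:FDRGeneral} ties $R$ to the \emph{noise} covariance $C$, not to the autocorrelation of $\overline Z$, and a replacement $\int_0^t R(t-t')\overline Z(t')\d t'\approx r\,\overline Z(t)$ is the steady-state/short-memory approximation of \cref{Assumption3}, not a consequence of \cref{Assumption5}. Second, the memory here has the nonstandard mixed form $\int_0^t R(t-t')\big(\cdots - Z(t')\big)\d t'\cdot W(t)$ (the kernel acts on $Z(t')$ but multiplies $W(t)$), which does not fit the standard GLE paradigm where friction and colored noise are jointly eliminated to give an effective Markovian equation with a rescaled temperature; your assertion that the fast part of the colored noise ``recombines into an effective white noise of amplitude $\propto 1/\sqrt{\beta d}$'' cannot be obtained by a naive reduction (it would generically increase both the effective friction and the effective noise amplitude, not leave the latter unchanged). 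The dilation sidesteps both issues, because the auxiliary Brownian motions $\Xi_k$ are already of amplitude $1/\sqrt{\beta d}$ and the potential structure enforces the correct invariant measure without any time-scale separation argument. The self-consistency arguments for $\xi$, $r$, and $V_\beta$ are fine, though the paper derives the integrated response $r=\frac{1}{1+4\beta C_\text{TTI}(0)}$ directly by differentiating $\P_\beta$ with respect to a constant perturbation $H$ and then notes consistency with FDT, whereas you go the other way.
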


This stationary measure is computed in \cref{App:Subsec:StationaryMeasure}. The main technical tool involved is the mapping of the dynamics \eqref{eq:DMFT1W} onto a coupling with a Gaussian auxiliary matrix process that allows to use standard results for the stationary measure of Langevin dynamics. The self-consistent equations for the variables $\xi, r$ are a direct consequence of the self-consistency of the covariance of $\G(t)$ and $R(t,t')$ in \cref{Result1}. These equations are derived in \cref{App:Subsubsec:SelfConsistentLangevin}.

It is interesting to note that this result exhibits strong similarities with the one in the gradient flow setting. First of all, the expression of the noise variance $\xi$ in equation \eqref{eq:XiLangevin} is almost the same as in equation \eqref{eq:SCxi}. The only difference is that at positive temperature, the Langevin predictor is stochastic and in this case, the MSE is computed as the one of the mean of $WW^\top$ under $\P_\beta$. In addition, the variable $r$ is defined the same way as $r_\infty$ in equation \eqref{eq:SCr}, and it precisely plays the same role as in the approximate dynamics \eqref{eq:DynamicsSimplified}. 

Additionally, we derive in \cref{App:Subsec:LabelLangevin} the stationary measure of the typical label, whose dynamics is given in equation \eqref{eq:DMFT1Y}. As remarked, the typical label remains Gaussian at all times. In the long-time limit, we obtain self-consistent expressions of its mean and covariance, summarized in \cref{App:Subsubsec:LabelLangevinSummary}. Together with the result of \cref{Result:Langevin}, this provides a set of self-consistent equations that allows to compute averaged quantities of the Langevin dynamics in the long-time limit. 

The reasoning carried out in \cref{Subsubsec:SteadyState} allowed to obtain the set of low-dimensional equations of \cref{Result2}, through the use of random matrix theory. Even though the set of equations of \cref{Result:Langevin} is still high-dimensional, we believe that it could be reduced to a system of scalar equations, as it was done by \citet{maillard2024bayes} in the Bayes-optimal setting. This would require to understand the distribution $\P_\beta$, and imply the use of results on HCIZ integrals \citep{harish1957differential, itzykson1980planar,guionnet2002large}. Two cases can be directly analyzed already. 

\paragraph{Zero-temperature limit.} When considering $\ell_2$-regularization $\Omega(W) = \lambda \tr\big( WW^\top \big)$ and taking the $\beta \to \infty$ limit in \cref{Result:Langevin}, we show in \cref{App:Subsec:ZeroTemperature} that we recover the same result as in \cref{Subsubsec:EquationsLongTimes}. Interestingly, as it is known that the stationary measure of Langevin dynamics concentrates on the set of the global minimizers of its associated potential, this implies that the gradient flow dynamics converges to a global minimizer of the regularized empirical loss. While we already claimed this was the case for large values of $\kappa$ in \cref{Subsubsec:Regions}, this general conclusion was still an open question. 

This claim is valid under the assumptions of the section. Indeed, in both the gradient flow and Langevin settings, these assumptions allowed to interpret the high-dimensional dynamics as an effective gradient system with the potential:
\begin{equation}
    U_\text{eff}(W) = \frac{r}{2d} \Big\| WW^\top - Z^* - \sqrt{\xi} \G \Big\|_F^2 + \frac{\lambda}{d} \tr \big( WW^\top \big). 
\end{equation}
As we show in \cref{Subsec:OjaKnownResults}, all local minimizers of this potential are global. Therefore, in this setting, it is no surprise to observe the match between gradient flow dynamics at long times and the zero-temperature limit of the stationary measure of Langevin dynamics.  

\paragraph{Bayes-optimal learning.} In addition, following an appropriate choice of the inverse temperature $\beta$ in the Langevin dynamics, we show that the stationary measure of \cref{Result:Langevin} matches with the Bayes-optimal posterior distribution derived by \citet{maillard2024bayes} for the same problem. This establishes a direct connection between the dynamical formulation considered here and the Bayes-optimal analysis. In addition, it reveals the equivalence between the replica computation performed by \citet{maillard2024bayes} under the replica-symmetric ansatz, and the dynamical mean-field analysis presented here in the TTI regime. 

\subsection{Small Regularization Limit} \label{subsec:SmallReg}

In this section we study the small regularization limit ($\lambda \to 0^+$) of the system of equations given in \cref{Result2}. Note that we do not expect the associated estimator to coincide with the one obtained when running gradient flow on the unregularized empirical loss. Indeed, in the following, we first consider the long-time $(t \to \infty)$ limit before studying the small regularization limit. 

The results derived in this section share several similarities with those of \citet{erba2025nuclear}. As mentioned earlier, when $\kappa \geq 1$, the system of equations presented in \cref{Result2} coincides exactly with theirs. However, our equations (and the conclusions we draw from them) remain valid for all values of $\kappa$, including the underparameterized regime $\kappa < 1$. It is precisely in this regime that the system~\eqref{eq:SystemResult2} depends explicitly on $\kappa$. This dependence allows us to study how this parameter, and thus the amount of overparameterization, affects the performance of the gradient flow estimator.

In the following, we will focus on the case $\kappa \geq \min(\kappa^*, 1)$, when the student possesses enough parameters to be able to recover the teacher. In particular, this setting will allow us to study perfect recovery in the small regularization limit (when $\Delta = 0$).

To capture the dependence on the teacher's effective rank $\kappa^*$, we recall \cref{Assumption1:Teacher} and decompose the teacher's asymptotic spectral measure $\mu^*$ as:
\begin{equation}
    \mu^* = (1 - \min(\kappa^*, 1)) \delta + \min(\kappa^*, 1) \nu^*,
\end{equation}
where $\nu^*$ is a probability measure with support on $\R^+$ that we assume in addition to be bounded away from zero. Some of our proofs will also require $\nu^*$ to admit a smooth density, but we believe that our results can be easily extended to more general settings. 

\subsubsection{Interpolation Threshold} \label{Subsubsec:InterpolationThreshold}

In a similar fashion as in the work of \citet{erba2025nuclear}, we are able to derive an interpolation threshold $\alpha_\text{inter}(\kappa, \kappa^*, \Delta)$ such that:
\begin{itemize}
    \item For $\alpha > \alpha_\text{inter}(\kappa, \kappa^*, \Delta)$, the performance of the gradient flow estimator in the limit $\lambda \to 0^+$ is found by taking $q = 0$ in the system~\eqref{eq:SystemResult2} while $\lambda / q$ remains of order one. In this case the gradient flow estimator behaves as if it was minimizing the unregularized loss. We will detail this correspondence in \cref{Subsubsec:BeyondInterpolation}.
    \item For $\alpha < \alpha_\text{inter}(\kappa, \kappa^*, \Delta)$, the system of equations at $\lambda = 0^+$ is obtained by plugging $\lambda = 0$ into the system~\eqref{eq:SystemResult2}. In this case the effect of the regularization remains in the limit $\lambda \to 0^+$ and the performance differs from the one of the unregularized dynamics. 
\end{itemize}
In \cref{Subsubsec:EquationsSmallReg}, we show that this threshold coincides with the largest sample complexity for which the training labels can still be fitted exactly, in the presence of label noise. This interpretation is consistent with the standard notion of the interpolation threshold in noisy learning settings.

As a consequence of the previous characterization, the interpolation threshold is given by the smallest value of $\alpha$ for which the system \eqref{eq:SystemResult2} admits a solution in the limit where $q$ vanishes proportionally to $\lambda$. For the following, we define the function:
\begin{equation} \label{eq:defI}
    I_\omega(\xi) = \int_{\max(0,\omega)} x h_\xi(x) \d \mu_\xi(x).
\end{equation}
Recall that $h_\xi$ is the Hilbert transform of the measure $\mu_\xi$. In the end, we have the following characterization of the interpolation threshold:
\begin{proposition} \label{Prop:InterpolationThreshold}
    For $\kappa \geq \min(\kappa^*, 1)$, consider $\omega, \xi$ to be the solution of the system:
    \begin{equation}
    \begin{aligned}
        \min(\kappa, 1) &= \int_\omega \d \mu_\xi(x), \\
        2 \xi I_\omega(\xi) &= \int_{\max(0, \omega)} x^2 \d \mu_\xi(x) - \frac{\Delta}{2} - Q_*.    
    \end{aligned}
    \end{equation}
    The interpolation threshold is then given by $\alpha_\mathrm{inter}(\kappa, \kappa^*, \Delta) = I_\omega(\xi)$. 
\end{proposition}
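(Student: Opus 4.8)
The plan is to read off the interpolation threshold by passing to the limit $q \to 0$ directly inside the system \eqref{eq:SystemResult2} of \cref{Result2}, subject to the constraint that $q$ vanish proportionally to $\lambda$ — equivalently, recalling $q = \lambda/r_\infty$ with $r_\infty$ defined in \eqref{eq:SCr}, that $r_\infty$ stay bounded away from $0$ and $\infty$ — and then identifying the threshold with the smallest sample complexity for which this constrained limit remains consistent. As a preliminary, I would record that $\mu_\xi$, being the free additive convolution of $\mu^*$ with a semicircle of variance $\xi$, is absolutely continuous, so that together with the smoothness assumed on $\nu^*$ all the integrals against $h_\xi \, \d\mu_\xi$ below converge and depend continuously on $(\omega,\xi)$, and that $\max(q,\omega) \to \max(0,\omega)$ as $q \to 0^+$ irrespective of the sign of $\omega$.

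With this in hand, I would pass to the limit in the three equations of \eqref{eq:SystemResult2} in turn. Equation \eqref{eq:ResultLongTimesOmega} contains no $q$ and is left unchanged, giving the first equation of the proposition. In \eqref{eq:ResultLongTimesIntegral}, since $\int_{\max(q,\omega)}(x-q)h_\xi(x)\,\d\mu_\xi(x) \to I_\omega(\xi)$ by \eqref{eq:defI}, one obtains
\begin{equation}
  1 = \frac{\lambda}{q} + \frac{1}{\alpha}\, I_\omega(\xi),
\end{equation}
so a $q \propto \lambda$ solution exists only when $\lambda/q = 1 - I_\omega(\xi)/\alpha > 0$, i.e.\ $\alpha > I_\omega(\xi)$. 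In \eqref{eq:ResultLongTimesMSE}, the terms carrying $q^2$ drop and the Hilbert-transform integral again converges to $I_\omega(\xi)$, leaving
\begin{equation}
  2\alpha\xi - \frac{\Delta}{2} = Q_* - \int_{\max(0,\omega)} x^2 \, \d\mu_\xi(x) + 4\xi\, I_\omega(\xi).
\end{equation}
The interpolation threshold is the boundary of the admissible region $\alpha > I_\omega(\xi)$, attained as $\lambda/q \to 0^+$; imposing $\lambda/q = 0$ in the first displayed equation gives $\alpha_{\mathrm{inter}} = I_\omega(\xi)$, and substituting $\alpha = I_\omega(\xi)$ into the second displayed equation yields, after rearranging the terms proportional to $\xi\, I_\omega(\xi)$,
\begin{equation}
  2\xi\, I_\omega(\xi) = \int_{\max(0,\omega)} x^2 \, \d\mu_\xi(x) - \frac{\Delta}{2} - Q_*,
\end{equation}
which is the second equation of the proposition, while \eqref{eq:ResultLongTimesOmega} is the first. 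Conversely, starting from a pair $(\omega,\xi)$ that solves the proposition's system and setting $\alpha = I_\omega(\xi)$, a continuity / implicit-function argument on \eqref{eq:ResultLongTimesOmega} together with the $q = 0$ limit of \eqref{eq:ResultLongTimesMSE} produces, for $\alpha$ slightly larger, a nearby solution $(\omega(\alpha),\xi(\alpha))$ with $1 - I_\omega(\xi(\alpha))/\alpha > 0$ and hence a genuine $q \propto \lambda$ solution of \eqref{eq:SystemResult2}, whereas for $\alpha$ slightly smaller this quantity turns negative and no such solution survives; thus $\alpha_{\mathrm{inter}} = I_\omega(\xi)$ is indeed the smallest admissible $\alpha$.

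The algebra above is elementary; the hard part will be the analytic control of the $q\to 0$ limit and the branch selection. Concretely: (i) one must justify convergence of $\int_{\max(q,\omega)}(x-q)h_\xi \,\d\mu_\xi$ and $\int_{\max(q,\omega)} x^2\,\d\mu_\xi$ to their $q = 0$ values, which is delicate when the threshold $\omega$ sits at a spectral edge of $\mu_\xi$ — where $h_\xi$ has square-root-type behaviour, integrable against the vanishing density — or near the origin, the location of the atom of $\mu^*$ that the semicircular regularization smooths out; (ii) one must rule out degenerate limit configurations, i.e.\ show that $\xi$ stays bounded away from $0$ and $\infty$ and that $\omega$ stays in a region where $\mu_\xi$ is non-degenerate along the limit; and (iii) one must select the physically relevant branch of \eqref{eq:SystemResult2} — the one reached by the gradient flow — and establish the sign change (or monotonicity) of $\alpha \mapsto \alpha - I_\omega(\xi(\alpha))$ that makes ``the smallest $\alpha$'' well posed. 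Finally, to complete the picture one would reconcile this algebraic threshold with the characterization announced in \cref{Subsubsec:EquationsSmallReg}, namely that $\alpha_{\mathrm{inter}}$ is the largest sample complexity at which the noisy training labels can still be fitted exactly; this should follow by evaluating the training-loss formula in \eqref{eq:LossValue} in the same $\lambda, q \to 0$ regime and checking that it vanishes precisely up to $\alpha_{\mathrm{inter}}$.
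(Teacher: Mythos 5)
Your proposal is correct and follows essentially the same route as the paper's proof in \cref{App:Subsec:InterpolationThreshold}: pass to the scaling $q \propto \lambda$ (equivalently $r_\infty = \lambda/q$ bounded away from $0$) in the system~\eqref{eq:SystemResult2}, read off $\lambda/q = 1 - I_\omega(\xi)/\alpha > 0$ from equation~\eqref{eq:ResultLongTimesIntegral}, identify the threshold as the boundary $\alpha = I_\omega(\xi)$ of that admissible region, and substitute into the $q \to 0$ limit of~\eqref{eq:ResultLongTimesMSE} to obtain the second equation of the proposition. The paper carries this out at the same level of rigor you do in your main argument, and does not explicitly resolve the analytic points you flag at the end (convergence of the integrals as $q \to 0$, non-degeneracy of the limiting $(\omega,\xi)$, monotonicity of $\alpha \mapsto \alpha - I_\omega(\xi(\alpha))$), so your additional caveats are a fair acknowledgment of gaps shared with the original rather than a deviation from it.
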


We emphasize that the dependence of these equations on $\kappa^*$ enters only through the teacher’s asymptotic spectral distribution $\mu^*$, which appears in $\mu_\xi$, the free additive convolution of $\mu^*$ with a semicircular distribution of variance $\xi$.

When $\kappa \geq 1$, our system of equations coincides with the one of \citet{erba2025nuclear} (Result~1). In their work, the interpolation threshold is defined as the smallest value of $\alpha$ for which the empirical loss admits a unique global minimizer. We conjecture that a similar interpretation holds for general values of $\kappa$. Indeed, the region $\alpha > \alpha_{\mathrm{inter}}$ is characterized by the fact that the ratio $\lambda/q$ remains positive in the limit of vanishing regularization. As discussed in \cref{Subsubsec:SteadyState}, this quantity coincides with the long-time integrated response $r_\infty$, defined in equation \eqref{eq:SCr}. Within the dynamical approximation introduced in equation~\eqref{eq:DynamicsSimplified}, this scalar variable is directly related to the curvature of the loss landscape near the point of convergence. This leads to two possibilities:
\begin{itemize}
    \item When $r_\infty = 0$, the landscape becomes flat in a neighborhood of the limit point, suggesting that the set of global minimizers of the empirical loss (in terms of $W$) forms a manifold of positive dimension.
    \item When $r_\infty > 0$, the convergence shares the same properties as in the case $\lambda > 0$, and we expect the global minimizer reached by gradient flow to be isolated (up to the representation $W \mapsto WW^\top$).
\end{itemize}
This dynamical interpretation only provides local information about the landscape close to the point reached by gradient flow. When $\kappa \geq 1$, the optimization problem is convex and so is the set of global minimizers. In this regime, the interpolation threshold $\alpha_{\mathrm{inter}}$ describes the threshold at which the loss admits a single global minimizer. This observation was already made by \citet{erba2025nuclear}. However, when $\kappa < 1$, convexity breaks, and the set of global minimizers
may be more complicated, potentially splitting into multiple components.

Interestingly, as illustrated in \cref{fig:FigOverfitting2}, the interpolation threshold appears to coincide with the location of the peak of the MSE in the double descent regime. This correspondence is not specific to our setting and has previously been observed in linear models \citep{belkin2019reconciling, hastie2022surprises} as well as in random feature models \citep{d2020double, mei2022generalization}. More generally, this behavior is consistent with the idea that, in the presence of label noise ($\Delta > 0$), the interpolation threshold marks the largest sample complexity for which the model can exactly fit the training data, leading to vanishing training error but potentially poor generalization. In \cref{subsec:Unregularized}, we further investigate this regime by analyzing the unregularized dynamics and providing additional numerical evidence.

The following corollary gives a more explicit characterization of the interpolation threshold in the noiseless case $\Delta = 0$ and for $\kappa \geq \min(\kappa^*,1)$, that is, when the student can recover the teacher at a finite value of $\alpha$.

\begin{corollary} \label{Prop:InterpolationThreshold1}
    Consider the case where $\kappa \geq \min(\kappa^*, 1)$ and $\Delta = 0$, and let $\sigma$ denote the semicircular distribution (defined in \cref{App:Subsubsec:MatrixEnsembles}). Then the solution $\omega, \xi$ of the system of equations of \cref{Prop:InterpolationThreshold} are reached at $\xi = 0, \omega = 0$. The value of the interpolation threshold is then, for $\kappa^* \leq 1$:
    \begin{equation} \label{eq:InterThreshold}
        \alpha_\mathrm{inter}(\kappa, \kappa^*) = \kappa^* - \frac{\kappa^{*2}}{2} + \frac{(1-\kappa^*)^2}{2} \int_{\max(0, \tilde \omega)} x^2 \d \sigma(x),
    \end{equation} 
    where $\tilde \omega$ solves the equation:
    \begin{equation}
        \frac{\min(\kappa, 1) - \kappa^*}{1 - \kappa^*} = \int_{\tilde \omega} \d \sigma(x).
    \end{equation}
    In addition, if $\kappa, \kappa^* \geq 1$, then $\alpha_\mathrm{inter}(\kappa, \kappa^*) = 1/2$. 
\end{corollary}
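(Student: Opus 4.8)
The plan is to analyze the two‑equation system of \cref{Prop:InterpolationThreshold} in the noiseless case $\Delta = 0$, show that its solution is attained in the limit $\xi \to 0^+$ (with $\omega \to 0^+$), and evaluate $\alpha_\mathrm{inter} = I_\omega(\xi)$ in that limit through a local blow‑up analysis of the spectral measure $\mu_\xi$ near the origin. As a preliminary, free additive convolution with a centered semicircle of variance $\xi$ shifts the second moment by exactly $\xi$, so $\int x^2\,\mathrm{d}\mu_\xi = Q_* + \xi$; with $\Delta = 0$ the second equation of \cref{Prop:InterpolationThreshold} then rewrites as $\int_{(-\infty,\max(0,\omega)]} x^2\,\mathrm{d}\mu_\xi(x) = \xi\big(1 - 2 I_\omega(\xi)\big)$. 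Since $\mu^*$ puts no mass on $(-\infty,0)$ and only an atom (of vanishing $x^2$‑weight) at $0$, the left‑hand side goes to $0$ with $\xi$: this both identifies $\xi = 0$, $\omega = 0$ as the relevant configuration and shows that, once the blow‑up limit is established, the equation is consistent to leading order in $\xi$.

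The core step is the local structure of $\mu_\xi$ near $0$. Writing $g_\xi$ for its Stieltjes transform, the subordination identity for free convolution with a semicircle reads $g_\xi(z) = g^*\!\big(z - \xi\, g_\xi(z)\big)$ with $g^*(w) = \tfrac{1-\kappa^*}{w} + \kappa^*\, g_{\nu^*}(w)$, and $g_{\nu^*}$ is analytic near $0$ because $\nu^*$ is supported away from the origin. Substituting the scaling ansatz $z = \sqrt\xi\,\tilde z$, $g_\xi = \xi^{-1/2}\tilde g$ and keeping the leading order reduces the subordination relation to $\tilde g\,(\tilde z - \tilde g) = 1 - \kappa^*$; selecting the branch with $\tilde g(\tilde z) \sim (1-\kappa^*)/\tilde z$ at infinity identifies $\tilde g$ as the Stieltjes transform of $\tilde\mu := (1-\kappa^*)$ times the semicircular law of variance $1-\kappa^*$, a measure of total mass $1-\kappa^*$. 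Hence, as $\xi \to 0^+$, $\mu_\xi$ splits into a near‑zero bulk of mass $1-\kappa^*$ that, rescaled by $x \mapsto x/\sqrt\xi$, converges to $\tilde\mu$, and a far bulk of mass $\kappa^*$ staying bounded away from $0$ (converging to $\kappa^*\nu^*$). Feeding this into the first equation $\min(\kappa,1) = \int_{(\omega,\infty)}\mathrm{d}\mu_\xi$ and writing $\omega = \sqrt{\xi(1-\kappa^*)}\,\tilde\omega$, the far bulk contributes its full mass $\kappa^*$ and the near‑zero bulk contributes $(1-\kappa^*)\int_{(\tilde\omega,\infty)}\mathrm{d}\sigma$, which yields precisely $\tfrac{\min(\kappa,1)-\kappa^*}{1-\kappa^*} = \int_{(\tilde\omega,\infty)}\mathrm{d}\sigma$, the defining equation for $\tilde\omega$ (here $\kappa \ge \kappa^*$ and $\kappa^* \le 1$ guarantee the right‑hand side lies in $[0,1]$).

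To evaluate $I_\omega(\xi) = \int_{(\max(0,\omega),\infty)} x\, h_\xi(x)\,\mathrm{d}\mu_\xi(x)$, I would use the exact identity $\int_\R x\, h_\mu(x)\,\mathrm{d}\mu(x) = \tfrac12\,\mu(\R)^2$, obtained by antisymmetrizing the kernel $\tfrac{x}{x-y}$, which for the probability measure $\mu_\xi$ gives $I_\omega(\xi) = \tfrac12 - \int_{(-\infty,\max(0,\omega)]} x\, h_\xi(x)\,\mathrm{d}\mu_\xi(x)$. On the domain $(-\infty,\max(0,\omega)]$ only the near‑zero bulk contributes, and there $h_\xi(x) = \xi^{-1/2}\tilde h(x/\sqrt\xi) + O(1)$ with $\tilde h$ the Hilbert transform of $\tilde\mu$, which inside its support equals $\tilde h(\tilde x) = \tilde x/2$; the $O(1)$ far‑bulk correction is negligible once multiplied by $x = O(\sqrt\xi)$ and integrated against a piece of mass $O(\sqrt\xi)$. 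This gives $\int_{(-\infty,\max(0,\omega)]} x\, h_\xi\,\mathrm{d}\mu_\xi \to \tfrac12 \int_{\{\tilde x \le \max(0,\sqrt{1-\kappa^*}\tilde\omega)\}} \tilde x^2\,\mathrm{d}\tilde\mu(\tilde x)$, and the change of variables $\tilde x = \sqrt{1-\kappa^*}\,u$ turns this into $\tfrac{(1-\kappa^*)^2}{2}\int_{(-\infty,\max(0,\tilde\omega)]} u^2\,\mathrm{d}\sigma(u)$. Therefore $\alpha_\mathrm{inter} = \tfrac12 - \tfrac{(1-\kappa^*)^2}{2}\int_{(-\infty,\max(0,\tilde\omega)]} u^2\,\mathrm{d}\sigma$; using $\int_\R u^2\,\mathrm{d}\sigma = 1$ to pass to $\int_{(\max(0,\tilde\omega),\infty)}$ and $\tfrac12 - \tfrac{(1-\kappa^*)^2}{2} = \kappa^* - \tfrac{\kappa^{*2}}{2}$ yields exactly \eqref{eq:InterThreshold}. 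The case $\kappa,\kappa^* \ge 1$ is immediate: then $\mu^*$ has no atom at $0$ and is supported away from it, so at $\xi = 0$ one has $\mu_\xi = \mu^*$ and $\int_{(\max(0,\omega),\infty)} = \int_\R$, whence $I_\omega(0) = \tfrac12$ directly from the same identity.

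The main obstacle is making the blow‑up limit rigorous: one must establish genuine weak convergence of the rescaled near‑zero part of $\mu_\xi$ to $\tilde\mu$ (not merely the formal reduction of the subordination equation), and — more delicately — that the principal‑value integral $h_\xi$ rescales as claimed with an error controlled uniformly enough to pass the limit inside the integral defining $I_\omega$, including near the edges of the near‑zero bulk and across the crossover toward the far bulk; the fact that the interior Hilbert transform of a semicircle is linear should make the edge contributions manageable. A secondary point is justifying that the solution is genuinely attained at $\xi = 0$: that the rewritten second equation becomes an identity to leading order in $\xi$ is consistent with this, and matches the expectation that the MSE vanishes at the noiseless interpolation threshold, but a short monotonicity/continuity argument in $\xi$ is needed to rule out other solution branches.
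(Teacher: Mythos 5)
Your proof is correct and reaches the same conclusion as the paper, but it packages the computation in a genuinely tidier way. The paper's proof (\cref{App:Subsec:InterpolationThreshold}) combines both equations into the single condition $I_\omega(\xi)=\tfrac{1}{2\xi}\bigl(\int_{\max(0,\omega)}x^2\d\mu_\xi - Q_*\bigr)$ and then expands \emph{both} sides as $\xi\to 0$ via \cref{Lemma:SmallXiFunctions}, which requires tracking the macroscopic part of $h_\xi$ on the far bulk (using $h_\xi(x)\to\kappa^* h_{\nu^*}(x)+(1-\kappa^*)/x$) and separately invoking $\int x h_{\nu^*}\d\nu^*=\tfrac12$ on $\nu^*$. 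You instead exploit two global identities — the second-moment shift $\int x^2\d\mu_\xi=Q_*+\xi$ of free convolution, and $\int x h_{\mu_\xi}\d\mu_\xi=\tfrac12$ applied directly to the probability measure $\mu_\xi$ (the paper's \cref{Lemma:Hilbert2}) — to rewrite both equations of \cref{Prop:InterpolationThreshold} as integrals over the complementary set $(-\infty,\max(0,\omega)]$, which lies entirely inside the microscopic near-zero bulk. The blow-up analysis (subordination limit $\tilde g(\tilde z-\tilde g)=1-\kappa^*$, rescaled near-zero bulk converging to a mass-$(1-\kappa^*)$ semicircle) then suffices on its own, and the macroscopic expansion of $h_\xi$ is never needed. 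The cornerstone — the local behaviour of $\mu_\xi$ and $h_\xi$ at scale $\sqrt{\xi}$ near the origin — is the same (\cref{Lemma:ExpansionSmallXi}), so the hard part is shared; what your route buys is that the far-bulk contribution is absorbed into exact global identities rather than expanded order by order.

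Two small caveats, both of which you flag yourself and both of which the paper shares. First, neither argument actually rules out a solution branch with $\xi>0$: the paper also only verifies that the system is satisfied in the $\xi\to 0^+$ limit. Second, for $\kappa^*\ge 1$ the decomposition $\mu^*=(1-\kappa^*)\delta+\kappa^*\nu^*$ should not be taken literally (the weight would be negative); the correct reading, used implicitly in your final paragraph and in the paper's \cref{Lemma:SmallXiFunctionsOverparam}, is that $\mu^*$ has no atom at zero and compact support bounded away from it, so that the full mass of $\mu_\xi$ lies to the right of $\max(0,\omega)$ and the global identity gives $I_\omega(\xi)=\tfrac12$ immediately.
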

The quantity $\tilde{\omega}$ is a rescaled version of the threshold $\omega$. Recall that $\omega$ selects a fraction $\kappa$ of the mass of the measure $\mu_\xi$. When $\kappa^* \leq 1$ and $\kappa > \kappa^*$, this selection involves a part of the semicircular component of $\mu_\xi$, whose support has width of order $\sqrt{\xi}$. As a result, $\omega$ must scale with $\xi$ and vanish as $\xi \to 0$. The rescaled variable $\tilde{\omega}$ captures how the semicircular density should be cut in order to select a mass $\kappa$. The corresponding expression for the interpolation threshold is derived in \cref{App:Subsec:InterpolationThreshold}.

As a first remark, this threshold does not depend on the teacher's distribution $\mu^*$, unlike for positive $\Delta$ (see \cref{Prop:InterpolationThreshold}). The semicircular distribution $\sigma$ appears as a universal object in these equations: it is directly linked to the Gaussian noise added to the teacher to form the matrix $Z^* + \sqrt{\xi} \G$ studied earlier. Indeed, it is well known \citep{wigner1955characteristic} that the semicircular distribution corresponds to the high-dimensional limit of the spectral density of a GOE matrix. 

\subsubsection{Set of Equations in the Small Regularization Limit} \label{Subsubsec:EquationsSmallReg}

Considering the previous observations, we shall now formulate the set of equations that describes the performance and spectral properties of the gradient flow predictor in the small regularization limit. 
\begin{proposition} \label{Prop:EquationsSmallReg}
    Recall the definition of the function $I_\omega$ in equation~\eqref{eq:defI}. Assume that given the values $\alpha, \kappa, \kappa^*, \Delta$ and $\lambda >0$, the system of equations~\eqref{eq:SystemResult2} has a unique solution $(\xi, q)$. Then, as $\lambda \to 0^+$, we have:
    \begin{description}
        \item[1.] If $\alpha \leq \alpha_\mathrm{inter}(\kappa, \kappa^*, \Delta)$, the gradient flow predictor remains:
        \begin{equation} \label{eq:SystemSmallReg1}
            Z_\infty = \Big( Z^* + \sqrt{\xi} \G - q I_d \Big)_{(m)}^+,
        \end{equation}
        with values of MSE and training loss:
        \begin{equation}
            \mathrm{MSE} = 2 \alpha \xi - \frac{\Delta}{2}, \hspace{1.5cm} \mathrm{Loss}_\mathrm{train} = 0,
        \end{equation}
        where $\xi, q$ are solutions of the system of equations~\eqref{eq:SystemResult2} with $\lambda = 0$.
        \item[2.] If $\alpha \geq \alpha_\mathrm{inter}(\kappa, \kappa^*, \Delta)$, the system of equations reduces to the variables $\omega, \xi$, solution of:
        \begin{subequations} \label{eq:SystemSmallReg2}
        \begin{align}
            \min(\kappa, 1) &= \int_\omega \d \mu_\xi(x), \\
            2 \alpha \xi - \frac{\Delta}{2} &= Q_* - \int_{\max(0, \omega)} x^2 \d \mu_\xi(x) + 4\xi I_\omega(\xi).
        \end{align}
        \end{subequations}
        In this case the gradient flow predictor is given by:
        \begin{equation}
            Z_\infty = \Big( Z^* + \sqrt{\xi} \G \Big)_{(m)}^+,
        \end{equation}
        and the MSE and the loss write:
        \begin{equation}
            \mathrm{MSE} = 2 \alpha \xi - \frac{\Delta}{2}, \hspace{1.5cm} \mathrm{Loss}_\mathrm{train} = \alpha \xi \left( 1 - \frac{I_\omega(\xi)}{\alpha} \right)^2. 
        \end{equation}
    \end{description}
\end{proposition}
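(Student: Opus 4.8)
The plan is to analyze the $\lambda\to 0^+$ asymptotics of the unique solution $(\xi_\lambda,q_\lambda)$ of the system~\eqref{eq:SystemResult2} (with $\omega_\lambda$ determined by~\eqref{eq:ResultLongTimesOmega}), keeping in mind that $\lambda$ enters this system only through the shift $q=\lambda/r_\infty$ appearing in the predictor~\eqref{eq:ZinfinityReg} and, crucially, through the explicit term $\lambda/q=r_\infty$ in~\eqref{eq:ResultLongTimesIntegral}. First I would establish that $q_\lambda$ is monotone nondecreasing in $\lambda$ — a stronger $\ell_2$-penalty produces a larger eigenvalue shift — and bounded below by $0$, so that $q_\lambda\to q_0\ge 0$; by the same kind of argument $\xi_\lambda$ and $\omega_\lambda$ also converge, to limits $\xi_0,\omega_0$. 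This monotonicity is the technical backbone of the step and relies on the monotonicity properties of the free-convolution integrals $\int_{\max(q,\omega)}(\cdot)\,h_\xi\,\d\mu_\xi$, analyzed in \cref{App:Subsec:AnalysisEquations}.

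Next I would split into cases according to the sign of $q_0$. If $q_0>0$ then $\lambda/q_\lambda\to 0$, and passing to the limit term by term in the three equations of~\eqref{eq:SystemResult2} yields exactly the same system with $\lambda$ set to $0$; moreover $\mathrm{Loss}_\mathrm{train}=\lambda^2\alpha\xi/q^2\to 0$ by~\eqref{eq:LossValue} since $q_\lambda$ stays bounded away from $0$, while the predictor~\eqref{eq:ZinfinityReg} keeps its shift $q_0>0$. This gives the first statement. If instead $q_0=0$, I would reparametrize by $\rho_\lambda:=\lambda/q_\lambda=r_\infty$: using continuity of $\xi\mapsto\mu_\xi$ (free additive convolution with a semicircle, \cref{App:Subsubsec:FreeConvolution}) and of its Hilbert transform up to the spectral edge, the integral $\int_{\max(q,\omega)}(x-q)h_\xi\,\d\mu_\xi$ converges to $I_{\omega_0}(\xi_0)$ and the remaining one to $\int_{\max(0,\omega_0)}x^2\,\d\mu_{\xi_0}$. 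Then~\eqref{eq:ResultLongTimesIntegral} forces $\rho_\lambda\to 1-I_{\omega_0}(\xi_0)/\alpha=:r_\infty^0$, which is admissible only if $\alpha\ge I_{\omega_0}(\xi_0)$; equations~\eqref{eq:ResultLongTimesOmega} and~\eqref{eq:ResultLongTimesMSE} collapse to the reduced system~\eqref{eq:SystemSmallReg2}; the predictor becomes $(Z^*+\sqrt{\xi_0}\,\G)^+_{(m)}$; and $\mathrm{Loss}_\mathrm{train}=\rho_\lambda^2\,\alpha\xi_\lambda\to\alpha\xi_0\,(1-I_{\omega_0}(\xi_0)/\alpha)^2$, which is the second statement.

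It then remains to match the dividing line between the two cases with the interpolation threshold of \cref{Prop:InterpolationThreshold}. Imposing $q=0$ and $r_\infty=0$ simultaneously in~\eqref{eq:SystemResult2} forces, via~\eqref{eq:ResultLongTimesIntegral}, the relation $\alpha=I_\omega(\xi)$; substituting this back into~\eqref{eq:ResultLongTimesMSE} and rearranging yields precisely $2\xi I_\omega(\xi)=\int_{\max(0,\omega)}x^2\,\d\mu_\xi-\Delta/2-Q_*$, i.e.\ the system defining $\alpha_\mathrm{inter}(\kappa,\kappa^*,\Delta)$. Combined with the monotonicity of the solution in $\alpha$ and the assumed uniqueness, this shows $q_0=0$ exactly when $\alpha\ge\alpha_\mathrm{inter}$ and $q_0>0$ when $\alpha<\alpha_\mathrm{inter}$, matching the case split in the statement; the formula $\mathrm{MSE}=2\alpha\xi-\Delta/2$ is inherited verbatim from \cref{Result2} in both regimes.

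The step I expect to be the main obstacle is the uniform control of $\int_{\max(q,\omega)}(x-q)h_\xi(x)\,\d\mu_\xi(x)$ as $q\to 0$: the Hilbert transform $h_\xi$ of the free-convolved measure can diverge at the lower edge $\omega$, so exchanging $\lambda\to 0$ with the integral and ensuring the limiting quantities are finite requires edge estimates on $\mu_\xi$ and $h_\xi$ (the assumed smoothness of $\nu^*$ is useful here). A secondary difficulty is propagating the $\lambda>0$ uniqueness hypothesis to well-posedness of the two limiting systems and establishing the monotonicity of the coupled solution in $\lambda$ and in $\alpha$ needed to pin the dichotomy exactly at $\alpha_\mathrm{inter}$; both are carried out by a direct analysis of the fixed-point equations in \cref{App:Subsec:InterpolationThreshold}.
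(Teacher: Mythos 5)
Your proposal follows essentially the same route as the paper's proof in \cref{App:Subsec:InterpolationThreshold}: distinguish between the scaling $q = \Theta(\lambda)$ (so that $\lambda/q \to r_\infty \in (0,1]$ and $q\to 0$) and the scaling $q = \Theta(1)$, pass to the limit term by term in the three equations of \eqref{eq:SystemResult2}, and identify $\alpha_\mathrm{inter}$ as the value at which the two regimes meet by imposing $q\to 0$ with $\lambda/q\to 0$, which gives $\alpha = I_\omega(\xi)$ and the system of \cref{Prop:InterpolationThreshold}. The final formulas for the MSE and the loss are obtained exactly as you describe from \eqref{eq:LossValue}.

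However, the monotonicity of $q_\lambda$ in $\lambda$ that you put forward as ``the technical backbone'' is not something the paper establishes, and the reference you give for it is misplaced: \cref{App:Subsec:AnalysisEquations} proves independence of the solution from $\kappa$ in the overparameterized region $\kappa\geq \kappa_{\min}$, and global optimality there — nothing about monotonicity of $(q_\lambda,\xi_\lambda)$ in $\lambda$ or of the coupled solution in $\alpha$. Likewise, \cref{App:Subsec:InterpolationThreshold} does not ``carry out'' a well-posedness or monotonicity analysis; it simply poses the two scaling ansätze and remarks, without proof, that any intermediate scaling of $q$ with $\lambda$ collapses to the boundary case $\alpha=\alpha_\mathrm{inter}$. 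So your proposal and the paper's proof share the same formal core and the same unproven dichotomy; your added monotonicity step is an attempt at extra rigor, but it is asserted rather than established, and the claim that it comes for free from the cited appendix does not hold. If you want a genuinely rigorous version, the convergence of $(q_\lambda,\xi_\lambda,\omega_\lambda)$ along the full family $\lambda\to 0^+$ (not just along subsequences) and the exhaustiveness of the two scalings would need an actual argument — for example compactness of $(q_\lambda,\xi_\lambda)$ (which follows from boundedness of the MSE) together with uniqueness of solutions to each of the two limiting systems, rather than monotonicity.
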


Even in the presence of label noise, for $\alpha < \alpha_\text{inter}$ the empirical loss vanishes. In the small regularization limit, this implies that gradient flow converges to a predictor that exactly fits all the training labels. When $\alpha \geq \alpha_\text{inter}$, we consider two cases:
\begin{itemize}
    \item If $\Delta = 0$, interpolation occurs after perfect recovery. In this case, the loss remains zero, and so does the MSE.
    \item In the presence of label noise, beyond the interpolation threshold, the student can no longer fit all the observed labels, and the training loss becomes positive. In this regime, we have the characterization of the interpolation threshold:
    \begin{equation}
        \alpha \leq \alpha_\text{inter} \Longleftrightarrow \mathrm{Loss}_\mathrm{train} = 0,
    \end{equation}
    meaning that $\alpha_\text{inter}$ is the largest sample complexity for which exact fitting of the training data is still possible. In noisy settings, this criterion is often taken as the definition of the interpolation threshold.
\end{itemize}

In addition, one can remark that the system of equations obtained for $\alpha > \alpha_\text{inter}$ is precisely the same as we would get when performing the same dynamical assumptions as in \cref{Subsec:LongTimes}, but for the unregularized dynamics. This observation motivates the conjecture formulated in \cref{subsec:Unregularized}: in this regime, the limit $\lambda \to 0^+$ coincides with the unregularized dynamics.

\cref{fig:MSE_Smallreg} illustrates \cref{Prop:EquationsSmallReg} in the noiseless case. It features both gradient descent simulations at $\lambda > 0$, and the solution of the system of equations of \cref{Prop:EquationsSmallReg} at $\lambda = 0^+$. In this limit, the training loss is always zero and the MSE vanishes at a finite value of $\alpha$. In the next part, we derive the perfect recovery threshold, corresponding to this critical value. 

\begin{figure}[ht]
    \centering
    \includegraphics[width=\linewidth]{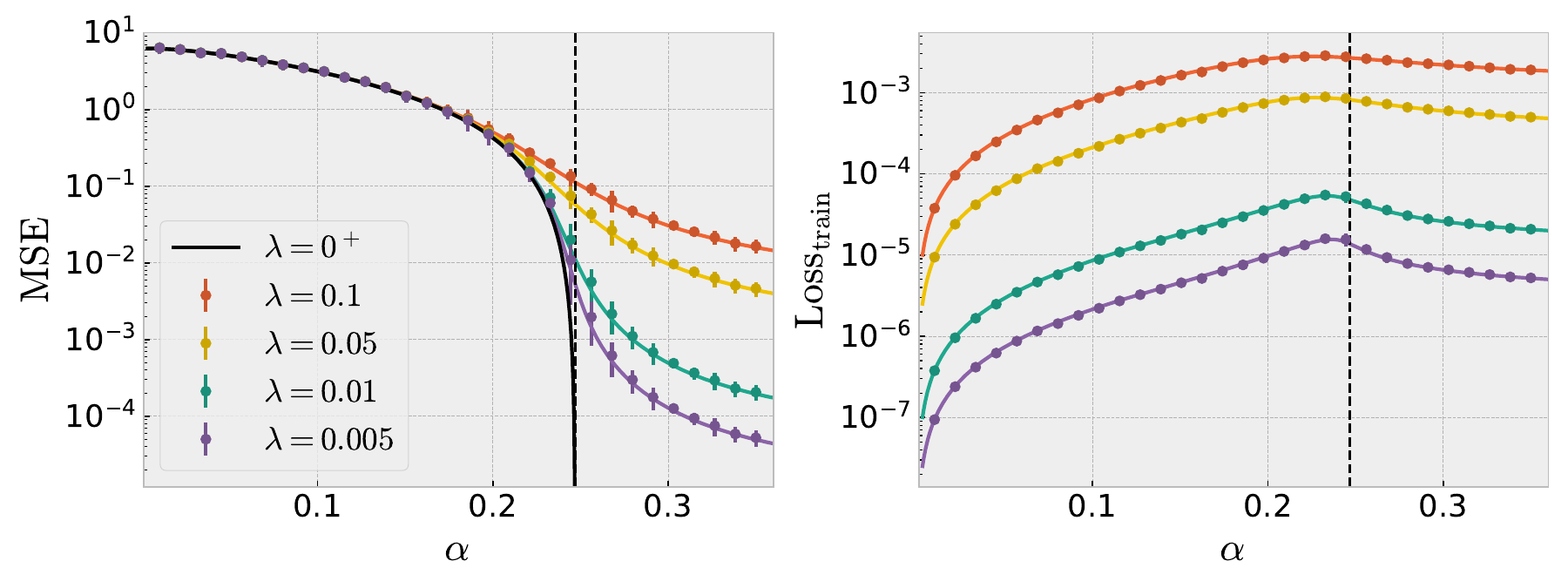}
    \vspace*{-0.7cm}
    \caption{MSE (left) and training loss (right) as a function of $\alpha$, for $\kappa = 0.3, \kappa^* = 0.2, \Delta = 0$ and several values of regularization strength $\lambda$. Dots: simulations of gradient descent, defined in equation \eqref{eq:GDdynamics}, for $\lambda > 0$ and averaged over 10 realizations of the initialization, teacher and data. Full lines: numerical integration of the system of equations~\eqref{eq:SystemResult2}. The black line corresponds to simulations of the system of equations in \cref{Prop:EquationsSmallReg}. Vertical dashed line: perfect recovery threshold, defined in \cref{Prop:PRThresholdRegularization}.}
    \label{fig:MSE_Smallreg}        
\end{figure}

\subsubsection{Perfect Recovery Threshold} \label{Subsubsec:PRThresholdReg}

From these results, we are now able to access the perfect recovery (PR) threshold associated with the system of equations in the small regularization limit, that is the value of $\alpha$ (that we denote $\alpha_\text{PR}^+$) for which the MSE is zero as soon as  $\alpha \geq \alpha_\text{PR}^+$. Note that for $\lambda > 0$ the MSE always remains positive, so that the PR transition can only happen in the small regularization limit. 

\begin{proposition} \label{Prop:PRThresholdRegularization}
Consider the setting $\Delta = 0$ and $\kappa \geq \min(\kappa^*, 1)$. Let $\sigma$ denote the semicircular distribution (defined in \cref{App:Subsubsec:MatrixEnsembles}). Then, if $\kappa^* < 1$, the perfect recovery threshold is given by:
    \begin{equation} \label{eq:PRThresholdReg}
        \alpha_\mathrm{PR}^+(\kappa, \kappa^*) = \kappa^* - \frac{\kappa^{*2}}{2} + \frac{(1-\kappa^*)^2}{2} \int_{\max(h, \tilde \omega)} x(x-h) \d \sigma(x),
    \end{equation}
    where $h, \tilde \omega$ solve the equations:
    \begin{equation}
    \begin{aligned}
        \frac{\min(\kappa, 1) - \kappa^*}{1 - \kappa^*} &= \int_{\tilde \omega} \d \sigma(x), \\
        \frac{\kappa^*}{1 - \kappa^*} &= \frac{1}{h} \int_{\max(h, \tilde \omega)} (x-h) \d \sigma(x). 
    \end{aligned}
    \end{equation}
In addition, if $\kappa^* \geq 1$, $\alpha_\mathrm{PR}^+(\kappa, \kappa^*) = \dfrac{1}{2}$ for any $\kappa \geq 1$. 
\end{proposition}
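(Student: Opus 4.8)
The plan is to start from \cref{Prop:EquationsSmallReg}: in the noiseless case $\Delta = 0$ the MSE equals $2\alpha\xi$, so perfect recovery is equivalent to $\xi \to 0^+$, and $\alpha_{\mathrm{PR}}^+$ is by definition the value of $\alpha$ at which the solution branch of the $\lambda = 0$ system~\eqref{eq:SystemResult2} reaches $\xi = 0$. Since, when $\Delta = 0$, interpolation occurs only after perfect recovery, this limit is taken inside the first regime of \cref{Prop:EquationsSmallReg}, where $(\xi, q)$ and the implicit threshold $\omega$ solve \eqref{eq:ResultLongTimesOmega}, \eqref{eq:ResultLongTimesIntegral}, \eqref{eq:ResultLongTimesMSE} with $\lambda = \Delta = 0$; in particular \eqref{eq:ResultLongTimesIntegral} reads $\alpha = \int_{\max(q,\omega)} (x-q)\,h_\xi(x)\,\d\mu_\xi(x)$ and, after using this, \eqref{eq:ResultLongTimesMSE} simplifies to $-2\alpha\xi = Q_* + \int_{\max(q,\omega)} (q^2 - x^2)\,\d\mu_\xi(x)$. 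The strategy is then to expand these two identities together with \eqref{eq:ResultLongTimesOmega} as $\xi \to 0^+$ and to read off the three scalar equations of the proposition.

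The crucial analytic ingredient is the local structure of $\mu_\xi$ near the origin. Because $\mu^* = (1-\kappa^*)\delta + \kappa^*\nu^*$ with $\nu^*$ bounded away from zero (\cref{Assumption1:Teacher}), the atom at $0$ is smeared by the free convolution into a compactly supported bulk of mass $1-\kappa^*$ on a window of width $\Theta(\sqrt\xi)$, while the $\nu^*$ component stays at distance $\Theta(1)$. Using the subordination relation $G_{\mu_\xi}(z) = G_{\mu^*}\!\big(z - \xi\,G_{\mu_\xi}(z)\big)$ and the small-argument expansion $G_{\mu^*}(w) = \tfrac{1-\kappa^*}{w} - \kappa^* m + o(1)$ with $m = \int t^{-1}\,\d\nu^*(t)$, one finds that to leading order the near-zero part of $\mu_\xi$ is $(1-\kappa^*)$ times a semicircle of variance $\xi(1-\kappa^*)$, and that $h_\xi(x) = \tfrac{x}{2\xi} - \kappa^* m + o(1)$ for $x = \Theta(\sqrt\xi)$. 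This dictates the rescaling $q = \sqrt{\xi(1-\kappa^*)}\,h$, $\omega = \sqrt{\xi(1-\kappa^*)}\,\tilde\omega$, under which the near-zero bulk becomes the standard semicircle $\sigma$ in the rescaled variable.

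With this at hand, every integral over $[\max(q,\omega),\infty)$ splits into a piece supported on the $\nu^*$ bulk (where $x = \Theta(1)$ and $q\to0$) and a piece supported on the near-zero bulk (where $x = \Theta(\sqrt\xi)$, rescaled to $\sigma$). Passing to the limit in \eqref{eq:ResultLongTimesOmega} gives $\min(\kappa,1) = \kappa^* + (1-\kappa^*)\int_{\tilde\omega}\d\sigma$, the first equation. In \eqref{eq:ResultLongTimesIntegral}, the $\nu^*$ piece tends to $\kappa^{*2}\int x\,h_{\nu^*}(x)\,\d\nu^*(x) + \kappa^*(1-\kappa^*)$; invoking the symmetrization identity $\int x\,h_\nu(x)\,\d\nu(x) = \tfrac12$ valid for any probability measure $\nu$, this equals $\kappa^* - \tfrac{\kappa^{*2}}{2}$, while the near-zero piece yields $\tfrac{(1-\kappa^*)^2}{2}\int_{\max(h,\tilde\omega)} x(x-h)\,\d\sigma(x)$ — together, formula~\eqref{eq:PRThresholdReg} for $\alpha_{\mathrm{PR}}^+$. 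Equation \eqref{eq:ResultLongTimesMSE} is degenerate at leading order (both sides vanish, using the free-cumulant identity $\int x^2\,\d\mu_\xi = Q_* + \xi$); the third equation comes out of the $O(\xi)$ balance, where $\int_{-\infty}^{\max(q,\omega)} x^2\,\d\mu_\xi$ and $q^2\mu_\xi([\max(q,\omega),\infty))$ are both evaluated via the near-zero rescaling and, after substituting the $\alpha_{\mathrm{PR}}^+$ formula, the $x^2$-integrals cancel, leaving precisely $\tfrac1h\int_{\max(h,\tilde\omega)}(x-h)\,\d\sigma(x) = \tfrac{\kappa^*}{1-\kappa^*}$. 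For $\kappa^* \geq 1$ the measure $\mu^*$ has no atom at $0$, there is no near-zero bulk, $\mu_\xi \to \mu^*$, the argument collapses, and $\alpha_{\mathrm{PR}}^+ = \tfrac12$ is obtained directly, consistent with the $\kappa \geq 1$ analysis of \citet{erba2025nuclear}.

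I expect the main obstacle to be making the near-zero analysis uniform enough to exchange the $\xi \to 0^+$ limit with the integrals: the integrands inherit a square-root edge singularity from $\mu_\xi$, and the terms dropped from the subordination expansion must be controlled as $o(\xi)$ uniformly on the relevant range — this is where the assumed smoothness of $\nu^*$, which governs the regularity and edge behaviour of the free convolution, is needed. A secondary point requiring care is to verify that the branch reaching $\xi = 0$ does so from within the first regime of \cref{Prop:EquationsSmallReg}, i.e.\ that $\alpha_{\mathrm{PR}}^+ \leq \alpha_{\mathrm{inter}}$, and that the rescaled unknowns $h, \tilde\omega$ stay $\Theta(1)$ along the branch; both should follow from monotonicity of $\xi$ in $\alpha$ and from the explicit comparison with the interpolation threshold of \cref{Prop:InterpolationThreshold1}, which is the $h = 0$ specialization of~\eqref{eq:PRThresholdReg}.
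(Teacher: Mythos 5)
Your proof follows essentially the same route as the paper's. Both start from the first regime of \cref{Prop:EquationsSmallReg} with $\lambda = \Delta = 0$, impose the double scaling $q \sim \sqrt{(1-\kappa^*)\xi}\,h$ and $\omega \sim \sqrt{(1-\kappa^*)\xi}\,\tilde\omega$, decompose $\mu_\xi$ into the macroscopic $\nu^*$ bulk and the near-zero semicircular bulk of mass $1-\kappa^*$, invoke the symmetrization identity $\int x\,h_\nu\,\d\nu = \tfrac12$ (\cref{Lemma:Hilbert2}) to reduce the $\nu^*$ contribution to $\kappa^* - \kappa^{*2}/2$, and then close the system by matching the two remaining equations at $O(\xi)$. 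The only deviation is organizational: you substitute \eqref{eq:ResultLongTimesIntegral} into \eqref{eq:ResultLongTimesMSE} at the outset (obtaining $-2\alpha\xi = Q_* + \int_{\max(q,\omega)}(q^2-x^2)\,\d\mu_\xi$) and take the $O(\xi)$ balance directly, whereas the paper keeps the two limit equations separate, produces two explicit expressions for $\alpha_{\mathrm{PR}}^+$ (one from each), and equates them; I verified that both routes collapse to the same equation $h\,\kappa^* = (1-\kappa^*)\int_{\max(h,\tilde\omega)}(x-h)\,\d\sigma$ after discarding the $h = 0$ (interpolation) branch. Your observation that the paper's small-$\xi$ asymptotics require uniform control of the subordination expansion near the semicircular edge, and that this is where smoothness of $\nu^*$ enters, is correct and is also the point the paper treats only formally via \cref{Lemma:ExpansionSmallXi,Lemma:SmallXiFunctions}.
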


For $\kappa \geq 1$, this quantity is exactly the same as the one identified by \citet{erba2025nuclear}. This threshold can be derived by sending the MSE to zero, which amounts to taking the $\xi \to 0$ limit. Then, the result is obtained by choosing the scaling $q \propto \sqrt{\xi}$. As previously, the variable $\tilde \omega$ plays the role of the rank constraint. We refer to \cref{App:Subsec:PRThreshold} for the derivation of this threshold. 

\subsubsection{Comparison of the Thresholds}

We shall now compare the interpolation and the PR thresholds derived in \cref{Prop:InterpolationThreshold1} and \cref{Prop:PRThresholdRegularization}. 

\begin{figure}[ht]
    \begin{minipage}{0.6\textwidth}
    \centering
    \includegraphics[width=\linewidth]{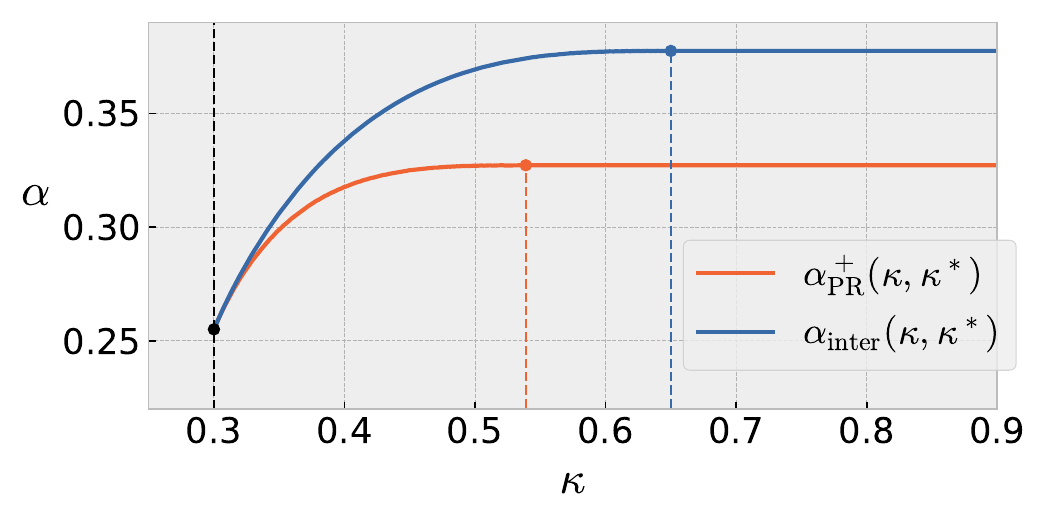}
    \end{minipage}
    \hfill
    \begin{minipage}{0.38\textwidth}
        \caption{Interpolation and PR thresholds as a function of $\kappa$ for $\kappa^* = 0.3$, computed from \cref{Prop:InterpolationThreshold1} and \cref{Prop:PRThresholdRegularization}. The black dot indicates the common value of the thresholds $\kappa^* - \kappa^{*2}/2$ at $\kappa = \kappa^*$. Vertical colored lines indicate the value of $\kappa$ at which each threshold becomes constant.}
        \label{fig:ThresholdsReg}
    \end{minipage}
\end{figure}

A first observation follows directly from equations~\eqref{eq:InterThreshold} and~\eqref{eq:PRThresholdReg}: for almost all values of $\kappa$ and $\kappa^*$, the perfect recovery threshold is strictly smaller than the interpolation threshold. This implies that, in the small regularization limit, perfect recovery typically occurs in a regime where the empirical loss still admits multiple global minimizers. In this sense, the solution selected in the small regularization limit has better generalization properties than a generic interpolator.

The two thresholds only coincide in specific situations: either when the student and teacher ranks match, $\kappa = \kappa^*$, or when both $\kappa$ and $\kappa^*$ are larger than one. In these cases, we have the expressions:
\begin{equation} \label{eq:limsthresholds}
    \alpha_\text{inter}(\kappa, \kappa^*) = \alpha_\text{PR}^+(\kappa, \kappa^*) =
    \begin{cases}
        \kappa^* - \dfrac{\kappa^{*2}}{2}, & \text{if } \kappa = \kappa^* \leq 1, \\
        \dfrac{1}{2}, & \text{if } \kappa, \kappa^* \geq 1.
    \end{cases}
\end{equation}
In the matched-rank case, $\kappa = \kappa^*$, this expression matches with the one previously obtained by \citet{maillard2024bayes}. The regime $\kappa, \kappa^* \geq 1$ also recovers known results for this problem, consistent with earlier works such as \citet{donoho2013phase, gamarnik2019stationary}.

Moreover, it is clear that these thresholds only depend on the effective rank $\kappa$ close to $\kappa^*$. More precisely, one can show that for each threshold there exists a critical value of $\kappa$ beyond which the threshold no longer depends on $\kappa$. For $\kappa^* < 1$, a direct calculation gives the expressions:
\begin{align}
    \alpha_\text{inter}(\kappa, \kappa^*) &= \frac{1}{2} \left( \frac{1}{2} + \kappa^* - \frac{\kappa^{*2}}{2} \right)& &\Longleftrightarrow \kappa \geq \frac{1+\kappa^*}{2}, \label{eq:ConstantInterThreshold} \\ 
    \alpha_\text{PR}^+(\kappa, \kappa^*) &= \kappa^* - \frac{\kappa^{*2}}{2} + \frac{(1-\kappa^*)^2}{2} \int_h x(x-h) \d \sigma(x)& &\Longleftrightarrow \kappa \geq \kappa^* + (1-\kappa^*) \int_h \d \sigma(x), \label{eq:ConstantPRThreshold} 
\end{align}
where $h > 0$ is solution of:
\begin{equation}
    \frac{\kappa^*}{1 - \kappa^*} = \frac{1}{h} \int_h (x-h) \d \sigma(x). 
\end{equation}
These two values of the interpolation and perfect recovery thresholds match with the ones obtained by \citet{erba2025nuclear}, for $\kappa \geq 1$.  Moreover, the critical value of $\kappa$ at which $\alpha_\text{PR}^+$ becomes constant in equation~\eqref{eq:ConstantPRThreshold} exactly matches the quantity $\kappa_{\min}$ defined in~\eqref{eq:kappamin}, in the small regularization limit. Recall that $\kappa_{\min}$ corresponds to the (normalized) rank of the global minimizer of the regularized loss over the set of PSD matrices. As discussed in \cref{Subsubsec:Regions}, this quantity separates two regimes at positive regularization, depending on whether the student has enough parameters to reach this global minimizer. This distinction persists in the small regularization limit. For small values of $\kappa$, the student converges to a suboptimal predictor in terms of the empirical loss (when viewed as a function of $Z = WW^\top$). Nevertheless, the low-rank structure of the solution allows the student to recover the teacher using fewer samples.

This analysis reveals an interesting conclusion about the role of overparameterization, quantified by the network normalized width $\kappa$. As illustrated in \cref{fig:ThresholdsReg}, increasing $\kappa$ requires more observations to reach perfect recovery, but only up to a point. For larger values of $\kappa$ (while still $\kappa < 1$), the perfect recovery threshold becomes independent of the number of parameters. This behavior is driven by the low-rank structure of the teacher, that influences $\kappa_{\min}$, the rank of the global minimizer of the empirical loss over all PSD matrices. Once $\kappa$ exceeds this value, further overparameterization does not increase the sample complexity required for perfect recovery.

To conclude this part, we insist on the generality of the previous results: the expressions of $\alpha_\mathrm{PR}^+$ and $\alpha_\mathrm{inter}$ only depend on the variables $\kappa, \kappa^*$, but not on the distribution of the teacher or the student at initialization. Although the derivation of equations~\eqref{eq:InterThreshold} and~\eqref{eq:PRThresholdReg} requires some mild assumptions (that we detail in \cref{App:SmallReg}), these remain very general. 

\paragraph{Small-width teacher.} One can wonder about the behavior of the thresholds $\alpha_\mathrm{inter}$ and $\alpha_\mathrm{PR}^+$ in the limit $\kappa^* \to 0$, i.e., when the teacher matrix possesses a sub-extensive rank $m^* \ll d$. In this case, as a consequence of \cref{Prop:InterpolationThreshold1} and \cref{Prop:PRThresholdRegularization}, we have the equations:
\begin{equation}
\begin{aligned}
    \alpha_\mathrm{inter}(\kappa, 0^+) &= \frac{1}{2} \int_{\max(0, \tilde \omega)} x^2 \d \sigma(x), \hspace{1.5cm} \min(\kappa, 1) = \int_{\tilde \omega} \d \sigma(x), \\
    \alpha_\mathrm{PR}^+(\kappa, 0^+) &= 0. 
\end{aligned}
\end{equation}
Therefore, even an extensive-width student (with $\kappa > 0$) is able to recover this small teacher with a number of observations $n \ll d^2$, but interpolation only arises at $n = \Theta(d^2)$. In addition, we have the values:
\begin{equation}
    \alpha_\mathrm{inter}(\kappa, 0^+) \xrightarrow[\kappa \to 0^+]{} 0, \hspace{1.5cm} \alpha_\mathrm{inter}(\kappa, 0^+) = \frac{1}{4} \Longleftrightarrow \kappa \geq \frac{1}{2}. 
\end{equation}
This value of $1/4$ is already present in the work of \citet{erba2025nuclear}. In comparison, \citet{sarao2020optimization} derived an interpolation threshold $n = 2d$ in the case of a rank-one teacher. As we do not recover this result when taking the $\kappa^* \to 0$ limit, this suggests the presence of different regimes depending on the scaling between $m^*$ and the dimension. 

\subsubsection{Minimal Regularization Estimator} \label{Subsubsec:MinRegInterpolator}

The previous observations reveal that in the small regularization limit, the gradient flow dynamics is selecting a specific global minimizer of the empirical loss that exhibits favorable generalization properties. In this part we relate this observation to the notion of minimal regularization interpolator. 

We start by mentioning a standard proposition that gives a general characterization of the behavior of the global minimizer of a regularized loss in the small regularization limit.
\begin{proposition} \label{Prop:MinRegularizationInterpolator}
    Consider a loss of the form $\L_\lambda = \L + \lambda \Omega$, where $\L, \Omega$ are both continuous and take positive values. Assume moreover that the regularization $\Omega$ is coercive:
    \begin{equation*}
        \big\| \Omega(W) \big\| \xrightarrow[\|W\| \to \infty]{} \infty.
    \end{equation*}
    Define $S^* = \mathrm{argmin}(\L)$ and assume $S^*$ is non-empty. Then, given a family $(W_\lambda)_{\lambda > 0}$ such that $W_\lambda$ is a global minimizer of $\L_\lambda$ for all $\lambda > 0$, every cluster point of $(W_\lambda)$ as $\lambda \to 0$ belongs to $S^*$ and minimizes $\Omega$ over $S^*$. In addition, if the minimizer of $\Omega$ over $S^*$ is unique, then $W_\lambda$ converges toward it. 
\end{proposition}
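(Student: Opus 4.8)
The plan is to run the standard Tikhonov-regularization (equivalently $\Gamma$-convergence) argument, which decomposes into three ingredients: a uniform bound confining the family $(W_\lambda)$ to a single compact set, a continuity argument showing its cluster points lie in $S^*$, and a comparison inequality showing those cluster points minimize $\Omega$ over $S^*$. Throughout, the domain is a finite-dimensional Euclidean space (here $\R^{d\times m}$), so that boundedness implies relative compactness.

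First I would establish the a priori bound. Fix any $W_0 \in S^*$ and write $m = \min \L = \L(W_0)$. By global optimality of $W_\lambda$ for $\L_\lambda = \L + \lambda\Omega$, and using $\L \ge m$ together with $\Omega \ge 0$, the chain $m + \lambda\Omega(W_\lambda) \le \L(W_\lambda) + \lambda\Omega(W_\lambda) \le \L(W_0) + \lambda\Omega(W_0) = m + \lambda\Omega(W_0)$ simplifies, after cancelling $m$ and dividing by $\lambda > 0$, to $\Omega(W_\lambda) \le \Omega(W_0)$ for all $\lambda > 0$. Since $\Omega$ is continuous and coercive, the sublevel set $\{W : \Omega(W) \le \Omega(W_0)\}$ is closed and bounded, hence compact; thus $(W_\lambda)_{\lambda>0}$ lies in one compact set and cluster points as $\lambda \to 0$ exist.

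Next, let $\bar W = \lim_k W_{\lambda_k}$ with $\lambda_k \to 0$. From $\L(W_{\lambda_k}) \le \L(W_{\lambda_k}) + \lambda_k \Omega(W_{\lambda_k}) \le m + \lambda_k\Omega(W_0)$ and continuity of $\L$, passing to the limit gives $\L(\bar W) \le m$, which combined with $\L \ge m$ forces $\bar W \in S^*$. For optimality of $\Omega$, I would re-run the a priori bound of the first step with an arbitrary $W_0' \in S^*$ in place of $W_0$, obtaining $\Omega(W_{\lambda_k}) \le \Omega(W_0')$; continuity of $\Omega$ then yields $\Omega(\bar W) \le \Omega(W_0')$ for every $W_0' \in S^*$, so $\bar W$ attains $\min_{S^*}\Omega$ (in particular this infimum is attained).

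Finally, if the minimizer $W^\sharp$ of $\Omega$ over $S^*$ is unique, every cluster point of $(W_\lambda)$ equals $W^\sharp$; since the family stays in a compact set this upgrades to convergence $W_\lambda \to W^\sharp$ as $\lambda \to 0$ (otherwise some sequence $W_{\lambda_k}$ with $\lambda_k \to 0$ would remain bounded away from $W^\sharp$ and, by compactness, possess a convergent subsequence with limit $\neq W^\sharp$, contradicting uniqueness of the cluster point). I do not expect a genuine obstacle here, the proof being elementary; the only point meriting attention is the compactness input, which uses finite-dimensionality of the ambient space so that bounded sets are relatively compact. In an infinite-dimensional setting one would instead invoke weak sequential compactness of sublevel sets together with weak lower semicontinuity of $\Omega$, but neither refinement is needed in the present context.
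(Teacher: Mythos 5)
Your proof is correct and follows essentially the same route as the paper's: the a priori bound $\Omega(W_\lambda) \le \Omega(W_0)$ from comparing $W_\lambda$ against a fixed reference in $S^*$, compactness via coercivity, continuity of $\L$ to place cluster points in $S^*$, and the same comparison against an arbitrary element of $S^*$ to obtain $\Omega$-optimality, concluding with the compactness argument for uniqueness. The only cosmetic difference is that the paper normalizes $\min\L = 0$ without loss of generality while you carry the constant $m$ explicitly.
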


Linking this proposition back to our setting, a natural candidate for the predictor selected in the small regularization limit is the minimal $\ell_2$-norm interpolator:
\begin{equation}
    \underset{W \in S^*}{\mathrm{argmin}}\, \|W\|_F^2, 
    \hspace{1.5cm} 
    S^* = \underset{W \in \R^{d \times m}}{\mathrm{argmin}} \left[\frac{1}{n} \sum_{k=1}^n \Big( \tr(X_k WW^\top) - z_k \Big)^2 \right],
\end{equation}
corresponding to the minimal $\ell_2$-norm interpolator. However, convergence toward this predictor is only guaranteed when gradient flow reaches a global minimizer of the regularized loss. As emphasized earlier, our results allow us to establish this property only in the regime $\kappa \geq \kappa_{\min}$, where gradient flow converges to the global minimizer over all PSD matrices. In this case, the link with the minimal-norm interpolator has already been made precise by \citet{erba2025nuclear}. Moreover, rewriting the optimization problem in terms of $Z = WW^\top$ shows that minimizing $\|W\|_F^2$ is equivalent to minimizing $\tr(Z)$, yielding the minimal nuclear-norm interpolator.

In the underparameterized regime $\kappa < \kappa_{\min}$, we cannot directly guarantee that the solution reached by gradient flow at positive $\lambda$ corresponds to a global minimizer of the regularized loss. Nevertheless, as discussed in \cref{Subsubsec:Langevin}, the analysis of low-temperature Langevin dynamics suggests that this may still hold, provided that our asymptotic simplifications on the dynamics (namely time-translational invariance and fluctuation--dissipation) are valid. If confirmed, this would imply that, for all values of $\kappa$, \cref{Prop:EquationsSmallReg} characterizes the performance of the minimal $\ell_2$-norm interpolator.

\subsection{Unregularized Dynamics} \label{subsec:Unregularized}

In this section, we investigate the gradient flow dynamics~\eqref{eq:GFdynamics} in the absence of regularization. First of all, note that the dynamical result of \cref{Result1} still holds in the absence of regularization, but the dynamics exhibit quite different behaviors than in the regularized case. Indeed, numerical simulations suggest that the dynamics at long times strongly depends on the initial condition. For instance, we show in \cref{fig:FigInit1} the MSE reached by the unregularized dynamics initialized with Gaussian weights with different variances $\gamma$ and remark that the curves we obtain as a function of $\alpha$ strongly depend on this parameter. In contrast, it is known that the presence of regularization tends to make the landscape more convex \citep[see for instance][]{hoerl1970ridge, du2018power, kobayashi2024weight}. It also reduces the dependence on initialization by shrinking the weight components orthogonal to the data, which sometimes leads to the independence of the long-time dynamics with respect to the initial condition.

\begin{figure}[ht]
\centering
\begin{minipage}{0.6\textwidth}
\includegraphics[width=\linewidth]{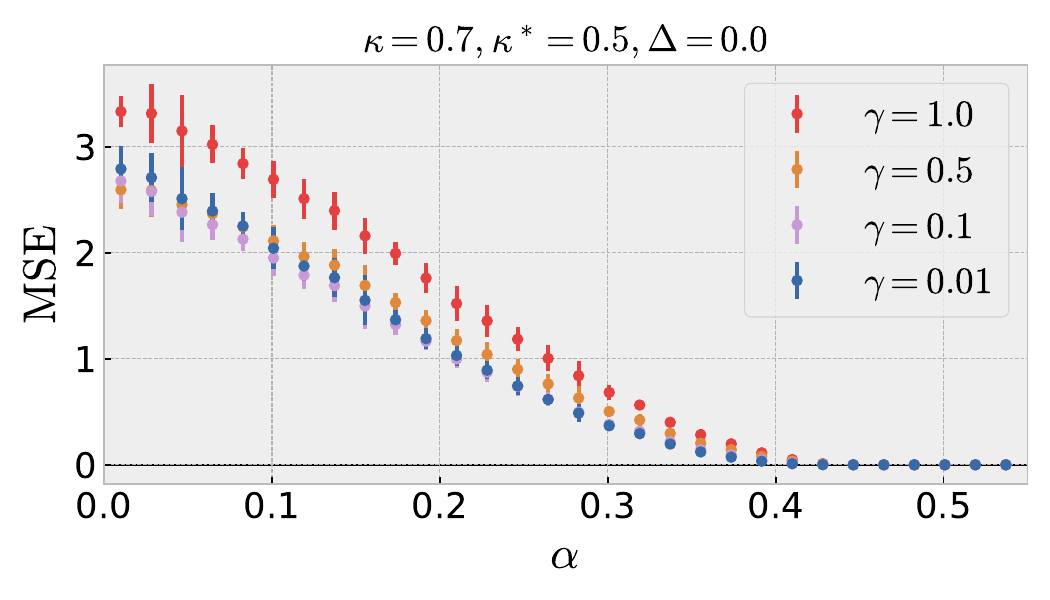}
\end{minipage}
\hfill
\begin{minipage}{0.37\textwidth}
\caption{MSE reached by gradient descent, defined in equation \eqref{eq:GDdynamics}, as a function of $\alpha$ and for different values of initialization variance~$\gamma$. Simulations are averaged over 10 realizations of the initialization, teacher and data. $\kappa = 0.7, \kappa^* = 0.5$ and zero label noise.}
\label{fig:FigInit1}
\end{minipage}
\end{figure}
This observation reveals that the assumption used in \cref{Subsec:LongTimes} is not relevant to describe the unregularized dynamics, since it strongly relies on the hypothesis that the early stages of the dynamics are forgotten at long times.

\subsubsection{Beyond the Interpolation Threshold} \label{Subsubsec:BeyondInterpolation}

Despite these observations, recall that in \cref{Subsubsec:InterpolationThreshold} we introduced the interpolation threshold $\alpha_\text{inter}$, and remarked that for $\alpha > \alpha_\text{inter}$, the statistics of the gradient flow estimator in the small regularization limit are the same as if there were no regularization. In this regime, we therefore conjecture that the equations derived in \cref{Prop:EquationsSmallReg} also apply to the unregularized setting. 
\begin{conjecture} \label{conjecture:unregularized}
    Consider the gradient flow dynamics \eqref{eq:GFdynamics} under \cref{Assumption2}, in the absence of regularization. Then, for $\alpha \geq \alpha_\mathrm{inter}(\kappa, \kappa^*, \Delta)$, the statistics of the gradient flow estimator are given by \textbf{2.} in \cref{Prop:EquationsSmallReg}. 
\end{conjecture}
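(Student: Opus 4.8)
The plan is to transpose the long-time analysis of \cref{Subsec:LongTimes} to the unregularized dynamics, and to show that the obstruction flagged in \cref{subsec:Unregularized} --- the dependence of the long-time limit on the initialization, visible in \cref{fig:FigInit1} --- disappears precisely when $\alpha \geq \alpha_\mathrm{inter}(\kappa,\kappa^*,\Delta)$. The starting point is \cref{Result1}, whose equations \eqref{eq:DMFT1W}--\eqref{eq:DMFT1Y} hold verbatim with $\Omega \equiv 0$. The first step is to argue that, in this regime, the memory term in \eqref{eq:DMFT1W} still admits the short-memory collapse of \cref{Assumption3}, that is $\int_0^t R(t,t')(\G(t') + Z^* - Z(t'))\,\d t' \approx r_\infty(\sqrt{\xi}\,\G + Z^* - Z(t))$ with a \emph{strictly positive} $r_\infty = \lim_{t\to\infty}\int_0^t R(t,t')\,\d t'$. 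The heuristic for why positivity holds above $\alpha_\mathrm{inter}$ but not below is as follows: for $\Delta>0$, exact interpolation becomes impossible past the threshold, so the training loss at convergence is bounded away from zero and the effective loss landscape near the limit point has strictly positive curvature --- equivalently $r_\infty>0$ --- whereas below $\alpha_\mathrm{inter}$ the landscape is flat, $r_\infty\to0$, and the dynamics can wander, matching the initialization sensitivity of \cref{fig:FigInit1} (the case $\Delta=0$ being recovered by continuity, since perfect recovery has already occurred at $\alpha_\mathrm{inter}$).

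Granting this ansatz, the second step is mechanical and parallels the derivation of \cref{Result2} carried out in \cref{App:LongTimes}: equation \eqref{eq:DMFT1W} reduces to the Oja flow \eqref{eq:DynamicsSimplified} with $\lambda=0$, namely $\dot W(t) = 2 r_\infty(\sqrt{\xi}\,\G + Z^* - W(t)W(t)^\top)W(t)$; by the Oja-flow results of \cref{Sec:OjaFlow} this flow converges, for almost every initialization allowed by \cref{Assumption1:Initialization}, to $Z_\infty = (Z^*+\sqrt{\xi}\,\G)^+_{(m)}$, the $q=0$ specialization of \eqref{eq:ZinfinityReg}. Substituting this fixed point into the self-consistency relations \eqref{eq:SCxi} and \eqref{eq:SCr}, and evaluating the resulting spectral integrals through $\mu_\xi$, the free additive convolution of $\mu^*$ with a semicircle law of variance $\xi$, exactly as in \cref{App:LongTimes}, yields $r_\infty = 1 - I_\omega(\xi)/\alpha$ together with the reduced system \eqref{eq:SystemSmallReg2} for $(\omega,\xi)$. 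One then closes the logical loop: by the defining property of the interpolation threshold in \cref{Prop:InterpolationThreshold}, positivity of $r_\infty$ at the solution of \eqref{eq:SystemSmallReg2} is equivalent to $\alpha>\alpha_\mathrm{inter}$, so the short-memory ansatz is self-consistent exactly on the conjectured range. The MSE and training-loss formulas of case \textbf{2.} of \cref{Prop:EquationsSmallReg} then follow by evaluating $\frac1d\|Z_\infty-Z^*\|_F^2$ and the empirical loss at $Z_\infty$.

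The main obstacle is the justification of the short-memory collapse for the unregularized dynamics: unlike the regularized case, there is no $\ell_2$ penalty to damp the components of $W$ lying outside the span of the data, so one must show that the positive curvature produced by the failure of interpolation is by itself enough to force the trajectory to forget its initialization. A natural route is to treat perturbatively the high-dimensional system of \cref{Result1}, linearizing \eqref{eq:DMFT1W} around the conjectured fixed point along the lines of \cref{App:Subsec:DynamicalStability}, to prove that the linearized response operator is contracting whenever $\alpha>\alpha_\mathrm{inter}$ and to rule out the aging scenario of \cref{App:Aging}; combined with the fact that \cref{Assumption1:Initialization} only constrains the spectrum of $W_0W_0^\top$, this would give initialization-independence of $Z_\infty$. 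The high-dimensional convergence-rate dichotomy ($\Theta(t^{-3})$ above threshold versus $\Theta(t^{-1})$ below) signals that the borderline case $\alpha=\alpha_\mathrm{inter}$ is delicate and should be handled separately, and --- as for the other claims in this paper --- the argument ultimately rests on the unproven TTI and fluctuation--dissipation assumptions; numerical integration of \eqref{eq:GDdynamics} at $\lambda=0$ for $\alpha>\alpha_\mathrm{inter}$ provides the empirical support, in the same spirit as \cref{fig:MSE_Smallreg} and \cref{fig:FigOverfitting2} do for the $\lambda\to0^+$ predictions.
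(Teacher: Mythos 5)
Note first that the statement you are addressing is labelled a \emph{conjecture}, and the paper does not prove it: the only support offered is (i) the observation, made at the end of \cref{Subsubsec:EquationsSmallReg}, that the system \eqref{eq:SystemSmallReg2} for $\alpha>\alpha_\mathrm{inter}$ is exactly what one would obtain by running the steady-state simplification of \cref{Subsubsec:SteadyState} directly on the unregularized dynamics, (ii) the zero-temperature Langevin analysis in \cref{App:Subsec:ZeroTemperature}, which shows that when the integrated response $r_\beta$ stays of order one the stationary measure at $\lambda=0$ collapses onto the same fixed point, and (iii) the numerical evidence of \cref{fig:FigUnregularized1}. Your sketch reproduces item (i) essentially verbatim --- plug $\Omega\equiv 0$ into \cref{Result1}, invoke \cref{Assumption3} to collapse to the Oja flow with target $Z^*+\sqrt{\xi}\G$, identify $Z_\infty=(Z^*+\sqrt{\xi}\G)_{(m)}^+$ via \cref{prop:OjaConvergence}, close the equations to get \eqref{eq:SystemSmallReg2}, and observe that $r_\infty=1-I_\omega(\xi)/\alpha>0$ is precisely the consistency condition $\alpha>\alpha_\mathrm{inter}$ --- so you are taking the same route the paper uses to motivate (not prove) the conjecture.

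You also correctly pinpoint the gap the paper leaves open: without the $\ell_2$ penalty nothing \emph{a priori} damps the components of $W$ outside the data span, so \cref{Assumption3:Response} and the initialization-independence of $Z_\infty$ are not justified for $\lambda=0$; your suggestion to linearize the system of \cref{Result1} around the conjectured fixed point and rule out the aging scenario of \cref{App:Aging} is a sensible but untouched direction. Two minor points worth flagging. First, your heuristic that positive training loss implies positive curvature and hence $r_\infty>0$ is plausible for $\Delta>0$, but the paper instead justifies the link between $r_\infty>0$ and the uniqueness/structure of minimizers via the response calculation in \cref{App:Subsubsec:LongTimesResponse} and the Langevin stationary measure, not via a curvature-from-nonzero-loss argument; the $\Delta=0$ ``by continuity'' step is also informal since $\alpha_\mathrm{PR}$ and $\alpha_\mathrm{inter}$ differ. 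Second, you omit the paper's independent Langevin-based corroboration, which is the only piece of theoretical support beyond the heuristic you reconstruct. Given that the target statement is a conjecture, your blind sketch is an accurate reconstruction of the paper's reasoning and of its acknowledged limitations.
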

In \cref{fig:FigUnregularized1}, we present a numerical illustration of this conjecture: the dependence on the initialization strength $\gamma$ appears to be present only for $\alpha < \alpha_{\text{inter}}$. Beyond the interpolation threshold, this dependence vanishes and the performance of the gradient flow estimator matches the one in the small regularization limit. In this figure, we plot the MSE, the empirical loss and the in-sample error, corresponding to the error on the true labels:
\begin{equation} \label{eq:insampleerror}
    \mathrm{Err}_\mathrm{in} = \frac{1}{4n} \sum_{k=1}^n \Big( \tr\big(X_k WW^\top \big) - \tr \big(X_k Z^* \big) \Big)^2,
\end{equation} 
where $W$ is the gradient flow estimator reached at convergence. Similarly to the expressions of the MSE and training loss derived in \cref{Prop:EquationsSmallReg}, we derive an expression for the in-sample error in \cref{App:Subsec:InterpolationThreshold}. 

\cref{fig:FigUnregularized1} reveals that for $\alpha < \alpha_\text{inter}$, the student is able to perfectly fit all the noisy labels, leading to a zero loss, and an in-sample error equal to $\Delta / 4$: in this case there are too few observations for the student to capture the teacher structure behind the label noise. On the contrary, for $\alpha > \alpha_\text{inter}$, the noisy labels cannot all be fitted, the loss becomes positive (and increases with $\alpha$), but the representation learned by the student leads to a smaller error on the labels as well as a smaller MSE. In this region, despite the label noise, the high number of observations leads to an improved generalization. 

\begin{figure}[ht]
    \centering
    \includegraphics[width=\linewidth]{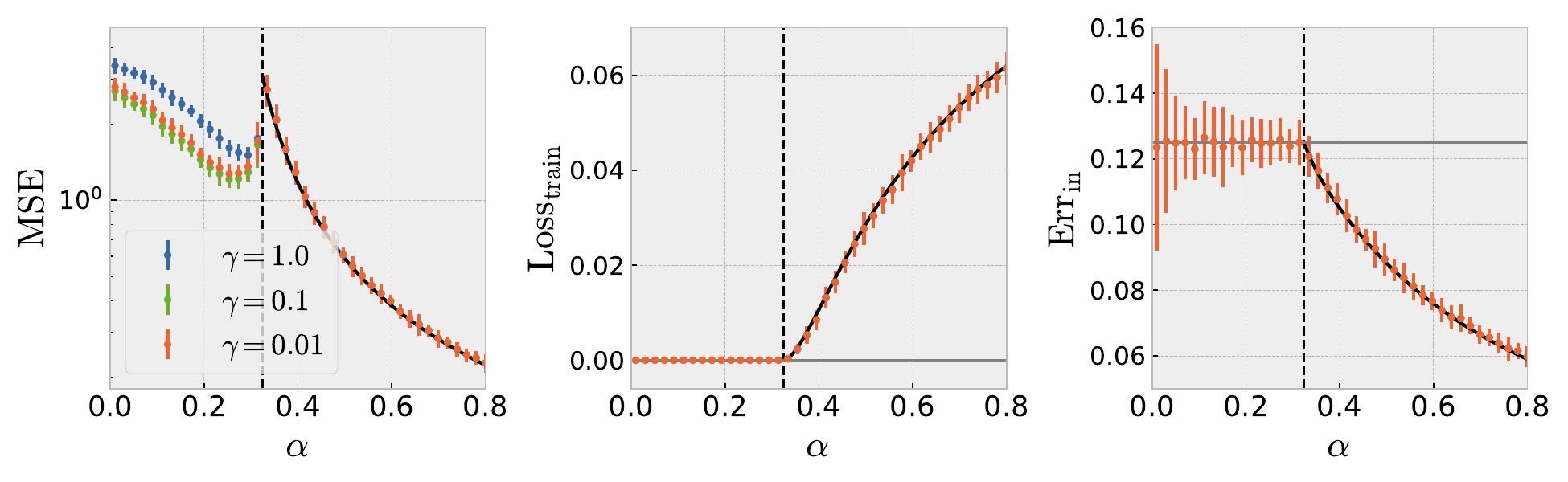}
    \vspace*{-0.8cm}
    \caption{Comparison between simulations of gradient descent, defined in equation \eqref{eq:GDdynamics}, and numerical integration of the system of equations~\eqref{eq:SystemSmallReg2} for $\alpha > \alpha_\text{inter}$ (vertical dashed line), for $\kappa = 0.7, \kappa^* = 0.5$ and $\Delta = 0.5$. MSE (left), empirical loss (middle) and error on the true labels, also known as in-sample error (right), as a function of $\alpha$ for three values of initialization variance~$\gamma$, and with no regularization. The horizontal gray line on the right panel corresponds to the value $\Delta / 4$. Gradient descent simulations are averaged over 20 realizations of the initialization, teacher, and data.}
    \label{fig:FigUnregularized1}
\end{figure}

\subsubsection{Perfect Recovery Threshold} \label{Subsubsec:PRThresholdUnreg}

We shall now investigate the perfect recovery threshold achieved by the gradient flow estimator in the unregularized case. We therefore consider the case $\kappa \geq \min(\kappa^*, 1)$ and $\Delta = 0$, allowing for perfect recovery. As shown previously in \cref{Subsubsec:InterpolationThreshold}, the MSE is already zero in the region $\alpha > \alpha_\text{inter}$. This leads to the question: does the PR threshold match the interpolation threshold, i.e., does gradient flow require the loss to have a single minimizer (at least locally near the point of convergence) in order to recover the teacher, or does the minimizer that is chosen have some particular properties that trigger perfect recovery before interpolation? 

As a result of a large number of simulations, we formulate the following conjecture:
\begin{conjecture} \label{Conjecture:PRThreshold}
Let $\sigma$ denote the semicircular distribution (defined in \cref{App:Subsubsec:MatrixEnsembles}). In the setting where $\Delta = 0$ and $\kappa \geq \min(\kappa^*, 1)$, the value of the perfect recovery threshold is given by:
\begin{equation} \label{eq:PRThresholdUnreg}
\alpha_\mathrm{PR}(\kappa, \kappa^*) = \min \left( \kappa - \frac{\kappa^2}{2}, \alpha_\mathrm{inter}(\kappa, \kappa^*) \right),
\end{equation}
where $\alpha_\mathrm{inter}$ is given in \cref{Prop:InterpolationThreshold1} for $\kappa^* \leq 1$:
\begin{equation}
    \alpha_\mathrm{inter}(\kappa, \kappa^*) = \kappa^* - \frac{\kappa^{*2}}{2} + \frac{(1-\kappa^*)^2}{2} \int_{\max(0, \tilde \omega)} x^2 \d \sigma(x),
\end{equation}
with $\tilde \omega$ being solution of:
\begin{equation}
    \frac{\min(\kappa, 1) - \kappa^*}{1 - \kappa^*} = \int_{\tilde \omega} \d \sigma(x).
\end{equation}
\end{conjecture}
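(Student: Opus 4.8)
The plan is to prove the two inequalities $\alpha_\mathrm{PR}\le\min\!\big(\kappa-\tfrac{\kappa^2}{2},\,\alpha_\mathrm{inter}\big)$ and $\alpha_\mathrm{PR}\ge\min\!\big(\kappa-\tfrac{\kappa^2}{2},\,\alpha_\mathrm{inter}\big)$ separately, matching the two arguments of the minimum with two distinct obstructions to recovery. For the first inequality I would treat the regime $\alpha\ge\alpha_\mathrm{inter}$ via \cref{conjecture:unregularized}: there the unregularized estimator coincides with the $\lambda\to0^+$ limit described by the second case of \cref{Prop:EquationsSmallReg}, whose MSE vanishes as soon as $\alpha\ge\alpha_\mathrm{PR}^+(\kappa,\kappa^*)$, and \cref{Prop:PRThresholdRegularization} together with~\eqref{eq:limsthresholds} gives $\alpha_\mathrm{PR}^+\le\alpha_\mathrm{inter}$; hence $\alpha_\mathrm{PR}\le\alpha_\mathrm{inter}$. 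The remaining part of the upper bound concerns the window $\kappa-\tfrac{\kappa^2}{2}<\alpha<\alpha_\mathrm{inter}$, which is nonempty exactly when $\kappa$ is close to $\kappa^*$: at $\kappa=\kappa^*$ the two curves agree and equal $\kappa^*-\tfrac{\kappa^{*2}}{2}$, and comparing their $\kappa$-derivatives there shows $\kappa-\tfrac{\kappa^2}{2}$ is the one that grows the slower.

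Inside that window I would argue in two steps. First, a dimension count for the image of $W\mapsto WW^\top$ shows that the variety $\mathcal M=\{WW^\top : W\in\R^{d\times m}\}$ of rank-$\le m$ PSD matrices has local dimension $(\kappa-\tfrac{\kappa^2}{2})\,d^2$ near $Z^*$ --- the generic dimension $md-\binom m2$ of the rank-$m$ stratum, which persists at the lower-rank point $Z^*$ through a Schur-complement parametrization of $\mathcal M$ around it. Cutting $\mathcal M$ with the $n=\alpha d^2$ interpolation constraints $\tr\!\big(X_k(Z-Z^*)\big)=0$ then leaves $Z^*$ as an isolated zero-loss point of $\mathcal M$ once $\alpha>\kappa-\tfrac{\kappa^2}{2}$; this should follow from a transversality / null-space property of GOE measurement operators restricted to the tangent cone of $\mathcal M$ at $Z^*$. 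Second, gradient flow on $\L$ from a generic $W_0$ converges to a critical point of $\L$ whose image $Z_\infty$ attains the global minimum value $0$; combining this with a benign-landscape statement for the factored matrix-sensing objective and with the local isolation just established forces $Z_\infty=Z^*$.

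For the lower bound it suffices, since $\min(\cdot)\le\kappa-\tfrac{\kappa^2}{2}$, to show that recovery fails for $\alpha<\min\!\big(\kappa-\tfrac{\kappa^2}{2},\,\alpha_\mathrm{inter}\big)$, where in particular $\alpha<\kappa-\tfrac{\kappa^2}{2}$. By the same dimension count the zero-loss set $S^*=\{W:\L(W)=0\}$ projects under $W\mapsto WW^\top$ onto a submanifold $M\subset\mathcal M$ of positive dimension $(\kappa-\tfrac{\kappa^2}{2}-\alpha)\,d^2$ containing $Z^*$. One then wants to conclude that the gradient-flow limit map $W_0\mapsto W_\infty(W_0)$, restricted to the (positive-measure, conjecturally full-measure) set of initializations that converge to $S^*$, reaches $Z^*$ only for a Lebesgue-null set of $W_0$: since this map is real-analytic on an open set and nonconstant --- perturbing $W_0$ moves its limit along the positive-dimensional $M$ --- the set $\{W_0:W_\infty(W_0)W_\infty(W_0)^\top=Z^*\}$ is a proper analytic subset, hence of measure zero, so $\mathrm{MSE}>0$ almost surely over the initialization. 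Combining the two bounds yields $\alpha_\mathrm{PR}=\min\!\big(\kappa-\tfrac{\kappa^2}{2},\,\alpha_\mathrm{inter}\big)$, with $\alpha_\mathrm{inter}$ imported verbatim from \cref{Prop:InterpolationThreshold1}; the check at $\kappa=\kappa^*$ (both arguments equal $\kappa^*-\tfrac{\kappa^{*2}}{2}$, matching the Bayes-optimal threshold of \citet{maillard2024bayes}) and the saturation of $\alpha_\mathrm{inter}$ for $\kappa\ge\tfrac{1+\kappa^*}{2}$ identify which argument is active as $\kappa$ varies.

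The main obstacle is twofold, and is precisely why the statement is a conjecture. The upper bound in the window $\kappa-\tfrac{\kappa^2}{2}<\alpha<\alpha_\mathrm{inter}$ requires controlling the implicit bias of \emph{unregularized} gradient flow, from a generic (non-vanishing) initialization, on the nonconvex factored objective at a sample size $n$ of order $(\kappa-\tfrac{\kappa^2}{2})\,d^2$ --- strictly below the order-$\kappa d^2$ regime in which restricted-isometry-type benign-landscape results apply --- while the steady-state reduction of \cref{Subsubsec:SteadyState} is explicitly inapplicable here because the unregularized dynamics is not forgetful (see \cref{fig:FigInit1}); one therefore needs either a genuinely new dynamical argument or a sharpened landscape analysis exploiting the Gaussianity of the $X_k$ and the exact threshold $\kappa-\tfrac{\kappa^2}{2}$. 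Separately, invoking \cref{conjecture:unregularized} for $\alpha\ge\alpha_\mathrm{inter}$ only reduces one conjecture to another: a self-contained argument would have to establish the loss of initialization dependence at $\alpha=\alpha_\mathrm{inter}$ directly from the high-dimensional system of \cref{Result1} (or its aging refinement in \cref{App:Aging}), showing that its only consistent long-time solution beyond $\alpha_\mathrm{inter}$ is the forgetful one.
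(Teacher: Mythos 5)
The statement you were asked to prove is, in the paper, a \emph{conjecture}, and the paper does not supply a proof: it motivates the formula heuristically (via the degrees-of-freedom count $\kappa - \kappa^2/2$ for the rank-$\lfloor\kappa d\rfloor$ PSD variety and the already-derived $\alpha_\mathrm{inter}$ from \cref{Prop:InterpolationThreshold1}), and then supports it purely with the gradient-descent experiments reported in \cref{App:Subsubsec:SimusPRThreshold}. Your proposal correctly identifies this status, and its two-sided plan is consistent with the paper's informal picture: the $\alpha_\mathrm{inter}$ branch is governed by the disappearance of initialization dependence past interpolation (for which the paper itself only invokes \cref{conjecture:unregularized} and the $\lambda\to 0^+$ machinery of \cref{Prop:EquationsSmallReg}, so your reduction of one conjecture to the other is faithful), and the $\kappa-\kappa^2/2$ branch is the paper's own degrees-of-freedom interpretation, which you fill out with a tangent-cone / transversality argument and a Sard-type argument for the lower bound that the paper never attempts. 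So on the $\alpha_\mathrm{inter}$ side you and the paper reason along the same lines; on the $\kappa-\kappa^2/2$ side you go well beyond the paper's heuristic by turning the dimension count into a local-isolation statement and a measure-zero-preimage statement for the limit map.

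Two caveats worth making explicit, since you are proposing more than the paper claims. First, the benign-landscape step in your upper bound is not currently available in this sample-size regime: the known absence-of-spurious-minima results for factored matrix sensing require RIP at the target rank, which holds for $n\gtrsim m d$, whereas you want to operate just above $n\approx (\kappa-\kappa^2/2)d^2\approx m d - \binom m 2$, i.e.\ precisely at the geometric threshold where the Jacobian of the constraint map on the rank-$m$ stratum becomes square; at that boundary the restricted injectivity / transversality property you invoke is a delicate sharp-threshold statement about GOE sensing operators on the tangent cone of the determinantal variety at the lower-rank point $Z^*$, and no off-the-shelf result gives it. Second, the real-analyticity of the gradient-flow limit map $W_0\mapsto W_\infty(W_0)$ (even on the basin of the zero-loss set), which underlies your measure-zero argument for the lower bound, is itself non-obvious for an unregularized, rank-deficient, non-Morse landscape; it is plausible by Łojasiewicz-type arguments but would need to be established. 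You flag both of these as obstacles, which is the right judgment; the paper sidesteps them entirely by formulating the statement as a conjecture backed by the numerics of \cref{fig:PRSummary}.
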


This conjecture gives the expression of the perfect recovery threshold as the minimum of the interpolation threshold $\alpha_\text{inter}$ and a quantity $\alpha_\mathrm{dof}(\kappa) = \kappa - \kappa^2 / 2$ that corresponds to the (normalized) number of degrees of freedom of the set of PSD matrices with rank $\kappa d$:
\begin{equation}
    \kappa - \frac{\kappa^2}{2} = \lim_{d \to \infty} \frac{1}{d^2} \mathrm{dim} \big\{ Z \in \mathcal{S}_d^+(\R), \, \mathrm{rank}(Z) = \lfloor \kappa d \rfloor \big\}.
\end{equation}
Therefore, for $n \geq \alpha_{\mathrm{dof}}(\kappa)\, d^2$, the number of observations exceeds the number of free parameters of the student matrix. Interestingly, this value does not coincide with the interpolation threshold derived in \cref{Subsubsec:InterpolationThreshold}. Indeed, the quantity $\alpha_{\mathrm{dof}}(\kappa)$ is a geometric quantity determined by the symmetries of the predictor, whereas the interpolation threshold is problem-specific and accounts for the structure of the teacher (note that $\alpha_{\text{inter}}$ depends on $\kappa^*$ but not on $\alpha_{\mathrm{dof}}$).

More precisely, for $\kappa^* \leq 1$, an analysis of the function $\alpha_{\mathrm{PR}}(\kappa, \kappa^*)$ in \cref{Conjecture:PRThreshold} reveals the existence of a critical value $\kappa_c \in [\kappa^*, 1]$ such that
\begin{equation}
    \alpha_{\mathrm{PR}}(\kappa, \kappa^*) =
    \begin{cases}
        \kappa - \dfrac{\kappa^2}{2}, & \text{if } \kappa \leq \kappa_c, \\
        \alpha_{\mathrm{inter}}(\kappa, \kappa^*), & \text{if } \kappa \geq \kappa_c.
    \end{cases}
\end{equation}
Therefore, for $\kappa < \kappa_c$, the teacher is perfectly recovered, despite the presence of multiple global minimizers in its vicinity. This conclusion is similar to the case of the small regularization limit \cref{subsec:SmallReg}: this phenomenon is an instance of the implicit bias induced by gradient flow dynamics.

\begin{figure}[ht]
    \centering
    \begin{minipage}{0.58\textwidth}
    \includegraphics[width=\linewidth]{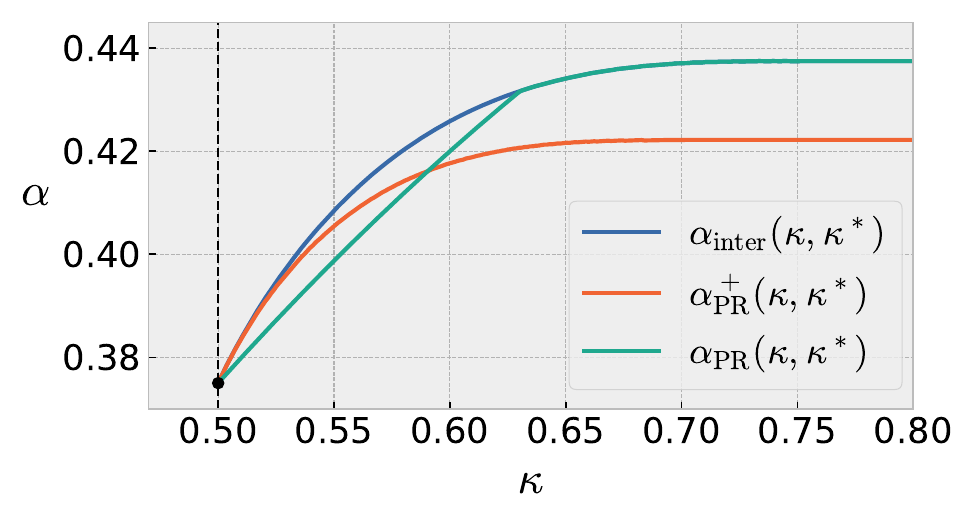}
    \end{minipage}
    \hfill
    \begin{minipage}{0.4\textwidth}
        \caption{Conjectured PR threshold $\alpha_\text{PR}$ (\cref{Conjecture:PRThreshold}), along with the interpolation threshold $\alpha_\mathrm{inter}$ (\cref{Prop:InterpolationThreshold1}) and the PR threshold $\alpha_\text{PR}^+$ (\cref{Prop:PRThresholdRegularization}) in the small regularization limit as a function of $\kappa$ for $\kappa^* = 0.5$ (vertical dashed line).}
        \label{fig:FigThreshold1}
    \end{minipage}
\end{figure}
\cref{fig:FigThreshold1} compares the three thresholds derived in this section and in \cref{subsec:SmallReg}. Interestingly, the figure features a region in which the PR threshold in the $\lambda \to 0^+$ limit is larger than its unregularized counterpart. While this observation is plausible and does not contradict our previous results, the behavior differs from that observed at larger values of $\kappa$. In particular, this suggests that the implicit bias of the unregularized dynamics and the explicit bias induced by vanishing regularization select different solutions. As a result, each mechanism may be advantageous in different regions of the parameter space.

In the unregularized case, the perfect recovery threshold exhibits the same qualitative behavior as in the small regularization limit. At fixed $\kappa^*$, it increases with $\kappa$ up to a certain value and then becomes constant. This occurs before $\kappa = 1$ and indicates that, in this regime, increasing the number of parameters does not require more data to achieve perfect recovery. From the expression~\eqref{eq:PRThresholdUnreg}, one can show that $\alpha_{\mathrm{PR}}$ becomes constant at $\kappa = (1 + \kappa^*) / 2$, provided that $\kappa^* \leq 1$. This value coincides with the interpolation threshold given in \eqref{eq:ConstantInterThreshold}. 

Finally, we emphasize that \cref{Conjecture:PRThreshold} is confirmed by a large number of numerical simulations. More details on these simulations, the results and the measure of the PR threshold can be found in \cref{App:Subsubsec:SimusPRThreshold}. 

\subsection{Gaussian Equivalence and Quadratic Neural Networks} \label{subsec:GaussianUniversality}

As emphasized earlier, the Gaussian structure of the data is a key assumption in our results. It enables to rewrite the dynamical partition function associated with the gradient flow dynamics~\eqref{eq:GFdynamics} by averaging with respect to this distribution and leads to the dynamics given in \cref{Result0}. 

However, we believe that our conclusions should extend beyond the Gaussian setting to a broader class of distributions. The idea that key phenomena or asymptotic behaviors in complex systems remain unchanged regardless of the specific underlying distribution is often referred to as Gaussian universality. This principle has been supported by a long line of work establishing such universality results through asymptotic analyses of high-dimensional inference problems \citep{hu2022universality, montanari2022universality,  dandi2023universality, gerace2024gaussian, bandeira2025exact, xu2025fundamental, wen2025does}. Unlike these works that derive universality results for static problems (either empirical risk minimization or Bayes-optimal learning), our conjecture bears on the behavior of gradient flow trajectories. Such dynamical universality results, although less studied, have been investigated in a few prior works \citep{celentano2021high, goldt2022gaussian, han2025entrywise}. 

In the following, we study the case where the sensing matrices are (centered) rank-one measurements:
\begin{equation} \label{eq:sensingquadratic}
    X_k = \frac{x_kx_k^\top - I_d}{\sqrt{d}}, \hspace{1.5cm} x_1, \dots, x_n \overset{\mathrm{i.i.d.}}{\sim} \N(0, I_d).  
\end{equation}
As underlined in \cref{Sec:Setting}, this setting corresponds to a shallow neural network with quadratic activation function, and fixed output weights. Our conjecture bears on the equivalence between this model and the Gaussian matrix sensing setting for which we derived our results.

\begin{conjecture} \label{Conjecture:GaussianUniversality}
The conclusion of \cref{Result0} still holds if the sensing matrices $X_1, \dots, X_n$ are i.i.d. distributed as in \eqref{eq:sensingquadratic}. 
\end{conjecture}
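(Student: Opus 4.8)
The plan is to show that the single step in the derivation of \cref{Result0} where \cref{Assumption1:Data} is used --- the average of the dynamical generating functional (the ``dynamical partition function'' of \cref{App:Subsec:DMFTDerivation}) over the sensing matrices $X_1,\dots,X_n$ --- is \emph{universal}, i.e.\ has the same $d\to\infty$ limit for the centered rank-one ensemble \eqref{eq:sensingquadratic} as for $\mathrm{GOE}(d)$. Once this average is performed, the rest of the argument (the saddle-point in $d$ yielding the self-consistent system) is blind to the data distribution. The generating functional depends on the data only through the scalar local fields $y_k(t)=\tr\!\big(X_k Z(t)\big)$ with $Z(t)=W(t)W(t)^\top$, and through the feedback contractions $X_k W(t)$ (and their conjugate counterparts) appearing in the drift of \eqref{eq:GFdynamics}. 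It therefore suffices to establish a \emph{process-level Gaussian equivalence}: that the joint law of $\big(y_k(\cdot)\big)_{k\le n}$, together with these contractions, converges to the same Gaussian-process limit in both ensembles.

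The first observation is that the first two moments match \emph{exactly}. A direct computation gives, for any symmetric $A,B$ and for both ensembles, $\mathbb{E}\,\tr(X_k A)=0$, $\mathbb{E}\,\tr(X_k A)\tr(X_k B)=\tfrac{2}{d}\tr(AB)$, and $\mathbb{E}\,X_k A X_k = \tfrac{1}{d}\big(A+(\tr A)I_d\big)$, so the effective drift and the covariances $\mathcal{K}_Z,\mathcal{K}_y$ of \cref{Result0} are unchanged at this level. What remains is to show that the non-Gaussian corrections of the rank-one model vanish in the limit. For the local fields this is a classical central limit theorem for centered quadratic forms of Gaussian vectors: writing $y_k(t)=\tfrac{1}{\sqrt d}\big(x_k^\top Z(t) x_k-\tr Z(t)\big)$, the $j$-th cumulant of $y_k(t)$ is $O\big(d^{1-j/2}\,\|Z(t)\|_{\mathrm{op}}^{\,j-2}\big)$ (using $\tfrac1d\|Z(t)\|_F^2=\Theta(1)$), hence $o(1)$ for $j\ge 3$ as long as $\|Z(t)\|_{\mathrm{op}}$ stays bounded along the trajectory --- a bound that follows from \cref{Assumption1} together with a Gr\"onwall estimate on the flow \eqref{eq:GFdynamics} (equivalently on its regularized Oja form from \cref{Sec:OjaFlow}). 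For the feedback terms, the relevant objects are empirical averages $\tfrac1n\sum_k(\text{quadratic in }X_k)$, whose limits are governed by Marchenko--Pastur/free-probability asymptotics depending only on the first two moments of $X_k$, with $O(d^{-1})$ fluctuations in both ensembles.

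The subtlety --- and the reason the above is not yet a proof --- is that $Z(t)$ is not independent of the $X_k$, so one cannot invoke a quadratic-form CLT conditionally on $Z(t)$. I would handle this with a leave-one-out (cavity) decomposition: let $Z^{(k)}(t)$ solve the same dynamics with the $k$-th sample removed; since a single sample contributes $O(1/n)=O(d^{-2})$ to $\nabla\L$, stability estimates for the flow give $\tfrac1d\|Z(t)-Z^{(k)}(t)\|_F^2=O(d^{-2})$, uniformly on compact time intervals and, with high probability, simultaneously over all $k\le n$. Substituting $Z^{(k)}(t)$ for $Z(t)$ inside $y_k(t)$ decouples the sensing matrix from the matrix it contracts, at a cost that vanishes after summing over the $n=\Theta(d^2)$ samples. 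With this decoupling in place, a Lindeberg-type interpolation --- replacing the rank-one $X_k$ by a GOE matrix with matching first two moments, one index at a time, and bounding the remainder by the third cumulant controlled above --- transfers the limiting generating functional from the rank-one to the Gaussian ensemble. (An equivalent route is a synchronous coupling of the two flows driven by a shared source of randomness, followed by the same Gr\"onwall argument.)

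The main obstacle is the uniform propagation of these cavity estimates through the nonlinear, self-consistent feedback: the drift in \eqref{eq:GFdynamics} is quadratic in $W$, so errors can be amplified, and one needs a priori control of $\|W(t)\|_{\mathrm{op}}$ and of the fields $y_k(t)$ that is uniform in $d$ --- and, for the long-time statements, valid up to $t\to\infty$ rather than merely on $[0,T]$. A second, more structural obstacle is that \cref{Result0} is itself obtained by a non-rigorous saddle-point computation, so a fully rigorous universality statement would first require a rigorous derivation of the Gaussian case along the lines of \citet{celentano2021high,gerbelot2024rigorous,han2025entrywise}, and then the universality transfer above. I would therefore expect a complete proof to proceed in two stages --- first establishing \cref{Result0} rigorously for GOE data, then running the leave-one-out/Lindeberg machinery to remove Gaussianity --- with the a priori trajectory control being the common bottleneck of both.
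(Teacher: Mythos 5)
Your proposal is correct in its core observations and shares the central structural fact with the paper's argument---exact matching of the first two moments of the two sensing ensembles and $O(d^{1-j/2})$ decay of higher cumulants of centered Gaussian quadratic forms---but the route you propose is genuinely different from the one taken in \cref{App:Subsec:GaussianEquivalence}. The paper remains entirely inside the path-integral formalism: after averaging the dynamical partition function over the data, the only data-dependent object left is the multi-time process $\mathbf{y}(t)=\big(\tr(Z(t)X),\tr(\hat Z(t)X),\tr(Z^*X)\big)$, and \cref{Lemma:CumulantsQuadratic} supplies an \emph{exact} closed form for its joint cumulants in terms of traces $\tr\big(Z_{i_1}(t_1)\cdots Z_{i_r}(t_r)\big)$ with $Z_1=Z$, $Z_2=\hat Z$, $Z_3=Z^*$; the order-$r$ cumulant scales as $d^{-r/2}$ times such traces, so the argument reduces to the unproven claim that all these mixed traces, including those involving the conjugate field $\hat Z$, stay $O(d)$. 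Because $W,\hat W$ are free integration variables in that formulation, the dependence of $Z(t)$ on $X_k$ that you rightly flag never arises there; it is traded for the $\hat Z$-trace bound. Your leave-one-out plus Lindeberg route operates on the true trajectory instead and is the natural path toward a rigorous universality theorem, but it pays for concreteness with exactly the obstacles you list: uniform-in-$d$ a priori control of $\|W(t)\|_{\mathrm{op}}$, propagation of the cavity estimate through the quadratic drift, and the fact that a rigorous reference statement for the Gaussian case must come first. One small caution on your treatment of the feedback term: $\tfrac{d}{n}\sum_k \ell'(y_k,z_k)X_k W(t)$ is a quadratic polynomial in $X_k$ only for the square cost, since in general $\ell'$ is nonlinear in $y_k$; the leave-one-out decoupling is therefore already needed to make sense of the universality of this term, so your cavity estimate does genuine work there rather than being an optional refinement.
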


This conjecture is inspired by some recent works that derived universality results for quadratic neural networks, in the Bayes-optimal setting \citep{maillard2024bayes} and for empirical risk minimization \citep{erba2025nuclear}. Although we formulate it as a conjecture, we build in \cref{App:Subsec:GaussianEquivalence}
an argument that would lead to a proof, but requires a more rigorous treatment.  

\begin{figure}[ht]
    \centering
    \includegraphics[width=\linewidth]{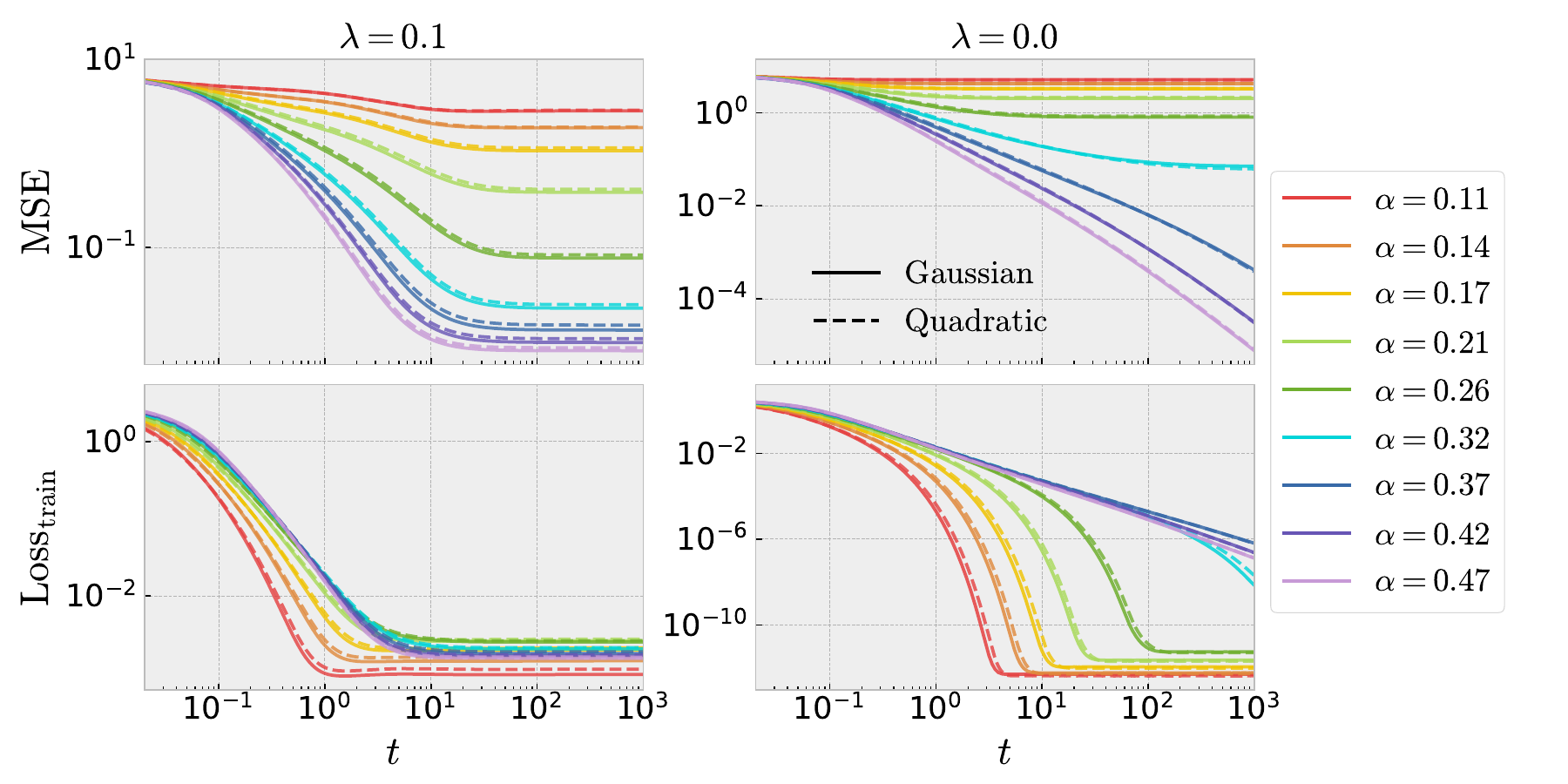}
    \vspace*{-0.7cm}
    \caption{MSE and loss achieved by gradient descent, defined in equation \eqref{eq:GDdynamics}, as a function of time for $d = 150, \kappa = 0.5, \kappa^* = 0.2$, $\Delta = 0$, $\lambda = 0.1$ (left) and $\lambda = 0$ (right), for several values of $\alpha$. Full lines: Gaussian data. Dashed lines: data generated as in equation~\eqref{eq:sensingquadratic}. Gradient descent simulations are averaged over 10 realizations of the initialization, teacher and data.}
    \label{fig:FigQuad1}
\end{figure}

In \cref{fig:FigQuad1}, we numerically justify this claim by comparing time-dependent averaged quantities (MSE, loss) along both gradient descent dynamics (for the Gaussian and quadratic cases). Despite some slight discrepancies that should originate from the finite dimension (here $d = 150$), the trajectories look very similar and provide a first confirmation of this equivalence result.

To conclude this part, one question that would require a deeper clarification is to which extent this Gaussian universality result holds. Indeed, as it is often the case in such results, we believe that \cref{Conjecture:GaussianUniversality} can be extended to a larger class of distributions. Therefore, it would be of interest to understand what would be the necessary requirements on this distribution to guarantee universality in our setting. 

\section{Additional Results on the Oja Flow} \label{Sec:OjaFlow}
In this section, we consider the Oja flow dynamics, corresponding to the approximate dynamics we use in \cref{Subsec:LongTimes} to derive our long-time equations for the gradient flow trajectory. In the following, we derive several results that are then used to prove the main claims of \cref{Subsec:LongTimes}. 

In the following, we let $A \in \mathcal{S}_d(\R)$ be a symmetric matrix, and consider the nonlinear matrix differential equation:
\begin{equation} \label{eq:OjaFlow}
    \dot W(t) = \big(A - W(t)W(t)^\top \big) W(t),
\end{equation}
where $W(t) \in \R^{d \times m}$. Moreover, note that $W(t)$ is solution of the gradient flow associated with the loss:
\begin{equation} \label{eq:PotentialOja}
    U(W) = \frac{1}{4} \big\| WW^\top - A \big\|_F^2. 
\end{equation}
Before presenting some results, we mention several key references on this topic. The Oja flow was introduced in the seminal paper by \citet{oja1982simplified}, and was designed as a continuous-time model for principal component analysis. Subsequent works studied its convergence properties \citep{yan1994global, tsuzuki2025global}. This flow is naturally connected to high-dimensional inference problems, such as low-rank matrix factorization and matrix sensing. Such formulations are central to the Burer–Monteiro approach to semidefinite programming \citep{burer2003nonlinear, boumal2016non} and can also be viewed as part of the broader framework of optimization on matrix manifolds \citep{absil2008optimization}. Finally, related ideas appear in matrix denoising: for instance, \citet{bodin2023gradient} use the structure of the Oja flow to derive a high-dimensional limit for extensive-rank positive semidefinite denoising dynamics.

In the following, we study this dynamics and present new results, that we directly use to study the high-dimensional dynamics for our matrix sensing problem. More precisely:
\begin{itemize}
    \item In \cref{Subsec:OjaKnownResults}, we start by mentioning several existing results on the Oja flow dynamics. 
    \item In \cref{Subsec:OjaCVFinitedim} and \cref{Subsec:OjaCVInfinitedim}, we then derive tight convergence rates in both finite and infinite dimension. We compare those with existing literature on the subject.
    \item In \cref{Subsec:OjaResponse}, we finally derive the linear response of the solution with respect to a time-dependent perturbation. 
\end{itemize}

\subsection{Known Properties of the Oja Flow} \label{Subsec:OjaKnownResults}

We start with some known results on the Oja flow dynamics. The first one we present is a closed-form solution, expressed in terms of the matrix $Z(t) = W(t)W(t)^\top$. 
\begin{proposition} \label{prop:OjaSolution} Let $\big(W(t) \big)_{t \geq 0}$ be the solution of~\eqref{eq:OjaFlow} with initial condition $W_0$. Then, at all times:
\begin{equation} \label{eq:SolOjaFlow}
    W(t)W(t)^\top = e^{tA} W_0 \left( I_m + 2 W_0^\top \int_0^t e^{2sA} \d s \, W_0 \right)^{-1} W_0^\top e^{t A}. 
\end{equation}
\end{proposition}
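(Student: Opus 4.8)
The plan is to reduce the nonlinear matrix flow to a linear one by a change of variables. The key observation is that the factor $W(t)$ lives in $\R^{d\times m}$ while the dynamics~\eqref{eq:OjaFlow} is driven by the symmetric matrix $A$ acting on the left and by the (time-dependent) matrix $Z(t)=W(t)W(t)^\top$ on the right. First I would guess that $W(t) = e^{tA} W_0\, G(t)$ for some curve $G(t)\in\R^{m\times m}$ with $G(0)=I_m$, since in the absence of the cubic term the solution would be exactly $e^{tA}W_0$. Substituting this ansatz into~\eqref{eq:OjaFlow} and using $\dot W = A e^{tA}W_0 G + e^{tA}W_0\dot G$, the linear term $A e^{tA}W_0 G$ cancels on both sides, leaving
\begin{equation*}
 e^{tA}W_0 \dot G(t) = - \big(e^{tA}W_0 G(t) G(t)^\top W_0^\top e^{tA}\big)\, e^{tA} W_0 G(t).
\end{equation*}
This is not yet closed in $G$ because of the stray $e^{tA}$ factors, which suggests that the natural object is really $Z(t)$ rather than $W(t)$ itself.

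Accordingly, the main step is to derive and solve an ODE for $Z(t)=W(t)W(t)^\top$. Differentiating, $\dot Z = \dot W W^\top + W \dot W^\top = (A-Z)Z + Z(A-Z) = AZ + ZA - 2Z^2$, a matrix Riccati equation. I would then look for a solution of the form $Z(t) = e^{tA} W_0\, M(t)^{-1} W_0^\top e^{tA}$ with $M(t)\in\mathcal S_m^{++}(\R)$, $M(0)=I_m$; this form is forced by the left/right $e^{tA}$ conjugation seen above together with the rank-$\le m$ structure. Plugging this into the Riccati equation and using $\frac{d}{dt}(M^{-1}) = -M^{-1}\dot M M^{-1}$, the terms $AZ+ZA$ are exactly reproduced by differentiating the $e^{tA}$ factors, so the equation collapses to $- e^{tA}W_0 M^{-1}\dot M M^{-1} W_0^\top e^{tA} = -2 e^{tA} W_0 M^{-1} W_0^\top e^{2tA} W_0 M^{-1} W_0^\top e^{tA}$, i.e. $\dot M(t) = 2\, W_0^\top e^{2tA} W_0$. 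Integrating with $M(0)=I_m$ gives $M(t) = I_m + 2 W_0^\top\!\int_0^t e^{2sA}\,\d s\, W_0$, which is exactly the matrix inverted in~\eqref{eq:SolOjaFlow}.

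To finish I would verify that this candidate is legitimate: $M(t)$ is symmetric positive definite for all $t\ge 0$ (it is $I_m$ plus a PSD matrix, since $\int_0^t e^{2sA}\d s \succeq 0$ as an integral of PSD matrices), hence invertible, so the formula is well defined for all $t\ge0$ and defines a smooth curve; one then checks it satisfies $\dot Z = AZ+ZA-2Z^2$ with $Z(0)=W_0W_0^\top$, and invokes uniqueness of solutions of this locally Lipschitz ODE to conclude $W(t)W(t)^\top$ equals the right-hand side of~\eqref{eq:SolOjaFlow}. The main obstacle is really just identifying the correct ansatz for $Z(t)$: once the conjugated-Riccati form is in hand the computation is a short exercise in the product rule and the derivative of a matrix inverse. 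A minor point worth a sentence is that one should justify passing from the flow on $W$ to the flow on $Z$ — i.e. that~\eqref{eq:OjaFlow} has a global solution so that $Z(t)$ is defined for all $t$ — which follows since the right-hand side of~\eqref{eq:OjaFlow} is the gradient of the coercive, smooth potential~\eqref{eq:PotentialOja}, ruling out finite-time blow-up.
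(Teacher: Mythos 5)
Your proof is correct and follows exactly the route indicated by the paper: reduce to the Riccati equation $\dot Z = AZ + ZA - 2Z^2$ for $Z = WW^\top$, verify that the conjugated ansatz $e^{tA}W_0 M(t)^{-1}W_0^\top e^{tA}$ with $\dot M = 2W_0^\top e^{2tA}W_0$ solves it, and conclude by uniqueness. The observations about positive definiteness of $M(t)$ and global existence via coercivity of the potential are sensible completeness points that the paper leaves implicit.
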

This result, already present in several previous works \citep{yan1994global, bodin2023gradient, martin2024impact}, can easily be derived by writing the differential equation solved by $Z(t)$ and remarking that~\eqref{eq:SolOjaFlow} solves this equation with the right initial condition. This explicit solution of the Oja flow is very powerful and will be a key result when studying the high-dimensional behavior of the dynamics.

We shall now move on to the convergence properties of the Oja flow. Indeed, this dynamics is very well understood at long times. Originally designed as a dynamical model for principal component recovery, it is no surprise that the dynamics converges to a point that selects the largest eigenvalues of the target matrix $A$. 

\begin{proposition} \label{prop:OjaConvergence}
    Let $W(t)$ be solution of the Oja flow~\eqref{eq:OjaFlow}. Assume that:
    \begin{itemize}
        \item The dynamics is initialized at $t = 0$ with a random matrix whose distribution is absolutely continuous with respect to the Lebesgue measure on $\R^{d \times m}$.
        \item The eigenvalues of $A$ are simple.
    \end{itemize}
    Then, with probability one, $W(t)$ converges as $t \to \infty$ toward some $W_\infty$ satisfying:
    \begin{equation}
        W_\infty W_\infty^\top = A_{(m)}^+, 
    \end{equation}
    where $A_{(m)}^+$ is defined in equation \eqref{eq:defPositivePart} and is obtained spectrally by selecting the $m$ largest positive eigenvalues of $A$.  
\end{proposition}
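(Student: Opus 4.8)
The plan is to combine the closed-form solution of \cref{prop:OjaSolution} with a spectral bookkeeping argument, and then to upgrade convergence of $Z(t) = W(t)W(t)^\top$ to convergence of $W(t)$ itself through an analyticity argument. First, since $A$ is symmetric with simple eigenvalues, I would diagonalize $A = U \Lambda U^\top$ with $\Lambda = \mathrm{diag}(\lambda_1, \dots, \lambda_d)$, $\lambda_1 > \cdots > \lambda_d$, and set $V_0 = U^\top W_0$. Because $U$ is orthogonal, $V_0$ still admits a density on $\R^{d\times m}$, and it suffices to prove $\tilde Z(t) := U^\top Z(t) U \to \Lambda_{(m)}^+$. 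Substituting the diagonalization into \eqref{eq:SolOjaFlow} gives $\tilde Z(t) = e^{t\Lambda} V_0\, G(t)^{-1}\, V_0^\top e^{t\Lambda}$, where $G(t) = I_m + 2\, V_0^\top M(t) V_0$ and $M(t) = \mathrm{diag}\big(\mu_1(t), \dots, \mu_d(t)\big)$ with $\mu_i(t) = \tfrac{e^{2t\lambda_i} - 1}{2\lambda_i}$ (and $\mu_i(t) = t$ when $\lambda_i = 0$). All of the work is then in extracting the $t\to\infty$ limit of this expression.

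Next I would set $p = \#\{i : \lambda_i > 0\}$ and $r = \min(m, p)$, so that the target matrix is $\Lambda_{(m)}^+ = \mathrm{diag}(\lambda_1, \dots, \lambda_r, 0, \dots, 0)$, and split the coordinates into the selected block $T = \{1, \dots, r\}$ and its complement $B$. Two structural facts drive the argument. First, since $\lambda_r > 0$ the entries $\mu_i(t)$ for $i \in T$ blow up exponentially, and since $\lambda_r > \lambda_{r+1}$ — this is where the simple-eigenvalue hypothesis is used — their growth strictly dominates $\mu_j(t)$ for every $j \in B$. Second, the block $V_T \in \R^{r \times m}$ formed by the first $r$ rows of $V_0$ has full rank $r$ almost surely, because $V_0$ is absolutely continuous with respect to Lebesgue measure — this is exactly where the assumption on ``almost all initializations'' enters. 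Working in the decomposition $\R^m = \mathrm{span}(\text{rows of } V_T) \oplus \ker V_T$, one shows that on the first summand $G(t)$ blows up at rate $e^{2t\lambda_r}$, while on $\ker V_T$ it grows at most linearly in $t$ (from a possible zero eigenvalue of $A$) or stays bounded; a Schur-complement estimate then yields $V_T G(t)^{-1} V_T^\top = \tfrac12 M_T(t)^{-1} (1 + o(1))$ with $M_T(t) = \mathrm{diag}(\mu_1(t), \dots, \mu_r(t))$, so the $T\times T$ block of $\tilde Z(t)$ equals $\tfrac12 e^{t\Lambda_T} M_T(t)^{-1} e^{t\Lambda_T}(1 + o(1))$, whose $i$-th diagonal entry is $\tfrac{\lambda_i e^{2t\lambda_i}}{e^{2t\lambda_i} - 1} \to \lambda_i$. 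For the remaining blocks, observe that for $u \in \ker V_T$ the vector $e^{t\Lambda} V_0 u$ is supported on the non-positive block $B$, hence bounded; together with the fact that $G(t)^{-1}$ is $O(1/t)$ in the zero-eigenvalue directions and that $e^{t\Lambda} V_0 u$ decays exponentially in the strictly negative directions, this forces the $B\times B$ block and the cross blocks of $\tilde Z(t)$ to vanish. Collecting terms gives $\tilde Z(t) \to \Lambda_{(m)}^+$, hence $Z(t) \to A_{(m)}^+$.

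It remains to deduce that $W(t)$ itself converges. Since $W(t)$ is the gradient flow of $U(W) = \tfrac14 \|WW^\top - A\|_F^2$, the value $U(W(t))$ is nonincreasing, so $\|W(t)W(t)^\top - A\|_F \le \|W_0 W_0^\top - A\|_F$ and therefore $\|W(t)\|_F^2 = \tr\big(W(t)W(t)^\top\big)$ stays bounded uniformly in $t$. As $U$ is a polynomial, hence real-analytic, the Łojasiewicz gradient inequality implies that a bounded gradient-flow trajectory converges to a single critical point $W_\infty$; by continuity $W_\infty W_\infty^\top = \lim_{t\to\infty} Z(t) = A_{(m)}^+$, which is the claim.

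The main obstacle is the middle step: uniformly controlling $G(t)^{-1}$ across the regime changes induced by the different signs of the eigenvalues of $A$, and in particular handling the degenerate case $p < m$, in which the limit is rank-deficient and the zero eigenvalues of $A$ produce only linear — rather than exponential — growth of $G(t)$ on part of $\R^m$, so that the vanishing of the corresponding blocks of $\tilde Z(t)$ must be argued carefully. By contrast, the reduction to diagonal $A$, the almost-sure full-rank statement from absolute continuity, and the final Łojasiewicz argument are routine.
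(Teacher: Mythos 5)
Your proof takes a genuinely different route from the paper's. The paper argues variationally: it characterizes all critical points of the potential $U(W)=\tfrac14\|WW^\top - A\|_F^2$ (showing $Z=WW^\top$ commutes with $A$ at any critical point and $\mu_k\in\{0,\lambda_k\}$ spectrally), examines the Hessian to identify the local minimizers as exactly those $W$ with $WW^\top = A_{(m)}^+$, and then invokes the stable manifold theorem to conclude that, for almost every initialization, gradient flow avoids all strict saddles and hence converges to a local minimizer. Your argument, by contrast, is dynamical: it exploits the closed-form solution of \cref{prop:OjaSolution}, reduces $A$ to diagonal form, and shows directly by an asymptotic Schur-complement estimate that $Z(t)\to\Lambda_{(m)}^+$, then upgrades to convergence of $W(t)$ itself via boundedness and the Łojasiewicz inequality. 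Both approaches are valid. Yours is more computational but also more quantitative — essentially the same asymptotic analysis underlies the convergence-rate results of \cref{Prop:ConvergenceHighDimOja} and \cref{Prop:HighDimCVRates} — whereas the paper's argument is shorter and avoids matrix-block bookkeeping. Absolute continuity of the initialization enters in different places: you use it to ensure $V_T$ has full rank $r$ almost surely; the paper uses it (together with analyticity) for the stable-manifold / avoidance-of-saddles step.

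One imprecision in your sketch to watch: the claim that $G(t)$ restricted to $\ker V_T$ grows at most linearly or stays bounded is only correct when the number $p$ of positive eigenvalues of $A$ satisfies $p\le m$. When $p>m$ (so $r=m$ and the limit has full rank $m$), the complementary index set $B=\{m+1,\dots,d\}$ contains indices with $\lambda_i>0$, and the corresponding $\mu_i(t)$ grow like $e^{2t\lambda_i}$; hence $G(t)$ on $\ker V_T$ also blows up exponentially, merely at the strictly slower rate $e^{2t\lambda_{m+1}}$. The Schur-complement comparison still succeeds because $\lambda_{m+1}<\lambda_m$ (this is precisely where the simple-eigenvalue hypothesis is needed), but the error terms are exponential rather than polynomial, and the bookkeeping in your ``main obstacle'' is correspondingly heavier than your parenthetical suggests. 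Apart from this — and the acknowledged need to make the uniform control of $G(t)^{-1}$ fully rigorous, including the cross and $B\times B$ blocks when $A$ has a zero eigenvalue — the strategy is sound and would yield the proposition.
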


The proof of this result is standard and is deferred to \cref{App:Subsec:OjaConvergence}. It uses the analysis of the local minimizers associated with the loss~\eqref{eq:PotentialOja}, along with the fact that, under a random initialization, gradient flow almost always converges to a local minimizer. 

\subsection{Finite-Dimensional Convergence Rates} \label{Subsec:OjaCVFinitedim}

In this part, we mention the convergence rates associated with the Oja flow toward a solution of \cref{prop:OjaConvergence}. In the interest of our main results in \cref{Subsec:LongTimes}, we restrict ourselves to the case where the target matrix $A$ is invertible and has simple eigenvalues. Indeed, this assumption allows us to obtain some exponentially fast convergence rates. Regarding prior works on this problem, several works have already quantified such convergence in the presence of a positive-definite \citep{yan1994global} or rank-deficient \citep{martin2024impact} target, and \citet{tsuzuki2025global} analyzed the convergence on the Stiefel manifold. Interestingly, several works \citep{sarao2020optimization, martin2024impact} also identified a power-law convergence in the overparameterized setting. 

In the following, we denote $p$ the number of positive eigenvalues of $A$. Our method is the following: 
\begin{itemize}
    \item For the case $m \leq p$, the gradient flow dynamics converges toward a matrix $W$ which is of full rank. We then prove that in the general case, for functions of $WW^\top$, exponentially fast convergence is guaranteed provided that the Hessian is positive definite when restricted to an appropriate subspace. 
    \item When $m > p$, the gradient flow converges toward a matrix of rank $p < \min(m,d)$, and the previous reasoning does not apply anymore. Using a similar technique to \citet{martin2024impact}, we derive the rates via Grönwall-type bounds.
\end{itemize}

\subsubsection{Convergence to a Full-Rank Matrix} 

We first consider the case $m \leq p$. Then, $A$ has more positive eigenvalues than $m$ and the limit derived in \cref{prop:OjaConvergence} is of rank $m$ (obviously in this case one has $m \leq d$). The first result uses the fact that the loss function~\eqref{eq:PotentialOja} exhibits symmetries: it is a function of $WW^\top$ and associated with this symmetry is a set of what we call \textit{uninformative directions}. These directions are flat for the loss $U$ and should not compromise exponentially fast convergence. Indeed, we show that these directions are not seen through the gradient flow trajectory and that we can restrict the Hessian on the informative directions to study convergence. 

\begin{proposition} \label{prop:CVRates1}
    Let $F(W) = G(WW^\top)$ for $W \in \R^{d \times m}$ where $G \colon \mathcal{S}_d(\R) \to \R$ is $\mathcal{C}^2$. Define:
    \begin{equation}
        \mathcal{H}_W = \Big\{ K \in \R^{d \times m}, \, W^\top K = K^\top W  \Big\},
    \end{equation}
    and consider the gradient flow dynamics $\dot W(t) = - \nabla F(W(t))$. Assume that:
    \begin{itemize}
        \item $W(t) \xrightarrow[t \to \infty]{} W_\infty$ with $\mathrm{rank}(W_\infty) = m \leq d$. 
        \item The restriction of the Hessian of $F$ on $\mathcal{H}_{W_\infty}$ is positive definite with smallest eigenvalue $\varrho > 0$.
    \end{itemize}
    Then, for all $0 < c < \varrho$, we have:
    \begin{equation} \label{eq:ExponentialConvergenceGeneral}
        F\big( W(t) \big) - F(W_\infty) \underset{t \to \infty}{=} o \big( e^{-2ct} \big), \hspace{1.5cm} \big\| Z(t) - Z_\infty \big\| \underset{t \to \infty}{=} o \big( e^{-ct} \big),
    \end{equation} 
    with $Z(t) = W(t)W(t)^\top$ and $Z_\infty = W_\infty W_\infty^\top$. 
\end{proposition}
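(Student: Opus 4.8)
The plan is to exploit the $\mathrm{O}(m)$-invariance of $F$, which holds because $F$ depends on $W$ only through $WW^\top$: one has $F(WO) = F(W)$ for every $O \in \mathrm{O}(m)$. Since $\mathrm{rank}(W_\infty) = m$, the map $O \mapsto W_\infty O$ has trivial stabilizer, so the orbit $\mathcal{M} = \{W_\infty O : O \in \mathrm{O}(m)\}$ is a compact embedded submanifold of $\R^{d\times m}$ diffeomorphic to $\mathrm{O}(m)$, with tangent space at $W_\infty$ equal to the vertical space $\mathcal{V}_{W_\infty} = \{W_\infty\Lambda : \Lambda^\top = -\Lambda\}$. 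A one-line computation shows that $\mathcal{H}_{W_\infty}$ is precisely the orthogonal complement of $\mathcal{V}_{W_\infty}$ for the Frobenius inner product $\langle A, B\rangle = \tr(A^\top B)$ — indeed $\tr(\Lambda^\top W_\infty^\top K) = 0$ for all antisymmetric $\Lambda$ iff $W_\infty^\top K$ is symmetric — hence $\mathcal{H}_{W_\infty}$ is the normal space of $\mathcal{M}$ at $W_\infty$. I would also record the horizontality identity $\nabla F(W) = 2\,\nabla G(WW^\top)\,W \in \mathcal{H}_W$ for all $W$ (since $W^\top\nabla F(W)$ is symmetric), which formalizes the claim that the vertical directions are flat and never excited by the flow.

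First I would argue that $W_\infty$, and hence by invariance every point of $\mathcal{M}$, is a critical point of $F$: along the flow $\tfrac{d}{dt}F(W(t)) = -\|\nabla F(W(t))\|_F^2 \le 0$, so $F(W(t))$ decreases to $F(W_\infty)$ and $\int_0^\infty\|\nabla F(W(t))\|_F^2\,dt = F(W_0) - F(W_\infty) < \infty$, which together with $W(t)\to W_\infty$ and continuity of $\nabla F$ forces $\nabla F(W_\infty) = 0$ (otherwise $F(W(t))$ would decrease without bound). Differentiating the identity $\nabla F \equiv 0$ along $\mathcal{M}$ then gives $\mathcal{V}_{W'} \subseteq \ker\mathrm{Hess}\,F(W')$ for every $W' \in \mathcal{M}$, and by symmetry of the Hessian $\mathrm{Hess}\,F(W')$ maps $\mathcal{H}_{W'}$ into itself, so its restriction $M_{W'} := \mathrm{Hess}\,F(W')|_{\mathcal{H}_{W'}}$ is a genuine self-adjoint operator on $\mathcal{H}_{W'}$. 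By hypothesis $M_{W_\infty} \geq \varrho$, and conjugating by $O$ — using $F(W_\infty O + H) = F(W_\infty + HO^\top)$ and the isometry $H \mapsto HO^\top$ of $\mathcal{H}_{W_\infty O}$ onto $\mathcal{H}_{W_\infty}$ — shows $M_{W'} \geq \varrho$ for all $W' \in \mathcal{M}$: the orbit is a nondegenerate (Morse--Bott) critical manifold with normal Hessian uniformly bounded below by $\varrho$.

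The key step is then a Łojasiewicz gradient inequality with the sharp exponent $1/2$ and constant arbitrarily close to $\varrho$ in a neighbourhood of $\mathcal{M}$. Fix $c < \varrho$ and $\epsilon \in (0, \varrho - c)$. By the tubular-neighbourhood theorem there is a neighbourhood $\mathcal{U}$ of $\mathcal{M}$ in which every $W$ is uniquely written $W = W' + H$ with $W' \in \mathcal{M}$ the nearest orbit point and $H \in \mathcal{H}_{W'}$; a second-order Taylor expansion at $W'$, uniform over the compact $\mathcal{M}$ (using $G \in \mathcal{C}^2$ and $\nabla F(W') = 0$), gives $F(W) - F(W_\infty) = \tfrac12\langle H, M_{W'}H\rangle + o(\|H\|_F^2)$ and $\|\nabla F(W)\|_F^2 = \|M_{W'}H\|_F^2 + o(\|H\|_F^2)$. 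The elementary bound $M^2 \geq \varrho M$, valid for any self-adjoint $M$ with $0 \leq \varrho \leq M$, yields $\|M_{W'}H\|_F^2 \geq \varrho\,\langle H, M_{W'}H\rangle$, so after shrinking $\mathcal{U}$ to absorb the remainders one obtains, on $\mathcal{U}$,
\begin{equation*}
\|\nabla F(W)\|_F^2 \;\geq\; 2(\varrho - \epsilon)\,\big(F(W) - F(W_\infty)\big), \qquad F(W) - F(W_\infty) \;\geq\; 0 .
\end{equation*}
Since $W(t)\to W_\infty \in \mathcal{M}$, there is $t_0$ with $W(t)\in\mathcal{U}$ for $t \geq t_0$; setting $u(t) = F(W(t)) - F(W_\infty)$ gives $\dot u \leq -2(\varrho-\epsilon)u$, hence $u(t) = O(e^{-2(\varrho-\epsilon)t}) = o(e^{-2ct})$, the first claim. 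For the second, put $\phi = \sqrt{u}$; from $\|\nabla F(W(t))\|_F^2 \geq 2(\varrho-\epsilon)u(t)$ one gets $\dot\phi = -\|\nabla F(W(t))\|_F^2/(2\sqrt u) \leq -\tfrac12\sqrt{2(\varrho-\epsilon)}\,\|\nabla F(W(t))\|_F = -\tfrac12\sqrt{2(\varrho-\epsilon)}\,\|\dot W(t)\|_F$, so $\|W(t) - W_\infty\|_F \leq \int_t^\infty\|\dot W(s)\|_F\,ds \leq \sqrt{2/(\varrho-\epsilon)}\,\phi(t) = O(e^{-(\varrho-\epsilon)t}) = o(e^{-ct})$; finally $\big\|Z(t) - Z_\infty\big\| \leq \|W(t) - W_\infty\|_F\big(\|W(t)\|_F + \|W_\infty\|_F\big) = o(e^{-ct})$ since the trajectory is bounded.

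The main obstacle is the third step: one must verify that $\mathcal{M}$ is genuinely a nondegenerate critical submanifold (not merely check the Hessian condition at the single point $W_\infty$), set up the slice coordinates so that the normal Hessian simultaneously controls $F - F(W_\infty)$ from above and $\|\nabla F\|_F$ from below on a full neighbourhood of $\mathcal{M}$ with remainder terms that are uniform over $\mathcal{M}$, and — crucially — invoke $M^2 \geq \varrho M$ rather than a cruder pairing of the largest and smallest eigenvalues, so that the effective contraction rate is exactly $\varrho$. The remaining ingredients — criticality of $W_\infty$, the block structure of the Hessian, and the Grönwall / separation-of-variables step together with the passage from $W$ to $Z$ — are routine.
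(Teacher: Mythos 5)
Your proof is correct, and it takes a genuinely different route from the paper's. The paper works entirely locally at $W_\infty$: it takes $\phi(t) = \tfrac12\|\pi_{W_\infty}(\nabla F(W(t)))\|_F^2$ as the Lyapunov quantity, differentiates it along the flow, decomposes $\d^2 F_{W(t)}$ into $\d^2 F_{W_\infty}$ plus a continuity error, and uses the block structure of $\d^2 F_{W_\infty}$ (zero on $\mathcal{V}_{W_\infty}$, $\geq\varrho$ on $\mathcal{H}_{W_\infty}$) together with the continuity of $W\mapsto\pi_W$ on full-rank matrices to obtain a Grönwall bound $\dot\phi \leq -2(\varrho-\epsilon(t))\phi$; the conclusions on $F$ and $Z$ then follow by integrating $\phi$. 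You instead globalize the argument: you identify the $\mathrm{O}(m)$-orbit $\mathcal{M}$ of $W_\infty$ as a compact nondegenerate (Morse--Bott) critical submanifold, establish a sharp-constant Łojasiewicz inequality $\|\nabla F\|_F^2 \geq 2(\varrho-\epsilon)(F-F_\infty)$ on a tubular neighborhood, and then run the standard Łojasiewicz decay argument directly on $u(t)=F(W(t))-F(W_\infty)$, passing through $\sqrt{u}$ to control arc length. Both approaches rest on the same two structural facts — horizontality of the gradient and vanishing of the Hessian on $\mathcal{V}$ at critical points — which the paper isolates as Lemmas on $\pi_W$ and on the block structure of $\d^2 F_W$, and which you express as "$\mathcal{M}$ is a Morse--Bott critical manifold." Your approach pays for its conceptual clarity with more geometric scaffolding (the tubular neighborhood, uniformity of Taylor remainders over $\mathcal{M}$, conjugation-equivariance of $M_{W'}$) but buys a cleaner final decay argument and makes explicit the point — the operator inequality $M^2 \succeq \varrho M$ — that is needed to get the rate $\varrho$ rather than $\varrho^2/\lambda_{\max}$; the paper's route avoids the tubular neighborhood entirely but must instead differentiate the projected gradient norm and control a continuity error term.

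One small remark: the parenthetical "(otherwise $F(W(t))$ would decrease without bound)" in your criticality argument is not the right justification — $F(W(t))$ converges to $F(W_\infty)$ by continuity regardless. The correct reason, which you do state just before, is that $\nabla F(W_\infty)\neq 0$ together with $W(t)\to W_\infty$ and continuity of $\nabla F$ would force $\|\nabla F(W(t))\|_F^2$ to stay bounded below for large $t$, contradicting the finiteness of $\int_0^\infty\|\nabla F(W(t))\|_F^2\,\d t$.
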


We prove this proposition in \cref{App:Subsec:CVRates1}. The proof only involves elementary linear algebra and ordinary differential equations tools, but one could interpret the gradient flow dynamics as a projected gradient flow on an appropriate quotient space. Indeed, the subspace $\mathcal{H}_W$ precisely corresponds to the horizontal space associated with the quotient manifold $\big\{ W \in \R^{d \times m}, \, \mathrm{rank}(W) = m \big\}$ with respect to the equivalence relation $V \sim W \Longleftrightarrow VV^\top = WW^\top$. Then, the orthogonal complement of $\mathcal{H}_W$ corresponds to the directions such that the map $W \mapsto WW^\top$ remains constant at first order. Regarding the gradient flow dynamics, these correspond to the uninformative directions, that are inherently flat but do not influence the convergence. More details on the study of this manifold can be found in \citep{massart2020quotient}. 

Therefore, in order to derive the convergence rates, we only need to study the Hessian of the loss~\eqref{eq:PotentialOja}. It can be easily computed:
\begin{equation} \label{eq:HessianOja}
    \d^2 U_W(H) = \big( WW^\top - A \big) H + H W^\top W + WH^\top W.
\end{equation}
We will study this linear map at the point of convergence of the dynamics, i.e., a $W \in \R^{d \times m}$ such that $WW^\top = A_{(m)}^+$. We gather the results obtained in the following proposition:
\begin{proposition} \label{Prop:HessianOja}
    Consider $W \in \R^{d \times m}$ such that $WW^\top = A_{(m)}^+$, and the restriction of $\d^2 U_W$ on the space $\mathcal{H}_W$ (defined in \cref{prop:CVRates1}). Denote $\lambda_1 > \dots > \lambda_d$ the ordered eigenvalues of $A$ and $p$ the number of its positive eigenvalues. Then, for $m \leq p$, on $\mathcal{H}_W$, the Hessian $\d^2 F_W$ has eigenvalues:
\begin{center} \vspace{-0.2cm} \begin{tabular}{c|c|c}
        Eigenvalue & Multiplicity & Indices  \\
        \hline
    $\lambda_i + \lambda_j$ & $1$ &$1 \leq i \leq j \leq m$ \\
    $\lambda_i - \lambda_j$ & $1$ & $i \in \llbracket 1, m \rrbracket$, $j \in \llbracket m+1, d \rrbracket$
\end{tabular} \end{center}
Then, if the flow~\eqref{eq:OjaFlow} converges toward a solution given in \cref{prop:OjaConvergence}, the convergence is exponentially fast, in the sense of equation \eqref{eq:ExponentialConvergenceGeneral}, with rate:
\begin{equation}
    \varrho_\text{CV} = \min \big( 2 \lambda_m, \lambda_m - \lambda_{m+1} \big). 
\end{equation}
\end{proposition}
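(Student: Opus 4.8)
The plan is to diagonalize $\d^2 U_W$ restricted to $\mathcal{H}_W$ in a convenient eigenbasis, read off the table, and then invoke \cref{prop:CVRates1} for the rate. Write $A = U\,\mathrm{diag}(\lambda_1,\dots,\lambda_d)\,U^\top$ with $U = [\,U_m \mid U_\perp\,] \in O(d)$, $U_m = [u_1,\dots,u_m]$, and set $D_m = \mathrm{diag}(\lambda_1,\dots,\lambda_m)$ and $D_\perp = \mathrm{diag}(\lambda_{m+1},\dots,\lambda_d)$. Since $m \le p$ we have $\lambda_1 > \cdots > \lambda_m > 0$, so by \eqref{eq:defPositivePart} $A_{(m)}^+ = U_m D_m U_m^\top$, and every $W$ with $WW^\top = A_{(m)}^+$ equals $U_m D_m^{1/2} O$ for some $O \in O(m)$ (compare eigendecompositions, using that $\lambda_1,\dots,\lambda_m$ are distinct). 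The right action $W \mapsto WO$ leaves the loss \eqref{eq:PotentialOja} invariant, maps $\mathcal{H}_W$ onto $\mathcal{H}_{WO}$, and conjugates $\d^2 U_W$; hence the spectrum of the restricted Hessian does not depend on $O$, and I will take $W = U_m D_m^{1/2}$.

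Next I would parametrize $\mathcal{H}_W$. Writing $K \in \R^{d\times m}$ in $U$-coordinates as $U^\top K = \binom{K_1}{K_2}$ with $K_1 \in \R^{m\times m}$ and $K_2 \in \R^{(d-m)\times m}$, the constraint $W^\top K = K^\top W$ reads $D_m^{1/2} K_1 \in \mathcal{S}_m(\R)$, i.e.\ $K_1 = D_m^{-1/2} B$ with $B \in \mathcal{S}_m(\R)$ free, while $K_2$ is unconstrained. Using $WW^\top - A = -\,U_\perp D_\perp U_\perp^\top$, $W^\top W = D_m$ and $W = U_m D_m^{1/2}$, the Hessian formula \eqref{eq:HessianOja} gives, after a short computation,
\begin{equation*}
    U^\top \d^2 U_W(K) \;=\; \begin{pmatrix} D_m^{-1/2}\big(D_m B + B D_m\big) \\ K_2 D_m - D_\perp K_2 \end{pmatrix}.
\end{equation*}
In particular $\d^2 U_W$ preserves $\mathcal{H}_W$ (the top block is again $D_m^{-1/2}B'$ with $B' = D_m B + B D_m$ symmetric), and in the coordinates $(B,K_2)$ it is block diagonal: on $\mathcal{S}_m(\R)$ it acts as $T_B : B \mapsto D_m B + B D_m$, and on $\R^{(d-m)\times m}$ it acts as $T_\perp : K_2 \mapsto K_2 D_m - D_\perp K_2$. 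Since eigenvalues of a linear operator do not depend on the chosen basis, it suffices to diagonalize these two blocks: the symmetric matrices $e_ie_j^\top + e_je_i^\top$ ($1\le i\le j\le m$) are eigenvectors of $T_B$ with eigenvalues $\lambda_i+\lambda_j$, and the matrices $e_{j-m}e_i^\top$ ($1\le i\le m<j\le d$) are eigenvectors of $T_\perp$ with eigenvalues $\lambda_i-\lambda_j$. As $\binom{m+1}{2}+m(d-m) = dm-\binom{m}{2} = \dim\mathcal{H}_W$, this exhausts the spectrum and proves the table.

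For the rate, $\lambda_1 > \cdots > \lambda_m > 0$ makes every $\lambda_i+\lambda_j$ positive with minimum $2\lambda_m$, and the simplicity of the spectrum of $A$ makes every $\lambda_i-\lambda_j$ positive with minimum $\lambda_m-\lambda_{m+1}$ (when $m=d$ the block $T_\perp$ is empty and the minimum is just $2\lambda_m$). Thus $\d^2 U_W|_{\mathcal{H}_W}$ is positive definite with smallest eigenvalue $\varrho = \min(2\lambda_m,\lambda_m-\lambda_{m+1})$, and since $\mathrm{rank}(A_{(m)}^+) = m$, \cref{prop:CVRates1} applies and yields convergence at rate $\varrho_\mathrm{CV} = \varrho$ in the sense of \eqref{eq:ExponentialConvergenceGeneral}. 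I expect the only delicate point to be the bookkeeping around the parametrization of $\mathcal{H}_W$: the factor $D_m^{-1/2}$ must be carried correctly through \eqref{eq:HessianOja}, and one must observe that although the splitting $(B,K_2)$ is not orthogonal for the induced Frobenius metric on $\mathcal{H}_W$, it is a genuine $\d^2 U_W$-invariant decomposition, so the eigenvalues of $T_B$ and $T_\perp$ are exactly those of the restricted Hessian. Everything else is elementary linear algebra.
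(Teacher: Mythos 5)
Your proof is correct and follows essentially the same strategy as the paper's: diagonalize $A$, split $K$ into a top block $K_1$ (along the positive $m$-eigendirections) and a bottom block $K_2$, observe that the Hessian is block-diagonal in this splitting, and diagonalize each block; the $K_2$-block gives $\lambda_i - \lambda_j$ and the $K_1$-block gives $\lambda_i + \lambda_j$, then invoke \cref{prop:CVRates1}. The only notable difference is in how the constraint on $K_1$ is handled: the paper keeps the constraint $\Lambda_1^{1/2}K_1 = K_1^\top\Lambda_1^{1/2}$ and constructs explicit eigenvectors $K_1 = e_ie_j^\top + \sqrt{\lambda_i/\lambda_j}\,e_je_i^\top$ that satisfy it, whereas you reparametrize $K_1 = D_m^{-1/2}B$ with $B \in \mathcal{S}_m(\R)$ unconstrained, which turns the top block into the transparent Sylvester operator $B \mapsto D_m B + B D_m$. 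This buys a slightly cleaner diagonalization (standard eigenvectors $e_ie_j^\top + e_je_i^\top$) at the cost of the bookkeeping you flag, namely that the change of coordinates $B \mapsto D_m^{-1/2}B$ is not an isometry — but, as you correctly note, similarity invariance of the spectrum makes this immaterial. Your dimension count $\binom{m+1}{2} + m(d-m) = dm - \binom{m}{2} = \dim\mathcal{H}_W$ correctly confirms the spectrum is exhausted, and the reduction to the rate via \cref{prop:CVRates1} is as in the paper.
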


This proposition is proved in \cref{App:Subsec:HessianOja}. As a remark, we obtain exponentially fast convergence, but in the usual random matrix theory setting, we expect these rates to be of order $d^{-1}$. Of course, given the spectrum of the Hessian, this only concerns a very small number of directions.

\subsubsection{Convergence to a Rank-Deficient Matrix} 

In the case $m > p$, the dynamics converges to a matrix of rank $p$. In this case the result of \cref{prop:CVRates1} does not apply. The convergence rate of the dynamics can be derived by decomposing the dynamics onto the subspaces associated with the positive and negative eigenvalues of $A$. Some standard bounds and Grönwall-type arguments lead to the following result:

\begin{proposition} \label{Prop:CVRates2}
    Consider the case $m > p$ and denote $W(t)$ the solution of~\eqref{eq:OjaFlow}. Assume that $W(t)$ converges to the limit given in \cref{prop:OjaConvergence}. Denote $\lambda_1 > \dots > \lambda_d$ the ordered eigenvalues of $A$ and $p$ the number of its positive eigenvalues. Then, for all $0 < c < \min(2\lambda_p, |\lambda_{p+1}|)$, we have:
    \begin{equation}
        U(W(t)) - U(W_\infty) = o(e^{-2ct}), \hspace{1.5cm} \big\|Z(t) - Z_\infty\big\| = o(e^{-ct}). 
    \end{equation}
\end{proposition}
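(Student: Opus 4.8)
The plan is to mimic the structure of the proof of \cref{prop:CVRates1}/\cref{Prop:HessianOja} but account for the fact that the limit $Z_\infty = A_{(p)}^+$ is rank-deficient, so the Hessian on $\mathcal{H}_{W_\infty}$ is only positive semidefinite (it necessarily has flat directions coming from the $m-p$ zero columns of $W_\infty$ after rotation). The natural coordinate system is the spectral decomposition $A = U \mathrm{diag}(\lambda_1,\dots,\lambda_d) U^\top$: writing $V(t) = U^\top W(t) \in \R^{d\times m}$, the flow \eqref{eq:OjaFlow} becomes $\dot V = (\Lambda - VV^\top)V$ with $\Lambda = \mathrm{diag}(\lambda_i)$. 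I would split the rows of $V$ into the block $V_+ \in \R^{p\times m}$ (indices $i\le p$, positive eigenvalues) and $V_- \in \R^{(d-p)\times m}$ (indices $i>p$, negative eigenvalues), and track $\|V_-(t)\|_F$ together with the error $\|V_+(t)V_+(t)^\top - \Lambda_+\|$ where $\Lambda_+ = \mathrm{diag}(\lambda_1,\dots,\lambda_p)$.

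The key steps, in order: (i) Derive the coupled ODEs for the two blocks, obtaining $\dot V_- = (\Lambda_- - VV^\top)V_-$ with $\Lambda_-$ having all eigenvalues $\le \lambda_{p+1}<0$; since we assume convergence to the limit of \cref{prop:OjaConvergence}, we know $V_-(t)\to 0$ and $V_+V_+^\top \to \Lambda_+$. (ii) Show $\|V_-(t)\|_F = o(e^{-ct})$ for any $c < |\lambda_{p+1}|$: near the limit, $VV^\top$ is close to $\Lambda_+$ embedded in the top block, so the effective drift on $V_-$ is $\Lambda_- - (\text{small})$, whose symmetric part is $\le \lambda_{p+1} + o(1) < -c$ eventually; a Grönwall argument on $\tfrac{d}{dt}\|V_-\|_F^2 \le 2(\lambda_{p+1}+o(1))\|V_-\|_F^2$ then gives the exponential decay. (iii) For $V_+$, note that once $V_-$ is negligible the dynamics of $V_+$ is essentially the $m>p$-neuron Oja flow toward the full-rank-$p$ target $\Lambda_+\in\mathcal{S}_p^{++}(\R)$; the uninformative/flat directions are exactly the $m-p$ redundant columns, which by \cref{prop:CVRates1}-type reasoning (restricting to $\mathcal{H}_{W_\infty}$) do not obstruct convergence. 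The nonzero Hessian eigenvalues on the relevant subspace are $\lambda_i+\lambda_j$ ($1\le i\le j\le p$) and $\lambda_i - \lambda_j$ ($i\le p < j$, but these are governed by the $V_-$ block already handled), so the local rate for the $V_+V_+^\top$ error is $\min(2\lambda_p, |\lambda_{p+1}|)$. (iv) Combine: $\|Z(t)-Z_\infty\|$ is controlled by both $\|V_+V_+^\top - \Lambda_+\|$ and the cross-terms $\|V_+V_-^\top\|$, $\|V_-V_-^\top\|$, all of which decay at rate $e^{-ct}$; then $U(W(t)) - U(W_\infty) = \tfrac14\|Z(t)-Z_\infty\|_F^2 = o(e^{-2ct})$.

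The main obstacle is step (iii): handling the $V_+$ block cleanly when $m>p$. One cannot directly invoke \cref{prop:CVRates1} because its hypothesis requires $\mathrm{rank}(W_\infty)=m$, which fails here; instead one must either work on the quotient manifold of rank-$p$ matrices (as in the reference \citet{massart2020quotient}) or, more elementarily, decompose $V_+ = S_+ + N_+$ where $S_+$ lies in the "informative" horizontal space at $W_\infty$ and $N_+$ parametrizes the redundant columns, and show $N_+$ stays bounded while $S_+$ converges exponentially. The cleanest route, which I would adopt to keep the argument self-contained, is the Grönwall-type decomposition used by \citet{martin2024impact}: bound $\tfrac{d}{dt}$ of the squared error directly, absorbing the flat directions into a term that is controlled by the decaying quantities, so that no manifold machinery is needed. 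A secondary subtlety is that the bounds in (ii) and (iii) are coupled (the decay of $V_-$ feeds into the drift of $V_+$ and vice versa), so one should set up a single Grönwall inequality for the combined error $\mathcal{E}(t) = \|V_-\|_F^2 + \|V_+V_+^\top - \Lambda_+\|_F^2$ and verify $\dot{\mathcal{E}} \le -2c\,\mathcal{E} + o(\mathcal{E})$ for any $c < \min(2\lambda_p, |\lambda_{p+1}|)$, which yields both conclusions simultaneously.
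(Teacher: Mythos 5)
Your block decomposition coincides exactly with the paper's: in the eigenbasis of $A$ the proof works with $Z = \begin{pmatrix}Y_1 & Y_2 \\ Y_2^\top & Y_3\end{pmatrix}$, i.e., $Y_1 = V_+V_+^\top$, $Y_2 = V_+V_-^\top$, $Y_3 = V_-V_-^\top$ in your notation, and sets up Gr\"onwall-type bounds on $\phi_1 = \|Y_1-\Lambda_1\|_F^2$, $\phi_2 = \|Y_2\|_F^2$, $\phi_3 = \|Y_3\|_F^2$. Tracking the scalar $\|V_-\|_F^2 = \tr(Y_3)$ instead of $\|Y_3\|_F^2$ is a modest simplification that works; your written ODE for $V_-$ is slightly off (the correct equation is $\dot V_- = \Lambda_- V_- - V_- V^\top V$, not $(\Lambda_- - VV^\top)V_-$, which does not even have consistent dimensions), but the nonlinear term has a favorable sign after contracting, so $\tfrac{d}{dt}\|V_-\|_F^2 \le 2\lambda_{p+1}\|V_-\|_F^2$ holds unconditionally and your combined Gr\"onwall inequality on $\mathcal{E}$ can indeed be closed.

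The genuine gap is in step (iv). The identity $U(W(t)) - U(W_\infty) = \tfrac14\|Z(t)-Z_\infty\|_F^2$ is false. Since $Z_\infty - A$ has a nonzero block $\Lambda_2 = -\Lambda_- \succ 0$ in the negative eigenspace of $A$, one actually has
\begin{equation*}
    U(W(t)) - U(W_\infty) \;=\; \frac14\,\big\|Z(t) - Z_\infty\big\|_F^2 \;+\; \frac12\,\tr\big(Y_3(t)\Lambda_2\big),
\end{equation*}
and the second term is nonnegative and in general nonzero. It is not subsumed by $\|Z(t)-Z_\infty\|_F^2$: it decays at the rate of $\tr(Y_3) = \|V_-\|_F^2 \sim e^{-2|\lambda_{p+1}|t}$, whereas the $Y_3$-contribution inside $\|Z(t)-Z_\infty\|_F^2$ is $\phi_3 = \|Y_3\|_F^2 \le (\tr Y_3)^2$, which decays twice as fast. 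The omission is repairable --- $\tr(Y_3\Lambda_2) \le \|\Lambda_2\|_\mathrm{op}\|V_-\|_F^2$ is controlled by precisely the quantity you already track, and this is how the paper closes the argument --- but as written your proposal does not establish the claimed rate for the loss gap, and the asserted chain of equalities is wrong.
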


We prove this proposition in \cref{App:Subsec:CVRates2}. As suggested, the rate depends on the eigenvalues of $A$ near zero, since $\lambda_p$ is the smallest positive eigenvalue of $A$ and $\lambda_{p+1}$ is the largest negative. For instance, in high dimension, if $A$ admits a converging spectral density with mass near zero, this rate is typically of order $d^{-1}$, which is the same scaling as the previous case. This invites us to investigate the convergence rates in the high-dimensional limit, which we do in the next part. 

\subsection{Infinite-Dimensional Convergence Rates}  \label{Subsec:OjaCVInfinitedim}

Building on the previous convergence rates at finite dimension, we derive similar results but by first taking the high-dimensional limit at fixed $t \geq 0$, and then the long-time limit. In the following, we assume that the empirical spectral distribution of $A$ converges to some $\mu_A$ in the high-dimensional limit. Moreover, consistently with the rest of the paper, we consider the limit where $m \sim \kappa d$ as $d \to \infty$. We then define the rescaled number of positive eigenvalues of $A$ in the limit:
\begin{equation}
    \kappa_A = \int \1_{x > 0} \d \mu_A(x). 
\end{equation}
As in \cref{Prop:HessianOja,Prop:CVRates2}, the following results will depend on the value of $\kappa$ with respect to $\kappa_A$. 

For the sake of simplicity, and in order to use random matrix theory results, we will assume that the initialization $W_0$ is a Gaussian matrix. However, since we are only interested in the long-time limit, we believe that our results will hold for more general distributions.

We also insist on the fact that the distribution of $A$ remains arbitrary in our case. Indeed, several similar results have been derived, for instance by \citet{bodin2023gradient}, but for more specific distributions. The result we prove is obtained in two steps:
\begin{itemize}
    \item We exploit the explicit solution of the Oja flow in \cref{prop:OjaSolution} in order to derive an exact high-dimensional limit for the function:
    \begin{equation}
        \frac{1}{d} \big\| Z(t) - \phi(A) \big\|_F^2, \hspace{1.5cm} Z(t) = W(t)W(t)^\top,
    \end{equation}
    for all $t \geq 0$, where $\phi$ is a spectral function of $A$. 
    \item By choosing $\phi$ associated with the limit of the dynamics, we then compute the $t \to \infty$ asymptotic of the previous quantity, depending on the value of $\kappa, \kappa_A$.  
\end{itemize}

\subsubsection{High-Dimensional Limit of the Dynamics}

We first derive the high-dimensional limit associated with the Oja flow.
\begin{proposition} \label{Prop:ConvergenceHighDimOja}
    Let $W(t) \in \R^{d \times m}$ be the solution of the Oja flow~\eqref{eq:OjaFlow} with initial condition $W(t = 0) = W_0$. Assume that:
    \begin{itemize}
        \item $W_0$ is a centered Gaussian matrix with i.i.d. coefficients of variance $1/m$.
        \item $A$ is almost surely invertible, and its empirical spectral distribution converges as $d \to \infty$ to some probability measure $\mu_A$. 
    \end{itemize} 
    Let $\phi$ be a spectral function on $\mathcal{S}_d(\R)$. Then, with $Z(t) = W(t)W(t)^\top$, for all $t \geq 0$, as $d \to \infty$ with $m \sim \kappa d$:
    \begin{equation} \label{eq:ConvergenceHighDimOja}
    \begin{aligned}
        \lim_{d \to \infty} \frac{1}{d} \Big\| Z(t) - \phi(A) \Big\|_F^2 &= \mathfrak{g}(t) \left( \int \left( \frac{x e^{xt}}{q_t(x)} \right)^2 \d \mu_A(x) \right)^2 \left( \kappa + \int \frac{z(e^{2zt} - 1)}{q_t(z)^2} \d \mu_A(z) \right)^{-1} \\
        &\hspace{2cm}+\int \left( \mathfrak{g}(t) \frac{xe^{2xt}}{q_t(x)} - \phi(x) \right)^2 \d \mu_A(x),
    \end{aligned}
    \end{equation}
    where $q_t(x) = (e^{2xt} - 1) \mathfrak{g}(t) + x$, and $\mathfrak{g}(t)$ solves the self-consistent equation:
    \begin{equation}
        \kappa \mathfrak{g}(t) + 1 - \kappa = \int \frac{x}{(e^{2xt}-1) \mathfrak{g}(t) + x} \d \mu_A(x).
\end{equation}
\end{proposition}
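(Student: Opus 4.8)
The plan is to build on the closed-form expression of \cref{prop:OjaSolution} and to reduce the computation to deterministic equivalents for the resolvent of a Wishart-type matrix. Since $A$ is invertible, $2\int_0^t e^{2sA}\,\d s = A^{-1}\big(e^{2tA}-I_d\big) = g(A)$ with $g(x) = (e^{2tx}-1)/x$, so \cref{prop:OjaSolution} reads $Z(t) = e^{tA}\,W_0 M^{-1} W_0^\top\,e^{tA}$ with $M = I_m + W_0^\top g(A)\,W_0 \in \mathcal{S}_m^{++}(\R)$. Because $W_0$ is Gaussian, its law is invariant under $W_0 \mapsto U^\top W_0$ for orthogonal $U$; diagonalizing $A$, we may therefore assume $A$ diagonal, so that the only remaining randomness is that of $W_0$. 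Expanding the Frobenius norm,
\[
    \tfrac1d\big\|Z(t)-\phi(A)\big\|_F^2 = \tfrac1d\tr\big(Z(t)^2\big) - \tfrac2d\tr\big(Z(t)\phi(A)\big) + \tfrac1d\tr\big(\phi(A)^2\big),
\]
and the last term converges to $\int\phi(x)^2\,\d\mu_A(x)$. It remains to control the \emph{one-resolvent} quantity $\tfrac1d\tr(Z(t)\phi(A)) = \tfrac1d\tr(\Phi\,G)$ and the \emph{two-resolvent} quantity $\tfrac1d\tr(Z(t)^2) = \tfrac1d\tr\big(G\,e^{2tA}\,G\,e^{2tA}\big)$, where $G := W_0 M^{-1} W_0^\top$ and $\Phi := e^{2tA}\phi(A)$ is a deterministic spectral function of $A$.

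For the one-resolvent term I would first use the Woodbury identity $G = g(A)^{-1} - g(A)^{-1} R\, g(A)^{-1}$ with $R := \big(g(A)^{-1}+W_0 W_0^\top\big)^{-1}$, which recasts the problem as a deterministic-equivalent computation for the resolvent of the sample-covariance matrix $W_0 W_0^\top = \sum_{j=1}^m w_j w_j^\top$ (with $w_j \sim \N(0,\tfrac1m I_d)$ i.i.d.), additively perturbed by the deterministic diagonal matrix $g(A)^{-1}$. A leave-one-out / Sherman--Morrison argument (or, using Gaussianity, Stein's lemma) shows that the $m$ rank-one terms collectively screen to a scalar multiple of the identity, so that $R$ concentrates around $\big(g(A)^{-1}+\mathfrak{g}(t) I_d\big)^{-1}$, where $\mathfrak{g}(t) = \lim_{d\to\infty}\tfrac1m\tr M^{-1}$ is determined self-consistently: combining this with the identity $\tr M^{-1} = m - d + \tr\big(g(A)^{-1}R\big)$ and passing to the limit gives precisely $\kappa\mathfrak{g}(t) + 1 - \kappa = \int \d\mu_A(x)/(1+\mathfrak{g}(t)g(x))$, whose right side is strictly decreasing and left side strictly increasing in $\mathfrak{g}(t)$, hence the solution is unique. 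Substituting back, $\tfrac1d\tr(\Phi G) \to \int \Phi(x)\,\mathfrak{g}(t)/(1+\mathfrak{g}(t)g(x))\,\d\mu_A(x)$; since $1+\mathfrak{g}(t)g(x) = q_t(x)/x$, this equals $\int \mathfrak{g}(t)\,\frac{x e^{2tx}}{q_t(x)}\,\phi(x)\,\d\mu_A(x)$, which is exactly the cross term in the claimed identity, and it also identifies the deterministic equivalent of $G$ as $\theta(A)$ with $\theta(x) = \mathfrak{g}(t)/(1+\mathfrak{g}(t)g(x))$.

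For the two-resolvent term, Woodbury reduces $\tfrac1d\tr(G\,e^{2tA}\,G\,e^{2tA})$ to one-resolvent traces plus a genuine product $\tfrac1d\tr(D R D' R)$ for spectral functions $D,D'$ of $A$. Writing $G = \theta(A) + \Delta$, the cross terms $\tfrac1d\tr(\theta(A)e^{2tA}\Delta e^{2tA})$ vanish in the limit (a deterministic matrix tested against the mean-zero fluctuation $\Delta$), so $\tfrac1d\tr(G e^{2tA}G e^{2tA}) \to \int\big(\theta(x)e^{2tx}\big)^2\d\mu_A(x) + \lim_{d\to\infty}\tfrac1d\tr\big(\Delta e^{2tA}\Delta e^{2tA}\big)$, and $\theta(x)e^{2tx} = \mathfrak{g}(t)x e^{2tx}/q_t(x)$ gives the second summand in \eqref{eq:ConvergenceHighDimOja}. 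The remaining variance term is then obtained from the standard second-order deterministic equivalent for sample-covariance resolvents --- equivalently, by differentiating the first-order relation $\tfrac1d\tr(D R_s) \to \int D(x)/\big(g(x)^{-1}+sE(x)+\mathfrak{g}_s(t)\big)\,\d\mu_A(x)$ for $R_s = (g(A)^{-1}+sE+W_0 W_0^\top)^{-1}$ with respect to $s$ at $s=0$, using implicit differentiation of the self-consistent equation. The Jacobian of the fixed-point map equals $\kappa + \int g(z)/(1+\mathfrak{g}(t)g(z))^2\,\d\mu_A(z) = \kappa + \int z(e^{2zt}-1)/q_t(z)^2\,\d\mu_A(z)$, so it appears in the denominator, and, after the algebra, $\tfrac1d\tr(\Delta e^{2tA}\Delta e^{2tA})$ matches the first summand in \eqref{eq:ConvergenceHighDimOja}; collecting the three pieces yields the stated formula.

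The hard part will be making these deterministic equivalents rigorous under the mild hypotheses retained here. The one-resolvent step is classical (Marchenko--Pastur with a deterministic deformation), with errors that are $o(1)$ uniformly on compact intervals of $t$ because $g(x) = (e^{2tx}-1)/x$ stays bounded and bounded away from zero near $x=0$. The delicate point is the two-resolvent / variance term: a clean proof requires either a local law for the sample-covariance resolvent tested against deterministic matrices, or a careful implementation of the differentiation argument with uniform control of the error terms --- together with the (implicitly needed) assumption that the spectral norm of $A$ remains bounded, so that no mass of $\mu_A$ escapes to infinity.
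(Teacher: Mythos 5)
Your plan is essentially the paper's proof: both start from the closed-form solution of \cref{prop:OjaSolution}, pass to the diagonalizing basis of $A$ to reduce the problem to deterministic equivalents for a Wishart-type resolvent via a leave-one-out/Sherman--Morrison argument, split $\tfrac1d\|Z(t)-\phi(A)\|_F^2$ into a one-resolvent trace, a two-resolvent trace and a deterministic term, and obtain the variance contribution by implicitly differentiating the self-consistent Stieltjes equation (the paper computes $\lim\tfrac1m\E\,\tr(M^2)=-\mathfrak{g}'(-1)$, which is exactly your Jacobian-of-the-fixed-point factor). The only organizational differences are that the paper works entry-by-entry with $G_{ii},G_{ij}$ and computes $\E\,G_{ij}^2$ by a leave-two-out step rather than via your Woodbury/$G=\theta(A)+\Delta$ reformulation, and it closes the argument with a separate concentration lemma (\cref{Lemma:ConvergenceRMT2}, via the Gaussian Poincar\'e inequality) that your proposal only gestures at.
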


This proposition is proved in \cref{App:Subsec:HighDimLimit}. The proof uses the explicit solution derived in \cref{prop:OjaSolution} and interprets this solution using the Stieltjes transform of a Gaussian matrix with time-dependent covariance. Then, the application of some standard random matrix results \citep[that can be found in][]{bun2017cleaning}, as well as some lemmas given in \cref{App:Subsec:HighDimLimit}, allow us to access~\eqref{eq:ConvergenceHighDimOja}. 

To the best of our knowledge, such a result is new and can be more widely applied to understand the Oja flow dynamics in high dimension. 

\subsubsection{Convergence Rates in the High-Dimensional Limit}

We now explain how to derive convergence rates starting from \cref{Prop:ConvergenceHighDimOja}. To obtain the distance to the limit using~\eqref{eq:ConvergenceHighDimOja}, one has to pick the spectral function $\phi$ corresponding to the $m$ largest positive eigenvalues of $A$. This is done by choosing:
\begin{equation}
    \phi(x) = x \1_{x \geq \max(0, \omega)}, \hspace{1.5cm} \kappa = \int \1_{x \geq \omega} \d \mu_A(x). 
\end{equation}
Here $\omega$ selects a fraction $\kappa$ of the measure $\mu_A$ corresponding to the largest eigenvalues and $\phi$ applies a threshold using $\omega$ and 0 to ensure that the selected eigenvalues are positive. Then, one can study the long-time limit of equation~\eqref{eq:ConvergenceHighDimOja} to get the following result:
\begin{proposition} \label{Prop:HighDimCVRates}
    Assume that $\mu_A$ admits a density $\rho_A$. Then, there exists absolute constants $C_1, C_2 > 0$ such that:
    \begin{equation} \label{eq:HighDimCVOja}
    \lim_{d \to \infty}\frac{1}{d} \big\| Z(t)  -Z_\infty \big\|_F^2 \underset{t \to \infty}{\sim} \left\{ \begin{array}{cc} C_1 \dfrac{\rho_A(0)}{t^3}, & \mathrm{if} \ \kappa > \min(\kappa_A, 1) \\
    C_2 \dfrac{\omega^2 \rho_A(\omega)}{t},  & \mathrm{if} \ \kappa < \min(\kappa_A, 1).   
    \end{array}  \right.
    \end{equation} 
\end{proposition}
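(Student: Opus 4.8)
The plan is to work directly from the exact high-dimensional identity \eqref{eq:ConvergenceHighDimOja} of \cref{Prop:ConvergenceHighDimOja}, specialized to the spectral function whose image is the limit point: with $\omega$ defined by $\kappa = \int \1_{x \geq \omega}\,\d\mu_A(x)$, set $\phi(x) = x\,\1_{x \geq \max(0,\omega)}$, so that $\phi(A) = A_{(m)}^{+} = Z_\infty$. After this substitution the right-hand side of \eqref{eq:ConvergenceHighDimOja} is an explicit functional of $t$ through the scalar $\mathfrak{g}(t)$ and the kernel $q_t(x) = (e^{2xt}-1)\mathfrak{g}(t)+x$, and the proof reduces to two tasks: first, pin down the $t\to\infty$ asymptotics of $\mathfrak{g}(t)$ from its self-consistent equation $\kappa\mathfrak{g}(t) + 1-\kappa = \int x/q_t(x)\,\d\mu_A(x)$; second, carry out a Laplace / Watson-type analysis of the three $\mu_A$-integrals appearing in \eqref{eq:ConvergenceHighDimOja}, localized near the spectral edge that governs the slow modes.

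For the first task I would split the self-consistent equation at $x=0$. The $x<0$ part converges (since $e^{2xt}\to 0$) to $\int_{x<0}x/(x-\mathfrak{g}_\infty)\,\d\mu_A(x)$, while the behavior of the $x>0$ part depends on the regime. When $\kappa > \min(\kappa_A,1)$ (equivalently, since $\kappa_A\le 1$, just $\kappa>\kappa_A$, i.e. asymptotically $m>p$, so $\max(0,\omega)=0$ and $\phi=(\cdot)^{+}$), the $x>0$ part vanishes and $\mathfrak{g}(t)$ converges to the unique positive root $\mathfrak{g}_\infty$ of $\kappa g + 1-\kappa = \int_{x<0}x/(x-g)\,\d\mu_A(x)$ — existence and uniqueness follow from monotonicity of both sides in $g$ — with a correction of size $\Theta(t^{-2})$ created by eigenvalues of $A$ near $0$, irrelevant at leading order. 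When $\kappa < \min(\kappa_A,1)$ one has $\omega>0$, $\mathfrak{g}(t)\to 0$, and in fact $\mathfrak{g}(t) = c_\infty e^{-2\omega t}(1+o(1))$: the exponential rate $2\omega$ is forced by requiring the $x<\omega$ part of $\int x/q_t\,\d\mu_A$ to match $\mu_A((-\infty,\omega)) = 1-\kappa$ to leading order, and the prefactor $c_\infty$ is fixed by matching the two $\Theta(t^{-1})$ corrections — one from the window $x = \omega + O(1/t)$, one from the small positive eigenvalues — against the exponentially small left-hand side $\kappa\mathfrak{g}(t)$; this matching yields $c_\infty = r\omega$, where $r>0$ is the universal root of $e^{r} = 1 + 1/r$.

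For the second task I would insert these asymptotics into \eqref{eq:ConvergenceHighDimOja} and evaluate each integral by the rescaling $x = s/(2t)$ near the edge $0$ (resp. $x = \omega + s/(2t)$ near $\omega$), using continuity of the density $\rho_A$ to replace $\d\mu_A(x)$ by $\rho_A(0)\,\d x$ (resp. $\rho_A(\omega)\,\d x$). In the regime $\kappa > \min(\kappa_A,1)$, the \emph{bias} term $\int(\mathfrak{g}(t)\,xe^{2xt}/q_t(x) - \phi(x))^2\,\d\mu_A$ is dominated by $|x|\lesssim 1/t$: in the rescaled variable its integrand is $O(t^{-2})$ with the $\mathfrak{g}_\infty$-dependence cancelling, and integration against $\rho_A(0)\,\d s/(2t)$ produces $C_1\rho_A(0)/t^3$ with $C_1$ an explicit universal constant; the \emph{variance} (first) term is $\mathfrak{g}_\infty\cdot\Theta(t^{-6})$ and is therefore subleading, and one must also check that the bulk contribution ($x$ bounded away from $0$) is exponentially small. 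In the regime $\kappa < \min(\kappa_A,1)$, the window $x\approx\omega$ controls everything: the $0<x<\omega$ piece of the bias term, its $x>\omega$ piece, and the variance term each contribute at order $t^{-1}$, and substituting $c_\infty = r\omega$ shows that all three are proportional to $\omega^2\rho_A(\omega)$, so they combine into $C_2\omega^2\rho_A(\omega)/t$.

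The main obstacle is the analysis of $\mathfrak{g}(t)$ in the underparameterized regime $\kappa<\min(\kappa_A,1)$: one must expand the self-consistent equation to the exact order at which the exponential rate and the prefactor $c_\infty$ become visible, which requires a uniform Laplace estimate of $\int x/q_t(x)\,\d\mu_A$ near $x=\omega$ and careful accounting of the cancellation imposed by the exponentially small left-hand side. A secondary difficulty is proving rigorously that the bulk eigenvalues contribute at strictly higher order than the stated rates — this uses boundedness of $\rho_A$ near the edge together with $\rho_A(0)>0$ (resp. $\rho_A(\omega)>0$) — and tracking the powers of $\omega$ and the numerical constants so that the three $\Theta(t^{-1})$ pieces genuinely collapse into a single $\omega^2\rho_A(\omega)/t$ term.
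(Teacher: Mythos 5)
Your overall strategy is the one the paper uses: specialize \cref{Prop:ConvergenceHighDimOja} to $\phi(x) = x\,\1_{x\ge\max(0,\omega)}$, derive the long-time asymptotics of $\mathfrak{g}(t)$ from its self-consistent equation, and then Laplace-localize the three $\mu_A$-integrals near the relevant spectral edge ($0$ when $\kappa>\kappa_A$, $\omega$ when $\kappa<\kappa_A$), with the variance (first) term dominating the $\kappa<\kappa_A$ rate and the bias term dominating the $\kappa>\kappa_A$ rate. The identification that the $\mathfrak{g}_\infty$-dependence cancels at leading order in the $t^{-3}$ rate, and that the bulk of the spectrum contributes only at exponentially small order, is correct.

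However, your claimed prefactor in the underparameterized regime is wrong. You assert $\mathfrak{g}(t) \sim c_\infty e^{-2\omega t}$ with $c_\infty = r\omega$ where $r$ solves $e^r = 1 + 1/r$. The paper's \cref{Lemma:LongtimesStieltjes} establishes $c_\infty = \omega$, i.e., $r = 1$. The matching condition coming from the self-consistent equation, after the rescaling $x = \omega + u/t$ and discarding the exponentially small left-hand side, is
\begin{equation*}
0 = \lim_{t\to\infty}\int_{-\infty}^{\infty}\left(\frac{\omega}{e^{2u}h(t)+\omega} - \1_{u<0}\right)\d u = \lim_{t\to\infty}\frac{1}{2}\log\frac{\omega}{h(t)},
\end{equation*}
which forces $h(t)\to\omega$; there is no second $\Theta(t^{-1})$ contribution from small eigenvalues near zero (the integrand of the self-consistent equation is already $\approx 1$ there, since $e^{2xt}\mathfrak{g}(t) = \omega\,e^{-2(\omega-x)t}$ is exponentially small uniformly for $x$ bounded away from $\omega$). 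Because the three leading-order pieces scale jointly as $\omega^2\rho_A(\omega)/t$ for any $c_\infty = \Theta(\omega)$ — the $c_\infty$-dependence either cancels (in $\mathfrak{g}(t)I_1^2/I_2$) or is absorbed into a universal constant via $u\mapsto u+\tfrac12\log r$ — your error only corrupts the numerical value of $C_2$ and not the order of magnitude, so the proposition as stated would still follow. But the derivation of $c_\infty$ as you describe it would need to be corrected to $c_\infty=\omega$ to recover the paper's explicit constant $C_2 = \int\frac{e^{2u}}{(e^{2u}+1)^2}\d u + \int\left(\frac{e^{2u}}{e^{2u}+1}-\1_{u\ge 0}\right)^2\d u$.
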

This result is proved in \cref{App:Subsec:LongTime}. In both cases, the convergence speed is proportional to the value of the density $\rho_A$ near the smallest eigenvalue of $Z_\infty$. Interestingly, if the density is zero in a neighborhood of this value, the convergence remains exponential, which is consistent with the previous observations at finite dimension. On the other hand, when the density is positive at the threshold, because of the directions with vanishing exponential rates identified in \cref{Subsec:OjaCVFinitedim}, the convergence becomes a power-law in the high-dimensional limit. 

The critical case $\kappa = \min(\kappa_A, 1)$ is not covered by \cref{Prop:HighDimCVRates}. We believe this regime to be more technical to analyze, and we leave its study for future work. 

\begin{figure}[ht]
    \centering
    \includegraphics[width=\linewidth]{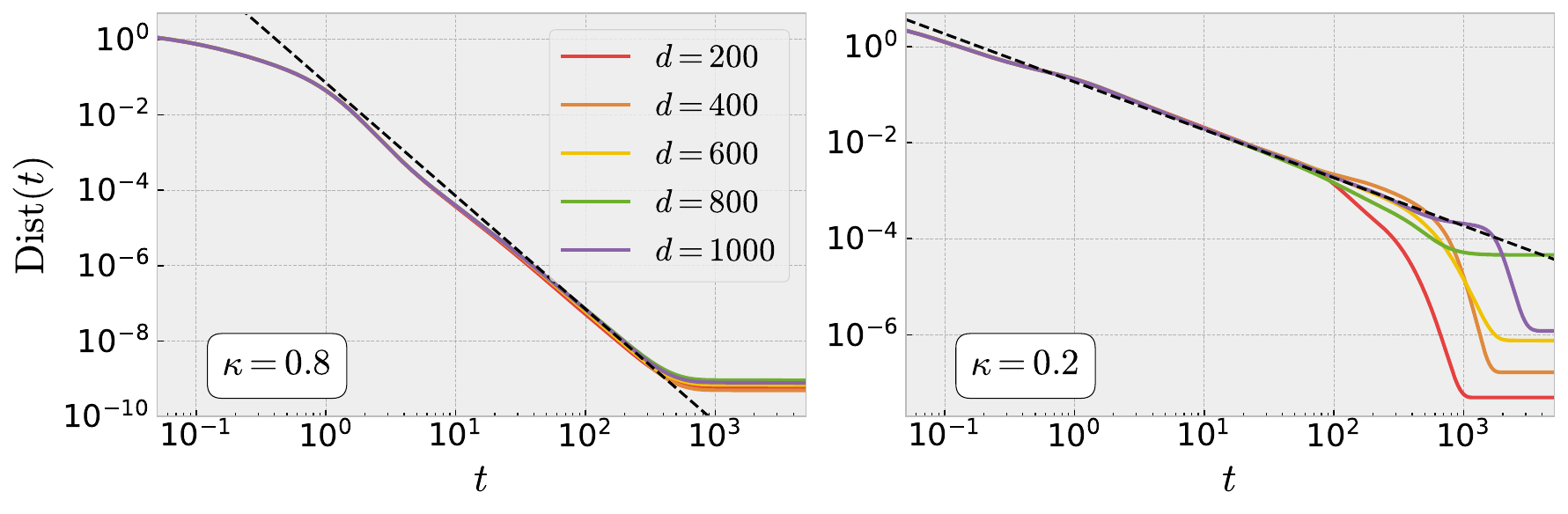}
    \vspace*{-0.7cm}
    \caption{Comparison between the numerical integration of the Oja flow \eqref{eq:OjaFlow} (with stepsize $\eta = 5 \times 10^{-3}$) and the power-law asymptotic of equation \eqref{eq:HighDimCVOja}. Colored lines: the quantity $\mathrm{Dist}(t) = \frac{1}{d} \|Z(t) - Z_\infty\|_F^2$, averaged over 20 repetitions of the initialization and target matrix (here distributed as a GOE), and plotted for several values of the dimension $d$. Black dashed lines: exact power-law asymptotics given by equation \eqref{eq:HighDimCVOja} (the value of the constants can be found in \cref{App:Subsec:LongTime}).}
    \label{fig:CVOja}
\end{figure}

\cref{fig:CVOja} confirms the previous asymptotics. In this figure, we plot the distance to convergence (left-hand side of equation \ref{eq:HighDimCVOja}) as a function of time, and with the choice of a target matrix $A \sim \mathrm{GOE}(d)$ (see \cref{def:GOE} for details), so that in this case $\kappa_A = 1/2$. This function of the dynamics is then compared with the asymptotic prediction of \eqref{eq:HighDimCVOja} (black dashed line): the left panel shows the case $\kappa > \kappa_A$ and the right panel $\kappa < \kappa_A$, leading to the distinct power-law behaviors. The comparison reveals an excellent match, down to the constant factor. For very large values of $t$, the asymptotics seem to break, but we believe that this is due to numerical precision limits in the left panel, and to finite-dimensional effects on the right panel. Indeed, recall that \cref{Subsec:OjaCVFinitedim} derived exponentially fast convergence rates: these still occur when the dimension is large, but only on a timescale of order $d$.  

\subsection{Linear Response} \label{Subsec:OjaResponse}

We shall now study the linear response associated with the Oja flow. This object is central in the high-dimensional equations derived in \cref{subsec:DMFT} and will be necessary to close the system of equations in \cref{Subsec:LongTimes}. We shall now consider the perturbed Oja flow dynamics:
\begin{equation} \label{eq:OjaFlowPerturbed}
    \dot W(t) = \big( A - W(t)W(t)^\top \big) W(t) + H(t) W(t),
\end{equation}
for some $H(t) \in \mathcal{S}_d(\R)$, and we compute the linear response operator:
\begin{equation}
    R(t,t') = \left. \frac{\partial Z(t)}{\partial H(t')} \right|_{H = 0}, \hspace{1.5cm} Z(t) = W(t)W(t)^\top.
\end{equation}
Note that, as emphasized earlier, $R(t,t')$ is a linear map $\mathcal{S}_d(\R) \to \mathcal{S}_d(\R)$. The following proposition gives its expression: 
\begin{proposition} \label{Prop:OjaResponse1}
    The linear response of the Oja flow is given by, for $H \in \mathcal{S}_d(\R)$:
    \begin{equation}
        R(t,t')(H) = \Big( U(t,t') H V(t,t')^\top + V(t,t') H U(t,t')^\top \Big) \1_{t' \leq t}, 
    \end{equation}
    where:
    \begin{equation}
        U(t,t') = P(t)^{-1} P(t'), \hspace{1.2cm} V(t,t') = P(t)^{-1} Z_0 e^{t'A}, \hspace{1.2cm} P(t) = e^{-tA} + Z_0 s_t(A),
    \end{equation}
    and the function $s_t$, defined by $s_t(\lambda) = \lambda^{-1} \big( e^{t \lambda} - e^{-t \lambda} \big)$ if $\lambda \neq 0$ and $s_t(0) = 2t$ is applied spectrally to $A$. In the basis of $\mathcal{S}_d(\R)$:
    \begin{equation*}
        E_{ij} = \frac{e_i e_j^\top + e_je_i^\top}{2},
    \end{equation*}
    for $i \leq j$:
    \begin{align}
        R_{ijkl}(t,t') &\equiv \tr \Big( R(t,t')\big(E_{kl} \big) E_{ij} \Big) \\
        &= \frac{1}{2} \bigg(U_{ik}(t,t') V_{jl}(t,t') + U_{il}(t,t') V_{jk}(t,t') \notag \\
        &\hspace{1cm} + U_{jk}(t,t') V_{il}(t,t') + U_{jl}(t,t') V_{ik}(t,t') \bigg) \1_{t' \leq t} \notag. 
    \end{align}
\end{proposition}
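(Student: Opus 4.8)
The plan is to exploit the fact that the Oja flow closes on the symmetric matrix $Z(t)=W(t)W(t)^\top$, for which the perturbed dynamics \eqref{eq:OjaFlowPerturbed} is a matrix Riccati equation whose linearization is a Lyapunov equation. First I would note that if $Z^\epsilon=W^\epsilon(W^\epsilon)^\top$ with $W^\epsilon$ the solution of \eqref{eq:OjaFlowPerturbed} for a perturbation $\epsilon H(t)$, then
\[
\dot Z^\epsilon=(A+\epsilon H)Z^\epsilon+Z^\epsilon(A+\epsilon H)-2(Z^\epsilon)^2 ,
\]
so that the perturbation amounts to the replacement $A\mapsto A+\epsilon H(t)$ in the Riccati flow $\dot Z=AZ+ZA-2Z^2$. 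Differentiating at $\epsilon=0$ and writing $Z_1=\partial_\epsilon Z^\epsilon|_{\epsilon=0}$ gives the linear inhomogeneous equation
\[
\dot Z_1=B(t)Z_1+Z_1B(t)+\big(H(t)Z(t)+Z(t)H(t)\big),\qquad B(t)=A-2Z(t),\qquad Z_1(0)=0 ,
\]
with $Z(t)$ the unperturbed solution and $B(t)$ symmetric. Denoting by $\Phi(t,s)$ the state-transition matrix of $\dot x=B(t)x$ (so $\partial_t\Phi(t,s)=B(t)\Phi(t,s)$, $\Phi(s,s)=I_d$), the variation-of-constants formula for Lyapunov equations — which uses the symmetry of $B$ — yields $Z_1(t)=\int_0^t\Phi(t,s)\big(H(s)Z(s)+Z(s)H(s)\big)\Phi(t,s)^\top\,\d s$; concentrating the perturbation at time $t'$ then gives $R(t,t')(H)=\Phi(t,t')\big(HZ(t')+Z(t')H\big)\Phi(t,t')^\top\,\1_{t'\le t}$, the indicator coming from the integration range.

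It then remains to identify $\Phi(t,t')$ with $U(t,t')=P(t)^{-1}P(t')$. For this I would first record a closed form for the unperturbed flow in terms of $Z_0=W_0W_0^\top$: starting from \cref{prop:OjaSolution} with $A$ invertible, the push-through identity gives $Z(t)=e^{tA}Z_0(I_d+2K_tZ_0)^{-1}e^{tA}$ with $K_t=\int_0^te^{2sA}\,\d s=\tfrac12A^{-1}(e^{2tA}-I_d)$, and transposing (using that $A$, $e^{\pm tA}$, $A^{-1}$ are symmetric and mutually commute, and that $Z(t)$ is symmetric) yields $Z(t)=P(t)^{-1}Z_0e^{tA}$ with $P(t)=e^{-tA}+Z_0A^{-1}(e^{tA}-e^{-tA})$ — the same substitution underlies the ``doubling'' linearization $Z=NM^{-1}$, $\dot N=AN$, $\dot M=2N-AM$ of the Riccati equation. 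Since $P(0)=I_d$, it suffices to check $\partial_t[P(t)^{-1}]=B(t)P(t)^{-1}$, i.e. $\dot P(t)=-P(t)(A-2Z(t))=-P(t)A+2Z_0e^{tA}$, where the last equality uses $P(t)Z(t)=Z_0e^{tA}$. A direct differentiation of $P$ together with the commutation relations shows that both sides equal $-Ae^{-tA}+Z_0(e^{tA}+e^{-tA})$, hence $\Phi(t,0)=P(t)^{-1}$ and $\Phi(t,t')=\Phi(t,0)\Phi(t',0)^{-1}=P(t)^{-1}P(t')=U(t,t')$.

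Finally I would put the formula into the stated shape. Since $Z(t')=P(t')^{-1}Z_0e^{t'A}$, we get $V(t,t')=P(t)^{-1}Z_0e^{t'A}=P(t)^{-1}P(t')Z(t')=U(t,t')Z(t')$, so that, using $Z(t')^\top=Z(t')$,
\[
\Phi(t,t')\big(HZ(t')+Z(t')H\big)\Phi(t,t')^\top=U(t,t')HV(t,t')^\top+V(t,t')HU(t,t')^\top ,
\]
which is the claimed operator identity. The componentwise expression follows by expanding $R(t,t')(E_{kl})$ and pairing with $E_{ij}$ through the trace, a routine computation with $E_{ij}=(e_ie_j^\top+e_je_i^\top)/2$. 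I expect the only delicate points to be the manipulations involving non-commuting factors — the passage from \cref{prop:OjaSolution} to $Z(t)=P(t)^{-1}Z_0e^{tA}$ and the verification of $\dot P=-PA+2Z_0e^{tA}$ — together with the standard justification for differentiating under the integral sign when taking the functional derivative; everything else is bookkeeping.
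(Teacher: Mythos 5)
Your argument is correct and structurally matches the paper's proof: both linearize the Riccati flow $\dot Z = AZ + ZA - 2Z^2$ under the substitution $A \mapsto A + \epsilon H(t)$, both write the first-order response as a Lyapunov variation-of-constants integral through the state-transition matrix $\Phi(t,s)$ of $\dot x = B(t)x$ with $B(t) = A - 2Z(t)$, and both identify $\Phi(t,s) = P(t)^{-1}P(s) = U(t,s)$ and then read off the kernel by localizing the perturbation in time.

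The one genuinely different step is how the closed form for $P(t)$ is produced. You derive $Z(t) = P(t)^{-1} Z_0 e^{tA}$ directly from the explicit solution of \cref{prop:OjaSolution} via a push-through identity and the symmetry of $Z(t)$, and then verify the defining first-order relation $\dot P(t) = -P(t)B(t)$ by differentiating this closed form, using $P(t)Z(t) = Z_0 e^{tA}$. The paper instead derives an auxiliary ODE for $B(t)$, deduces a second-order linear ODE $\ddot P = P A^2$ for $P$ (its statement of this ODE and of $\dot P(0)$ carry spurious factor-of-two typos, though the resulting $P$ is correct), and integrates it with the initial conditions at $t=0$. Your route buys a slightly shorter path: it avoids the second-order ODE and the computation of $\dot B$ altogether, and makes it explicit that the formula for $P$ is not an ansatz but is forced by the factorization of the Oja solution $Z(t) = P(t)^{-1}Z_0 e^{tA}$, which is the paper's identity $P(t)Z_0(t) = W_0W_0^\top e^{tA}$ reread. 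After that, both arguments close identically: $V(t,t') = U(t,t')Z(t')$, symmetry of $Z(t')$, and a routine expansion in the $E_{ij}$ basis.
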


This result is exact and does not require any long-time or high-dimensional assumption. The key point is to decompose $Z(t)$ between the solution at $H = 0$ and a solution of order~$H$, and obtain a differential equation on this solution. Finally, the differential equation can be solved by exploiting the explicit solution at $H = 0$ derived in \cref{prop:OjaSolution}. The proof details are given in \cref{App:Subsec:OjaResponse1}. 

We now compute the high-dimensional limit of the response operator. To do so, we fix some times $t,t'$ and consider the mean diagonal response:
\begin{equation}
    R_\text{diag}(t,t') = \lim_{d \to \infty} \frac{1}{d^2} \tr \left( \left. \frac{\partial Z(t)}{\partial H(t')} \right|_{H = 0} \right), \hspace{1.5cm} r_\text{diag}(t) = \int_0^t R_\text{diag}(t,t') \d t'. 
\end{equation}
These scalar quantities play an essential role in our system of equations, and in the case of the Oja flow they can be exactly computed. In the same fashion as in \cref{Prop:ConvergenceHighDimOja}, we exploit some well-known random matrix results that require the initialization $W_0$ to be a Gaussian matrix. However, we believe that all the results that will be considered in the $t \to \infty$ limit will not depend on this choice of distribution. 

\begin{proposition} \label{Prop:DiagonalResponse}
    Under the same assumptions as in \cref{Prop:ConvergenceHighDimOja}, with $m \sim \kappa d$ in the $d \to \infty$ limit, we have:
    \begin{align}
        R_\mathrm{diag}(t,t') &= \frac{\mathfrak{g}(t)}{2} \iint \frac{1}{q_t(x) q_t(y)} e^{(x+y)t} \Big( y(x - \mathfrak{g}(t)) e^{(y-x)t'} + x(y - \mathfrak{g}(t)) e^{(x-y)t'} \label{eq:Rdiagprop1} \\
        &\hspace{5cm} + (x+y) \mathfrak{g}(t) e^{(x+y)t'} \Big) \d \mu_A(x) \d \mu_A(y), \notag\\
        r_\mathrm{diag}(t) &= \frac{\mathfrak{g}(t)}{2} \iint \frac{1}{y-x} \left[ \frac{y e^{2yt}}{q_t(y)} - \frac{x e^{2xt}}{q_t(x)} \right] \d \mu_A(x) \d \mu_A(y), \label{eq:Rdiagprop2}
    \end{align}
    where $q_t(x) = \mathfrak{g}(t)(e^{2xt} - 1) + x$ and $\mathfrak{g}(t)$ solves the self-consistent equation:
    \begin{equation}
        \kappa \mathfrak{g}(t) + 1 - \kappa = \int \frac{x}{(e^{2xt}-1) \mathfrak{g}(t) + x} \d \mu_A(x).
    \end{equation}
\end{proposition}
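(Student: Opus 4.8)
The plan is to start from the exact, finite-$d$ formula for the response operator in \cref{Prop:OjaResponse1}, reduce $d^{-2}\tr$ of it to a product of two scalar normalized traces, and evaluate those with the deterministic-equivalent machinery already used to establish \cref{Prop:ConvergenceHighDimOja}. For the trace reduction, write $U=U(t,t')$, $V=V(t,t')$ as in \cref{Prop:OjaResponse1}; a direct computation from the displayed formula for $R_{ijkl}$ (accounting for the normalization of the basis $\{E_{ij}\}$ of $\mathcal S_d(\R)$) gives, for $t'\le t$,
\[
\tr R(t,t')=\tr U(t,t')\,\tr V(t,t')+\tr\!\big(U(t,t')V(t,t')\big),
\]
which is just the identity that the operator $H\mapsto UHV^\top+VHU^\top$ on $\mathcal S_d(\R)$ has trace $\tr U\,\tr V+\tr(UV)$. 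For fixed $t,t'$ the spectrum of $A$ is bounded, so $e^{sA}$, $B_s(A):=A^{-1}(e^{2sA}-I_d)$, $Z_0=W_0W_0^\top$ and $(I_d+Z_0B_s(A))^{-1}$ all have bounded operator norm, whence $\tr(UV)=O(d)$ and $\tr U,\tr V=O(d)$; dividing by $d^2$ and letting $d\to\infty$, only the product $\tfrac1d\tr U\cdot\tfrac1d\tr V$ survives, and each factor self-averages since it is a normalized trace of a resolvent-type matrix against a spectral function of $A$.

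The second step makes $U$ and $V$ explicit. A short computation rewrites the matrix $P(t)=e^{-tA}+Z_0A^{-1}(e^{tA}-e^{-tA})$ of \cref{Prop:OjaResponse1} as $P(t)=(I_d+Z_0B_t(A))e^{-tA}$, so $P(t)^{-1}=e^{tA}(I_d+Z_0B_t(A))^{-1}$, and therefore $\tfrac1d\tr U(t,t')=\tfrac1d\tr[(I_d+Z_0B_t(A))^{-1}(I_d+Z_0B_{t'}(A))e^{(t-t')A}]$ and $\tfrac1d\tr V(t,t')=\tfrac1d\tr[(I_d+Z_0B_t(A))^{-1}Z_0e^{(t+t')A}]$. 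Expanding $(I_d+Z_0B_t)^{-1}(I_d+Z_0B_{t'})=(I_d+Z_0B_t)^{-1}+(I_d+Z_0B_t)^{-1}Z_0B_{t'}$ and using the push-through identity $(I_d+Z_0S)^{-1}Z_0=W_0(I_m+W_0^\top SW_0)^{-1}W_0^\top$, both traces reduce to combinations of $\tfrac1d\tr[(I_d+Z_0B_t(A))^{-1}\psi(A)]$ and $\tfrac1d\tr[(I_d+Z_0B_t(A))^{-1}Z_0\psi(A)]$ for explicit spectral functions $\psi$. The deterministic equivalent underlying \cref{Prop:ConvergenceHighDimOja} --- that, tested against functions of $A$, $(I_d+Z_0B_t(A))^{-1}$ behaves like $(I_d+\mathfrak{g}(t)B_t(A))^{-1}$ with $\mathfrak{g}(t)$ the (unique, positive) solution of the stated self-consistent equation --- yields $\tfrac1d\tr[(I_d+Z_0B_t(A))^{-1}\psi(A)]\to\int x\psi(x)q_t(x)^{-1}\d\mu_A(x)$ and $\tfrac1d\tr[(I_d+Z_0B_t(A))^{-1}Z_0\psi(A)]\to\mathfrak{g}(t)\int x\psi(x)q_t(x)^{-1}\d\mu_A(x)$, with $q_t(x)=(e^{2xt}-1)\mathfrak{g}(t)+x$. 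Substituting the relevant $\psi$'s, multiplying $\tfrac1d\tr U$ by $\tfrac1d\tr V$, and symmetrizing in the two integration variables $x,y$ over $\mu_A$ reproduces \eqref{eq:Rdiagprop1} (a useful check: at $t'=t$ the bracket collapses to $2xy$).

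Finally, $r_\mathrm{diag}(t)=\int_0^tR_\mathrm{diag}(t,t')\,\d t'$ is obtained by integrating \eqref{eq:Rdiagprop1} over $t'$: the only $t'$-dependence sits in $e^{\pm(x-y)t'}$ and $e^{(x+y)t'}$, whose primitives $\tfrac{e^{t(y-x)}-1}{y-x}$ and $\tfrac{e^{t(x+y)}-1}{x+y}$ recombine --- after expanding and symmetrizing in $x\leftrightarrow y$, clearing denominators and using $q_t(x)-x=(e^{2xt}-1)\mathfrak{g}(t)$ --- into the compact kernel $\tfrac1{y-x}\big(\tfrac{ye^{2yt}}{q_t(y)}-\tfrac{xe^{2xt}}{q_t(x)}\big)$ of \eqref{eq:Rdiagprop2}. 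An equivalent and cleaner route is to recognize $\int_0^tR(t,t')\,\d t'$ as the linear response of $Z(t)$ to a \emph{constant} perturbation of the target, i.e.\ the derivative at $H=0$ of the Oja solution of \cref{prop:OjaSolution} with $A$ replaced by $A+H$, which can be differentiated directly from the closed form.

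I expect the main obstacle to be making the deterministic-equivalent reduction rigorous: one needs concentration of $\tfrac1d\tr[(I_d+Z_0B_t(A))^{-1}\psi(A)]$ and $\tfrac1d\tr[(I_d+Z_0B_t(A))^{-1}Z_0\psi(A)]$ around their limits, uniformly over the (fixed, $t$-dependent) family of bounded spectral functions $\psi$ generated by the extra $e^{(t\mp t')A}$ and $B_{t'}(A)$ factors that do not appear in \cref{Prop:ConvergenceHighDimOja}, together with well-posedness and regularity of the equation defining $\mathfrak{g}(t)$; this is standard given \citet{bun2017cleaning} and the bounded limiting spectrum, but requires care. The remainder is essentially algebraic bookkeeping needed to match the assembled double integrals to the precise symmetric forms \eqref{eq:Rdiagprop1}--\eqref{eq:Rdiagprop2}.
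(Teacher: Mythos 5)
Your proposal is correct and takes essentially the same approach as the paper: both start from \cref{Prop:OjaResponse1}, reduce $d^{-2}\tr$ of the response operator to $\tfrac1d\tr U\cdot\tfrac1d\tr V$ plus a subleading $O(1/d)$ cross term, then evaluate the two normalized traces via the deterministic equivalent from \citet{bun2017cleaning} (you phrase it as traces against spectral test functions after a push-through identity; the paper works with diagonal resolvent entries in the eigenbasis of $A$, which is the same information), and finally obtain $r_\mathrm{diag}$ by integrating over $t'$ and symmetrizing. Incidentally, your cross term $\tr(UV)$ is the correct one for the trace of $H\mapsto UHV^\top+VHU^\top$ on $\mathcal S_d(\R)$; the paper's displayed $\tr(VU^\top)$ appears to be a typo, though it is immaterial since that term vanishes in the limit either way.
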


We prove this proposition in \cref{App:Subsec:OjaResponse2}. 

Finally, we emphasize that it is technically possible to derive a closed-form expression for higher-order responses for the Oja flow dynamics. In \cref{App:Subsec:DynamicalStability}, we explain how this can be achieved and how these responses can be used to investigate the stability of the long-time approximations made on the high-dimensional equations in \cref{Subsec:LongTimes}.

\section{Conclusion and Perspectives} \label{Sec:Conclusion}
In this work, we studied the high-dimensional training dynamics of a shallow neural network with quadratic activation function in a teacher--student setting. In the extensive-width regime, we derived a high-dimensional description of the gradient flow dynamics on the empirical loss and obtained asymptotic equations governing its long-time behavior. These results clarify how overparameterization affects learning and generalization, and help understand overfitting and double descent phenomena. Overall, our findings contribute to a clearer theoretical picture of learning dynamics in high-dimensional, overparameterized neural networks.

\paragraph{Toward a more rigorous analysis.} Despite the theoretical insights and numerical evidence presented in this paper, several aspects of our analysis call for a more rigorous treatment. 
\begin{itemize}
    \item The derivation of the equivalent dynamics in the high-dimensional limit (\cref{subsec:DMFT}) relies on statistical physics techniques that have been applied to several learning problems. To obtain fully rigorous results, one would need to generalize the mathematical analysis developed for finite width to the extensive width case, in order to characterize the limiting high-dimensional object and to formalize convergence in distribution.
    \item The simplification of the dynamics at long times (\cref{Subsec:LongTimes}) is based on structural assumptions on the dynamics. Confirming this ansatz mathematically is challenging, and comparable rigorous results exist only for simpler models \citep{celentano2021high, fan2025dynamical, chen2025learning}.
    \item Our analysis relies on a Gaussian surrogate for the rank-one sensing matrices associated with the quadratic model. While we provide theoretical arguments and numerical evidence supporting the equivalence between these two models (\cref{subsec:GaussianUniversality}), we believe that this equivalence could be established rigorously and extended to a broader class of distributions. Similar universality results are well understood in the static setting \citep{maillard2024bayes, erba2025nuclear, xu2025fundamental}, but extending them to high-dimensional trajectories remains an open problem.
\end{itemize}

\paragraph{Generalizations.} We expect that the dynamical framework developed in this work can be extended to other settings of interest. As noted by \citet{maillard2024bayes}, studying polynomial activation functions leads to a lifting of the model to a linear one in the space of tensors. This extension introduces new mathematical challenges, in particular the analysis of the resulting nonlinear tensor dynamics under similar assumptions as those introduced in \cref{Subsec:LongTimes}.

Another promising direction concerns the study of learning dynamics in transformer architectures. Simplified attention mechanisms have already been analyzed in the Bayes-optimal and empirical risk minimization settings, including in deep architectures \citep{troiani2025fundamental, erba2025bilinear, boncoraglio2025inductive, boncoraglio2025bayes}. Extending these approaches to a dynamical setting remains challenging and calls for further attention.

\section*{Acknowledgments}
We thank Lenka Zdeborová, Florent Krzakala, Emanuele Troiani, Vittorio Erba, Antoine Maillard, Bruno Loureiro, Guilhem Semerjian, Gérard Ben Arous, Lénaïc Chizat, Louis-Pierre Chaintron, Nicolas Boumal and Andreea-Alexandra Mu\c{s}at for useful remarks and discussions. We also thank the anonymous reviewers for their helpful comments and suggestions.

This work has received support from the French government, managed by the National Research Agency, under the France 2030 program with the reference “PR[AI]RIE-PSAI”
(ANR-23-IACL-0008).

\bibliography{Biblio}

\begin{thebibliography}{}

\bibitem[Absil et~al., 2008]{absil2008optimization}
Absil, P.-A., Mahony, R., and Sepulchre, R. (2008).
\newblock {\em Optimization Algorithms on Matrix Manifolds}.
\newblock Princeton University Press.

\bibitem[Adlam and Pennington, 2020a]{adlam2020neural}
Adlam, B. and Pennington, J. (2020a).
\newblock The neural tangent kernel in high dimensions: Triple descent and a multi-scale theory of generalization.
\newblock In {\em International Conference on Machine Learning}, pages 74--84.

\bibitem[Adlam and Pennington, 2020b]{adlam2020understanding}
Adlam, B. and Pennington, J. (2020b).
\newblock Understanding double descent requires a fine-grained bias-variance decomposition.
\newblock {\em Advances in Neural Information Processing Systems}, 33:11022--11032.

\bibitem[Agoritsas et~al., 2018]{agoritsas2018out}
Agoritsas, E., Biroli, G., Urbani, P., and Zamponi, F. (2018).
\newblock Out-of-equilibrium dynamical mean-field equations for the perceptron model.
\newblock {\em Journal of Physics A: Mathematical and Theoretical}, 51(8):085002.

\bibitem[Allen-Zhu et~al., 2019]{allen2019convergence}
Allen-Zhu, Z., Li, Y., and Song, Z. (2019).
\newblock A convergence theory for deep learning via over-parameterization.
\newblock In {\em International Conference on Machine Learning}, pages 242--252.

\bibitem[Altieri et~al., 2020]{altieri2020dynamical}
Altieri, A., Biroli, G., and Cammarota, C. (2020).
\newblock Dynamical mean-field theory and aging dynamics.
\newblock {\em Journal of Physics A: Mathematical and Theoretical}, 53(37):375006.

\bibitem[Anderson et~al., 2010]{anderson2010introduction}
Anderson, G.~W., Guionnet, A., and Zeitouni, O. (2010).
\newblock {\em An Introduction to Random Matrices}.
\newblock Cambridge university press.

\bibitem[Arora et~al., 2019]{arora2019implicit}
Arora, S., Cohen, N., Hu, W., and Luo, Y. (2019).
\newblock Implicit regularization in deep matrix factorization.
\newblock {\em Advances in Neural Information Processing Systems}, 32.

\bibitem[Aubin et~al., 2019]{aubin2019committee}
Aubin, B., Maillard, A., Barbier, J., Krzakala, F., Macris, N., and Zdeborová, L. (2019).
\newblock The committee machine: computational to statistical gaps in learning a two-layers neural network.
\newblock {\em Journal of Statistical Mechanics: Theory and Experiment}, 2019(12):124023.

\bibitem[Bach, 2024a]{bach2024high}
Bach, F. (2024a).
\newblock High-dimensional analysis of double descent for linear regression with random projections.
\newblock {\em SIAM Journal on Mathematics of Data Science}, 6(1):26--50.

\bibitem[Bach, 2024b]{bach2024learning}
Bach, F. (2024b).
\newblock {\em Learning Theory from First Principles}.
\newblock MIT Press.

\bibitem[Bandeira and Maillard, 2025]{bandeira2025exact}
Bandeira, A.~S. and Maillard, A. (2025).
\newblock Exact threshold for approximate ellipsoid fitting of random points.
\newblock {\em Electronic Journal of Probability}, 30:1--46.

\bibitem[Barbier et~al., 2025a]{barbier2025phase}
Barbier, J., Camilli, F., Ko, J., and Okajima, K. (2025a).
\newblock Phase diagram of extensive-rank symmetric matrix denoising beyond rotational invariance.
\newblock {\em Physical Review X}, 15(2):021085.

\bibitem[Barbier et~al., 2025b]{barbier2025statistical}
Barbier, J., Camilli, F., Nguyen, M.-T., Pastore, M., and Skerk, R. (2025b).
\newblock Statistical physics of deep learning: Optimal learning of a multi-layer perceptron near interpolation.
\newblock {\em arXiv preprint arXiv:2510.24616}.

\bibitem[Barbier et~al., 2019]{barbier2019optimal}
Barbier, J., Krzakala, F., Macris, N., Miolane, L., and Zdeborov{\'a}, L. (2019).
\newblock Optimal errors and phase transitions in high-dimensional generalized linear models.
\newblock {\em Proceedings of the National Academy of Sciences}, 116(12):5451--5460.

\bibitem[Barbier and Macris, 2019]{barbier2019adaptive}
Barbier, J. and Macris, N. (2019).
\newblock The adaptive interpolation method: a simple scheme to prove replica formulas in {Bayesian} inference.
\newblock {\em Probability Theory and Related Fields}, 174(3):1133--1185.

\bibitem[Barbier et~al., 2020]{barbier2020mutual}
Barbier, J., Macris, N., Dia, M., and Krzakala, F. (2020).
\newblock Mutual information and optimality of approximate message-passing in random linear estimation.
\newblock {\em IEEE Transactions on Information Theory}, 66(7):4270--4303.

\bibitem[Belkin et~al., 2019]{belkin2019reconciling}
Belkin, M., Hsu, D., Ma, S., and Mandal, S. (2019).
\newblock Reconciling modern machine-learning practice and the classical bias--variance trade-off.
\newblock {\em Proceedings of the National Academy of Sciences}, 116(32):15849--15854.

\bibitem[Belkin et~al., 2020]{belkin2020two}
Belkin, M., Hsu, D., and Xu, J. (2020).
\newblock Two models of double descent for weak features.
\newblock {\em SIAM Journal on Mathematics of Data Science}, 2(4):1167--1180.

\bibitem[Ben~Arous et~al., 2001]{arous2001aging}
Ben~Arous, G., Dembo, A., and Guionnet, A. (2001).
\newblock Aging of spherical spin glasses.
\newblock {\em Probability Theory and Related Fields}, 120(1):1--67.

\bibitem[Ben~Arous et~al., 2006]{ben2006cugliandolo}
Ben~Arous, G., Dembo, A., and Guionnet, A. (2006).
\newblock Cugliandolo-{Kurchan} equations for dynamics of spin-glasses.
\newblock {\em Probability Theory and Related Fields}, 136(4):619--660.

\bibitem[Ben~Arous et~al., 2025]{arous2025learning}
Ben~Arous, G., Erdogdu, M.~A., Vural, N.~M., and Wu, D. (2025).
\newblock Learning quadratic neural networks in high dimensions: {SGD} dynamics and scaling laws.
\newblock {\em arXiv preprint arXiv:2508.03688}.

\bibitem[Ben~Arous et~al., 2022]{ben2022high}
Ben~Arous, G., Gheissari, R., and Jagannath, A. (2022).
\newblock High-dimensional limit theorems for {SGD}: Effective dynamics and critical scaling.
\newblock {\em Advances in Neural Information Processing Systems}, 35:25349--25362.

\bibitem[Ben~Arous and Guionnet, 1995]{arous1995large}
Ben~Arous, G. and Guionnet, A. (1995).
\newblock Large deviations for {Langevin} spin glass dynamics.
\newblock {\em Probability Theory and Related Fields}, 102(4):455--509.

\bibitem[Biane, 1997]{biane1997free}
Biane, P. (1997).
\newblock On the free convolution with a semi-circular distribution.
\newblock {\em Indiana University Mathematics Journal}, pages 705--718.

\bibitem[Billingsley, 2013]{billingsley2013convergence}
Billingsley, P. (2013).
\newblock {\em Convergence of Probability Measures}.
\newblock John Wiley \& Sons.

\bibitem[Bodin and Macris, 2023]{bodin2023gradient}
Bodin, A. and Macris, N. (2023).
\newblock Gradient flow on extensive-rank positive semi-definite matrix denoising.
\newblock In {\em 2023 IEEE Information Theory Workshop (ITW)}, pages 365--370.

\bibitem[Bojarski et~al., 2016]{bojarski2016end}
Bojarski, M., Del~Testa, D., Dworakowski, D., Firner, B., Flepp, B., Goyal, P., Jackel, L.~D., Monfort, M., Muller, U., Zhang, J., et~al. (2016).
\newblock End to end learning for self-driving cars.
\newblock {\em arXiv preprint arXiv:1604.07316}.

\bibitem[Boncoraglio et~al., 2025a]{boncoraglio2025inductive}
Boncoraglio, F., Erba, V., Troiani, E., Krzakala, F., and Zdeborov{\'a}, L. (2025a).
\newblock Inductive bias and spectral properties of single-head attention in high dimensions.
\newblock {\em arXiv preprint arXiv:2509.24914}.

\bibitem[Boncoraglio et~al., 2025b]{boncoraglio2025bayes}
Boncoraglio, F., Troiani, E., Erba, V., and Zdeborov{\'a}, L. (2025b).
\newblock Bayes optimal learning of attention-indexed models.
\newblock {\em arXiv preprint arXiv:2506.01582}.

\bibitem[Bonnaire et~al., 2025]{bonnaire2025role}
Bonnaire, T., Biroli, G., and Cammarota, C. (2025).
\newblock The role of the time-dependent {Hessian} in high-dimensional optimization.
\newblock {\em Journal of Statistical Mechanics: Theory and Experiment}, 2025(8):083401.

\bibitem[Bordelon et~al., 2024]{bordelon2024dynamical}
Bordelon, B., Atanasov, A., and Pehlevan, C. (2024).
\newblock A dynamical model of neural scaling laws.
\newblock In {\em International Conference on Machine Learning}, pages 4345--4382.

\bibitem[Bordelon and Pehlevan, 2022]{bordelon2022self}
Bordelon, B. and Pehlevan, C. (2022).
\newblock Self-consistent dynamical field theory of kernel evolution in wide neural networks.
\newblock {\em Advances in Neural Information Processing Systems}, 35:32240--32256.

\bibitem[Bouchaud, 1992]{bouchaud1992weak}
Bouchaud, J.-P. (1992).
\newblock Weak ergodicity breaking and aging in disordered systems.
\newblock {\em Journal de Physique I}, 2(9):1705--1713.

\bibitem[Bouchaud et~al., 1998]{bouchaud1998out}
Bouchaud, J.-P., Cugliandolo, L.~F., Kurchan, J., and M{\'e}zard, M. (1998).
\newblock Out of equilibrium dynamics in spin-glasses and other glassy systems.
\newblock {\em Spin Glasses and Random Fields}, 12(161):9.

\bibitem[Boumal et~al., 2016]{boumal2016non}
Boumal, N., Voroninski, V., and Bandeira, A. (2016).
\newblock The non-convex {Burer}-{Monteiro} approach works on smooth semidefinite programs.
\newblock {\em Advances in Neural Information Processing Systems}, 29.

\bibitem[Brown et~al., 2020]{brown2020language}
Brown, T., Mann, B., Ryder, N., Subbiah, M., Kaplan, J.~D., Dhariwal, P., Neelakantan, A., Shyam, P., Sastry, G., Askell, A., et~al. (2020).
\newblock Language models are few-shot learners.
\newblock {\em Advances in Neural Information Processing Systems}, 33:1877--1901.

\bibitem[Bun et~al., 2017]{bun2017cleaning}
Bun, J., Bouchaud, J.-P., and Potters, M. (2017).
\newblock Cleaning large correlation matrices: tools from random matrix theory.
\newblock {\em Physics Reports}, 666:1--109.

\bibitem[Burer and Monteiro, 2003]{burer2003nonlinear}
Burer, S. and Monteiro, R.~D. (2003).
\newblock A nonlinear programming algorithm for solving semidefinite programs via low-rank factorization.
\newblock {\em Mathematical Programming}, 95(2):329--357.

\bibitem[Candes and Plan, 2011]{candes2011tight}
Candes, E.~J. and Plan, Y. (2011).
\newblock Tight oracle inequalities for low-rank matrix recovery from a minimal number of noisy random measurements.
\newblock {\em IEEE Transactions on Information Theory}, 57(4):2342--2359.

\bibitem[Celentano et~al., 2021]{celentano2021high}
Celentano, M., Cheng, C., and Montanari, A. (2021).
\newblock The high-dimensional asymptotics of first order methods with random data.
\newblock {\em arXiv preprint arXiv:2112.07572}.

\bibitem[Chaintron and Diez, 2022]{chaintron2022propagation}
Chaintron, L.-P. and Diez, A. (2022).
\newblock Propagation of chaos: a review of models, methods and applications. {I}. {Models} and methods.
\newblock {\em arXiv preprint arXiv:2203.00446}.

\bibitem[Chen and Shen, 2025]{chen2025learning}
Chen, Y. and Shen, Y. (2025).
\newblock Learning single index model with gradient descent: spectral initialization and precise asymptotics.
\newblock {\em arXiv preprint arXiv:2509.23527}.

\bibitem[Chizat and Bach, 2018]{chizat2018global}
Chizat, L. and Bach, F. (2018).
\newblock On the global convergence of gradient descent for over-parameterized models using optimal transport.
\newblock {\em Advances in Neural Information Processing Systems}, 31.

\bibitem[Chizat et~al., 2019]{chizat2019lazy}
Chizat, L., Oyallon, E., and Bach, F. (2019).
\newblock On lazy training in differentiable programming.
\newblock {\em Advances in Neural Information Processing Systems}, 32.

\bibitem[Chow and Buice, 2015]{chow2015path}
Chow, C.~C. and Buice, M.~A. (2015).
\newblock Path integral methods for stochastic differential equations.
\newblock {\em The Journal of Mathematical Neuroscience (JMN)}, 5(1):8.

\bibitem[Christof and Kowalczyk, 2023]{christof2023omnipresence}
Christof, C. and Kowalczyk, J. (2023).
\newblock On the omnipresence of spurious local minima in certain neural network training problems.
\newblock {\em Constructive Approximation}, pages 1--28.

\bibitem[Cornacchia et~al., 2023]{cornacchia2023learning}
Cornacchia, E., Mignacco, F., Veiga, R., Gerbelot, C., Loureiro, B., and Zdeborov{\'a}, L. (2023).
\newblock Learning curves for the multi-class teacher--student perceptron.
\newblock {\em Machine Learning: Science and Technology}, 4(1):015019.

\bibitem[Crisanti et~al., 1993]{crisanti1993spherical}
Crisanti, A., Horner, H., and Sommers, H.-J. (1993).
\newblock The spherical p-spin interaction spin-glass model: the dynamics.
\newblock {\em Zeitschrift f{\"u}r Physik B Condensed Matter}, 92(2):257--271.

\bibitem[Cugliandolo and Kurchan, 2000]{cugliandolo2000scenario}
Cugliandolo, L. and Kurchan, J. (2000).
\newblock A scenario for the dynamics in the small entropy production limit.
\newblock {\em Journal of the Physical Society of Japan}, 69(Suppl. A):247--256.

\bibitem[Cugliandolo and Kurchan, 1993]{cugliandolo1993analytical}
Cugliandolo, L.~F. and Kurchan, J. (1993).
\newblock Analytical solution of the off-equilibrium dynamics of a long-range spin-glass model.
\newblock {\em Physical Review Letters}, 71(1):173.

\bibitem[Cugliandolo and Kurchan, 1994]{cugliandolo1994out}
Cugliandolo, L.~F. and Kurchan, J. (1994).
\newblock On the out-of-equilibrium relaxation of the {Sherrington}-{Kirkpatrick} model.
\newblock {\em Journal of Physics A: Mathematical and General}, 27(17):5749.

\bibitem[Cugliandolo et~al., 2019]{cugliandolo2019building}
Cugliandolo, L.~F., Lecomte, V., and Van~Wijland, F. (2019).
\newblock Building a path-integral calculus: a covariant discretization approach.
\newblock {\em Journal of Physics A: Mathematical and Theoretical}, 52(50):50LT01.

\bibitem[Cui et~al., 2023]{cui2023bayes}
Cui, H., Krzakala, F., and Zdeborov{\'a}, L. (2023).
\newblock Bayes-optimal learning of deep random networks of extensive-width.
\newblock In {\em International Conference on Machine Learning}, pages 6468--6521.

\bibitem[Cybenko, 1989]{cybenko1989approximation}
Cybenko, G. (1989).
\newblock Approximation by superpositions of a sigmoidal function.
\newblock {\em Mathematics of Control, Signals and Systems}, 2(4):303--314.

\bibitem[Dandi et~al., 2023]{dandi2023universality}
Dandi, Y., Stephan, L., Krzakala, F., Loureiro, B., and Zdeborov{\'a}, L. (2023).
\newblock Universality laws for {Gaussian} mixtures in generalized linear models.
\newblock {\em Advances in Neural Information Processing Systems}, 36:54754--54768.

\bibitem[D'Angelo et~al., 2024]{d2024we}
D'Angelo, F., Andriushchenko, M., Varre, A.~V., and Flammarion, N. (2024).
\newblock Why do we need weight decay in modern deep learning?
\newblock {\em Advances in Neural Information Processing Systems}, 37:23191--23223.

\bibitem[d'Ascoli et~al., 2020a]{d2020double}
d'Ascoli, S., Refinetti, M., Biroli, G., and Krzakala, F. (2020a).
\newblock Double trouble in double descent: Bias and variance(s) in the lazy regime.
\newblock In {\em International Conference on Machine Learning}, pages 2280--2290.

\bibitem[d'Ascoli et~al., 2020b]{d2020triple}
d'Ascoli, S., Sagun, L., and Biroli, G. (2020b).
\newblock Triple descent and the two kinds of overfitting: Where \& why do they appear?
\newblock {\em Advances in Neural Information Processing Systems}, 33:3058--3069.

\bibitem[De~Dominicis, 1978]{de1978dynamics}
De~Dominicis, C. (1978).
\newblock Dynamics as a substitute for replicas in systems with quenched random impurities.
\newblock {\em Physical Review B}, 18(9):4913.

\bibitem[De~Pirey et~al., 2022]{de2022path}
De~Pirey, T.~A., Cugliandolo, L.~F., Lecomte, V., and Van~Wijland, F. (2022).
\newblock Path integrals and stochastic calculus.
\newblock {\em Advances in Physics}, 71(1-2):1--85.

\bibitem[Defilippis et~al., 2025]{defilippis2025scaling}
Defilippis, L., Xu, Y., Girardin, J., Troiani, E., Erba, V., Zdeborov{\'a}, L., Loureiro, B., and Krzakala, F. (2025).
\newblock Scaling laws and spectra of shallow neural networks in the feature learning regime.
\newblock {\em arXiv preprint arXiv:2509.24882}.

\bibitem[Derezinski et~al., 2020]{derezinski2020exact}
Derezinski, M., Liang, F.~T., and Mahoney, M.~W. (2020).
\newblock Exact expressions for double descent and implicit regularization via surrogate random design.
\newblock {\em Advances in Neural Information Processing Systems}, 33:5152--5164.

\bibitem[Donoho et~al., 2013]{donoho2013phase}
Donoho, D.~L., Gavish, M., and Montanari, A. (2013).
\newblock The phase transition of matrix recovery from {Gaussian} measurements matches the minimax {MSE} of matrix denoising.
\newblock {\em Proceedings of the National Academy of Sciences}, 110(21):8405--8410.

\bibitem[Donoho et~al., 2009]{donoho2009message}
Donoho, D.~L., Maleki, A., and Montanari, A. (2009).
\newblock Message-passing algorithms for compressed sensing.
\newblock {\em Proceedings of the National Academy of Sciences}, 106(45):18914--18919.

\bibitem[Du and Lee, 2018]{du2018power}
Du, S. and Lee, J. (2018).
\newblock On the power of over-parametrization in neural networks with quadratic activation.
\newblock In {\em International Conference on Machine Learning}, pages 1329--1338.

\bibitem[Du et~al., 2019]{du2019gradient}
Du, S., Lee, J., Li, H., Wang, L., and Zhai, X. (2019).
\newblock Gradient descent finds global minima of deep neural networks.
\newblock In {\em International Conference on Machine Learning}, pages 1675--1685.

\bibitem[Dupuis, 2023]{dupuis2023field}
Dupuis, N. (2023).
\newblock {\em Field Theory of Condensed Matter and Ultracold Gases: Volume 1}.
\newblock World Scientific.

\bibitem[Edelman et~al., 1998]{edelman1998geometry}
Edelman, A., Arias, T.~A., and Smith, S.~T. (1998).
\newblock The geometry of algorithms with orthogonality constraints.
\newblock {\em SIAM journal on Matrix Analysis and Applications}, 20(2):303--353.

\bibitem[Erba et~al., 2025a]{erba2025bilinear}
Erba, V., Troiani, E., Biggio, L., Maillard, A., and Zdeborov{\'a}, L. (2025a).
\newblock Bilinear sequence regression: A model for learning from long sequences of high-dimensional tokens.
\newblock {\em Physical Review X}, 15(2):021092.

\bibitem[Erba et~al., 2025b]{erba2025nuclear}
Erba, V., Troiani, E., Zdeborov{\'a}, L., and Krzakala, F. (2025b).
\newblock The nuclear route: Sharp asymptotics of {ERM} in overparameterized quadratic networks.
\newblock {\em arXiv preprint arXiv:2505.17958}.

\bibitem[Fan et~al., 2025]{fan2025dynamical}
Fan, Z., Ko, J., Loureiro, B., Lu, Y.~M., and Shen, Y. (2025).
\newblock Dynamical mean-field analysis of adaptive {Langevin} diffusions: Replica-symmetric fixed point and empirical {Bayes}.
\newblock {\em arXiv preprint arXiv:2504.15558}.

\bibitem[Fazel et~al., 2001]{fazel2001rank}
Fazel, M., Hindi, H., and Boyd, S.~P. (2001).
\newblock A rank minimization heuristic with application to minimum order system approximation.
\newblock In {\em Proceedings of the 2001 American Control Conference}, volume~6, pages 4734--4739.

\bibitem[Gamarnik et~al., 2019]{gamarnik2019stationary}
Gamarnik, D., K{\i}z{\i}lda{\u{g}}, E.~C., and Zadik, I. (2019).
\newblock Stationary points of shallow neural networks with quadratic activation function.
\newblock {\em arXiv preprint arXiv:1912.01599}.

\bibitem[Ge et~al., 2017]{ge2017no}
Ge, R., Jin, C., and Zheng, Y. (2017).
\newblock No spurious local minima in nonconvex low rank problems: A unified geometric analysis.
\newblock In {\em International Conference on Machine Learning}, pages 1233--1242.

\bibitem[Gerace et~al., 2024]{gerace2024gaussian}
Gerace, F., Krzakala, F., Loureiro, B., Stephan, L., and Zdeborov{\'a}, L. (2024).
\newblock Gaussian universality of perceptrons with random labels.
\newblock {\em Physical Review E}, 109(3):034305.

\bibitem[Gerace et~al., 2020]{gerace2020generalisation}
Gerace, F., Loureiro, B., Krzakala, F., M{\'e}zard, M., and Zdeborov{\'a}, L. (2020).
\newblock Generalisation error in learning with random features and the hidden manifold model.
\newblock In {\em International Conference on Machine Learning}, pages 3452--3462.

\bibitem[Gerbelot et~al., 2022]{gerbelot2022asymptotic}
Gerbelot, C., Abbara, A., and Krzakala, F. (2022).
\newblock Asymptotic errors for teacher-student convex generalized linear models (or: How to prove {Kabashima}’s replica formula).
\newblock {\em IEEE Transactions on Information Theory}, 69(3):1824--1852.

\bibitem[Gerbelot and Berthier, 2023]{gerbelot2023graph}
Gerbelot, C. and Berthier, R. (2023).
\newblock Graph-based approximate message passing iterations.
\newblock {\em Information and Inference: A Journal of the IMA}, 12(4):2562--2628.

\bibitem[Gerbelot et~al., 2024]{gerbelot2024rigorous}
Gerbelot, C., Troiani, E., Mignacco, F., Krzakala, F., and Zdeborova, L. (2024).
\newblock Rigorous dynamical mean-field theory for stochastic gradient descent methods.
\newblock {\em SIAM Journal on Mathematics of Data Science}, 6(2):400--427.

\bibitem[Goldt et~al., 2022]{goldt2022gaussian}
Goldt, S., Loureiro, B., Reeves, G., Krzakala, F., M{\'e}zard, M., and Zdeborov{\'a}, L. (2022).
\newblock The {Gaussian} equivalence of generative models for learning with shallow neural networks.
\newblock In {\em Mathematical and Scientific Machine Learning}, pages 426--471.

\bibitem[Grafakos et~al., 2008]{grafakos2008classical}
Grafakos, L. et~al. (2008).
\newblock {\em Classical Fourier Analysis}, volume~2.
\newblock Springer.

\bibitem[Gross, 2011]{gross2011recovering}
Gross, D. (2011).
\newblock Recovering low-rank matrices from few coefficients in any basis.
\newblock {\em IEEE Transactions on Information Theory}, 57(3):1548--1566.

\bibitem[Guionnet and Zeitouni, 2002]{guionnet2002large}
Guionnet, A. and Zeitouni, O. (2002).
\newblock Large deviations asymptotics for spherical integrals.
\newblock {\em Journal of Functional Analysis}, 188(2):461--515.

\bibitem[Gunasekar et~al., 2017]{gunasekar2017implicit}
Gunasekar, S., Woodworth, B.~E., Bhojanapalli, S., Neyshabur, B., and Srebro, N. (2017).
\newblock Implicit regularization in matrix factorization.
\newblock {\em Advances in Neural Information Processing Systems}, 30.

\bibitem[Hager, 1989]{hager1989updating}
Hager, W.~W. (1989).
\newblock Updating the inverse of a matrix.
\newblock {\em SIAM Review}, 31(2):221--239.

\bibitem[Han, 2025]{han2025entrywise}
Han, Q. (2025).
\newblock Entrywise dynamics and universality of general first order methods.
\newblock {\em The Annals of Statistics}, 53(4):1783--1807.

\bibitem[Harish-Chandra, 1957]{harish1957differential}
Harish-Chandra (1957).
\newblock Differential operators on a semisimple {Lie} algebra.
\newblock {\em American Journal of Mathematics}, pages 87--120.

\bibitem[Hastie et~al., 2022]{hastie2022surprises}
Hastie, T., Montanari, A., Rosset, S., and Tibshirani, R.~J. (2022).
\newblock Surprises in high-dimensional ridgeless least squares interpolation.
\newblock {\em Annals of Statistics}, 50(2):949.

\bibitem[Hinton et~al., 2012]{hinton2012deep}
Hinton, G., Deng, L., Yu, D., Dahl, G.~E., Mohamed, A.-r., Jaitly, N., Senior, A., Vanhoucke, V., Nguyen, P., Sainath, T.~N., et~al. (2012).
\newblock Deep neural networks for acoustic modeling in speech recognition: The shared views of four research groups.
\newblock {\em IEEE Signal Processing Magazine}, 29(6):82--97.

\bibitem[Hoerl and Kennard, 1970]{hoerl1970ridge}
Hoerl, A.~E. and Kennard, R.~W. (1970).
\newblock Ridge regression: Biased estimation for nonorthogonal problems.
\newblock {\em Technometrics}, 12(1):55--67.

\bibitem[Hornik et~al., 1989]{hornik1989multilayer}
Hornik, K., Stinchcombe, M., and White, H. (1989).
\newblock Multilayer feedforward networks are universal approximators.
\newblock {\em Neural Networks}, 2(5):359--366.

\bibitem[Hu and Lu, 2022]{hu2022universality}
Hu, H. and Lu, Y.~M. (2022).
\newblock Universality laws for high-dimensional learning with random features.
\newblock {\em IEEE Transactions on Information Theory}, 69(3):1932--1964.

\bibitem[Itzykson and Zuber, 1980]{itzykson1980planar}
Itzykson, C. and Zuber, J.-B. (1980).
\newblock The planar approximation. ii.
\newblock {\em Journal of Mathematical Physics}, 21(3):411--421.

\bibitem[Jacot et~al., 2018]{jacot2018neural}
Jacot, A., Gabriel, F., and Hongler, C. (2018).
\newblock Neural tangent kernel: Convergence and generalization in neural networks.
\newblock {\em Advances in neural information processing systems}, 31.

\bibitem[Journ{\'e}e et~al., 2010]{journee2010low}
Journ{\'e}e, M., Bach, F., Absil, P.-A., and Sepulchre, R. (2010).
\newblock Low-rank optimization on the cone of positive semidefinite matrices.
\newblock {\em SIAM Journal on Optimization}, 20(5):2327--2351.

\bibitem[Jumper et~al., 2021]{jumper2021highly}
Jumper, J., Evans, R., Pritzel, A., Green, T., Figurnov, M., Ronneberger, O., Tunyasuvunakool, K., Bates, R., {\v{Z}}{\'\i}dek, A., Potapenko, A., et~al. (2021).
\newblock Highly accurate protein structure prediction with alphafold.
\newblock {\em Nature}, 596(7873):583--589.

\bibitem[Kawaguchi and Huang, 2019]{kawaguchi2019gradient}
Kawaguchi, K. and Huang, J. (2019).
\newblock Gradient descent finds global minima for generalizable deep neural networks of practical sizes.
\newblock In {\em Allerton Conference on Communication, Control, and Computing}, pages 92--99.

\bibitem[Kobayashi et~al., 2024]{kobayashi2024weight}
Kobayashi, S., Akram, Y., and Von~Oswald, J. (2024).
\newblock Weight decay induces low-rank attention layers.
\newblock {\em Advances in Neural Information Processing Systems}, 37:4481--4510.

\bibitem[Krizhevsky et~al., 2012]{krizhevsky2012imagenet}
Krizhevsky, A., Sutskever, I., and Hinton, G.~E. (2012).
\newblock Imagenet classification with deep convolutional neural networks.
\newblock {\em Advances in Neural Information Processing Systems}, 25.

\bibitem[Krogh and Hertz, 1991]{krogh1991simple}
Krogh, A. and Hertz, J. (1991).
\newblock A simple weight decay can improve generalization.
\newblock {\em Advances in Neural Information Processing Systems}, 4.

\bibitem[Kubo, 1966]{kubo1966fluctuation}
Kubo, R. (1966).
\newblock The fluctuation-dissipation theorem.
\newblock {\em Reports on Progress in Physics}, 29(1):255.

\bibitem[Lee et~al., 2016]{lee2016gradient}
Lee, J.~D., Simchowitz, M., Jordan, M.~I., and Recht, B. (2016).
\newblock Gradient descent only converges to minimizers.
\newblock In {\em Conference on Learning Theory}, pages 1246--1257.

\bibitem[Li et~al., 2018]{li2018algorithmic}
Li, Y., Ma, T., and Zhang, H. (2018).
\newblock Algorithmic regularization in over-parameterized matrix sensing and neural networks with quadratic activations.
\newblock In {\em Conference on Learning Theory}, pages 2--47.

\bibitem[Loureiro et~al., 2022]{loureiro2022fluctuations}
Loureiro, B., Gerbelot, C., Refinetti, M., Sicuro, G., and Krzakala, F. (2022).
\newblock Fluctuations, bias, variance \& ensemble of learners: Exact asymptotics for convex losses in high-dimension.
\newblock In {\em International Conference on Machine Learning}, pages 14283--14314.

\bibitem[Maillard et~al., 2020a]{maillard2020landscape}
Maillard, A., Ben~Arous, G., and Biroli, G. (2020a).
\newblock Landscape complexity for the empirical risk of generalized linear models.
\newblock In {\em Mathematical and Scientific Machine Learning}, pages 287--327.

\bibitem[Maillard et~al., 2022]{maillard2022perturbative}
Maillard, A., Krzakala, F., M{\'e}zard, M., and Zdeborov{\'a}, L. (2022).
\newblock Perturbative construction of mean-field equations in extensive-rank matrix factorization and denoising.
\newblock {\em Journal of Statistical Mechanics: Theory and Experiment}, 2022(8):083301.

\bibitem[Maillard et~al., 2020b]{maillard2020phase}
Maillard, A., Loureiro, B., Krzakala, F., and Zdeborov{\'a}, L. (2020b).
\newblock Phase retrieval in high dimensions: Statistical and computational phase transitions.
\newblock {\em Advances in Neural Information Processing Systems}, 33:11071--11082.

\bibitem[Maillard et~al., 2024]{maillard2024bayes}
Maillard, A., Troiani, E., Martin, S., Krzakala, F., and Zdeborov{\'a}, L. (2024).
\newblock Bayes-optimal learning of an extensive-width neural network from quadratically many samples.
\newblock {\em Advances in Neural Information Processing Systems}, 37:82085--82132.

\bibitem[Marchenko and Pastur, 1967]{marchenko1967distribution}
Marchenko, V.~A. and Pastur, L.~A. (1967).
\newblock Distribution of eigenvalues for some sets of random matrices.
\newblock {\em Matematicheskii Sbornik}, 114(4):507--536.

\bibitem[Martin et~al., 2024]{martin2024impact}
Martin, S., Bach, F., and Biroli, G. (2024).
\newblock On the impact of overparameterization on the training of a shallow neural network in high dimensions.
\newblock In {\em International Conference on Artificial Intelligence and Statistics}, pages 3655--3663.

\bibitem[Massart and Absil, 2020]{massart2020quotient}
Massart, E. and Absil, P.-A. (2020).
\newblock Quotient geometry with simple geodesics for the manifold of fixed-rank positive-semidefinite matrices.
\newblock {\em SIAM Journal on Matrix Analysis and Applications}, 41(1):171--198.

\bibitem[McCullagh, 2019]{mccullagh2019generalized}
McCullagh, P. (2019).
\newblock {\em Generalized Linear Models}.
\newblock Routledge.

\bibitem[Mei and Montanari, 2022]{mei2022generalization}
Mei, S. and Montanari, A. (2022).
\newblock The generalization error of random features regression: Precise asymptotics and the double descent curve.
\newblock {\em Communications on Pure and Applied Mathematics}, 75(4):667--766.

\bibitem[M{\'e}zard et~al., 1987]{mezard1987spin}
M{\'e}zard, M., Parisi, G., and Virasoro, M.~A. (1987).
\newblock {\em Spin glass theory and beyond: An Introduction to the Replica Method and Its Applications}, volume~9.
\newblock World Scientific Publishing Company.

\bibitem[Mignacco et~al., 2020]{mignacco2020dynamical}
Mignacco, F., Krzakala, F., Urbani, P., and Zdeborov{\'a}, L. (2020).
\newblock Dynamical mean-field theory for stochastic gradient descent in {Gaussian} mixture classification.
\newblock {\em Advances in Neural Information Processing Systems}, 33:9540--9550.

\bibitem[Montanari and Saeed, 2022]{montanari2022universality}
Montanari, A. and Saeed, B.~N. (2022).
\newblock Universality of empirical risk minimization.
\newblock In {\em Conference on Learning Theory}, pages 4310--4312.

\bibitem[Montanari and Urbani, 2025]{montanari2025dynamical}
Montanari, A. and Urbani, P. (2025).
\newblock Dynamical decoupling of generalization and overfitting in large two-layer networks.
\newblock {\em arXiv preprint arXiv:2502.21269}.

\bibitem[Nakkiran et~al., 2021]{nakkiran2021deep}
Nakkiran, P., Kaplun, G., Bansal, Y., Yang, T., Barak, B., and Sutskever, I. (2021).
\newblock Deep double descent: Where bigger models and more data hurt.
\newblock {\em Journal of Statistical Mechanics: Theory and Experiment}, 2021(12):124003.

\bibitem[Nakkiran et~al., 2020]{nakkiran2020optimal}
Nakkiran, P., Venkat, P., Kakade, S., and Ma, T. (2020).
\newblock Optimal regularization can mitigate double descent.
\newblock {\em arXiv preprint arXiv:2003.01897}.

\bibitem[Noferini, 2017]{noferini2017formula}
Noferini, V. (2017).
\newblock A formula for the {Fr{\'e}chet} derivative of a generalized matrix function.
\newblock {\em SIAM Journal on Matrix Analysis and Applications}, 38(2):434--457.

\bibitem[Oja, 1982]{oja1982simplified}
Oja, E. (1982).
\newblock Simplified neuron model as a principal component analyzer.
\newblock {\em Journal of Mathematical Biology}, 15(3):267--273.

\bibitem[Panageas and Piliouras, 2016]{panageas2016gradient}
Panageas, I. and Piliouras, G. (2016).
\newblock Gradient descent only converges to minimizers: Non-isolated critical points and invariant regions.
\newblock {\em arXiv preprint arXiv:1605.00405}.

\bibitem[Park et~al., 2017]{park2017non}
Park, D., Kyrillidis, A., Carmanis, C., and Sanghavi, S. (2017).
\newblock Non-square matrix sensing without spurious local minima via the {Burer}-{Monteiro} approach.
\newblock In {\em Artificial Intelligence and Statistics}, pages 65--74.

\bibitem[Pavliotis, 2014]{pavliotis2014stochastic}
Pavliotis, G.~A. (2014).
\newblock Stochastic processes and applications.
\newblock {\em Texts in Applied Mathematics}, 60.

\bibitem[Rahimi and Recht, 2007]{rahimi2007random}
Rahimi, A. and Recht, B. (2007).
\newblock Random features for large-scale kernel machines.
\newblock {\em Advances in Neural Information Processing Systems}, 20.

\bibitem[Recht et~al., 2010]{recht2010guaranteed}
Recht, B., Fazel, M., and Parrilo, P.~A. (2010).
\newblock Guaranteed minimum-rank solutions of linear matrix equations via nuclear norm minimization.
\newblock {\em SIAM Review}, 52(3):471--501.

\bibitem[Ren et~al., 2025]{ren2025emergence}
Ren, Y., Nichani, E., Wu, D., and Lee, J.~D. (2025).
\newblock Emergence and scaling laws in {SGD} learning of shallow neural networks.
\newblock {\em arXiv preprint arXiv:2504.19983}.

\bibitem[Safran and Shamir, 2018]{safran2018spurious}
Safran, I. and Shamir, O. (2018).
\newblock Spurious local minima are common in two-layer relu neural networks.
\newblock In {\em International Conference on Machine Learning}, pages 4433--4441.

\bibitem[Sarao~Mannelli et~al., 2019]{sarao2019afraid}
Sarao~Mannelli, S., Biroli, G., Cammarota, C., Krzakala, F., and Zdeborov{\'a}, L. (2019).
\newblock Who is afraid of big bad minima? analysis of gradient-flow in spiked matrix-tensor models.
\newblock {\em Advances in Neural Information Processing Systems}, 32.

\bibitem[Sarao~Mannelli et~al., 2020]{sarao2020optimization}
Sarao~Mannelli, S., Vanden-Eijnden, E., and Zdeborov{\'a}, L. (2020).
\newblock Optimization and generalization of shallow neural networks with quadratic activation functions.
\newblock {\em Advances in Neural Information Processing Systems}, 33:13445--13455.

\bibitem[Semerjian, 2024]{semerjian2024matrix}
Semerjian, G. (2024).
\newblock Matrix denoising: {Bayes}-optimal estimators via low-degree polynomials.
\newblock {\em Journal of Statistical Physics}, 191(10):139.

\bibitem[Smale, 1963]{smale1963stable}
Smale, S. (1963).
\newblock Stable manifolds for differential equations and diffeomorphisms.
\newblock {\em Annali della Scuola Normale Superiore di Pisa-Scienze Fisiche e Matematiche}, 17(1-2):97--116.

\bibitem[Soltanolkotabi et~al., 2018]{soltanolkotabi2018theoretical}
Soltanolkotabi, M., Javanmard, A., and Lee, J.~D. (2018).
\newblock Theoretical insights into the optimization landscape of over-parameterized shallow neural networks.
\newblock {\em IEEE Transactions on Information Theory}, 65(2):742--769.

\bibitem[Sompolinsky et~al., 1988]{sompolinsky1988chaos}
Sompolinsky, H., Crisanti, A., and Sommers, H.-J. (1988).
\newblock Chaos in random neural networks.
\newblock {\em Physical Review Letters}, 61(3):259.

\bibitem[Sompolinsky and Zippelius, 1982]{sompolinsky1982relaxational}
Sompolinsky, H. and Zippelius, A. (1982).
\newblock Relaxational dynamics of the {Edwards}-{Anderson} model and the mean-field theory of spin-glasses.
\newblock {\em Physical Review B}, 25(11):6860.

\bibitem[Srinivasan and Panda, 2023]{srinivasan2023gradient}
Srinivasan, S. and Panda, N. (2023).
\newblock What is the gradient of a scalar function of a symmetric matrix?
\newblock {\em Indian Journal of Pure and Applied Mathematics}, 54(3):907--919.

\bibitem[Troiani et~al., 2025]{troiani2025fundamental}
Troiani, E., Cui, H., Dandi, Y., Krzakala, F., and Zdeborov{\'a}, L. (2025).
\newblock Fundamental limits of learning in sequence multi-index models and deep attention networks: High-dimensional asymptotics and sharp thresholds.
\newblock {\em arXiv preprint arXiv:2502.00901}.

\bibitem[Tsuzuki and Ohki, 2025]{tsuzuki2025global}
Tsuzuki, D. and Ohki, K. (2025).
\newblock Global convergence of {Oja}'s component flow for general square matrices and its applications.
\newblock {\em arXiv preprint arXiv:2510.00801}.

\bibitem[Vaswani et~al., 2017]{vaswani2017attention}
Vaswani, A., Shazeer, N., Parmar, N., Uszkoreit, J., Jones, L., Gomez, A.~N., Kaiser, {\TextL}., and Polosukhin, I. (2017).
\newblock Attention is all you need.
\newblock {\em Advances in Neural Information Processing Systems}, 30.

\bibitem[Venturi et~al., 2019]{venturi2019spurious}
Venturi, L., Bandeira, A.~S., and Bruna, J. (2019).
\newblock Spurious valleys in one-hidden-layer neural network optimization landscapes.
\newblock {\em Journal of Machine Learning Research}, 20(133):1--34.

\bibitem[Voiculescu, 1997]{voiculescu1997derivative}
Voiculescu, D.-V. (1997).
\newblock The derivative of order 1/2 of a free convolution by a semicircle distribution.
\newblock {\em Indiana University Mathematics Journal}, pages 697--703.

\bibitem[Wen et~al., 2025]{wen2025does}
Wen, G.~G., Hu, H., Lu, Y.~M., Fan, Z., and Misiakiewicz, T. (2025).
\newblock When does {Gaussian} equivalence fail and how to fix it: Non-universal behavior of random features with quadratic scaling.
\newblock {\em arXiv preprint arXiv:2512.03325}.

\bibitem[Wigner, 1955]{wigner1955characteristic}
Wigner, E.~P. (1955).
\newblock Characteristic vectors of bordered matrices with infinite dimensions.
\newblock {\em Annals of Mathematics}, pages 548--564.

\bibitem[Wu and Xu, 2020]{wu2020optimal}
Wu, D. and Xu, J. (2020).
\newblock On the optimal weighted $\ell_2$ regularization in overparameterized linear regression.
\newblock {\em Advances in Neural Information Processing Systems}, 33:10112--10123.

\bibitem[Xu et~al., 2025]{xu2025fundamental}
Xu, Y., Maillard, A., Zdeborov{\'a}, L., and Krzakala, F. (2025).
\newblock Fundamental limits of matrix sensing: Exact asymptotics, universality, and applications.
\newblock {\em arXiv preprint arXiv:2503.14121}.

\bibitem[Yan et~al., 1994]{yan1994global}
Yan, W.-Y., Helmke, U., and Moore, J.~B. (1994).
\newblock Global analysis of {Oja}'s flow for neural networks.
\newblock {\em IEEE Transactions on Neural Networks}, 5(5):674--683.

\bibitem[Yun et~al., 2018]{yun2018small}
Yun, C., Sra, S., and Jadbabaie, A. (2018).
\newblock Small nonlinearities in activation functions create bad local minima in neural networks.
\newblock {\em arXiv preprint arXiv:1802.03487}.

\bibitem[Zdeborov{\'a} and Krzakala, 2016]{zdeborova2016statistical}
Zdeborov{\'a}, L. and Krzakala, F. (2016).
\newblock Statistical physics of inference: Thresholds and algorithms.
\newblock {\em Advances in Physics}, 65(5):453--552.

\bibitem[Zhang et~al., 2019]{zhang2019three}
Zhang, G., Wang, C., Xu, B., and Grosse, R. (2019).
\newblock Three mechanisms of weight decay regularization.
\newblock In {\em International Conference on Learning Representations}.

\bibitem[Zheng and Lafferty, 2015]{zheng2015convergent}
Zheng, Q. and Lafferty, J. (2015).
\newblock A convergent gradient descent algorithm for rank minimization and semidefinite programming from random linear measurements.
\newblock {\em Advances in Neural Information Processing Systems}, 28.

\end{thebibliography}

\newpage
\appendix

\section{Guide to the Appendices}
The appendices contain several types of technical material. This short guide summarizes the role of each appendix and clarifies its connections to the main text as well as its level of rigor. The latter is indicated in the following table with the color code of the first column: a green cell refers to a section containing rigorous results, orange stands for a physics-based derivation, and purple is used for numerical confirmation.

\begin{center} \small \renewcommand{\arraystretch}{1.25}
\begin{tabular}{
|>{\raggedright\arraybackslash}p{0.1\textwidth}
|>{\raggedright\arraybackslash}p{0.6\textwidth}
|>{\raggedright\arraybackslash}p{0.15\textwidth}|}
\hline
\textbf{Section} & \textbf{Content} & \textbf{Used in} \\ 
\hline 
\cellcolor{rigorcolor} \cref{App:Prerequisite} & Preliminary technical results. &\\ 
\hline
\cellcolor{predcolor} \cref{App:DMFT} & Derivation of the high-dimensional DMFT equations, study of the simplified setting (\cref{Assumption2}), and discussion on Gaussian equivalence. & \cref{subsec:DMFT}, \cref{subsec:GaussianUniversality} \\ 
\hline 
\cellcolor{predcolor} \cref{App:LongTimes} & Derivation of the long-time scalar equations from the steady-state ansatz (\cref{Assumption3}), correspondence with \citet{erba2025nuclear}, impact of overparameterization, population limit. & \cref{Subsec:LongTimes} \\ 
\hline 
\cellcolor{predcolor} \cref{App:Stability} & Stability analysis of the steady-state solution. & \cref{Subsubsec:Stability} \\ 
\hline 
\cellcolor{predcolor} \cref{App:Langevin} & Analysis of Langevin dynamics from time-translational invariance and fluctuation-dissipation (\cref{Assumption4}). & \cref{Subsubsec:Langevin} \\ 
\hline 
\cellcolor{predcolor} \cref{App:Aging} & Derivation of the aging equations under the assumption of slow relaxation to equilibrium. & \cref{Subsubsec:Stability} \\ 
\hline 
\cellcolor{rigorcolor} \cref{App:SmallReg} & Study of the small regularization limit and derivation of the interpolation and perfect recovery thresholds. & \cref{subsec:SmallReg} \\ 
\hline 
\cellcolor{rigorcolor} \cref{App:Oja} & Analysis of the Oja flow: finite and infinite-dimensional convergence rates, linear response. & \cref{Sec:OjaFlow} \\ 
\hline 
\cellcolor{validcolor} \cref{App:Simulations} & Numerical details and additional simulations. & \\ \hline \end{tabular} \end{center}

Additionally, we refer to \cref{fig:Logicalstructure} for the dependencies between the main results and appendices of the paper.

\section{Technical Background} \label{App:Prerequisite}
This section introduces additional definitions, background material, and technical lemmas that will be used throughout the appendix. More precisely:
\begin{itemize}
    \item In \cref{App:Subsec:RandomMatrices}, we recall some standard results on random matrices, introduce Stieltjes and Hilbert transforms of probability measures, and study the case of the free convolution with a semicircular density. 
    \item In \cref{App:Subsec:PSDApprox}, we state useful properties of the best low-rank positive semidefinite approximation of a symmetric matrix.
    \item Finally, in \cref{App:Subsec:PrerequisiteDMFT}, we prove technical lemmas used in the derivation of the high-dimensional equations in \cref{App:Subsec:DMFTDerivation}. 
\end{itemize}

\subsection{Asymptotic Spectral Properties of Random Matrices} \label{App:Subsec:RandomMatrices}

We start by recalling standard random matrix ensembles and their spectral properties in the high-dimensional limit.

\subsubsection{Standard Matrix Ensembles} \label{App:Subsubsec:MatrixEnsembles}

We now proceed to define the GOE and Wishart distributions, and state the well-known convergence results regarding their respective spectral distributions in the large dimension limit. 

\begin{definition} \label{def:GOE}
    We say that $X \in \mathcal{S}_d(\R)$ is distributed according to the GOE($d$) distribution if:
    \begin{equation*}
        \hspace{3cm} X_{ij} \overset{\mathrm{i.i.d.}}{\sim} \N \left(0, \frac{1 + \delta_{ij}}{d} \right), \hspace{2.5cm} \big(1 \leq i \leq j \leq d \big). 
    \end{equation*}
    As a consequence, if $X \sim \mathrm{GOE}(d)$, the mapping $A \in \mathcal{S}_d(\R) \mapsto \tr(AX) \in \R$ defines a centered Gaussian process on $\mathcal{S}_d(\R)$ with covariance function:
    \begin{equation*}
        \E \, \tr(AX) \tr(BX) = \frac{2}{d} \tr(AB). 
    \end{equation*}
\end{definition}
More properties of this distribution can be found, for instance, in \citet[][Chapter 2]{anderson2010introduction}. 

\begin{definition} \label{def:Wishart}
    We say that $Z \in \mathcal{S}_d^+(\R)$ is distributed according to the Wishart distribution $\mathcal{W}(m,d)$ if:
    \begin{equation*}
        Z = \frac{1}{m} WW^\top, 
    \end{equation*}
    where $W \in \R^{d \times m}$ has i.i.d. standard Gaussian entries. 
\end{definition}

These two matrix ensembles are fundamental in random matrix theory and are used as key models for understanding the behavior of large complex systems. They appear in a wide range of fields, from statistical physics to statistics and machine learning, where they model random data and covariance structures. A key aspect of both ensembles is the behavior of their eigenvalue distributions in the high-dimensional limit. 

\begin{theorem} \citep{wigner1955characteristic, marchenko1967distribution}. Let $\G \sim \mathrm{GOE}(d)$ and $Z \sim \mathcal{W}(m,d)$, with $m \sim \kappa d$ as $d \to \infty$. Then, almost surely as $d \to \infty$, the empirical spectral distributions of $\G$ and $Z$ respectively weakly converge to the probability distributions:
\begin{itemize}
    \item The semicircular distribution $\sigma$:
    \begin{equation*}
        \d \sigma(x) = \frac{1}{2\pi} \sqrt{4-x^2} \bm{1}_{|x| \leq 2} \d x,
    \end{equation*}
    \item The Marchenko--Pastur distribution:
    \begin{equation*}
        \d \nu_{\kappa}(x) = (1 - \kappa)^+ \delta(x) + \frac{\kappa}{2\pi x} \sqrt{(\lambda_+ - x)(x - \lambda_-)} \d x, \hspace{1.5cm} \lambda_\pm = \left( 1 \pm \frac{1}{\sqrt{\kappa}} \right)^2. 
    \end{equation*}
\end{itemize}
\end{theorem}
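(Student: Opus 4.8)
The plan is to prove both statements by the classical \emph{method of moments}, upgrading convergence in expectation to almost sure weak convergence by a variance bound together with the Borel--Cantelli lemma. Since the semicircular law $\sigma$ and the Marchenko--Pastur law $\nu_\kappa$ both have compact support, they are determined by their moments, so it suffices to show that for each fixed integer $k \geq 1$ the $k$-th moment of the empirical spectral distribution converges almost surely to $\int x^k \, \d\sigma(x)$, respectively $\int x^k \, \d\nu_\kappa(x)$.

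For the GOE case, I would expand $\frac{1}{d}\E\,\tr(\G^k) = \frac{1}{d}\sum_{i_1,\dots,i_k=1}^d \E\big[\G_{i_1 i_2}\G_{i_2 i_3}\cdots\G_{i_k i_1}\big]$ as a sum over closed walks of length $k$ on $\{1,\dots,d\}$. By Wick's theorem the Gaussian expectation equals the sum over pairings of the $k$ edge-factors of products of covariances, each covariance being of order $d^{-1}$. A standard counting argument, grouping walks according to the multigraph obtained after identifying paired edges, shows that a pairing contributes at leading order in $d$ only when $k$ is even, paired edges are traversed in opposite directions, and the resulting multigraph is a tree; these pairings are exactly the non-crossing pair partitions of $\{1,\dots,k\}$, whose number is the Catalan number $C_{k/2}$. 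Since $C_p$ (respectively $0$) is precisely the $2p$-th (respectively odd) moment of $\sigma$, this gives $\frac{1}{d}\E\,\tr(\G^k)\to\int x^k\,\d\sigma(x)$. To pass from expectation to almost sure convergence I would bound $\mathrm{Var}\big(\frac{1}{d}\tr(\G^k)\big) = O(d^{-2})$, either via the Efron--Stein inequality using the Gaussianity of the entries, or by a direct Wick computation on the doubled walk of length $2k$ showing that the connected contributions carry an extra factor $d^{-2}$; Borel--Cantelli and the method of moments for random measures then conclude.

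For the Wishart case I would run the same scheme for $Z = \frac{1}{m}WW^\top$. Here $\frac{1}{d}\E\,\tr(Z^k) = \frac{1}{m^k d}\sum \E\big[W_{i_1 j_1}W_{i_2 j_1}W_{i_2 j_2}W_{i_3 j_2}\cdots\big]$, where the walk alternates between row indices $i_\ell$ and column indices $j_\ell$, and Wick pairings must pair $W$-factors. The leading contributions again come from tree-like, non-crossing structures, but one now has to count separately the free row and column indices, which produces a factor $(d/m)^{b-1} \to \kappa^{-(b-1)}$ where $b$ is the number of blocks of the associated non-crossing partition. Summing over non-crossing partitions with a prescribed number of blocks yields the weighted sum of Narayana numbers $\sum_{r=1}^k \frac{1}{r}\binom{k}{r-1}\binom{k-1}{r-1}\kappa^{-(r-1)}$, which is exactly $\int x^k\,\d\nu_\kappa(x)$ (the atom $(1-\kappa)^+\delta$ at zero does not affect positive moments). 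The same variance bound and Borel--Cantelli argument then give almost sure weak convergence of the empirical spectral distribution to $\nu_\kappa$.

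The main obstacle in both cases is the combinatorial bookkeeping: identifying precisely which Wick pairings survive the $d\to\infty$ limit (the tree / non-crossing condition), and, for the Wishart matrix, correctly tracking the power of $\kappa$ as a function of the number of blocks of the partition. An alternative route that trades combinatorics for analysis is the \emph{Stieltjes transform} method: setting $g_d(z) = \frac{1}{d}\tr\big((\G - zI_d)^{-1}\big)$ for $z$ in the upper half-plane, one uses Gaussian integration by parts together with a Schur-complement (rank-one update) identity to derive the approximate self-consistent equation $g_d(z) \approx -(z + g_d(z))^{-1}$, controls the remainder with high probability, and inverts the resulting quadratic to recover the Stieltjes transform of $\sigma$; the analogous fixed-point equation yields the Stieltjes transform of $\nu_\kappa$, and the Stieltjes continuity theorem concludes. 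Either route is standard, but I would write out the moment method in detail as it is the most self-contained.
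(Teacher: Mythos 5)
The paper does not prove this statement: it is presented in Appendix~A.1.1 as a background fact, with citations to \citet{wigner1955characteristic} and \citet{marchenko1967distribution}, and no argument of its own. So there is no "paper proof" to compare your attempt against.

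That said, your blind proof sketch is the standard moment-method argument and is correct as far as it goes. The ingredients are all in the right place: compact support so moment convergence suffices, Wick expansion over closed walks, identification of the leading (tree/non-crossing) contributions with Catalan numbers for the Wigner case and with $\kappa$-weighted Narayana numbers for the Wishart case (your formula $\sum_{r=1}^{k}\frac{1}{r}\binom{k}{r-1}\binom{k-1}{r-1}\kappa^{-(r-1)}$ is indeed an alternative form of $\sum_{r=0}^{k-1}\frac{1}{r+1}\binom{k}{r}\binom{k-1}{r}\kappa^{-r}$, the $k$-th moment of $\nu_\kappa$), and the $O(d^{-2})$ variance bound plus Borel--Cantelli to upgrade to almost sure convergence. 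Two small remarks you could have made explicit: (i) for the GOE normalization used here the diagonal entries have variance $2/d$ rather than $1/d$, but walks touching the diagonal are subleading so the limit is unaffected; (ii) the atom $(1-\kappa)^+\delta$ is forced by the rank deficiency of $Z$ when $m < d$, which is a cleaner way to see it than via moments (which vanish on the atom for $k\ge 1$). For a paper citing this as background, simply invoking the references --- as the paper does --- is the appropriate level of detail; your sketch would be what one writes if the result were to be proved from scratch.
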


It will be useful to consider the rescaled semicircular distribution with variance $\xi$:
\begin{equation} \label{eq:rescaledsemicircular}
    \d \sigma_\xi(x) = \frac{1}{2\pi \xi} \sqrt{4 \xi - x^2} \1_{|x| \leq 2 \sqrt{\xi}} \d x. 
\end{equation}

\subsubsection{Stieltjes and Hilbert Transforms} \label{App:Subsubsec:StieltjesHilbert}

We shall now define two important transforms of probability measures. More details can be found in \citet{bun2017cleaning}. 

\begin{definition} \label{Def:HilbertStieltjes}
    For a probability measure $\mu$ on $\R$, we define its Stieltjes transform $m_\mu$ and Hilbert transform $h_\mu$ as:
    \begin{equation*}
         m_\mu(z) = \int \frac{\d \mu(y)}{z-y}, \hspace{2cm} h_\mu(x) = \mathrm{P.V.} \int \frac{\d \mu(y)}{x-y},
    \end{equation*}
    for $z \in \mathbb{C}$ and $x \in \R$. The notation P.V. denotes Cauchy's principal value. More precisely:
    \begin{equation} \label{eq:EpsilonHilbert}
        h_\mu(x) = \lim_{\epsilon \to 0} h_\mu^\epsilon(x), \hspace{2cm} h_\mu^\epsilon(x) = \int \1_{|x-y| > \epsilon} \frac{\d \mu(y)}{x-y}. 
    \end{equation}
    Moreover, if $\mu$ admits a density $\rho$, then, for $x \in \mathrm{Supp}(\mu)$:
    \begin{equation*}
        \lim_{\eta \to 0^+} m_\mu(x + i \eta) = h_\mu(x) - i \pi \rho(x). 
    \end{equation*}
\end{definition}

We now state a technical lemma on the limit in equation \eqref{eq:EpsilonHilbert}. 

\begin{lemma} \label{Lemma:Hilbert}
    Let $\mu$ be a compactly supported probability measure with no atoms, and $h_\mu$ its Hilbert transform. For $\epsilon > 0$, consider $h_\mu^\epsilon$ as defined in equation \eqref{eq:EpsilonHilbert}. 
    \begin{description}
        \item[1.] Let $f : \R \to \R$ be either Lipschitz or non-decreasing on the support of $\mu$. Then:
    \begin{equation*}
        \lim_{\epsilon \to 0^+} \int f(x) h_\mu^\epsilon(x) \d \mu(x) = \frac{1}{2} \iint \frac{f(x) - f(y)}{x-y} \d \mu(x) \d \mu(y).
    \end{equation*}
        \item[2.] Moreover, if $\mu$ admits a bounded square-integrable density with respect to the Lebesgue measure, then for any $f \in L^2(\mu)$:
    \begin{equation*}
        \lim_{\epsilon \to 0^+} \int f(x) h_\mu^\epsilon(x) \d \mu(x) = \int f(x) h_\mu(x) \d \mu(x).
    \end{equation*}
    \end{description}
\end{lemma}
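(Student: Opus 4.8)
The plan is to reduce both statements to elementary convergence theorems after symmetrizing the double integral. For Part~1, note that $f$, being continuous on the compact set $\mathrm{supp}(\mu)$, is bounded there, and that for fixed $\epsilon>0$ the truncated kernel $(x-y)^{-1}\1_{|x-y|>\epsilon}$ is bounded; hence Fubini applies and, averaging the integrand over the swap $(x,y)\mapsto(y,x)$,
\[
\int f(x)\,h_\mu^\epsilon(x)\,\d\mu(x)=\iint_{|x-y|>\epsilon}\frac{f(x)}{x-y}\,\d\mu(x)\,\d\mu(y)=\frac12\iint_{|x-y|>\epsilon}\frac{f(x)-f(y)}{x-y}\,\d\mu(x)\,\d\mu(y).
\]
Write $g(x,y)=(f(x)-f(y))/(x-y)$, which is symmetric on $\{x\neq y\}$. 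Since $\mu$ has no atoms, Fubini gives $(\mu\otimes\mu)(\{x=y\})=\int\mu(\{x\})\,\d\mu(x)=0$, so $\1_{|x-y|>\epsilon}\to 1$ for $\mu\otimes\mu$-a.e.\ $(x,y)$ as $\epsilon\downarrow0$. If $f$ is $L$-Lipschitz on $\mathrm{supp}(\mu)$ then $|g|\le L$ there, and dominated convergence (with dominating function $L$, integrable since $\mu\otimes\mu$ is finite) yields the claimed identity with a finite right-hand side. If instead $f$ is non-decreasing on $\mathrm{supp}(\mu)$ then $g\ge0$, and since $\epsilon\mapsto\1_{|x-y|>\epsilon}$ is monotone in $\epsilon$, monotone convergence gives the identity, with both sides possibly equal to $+\infty$.

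For Part~2, I would transfer the problem to Lebesgue measure. Write $\d\mu=\rho\,\d x$ with $\rho$ bounded and in $L^2(\R)$, and let $H_\epsilon$ denote the truncated Hilbert transform $H_\epsilon g(x)=\int_{|x-y|>\epsilon}g(y)/(x-y)\,\d y$, so that $h_\mu^\epsilon=H_\epsilon\rho$ and, by definition of the principal value, $h_\mu=\lim_{\epsilon\to0}H_\epsilon\rho$ pointwise a.e. By the classical $L^2$ theory of the Hilbert transform, the operators $H_\epsilon$ are uniformly bounded on $L^2(\R)$ and converge strongly to the Hilbert transform $H$; in particular $H_\epsilon\rho\to H\rho$ in $L^2(\R)$, which together with the pointwise limit forces $H\rho=h_\mu$ a.e. Given $f\in L^2(\mu)$, boundedness of $\rho$ gives $\int(f\rho)^2\,\d x\le\|\rho\|_\infty\int f^2\rho\,\d x<\infty$, hence $f\rho\in L^2(\R)$; therefore
\[
\int f(x)h_\mu^\epsilon(x)\,\d\mu(x)=\langle f\rho,\,H_\epsilon\rho\rangle_{L^2(\R)}\xrightarrow[\epsilon\to0]{}\langle f\rho,\,h_\mu\rangle_{L^2(\R)}=\int f(x)h_\mu(x)\,\d\mu(x),
\]
which is the desired statement.

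The symmetrization and the two convergence theorems of Part~1 are routine, as are the measurability of $h_\mu^\epsilon$ and the use of Fubini at each fixed $\epsilon$. The one nontrivial external input is the classical fact invoked in Part~2 — that the truncated Hilbert transforms are uniformly $L^2$-bounded and converge in $L^2$ norm (equivalently, that the Hilbert transform is a bounded Calderón--Zygmund operator on $L^2(\R)$) — and this is the point where a precise citation, e.g.\ to Stein's treatment of singular integrals, is required. I expect no genuine obstacle beyond correctly quoting this ingredient and being careful that the a.e.\ and $L^2$ limits of $H_\epsilon\rho$ agree.
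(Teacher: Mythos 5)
Your proof is correct and follows essentially the same route as the paper's: in Part~1 you symmetrize the double integral, invoke the no-atoms condition to make the diagonal negligible, and then apply dominated (Lipschitz case) or monotone (non-decreasing case) convergence; in Part~2 you transfer to $L^2(\d x)$, cite $L^2$-convergence of the truncated Hilbert transforms, and conclude by Cauchy--Schwarz (phrased as continuity of the $L^2$ inner product, using $\|\rho\|_\infty$ to bound $f\rho$ in $L^2$). The only cosmetic difference is that the paper estimates $\|f(h_\mu-h_\mu^\epsilon)\|_{L^1(\mu)}$ directly while you write it as an inner product, which is the same bound.
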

\begin{proof}
For $\epsilon > 0$, define:
\begin{equation*}
    I_\epsilon \equiv \frac{1}{2} \iint \frac{f(x) - f(y)}{x-y} \1_{|x-y| > \epsilon} \d \mu(x) \d \mu(y).
\end{equation*}
Then, it is clear, by symmetry, that:
\begin{equation} \label{eq:LemmaHilbert1}
    I_\epsilon = \int f(x) h_\mu^\epsilon(x) \d \mu(x).
\end{equation}
Now, if $f$ is non-decreasing, for any $x, y \in \mathrm{Supp}(\mu)$ such that $x \neq y$, the family:
\begin{equation*}
    \left( \frac{f(x) - f(y)}{x-y} \1_{|x-y| > \epsilon} \right)_{\epsilon > 0},
\end{equation*}
is non-decreasing as $\epsilon \to 0$, and $I_\epsilon$ converges by the monotone convergence theorem. On the other hand, if $f$ is Lipschitz, the previous family is bounded in absolute value by the Lipschitz constant of $f$ and the integral converges toward a finite value due to the dominated convergence theorem. In both cases the limit is given by:
\begin{equation*}
    \lim_{\epsilon \to 0} I_\epsilon = \frac{1}{2} \iint \frac{f(x) - f(y)}{x-y} \1_{x \neq y} \d \mu(x) \d \mu(y).
\end{equation*}
Since $\mu$ has not atoms, the diagonal has zero mass and the identity \eqref{eq:LemmaHilbert1} concludes the first part. 

For the second point, denote $\rho$ the density of $\mu$, and assume that $\rho$ is bounded and in $L^2(\d x)$. Now, it is standard (see for instance \citet{grafakos2008classical}, Theorem 5.1.12) that $h_\mu^\epsilon$ converges to $h_\mu$ in $L^2(\d x)$. Therefore:
\begin{equation*}
\begin{aligned}
    \left| \int f(x) \big( h_\mu(x) - h_\mu^\epsilon(x) \big) \d \mu(x) \right| &\leq \big\| f(h_\mu - h_\mu^\epsilon) \big\|_{L^1(\mu)} \\
    &\leq \|f\|_{L^2(\mu)} \sqrt{\|\rho\|_\infty} \|h_\mu - h_\mu^\epsilon\|_{L^2(\d x)}.
    \end{aligned}
\end{equation*}
By assumptions, this goes to zero as $\epsilon \to 0$. This gives the result. 
\end{proof}

We now formulate a direct corollary of this result.
\begin{lemma} \label{Lemma:Hilbert2}
    Let $\mu$ be a compactly supported probability measure, with no atoms, with a bounded and square-integrable density, and $h_\mu$ its Hilbert transform. Then:
    \begin{equation*}
        \int x h_\mu(x) \d \mu(x) = \frac{1}{2}. 
    \end{equation*}
\end{lemma}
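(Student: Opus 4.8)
The plan is to obtain this identity as a direct corollary of \cref{Lemma:Hilbert}, applied to the test function $f(x) = x$. This function is continuous, non-decreasing, and Lipschitz with constant one on the support of $\mu$; moreover, since $\mu$ is compactly supported, $f \in L^2(\mu)$. Hence both items of \cref{Lemma:Hilbert} apply to it, and the whole argument reduces to chaining them together.

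Concretely, I would first invoke item~\textbf{2} of \cref{Lemma:Hilbert}, which uses that $\mu$ admits a bounded, square-integrable density, to write the principal-value integral as the limit of its regularized version:
\begin{equation*}
    \int x\, h_\mu(x)\, \d \mu(x) = \lim_{\epsilon \to 0^+} \int x\, h_\mu^\epsilon(x)\, \d \mu(x).
\end{equation*}
Then I would apply item~\textbf{1} of \cref{Lemma:Hilbert} with $f(x) = x$ (legitimate since $f$ is Lipschitz, and also non-decreasing) to identify this limit:
\begin{equation*}
    \lim_{\epsilon \to 0^+} \int x\, h_\mu^\epsilon(x)\, \d \mu(x) = \frac{1}{2} \iint \frac{x - y}{x - y}\, \d \mu(x)\, \d \mu(y).
\end{equation*}
The integrand equals $1$ off the diagonal $\{x = y\}$, and the diagonal is $(\mu \otimes \mu)$-null because $\mu$ has no atoms (already guaranteed by the existence of a density). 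Therefore the double integral is $\tfrac{1}{2}(\mu \otimes \mu)(\R^2) = \tfrac{1}{2}$, which gives the claim.

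Since the statement is an immediate consequence of \cref{Lemma:Hilbert}, I do not expect any real obstacle. The only points deserving care are bookkeeping: checking that $f(x) = x$ is simultaneously Lipschitz and non-decreasing (so item~\textbf{1} applies cleanly, with the double integral being a genuine Lebesgue integral by dominated or monotone convergence as in the lemma's proof), that $f \in L^2(\mu)$ so that item~\textbf{2} is available, and that the absence of atoms makes the diagonal negligible so the constant integrand collapses to total mass one. All of these are verified at once, so the proof is short.
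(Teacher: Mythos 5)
Your proof is correct and follows the same route as the paper: apply \cref{Lemma:Hilbert} with $f(x)=x$, chaining item 2 (to pass to the regularized integral) and item 1 (to rewrite it as the symmetrized double integral, which collapses to $1/2$ since the diagonal has zero $\mu\otimes\mu$-mass). The paper's own proof is a one-line version of exactly this argument, so your additional bookkeeping checks simply make explicit what the paper leaves implicit.
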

\begin{proof}
    As a consequence of \cref{Lemma:Hilbert}, we have with $f(x) = x$:
    \begin{equation*}
        \int x h_\mu(x) \d \mu(x) = \lim_{\epsilon \to 0^+} \int x h_\mu^\epsilon(x) \d \mu(x) = \frac{1}{2} \iint \frac{x-y}{x-y} \d \mu(x) \d \mu(y) = \frac{1}{2}. 
    \end{equation*}
\end{proof}

\subsubsection{Free Convolution with a Semicircular Density} \label{App:Subsubsec:FreeConvolution}

In this section, we will be interested in the free additive convolution between some given probability distribution $\mu^*$ and $\sigma_\xi$, defined in equation \eqref{eq:rescaledsemicircular}. We will denote this resulting measure as $\mu_\xi = \mu^* \boxplus \sigma_\xi$. This object is of great interest for this work as it describes the asymptotic spectral distribution of a random matrix of the form $Z^* + \sqrt{\xi} \G$, for $\G \sim \mathrm{GOE}(d)$, and where $Z^*$ is a random matrix independent from $\G$, with asymptotic spectral distribution $\mu^*$. We refer to \citet{biane1997free} for several results on this object. 

The first relevant result for us is the so-called subordination identity between the Stieltjes transforms of $\mu_\xi$ and $\mu^*$. 

\begin{lemma} \label{Lemma:SubordinationSemicircular}
    Let $\mu_\xi$ be the free additive convolution between some probability measure $\mu^*$ and a semicircular density with variance $\xi > 0$. Consider $m_\xi$ and $m_*$ the Stieltjes transforms of $\mu_\xi$ and $\mu^*$. We have the equation, for all $z \in \mathbb{C} \setminus \mathrm{Supp}(\mu_\xi)$:
    \begin{equation}
        m_\xi(z) = m_* \big( z - \xi m_\xi(z) \big). 
    \end{equation}
\end{lemma}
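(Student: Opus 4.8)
The plan is to establish the subordination identity
\begin{equation*}
    m_\xi(z) = m_*\big(z - \xi\, m_\xi(z)\big)
\end{equation*}
directly from the $R$-transform machinery of free probability, which is the standard and cleanest route. Recall that for a compactly supported probability measure $\mu$ with Stieltjes transform $m_\mu$, one defines the $R$-transform $R_\mu$ implicitly through $m_\mu\big(R_\mu(w) + 1/w\big) = w$, equivalently via the functional inverse $G_\mu^{-1}(w) = R_\mu(w) + 1/w$ of the Cauchy transform $G_\mu = m_\mu$. The defining property of free additive convolution is that $R$-transforms add: $R_{\mu_\xi} = R_{\mu^*} + R_{\sigma_\xi}$. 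The only input specific to the semicircular case is the elementary computation $R_{\sigma_\xi}(w) = \xi w$; this follows from the classical fact that the standard semicircular law has $R$-transform equal to the identity, together with the scaling $\sigma_\xi(\cdot) = \sqrt{\xi}\,\sigma_1(\cdot/\sqrt{\xi})$, which multiplies the $R$-transform by $\xi$.

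Concretely, I would proceed as follows. First, fix $z \in \mathbb{C}\setminus\mathrm{Supp}(\mu_\xi)$ and set $w = m_\xi(z)$, so that $z = G_{\mu_\xi}^{-1}(w)$. By the additivity of the $R$-transform and the computation $R_{\sigma_\xi}(w) = \xi w$, we get
\begin{equation*}
    z = G_{\mu_\xi}^{-1}(w) = R_{\mu_\xi}(w) + \frac1w = R_{\mu^*}(w) + \xi w + \frac1w = \Big(R_{\mu^*}(w) + \frac1w\Big) + \xi w = G_{\mu^*}^{-1}(w) + \xi w.
\end{equation*}
Rearranging gives $z - \xi w = G_{\mu^*}^{-1}(w)$, i.e. $m_*(z - \xi w) = w = m_\xi(z)$, which is exactly the claimed identity. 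One should record why $w$ lies in the domain where these functional inverses make sense: for $z$ outside the (compact) support of $\mu_\xi$, $m_\xi(z)$ takes values in a neighborhood of $0$ on which $G_{\mu_\xi}^{-1}$, hence also $G_{\mu^*}^{-1}$, is a well-defined analytic function, so the chain of equalities is legitimate there. The identity then extends to all of $\mathbb{C}\setminus\mathrm{Supp}(\mu_\xi)$ by analytic continuation, since both sides are analytic on that connected (for the relevant component) domain and agree on an open set.

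The main obstacle is not any deep difficulty but rather bookkeeping about domains of analyticity and the precise definition of the $R$-transform one adopts: free-probability references differ on sign conventions for the Cauchy/Stieltjes transform and on whether one works near $0$ or near $\infty$, so care is needed to match the convention $m_\mu(z) = \int (z-y)^{-1}\d\mu(y)$ used in \cref{Def:HilbertStieltjes}. An alternative, more self-contained approach avoids the $R$-transform entirely: one can invoke Biane's subordination result \citep{biane1997free}, which states that for $\mu_\xi = \mu^* \boxplus \sigma_\xi$ there is an analytic subordination function $\omega$ with $m_\xi(z) = m_*(\omega(z))$, and then identify $\omega(z) = z - \xi m_\xi(z)$ by using the explicit form of the subordination function associated with a semicircular perturbation (this is precisely the content of the fixed-point characterization of free convolution by a semicircle, which also underlies the complex Burgers / Dyson equation for the semicircular flow). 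Either way, the semicircular case is the one setting in which the subordination function is available in closed form, which is what makes the clean identity possible; I would present the $R$-transform derivation as the main proof since it is the shortest.
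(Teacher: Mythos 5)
The paper does not give a proof of this lemma; it simply cites \citet{biane1997free} for the result. Your $R$-transform argument is a correct, standard, and self-contained derivation of the subordination identity: the key facts you use — additivity of $R$-transforms under free convolution, $R_{\sigma_\xi}(w)=\xi w$, and $G_\mu^{-1}(w)=R_\mu(w)+1/w$ — are all correct with the Stieltjes-transform convention $m_\mu(z)=\int (z-y)^{-1}\,\d\mu(y)$ of \cref{Def:HilbertStieltjes}, and your bookkeeping about restricting to a neighborhood of $0$ where the functional inverses are analytic, then extending by analyticity, is the right way to make the chain of equalities rigorous. You also correctly identify the alternative route through Biane's subordination theorem, which is in fact the source the paper points to. In short, your proof is sound and slightly more explicit than the paper's treatment, since the paper delegates the entire argument to the reference.
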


A proof of this can be found in \citet{biane1997free}. This relationship between Stieltjes transforms is known to characterize the distribution $\mu_\xi$.

\begin{lemma} \label{lemma:BurgerStieltjes}
    Let $m_\xi$ be the Stieltjes transform of $\mu_\xi$. Then the map $(\xi, z) \mapsto m_\xi(z)$ is $\mathcal{C}^1$ on $\R^+ \times \mathbb{C} \setminus \R$ and analytic in $z$ for fixed $\xi$. Moreover, $m_\xi$ satisfies the complex Burgers' equation:
    \begin{equation}
        \partial_\xi m_\xi(z) + m_\xi(z) \partial_z m_\xi(z) = 0,
    \end{equation}
    whenever $\xi \geq 0$ and $z \in \mathbb{C} \setminus \mathbb{R}$, with initial condition $m_0(z) = m_*(z)$. 
\end{lemma}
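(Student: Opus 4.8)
The plan is to derive the complex Burgers' equation by differentiating the subordination identity of \cref{Lemma:SubordinationSemicircular}, namely $m_\xi(z) = m_*\big(z - \xi m_\xi(z)\big)$, with respect to $\xi$ and with respect to $z$, and then eliminating the derivative of $m_*$ between the two resulting relations. First I would address the regularity claims: for fixed $\xi > 0$, analyticity of $m_\xi$ in $z$ on $\mathbb{C}\setminus\mathbb{R}$ is standard (it is the Stieltjes transform of a compactly supported probability measure). Joint $\mathcal{C}^1$ regularity in $(\xi, z)$ on $\R^+\times(\mathbb{C}\setminus\mathbb{R})$ can be obtained from the implicit function theorem applied to the subordination identity: writing $F(\xi,z,w) = w - m_*(z-\xi w)$, one has $F(\xi,z,m_\xi(z)) = 0$, and $\partial_w F = 1 + \xi m_*'(z-\xi w)$. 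Since $\Im(z - \xi m_\xi(z)) \neq 0$ for $z\notin\mathbb{R}$ and $\xi \geq 0$ (because $\Im m_\xi(z)$ has the opposite sign to $\Im z$), the quantity $m_*'$ evaluated there is finite, and one checks $\partial_w F \neq 0$ — using that $|m_*'(\zeta)| = \big|\int (\zeta-y)^{-2}\d\mu^*(y)\big|$ is controlled while $\xi \Im m_\xi(z)$ is strictly between $0$ and $\Im z$ in modulus, so the subordination map is a genuine contraction-type fixed point and $\partial_w F$ stays bounded away from zero. This gives $\mathcal{C}^1$-smoothness of $m_\xi(z)$ in both variables and justifies the manipulations below.

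Next, abbreviate $m = m_\xi(z)$ and $\zeta = z - \xi m$. Differentiating $m = m_*(\zeta)$ in $z$ gives $\partial_z m = m_*'(\zeta)\,(1 - \xi\,\partial_z m)$, hence
\begin{equation}
    \partial_z m = \frac{m_*'(\zeta)}{1 + \xi\, m_*'(\zeta)}.
\end{equation}
Differentiating $m = m_*(\zeta)$ in $\xi$ gives $\partial_\xi m = m_*'(\zeta)\,(-m - \xi\,\partial_\xi m)$, hence
\begin{equation}
    \partial_\xi m = \frac{-m\, m_*'(\zeta)}{1 + \xi\, m_*'(\zeta)}.
\end{equation}
Comparing the two displays immediately yields $\partial_\xi m = -m\,\partial_z m$, which is precisely $\partial_\xi m_\xi(z) + m_\xi(z)\,\partial_z m_\xi(z) = 0$. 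The initial condition $m_0(z) = m_*(z)$ follows since $\sigma_0 = \delta_0$ and $\mu^* \boxplus \delta_0 = \mu^*$ (equivalently, the subordination identity at $\xi = 0$ reads $m_0(z) = m_*(z)$).

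The main obstacle is the regularity statement rather than the PDE itself: one must verify carefully that $1 + \xi m_*'(\zeta)$ does not vanish on $\R^+\times(\mathbb{C}\setminus\mathbb{R})$, so that the implicit function theorem applies and the two differentiated identities can be solved for $\partial_z m$ and $\partial_\xi m$. This is where the specific structure of the subordination — the fact that $z \mapsto z - \xi m_\xi(z)$ maps the upper half-plane into itself and the semicircular perturbation is "nice" — is essential; a clean way to see it is to note that $m_\xi$ is itself the unique fixed point of a holomorphic self-map of a half-plane, so the linearization $1 + \xi m_*'(\zeta)$ inherits a nondegeneracy bound (its modulus is bounded below by $1 - \xi|\Im m_\xi(z)|/|\Im z|$-type quantities that stay positive). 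Once this nonvanishing is established, everything else is routine differentiation and elimination.
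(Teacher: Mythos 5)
The paper does not prove this lemma; it simply cites \citet{voiculescu1997derivative}. Your blind proposal therefore supplies a derivation the text itself omits, and the route you take — differentiate the subordination identity $m_\xi(z) = m_*\big(z - \xi m_\xi(z)\big)$ in both $\xi$ and $z$, then eliminate $m_*'(\zeta)$ — is a standard and correct way to obtain the free Burgers equation. The algebra
\begin{equation*}
    \partial_z m = \frac{m_*'(\zeta)}{1 + \xi m_*'(\zeta)}, \qquad
    \partial_\xi m = \frac{-m\,m_*'(\zeta)}{1 + \xi m_*'(\zeta)},
\end{equation*}
and the conclusion $\partial_\xi m + m\,\partial_z m = 0$ check out, as does the initial condition at $\xi = 0$.

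One quantitative detail in your regularity argument is slightly off, though the idea is right. You claim a lower bound of the form $1 - \xi\,|\Im m_\xi(z)|/|\Im z|$ for $|1 + \xi m_*'(\zeta)|$; that specific quantity can become negative. The clean estimate is the following. Set $\zeta = z - \xi m_\xi(z)$ and $I = \int |\zeta - y|^{-2}\,\d\mu^*(y)$, so that $|m_*'(\zeta)| \le I$ and $|\Im m_*(\zeta)| = I\,\Im\zeta$. Since $m_\xi(z) = m_*(\zeta)$, taking imaginary parts of $\zeta = z - \xi m_\xi(z)$ gives $\Im\zeta\,(1 - \xi I) = \Im z$, hence $\xi I < 1$ for $z \in \mathbb{C}^+$. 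Therefore $|1 + \xi m_*'(\zeta)| \ge 1 - \xi I = \Im z/\Im\zeta > 0$, which is the nonvanishing needed to apply the implicit function theorem to $F(\xi,z,w) = w - m_*(z - \xi w)$. With that correction your argument is complete.
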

A reference for this result can be found in \citet{voiculescu1997derivative}. 

\subsection{Best PSD Low-Rank Approximation} \label{App:Subsec:PSDApprox}

Throughout the main text and the appendix, we often deal with the notation $A_{(m)}^+$ for a matrix $A \in \mathcal{S}_d(\R)$. Although it is already defined in equation \eqref{eq:defPositivePart}, we now provide a formal definition as well as some relevant properties. 

\begin{definition} \label{Def:EckartYoung}
Let $A \in \mathcal{S}_d(\R)$ with eigenvalue decomposition $A = U \mathrm{diag}(\lambda_1, \dots, \lambda_d) U^\top$ and $\lambda_1 \geq \cdots \geq \lambda_d$, we define for $m \leq d$:
\begin{equation*}
A_{(m)}^{+} = U \mathrm{diag}\big(\lambda_1^+, \dots, \lambda_m^+, 0,\dots, 0 \big) U^\top,
\hspace{1.2cm} \lambda^+ = \max(\lambda, 0).
\end{equation*}
By convention, if $m > d$, we define $A_{(m)}^+ = A^+$ to be the positive part of $A$.  
\end{definition}
This matrix is known to be the best PSD approximation of $A$ by a matrix of rank at most $m$, for the Frobenius norm.  
\begin{lemma} \label{Lemma:MinEckartYoung}
    Assume that $A$ has simple eigenvalues. Then:
\begin{equation*}
    \Big\{ A_{(m)}^+ \Big\} = \underset{\substack{S \in \mathcal{S}_d^+(\R) \\ \mathrm{rank}(S) \leq m}}{\mathrm{argmin}} \big\| A - S \big\|_F^2.
\end{equation*}
\end{lemma}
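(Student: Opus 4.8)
The plan is to reduce the matrix optimization to a scalar rearrangement argument by passing to the eigenbasis of $A$, then combine the classical Eckart--Young theorem (for the rank constraint) with a coordinatewise truncation argument (for the PSD constraint). First I would write $A = U\,\mathrm{diag}(\lambda_1,\dots,\lambda_d)\,U^\top$ with $\lambda_1 > \cdots > \lambda_d$ (simple eigenvalues), and note that conjugation by $U^\top$ is an isometry for the Frobenius norm that preserves both the PSD cone and the rank, so it suffices to minimize $\|\,\mathrm{diag}(\lambda_1,\dots,\lambda_d) - S\,\|_F^2$ over $S \in \mathcal{S}_d^+(\R)$ with $\mathrm{rank}(S)\le m$; the claimed minimizer becomes $\mathrm{diag}(\lambda_1^+,\dots,\lambda_m^+,0,\dots,0)$.

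Next I would handle the PSD constraint. For any feasible $S$, write $D = \mathrm{diag}(\lambda_1,\dots,\lambda_d)$ and expand
\[
\|D - S\|_F^2 = \sum_{i} (\lambda_i - S_{ii})^2 + \sum_{i\ne j} S_{ij}^2.
\]
Since $S \succeq 0$, we may zero out all off-diagonal entries without leaving the PSD cone only if we are careful — instead, the cleaner route is: by the variational/interlacing description, the best rank-$m$ PSD approximation can be sought among matrices of the form $S = \sum_{k=1}^m s_k v_k v_k^\top$ with $s_k \ge 0$ and $\{v_k\}$ orthonormal. The key step is then to show the optimal eigenvectors $v_k$ can be taken to be standard basis vectors $e_{i_k}$: this follows from Eckart--Young (Schmidt--Mirsky), which gives that the best rank-$m$ approximation of $D$ \emph{without} the PSD constraint is obtained by keeping the $m$ entries of largest absolute value, and a standard exchange/perturbation argument showing that among these, replacing any retained negative $\lambda_i$ by $0$ strictly decreases the error. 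Concretely, for the scalar problem $\min_{s\ge 0}(\lambda - s)^2$ the minimizer is $\lambda^+$, with value $(\lambda^-)^2 = \min(\lambda,0)^2$; so once the support $I$ of size $\le m$ is fixed, the error is $\sum_{i\in I}\min(\lambda_i,0)^2 + \sum_{i\notin I}\lambda_i^2$, and minimizing over $I$ selects the $m$ largest $\lambda_i$ — and among those only the positive ones actually get ``used'' (the negative ones contribute $\lambda_i^2$ whether kept or dropped, so dropping them is weakly better and frees a rank slot, which is strictly better if any positive eigenvalue index outside the top $m$ exists, and otherwise inconsequential).

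The last step is to argue uniqueness, which is where the simple-eigenvalue hypothesis is used. Given distinct $\lambda_i$, the choice of the ``top $m$'' indices is unique, the truncation $\lambda_i \mapsto \lambda_i^+$ is unique, and the only remaining ambiguity — rotations within an eigenspace — is excluded since all eigenspaces are one-dimensional; hence the minimizer is exactly $A_{(m)}^+$, giving the singleton on the left-hand side. I expect the main obstacle to be making the ``the optimal eigenvectors align with the eigenbasis of $A$'' step fully rigorous rather than hand-wavy: the slick way is to invoke Eckart--Young directly for the unconstrained rank-$m$ problem and then observe that its PSD-truncated version $\big(\sum_{i\in I_{\mathrm{top}}}\lambda_i e_ie_i^\top\big)^+$ is feasible and achieves the lower bound $\sum_i \min(\lambda_i,0)^2$ restricted appropriately, so that no further rotation can help; this avoids a direct Lagrange-multiplier computation on the manifold of rank-$m$ PSD matrices.
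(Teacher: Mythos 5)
There is a genuine gap in your key step, and it is in exactly the place you flagged: the claim that the optimal eigenvectors of $S$ align with the eigenbasis of $A$ does not ``follow from Eckart--Young.'' The unconstrained Eckart--Young theorem selects the $m$ eigenvalues of \emph{largest absolute value}, and PSD-truncating that minimizer can strictly increase the error rather than decrease it. Take $D = \mathrm{diag}(1,-5)$ and $m=1$: the best unconstrained rank-$1$ approximation is $\mathrm{diag}(0,-5)$ with error $1$, and its positive part is $0$ with error $26$, whereas the actual best PSD rank-$\le 1$ approximation is $\mathrm{diag}(1,0)$ with error $25$. So the best unconstrained approximant retains an index the PSD optimizer must discard, and your ``replacing any retained negative $\lambda_i$ by $0$ strictly decreases the error'' assertion is false. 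The correct tool for the alignment step is not Eckart--Young but the eigenvalue rearrangement inequality (Hoffman--Wielandt/Lidskii for symmetric matrices): for symmetric $A,S$ with sorted eigenvalues $\lambda_1 \ge \cdots \ge \lambda_d$ and $\mu_1 \ge \cdots \ge \mu_d$ one has $\|A - S\|_F^2 \ge \sum_i (\lambda_i - \mu_i)^2$. Constraining $\mu_1 \ge \cdots \ge \mu_m \ge 0 = \mu_{m+1} = \cdots = \mu_d$ and minimizing the right-hand side coordinatewise gives exactly $\sum_{i\le m}\min(\lambda_i,0)^2 + \sum_{i>m}\lambda_i^2$, which $A_{(m)}^+$ attains; and the equality case of Hoffman--Wielandt together with the simplicity of the spectrum of $A$ forces $S$ to be diagonal in $A$'s eigenbasis, giving uniqueness. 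With that substitution your blueprint (diagonalize, reduce to scalars, optimize the support) becomes correct.

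For comparison, the paper's proof does not diagonalize $S$ directly; it observes that the rank-$\le m$ PSD constraint set is the image of $W \mapsto WW^\top$ over $W \in \mathbb{R}^{d\times m}$, so the argmin over $S$ is the image of $\mathrm{argmin}_W \|A - WW^\top\|_F^2$, and then it invokes the first- and second-order analysis of this factorized problem carried out in the proof of \cref{prop:OjaConvergence} (any local, hence global, minimizer satisfies $WW^\top = A_{(m)}^+$). That route avoids the alignment question entirely, at the cost of a Hessian computation; your route, once repaired with Hoffman--Wielandt, is more elementary and self-contained.
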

\begin{proof}
This property is standard. To prove it, one can realize the argmin is the image through the map $W \mapsto WW^\top$ of the set:
\begin{equation*}
    \underset{W \in \R^{d \times m}}{\mathrm{argmin}} \big\| A - WW^\top \big\|_F^2.
\end{equation*}
Now, we precisely show in \cref{App:Subsec:OjaConvergence} that whenever $A$ has simple eigenvalues, any local minimizer of the map $W \mapsto \big\|A - WW^\top \big\|_F^2$ verifies $WW^\top = A_{(m)}^+$. Since any element in the previous argmin is a global minimizer, this concludes the proof.
\end{proof}

The following lemma computes the differential of the map $A \mapsto A_{(m)}^+$ on $\mathcal{S}_d(\R)$. 

\begin{lemma} \label{Lemma:DerivativeSpectral}
    Let $A \in \mathcal{S}_d(\R)$ with simple and non-zero eigenvalues $\lambda_1 > \dots > \lambda_d$. Write $A = U \mathrm{diag}(\lambda_1, \dots, \lambda_d) U^\top$. Then, the map $\Phi \colon X \in \mathcal{S}_d(\R) \mapsto X_{(m)}^+ \in \mathcal{S}_d(\R)$ is differentiable at $A$ and has differential:
    \begin{equation*}
        \d \Phi_A(H) = U \Big( D \circ \big( U^\top H U \big) \Big) U^\top,
    \end{equation*}
    where $\circ$ denotes the Hadamard product (pointwise product between matrices), and $D \in \mathcal{S}_d(\R)$ is defined as:
    \begin{equation*}
        D_{ij} = \left\{ \begin{array}{cc}
            \dfrac{\lambda_i \1_{\lambda_i > 0} \1_{i \leq m} - \lambda_j \1_{\lambda_j > 0} \1_{j \leq m}}{\lambda_i - \lambda_j}, & \mathrm{if} \, i \neq j, \\
            \1_{\lambda_i > 0} \1_{i \leq m}, & \mathrm{if} \, i = j.
        \end{array} \right.
    \end{equation*}
    As a consequence, the spectrum of $\d \Phi_A$ is given by:
    \begin{equation*}
        \mathrm{Sp} \big( \d \Phi_A \big) = \big\{ D_{ij}, \, 1 \leq i \leq j \leq d \big\},
    \end{equation*}
    and we have:
    \begin{equation*}
        \tr \, \d \Phi_A = \sum_{1 \leq i \leq j \leq d} D_{ij}, \hspace{1.5cm} \big\| \d \Phi_A \big\|_F^2 = \sum_{1 \leq i \leq j \leq d} D_{ij}^2.
    \end{equation*}
\end{lemma}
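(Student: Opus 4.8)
\textbf{Proof plan for \cref{Lemma:DerivativeSpectral}.}

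The natural approach is to reduce to the diagonal case by orthogonal invariance and then apply standard perturbation theory for simple eigenvalues. First I would observe that since $A$ has simple, nonzero eigenvalues, there is a neighbourhood of $A$ in $\mathcal{S}_d(\R)$ on which the eigenvalues stay simple and keep the same signs; on this neighbourhood the eigenvalues $\lambda_i(X)$ and the spectral projectors $P_i(X) = u_i(X) u_i(X)^\top$ are real-analytic functions of $X$ (Rellich / Kato perturbation theory), and the map $\Phi$ can be written locally as $\Phi(X) = \sum_{i=1}^{m} \lambda_i(X)^+ P_i(X) = \sum_{i=1}^{m} \lambda_i(X)\, \1_{\lambda_i > 0}\, P_i(X)$, the indicator being locally constant. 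This already shows $\Phi$ is differentiable at $A$.

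Next I would compute $\d\Phi_A(H)$ using the well-known first-order formulas $\d\lambda_i(A)(H) = u_i^\top H u_i$ and $\d P_i(A)(H) = \sum_{j \neq i} \frac{u_j^\top H u_i}{\lambda_i - \lambda_j}\big(u_i u_j^\top + u_j u_i^\top\big)$, valid because the eigenvalues are simple. Substituting into $\d\Phi_A(H) = \sum_{i=1}^m \1_{\lambda_i>0}\big(\d\lambda_i(A)(H)\, P_i + \lambda_i\, \d P_i(A)(H)\big)$ and collecting terms, the coefficient in front of the symmetric rank-one block $u_i u_i^\top$ is $\1_{\lambda_i>0}\1_{i\le m}(u_i^\top H u_i)$, and the coefficient in front of $u_i u_j^\top + u_j u_i^\top$ (for $i \neq j$) is $\frac{\lambda_i \1_{\lambda_i>0}\1_{i\le m} - \lambda_j \1_{\lambda_j>0}\1_{j\le m}}{\lambda_i - \lambda_j}(u_j^\top H u_i)$ after symmetrizing the two contributions coming from indices $i$ and $j$. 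Writing $\tilde H = U^\top H U$ so that $(\tilde H)_{ij} = u_i^\top H u_j$, this is exactly $\d\Phi_A(H) = U\big(D \circ \tilde H\big) U^\top$ with $D$ as stated. The careful bookkeeping of which of the two indices $i,j$ contributes (only pairs with at least one index in $\{1,\dots,m\}$ and positive eigenvalue survive, and when both do the difference quotient appears) is the one place to be attentive, but it is routine.

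Finally, the spectral statements follow because $H \mapsto U(D\circ (U^\top H U))U^\top$ is, in the orthonormal basis $\{E_{ij}\}_{i\le j}$ of $\mathcal{S}_d(\R)$ used elsewhere in the paper, a diagonal operator: $E_{ij} \mapsto D_{ij} E_{ij}$ (using that $D$ is symmetric, $D_{ij}=D_{ji}$). Hence the eigenvalues of $\d\Phi_A$ are precisely the $D_{ij}$ for $1\le i\le j\le d$, and the trace and squared Frobenius norm are the corresponding sums $\sum_{i\le j} D_{ij}$ and $\sum_{i\le j} D_{ij}^2$. The main (mild) obstacle is purely the combinatorial care in the second step — correctly identifying the surviving terms and their coefficients after symmetrization — rather than any analytic difficulty; the differentiability itself is immediate from the simplicity of the spectrum.
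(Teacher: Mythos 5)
Your argument is correct but takes a genuinely different route from the paper. The paper invokes the Daleckiĭ--Kreĭn theorem for spectral functions: it first chooses a threshold $\omega$ strictly between $\lambda_{m+1}$ and $\lambda_m$, shows that on a neighbourhood of $A$ the map $\Phi$ coincides with the spectral function associated with $F(x) = x\,\1_{x \geq \max(0,\omega)}$, and then quotes the theorem, which hands over the Hadamard-product formula for $\d\Phi_A$ with $D$ given by the difference quotients of $F$ on $\mathrm{Sp}(A)$. You instead reconstruct that formula by hand: writing $\Phi(X) = \sum_{i\le m}\lambda_i(X)^+ P_i(X)$ locally (with the indicators locally constant because the spectrum stays away from zero and from crossings), differentiating via the first-order perturbation formulas $\d\lambda_i(H) = u_i^\top H u_i$ and $\d P_i(H) = \sum_{j\neq i}\frac{u_j^\top H u_i}{\lambda_i-\lambda_j}(u_iu_j^\top + u_j u_i^\top)$, and collecting the symmetric $u_iu_j^\top + u_ju_i^\top$ blocks. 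This is exactly the computation that the Daleckiĭ--Kreĭn theorem packages; your version is more elementary (no external citation needed) at the cost of the bookkeeping you flag, while the paper's is shorter but relies on knowing the theorem and on the slightly delicate observation that $X\mapsto X_{(m)}^+$ really is a spectral function near $A$. Both then obtain the spectrum in the same way. One small imprecision to fix in your write-up: $\d\Phi_A$ is diagonal in the basis $\{U E_{ij} U^\top\}_{i\le j}$, not in $\{E_{ij}\}_{i\le j}$ itself; equivalently, $H\mapsto\gamma H$ iff $K=U^\top H U$ satisfies $D\circ K=\gamma K$, and taking $K=E_{ij}$ gives $\gamma=D_{ij}$. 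The spectrum is of course basis-independent, so your conclusion stands, but the statement as written reads as if the standard basis $\{E_{ij}\}$ diagonalizes $\d\Phi_A$, which it does not unless $U=I_d$.
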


\begin{proof}
    As a consequence of Daleckiĭ--Kreĭn theorem \citep[see for instance][Theorem 2.11]{noferini2017formula}, provided that $F \colon \R \to \R$ is continuously differentiable on the spectrum of $A$, and if $F^\text{sp}$ is the associated spectral function on $\mathcal{S}_d(\R)$:
    \begin{equation} \label{eq:spectralfunction}
        F^\text{sp} \Big( U \mathrm{diag}(\alpha_1, \dots, \alpha_d) U^\top \Big) =  U \mathrm{diag}\big( F(\alpha_1), \dots, F(\alpha_d) \big) U^\top,
    \end{equation}
    for any orthogonal matrix $U \in O_d(\R)$ and $\alpha_1, \dots, \alpha_d \in \R^d$, then $F^\text{sp}$ is differentiable (in the Fréchet sense) at $A$ and has differential:
    \begin{equation} \label{eq:DerivativeSpectral1}
        \d F^\text{sp}_A(H) = U \Big( D \circ \big( U^\top H U \big) \Big) U^\top,
    \end{equation}
    where:
    \begin{equation} \label{eq:DerivativeSpectral2}
        D_{ij} = \left\{ \begin{array}{cc}
            \dfrac{F(\alpha_i) - F(\alpha_j)}{\alpha_i - \alpha_j}, & \mathrm{if} \, i \neq j, \\
            F'(\alpha_i), & \mathrm{if} \, i = j.
        \end{array} \right.
    \end{equation}
    Now recall that in our case, we are interested in $\Phi(X) = X_{(m)}^+$, i.e., the truncation of $X$ onto the subspace associated with the $m$ largest positive eigenvalues of $X$. Note that such a map is not properly a spectral function in the sense of equation~\eqref{eq:spectralfunction}. To solve this problem, we introduce some threshold $\omega$ such that $\lambda_{m+1} < \omega < \lambda_m$. Due to the continuity of the spectrum at $A$, that has simple eigenvalues by assumption, there exists a neighborhood $\mathcal{V} \in \mathcal{S}_d(\R)$ of $A$ such that for every $X \in \mathcal{V}$, we also have $\lambda_{m+1}(X) < \omega < \lambda_m(X)$. Therefore, on this neighborhood, $\Phi$ coincides with the spectral function associated with the real function $F(x) = x \1_{x \geq \max(0, \omega)}$. By assumption $F$ is differentiable on the spectrum of $A$, and combining the identities \eqref{eq:DerivativeSpectral1}, \eqref{eq:DerivativeSpectral2} this leads to the desired result.

    Regarding the spectrum of $\d \Phi_A$, note that $H \in \mathcal{S}_d(\R)$ solves $\d \Phi_A(H) = \gamma H$ for some $\gamma \in \R$ if and only if $K = U^\top H U$ solves $D \circ K = \gamma K$. Taking $K$ to be, for $1 \leq i \leq j \leq d$:
    \begin{equation*}
        K = \frac{e_ie_j^\top + e_j e_i^\top}{2},
    \end{equation*}
    leads to the desired identity with $\gamma = D_{ij}$.  

    The trace of $\d \Phi_A$ can now be computed as the sum of its eigenvalues. For the Frobenius norm, let us remark that, for any $H, K \in \mathcal{S}_d(\R)$:
    \begin{equation*}
        \tr \big( \d \Phi_A(H) K \big) = \sum_{i,j=1}^d D_{ij} \big(U^\top H U \big)_{ij} \big( U^\top K U \big)_{ij},
    \end{equation*}
    so that $\d \Phi_A$ is self-adjoint (as a linear map on $\mathcal{S}_d(\R)$) and $\big\| \d \Phi_A \big\|_F^2$ can be computed as the sum of the squared eigenvalues.
\end{proof}

\subsection{Prerequisite for the High-Dimensional Equations} \label{App:Subsec:PrerequisiteDMFT}

We now state a few lemmas that will be used in \cref{App:Subsec:DMFTDerivation} for the derivation of high-dimensional dynamics. We start with a lemma giving the derivative of a Gaussian expectation with respect to its covariance matrix. 

\begin{lemma} \label{Lemma:DerivativeGaussianExpectation}
    Let $X \in \R^N$ be a centered Gaussian vector with covariance $K \in \mathcal{S}_N^{++}(\R)$. Let $F \colon \R^N \to \mathbb{C}$ such that:
    \begin{equation*}
        \E \big[ |F(X)| \big] < \infty, \hspace{1.5cm} \E \Big[ |F(X)| \big\|XX^\top \big\|_F \Big] < \infty. 
    \end{equation*}
    Then, viewing $\E\big[F(X)\big]$ as a function of $K$ on $\mathcal{S}_N^{++}(\R)$:
    \begin{equation*}
        \nabla_K \E \big[F(X) \big] = - \frac{1}{2} \E \big[ F(X) \big] K^{-1} + \frac{1}{2} K^{-1} \, \E \big[ F(X) XX^\top \big] K^{-1}. 
    \end{equation*}
\end{lemma}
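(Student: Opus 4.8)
The plan is to write the Gaussian density explicitly and differentiate under the integral sign. Since $K \in \mathcal{S}_N^{++}(\R)$, the law of $X$ has a density with respect to Lebesgue measure on $\R^N$, namely $p_K(x) = (2\pi)^{-N/2}(\det K)^{-1/2}\exp\!\big(-\tfrac12 x^\top K^{-1}x\big)$, so that $\E[F(X)] = \int_{\R^N} F(x)\,p_K(x)\,dx$ and the claim is a statement about $\nabla_K$ of this integral. First I would compute $\nabla_K p_K$ pointwise: writing $\log p_K(x) = c_N - \tfrac12\log\det K - \tfrac12 x^\top K^{-1}x$ and using the standard matrix-calculus identities $\nabla_K \log\det K = K^{-1}$ and $\nabla_K\big(x^\top K^{-1}x\big) = -K^{-1}xx^\top K^{-1}$ on $\mathcal{S}_N^{++}(\R)$ with the trace inner product (both right-hand sides are already symmetric, so no symmetrization correction is needed), one obtains
\[
\nabla_K p_K(x) = p_K(x)\left(-\tfrac12 K^{-1} + \tfrac12 K^{-1}xx^\top K^{-1}\right).
\]

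Substituting this into the integral and pulling the matrices $K^{-1}$, which do not depend on $x$, outside the expectation then yields exactly
\[
\nabla_K\E[F(X)] = -\tfrac12 K^{-1}\,\E[F(X)] + \tfrac12 K^{-1}\,\E\big[F(X)\,XX^\top\big]\,K^{-1},
\]
where both expectations are finite thanks to the two hypotheses, using $\|XX^\top\|_F = \|X\|^2$ to control the second one. (Equivalently, one can differentiate with respect to the precision matrix $\Theta = K^{-1}$, where $\nabla_\Theta\log p_\Theta(x) = \tfrac12\Theta^{-1} - \tfrac12 xx^\top$ gives the formula in one line, and then change variables $\nabla_K = -K^{-1}\nabla_\Theta K^{-1}$; this is a useful cross-check.)

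The only genuine content is justifying the exchange of $\nabla_K$ and the integral. Since $\mathcal{S}_N^{++}(\R)$ is open, it suffices to dominate, uniformly over $K'$ in a small neighbourhood $\mathcal V$ of $K$, the map $x\mapsto |F(x)|\,\|\nabla_{K'}p_{K'}(x)\|$ by a single integrable function. From the pointwise formula, $\|\nabla_{K'}p_{K'}(x)\| \le p_{K'}(x)\,(a+b\|x\|^2)$ with $a,b$ uniform over $\mathcal V$ by continuity of $K'\mapsto K'^{-1}$, and shrinking $\mathcal V$ so that $K'\preceq 2K$ there gives $p_{K'}(x)\le C\exp(-\tfrac14 x^\top K^{-1}x)$; the product is then bounded by a constant times $|F(x)|(1+\|x\|^2)$ against a Gaussian weight, which is integrable provided the stated moment conditions are understood to hold locally uniformly in the covariance — as is the case in all applications in the sequel, where $F$ is at most of polynomial growth (or bounded). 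The classical differentiation-under-the-integral theorem then applies and concludes the proof. I expect this domination step to be the main (and only) obstacle: with the integrability hypothesis imposed literally at the single covariance $K$ one must either invoke its natural locally-uniform version or restrict to the polynomial-growth $F$ occurring later; the rest is routine linear algebra.
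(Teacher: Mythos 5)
Your proof follows the same route as the paper: write out the Gaussian density explicitly, differentiate via the standard matrix identities for $\log\det K$ and $x^\top K^{-1}x$, and invoke differentiation under the integral sign. You go further than the paper on the last step, and your observation that the stated moment hypotheses only control integrability at the single covariance $K$—whereas dominated convergence really requires a locally-uniform bound, which does hold for the polynomially-growing $F$ used in the sequel—is a fair catch that the paper simply glosses over with a one-line appeal to the assumptions.
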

\begin{proof}
    This result is a simple consequence of the formula:
    \begin{equation*}
        \E \big[F(X) \big] = \frac{1}{(2\pi)^{N/2}} \big( \det K \big)^{-1/2} \int \exp \left( - \frac{1}{2} x^\top K^{-1} x \right) F(x) \d x. 
    \end{equation*}
    Now, it is well-known that on the space of positive-definite symmetric matrices:
    \begin{equation*}
        \nabla_K \det K = (\det K) K^{-1}, \hspace{1.5cm} \nabla_K (x^\top K^{-1} x) = - K^{-1} xx^\top K^{-1}. 
    \end{equation*}
    Now, the assumption on $F$ guarantees that we can apply dominated convergence and differentiate under the integral. This leads to the desired. 
\end{proof}

In \cref{App:Subsec:DMFTDerivation}, we will use a generalization of this result for a multi-dimensional Gaussian process $\big( X(t) \big)_{t \in [0,T]}$ on $\R^q$, with covariance $K_{ij}(s,t) = \E \, X_i(s) X_j(t)$ for $i,j \in \{1, \dots, q\}$ and $s,t \in [0,T]$. Informally, the result of \cref{Lemma:DerivativeGaussianExpectation} still applies provided that we add contractions over time variables in addition to those over indices:
\begin{align} \label{eq:DerivativeGaussianProcess}
    \frac{\partial \, \E \big[ F(X) \big]}{\partial K_{ij}(s,t)} &= - \frac{1}{2} \E \big[ F(X) \big] \big(K^{-1} \big)_{ij}(s,t) \\
    &\hspace{1cm}+ \frac{1}{2} \sum_{i',j'=1}^q \int_0^T \!\! \int_0^T \big(K^{-1} \big)_{ii'}(s,s') \big(K^{-1} \big)_{jj'}(t,t') \E \Big[ F(X) X_{i'}(s') X_{j'}(t') \Big] \d s' \d t'. \notag 
\end{align}
$K^{-1}$ denotes the functional inverse of $K$ with respect to both time and indices:
\begin{equation} \label{eq:CovarianceFunction}
    \sum_{k=1}^q \int_0^T K_{ik}(t, u) \big(K^{-1} \big)_{kj}(u,s) \d u = \delta(t-s) \delta_{ij}. 
\end{equation}
$\delta$ denotes the Dirac delta distribution supported at zero. See \cref{App:Subsec:DMFTBackground} for more details. 

The following lemma gives the inverse of a certain type of $3 \times 3$ block matrix, that will be useful in \cref{App:Subsec:DMFTDerivation}. 
\begin{lemma} \label{lemma:inversematrix}
    Let $A \in \mathcal{S}_N(\R)$, $B \in \R^{N \times N}$ invertible, $c \in \R^N$ and $\lambda \in \R^*$. Then:
    \begin{equation*}
        \begin{pmatrix} A & B & c \\ B^\top & 0 & 0 \\ c^\top & 0 & \lambda
        \end{pmatrix}^{-1} = \begin{pmatrix} 0 & (B^\top)^{-1} & 0 \\ B^{-1} & M & - \lambda^{-1} B^{-1} c \\ 0 & - \lambda^{-1} c^\top (B^\top)^{-1} & \lambda^{-1}\end{pmatrix},
    \end{equation*}
    with:
    \begin{equation*}
        M = B^{-1} \left( \frac{1}{\lambda} cc^\top - A \right) (B^\top)^{-1}. 
    \end{equation*}
\end{lemma}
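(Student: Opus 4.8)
\emph{Proof strategy.} The plan is a direct blockwise verification. Write $\mathcal{M}$ for the $(2N+1)\times(2N+1)$ block matrix on the left-hand side and $\mathcal{N}$ for the claimed inverse on the right-hand side; since $\mathcal{M}$ is square, it suffices to check $\mathcal{M}\mathcal{N}=I_{2N+1}$. I would first note that both $\mathcal{M}$ and $\mathcal{N}$ are symmetric: for $\mathcal{M}$ this is immediate, and for $\mathcal{N}$ it reduces to $M^\top=M$, which holds because $A$ and $cc^\top$ are symmetric and conjugation by $(B^\top)^{-1}$ preserves symmetry (and $(-\lambda^{-1}B^{-1}c)^\top = -\lambda^{-1}c^\top(B^\top)^{-1}$). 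Hence one may verify either $\mathcal{M}\mathcal{N}=I_{2N+1}$ or $\mathcal{N}\mathcal{M}=I_{2N+1}$, whichever is more convenient.

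I would then expand $\mathcal{M}\mathcal{N}$ into its nine blocks. Most are immediate: the second block-row $(B^\top,0,0)$ of $\mathcal{M}$ applied to the three block-columns of $\mathcal{N}$ produces $(0,\,B^\top(B^\top)^{-1},\,0)=(0,I_N,0)$, and the third block-row $(c^\top,0,\lambda)$ produces $(0,\,c^\top(B^\top)^{-1}-\lambda\lambda^{-1}c^\top(B^\top)^{-1},\,1)=(0,0,1)$. The only block where a genuine cancellation occurs is the $(1,2)$ block: the first block-row $(A,B,c)$ times the second block-column of $\mathcal{N}$ gives $A(B^\top)^{-1}+BM-\lambda^{-1}cc^\top(B^\top)^{-1}$, and substituting $M=B^{-1}(\lambda^{-1}cc^\top-A)(B^\top)^{-1}$ makes the terms $\pm A(B^\top)^{-1}$ and $\pm\lambda^{-1}cc^\top(B^\top)^{-1}$ cancel in pairs, leaving $0$. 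Likewise the $(1,1)$ block reduces to $BB^{-1}=I_N$ and the $(1,3)$ block to $-\lambda^{-1}BB^{-1}c+\lambda^{-1}c=0$, so $\mathcal{M}\mathcal{N}=I_{2N+1}$.

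There is essentially no obstacle here: the statement is a finite algebraic identity, and the proof is pure bookkeeping of block products, with the invertibility of $B$ and the hypothesis $\lambda\neq0$ entering only so that every inverse written down is well defined. If a more structural derivation were preferred, one could alternatively construct $\mathcal{N}$ by block Gaussian elimination — using the invertible off-diagonal blocks $B$ and $B^\top$ of the second row/column to clear the first and third rows and columns, which leaves a scalar Schur complement equal to $\lambda$, and then back-substituting — but for the purposes of the paper the direct verification above is the cleanest route, and I would present it in that form.
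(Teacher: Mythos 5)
Your proof is correct. The paper states this lemma without proof (it is treated as an elementary linear-algebra fact in the preliminaries section), so there is no paper argument to compare against; your direct blockwise verification of $\mathcal{M}\mathcal{N}=I_{2N+1}$ is exactly the natural way to fill that gap, and each of the nine block products you describe checks out, with the only nontrivial cancellation indeed occurring in the $(1,2)$ block upon substituting the formula for $M$. Your remark that one could alternatively derive the inverse via block Gaussian elimination on the invertible off-diagonal blocks $B,B^\top$ is also sound, but for a one-off identity like this the direct check you present is the cleaner choice.
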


Again, this result can be generalized in order to compute the inverse of the covariance function $K$ as in equation \eqref{eq:CovarianceFunction}, with $q = 3$. The specific structure of the matrix in \cref{lemma:inversematrix} would correspond, in continuous-time, to a Gaussian process in $\R^3$ with a time-independent third coordinate. 

\section{Derivation of the High-Dimensional Dynamics} \label{App:DMFT}
In this section, we derive the results presented in \cref{subsec:DMFT}. The plan goes as follows:
\begin{itemize}
    \item In \cref{App:Subsec:DMFTBackground}, we give some background on the methods we use and introduce the relevant objects for the derivation of the high-dimensional equations.
    \item In \cref{App:Subsec:SummaryDMFT}, we give the full set of self-consistent equations mentioned in \cref{Result0}, associated with the gradient flow dynamics \eqref{eq:GFdynamics} for general cost function, noise channel, and regularization. 
    \item In \cref{App:Subsec:DMFTDerivation}, we derive this set of equations. 
    \item In \cref{App:Subsec:DMFTSimplification}, we simplify the set of equations when dealing with the quadratic cost and a Gaussian noise channel. This leads to the system of equations of \cref{Result1}. 
    \item In \cref{App:Subsec:SecondLayer}, we show that our method can be generalized to learning the output weights of the network. We explain how this can be achieved and give the associated set of equations. 
    \item In \cref{App:Subsec:GaussianEquivalence}, we discuss \cref{Conjecture:GaussianUniversality} and the relationship with quadratic networks, and give the steps that would lead to a rigorous proof of this conjecture. 
\end{itemize}

\subsection{Path Integral Formalism} \label{App:Subsec:DMFTBackground}

Before giving and deriving the set of equations of \cref{Result0}, we give some definitions and prior results on the objects involved in our calculation. 

The technique we use to derive the high-dimensional equations is often referred to as the \textit{path integral} method. This formalism provides a powerful way to study the dynamics of high-dimensional disordered systems. It was first introduced in spin-glass dynamics by \citet{de1978dynamics}, who formulated the dynamics using a functional-integral and generating-functional approach. This idea was further developed by \citet{sompolinsky1982relaxational}, who used it to derive the dynamical mean-field equations for spin-glass models. Since then, the same approach has been applied in many contexts, from classical spin models to modern learning and inference problems \citep{crisanti1993spherical, agoritsas2018out, mignacco2020dynamical, bordelon2022self}, to obtain self-consistent dynamical equations describing the collective behavior of complex random systems. While these derivations are non-rigorous, such results have been rigorously obtained on similar problems, for instance by \citet{arous2001aging}, who derives a large deviation principle for spherical spin glasses. From this perspective, the path integral formulation is closely related, as it relies on a saddle-point asymptotic that is similar to the large deviation approach, though in a non-rigorous setting. 

\paragraph{The dynamical partition function.} For simplicity, consider a gradient flow dynamics similar to ours in equation \eqref{eq:GFdynamics}:
\begin{equation*}
    \dot W(t) = - \nabla_W F \big( W(t), X \big),
\end{equation*}
where $F$ depends on some random parameters $X$. In our case this would correspond to the sensing matrices and random labels. To study this dynamics, one can construct a functional integral representation, leading to an object encoding the full trajectory of the system. Formally, one can integrate over the set of all possible trajectories $\big\{ W(t) \big\}_{t \in [0,T]}$ using a measure that we write $\D W$. This measure can be interpreted as the continuous limit of the product measure at discrete time instants:
\begin{equation*}
    \D W = \lim_{N \to \infty} \prod_{p=0}^N \d W_{pT/N}. 
\end{equation*}
The object we then consider is the dynamical partition function:
\begin{equation*}
    \mathcal{Z}_\text{dyn} = \int \mathcal{D} W \,  \delta \Big( \dot W(t) + \nabla_W F \big( W(t), X \big) \Big).
\end{equation*}
$\delta$ denotes a functional Dirac delta that imposes the equation of motion at every instant. Integrating over all possible trajectories simply counts the unique trajectory consistent with a given initial condition. The resulting integral is therefore a constant, independent of the particular realization of the disorder or the function $F$.

Path integrals have been widely used to describe stochastic and dynamical systems, and several works have studied how to define them more rigorously. For readers interested in more systematic or rigorous discussions of these constructions, we refer to \citet{chow2015path},  \citet{cugliandolo2019building}, \citet{de2022path}, and also \citet[][Chapter 1]{dupuis2023field} for a general introduction and physical applications. 

\paragraph{Representation of the functional Dirac.} To make the expression of the partition function easy to work with, it is convenient to rewrite the functional Dirac delta in its exponential form. Similarly to the well-known expression in $\R^p$:
\begin{equation*}
    \delta(y) = \frac{1}{(2\pi)^p} \int_{\R^p} \d \hat y  \, e^{iy^\top \hat y}, 
\end{equation*}
the same identity extends to functional integrals by introducing a time-dependent \emph{conjugate field} $\hat W(t)$. The partition function then rewrites, up to a constant depending only on the dimension:
\begin{equation} \label{eq:DefPartitionFunction}
    \mathcal{Z}_\text{dyn} \propto \int \D W \D \hat W \, \exp \left( i \int_0^T \tr \Big( \left[ \dot W(t)  + \nabla_W F \big( W(t), X \big) \right] \hat W(t)^\top \Big) \d t \right). 
\end{equation}
$\hat W$ plays the role of a Lagrange multiplier that enforces the equation of motion. This exponential form is particularly useful since it allows to manipulate the dynamics using field-theoretic tools. For instance, this formulation will later allow us to average the dynamical partition function with respect to the Gaussian randomness of $X$. 

\paragraph{Link with the generating functional.} The dynamical partition function can be viewed as a central object characterizing the full distribution of a trajectory. In practice, one can introduce external source fields $J(t), \hat J(t)$ that couple linearly to $W(t)$ and $\hat W(t)$:
\begin{equation*}
    \mathcal{Z}_\text{dyn}[J, \hat J] = \int \D W \D \hat W \exp \left( - S(W, \hat W) + \int_0^T \Big[ \tr \big( J(t) W(t)^\top \big) + \tr \big( \hat J(t) \hat W(t)^\top \big) \Big]\d t \right),
\end{equation*}
where $S(W, \hat W)$ is the quantity already appearing inside the exponential in equation \eqref{eq:DefPartitionFunction}. The function $\mathcal{Z}_\text{dyn}[J, \hat J]$, often called the generating functional of the dynamics (the analogue of a generating function for finite-dimensional random variables), contains all statistical information about the process. Correlations of any observable can be obtained by taking functional derivatives of $\mathcal{Z}_\text{dyn}[J, \hat J]$ with respect to $J, \hat J$, and then setting these variables to zero. In other words, the partition function (and its extension to the generating functional) fully characterizes the trajectory distribution of the random process $W$. 

\paragraph{The temporal Dirac delta distribution.} Above, we introduced the Dirac delta functional on trajectories. In what follows, we also use the Dirac delta distribution on a real scalar variable, denoted again by $\delta$, with the usual meaning of a distribution supported at zero. As in \cref{Result1}, equation \eqref{eq:AveragedQuantities}, we sometimes use the abusive notation:
\begin{equation*}
    R(t,t') = \delta(t-t') + G(t,t'),
\end{equation*}
for some function $G \colon \R^+ \times \R^+ \to \R$. The previous expression is to be understood in the sense of distributions: $R$ implicitly represents a linear operator $\overline R$ acting on test functions $\phi \colon \R^+ \to \R$:
\begin{equation*}
    \big(\overline R\phi \big)(t) = \phi(t) + \int_0^t G(t,t') \phi(t') \d t'. 
\end{equation*}

\subsection{Full Set of Equations} \label{App:Subsec:SummaryDMFT}

In this section, we give the general set of high-dimensional equations associated with the dynamics of the matrix $W(t)$ and the typical label $y(t)$. The general structure of the equations was given in \cref{Result0}, but for completeness we give them in detail. 

In the following, we show that in the high-dimensional limit, starting from the dynamics \eqref{eq:GFdynamics}, the evolution of the student matrix and the typical label is equivalent, in distribution, to the following set of equations:
\begin{align}
    \d W(t) &= \left( \mathcal{H}(t) + r(t) Z^* - \int_0^t \Gamma(t,t') Z(t') \d t' \right) W(t) \d t - \nabla \Omega\big( W(t) \big) \d t + \frac{1}{\sqrt{\beta d}} \d  B(t), \label{eq:DMFTStudent} \\
    0 &= \int_0^t R(t,t') y(t') \d t' + \eta(t) - m(t) y^* + \frac{2}{\alpha} \ell' \big( y(t), z \big), \label{eq:DMFTLabel}     
\end{align}
with $Z(t) = W(t)W(t)^\top$, $y^* \sim \N(0, 2Q_*)$ and $z \sim P \big( \cdot \, | \, y^* \big)$. The functions $\mathcal{H}$ and $\eta$ are independent centered Gaussian processes respectively belonging to $\mathcal{S}_d(\R)$ and $\R$, with covariances:
\begin{equation} \label{eq:CovarianceGaussianProcesses}
    \E \, \mathcal{H}_{ij}(t) \mathcal{H}_{i'j'}(t') = \frac{1}{2d} \big( \delta_{ii'} \delta_{jj'} + \delta_{ij'} \delta_{i'j} \big) \mathcal{K}_Z(t,t'), \hspace{1.5cm} \E \, \eta(t) \eta(t') = \mathcal{K}_y(t,t'). 
\end{equation}
We then consider the averaged quantities with respect to the dynamics of $Z(t), y(t)$:
\begin{align}
     C_y(s,t) &= \E \, y(s) y(t), &\hspace{1.cm} m_y(t) &= \E \, y(t) y^*, \hspace{1cm} \\
     C_Z(s,t) &= \frac{1}{d} \E \, \tr \big(Z(s) Z(t) \big), &\hspace{1.cm} m_Z(t) &= \frac{1}{d} \E \, \tr \big( Z(t) Z^* \big), \hspace{1cm} \\
     Q_* &= \frac{1}{d} \E \, \tr(Z^{*2}),
\end{align}
as well as the first-order derivatives:
\begin{equation} \label{eq:SummaryDMFTResponses}
    R_y(s,t) = \left. \frac{\partial \, \E \, y(s)}{\partial h(t)} \right|_{h = 0}, \hspace{1.5cm} R_Z(s,t) = \frac{1}{d^2} \tr \left( \left. \frac{\partial \, \E \, Z(s)}{\partial H(t)} \right|_{H = 0} \right),
\end{equation}
in response to a perturbation $\eta(t) \mapsto \eta(t) - h(t)$ and $\mathcal{H}(t) \mapsto \mathcal{H}(t) + H(t)$ in equations \eqref{eq:DMFTStudent}, \eqref{eq:DMFTLabel}. Then, the scalar deterministic functions $r, \Gamma, \mathcal{K}_Z, \mathcal{K}_y, m, R$ can be self-consistently computed as:
\begin{align}
    r(t) &= \alpha \left( \frac{1}{2Q_*} \int_0^t R(t,t') m_y(t')\d t' - \int_0^t \int_0^{t'} R(t,t') R_y(t',s) m(s) \d s \d t' \right), \\
    \Gamma(s,t) &= \alpha \left( R(s, t) - \int_t^s \int_t^{s'} R(s, s') R_y(s',t') R(t',t) \d t' \d s' \right), \\
    \mathcal{K}_Z(s,t) &= \frac{\alpha}{2} \Bigg(\frac{1}{2} \int_0^s \int_0^t \, R(s,s') C_y(s',t') R(t,t') \d t' \d s' - \int_0^s \int_0^{s'}  R(s,s') R_y(s',t') \mathcal{K}_y(t,t') \d t' \d s' \notag \\
    &\hspace{1.5cm}- m(t) \int_0^s R(s,s') m_y(s') \d s' + \frac{1}{2} \mathcal{K}_y(s,t) + Q_* m(s) m(t) \Bigg) + \mathrm{Sym}, \label{eq:DMFTSummary1} \\
    \mathcal{K}_y(t,t') &= 2 \int_0^t \int_0^s R(s,s') R(t,t') \Big( C_Z(s',t') - Q_*^{-1} m_Z(s') m_Z(t') \Big) \d s' \d t',  \\
    m(s) &= \frac{1}{Q_*} \int_0^s R(s,s') m_Z(s') \d s', \\
    \delta(t-t') &= \int_{t'}^t R(t,s) R_Z(s,t') \d s.
\end{align}
The notation Sym in equation \eqref{eq:DMFTSummary1} indicates the symmetrization with respect to the variables $s,t$. The last equation defines $R$ as the functional inverse of the response function $R_Z$. Although we do not prove that only one function $R$ satisfies this relationship for all instants $t,t'$, we believe that due to the causal structure of the responses, such a result should hold. 

This set of equations is truly self-consistent, in the usual spirit of DMFT equations: the stochastic processes $W(t), y(t)$ are driven by scalar and deterministic functions that are themselves computed as expectations over these processes. 

\subsection{Derivation of the Equations} \label{App:Subsec:DMFTDerivation}

In this section we derive the set of equations presented in the previous section. We start from the dynamics in equation \eqref{eq:GFdynamics}. Recalling the expression of the empirical loss in equation~\eqref{eq:Loss} and computing its gradient, the dynamics then writes:
\begin{equation} \label{eq:DMFTDynamics1}
    \d W(t) = - \frac{d}{n} \sum_{k=1}^n \ell' \Big( \tr \big(X_k W(t)W(t)^\top \big), z_k \Big) X_k W(t) \d t  - \nabla \Omega \big( W(t) \big) \d t + \frac{1}{\sqrt{\beta d}} \d B(t).  
\end{equation}
For simplicity we denote $\ell'$ the derivative of $\ell$ with respect to the first variable. We recall that $z_k$ is generated by the teacher matrix $Z^*$ through the noisy channel $P$:
\begin{equation*}
    z_k \sim P \Big( \cdot \, \big| \, \tr(X_k Z^*)  \Big). 
\end{equation*}
To keep the notation light, we derive the dynamical equations with $\Omega = 0$ and $\beta = \infty$, only keeping the first term in \eqref{eq:DMFTDynamics1}. In fact, since our calculation only transforms the empirical term, any additive term independent of the sensing matrices $(X_k)_{1 \leq k \leq n}$ could be integrated in equation \eqref{eq:DMFTDynamics1} and would appear in the same manner in the resulting dynamics. 

\paragraph{Plan of the derivation.} We derive our system of equations in several steps:
\begin{itemize}
    \item We start by computing the dynamical partition function and average it with respect to the observations (\cref{App:Subsubsec:DMFT1PartitionFunction}). The $d \to \infty$ limit allows to write a saddle-point equation on the covariance function of the typical label (\cref{App:Subsubsec:DMFT2Saddlepoint}). 
    \item We then study the response operator associated with the dynamics to simplify the structure of this covariance function (\cref{App:Subsubsec:DMFT3Covariance}). 
    \item This simplification allows to write stochastic integro-differential equations for the evolution of the typical label (\cref{App:Subsubsec:DMFT4Labels}) and student matrix (\cref{App:Subsubsec:DMFT6EvolutionStudent}). 
    \item We link several averaged quantities to the coefficients driving the dynamics in order to close the equations (\cref{App:Subsubsec:DMFT5Averaged}).   
\end{itemize}

The first step of this derivation is inspired by the replica calculation by \citet{maillard2024bayes}. In our case, the integration over multiple replicas is replaced by one over the time instants of the dynamics. However, note several differences: in the Bayes-optimal setting, replicas are introduced in order to compute the expectation of the logarithm of the partition function. Since our derivation only requires averaging the partition function (rather than its logarithm) with respect to the observations, we avoid the non-rigorous step of sending the number of replicas to zero. In addition, while the work of \citet{maillard2024bayes} relies on a replica-symmetric ansatz (justified in the Bayes-optimal setting) to simplify the structure of the overlaps between replicas, our approach does not require such a simplification for the time-dependent overlaps that appear in the dynamics. 

\subsubsection{The Dynamical Partition Function} \label{App:Subsubsec:DMFT1PartitionFunction}

Following our introduction on path integrals in \cref{App:Subsec:DMFTBackground}, we write the dynamical partition function associated with the gradient flow dynamics on $[0,T]$ for some finite-time horizon $T$:
\begin{equation*}
    \mathcal{Z}_\text{dyn} = \int \mathcal{D} W \, \delta \left( \dot W(t) + \frac{d}{n} \sum_{k=1}^n \ell' \big( y_k(t), z_k \big) X_k W(t) \right),
\end{equation*}
where $y_k(t) = \tr \big(X_k W(t) W(t)^\top\big)$. The Dirac delta notation indicates here that the constraint must be verified for all $t \in [0,T]$. Then, using the Fourier representation of the Dirac delta function (as explained in \cref{App:Subsec:DMFTBackground}), we can rewrite:
\begin{equation*}
    \mathcal{Z}_\text{dyn} \propto \int \D W \D \hat W \, \exp \bigg( - id \int_0^T \tr \big( \dot W(t) \hat W(t)^\top \big) \d t - \frac{id^2}{n} \sum_{k=1}^n \int_0^T \ell'\big( y_k(t), z_k \big) \hat y_k(t) \d t\bigg), 
\end{equation*}
with $\hat y_k(t) = \tr \big( X_k W(t) \hat W(t)^\top \big)$ and $\propto$ indicates proportionality, up to a constant that may depend only on the dimension. The goal is now to compute the average of $\mathcal{Z}_\text{dyn}$ with respect to the i.i.d. observations $X_1, \dots, X_n$. To do so, we write:
\begin{equation*}
    \E_{\substack{X_1, \dots, X_n \\ z_1, \dots, z_n}} \exp \left( - \frac{id^2}{n} \sum_{k=1}^n \int_0^T \ell'\big( y_k(t), z_k \big) \hat y_k(t) \d t \right) = \left[ \E_{X, z} \exp \left( - \frac{i}{\alpha} \int_0^T \ell'\big( y(t), z \big) \hat y(t) \d t\right) \right]^n. 
\end{equation*}
Here $z \sim P \big( \cdot \, \big| \, y^* \big)$, and the variable $y(t)$ corresponds to the typical label introduced in \cref{Result0}. We also used that $n \sim \alpha d^2$ in the large $d$ limit. Since the variables $W(t), \hat W(t), Z^*$ are fixed, and $X \sim \mathrm{GOE}(d)$ (see \cref{def:GOE}), the process $\mathbf{y}(t) = \big(y(t), \hat y(t), y^* \big)$ is Gaussian with zero mean and covariance:
\begin{equation} \label{eq:defcovarianceQ}
    Q(s,t) = \E \, \mathbf{y}(s) \mathbf{y}(t)^\top = \frac{2}{d} \begin{pmatrix}
        \tr \, Z(s) Z(t) & \tr \, Z(s) \hat Z(t) & \tr \, Z(s) Z^* \\
        \tr \, \hat Z(s) Z(t) & \tr \, \hat Z(s) \hat Z(t) & \tr \, \hat Z(s) Z^* \\
        \tr \, Z^*Z(t) & \tr \, Z^* \hat Z(t) & \tr \, Z^{*2}
    \end{pmatrix},
\end{equation}
with $\hat Z(t) = \mathrm{Sym}\big(W(t) \hat W(t)^\top \big)$. Therefore, we can finally write the averaged partition function (with respect to the observations):
\begin{equation*}
    \overline{\mathcal{Z}}_\mathrm{dyn} \propto \int \D W \D \hat W \, \exp \left(- id \int_0^T \tr \big( \dot W(t) \hat W(t)^\top \big) \d t \right) \left[ \E_{\mathbf{y},z} \exp \left( - \frac{i}{\alpha} \int_0^T \ell' \big(y(t), z \big) \hat y(t) \d t\right) \right]^n. 
\end{equation*}
Finally, we introduce the covariance matrix as an integration variable using a delta function, and obtain the identity:
\begin{equation*}
\begin{aligned}
    \overline{\mathcal{Z}}_\mathrm{dyn} &\propto \int \D W \D \hat W \D Q \D \hat Q \exp \left( - id^2 \sum_{a,b=1}^3 \int_0^T\!\! \int_0^T \hat Q_{ab}(s,t) \left( Q_{ab}(s,t) - \frac{2}{d} \tr \, Z_a(s) Z_b(t) \right) \d s \d t \right) \\
    &\hspace{0.5cm} \exp \left(- id \int_0^T \tr \big(\dot W(t) \hat W(t)^\top \big) \d t \right) \left[\E_\mathbf{y} \int \d z \, P \big( z \, \big| \, y^* \big) \exp \left( - \frac{i}{\alpha} \int_0^T  \ell' \big(y(t), z \big) \hat y(t) \d t\right) \right]^n,
\end{aligned}
\end{equation*}
where $Z_1(t) = Z(t)$, $Z_2(t) = \hat Z(t)$ and $Z_3(t) = Z^*$. Rearranging the terms, we finally obtain the expression:
\begin{equation} \label{eq:saddlepoint}
    \overline{\mathcal{Z}}_\mathrm{dyn} = \int \D Q \D \hat Q \exp \left( -id^2 \mathcal{T}(Q, \hat Q) + d^2 \mathcal{F}(\hat Q) + \alpha d^2 \mathcal{F}_\text{out}(Q) \right),
\end{equation}
with:
\begin{align}
    \mathcal{T}(Q, \hat Q) &= \sum_{a,b=1}^3 \int_0^T \!\! \int_0^T \hat Q_{ab}(s,t) Q_{ab}(s,t)  \d s \d t , \\
    \mathcal{F}(\hat Q) &= \frac{1}{d^2} \log \int \D W \D \hat W \, \exp \Bigg(- id \int_0^T \tr \, \dot W(t) \hat W(t)^\top \d t \label{eq:FunctionsSaddle} \\
    &\hspace{5cm}+ 2id  \sum_{a,b=1}^3 \int_0^T \!\! \int_0^T \hat Q_{ab}(s,t) \tr \big( Z_a(s) Z_b(t) \big) \d s \d t \Bigg), \notag \\
    \mathcal{F}_\text{out}(Q) &= \log \E_\mathbf{y} \int \d z \, P \big( z \, \big| \, y^* \big) \exp \left( - \frac{i}{\alpha} \int_0^T  \ell' \big(y(t), z \big) \hat y(t) \d t \right). 
\end{align}

\subsubsection{Saddle-Point Equations} \label{App:Subsubsec:DMFT2Saddlepoint}

Now, we can take the $d \to \infty$ limit in equation \eqref{eq:saddlepoint}. Although $W$ is still a high-dimensional object, the covariance functions $Q, \hat Q$ are finite-dimensional functions, when considered on timescales of order one. Then, the saddle-point asymptotics can be performed, and we obtain the equations:
\begin{equation} \label{eq:Saddle}
    i \hat Q(s,t) = \alpha \frac{\partial \mathcal{F}_\text{out}(Q)}{\partial Q(s,t)}, \hspace{1.5cm} i Q(s,t) = \frac{\partial \mathcal{F}(\hat Q)}{\partial \hat Q(s,t)}. 
\end{equation}
Note that the second identity leads back to the definition of $Q(s,t)$ in equation \eqref{eq:defcovarianceQ}, but averaged with respect to the dynamics.

The goal is now to compute the derivative of $\mathcal{F}_\text{out}$. To do so, recall that $\mathcal{F}_\text{out}$ depends on $Q$ through the expectation with respect to $\mathbf{y}$ which is a Gaussian process with covariance $Q$. Therefore, we can write:
\begin{equation*}
\begin{aligned}
    \mathcal{F}_\text{out}(Q) &= \log \E_{\mathbf{y} \sim \N(0, Q)} \E_{z \sim P( \cdot \, | \, y^*)} \exp \Big( - i \Phi(y, \hat y, z) \Big), \\
    \Phi(y, \hat y, z) &= \frac{1}{\alpha} \int_0^T \ell'\big(y(t), z \big) \hat y(t) \d t. 
\end{aligned}
\end{equation*}
To compute the derivative of $\mathcal{F}_\text{out}$, we shall now use \cref{Lemma:DerivativeGaussianExpectation} and the discussion that follows. As an application of the identity \eqref{eq:DerivativeGaussianProcess}:
\begin{align}
    \frac{\partial \mathcal{F}_\text{out}(Q)}{\partial Q_{ij}(s,t)} &= - \frac{1}{2} (Q^{-1})_{ij}(s,t) \label{eq:DerivativeSaddle} \\
    &\hspace{1cm}+ \frac{1}{2} \sum_{a,b=1}^3 \int_0^T \!\! \int_0^T (Q^{-1})_{ia}(s,s') (Q^{-1})_{jb}(t,t') \overline\E \big[ y_a(s') y_b(t') \big] \d s' \d t' \notag, 
\end{align}
where $y_1(t) = y(t), y_2(t) = \hat y(t), y_3(t) = y^*$, $Q^{-1}$ is the functional inverse of $Q$ and the expectation $\overline \E$ on $\mathbf{y}$ is computed as:
\begin{equation} \label{eq:DistribLabels}
    \overline \E \, f( \mathbf{y}) = \frac{\displaystyle \E_{\mathbf{y} \sim \N(0,Q)} \E_{z \sim P( \cdot \, | \, y^*)} \, f(\mathbf{y})e^{-i\Phi(y, \hat y, z)}}{\displaystyle \E_{\mathbf{y} \sim \N(0,Q)} \E_{z \sim P( \cdot \, | \, y^*)}  \, e^{-i\Phi(y, \hat y, z)}}. 
\end{equation}
This distribution will correspond to the trajectory distribution of the typical label $y(t)$. Now, since this distribution involves the expectation over a Gaussian process with covariance function $Q(s,t)$, one needs to compute $Q^{-1}$. In the following, using the structure of the dynamics, we simplify $Q$ and compute its inverse. 

\subsubsection{Form of the Covariance Function} \label{App:Subsubsec:DMFT3Covariance}

We now give some arguments in order to simplify the form of the covariance $Q(s,t)$. Recall that the saddle-point equations led us to the same expression of $Q$ as in equation \eqref{eq:defcovarianceQ}, but averaged with respect to the distribution of the dynamics. To simplify this covariance, we study the general perturbed gradient flow:
\begin{equation*}
    \dot W(t) = - \nabla F(W(t)) + H(t) W(t), 
\end{equation*}
with $H(t) \in \mathcal{S}_d(\R)$, and $F \colon \R^{d \times m} \to \R$ is some continuously differentiable function. The dynamical partition function for this equation writes:
\begin{equation} \label{eq:PartitionFunctionGeneric}
    \mathcal{Z}_\text{GF} \propto \int \D W \D \hat W \exp \left( -id \int_0^T \tr \Big[ \Big( \dot W(t) + \nabla F(W(t)) - H(t) W(t) \Big) \hat W(t)^\top \Big] \d t \right).
\end{equation}
Therefore, for any scalar function $f$ of the dynamics, we have the identities, when averaging with respect to the distribution associated with $\mathcal{Z}_\text{GF}$:
\begin{equation*} 
    \left. \frac{\partial \, \E \, f(W)}{\partial H(t)} \right|_{H = 0} = id \, \E \big[ f(W) \hat Z(t) \big], \hspace{1.5cm} \left. \frac{\partial^2 \, \E \, f(W)}{\partial H(t) \partial H(t')} \right|_{H = 0} = -d^2 \, \E \Big[ f(W) \big( \hat Z(t) \otimes \hat Z(t') \big) \Big]. 
\end{equation*}
Here we view $\E \, f(W)$ as a function of the perturbation $\big( H(t) \big)_{t \geq 0}$ on $\mathcal{S}_d(\R)$, and we compute the first and second derivatives of this function in the space $\mathcal{S}_d(\R)$ at $H = 0$. To prove these identities, one can write $\E \, f(W)$ as a function of $H$ with the partition function \eqref{eq:PartitionFunctionGeneric} and use that the gradient of the map $H \mapsto \tr(HM)$ on $\mathcal{S}_d(\R)$ is $\mathrm{Sym}(M)$ \citep[see][for the notion of symmetric gradient]{srinivasan2023gradient}. We recall that $\hat Z(t) = \mathrm{Sym}\big(W(t) \hat W(t)^\top \big)$. In the second derivative, the notation $\otimes$ refers to the tensor product. 

Therefore, the matrix $\hat Z(t)$ acts as a derivative operator when averaging quantities with respect to the dynamics. Thus, from the previous identities, we deduce that:
\begin{equation*}
    \E \, \tr \big( Z^* \hat Z(t) \big) = 0, \hspace{1.5cm} \E \, \tr \big( \hat Z(s) \hat Z(t) \big) = 0. 
\end{equation*}
Indeed, the first quantity can be rewritten as the derivative of $Z^*$ in response to a perturbation of the dynamics, and the second one corresponds to the second-order derivative of a constant function. In addition, we obtain:
\begin{equation*}
    \E \, \tr \big( Z(s) \hat Z(t) \big) = - \frac{i}{d} \tr \left( \left. \frac{\partial \, \E \, Z(s)}{\partial H(t)} \right|_{H = 0} \right).
\end{equation*}
Here the response operator can be interpreted as the differential of the function $\mathcal{S}_d(\R) \to \mathcal{S}_d(\R)$ that maps the perturbation $H$ introduced at time $t$ to the perturbed solution $\E \, Z(s)$ at time $s$. This term is zero when $t > s$. Indeed, the dynamics cannot be influenced by a perturbation added at a later time. Finally, we can conclude that the covariance function $Q(s,t)$ is of the form:
\begin{equation*}
    Q(s,t) = 2\begin{pmatrix} C_Z(s,t) & -i R_Z(s,t) & m_Z(s) \\
    -i R_Z(t,s) & 0 & 0 \\
    m_Z(t) & 0 & Q_*
    \end{pmatrix},
\end{equation*}
with:
\begin{equation} \label{eq:SC1}
\begin{aligned}
    C_Z(s,t) &= \frac{1}{d} \E \, \tr \big(Z(s) Z(t) \big), &\hspace{1.5cm} R_Z(s,t) &= \frac{1}{d^2} \tr \left( \left. \frac{\partial \, \E \, Z(s)}{\partial H(t)} \right|_{H = 0} \right), \\ 
    m_Z(t) &= \frac{1}{d} \E \, \tr \big( Z(t) Z^* \big), &\hspace{1.5cm} Q_* &= \frac{1}{d} \E \, \tr(Z^{*2}). 
\end{aligned}
\end{equation}
From this expression of $Q$, we can deduce the structure of its inverse by applying \cref{lemma:inversematrix}. Although this lemma is formulated with discrete variables, it is easily seen that we can adapt the result when dealing with two-time functions. In this case, the matrix inverse is replaced by the functional inverse, and the standard matrix product now corresponds to integration with respect to time variables. Then, we can conclude that the function $Q^{-1}$ appearing in equation~\eqref{eq:DerivativeSaddle} has the form:
\begin{equation*}
    Q^{-1}(s,t) = \frac{1}{2} \begin{pmatrix} 0 & i R(t,s) & 0 \\ i R(s,t) & K(s,t) & -im(s) \\ 0 & -im(t) & Q_*^{-1}
    \end{pmatrix},
\end{equation*}
and we have the relationships, from \cref{lemma:inversematrix}:
\begin{equation} \label{eq:SC2}
\begin{aligned}
    R(s,t) &= R_Z^{-1}(s,t), \\
    K(s,t) &= \int_0^t \int_0^s R(s,s') R(t,t') \Big( C_Z(s',t') - Q_*^{-1} m_Z(s') m_Z(t') \Big) \d s' \d t', \\
    m(s) &= \frac{1}{Q_*} \int_0^s R(s,s') m_Z(s') \d s'. 
\end{aligned}
\end{equation}
$R$ is defined as the inverse of $R_Z$, therefore also has a causal structure, i.e., $R(t,t') = 0$ for $t' > t$, and we can deduce the relationship for all $0 \leq t' \leq t \leq T$:
\begin{equation*}
    \int_{t'}^t R(t,s) R_Z(s,t') \d s = \delta(t-t').
\end{equation*}

\subsubsection{Evolution of the Labels} \label{App:Subsubsec:DMFT4Labels}

Before deriving the self-consistent set of equations, recall that equation \eqref{eq:DerivativeSaddle} involves averages of the labels with respect to a reweighted distribution, defined in equation \eqref{eq:DistribLabels}. We investigate this distribution and derive the evolution equation for the label $y(t)$. To do so, we use the fact that $\mathbf{y}$ is a Gaussian process with covariance $Q$ and we compute:
\begin{equation*}
\begin{aligned}
    \overline \E \, f(y, y^*) &\propto \int \D y \D \hat y \D y^* f(y,y^*) \exp \left( - \frac{1}{2} \int_0^T \!\! \int_0^T \mathbf{y}(s)^\top Q^{-1}(s,t) \mathbf{y}(t) \d t \d s \right) \\
    &\hspace{5cm} \times \E_z \exp \left( - \frac{i}{\alpha} \int_0^T  \ell'\big(y(s), z \big) \hat y(s) \d s \right).
\end{aligned}
\end{equation*}
Using the expression of $Q^{-1}$, we have:
\begin{equation*}
\begin{aligned}
    \int_0^T \!\! \int_0^T \mathbf{y}(s)^\top Q^{-1}(s,t) \mathbf{y}(t) \d t \d s &= \frac{1}{2Q_*} y^{*2} + i \int_0^T \!\! \int_0^s  \, R(s,t) y(t) \hat y(s) \d t \d s \\
    &\hspace{1cm}- i y^* \int_0^T m(s) \hat y(s) \d s + \frac{1}{2} \int_0^T \!\! \int_0^T K(s,t) \hat y(s) \hat y(t) \d t \d s.  
\end{aligned}
\end{equation*}
We now use the identity:
\begin{equation*}
    \exp \left( - \frac{1}{4} \int_0^T \!\! \int_0^T K(s,t) \hat y(s) \hat y(t) \d t \d s \right) = \E_\eta \exp \left( -\frac{i}{2} \int_0^T \eta(s) \hat y(s) \d s \right),
\end{equation*}
where $\eta(t)$ is a Gaussian process with zero mean and covariance $2 K(s,t)$. Finally, we end up with:
\begin{equation} \label{eq:DistribLabels2}
\begin{aligned}
    \overline \E \, f(y, y^*) &\propto \int \D y \D \hat y \D y^* f(y,y^*) \E_{z, \eta} \exp \left( - \frac{1}{4 Q_*} y^{*2} \right) \\
    &\hspace{0.5cm} \exp \left( \frac{i}{2} \int_0^T \left[ - \int_0^s R(s,t) y(t) \d t + m(s) y^* - \eta(s) - \frac{2}{\alpha} \ell'\big(y(s), z \big) \right] \hat y(s) \d s \right). 
\end{aligned}
\end{equation}
Therefore, integrating with respect to the variable $\hat y$, and using the exponential representation of the delta Dirac function (see \cref{App:Subsec:DMFTBackground}), we end up with the equation for $y(t)$:
\begin{equation} \label{eq:SC3}
    \int_0^t R(t,t') y(t') \d t' + \eta(t) - m(t) y^* + \frac{2}{\alpha} \ell'\big(y(t), z \big) = 0, \hspace{1.5cm} \E \, \eta(s) \eta(t) = 2 K(s,t),
\end{equation}
where $z \sim P( \, \cdot \, | \, y^*)$. Finally, we determine the form of the covariances for the $y$ variables that appear in equation \eqref{eq:DerivativeSaddle}. Following the same arguments as for the covariance $Q$, we can show that:
\begin{equation*}
    \overline \E \, \hat y(s) y^* = 0, \hspace{1.5cm} \overline \E \, \hat y(s) \hat y(t) = 0, \hspace{1.5cm} \overline \E \, y(s) \hat y(t) = -2iR_y(s,t), 
\end{equation*}
where $R_y$ is a response function and is non-zero only for $t \in [0,s]$. Similarly to the response identities we derived for $W(t)$, we can prove, using the distribution in equation \eqref{eq:DistribLabels2}, that:
\begin{equation*}
    R_y(s,t) = \left. \frac{\partial \, \overline \E \, y(s)}{\partial h(t)}\right|_{h = 0},
\end{equation*}
in response to a perturbation of the noise $\eta(t) \mapsto \eta(t) - h(t)$ in equation \eqref{eq:SC3}. In addition, equation \eqref{eq:DistribLabels2} shows that $\overline \E \,  y^{*2} = 2 Q_*$. Finally, we denote the covariances:
\begin{equation*}
    C_y(s,t) = \overline \E \, y(s) y(t), \hspace{1.5cm} m_y(t) = \overline \E \, y(t) y^*. 
\end{equation*}
With all of these quantities, we are now ready to go back to equation \eqref{eq:DerivativeSaddle} and derive the set of self-consistent equations. 

\subsubsection{Equations between Averaged Quantities} \label{App:Subsubsec:DMFT5Averaged}

Back to equation \eqref{eq:DerivativeSaddle}, we can compute the derivative of $\mathcal{F}_\text{out}$ with respect to $Q$, and with the saddle-point equation \eqref{eq:Saddle}, show that $\hat Q$ is of the form:
\begin{equation} \label{eq:matrixhatQ}
    \hat Q(s,t) = \frac{1}{4} \begin{pmatrix} 0 & - \Gamma(t,s) & 0 \\ - \Gamma(s,t) & i \mathcal{K}_Z(s,t) & r(t) \\ 0 & r(s) & 0  
    \end{pmatrix}, 
\end{equation}
and that we have the relationships:
\begin{align}
    \Gamma(s,t) &= \alpha \left( R(s, t) - \int_t^s \int_t^{s'} R(s, s') R_y(s',t') R(t',t) \d t' \d s' \right),  \label{eq:SC4} \\
    \mathcal{K}_Z(s,t) &= \alpha \bigg(\frac{1}{4} \int_0^s \int_0^t \, R(s,s') C_y(s',t') R(t,t') \d t' \d s' - \int_0^s \int_0^{s'}  R(s,s') R_y(s',t') K(t,t') \d t' \d s' \notag \\
    &\hspace{1.5cm}- \frac{1}{2} m(t) \int_0^s R(s,s') m_y(s') \d s' + \frac{1}{2} K(s,t) + \frac{1}{2} Q_* m(s) m(t) \bigg) + \mathrm{Sym}, \notag \\
    r(s) &= \alpha \left( \frac{1}{2Q_*} \int_0^s R(s,s') m_y(s')\d s' - \int_0^s \int_0^{s'} R(s,s') R_y(s',t') m(t') \d t' \d s' \right) \notag. 
\end{align}
$\mathrm{Sym}$ indicates the symmetrization of the previous expression with respect to the time variables $s,t$. These equations relate the coefficients of $\hat Q$ to the quantities derived from $Q$ (namely the functions $R, K, m$ that are the coefficients of $Q^{-1}$), and the averaged quantities of the dynamics of $y(t)$, themselves driven by the functions $R, K, m$. 

\subsubsection{Evolution of the Weights} \label{App:Subsubsec:DMFT6EvolutionStudent}

To close the system of self-consistent equations, we finally need another relationship between $\hat Q$ and $Q$. To do so, recall that the $Q$ variables are related to averages with respect to the dynamics of $W$. Then, from the expression of the dynamical partition function in the high-dimensional limit, we can derive equivalent dynamical equations for the weights $W(t)$. To do so, we go back to the expression of $\mathcal{F}$ in equation \eqref{eq:FunctionsSaddle}, along with the expression of $\hat Q$ in equation \eqref{eq:matrixhatQ}:
\begin{equation*}
\begin{aligned}
    \mathcal{F}(\hat Q) &= \frac{1}{d^2} \log \int \D W \D \hat W \exp \Bigg( - id \int_0^T \tr \big(\dot W(t) \hat W(t)^\top \big) \d t - id \int_0^T \!\! \int_0^t \Gamma(t,t') \tr\big( \hat Z(t) Z(t') \big) \d t' \d t \\
    &\hspace{4.2cm} + id \int_0^T r(t) \tr \big( \hat Z(t) Z^* \big) \d t - \frac{d}{2} \int_0^T \!\! \int_0^T \mathcal{K}_Z(s,t) \tr\big( \hat Z(s) \hat Z(t) \big) \d s \d t \Bigg).
\end{aligned}
\end{equation*}
We now use the identity:
\begin{equation*}
    \exp \left( - \frac{d}{2} \int_0^T \!\! \int_0^T \mathcal{K}_Z (s,t) \tr\big( \hat Z(s) \hat Z(t) \big) \d s \d t \right) = \E_V \exp \left(id \int_0^T \tr\big( \hat Z(s) V(s) \big) \d s \right), 
\end{equation*}
where $V(t) \in \R^{d \times d}$ is a centered Gaussian matrix with covariance:
\begin{equation*}
    \E \, V_{ij}(s) V_{i'j'}(t) = \frac{1}{d} \delta_{ii'}\delta_{jj'} \mathcal{K}_Z(s,t). 
\end{equation*}
Therefore, using that $\hat Z(t) = \mathrm{Sym} \big(W(t) \hat W(t)^\top \big)$, we have:
\begin{equation} \label{eq:FunctionDynamicshatQ} 
\begin{aligned}
    \mathcal{F}(\hat Q) &= \frac{1}{d^2} \log \int \D W \D \hat W \exp \bigg( -id \int \d t \, \tr \, \bigg[ \dot W(t) + \int_0^t \Gamma(t,t') Z(t') \d t' \, W(t) \\
    &\hspace{7cm} - r(t) Z^* W(t) - \mathrm{Sym}\big( V(t) \big) W(t) \bigg] \hat W(t)^\top \bigg) \notag. 
\end{aligned}  
\end{equation}
We can integrate with respect to $\hat W(t)$ and use the exponential formulation of the delta functional in \cref{App:Subsec:DMFTBackground}. This leads to the following equation for $W(t)$:
\begin{equation} \label{eq:DynamicsW1}
    \dot W(t) = \left( \mathrm{Sym}\big( V(t) \big) + r(t) Z^* - \int_0^t \Gamma(t,t') Z(t') \d t' \right) W(t). 
\end{equation}
Now setting $\mathcal{H}(t) = \mathrm{Sym}(V(t))$, we get that $\mathcal{H}$ is still a centered Gaussian process with covariance:
\begin{equation*}
    \E \, \mathcal{H}_{ij}(s) \mathcal{H}_{i'j'}(t) = \frac{1}{2d} \Big( \delta_{ii'} \delta_{jj'} + \delta_{ij'} \delta_{i'j} \Big) \mathcal{K}_Z(s,t). 
\end{equation*}
As mentioned earlier, it is easily seen that we can add back the regularization and the Brownian motion from \eqref{eq:DMFTDynamics1} into equation \eqref{eq:DynamicsW1}. Then, gathering equations \eqref{eq:SC1}, \eqref{eq:SC2}, \eqref{eq:SC3}, \eqref{eq:SC4}, with the above dynamics on $W(t)$, and simply writing $\mathcal{K}_y = 2K$, we obtain the equations of \cref{App:Subsec:SummaryDMFT}. 

\subsection{Simplified Set of Equations} \label{App:Subsec:DMFTSimplification}

We now prove the simplified set of equations in \cref{Result1} under \cref{Assumption2}. More precisely, we take $\ell$ to be the quadratic cost and assume that the labels are generated using a Gaussian noisy channel. With these assumptions, we simplify the previous set of equations. The plan is the following:
\begin{itemize}
    \item We identify the label evolution as a Gaussian dynamics and transform the equation so that it is driven by quantities associated with $W(t)$. This allows to write a system of equations on the variables $R_y, m_y$ (\cref{App:Subsubsec:DMFTSimple1Labels}).
    \item We then use these simplifications to eliminate the variables $r, \Gamma, \mathcal{K}_Z$ (\cref{App:Subsubsec:DMFTSimple2Hat}).
    \item We finally rewrite the response function $R_Z$ and obtain the desired set of equations (\cref{App:Subsubsec:DMFTSimple3Response}). 
\end{itemize}

\subsubsection{Label Equation} \label{App:Subsubsec:DMFTSimple1Labels}

We proceed to simplify the label equation \eqref{eq:SC3}. Using \cref{Assumption2:Channel} on the noisy channel, we write $z = y^* + \sqrt{\Delta} \zeta$ where $\zeta \sim \N(0,1)$, we get the equation:
\begin{equation} \label{eq:DynamicsLabel1}
    \int_0^t R(t,t') y(t') \d t' + \eta(t) - m(t)y^* + \frac{2}{\alpha} \big(y(t) - y^* - \sqrt{\Delta} \zeta \big) = 0. 
\end{equation}
Remark now that $y(t)$ is a centered Gaussian process. Before deriving its covariance, we apply the linear time transformation $R_Z$ (equal to $R^{-1}$ from equation \ref{eq:SC2}) to equation \eqref{eq:DynamicsLabel1}, to get:
\begin{equation*}
    y(t) + \underbrace{\int_0^t R_Z(t,t') \eta(t') \d t'}_{\displaystyle \equiv \xi(t)} - \int_0^t R_Z(t,t') m(t') \d t' \,  y^* + \frac{2}{\alpha} \int_0^t R_Z(t,t') \big(y(t') - y^* - \sqrt{\Delta} \zeta \big) \d t' = 0. 
\end{equation*}
Now, from the expression of $m$ in equation \eqref{eq:SC2}, and the fact that the covariance of $\eta$ is $K$, written in equation \eqref{eq:SC2}, we have the identities:
\begin{equation*}
\begin{aligned}
    \int_0^t R_Z(t,t') m(t') \d t' &= \frac{1}{Q_*} m_Z(t), \\
    \E \, \xi(t) \xi(t') &= 2 C_Z(t,t') - \frac{2}{Q_*} m_Z(t) m_Z(t'). 
\end{aligned}
\end{equation*}
This transformation does not change the expression of $y(t)$, nor those of $m_y, C_y$. However, we should recompute the response with respect to a perturbation of the new dynamics. We then consider $R_y^\text{old}$ to be the response associated with equation \eqref{eq:DynamicsLabel1} and $R_y^\text{new}$ the response after the transformation by $R_Z$. In addition, since the responses are defined as additive perturbations of the noise (with a minus sign in this case), we simply have:
\begin{equation} \label{eq:ResponsesOldNEw}
    R_y^\text{old}(t,t') = - \frac{\partial \, \E \, y(t)}{\partial \eta(t')}, \hspace{1.5cm} R_y^\text{new}(t,t') = - \frac{\partial \, \E \, y(t)}{\partial \xi(t')},
\end{equation}
and due to the relationship between $\xi(t), \eta(t)$, we get:
\begin{equation} \label{eq:UpdateResponses}
    R_y^\text{old}(t,t') = \int_{t'}^t R_y^\text{new}(t,t'') R_Z(t'',t') \d t''. 
\end{equation}
From now on, we denote $R_y = R_y^\text{new}$. This leads to the equation on $y$:
\begin{equation} \label{eq:EvolutionLabels}
    y(t) + \xi(t) - \frac{1}{Q_*} m_Z(t) y^* + \frac{2}{\alpha} \int_0^t R_Z(t,t') \big( y(t') - y^* - \sqrt{\Delta} \zeta \big) \d t' = 0.
\end{equation}
From this stochastic evolution for $y(t)$, we can obtain equations for the deterministic functions $R_y, m_y$. This is possible since $y$ is explicitly written as a sum of independent Gaussian processes. Starting with the response, we use the previous identity and differentiate the new equation on $y(t)$ with respect to $\xi(t)$. This leads to:
\begin{equation} \label{eq:ResponseLabel}
    \delta(t-t') = R_y(t,t') + \frac{2}{\alpha} \int_{t'}^t R_Z(t,t'') R_y(t'',t') \d t''. 
\end{equation}
This implies that we have the identity, from equation \eqref{eq:EvolutionLabels}:
\begin{equation} \label{eq:EvolutionLabels2}
    y(t) = y^* + \sqrt{\Delta} \zeta + \int_0^t R_y(t,t') \Big( \xi(t') + \big(\chi_Z(t') - 1 \big) y^* - \sqrt{\Delta} \zeta \Big) \d t',
\end{equation}
with $\chi_Z(t) = m_Z(t) / Q_*$. Here we have transformed $\xi$ to $-\xi$, which leaves unchanged the statistics of this process. However, similarly to equation \eqref{eq:ResponsesOldNEw}, $R_y$ is now computed as the derivative of $\E \, y$ with respect to $\xi$. In order to arrive at equation \eqref{eq:DMFT1Y}, the only step remaining will be to reexpress $R_y$. 

Finally, the expression of $y(t)$ in equation \eqref{eq:EvolutionLabels2} leads to the identity on $m_y$:
\begin{equation} \label{eq:OverlapCorry}
    \frac{1}{2Q_*} m_y(t) = 1 - \int_0^t R_y(t,t') \big( 1 - \chi_Z(t') \big) \d t'.
\end{equation}

\subsubsection{Hat Variables} \label{App:Subsubsec:DMFTSimple2Hat}

We now compute the variables $\Gamma, \mathcal{K}_Z, r$, which are the coefficients of the conjugate matrix $\hat Q$. To simplify notation, we consider one-time and two-time functions as vectors and matrices, and replace integration by matrix and vector products. We start from equation \eqref{eq:SC4}, and recall that in these equations the response corresponds to $R_y^\text{old}$, that we should replace using equation~\eqref{eq:UpdateResponses}. Therefore, the system of equations \eqref{eq:SC4} writes, in compact notations:
\begin{align}
    \Gamma &= \alpha (R - RR_y), \notag \\
    \mathcal{K}_Z &= \alpha \left( \frac{1}{2} RC_y R^\top - RR_yR_ZK - KR_Z^\top R_y^\top R^\top - \frac{1}{2} Rm_y m^\top - \frac{1}{2} mm_y^\top R^\top + K + Q_* mm^\top \right), \notag \\
    r &= \alpha \left( \frac{1}{2Q_*} Rm_y - RR_y R_Z m \right). \label{eq:SimplificationsHats}
\end{align}
Now, from equation \eqref{eq:ResponseLabel} on $R_y$, we simply have $\Gamma = 2 R_y$. Now using the expression of $m_y$ in equation \eqref{eq:OverlapCorry}, as well as the expression of $m$ in equation \eqref{eq:SC2}, we easily obtain that $r = \Gamma \1$, that is:
\begin{equation*}
    r(t) = \int_0^t \Gamma(t,t') \d t'. 
\end{equation*}
Remains to compute the covariance $\mathcal{K}_Z$. To do so, let us now write the covariance $C_y$, that we can compute from equation \eqref{eq:EvolutionLabels2}. In this equation, $y$ is written as a sum of the three independent Gaussians $y^*, \zeta, \xi$. Therefore, we have:
\begin{equation*}
\begin{aligned}
    C_y &= 2Q_* \big( \1 + R_y(\chi_Z - \1) \big) \big( \1 + R_y(\chi_Z - \1) \big)^\top + \Delta \big(\1 - R_y \1 \big) \big(\1 - R_y \1 \big)^\top \\
    &\hspace{2cm} + 2 R_y \big( C_Z - Q_*^{-1} m_Z m_Z^\top \big) R_y^\top. 
\end{aligned}
\end{equation*}
Since we have the identities, from equations \eqref{eq:OverlapCorry}, \eqref{eq:ResponseLabel} and the definition of $K$ in equation~\eqref{eq:SC2}:
\begin{equation*}
\begin{aligned}
    \1 + R_y (\chi_Z - \1) &= \frac{1}{2Q_*} m_y, \\
    \1 - R_y \1 &= \frac{2}{\alpha} R_ZR_y \1, \\
    C_Z - Q_*^{-1} m_Zm_Z^\top &= R_Z K R_Z^\top, 
\end{aligned}
\end{equation*}
we get the expression:
\begin{equation*}
    R C_y R^\top = \frac{1}{2Q_*} R m_y m_y^\top R^\top + \frac{4\Delta}{\alpha^2} R_y \1 \1^\top R_y^\top + 2 R R_y R_Z K R_Z^\top R_y^\top R^\top. 
\end{equation*}
Then, using the expression of $m$ in equation \eqref{eq:SC2} and rearranging the expression of $\mathcal{K}_Z$ in equation~\eqref{eq:SimplificationsHats}:
\begin{equation} \label{eq:SimpleKZ1}
\begin{aligned}
    \frac{1}{\alpha} \mathcal{K}_Z &= \big( I - RR_y R_Z \big) K \big( I - R_Z^\top R_y^\top R^\top \big) \\
    &\hspace{2cm}+ \frac{1}{4Q_*} R ( 2m_Z - m_y)(2 m_Z - m_y)^\top R^\top + \frac{2\Delta}{\alpha^2} R_y \1 \1^\top R_y^\top. 
\end{aligned}
\end{equation}
Now, with the fact that $RR_Z = I$, we get by multiplying equation \eqref{eq:ResponseLabel} by $R$ on the left and $R_Z$ on the right:
\begin{equation*}
    I - RR_yR_Z = \frac{2}{\alpha} R_y R_Z. 
\end{equation*}
Therefore, back to the definition of $K$ in equation \eqref{eq:SC2}:
\begin{equation*}
    \big( I - RR_y R_Z \big) K \big( I - R_Z^\top R_y^\top R^\top \big) = \frac{4}{\alpha^2} R_y \big( C_Z - Q_*^{-1} m_Z m_Z^\top) R_y^\top. 
\end{equation*}
This gives the expression of the first term of equation \eqref{eq:SimpleKZ1}. For the second, we use the expression of $m_y$ in equation \eqref{eq:OverlapCorry} to get:
\begin{equation*}
    2 m_Z - m_y = 2 (I - R_y) (m_Z - Q_* \1). 
\end{equation*}
Therefore, since $\alpha(R - RR_y) = 2 R_y$, we get:
\begin{equation*}
    \frac{1}{4Q_*} R ( 2m_Z - m_y)(2 m_Z - m_y)^\top R^\top = \frac{4}{\alpha^2 Q_*} R_y (m_Z - Q_* \1)(m_Z - Q_* \1)^\top R_y^\top. 
\end{equation*}
Putting everything together, we arrive at the expression for $\mathcal{K}_Z$:
\begin{equation*}
    \mathcal{K}_Z = \frac{4}{\alpha} R_y \left( C_Z - \1 m_Z^\top - m_Z \1^\top + Q_* \1 \1^\top + \frac{\Delta}{2} \1 \1^\top \right) R_y^\top. 
\end{equation*}
Due to the expression of $C_Z, m_Z, Q_*$, we get the expression:
\begin{equation*}
    \mathcal{K}_Z(s, t) = \frac{4}{\alpha} \int_0^s \int_0^t R_y(s,s') R_y(t,t') \left( \frac{1}{d} \E \, \tr \Big[ \big( Z(s') - Z^* \big) \big( Z(t') - Z^* \big) \Big] + \frac{\Delta}{2} \right) \d t' \d s'.  
\end{equation*}
Now recall that $\mathcal{K}_Z$ is linked to the covariance of the Gaussian process $\mathcal{H}$ in equation \eqref{eq:CovarianceGaussianProcesses}. Due to the integral structure of $\mathcal{K}_Z$, we can write:
\begin{equation*}
    \mathcal{H}(t) = 2 \int_0^t R_y(t,t') \mathcal{G}(t') \d t', 
\end{equation*}
where $\mathcal{G}$ is also a centered Gaussian process taking values in the space of symmetric matrices, and whose covariance is given by:
\begin{equation*}
    \E \, \G_{ij}(t) \G_{i'j'}(t') = \frac{1}{2\alpha d} \big( \delta_{ii'} \delta_{jj'}+ \delta_{ij'} \delta_{i'j} \big) \left(  \frac{1}{d} \E \, \tr \Big[ \big( Z(t) - Z^* \big) \big( Z(t') - Z^* \big) \Big] + \frac{\Delta}{2} \right). 
\end{equation*}
Now, this is precisely the same covariance as in equation \eqref{eq:Covariance1G}. Putting everything together, equation \eqref{eq:DMFTStudent} can be rewritten as:
\begin{equation} \label{eq:CCLWt}
    \d W(t) = 2\left( \int_0^t R_y(t,t') \Big(\G(t') + Z^* - Z(t') \Big) \d t' \right) W(t) \d t - \nabla \Omega \big( W(t) \big) \d t + \frac{1}{\sqrt{\beta d}} \d B(t).
\end{equation}

\subsubsection{Response Function} \label{App:Subsubsec:DMFTSimple3Response}

The last step of the simplification is to rewrite the responses $R_y, R_Z$. At the moment we have the relationships:
\begin{align}
    R_Z(t,t') &= \frac{1}{d^2} \tr \left( \left. \frac{\partial \, \E \, Z(t)}{\partial H(t')} \right|_{H = 0} \right), \\
    \delta(t-t') &= R_y(t,t') + \frac{2}{\alpha} \int_{t'}^t \d t'' \, R_Z(t,t'') R_y(t'',t'), \label{eq:RelationshipResponses}
\end{align}
and $H(t) \in \mathcal{S}_d(\R)$ perturbs the equation for $W(t)$ as:
\begin{equation*}
    \d W(t) = 2\left( \int_0^t R_y(t,t') \Big(\G(t') + Z^* - Z(t') \Big) \d t' \right) W(t) \d t + H(t) W(t) \d t + \dots,
\end{equation*}
where the dots include the other terms of equation \eqref{eq:CCLWt}. We now consider the following perturbation:
\begin{equation*}
    H(t) = 2 \int_0^t R_y(t,t') \tilde H(t') \d t',
\end{equation*}
so that $\tilde H(t')$ appears as an additive perturbation of the noise $\G(t)$ into the equation for $W$. The response
associated with this perturbation writes:
\begin{equation*}
    \frac{1}{d^2} \tr \left( \frac{\partial Z(t)}{\partial \tilde H(t')} \right) = \frac{2}{d^2} \int_0^t \tr \left( \frac{\partial Z(t)}{\partial H(t'')} \right) R_y(t'',t') \bm{1}_{t'' \geq t'} \d t''. 
\end{equation*}
Therefore, when averaging, we obtain the response associated with $\tilde H$:
\begin{equation*}
\begin{aligned}
    \tilde R_Z(t,t') &\equiv \frac{1}{d^2} \tr \left( \left. \frac{\partial \, \E \, Z(t)}{\partial \tilde H(t')} \right|_{H = 0}  \right) \\
    &= 2 \int_{t'}^t \d t'' \, R_Z(t,t'') R_y(t'',t') \\
    &= \alpha \big( \delta(t-t') - R_y(t,t') \big),
\end{aligned}
\end{equation*}
where we used the relationship between $R_Z, R_y$ in equation \eqref{eq:RelationshipResponses}. This means that we can replace $R_y$ using the previous equation, which decouples the equation for $W(t)$ from the one on $y(t)$. We finally arrive at our system of equations by setting $R(t,t') \equiv R_y(t,t') = \delta(t-t') - \frac{1}{\alpha} \tilde R_Z(t,t')$ into equations \eqref{eq:EvolutionLabels2} and \eqref{eq:CCLWt}. 

\subsection{Learning the Second Layer} \label{App:Subsec:SecondLayer}

In this section, following from \cref{Subsubsec:SecondLayer}, we derive a similar set of equations when also optimizing the output weights of the neural network. 

\subsubsection{Gradient Flow Dynamics}

With the notations of \cref{Sec:Setting}, we are now interested in the predictor:
\begin{equation*}
    X \mapsto \tr \big( X W D_a W^\top \big) = \frac{1}{m} \sum_{i=1}^m a_i \tr \big( X w_i w_i^\top \big), \hspace{1.5cm} W = \frac{1}{\sqrt{m}} \Big( w_1 \, \big| \dots \big| \, w_m \Big),
\end{equation*}
with $a = \big(a_1, \dots, a_m \big)^\top \in \R^m$ and $D_a$ is the diagonal matrix with the same coefficients as $a$. We then consider the loss function:
\begin{equation*}
    \L(a, W) = \frac{1}{2n} \sum_{k=1}^n \ell \big( \tr \big( X_k W D_a W^\top \big), z_k \big),
\end{equation*}
where $z_k$ is still drawn from the distribution $z_k \sim P \big( \cdot \, \big| \, \tr(X_k Z^*) \big)$. For this derivation, the teacher matrix can remain as is, but we can think of it as $Z^* = W^* D_{a^*} W^{*\top}$ for $W^* \in \R^{d \times m^*}$ and $a^* \in \R^{m^*}$. We are then interested in the gradient flow dynamics:
\begin{equation*}
\begin{aligned}
    \dot a(t) &= - \vartheta d \, \nabla_a \L \big( a(t), W(t) \big)& &= - \frac{\vartheta d}{2n} \sum_{k=1}^n \ell' \big( y_k(t), z_k \big) \mathrm{diag} \big( W(t)^\top X_k W(t) \big), \\
    \dot W(t) &= - d \, \nabla_W \L \big( a(t), W(t) \big)& &= - \frac{d}{n} \sum_{k=1}^n \ell' \big( y_k(t), z_k \big) X_k W(t) D_{a(t)},
\end{aligned}
\end{equation*}
where $y_k(t) = \tr \big( X_k W(t) D_{a(t)} W(t)^\top \big)$, $\mathrm{diag}(A)$ is the vector composed of the diagonal elements of $A$, and $\vartheta > 0$ is a parameter that allows for different learning speed for $a(t)$ and $W(t)$. Note that the dynamics considered in \cref{Subsubsec:SecondLayer} includes  regularization and thermal noise terms, but those remain unchanged in the resulting dynamics as it is still formulated in terms of the variables $a(t), W(t)$.    

\subsubsection{Derivation of the Equations}

We shall now derive the set of equations given in \cref{Subsubsec:SecondLayer}. 

\paragraph{The dynamical partition function.} Writing the dynamical partition function similarly to what was done in \cref{App:Subsec:DMFTDerivation}, we get by introducing conjugate variables $\hat W(t), \hat a(t)$:
\begin{align}
    &\mathcal{Z}_\text{dyn} \propto \int \D W \D \hat W \D a \D \hat a \exp \left( -id \int_0^T \Big( \tr \big( \dot W(t) \hat W(t)^\top \big) + \dot a(t)^\top \hat a(t) \Big) \d t \right) \label{eq:PartitionFunctionSecondLayer} \\
    &\hspace{0.3cm}\exp \left( - \frac{id^2}{n} \sum_{k=1}^n \int_0^T \ell' \big( y_k(t), z_k \big) \left[ \tr \big( X_k W(t) D_{a(t)} \hat W(t)^\top \big) + \frac{\vartheta}{2} \tr \big( X_k W(t) D_{\hat a(t)} W(t)^\top \big) \right] \d t\right) \notag.
\end{align}
We then define:
\begin{equation} \label{eq:hatZtwolayers}
    \hat Z(t) = \mathrm{Sym} \left( W(t) D_{a(t)} \hat W(t)^\top + \frac{\vartheta}{2} W(t) D_{\hat a(t)} W(t)^\top \right), \hspace{1.2cm} \hat y_k(t) = \tr \big( X_k \hat Z(t) \big), 
\end{equation}
and find ourselves in the same setup as the is \cref{App:Subsec:DMFTDerivation} when considering this $\hat Z(t)$ and $Z(t) = W(t) D_{a(t)} W(t)^\top$. The only difference is the extra term involving $\dot a(t)$ in the dynamical partition function.

\paragraph{Response structure.} In \cref{App:Subsec:DMFTDerivation}, we studied the structure of the overlap matrix when considering perturbed dynamics solely on $W(t)$ (see equation \ref{eq:PartitionFunctionGeneric}). Here, we study a similar generic dynamics taking the form of a perturbed gradient flow associated with a function of $Z = W D_a W^\top$:
\begin{equation} \label{eq:GenericDynamics2Layers}
\begin{aligned}
    \dot a(t) &= - \frac{\vartheta}{2} \mathrm{diag} \Big( W(t)^\top \Big(\nabla F \big( Z(t) \big) + H(t) \Big) W(t) \Big), \\
    \dot W(t) &= -\Big( \nabla F \big( Z(t) \big) + H(t) \Big) W(t) D_{a(t)},
\end{aligned}
\end{equation}
where $H(t) \in \mathcal{S}_d(\R)$ is to be considered as a perturbation of the dynamics. Then, we can write the dynamical partition function in a similar fashion to equation \eqref{eq:PartitionFunctionGeneric}. Only showing the terms associated with the perturbation, we have:
\begin{equation*}
\begin{aligned}
    \mathcal{Z}_\text{GF} &\propto \int \D W \D \hat W \D a \D \hat a \exp \Bigg( id \int_0^T \bigg[\tr \big( H(t) W(t) D_{a(t)} \hat W(t)^\top \big) \\
    &\hspace{6cm} + \frac{\vartheta}{2} \hat a(t)^\top \mathrm{diag} \big( W(t)^\top H(t) W(t) \big) \bigg] \d t \Bigg) \\
    &= \int \D W \D \hat W \D a \D \hat a \exp \left( id \int_0^T \tr \big(H(t) \hat Z(t) \big) \d t \right),
\end{aligned}
\end{equation*}
where $\hat Z(t)$ is defined in equation \eqref{eq:hatZtwolayers}. Then, as previously done, we can show that differentiating averages of a function of the dynamics \eqref{eq:GenericDynamics2Layers} with respect to $H(t')$ is akin to multiplying by $\hat Z(t')$ and take the average. In particular, we find the same structure of the overlap matrix as in \cref{App:Subsec:DMFTDerivation} and we have the response identity:
\begin{equation*}
    \E \, \tr\big( Z(s) \hat Z(t) \big) = - \frac{i}{d} \tr \left( \left. \frac{\partial \, \E \, Z(s)}{\partial H(t)} \right|_{H = 0} \right),
\end{equation*}
where $Z(t) = W(t) D_{a(t)} W(t)^\top$ and $H(t)$ is introduced as in the dynamics \eqref{eq:GenericDynamics2Layers}. 

\paragraph{End of the calculation.} Once the response is computed, the exact same calculation can be carried out as in \cref{App:Subsec:DMFTDerivation}. In the end, to derive the dynamical equations on $a(t), W(t)$, we obtain a function that is similar to the one in equation \eqref{eq:FunctionsSaddle}:
\begin{equation*}
\begin{aligned}
    \mathcal{F} = \frac{1}{d^2} \log \int \D W \D \hat W \D a \D \hat a \exp \Bigg( - id \int_0^T \bigg[& \tr \big(\dot W(t) \hat W(t)^\top \big) + \dot a(t)^\top \hat a(t) \\
    &+ \int_0^t \Gamma(t,t') \tr \big( Z(t') \hat Z(t) \big) \d t' \\
    & - r(t) \tr \big( Z^* \hat Z(t) \big) - \tr \Big(\mathrm{Sym}\big( V(t) \big) \hat Z(t) \Big) \bigg] \d t \Bigg).
\end{aligned}
\end{equation*}
Expanding $\hat Z(t)$ using equation \eqref{eq:hatZtwolayers} and integrating with respect to $\hat W, \hat a$, we get the dynamical equations:
\begin{equation} \label{eq:Dynamics2Layers}
\begin{aligned}
    \dot W(t) &= \left( \mathcal{H}(t) + r(t) Z^* - \int_0^t \Gamma(t,t') Z(t') \d t' \right) W(t) D_{a(t)}, \\
    \dot a(t) &= \frac{\vartheta}{2} \mathrm{diag} \left( W(t)^\top \left[ \mathcal{H}(t) + r(t) Z^* - \int_0^t \Gamma(t,t') Z(t') \d t' \right] W(t) \right),
\end{aligned}
\end{equation}
where $Z(t) = W(t) D_{a(t)} W(t)^\top$. Then, in order to get the set of self-consistent equations, one can simply consider the ones given in \cref{App:Subsec:SummaryDMFT} while replacing the evolution of $W(t)$ in \eqref{eq:DMFTStudent} by the joint dynamics for $W(t), a(t)$ in equation \eqref{eq:Dynamics2Layers}. As underlined previously, the response function $R_Z$ in equation \eqref{eq:SummaryDMFTResponses} is defined under a perturbation of the noise $\mathcal{H}(t) \mapsto \mathcal{H}(t) + H(t)$, in both the dynamics for $a(t)$ and $W(t)$. In the end, the regularization and thermal noise introduced in \cref{Subsubsec:SecondLayer} can be added back into the dynamics \eqref{eq:Dynamics2Layers} and lead to the dynamical equations~\eqref{eq:DMFT2LayersA}, \eqref{eq:DMFT2LayersW}. 

To conclude, note the similarities of the dynamical structure between equations \eqref{eq:GenericDynamics2Layers} (when $H = 0$) and \eqref{eq:Dynamics2Layers}. In these dynamical equations, the structure of the optimization problem (gradient flow associated with a function of $Z = W D_a W^\top$) remains, and the gradient of the optimized function is replaced by a sum of nonlinear terms involving a high-dimensional Gaussian process, the teacher matrix and a non-local memory contribution. Interestingly, as it is the case in equation \eqref{eq:GenericDynamics2Layers}, these terms only depend on $Z(t)$, and not directly on $W(t), a(t)$. 

\subsection{Gaussian Equivalence} \label{App:Subsec:GaussianEquivalence}

In this section we give some intuition regarding \cref{Conjecture:GaussianUniversality} on the equivalence between the Gaussian matrix sensing model and the shallow quadratic networks setting. The dynamical equations of \cref{App:Subsec:SummaryDMFT} were derived under the first model, with the sensing matrices $X_1, \dots, X_n$ being i.i.d. drawn from the GOE distribution. From now on we consider the second one, i.e., we assume the $(X_k)_{1 \leq k \leq n}$ to be distributed as:
\begin{equation} \label{eq:SensingMatricesQuadratic}
    X_k = \frac{x_kx_k^\top - I_d}{\sqrt{d}}, \hspace{1.5cm} x_1, \dots, x_n \overset{\mathrm{i.i.d.}}{\sim} \N(0, I_d). 
\end{equation}
As a first remark, one can easily see that $\E \, X_k = 0$ and that the covariance of $X_k$ is given by:
\begin{equation*}
    \E \, (X_k)_{ij} (X_k)_{i'j'} = \frac{1}{d} \big( \delta_{ii'} \delta_{jj'} + \delta_{ij'} \delta_{i'j} \big),
\end{equation*}
which is the same as for the GOE. This is the first requirement for equivalence to hold: one needs to match the first and second moments. 

In \cref{App:Subsec:DMFTDerivation}, we exploited the Gaussian structure of the observations to derive self-consistent equations for the process $\mathbf{y}$ (see equation \ref{eq:defcovarianceQ}). For more general distributions of $X_1, \dots, X_n$, this process is \emph{a priori} non-Gaussian. However, when the matrices are drawn as in equation~\eqref{eq:SensingMatricesQuadratic}, we argue that $\mathbf{y}$ becomes Gaussian in the high-dimensional limit. In what follows, without giving a rigorous proof, we analyze the cumulants of this process and explain how we can show that they match the ones of a Gaussian process in the limit. 

\subsubsection{Cumulants of the Quadratic Networks Distribution}

Let us start by defining the notion of cumulants. Consider some real random variables $Y_1, \dots, Y_r$ with finite moments. Their joint cumulant is defined as:
\begin{equation} \label{eq:DefCumulants}
    K_r \big( Y_1, \dots, Y_r \big) = \left. \frac{\partial^r}{\partial s_1 \dots \partial s_r} \log \E \left[ \exp \left( \sum_{i=1}^r s_i Y_i \right) \right] \right|_{s_1, \dots, s_r = 0}. 
\end{equation}
It is easily seen that $K_1(X) = \E[X]$ and $K_2(X,Y) = \mathrm{Cov}(X,Y)$. In addition, we have the following characterization of the multivariate Gaussian distribution in terms of cumulants: a random vector $Y \in \R^p$  is Gaussian if and only if:
\begin{equation*}
    K_r \big( Y_{i_1}, \dots, Y_{i_r} \big) = 0,
\end{equation*}
for any $r > 2$ and indices $i_1, \dots, i_r \in \{1, \dots, p\}$. In addition, this property still holds for Gaussian processes, since such processes are characterized by the Gaussianity of their finite-dimensional marginals. 

In our case, we replace the Gaussian observations by the quadratic Gaussian matrices in equation \eqref{eq:SensingMatricesQuadratic}. As suggested by the following lemma, the structure of the cumulants is then more complex, but still can be understood:
\begin{lemma} \label{Lemma:CumulantsQuadratic}
    Let $x \sim \N(0, I_d)$ and $A_1, \dots A_r \in \mathcal{S}_d(\R)$. Then, for $r \geq 1$:
    \begin{equation*}
        K_r \big( x^\top A_1 x, \dots, x^\top A_r x \big) = \frac{2^{r-1}}{r} \sum_{\sigma \in \mathfrak{S}_r} \tr \big( A_{\sigma(1)} \dots A_{\sigma(r)} \big), 
    \end{equation*}
    where $\mathfrak{S}_r$ denotes the set of permutations of $r$ elements.
\end{lemma}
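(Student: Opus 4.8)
The plan is to compute the cumulant generating function of the vector $\big(x^\top A_1 x, \dots, x^\top A_r x\big)$ in closed form, using the standard Gaussian integral, and then extract the mixed $r$-th order coefficient. First I would introduce the variable $M(s) = \sum_{i=1}^r s_i A_i \in \mathcal{S}_d(\R)$ and recall the classical identity for $x \sim \N(0,I_d)$: for $\|M(s)\|$ small enough that $I_d - 2M(s)$ is positive definite,
\begin{equation}
    \E\Big[ \exp\big( x^\top M(s) x \big) \Big] = \det\big( I_d - 2 M(s) \big)^{-1/2},
\end{equation}
so that the cumulant generating function is
\begin{equation}
    \Psi(s) = \log \E\Big[ \exp\big( x^\top M(s) x \big) \Big] = -\tfrac{1}{2} \log \det\big( I_d - 2 M(s) \big) = -\tfrac{1}{2} \tr \log\big( I_d - 2 M(s) \big).
\end{equation}
Expanding the matrix logarithm as $\tr\log(I_d - 2M) = -\sum_{k\geq 1} \frac{2^k}{k}\tr\big(M^k\big)$, valid for $s$ in a neighborhood of the origin, gives $\Psi(s) = \sum_{k \geq 1} \frac{2^{k-1}}{k}\tr\big(M(s)^k\big)$, an absolutely convergent power series in $s$ near $0$.

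Next I would extract the coefficient of $s_1 s_2 \cdots s_r$ (each exponent exactly one), which by the definition~\eqref{eq:DefCumulants} of the joint cumulant equals $K_r(x^\top A_1 x, \dots, x^\top A_r x)$. Only the $k = r$ term in the series for $\Psi$ can contribute a monomial that is multilinear in all $r$ variables $s_1,\dots,s_r$: any term with $k < r$ is a polynomial of total degree $k < r$ in $s$, and any term with $k > r$ cannot produce the monomial $s_1\cdots s_r$ of degree $r$ after we evaluate all derivatives at $0$ (the extra degree forces at least one $s_i$ with exponent $0$, but also the term has degree $>r$ — more carefully, $\partial^r/\partial s_1\cdots\partial s_r$ of a homogeneous polynomial of degree $k\ne r$ vanishes at $0$). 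Expanding $\tr\big(M(s)^r\big) = \sum_{j_1,\dots,j_r=1}^r s_{j_1}\cdots s_{j_r}\, \tr\big(A_{j_1}\cdots A_{j_r}\big)$ and collecting the multilinear part, i.e.\ those tuples $(j_1,\dots,j_r)$ which are permutations of $(1,\dots,r)$, yields
\begin{equation}
    K_r\big(x^\top A_1 x,\dots,x^\top A_r x\big) = \frac{2^{r-1}}{r} \sum_{\sigma \in \mathfrak{S}_r} \tr\big(A_{\sigma(1)}\cdots A_{\sigma(r)}\big),
\end{equation}
which is the claim. The cases $r=1,2$ can be checked directly to match $\E[x^\top A_1 x] = \tr(A_1)$ and $\mathrm{Cov} = 2\tr(A_1 A_2)$, respectively.

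The only genuine subtlety — not really an obstacle, but the step deserving care — is the interchange of differentiation and the power-series expansion, i.e.\ justifying that $\Psi$ is real-analytic at $s=0$ so that the formal Taylor coefficients are the actual cumulants. This follows because $\det\big(I_d - 2M(s)\big)$ is a polynomial in $s$, hence $\Psi(s) = -\tfrac12 \log \det(I_d - 2M(s))$ is analytic on the open set where this determinant is positive, which contains a neighborhood of the origin; on that neighborhood term-by-term differentiation of the convergent series $\sum_k \frac{2^{k-1}}{k}\tr(M(s)^k)$ is legitimate. With analyticity in hand, matching the $s_1\cdots s_r$ coefficient is purely combinatorial bookkeeping. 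A useful corollary to record for the subsequent Gaussian-equivalence argument is that when $x$ is replaced by the centered quantity entering~\eqref{eq:SensingMatricesQuadratic} one instead works with $x^\top A x - \tr(A)$; the generating function picks up the extra factor $\exp\big(-\sum_i s_i \tr(A_i)\big)$, which only shifts $K_1$ and leaves all $K_r$ for $r\geq 2$ unchanged — the form above is exactly what is needed to compare with the GOE, for which only $K_2$ survives.
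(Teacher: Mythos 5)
Your argument is correct and follows the same route as the paper: expand $\log \E\big[e^{x^\top M_s x}\big] = -\tfrac12 \log\det(I_d - 2M_s)$ as the power series $\sum_{k\geq 1}\tfrac{2^{k-1}}{k}\tr(M_s^k)$, observe that only the $k=r$ term contributes to the mixed derivative $\partial^r/\partial s_1\cdots\partial s_r$ at $s=0$, and collect the permutation sum. Your additional care about analyticity and the remark on the centered variables are sound but not needed beyond what the paper already does.
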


\begin{proof}
Start from the definition of the multidimensional cumulants in equation \eqref{eq:DefCumulants}, and plug in $Y_i = x^\top A_i x$. Then:
\begin{equation*}
    K_r \big( Y_1, \dots, Y_r \big) = \left. \frac{\partial^r}{\partial s_1 \dots \partial s_r} \log \E \Big[ e^{x^\top M_s x} \Big] \right|_{s_1, \dots, s_r = 0}, 
\end{equation*}
with:
\begin{equation*}
    M_s = \sum_{i = 1}^r s_i A_i. 
\end{equation*}
Then, choosing $s_1, \dots, s_r$ close enough to zero, $M_s$ has a small enough spectral radius so that the previous expectation is finite and reads:
\begin{equation*}
     \log \E \Big[ e^{x^\top M_s x} \Big] = - \frac{1}{2} \log \det \big( I_d - 2M_s \big) = \frac{1}{2} \sum_{p=1}^\infty \frac{1}{p} 2^p \, \tr \big( M_s^p \big).
\end{equation*}
Then developing $\tr(M_s^p)$, we obtain:
\begin{equation*}
    \log \E \Big[ e^{x^\top M_s x} \Big] = \sum_{p=1}^\infty \frac{2^{p-1}}{p} \sum_{1 \leq i_1, \dots, i_p \leq r} s_{i_1} \dots s_{i_p} \tr \big( A_{i_1} \dots A_{i_p} \big). 
\end{equation*}
Since we are taking the partial derivative with respect to $s_1, \dots, s_r$ and then setting these variables to zero, all the terms $p \neq r$ vanish. In addition, it is clear that $i_1, \dots, i_r$ has to correspond to a permutation of $\{1, \dots, r\}$, otherwise one of the $s_i$ would not be represented and lead to a zero derivative. Therefore we get the desired.
\end{proof}
As a consequence of this lemma, if we let:
\begin{equation*}
    y_i = \tr \left( A_i \frac{xx^\top - I_d}{\sqrt{d}} \right) = \frac{x^\top A_i x - \tr(A_i)}{\sqrt{d}},
\end{equation*}
due to the homogeneity of the cumulants and the invariance with respect to a constant shift (for $r \geq 2$), we have the expression:
\begin{equation*}
    K_r(y_1, \dots, y_r) = \frac{1}{d^{r/2}} \frac{2^{r-1}}{r} \sum_{\sigma \in \mathfrak{S}_r} \tr \big( A_{\sigma(1)} \dots A_{\sigma(r)} \big). 
\end{equation*}
Then, if all traces of products involving the matrices $A_1, \dots, A_r$ are of order $d$ (as it will be the case for us), we have that in the $d \to \infty$ limit, $K_r(y_1, \dots, y_r) \longrightarrow 0$ as soon as $r > 2$, leading to the asymptotic Gaussianity of the random vector $(y_1, \dots, y_r)$ in the high-dimensional limit. 

\subsubsection{Equivalence for the Dynamical Partition Function}

Let us apply this result in our dynamical setting. Recall the expression of the partition function in \cref{App:Subsec:DMFTDerivation}:
\begin{equation*}
\begin{aligned}
    \overline{\mathcal{Z}}_\mathrm{dyn} &\propto \int \D W \D \hat W \, \exp \left(- id \int_0^T \tr \, \dot W(t) \hat W(t)^\top \d t \right) \left[ \E_{\mathbf{y},z} \exp \left( - \frac{i}{\alpha} \int_0^T \ell' \big(y(t), z \big) \hat y(t) \d t\right) \right]^n, 
\end{aligned}
\end{equation*}
where:
\begin{equation*}
    \mathbf{y}(t) = \Big( \tr(XZ(t)), \tr(X \hat Z(t)), \tr(XZ^*) \Big). 
\end{equation*}
This expression of the partition function holds no matter the distribution of $X$. When $X$ was drawn from the GOE, the next steps were to introduce the covariance function $Q$ of the Gaussian process $\mathbf{y}$, and to write saddle-point equations on this covariance. Similarly here, we could perform the saddle-point with respect to the higher-order cumulants of the process $\mathbf{y}(t)$:
\begin{equation} \label{eq:CumulantDynamics}
    Q_{i_1, \dots, i_r} \big( t_1, \dots, t_r \big) = K_r \big( \mathbf{y}_{i_1}(t_1), \dots, \mathbf{y}_{i_r}(t_r) \big),  
\end{equation}
with $i_1, \dots, i_r \in \{1, 2, 3 \}$ (corresponding to $Z, \hat Z$ or $Z^*$) and $t_1, \dots, t_r \in [0,T]$. As it was already shown for the Gaussian case, in the high-dimensional limit, these cumulants would concentrate around the ones when averaging with respect to the true dynamics, and the order-$r$ cumulant would then be expressed as a finite sum of terms of the form:
\begin{equation} \label{eq:Tracehighorder}
    \frac{1}{d^{r/2}} \E \,  \tr \Big( Z_{i_1}(t_1) \dots Z_{i_r}(t_r) \Big),
\end{equation}
with $Z_1 = Z, Z_2 = \hat Z, Z_3 = Z^*$. We now claim the following: as a consequence of the scaling chosen for the initialization, teacher, and the dynamics, all of these traces (at least in expectation) will remain of order $d$. To be more precise, as we showed in \cref{App:Subsubsec:DMFT3Covariance} when considering a general gradient flow dynamics \eqref{eq:PartitionFunctionGeneric}, the contraction with the matrix $\hat Z$ acts as a derivative when perturbing the dynamics for $Z(t)$. Therefore, any trace as in equation \eqref{eq:Tracehighorder} can in principle be expressed as a derivative of a contraction between the matrix $Z(t)$ (at different instants) and the teacher $Z^*$. Although we do not prove rigorously that these traces all remain of order $d$, we believe that such a property holds. 

In the end, this would allow to treat $\mathbf{y}$ as a Gaussian process, and therefore would lead to the same result derived for the Gaussian case. 

\paragraph{Some remarks.} We give some remarks regarding our previous arguments:
\begin{itemize}
    \item The previous argument, if made rigorous, would only guarantee the convergence in distribution of the finite-dimensional marginals of $\mathbf{y}$. To prove the convergence in distribution (in the space of continuous functions on $[0,T]$) of $\mathbf{y}$ toward a Gaussian process, one would require a tightness argument \citep[see for instance][Section 7]{billingsley2013convergence}. 
    \item Regarding the generality of our result, our calculation heavily rests on the quadratic Gaussian structure of the sensing matrices \eqref{eq:SensingMatricesQuadratic}: it enables us to compute exactly the cumulants. As a consequence, it is not clear whether this cumulant method could be extended to more general distributions. 
\end{itemize}

\section{Derivation of the Long-Time Equations} \label{App:LongTimes}
This section is devoted to the long-time analysis of the system of equations stated in \cref{Result1}. In particular, we derive the system of equations of \cref{Result2} and provide several proofs and insights supporting the results presented in \cref{Subsec:LongTimes}. The plan of the section is as follows:
\begin{itemize}
    \item In \cref{App:Subsec:SteadyState}, we discuss the steady-state assumption formulated in \cref{Assumption3} and relate it to the fast convergence of the dynamics. 
    \item In \cref{App:Subsec:DerivationLongTimes}, we build on this assumption to derive the set of equations at long times given in \cref{Result2}.
    \item In \cref{App:Subsec:ResponseInfiniteDim}, we show that the same set of equations can be obtained by taking the high-dimensional limit before the long-time limit, reinforcing our claim.
    \item In \cref{App:Subsec:ERM}, we show that in the overparameterized case $\kappa \geq 1$, our system of equations exactly matches the one derived by \citet{erba2025nuclear} in the empirical risk minimization setting.
    \item In \cref{App:Subsec:AnalysisEquations}, we analyze the system of equations at long times and derive some additional claims made in \cref{Subsec:LongTimes}.
    \item In \cref{App:Subsec:Population}, we study the population limit and prove the claims made in \cref{Subsubsec:Population} and \cref{Prop:PopulationLimit}. 
\end{itemize}

\subsection{Steady-State Assumption} \label{App:Subsec:SteadyState}

In this section we discuss the steady-state assumption formulated in \cref{Assumption3}. We start by recalling the set of equations from \cref{Result1}. With the choice of the $\ell_2$-regularization, the student matrix $W(t)$ solves the dynamics:
\begin{equation} \label{eq:GFdynamics2}
    \dot W(t) = 2 \int_0^t R(t,t') \Big( \mathcal{G}(t') + Z^* - W(t')W(t')^\top \Big) \d t' \, W(t) - 2 \lambda W(t).
\end{equation}
From the equations in \cref{Result1}, the covariance of the centered Gaussian process $\G(t)$ and the kernel $R(t,t')$ can be computed as averages with respect to equation \eqref{eq:GFdynamics2}:
\begin{align}
    \E \, \G_{ij}(t) \G_{i'j'}(t') &= \frac{1}{2\alpha d} \big( \delta_{ii'} \delta_{jj'} + \delta_{ij'} \delta_{i'j} \big) \left(\frac{1}{d} \E \, \tr \Big( (Z(t) - Z^*) (Z(t') - Z^*) \Big) + \frac{\Delta}{2} \right), \label{eq:LTCovariance} \\
    R(t,t') &= \delta(t-t') - \frac{1}{\alpha d^2} \tr\left( \left. \frac{\partial \, \E \, Z(t)}{\partial H(t')} \right|_{H = 0} \right), \label{eq:LTResponse}
\end{align}
with $Z(t) = W(t) W(t)^\top$, and the response is defined in terms of a perturbation $\G(t) \to \G(t) + H(t)$ in equation \eqref{eq:GFdynamics2}. 

The steady-state assumption bears on the fast convergence of the memory kernel $R(t,t')$ as $t-t' \to \infty$, and the one of the Gaussian process $\G(t)$ toward its final value, that we denote $\G_\infty$. As mentioned in \cref{Subsubsec:SteadyState}, we formulate this assumption in such a way that we can approximate the dynamics \eqref{eq:GFdynamics2} by:
\begin{equation} \label{eq:SimplifiedDynamics2}
    \dot W(t) = 2 r_\infty \Big( \G_\infty + Z^* - W(t)W(t)^\top \Big) W(t) - 2 \lambda W(t),
\end{equation}
where:
\begin{equation*}
    r_\infty = \lim_{t \to \infty} \int_0^t R(t,t') \d t'. 
\end{equation*}
We now explain how the steady-state assumption can be interpreted as one on the fast convergence of the matrix $Z(t) = W(t)W(t)^\top$. 

\paragraph{Gaussian noise.}
We first examine the Gaussian process $\mathcal{G}(t)$. From the expression of its covariance in equation~\eqref{eq:LTCovariance}, we obtain:
\begin{equation*}
    \frac{1}{d} \E \Big[ \big\| \mathcal{G}(t) - \mathcal{G}(t') \big\|_F^2 \Big]
    = \frac{1}{2\alpha d} \left( 1 + \frac{1}{d} \right)
    \E \Big[ \big\| Z(t) - Z(t') \big\|_F^2 \Big].
\end{equation*}
Therefore, the convergence to $Z(t)$ at long times implies the one of $\G(t)$ toward a Gaussian matrix $\G_\infty$. In addition, the convergence rate of $\G(t)$ as $t \to \infty$ is directly given by the one of $Z(t)$. Then, \cref{Assumption3:Noise} on the noise is not an independent dynamical property, but simply reflects the fast convergence of $Z(t)$.

\paragraph{Response.}
We now consider the response function defined in equation~\eqref{eq:LTResponse}. By definition, $R(t,t')$ measures the effect at time $t$ of a perturbation applied at an earlier time $t'$ to the trajectory $Z(t)$. As the gradient flow converges at long times, the influence of the initial stages of the dynamics progressively vanishes. As a consequence, perturbations applied far in the past have little impact on the state of the system at long times, and the response $R(t,t')$ decays as $t - t'$ increases.  Therefore, a faster relaxation of the dynamics leads to a faster decay of $R$.

Once the dynamics is close to convergence, the response is therefore localized near $t' = t$. This motivates approximations in which nonlocal memory terms vanish. In particular, one expects:
\begin{equation*}
    \int_0^t R(t,t') \, \phi(t') \, \d t'
    \underset{t \to \infty}{\approx}
    \left( \int_0^t R(t,t') \, \d t' \right) \phi(t),
\end{equation*}
with an error controlled by the relaxation rate of the dynamics. As for the noise, the decay of the response is not an independent assumption, but follows from the loss of memory along the gradient flow.

\subsection{Derivation of the Long-Time Equations} \label{App:Subsec:DerivationLongTimes}

The steady-state assumption allows to simplify the dynamics at long times. In addition to the dynamics \eqref{eq:SimplifiedDynamics2}, the variables $r_\infty$ and the covariance of $\G_\infty$ are self-consistently computed from $W(t)$ at long times. These self-consistent relations are given in equations \eqref{eq:SCxi}, \eqref{eq:SCr}. 

In this section we start from these equations and derive the set of self-consistent scalar equations of \cref{Result2}. The section is organized as follows:
\begin{itemize}
    \item In \cref{App:Subsubsec:LongTimesLimitDynamics}, we derive the long-time limit of the dynamics given in equation \eqref{eq:ZinfinityReg}. 
    \item In \cref{App:Subsubsec:LongTimesResponse}, we use the simplified dynamics to compute the associated response function, which allows to close the set of equations as $t \to \infty$, and obtain equation \eqref{eq:ResultLongTimesIntegral}.
    \item In \cref{App:Subsubsec:LongTimesMSE}, we compute the high-dimensional expression of the MSE using the expression of $Z_\infty$ as a function of the variables $\xi, q, \omega$. This leads to equation \eqref{eq:ResultLongTimesMSE}.
\end{itemize}

\subsubsection{Limit of the Dynamics} \label{App:Subsubsec:LongTimesLimitDynamics}

Let us now derive the limit of the dynamics under the steady-state assumption. 

\paragraph{Limit of the student matrix.} Given the simplified equation \eqref{eq:SimplifiedDynamics2}, one can interpret the dynamics as an Oja flow (see \cref{Sec:OjaFlow}), whose limit is derived in \cref{prop:OjaConvergence}. More precisely, for almost all initializations, the dynamics~\eqref{eq:SimplifiedDynamics2} converges to a point $W_\infty \in \R^{d \times m}$ such that:
\begin{equation} \label{eq:LTZinf}
    W_\infty W_\infty^\top = \left( Z^* + \G_\infty - \frac{\lambda}{r_\infty} I_d \right)_{(m)}^+. 
\end{equation}
We recall that $X \in \mathcal{S}_d(\R) \mapsto X_{(m)}^+ \in \mathcal{S}_d(\R)$ is the spectral map selecting the $m$ largest positive eigenvalues (see \cref{Def:EckartYoung}). Then, setting $q = \lambda / r_\infty$ and $\G_\infty = \sqrt{\xi} \G$ with $\G \sim \mathrm{GOE}(d)$, we precisely get the limit of the dynamics given in equation \eqref{eq:ZinfinityReg}. 

Now, taking the limit $t,t' \to \infty$ in the expression of the covariance of $\G$ in equation \eqref{eq:LTCovariance} leads to the self-consistent equation between $\xi$ and $Z_\infty$:
\begin{equation} \label{eq:xiMSE1}
    \xi = \frac{1}{2\alpha} \left( \frac{1}{d} \E \, \big\| Z_\infty - Z^* \big\|_F^2 + \frac{\Delta}{2} \right), 
\end{equation}
where $Z_\infty$ is the matrix in equation \eqref{eq:LTZinf}. This leads to the same expression of $\xi$ as in \eqref{eq:SCxi}, and therefore the expression of the MSE in equation \eqref{eq:LossValue} by rearranging. 

\paragraph{Limit of the labels.} Let us now apply the steady-state assumption to derive the limit of the label $y(t)$. To do so, recall the expression of $y(t)$ in equation \eqref{eq:DMFT1Y}. 

Similarly to \cref{Assumption3:Noise}, we assume that the Gaussian process $\xi(t)$ appearing in the dynamics \eqref{eq:DMFT1Y} converges fast enough as $t \to \infty$. Indeed, we have the covariance:
\begin{equation*}
    \E \, \xi(t) \xi(t') = 2 C_Z(t,t') - \frac{2}{Q_*} m_Z(t) m_Z(t'),
\end{equation*}
where $C_Z, m_Z, Q_*$ are defined in equation \eqref{eq:AveragedQuantities}. Therefore, as it was discussed in \cref{App:Subsec:SteadyState} for the case of the Gaussian process $\G(t)$, the fast convergence of the process $\xi(t)$ is a direct consequence of the one of $Z(t)$. 

Then, using the steady-state assumption in equation \eqref{eq:DMFT1Y}, we obtain the following expression of the label as $t \to \infty$:
\begin{equation} \label{eq:LimitLabel}
    y_\infty = \left( 1 - r_\infty + r_\infty\frac{m_Z^\infty}{Q_*} \right) y^* + r_\infty \xi_\infty + \sqrt{\Delta} (1 - r_\infty) \zeta,
\end{equation}
where $y^* \sim \N(0, 2Q_*), \zeta \sim \N(0,1)$ and $\xi_\infty$ are three independent centered Gaussian variables, and $\xi_\infty$ has variance:
\begin{equation*}
    \E \, \xi_\infty^2 = 2 C_Z^\infty - \frac{2}{Q_*} \big( m_Z^\infty \big)^2,
\end{equation*}
and we have:
\begin{equation*}
    C_Z^\infty = \frac{1}{d} \E \, \tr \big( Z_\infty^2 \big), \hspace{1.5cm} m_Z^\infty = \frac{1}{d} \E \, \tr \big( Z_\infty Z^* \big). 
\end{equation*}
This gives access to the expression of the training loss. Indeed, recall that we have:
\begin{equation*}
    \mathrm{Loss}_\mathrm{train} = \frac{1}{4} \E \, (y_\infty - z)^2.
\end{equation*}
In our case, since the noisy label writes $z = y^* + \sqrt{\Delta} \zeta$, we obtain:
\begin{equation*}
    \mathrm{Loss}_\mathrm{train} = \frac{r_\infty^2}{2} \left(  \mathrm{MSE} + \frac{\Delta}{2} \right). 
\end{equation*}
Now using the relationship between the MSE and $\xi$ in equation \eqref{eq:SCxi} and the fact that $r_\infty = \lambda / q$, we get the expression of the training loss \eqref{eq:LossValue}. 

\subsubsection{Response Equation} \label{App:Subsubsec:LongTimesResponse}

The goal of this section is to derive equation \eqref{eq:ResultLongTimesIntegral}. To do so, we use the fact that the variable $r_\infty$ is itself expressed as a function of the dynamics, through the response function:
\begin{equation} \label{eq:rinf1}
    R_Z(t,t') = \frac{1}{d^2} \tr\left( \left. \frac{\partial \, \E \, Z(t)}{\partial H(t')} \right|_{H = 0} \right), \hspace{1.5cm} r_\infty = 1 - \frac{1}{\alpha} \lim_{t \to \infty} \int_0^t R_Z(t,t') \d t'. 
\end{equation}
$H(t)$ is a perturbation of the high-dimensional dynamics \eqref{eq:GFdynamics2}:
\begin{equation} \label{eq:GFdynamicsPerturbed}
    \dot W(t) = 2 \int_0^t R(t,t') \Big( \mathcal{G}(t') + Z^* - Z(t') + H(t') \Big) \d t' \, W(t) - 2 \lambda W(t).
\end{equation}
We start by showing that we can compute the quantity $r_\infty$ only using the long-time limit of the dynamics. This enables to use the result of the previous section. Recall that the response operator at times $t,t'$ quantifies the change of $Z(t)$ in response to a perturbation introduced at $t'$. Therefore, the integrated response operator:
\begin{equation*}
    \int_0^t \left. \frac{\partial \, \E \, Z(t)}{\partial H(t')} \right|_{H = 0} \d t',
\end{equation*}
quantifies the response to a constant perturbation $H$ (present since $t = 0$). Therefore, we have the identity:
\begin{equation} \label{eq:AveragedResponse}
    \lim_{t \to \infty} \int_0^t R_Z(t,t') \d t' = \frac{1}{d^2} \tr \left( \left. \frac{\partial \, \E \, Z_\infty}{\partial H} \right|_{H = 0} \right),
\end{equation}
where $Z_\infty$ is the limit of the perturbed dynamics \eqref{eq:GFdynamicsPerturbed}. Now, similarly to the previous section, with the additional constant matrix $H$, we can deduce the limit from \cref{prop:OjaConvergence}:
\begin{equation} \label{eq:ZinfPerturbed}
    Z_\infty = \left( Z^* + \sqrt{\xi} \G - \frac{\lambda}{r_\infty} I_d + H \right)_{(m)}^+,
\end{equation}
and we only need to compute the derivative of this matrix with respect to $H$. Here, we wrote the limit $\G(t) \xrightarrow[t \to \infty]{} \sqrt{\xi} \G$, with $\G$ being a GOE matrix. 

Now the averaged response in equation \eqref{eq:AveragedResponse} can be computed using \cref{Lemma:DerivativeSpectral}. Indeed, equation \eqref{eq:ZinfPerturbed} shows that we can write $Z_\infty = A_{(m)}^+$ at $H = 0$, with $A$ having simple and non-zero eigenvalues, with probability one with respect to the Gaussian matrix $\G$. Therefore:
\begin{equation*}
    \frac{1}{d^2} \tr \left( \left. \frac{\partial \, \E \, Z_\infty}{\partial H} \right|_{H = 0} \right) = \frac{1}{d^2} \E \sum_{1 \leq i < j \leq d} \frac{\lambda_i \1_{\lambda_i > 0} \1_{i \leq m} - \lambda_j \1_{\lambda_j > 0} \1_{j \leq m}}{\lambda_i - \lambda_j} + \frac{1}{d^2} \E \sum_{i=1}^d \1_{\lambda_i > 0} \1_{i \leq m},
\end{equation*}
where $\lambda_1 > \dots > \lambda_d$ are the ordered eigenvalues of $A$. In the high-dimensional limit, the second term vanishes, and using the expression of $r_\infty$ in equation \eqref{eq:rinf1}, we are left with:
\begin{equation*}
    r_\infty = 1 - \frac{1}{2\alpha} \iint \frac{x \1_{x \geq \max(0, \omega)} - y \1_{y \geq \max(0, \omega)}}{x-y} \d \mu_A(x) \d \mu_A(y),
\end{equation*}
where $\mu_A$ is the limiting spectral density of $A$, and $\omega$ selects a fraction $\kappa$ of this distribution, and solves the equation:
\begin{equation*}
    \min(\kappa, 1) = \int \1_{x \geq \omega} \d \mu_A(x). 
\end{equation*}
Going back to the expression of $Z_\infty$ in equation \eqref{eq:ZinfPerturbed} with $H = 0$, we define $q = \lambda / r_\infty$. We then set $\mu_\xi$ to be the asymptotic spectral distribution of $Z^* + \sqrt{\xi} \G$. Therefore, $\omega$ now selects the $m$ largest eigenvalues of $\mu_\xi$ above $q > 0$. This means that in the integral in the expression of $r_\infty$ we should replace the lower bound of the integral by $\max(\omega, q)$. In addition, since the eigenvalues of $A$ are simply those of $Z^* + \sqrt{\xi} \G$ shifted by $q$, we get the equation:
\begin{align}
    r_\infty &= 1 - \frac{1}{2\alpha} \iint \frac{(x-q) \1_{x \geq \max(q, \omega)} - (y-q) \1_{y \geq \max(q, \omega)}}{x-y} \d \mu_\xi(x) \d \mu_\xi(y) \label{eq:ResponseRinf},\\
    \min(\kappa, 1) &= \int \1_{x \geq \omega} \d \mu_\xi(x).
\end{align}
Then using \cref{Lemma:Hilbert} with $\mu = \mu_\xi$ which admits a bounded square-integrable density as soon as $\xi > 0$, we have the identity by choosing $f(x) = (x-q) \1_{x \geq \max(q, \omega)}$:
\begin{equation*}
    \frac{1}{2} \iint \frac{(x-q) \1_{x \geq \max(q, \omega)} - (y-q) \1_{y \geq \max(q, \omega)}}{x-y} \d \mu_\xi(x) \d \mu_\xi(y) = \int_{\max(q, \omega)} (x-q) h_\xi(x) \d \mu_\xi(x),
\end{equation*}
and we obtain equation \eqref{eq:ResultLongTimesIntegral} by replacing $r_\infty = \lambda / q$ in equation \eqref{eq:ResponseRinf}. 

\subsubsection{MSE in the High-Dimensional Limit} \label{App:Subsubsec:LongTimesMSE}

In the following, we compute the high-dimensional limit of the MSE associated with the predictor in equation \eqref{eq:ZinfinityReg}:
\begin{equation*}
    Z_\infty = \Big( Z^* + \sqrt{\xi} \G - q I_d \Big)_{(m)}^+,
\end{equation*}
where $\G \sim \mathrm{GOE}(d)$.
\begin{proposition} \label{Prop:MSEHD}
    For $\xi > 0$ and $q \in \R$, consider:
    \begin{equation*}
        M_d(\xi, q) = \frac{1}{d} \Big\| \Big(Z^* + \sqrt{\xi} \G - q I_d \Big)_{(m)}^+ - Z^* \Big\|_F^2,  
    \end{equation*}
    where $\G \sim \mathrm{GOE}(d)$ and $m \sim \kappa d$ as $d \to \infty$. If the empirical spectral distribution of $Z^*$ converges to some $\mu^*$ as $d \to \infty$, then:
    \begin{equation*}
        \lim_{d \to \infty} M_d(\xi, q) = \int x^2 \d \mu^*(x) + \int_{\max(q, \omega)} (q^2 - x^2) \d \mu_\xi(x) + 4 \xi \int_{\max(q, \omega)} (x-q) h_\xi(x) \d \mu_\xi(x). 
    \end{equation*}
    where $\mu_\xi$ is the free additive convolution between $\mu^*$ and a semicircular distribution of variance~$\xi$, and $h_\xi$ is the Hilbert transform of $\mu_\xi$ (see \cref{Def:HilbertStieltjes}). Finally, $\omega$ verifies:
    \begin{equation*}
        \min(\kappa, 1) = \int_\omega \d \mu_\xi(x). 
    \end{equation*}
\end{proposition}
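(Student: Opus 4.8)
Write $M = Z^* + \sqrt{\xi}\,\G$ with $\G \sim \mathrm{GOE}(d)$, and expand
\begin{equation*}
    M_d(\xi,q) = \frac1d \tr\big(Z_\infty^2\big) - \frac2d \tr\big(Z_\infty Z^*\big) + \frac1d \tr\big((Z^*)^2\big),
    \qquad Z_\infty = \big(M - qI_d\big)_{(m)}^{+}.
\end{equation*}
The first step is to identify $Z_\infty$ as a spectral function of $M$. Since the eigenvalues of $A := M - qI_d$ are those of $M$ shifted by $-q$, the matrix $A_{(m)}^+$ keeps, among the eigenvectors $v_i$ of $M$ with eigenvalues $\lambda_i(M)$ (decreasing), exactly those with $i \le m$ and $\lambda_i(M) > q$. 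As $m \sim \kappa d$ and the empirical spectral distribution of $M$ converges almost surely to $\mu_\xi = \mu^* \boxplus \sigma_\xi$ (a deterministic matrix plus an independent GOE; see \cref{App:Subsubsec:FreeConvolution} and \citet{biane1997free}), the index cutoff $i \le m$ becomes the spectral cutoff $\lambda_i(M) \ge \omega$ with $\min(\kappa,1) = \int_\omega \d\mu_\xi(x)$, while the positivity cutoff is $\lambda_i(M) \ge q$. Hence $Z_\infty = g(M)$ with $g(x) = (x-q)\,\1_{x \ge \max(q,\omega)}$. Note that for $\xi > 0$ the measure $\mu_\xi$ has a bounded density, so it carries no atom at $\max(q,\omega)$ and the jump of $g$ there is immaterial in the limit.

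The two ``diagonal'' terms are then immediate: $\tfrac1d\tr((Z^*)^2) \to \int x^2 \d\mu^*(x)$ by convergence of the ESD of $Z^*$ (and compactness of its support, which controls the second moment), and $\tfrac1d\tr(g(M)^2) = \tfrac1d\sum_i g(\lambda_i(M))^2 \to \int g(x)^2 \d\mu_\xi(x) = \int_{\max(q,\omega)} (x-q)^2 \d\mu_\xi(x)$, using weak convergence of the ESD of $M$ together with boundedness of $\|M\|_{\mathrm{op}}$ (absence of outliers) to control the edge, $g$ being bounded with support in a fixed interval.

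The only nontrivial term is the cross term $\tfrac2d\tr(g(M)Z^*) = \tfrac2d\sum_i g(\lambda_i(M))\, v_i^\top Z^* v_i$. The key input is a deterministic equivalent for the resolvent of $M$: for $z \in \mathbb{C}^+$,
\begin{equation*}
    \frac1d \tr\Big(Z^*\,(zI_d - M)^{-1}\Big) \xrightarrow[d\to\infty]{} \big(z - \xi\, m_\xi(z)\big)\, m_\xi(z) - 1 ,
\end{equation*}
where $m_\xi$ is the Stieltjes transform of $\mu_\xi$. This follows by writing $Z^* = M - \sqrt\xi\,\G$ and using $\tfrac1d\tr(M(zI-M)^{-1}) = z\,\tfrac1d\tr(zI-M)^{-1} - 1 \to z\, m_\xi(z) - 1$ together with Gaussian integration by parts for the GOE field, which gives $\tfrac1d\tr(\G(zI-M)^{-1}) \to \sqrt\xi\, m_\xi(z)^2$ (from $\partial_{\G_{ab}}(zI-M)^{-1}_{ij} = \sqrt\xi\,(zI-M)^{-1}_{ia}(zI-M)^{-1}_{bj}$ contracted against the GOE covariance), and then the subordination identity $m_\xi(z) = m_*(z - \xi\, m_\xi(z))$ of \cref{Lemma:SubordinationSemicircular}; alternatively one may cite \citet{bun2017cleaning} directly. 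The measure $\tfrac1d\sum_i (v_i^\top Z^* v_i)\,\delta_{\lambda_i(M)}$ has Stieltjes transform $\tfrac1d\tr(Z^*(zI-M)^{-1})$, so by Stieltjes inversion (using $m_\xi(x + i0) = h_\xi(x) - i\pi\rho_\xi(x)$ from \cref{Def:HilbertStieltjes} and taking the imaginary part of $(z - \xi m_\xi)m_\xi - 1$) it converges to $\Xi(x)\,\d\mu_\xi(x)$ with the oracle overlap $\Xi(x) = x - 2\xi\, h_\xi(x)$. Consequently $\tfrac2d\tr(g(M)Z^*) \to 2\int g(x)\,\Xi(x)\,\d\mu_\xi(x) = 2\int_{\max(q,\omega)} (x-q)\big(x - 2\xi\, h_\xi(x)\big)\,\d\mu_\xi(x)$.

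Collecting the three limits and using the pointwise algebraic identity $(x-q)^2 - 2(x-q)(x - 2\xi h_\xi(x)) = (q^2 - x^2) + 4\xi(x-q)h_\xi(x)$ yields exactly the claimed formula for $\lim_d M_d(\xi,q)$, with $\omega$ characterized by $\min(\kappa,1) = \int_\omega \d\mu_\xi(x)$. The main obstacle is rigor in the cross term: upgrading the averaged resolvent statement to one about $\tfrac1d\sum_i g(\lambda_i(M))\,v_i^\top Z^* v_i$ for the \emph{discontinuous} weight $g$ requires either a local/anisotropic law for $M$ controlling the eigenvector overlaps $v_i^\top Z^* v_i$ inside the bulk, or an approximation of $\1_{x \ge \max(q,\omega)}$ by smooth test functions combined with absolute continuity of $\mu_\xi$ at the threshold and edge control; together with the moment-convergence inputs (boundedness of $\|Z^*\|_{\mathrm{op}}$ and $\|M\|_{\mathrm{op}}$), these are standard random-matrix facts that we invoke at the level of rigor of the rest of the paper.
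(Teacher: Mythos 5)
Your proposal is correct and reaches the same formula, but it handles the cross term differently from the paper. The paper first invokes the eigenvector-overlap asymptotic of \citet{bun2017cleaning},
\begin{equation*}
\E\,\big(u_j^\top u_i^\xi\big)^2 \sim \frac{1}{d}\,\frac{\xi}{\big(\lambda_j - \lambda_i^\xi + \xi h_\xi(\lambda_i^\xi)\big)^2 + \pi^2\xi^2\rho_\xi(\lambda_i^\xi)^2},
\end{equation*}
yielding a double integral over $\mu^*$ and $\mu_\xi$, and then collapses it to a single integral by rewriting the inner integral as $I_{\mu^*}(z_\xi(x))$ and applying the subordination identity to show $\xi I_{\mu^*}(z_\xi(x)) = x - 2\xi h_\xi(x)$. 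You instead compute the scalar deterministic equivalent $\tfrac1d\tr\big(Z^*(zI-M)^{-1}\big) \to (z-\xi m_\xi(z))m_\xi(z) - 1$ and read off the ``oracle overlap'' $\Xi(x) = x - 2\xi h_\xi(x)$ by Stieltjes inversion, which integrates out $\mu^*$ at the start and skips the double-integral stage. Both arguments rest on the same random-matrix input (free-addition subordination for a deformed GOE) and meet at the same algebraic identity $(x-q)^2 - 2(x-q)(x-2\xi h_\xi) = (q^2-x^2) + 4\xi(x-q)h_\xi$; your route is somewhat more streamlined, while the paper's makes the microscopic eigenvector structure explicit. The rigor caveat you flag at the end (passing from the averaged resolvent to the discontinuous weight $g$, which needs local-law/smoothing arguments) is the same gap the paper acknowledges when it writes ``to be fully rigorous, one should prove that the variance of this last term vanishes,'' so the two proofs sit at the same level of rigor.
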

\begin{proof}
We start by decomposing $M_d(\xi, q)$ at finite dimension, with $Z_\xi^* = Z^* + \sqrt{\xi} \G$:
\begin{equation*}
    M_d(\xi, q) = \frac{1}{d} \|Z^*\|_F^2  - \frac{2}{d} \tr \Big( Z^* \big(Z_\xi^* - q I_d \big)_{(m)}^+ \Big) + \frac{1}{d} \Big\| \big( Z_\xi^* - q I_d \big)_{(m)}^+ \Big\|_F^2.  
\end{equation*}
Then, by assumption, the first term concentrates in the $d \to \infty$ limit:
\begin{equation*}
    \frac{1}{d} \|Z^*\|_F^2 \xrightarrow[d \to \infty]{} \int x^2 \d \mu^*(x).
\end{equation*}
For the two other terms, we denote $\lambda_1, \dots, \lambda_d$ the eigenvalues of $Z^*$ and $u_1, \dots, u_d$ a family of associated eigenvectors. We do the same for $Z_\xi^*$ and write $\lambda_1^\xi, \dots, \lambda_d^\xi$ and $u_1^\xi, \dots, u_d^\xi$ its eigenvalues and eigenvectors. Now, we have:
\begin{equation*}
\begin{aligned}
    \frac{1}{d} \tr \Big( Z^* \big(Z_\xi^* - q I_d \big)_{(m)}^+ \Big) &= \frac{1}{d} \sum_{i=1}^m \sum_{j=1}^d \lambda_j \big(\lambda_i^\xi - q \big)^+ \big(u_j^\top u_i^\xi \big)^2, \\
    \frac{1}{d} \Big\| \big( Z_\xi^* - q I_d \big)_{(m)}^+ \Big\|_F^2 &= \frac{1}{d} \sum_{i=1}^m \big( \lambda_i^\xi - q \big)^{+2}. 
\end{aligned}
\end{equation*}
We can derive the limit of the second term by remarking that the eigenvalues of $Z_\xi^*$ which are selected are the $m$ largest ones that are larger than $q$. Therefore, the second term concentrates around:
\begin{equation*}
     \frac{1}{d} \Big\| \big( Z_\xi^* - q I_d \big)_{(m)}^+ \Big\|_F^2 \xrightarrow[d \to \infty]{} \int_{\max(q, \omega)} (x-q)^2 \d \mu_\xi(x), \hspace{1.5cm} \min(\kappa, 1) = \int_\omega \d \mu_\xi(x). 
\end{equation*}
For the first term, we use the result of \citet{bun2017cleaning} (see Section 4 and Appendix D), and obtain:
\begin{equation*}
    \E \, \big(u_j^\top u_i^\xi \big)^2 \underset{d \to \infty}{\sim} \frac{1}{d} \frac{\xi}{(\lambda_j - \lambda_i^\xi + \xi h_\xi(\lambda_i^\xi))^2 + \pi^2 \xi^2 \rho_\xi(\lambda_i^\xi)^2},
\end{equation*}
where $h_\xi$ is the Hilbert transform of $\mu_\xi$, and $\rho_\xi$ its density. Therefore:
\begin{equation*}
    \frac{1}{d} \tr \Big( Z^* \big(Z_\xi^* - q I_d \big)_{(m)}^+ \Big) \xrightarrow[d \to \infty]{} \int_{\max(q, \omega)} \int \frac{\xi y(x-q)}{(y-x + \xi h_\xi(x))^2 + \pi^2 \xi^2 \rho_\xi(x)^2} \d \mu^*(y) \d \mu_\xi(x).
\end{equation*}
Note that to be fully rigorous, one should prove that the variance of this last term vanishes. Putting everything together, we get:
\begin{equation} \label{eq:MSEHD1}
\begin{aligned}
    \lim_{d \to \infty} M_d(\xi, q) &= \int x^2 \d \mu^*(x) + \int_{\max(q, \omega)} (x-q)^2 \d \mu_\xi(x) \\
    &- 2\xi \int_{\max(q, \omega)}  \int \frac{y(x-q)}{(y-x + \xi h_\xi(x))^2 + \pi^2 \xi^2 \rho_\xi(x)^2} \d \mu^*(y) \d \mu_\xi(x).
\end{aligned}
\end{equation}
Now that we have obtained an expression in the high-dimensional limit, we proceed to simplify it. Let us rewrite this equation as:
\begin{equation} \label{eq:MSELandscape}
    \lim_{d \to \infty} M_d(\xi, q) = Q_* + \int_{\max(q, \omega)} (x-q)^2 \d \mu_\xi(x) - 2 \xi \int_{\max(q, \omega)} (x-q) I_{\mu^*} \big( z_\xi(x) \big) \d \mu_\xi(x), 
\end{equation}
with:
\begin{equation*}
    I_\nu(z) = \int \frac{y}{|y-z|^2} \d \nu(y), \hspace{1.5cm} z_\xi(x) = x - \xi h_\xi(x) + i \pi \xi \rho_\xi(x).
\end{equation*}
Then, using the identity:
\begin{equation*}
    \frac{y}{|y-z|^2} = \frac{1}{z - \bar{z}} \left( \frac{z}{y-z} - \frac{\bar{z}}{y-\bar{z}} \right),
\end{equation*}
we obtain that:
\begin{equation*}
    I_\nu(z) = -\frac{\mathrm{Im} \, zm_\nu(z)}{\mathrm{Im} \, z}, \hspace{2cm} m_\nu(z) = \int \frac{\d \nu(y)}{z-y}. 
\end{equation*}
$m_\nu$ is known as the Stieltjes transform of $\nu$ (see \cref{Def:HilbertStieltjes}). Now, since $\mu_\xi$ is the free additive convolution between $\mu^*$ and a semicircular density with variance $\xi$, as a consequence of \cref{Lemma:SubordinationSemicircular}, we have for all $z \in \mathbb{C} \setminus \mathrm{Supp}(\mu_\xi)$:
\begin{equation*}
    m_\xi(z) = m_* \big(z - \xi m_\xi(z) \big),
\end{equation*}
where $m_\xi$ and $m_*$ are the Stieltjes transforms of $\mu_\xi$ and $\mu^*$. Taking $z = x + i\eta$ and letting $\eta \to 0^+$, we then obtain, using \cref{Def:HilbertStieltjes}:
\begin{equation*}
    h_\xi(x) - i \pi \rho_\xi(x) = m_* \big( z_\xi(x) \big). 
\end{equation*}
Therefore:
\begin{equation*}
    \xi I_{\mu^*} \big( z_\xi(x) \big) = -\xi \frac{\mathrm{Im} \big[z_\xi(x) m_*(z_\xi(x)) \big]}{\mathrm{Im} \, z_\xi(x)} = x - 2 \xi h_\xi(x). 
\end{equation*}
Plugging this into equation \eqref{eq:MSELandscape} leads to:
\begin{equation*}
    \lim_{d \to \infty} M_d(\xi, q) = Q_* + \int_{\max(q, \omega)} \big( q^2 - x^2 \big) \d \mu_\xi(x) + 4 \xi \int_{\max(q, \omega)} (x-q) h_\xi(x) \d \mu_\xi(x), 
\end{equation*}
which is the desired.
\end{proof}

To conclude, using the expression $Z_\infty = \big( Z^* + \sqrt{\xi} \G - q I_d \big)_{(m)}^+$, and the fact that $M_d(\xi, q)$ is precisely the MSE associated with $Z_\infty$, the relationship between the MSE and $\xi$ in equation \eqref{eq:SCxi} can be rewritten as:
\begin{equation*}
    2 \alpha \xi - \frac{\Delta}{2} = Q_* + \int_{\max(q, \omega)} \big( q^2 - x^2 \big) \d \mu_\xi(x) + 4 \xi \int_{\max(q, \omega)} (x-q) h_\xi(x) \d \mu_\xi(x),
\end{equation*}
which is precisely equation \eqref{eq:ResultLongTimesMSE}. 

\subsection{Equations in the High-Dimensional Limit} \label{App:Subsec:ResponseInfiniteDim}

As mentioned earlier, for the sake of consistency with our earlier results, we derive our set of high-dimensional equations \eqref{eq:SystemResult2} in a slightly different fashion. Indeed, the method proposed in \cref{App:Subsec:DerivationLongTimes} first uses the long-time limit of the Oja flow (and its response), before taking the high-dimensional limit. However, we recall that the results derived in \cref{subsec:DMFT} already rely on a large dimension and are only valid for a fixed time horizon. 

Therefore, in order to strengthen our results, we show that we recover the same equations when first taking the high-dimensional limit (associated with the simplified dynamics) before the limit $t \to \infty$. To do so, we use the results of \cref{Sec:OjaFlow} on the Oja flow dynamics. 

\paragraph{Limit of the dynamics.} In \cref{Subsec:OjaCVInfinitedim}, we derive convergence rates for the Oja flow in the high-dimensional limit. In particular, we show that if $W(t)$ solves the dynamics:
\begin{equation*}
    \dot W(t) = \big( A - W(t)W(t)^\top \big) W(t),
\end{equation*}
then:
\begin{equation*}
    \lim_{t \to \infty} \lim_{d \to \infty} \frac{1}{d} \big\| Z(t) - Z_\infty \big\|_F^2 = 0,
\end{equation*}
with $Z(t) = W(t)W(t)^\top$ and $Z_\infty = A_{(m)}^+$. 
Crucially in the previous equation, the high-dimensional limit is taken before the long-time limit. Applying this result to the target matrix $A = Z^* + \sqrt{\xi} \G - q I_d$, we end up with the same limit as the one found in \cref{App:Subsubsec:LongTimesLimitDynamics}. Using again the result of \cref{App:Subsubsec:LongTimesMSE} as well as the relationship \eqref{eq:SCxi}, this precisely leads to equation \eqref{eq:ResultLongTimesMSE}. 

\paragraph{Response equation.} In order to derive the response equation, recall the expression of $R_Z$ and its link to $r_\infty$ in equation \eqref{eq:rinf1}. Using \cref{Prop:DiagonalResponse}, we have the high-dimensional limit:
\begin{equation} \label{eq:IntegratedResponse1}
    \int_0^t R_Z(t,t') \d t' \xrightarrow[d \to \infty]{} \frac{\mathfrak{g}(t)}{2} \iint \frac{1}{y-x} \left[ \frac{y e^{2yt}}{q_t(y)} - \frac{x e^{2xt}}{q_t(x)} \right] \d \mu_A(x) \d \mu_A(y),
\end{equation} 
where $q_t(x) = \mathfrak{g}(t)(e^{2xt} - 1) + x$ and $\mathfrak{g}(t)$ solves the self-consistent equation:
\begin{equation*}
    \kappa \mathfrak{g}(t) + 1 - \kappa = \int \frac{x}{(e^{2xt}-1) \mathfrak{g}(t) + x} \d \mu_A(x). 
\end{equation*}
We have proven this proposition in the case where the initialization of the flow is a Gaussian matrix, but we believe that when taking the $t \to \infty$ limit, the result becomes independent of the initialization (as it is the case for our derivation in \cref{App:Subsubsec:LongTimesResponse}).

Now, in order to take the long-time limit in equation \eqref{eq:IntegratedResponse1}, one can simply use \cref{Lemma:LongtimesStieltjes} that derives the long-time asymptotics of the function $\mathfrak{g}(t)$. As a consequence, we have the asymptotic:
\begin{equation*}
    \mathfrak{g}(t) \frac{x e^{2xt}}{q_t(x)} \xrightarrow[t \to \infty]{} x \1_{x \geq \max(0, \omega)}, \hspace{1.5cm} \kappa = \int \1_{x \geq \omega} \d \mu_A(x). 
\end{equation*}
Now plugging the expression of $A = Z^* + \sqrt{\xi} \G - q I_d$, we arrive at the identity:
\begin{equation*}
    \lim_{t \to \infty} \lim_{d \to \infty} \int_0^t R_Z(t,t') \d t' = \frac{1}{2} \iint \frac{(x-q) \1_{x \geq \max(q, \omega)} - (y-q) \1_{y \geq \max(q, \omega)}}{x-y} \d \mu_\xi(x) \d \mu_\xi(y). 
\end{equation*}
Using \cref{Lemma:Hilbert}, we end up with equation \eqref{eq:ResultLongTimesIntegral}. 

\paragraph{Conclusion.} In conclusion, the equations we obtain are unchanged when first taking the high-dimensional limit. This suggests a robustness of the Oja flow dynamics and reveals that the only relevant timescale is $t = O_d(1)$. 

Indeed, first taking the limit $t \to \infty$ allows the dynamics to explore all timescales, including those that may depend on the dimension. On the other hand, taking the limit $d \to \infty$ restricts the analysis to a timescale of order one. The fact that both limits lead to the same system of equations indicates that no additional dynamical behavior emerges beyond the order-one timescale. 

\subsection{Link with Empirical Risk Minimization} \label{App:Subsec:ERM}

In this part we link the system of equations in \cref{Result2} to the recent results of \citet{erba2025nuclear}. In this work, the authors derive the statistics of the global minimizer of the same regularized loss as ours, in the case $\kappa \geq 1$. 

In the following, we show that our system of equations is the same as theirs after matching our conventions. This result is no surprise for the following reason: it is known that when optimizing a function of $WW^\top$ (as in our case) in the setting where $m \geq d$, the gradient flow always converges to a global minimizer of the loss over the PSD matrices (see for instance \citet{bach2024learning}, Section 12.3.3).

\subsubsection{Correspondence of the Equations} \label{App:Subsubsec:MatchingERM}

The first step when comparing both setups is to match the constants used. By matching the expressions of our respective loss functions, we arrive at the expression of their regularization parameter:
\begin{equation*}
    \lambda_{\text{ERM}} = \frac{4\alpha}{\sqrt{\kappa}} \lambda. 
\end{equation*}
In addition, their results involve the free additive convolution between the teacher spectral distribution and a semicircular density with radius $2\delta$, i.e., with variance $\delta^2$. Then, with our notations, their set of self-consistent equations (Theorem 1) is given by:
\begin{align}
    4 \alpha \delta - \frac{\delta}{\epsilon} &= 2 \delta \, \partial_1 J(\delta^2, 4 \alpha \lambda \epsilon), \label{eq:EqsVittorio1} \\
    Q_* + \frac{\Delta}{2} + 2 \alpha \delta^2 - \frac{\delta^2}{\epsilon} &= \big( 1 - 4 \alpha \lambda \epsilon \partial_2 \big) J(\delta^2, 4 \alpha \lambda \epsilon), \label{eq:EqsVittorio2}
\end{align}
where the unknowns are $\delta, \epsilon$, and:
\begin{equation*}
    J(a, b) = \int_b (x-b)^2 \d \mu_a(x) ,
\end{equation*}
and $\mu_a$ corresponds to the asymptotic spectral density of the matrix $Z^* + \sqrt{a} \G$ with $\G \sim \mathrm{GOE}(d)$. We refer to \cref{App:Subsubsec:FreeConvolution} for more details.

\paragraph{Derivatives of $J$.} We shall now compute the partial derivatives of the function $J$. Let us start with the derivative with respect to $b$. Since the map $b \mapsto (x-b)^2 \1_{x \geq b}$ is $\mathcal{C}^1$, we have, interchanging integration and differentiation:
\begin{equation} \label{eq:DerivativebJ}
    \partial_2 J(a,b) = -2 \int_b (x-b) \d \mu_a(x). 
\end{equation}
Regarding the derivative with respect to $a$, we start from the complex Burgers' equation satisfied by $m_a$, the Stieltjes transform of $\mu_a$ (see \cref{lemma:BurgerStieltjes}):
\begin{equation*}
    \partial_a m_a(z) + m_a(z) \partial_z m_a(z) = 0,
\end{equation*}
Evaluating at $z = x + i \eta$ and taking imaginary parts while $\eta \to 0$, one gets the continuity equation using \cref{Def:HilbertStieltjes}:
\begin{equation*}
    \partial_a \rho_a(x) + \partial_x \big( h_a(x) \rho_a(x) \big) = 0.
\end{equation*}
Note that this equation is only verified in the sense of distributions. In the following, we use this equation non-rigorously and differentiate under the integral:
\begin{equation*}
\begin{aligned}
    \partial_1 J(a,b) &= \int_b (x-b)^2 \partial_a \rho_a(x) \d x \\
    &= - \int_b (x-b)^2 \partial_x \big( h_a(x) \rho_a(x) \big) \d x. 
\end{aligned}
\end{equation*}
Then integrating by parts, we finally get:
\begin{equation} \label{eq:DerivativeaJ}
    \partial_1 J(a,b) = 2 \int_b (x-b) h_a(x) \d \mu_a(x). 
\end{equation}

\paragraph{Equivalence of the systems of equations.} We now go back to the system of equations~\eqref{eq:EqsVittorio1} and \eqref{eq:EqsVittorio2}. We set $\xi = \delta^2$ and $q = 4 \alpha \lambda \epsilon$, and using the derivatives in equations~\eqref{eq:DerivativebJ} and~\eqref{eq:DerivativeaJ}, we obtain that $q, \xi$ solve the equations:
\begin{align}
    1 - \frac{\lambda}{q} &= \frac{1}{\alpha}  \int_q (x-q) h_\xi(x) \d \mu_\xi(x), \label{eq:Systemvittorio1}\\
    Q_* + \frac{\Delta}{2} + 2 \alpha \xi - \frac{4 \alpha \lambda \xi}{q} &= \int_q \big( x^2 - q^2 \big) \d \mu_\xi(x). \label{eq:Systemvittorio2}
\end{align}
Already the first equation is the same as \eqref{eq:ResultLongTimesIntegral} in \cref{Result2}, in the case where $\kappa \geq 1$. In addition, equation \eqref{eq:ResultLongTimesMSE} is directly obtained by replacing $\lambda / q$ in equation~\eqref{eq:Systemvittorio2} using equation \eqref{eq:Systemvittorio1}.

Let us now show that this also leads to the same expression of the MSE and loss. In our case, recall the expressions:
\begin{equation*}
    \mathrm{MSE} = \frac{1}{d} \big\| WW^\top - Z^* \big\|_F^2, \hspace{1.5cm} \mathrm{Loss}_\mathrm{train} = \frac{1}{4n} \sum_{k=1}^n \big( \tr(X_kZ) - z_k \big)^2. 
\end{equation*}
Now, \citet{erba2025nuclear} studied the test error and the loss value, that we respectively denote $e_\text{test}, L$. Taking into account their conventions, we reach the relationships with ours quantities:
\begin{equation} \label{eq:CorrespondenceMSELoss}
    e_\text{test} = \mathrm{MSE}, \hspace{1.5cm} L = 4 \alpha \, \mathrm{Loss}_\mathrm{train} + 4 \alpha \lambda \lim_{d \to \infty} \frac{1}{d} \|W_\infty\|_F^2. 
\end{equation}
Back to the correspondence between our variables $\xi, q$ and their variables $\delta, \epsilon$, their expression of $e_\text{test}$ directly leads to the MSE equation \eqref{eq:LossValue}. Regarding their loss, we use the expression of the derivative of $J$ in equation \eqref{eq:DerivativebJ} and reach the expression of their loss:
\begin{equation*}
    L = \frac{4 \alpha^2 \xi \lambda^2}{q^2} + 4 \alpha \lambda \int_q (x-q) \d \mu_\xi(x). 
\end{equation*}
Now, since we have:
\begin{equation*}
    \int_q (x-q) \d \mu_\xi(x) = \lim_{d \to \infty} \frac{1}{d} \|W_\infty\|_F^2, 
\end{equation*}
we indeed recover the expression of our loss in equation \eqref{eq:LossValue} using equation~\eqref{eq:CorrespondenceMSELoss}.

\subsubsection{Stability Condition}

In the same work, the authors derive a stability condition for the previous set of equations. This criterion is derived from an approximate message passing (AMP) iteration and reads, with our notations:
\begin{equation} \label{eq:StabilityERM}
    \iint \left( \frac{(x-q)^+ - (y-q)^+}{x-y} \d \mu_\xi(x) \d \mu_\xi(y) \right)^2 < 2 \alpha. 
\end{equation}
Interestingly, this criterion matches the one we later derive in \cref{App:Subsubsec:SusceptibilityNorm}, up to the fact that we only require the above quantity to be finite. We now show, using our system of equations, that the above criterion is verified. Recall that when $\kappa \geq 1$, as a consequence of \cref{Lemma:Hilbert}, $q, \xi$ are linked through the equation:
\begin{equation*}
    1  - \frac{\lambda}{q} = \frac{1}{2\alpha} \iint \frac{(x-q)^+ - (y-q)^+}{x-y} \d \mu_\xi(x) \d \mu_\xi(y).
\end{equation*}
Now, since the map $x \mapsto (x-q)^+$ is continuous and 1-Lipschitz, we have the bound, for all $x \neq y$:
\begin{equation*}
    \left( \frac{(x-q)^+ - (y-q)^+}{x-y} \right)^2 \leq \frac{(x-q)^+ - (y-q)^+}{x-y}. 
\end{equation*}
Putting everything together, and since $\lambda > 0$, the criterion \eqref{eq:StabilityERM} is satisfied. Through the work of \citet{erba2025nuclear} on the AMP iteration associated with the same problem as ours, the direct verification of the stability criterion allows to reinforce the validity of the simplifications regarding the dynamics introduced in \cref{Subsec:LongTimes} (at least in the case $\kappa \geq 1$). 

\subsection{Analysis of the Long-Time Equations} \label{App:Subsec:AnalysisEquations}

In this part we analyze the system of equations given in \cref{Result2}. In the following, we give the results:
\begin{itemize}
    \item We derive the existence of the two regions with respect to the variable $\kappa$ claimed in \cref{Subsubsec:Regions}. 
    \item We confirm that in the overparameterized region $\kappa \geq \kappa_{\min}$, the minimum reached by gradient flow is a global minimizer of the loss. 
\end{itemize}

\subsubsection{Overparameterized Region} \label{App:Subsubsec:Regions}

We now prove the claim made in \cref{Subsubsec:Regions} regarding the two regimes depending on the value of $\kappa$. Indeed, as we claimed, there exists a value $\kappa_{\min}$, depending on $\alpha, \lambda, \kappa^*, \Delta$, such that the set of equations is independent of $\kappa$ for $\kappa > \kappa_{\min}$. We recall the system of equations~\eqref{eq:SystemResult2}:
\begin{subequations} \label{eq:SystemResult2Proof}
\begin{align}
    \min(\kappa, 1) &= \int_\omega \d \mu_\xi(x), \label{eq:SystemProof0}\\
    1 &= \frac{\lambda}{q} + \frac{1}{\alpha} \int_{\max(q, \omega)} (x-q) h_\xi(x) \d \mu_\xi(x), \label{eq:SystemProof1}\\
    2\alpha \xi - \frac{\Delta}{2} &= Q_* + \int_{\max(q,\omega)} (q^2 - x^2) \d \mu_\xi(x) + 4 \xi \int_{\max(q,\omega)} (x-q) h_\xi(x) \d \mu_\xi(x). \label{eq:SystemProof2} 
\end{align}    
\end{subequations}
Remark that these equations only depend on $\kappa$ through the variable $\omega$ that selects a mass $\kappa$ of the measure $\mu_\xi$. Also remark that for a triplet $(q, \xi, \omega)$ solution of the system, having $q \geq \omega$ makes the last two equations independent of $\omega$, and therefore $\kappa$. In this case only the pair $(q, \xi)$ matters to solve the system of equations.
\begin{lemma}
    For a fixed set of parameters $\alpha, \lambda, \kappa^* > 0$ and $\Delta \geq 0$, consider a pair $(q^*, \xi^*)$ solution of the system of equations \eqref{eq:SystemResult2Proof} for $\kappa = 1$ (in this case one can choose $\omega = -\infty$). Let:
    \begin{equation*}
        \kappa_{\min} = \int_{q^*} \d \mu_{\xi^*}(x), 
    \end{equation*}
    then, for $\kappa \geq \kappa_{\min}$, one can choose $\omega \leq q^*$ so that the triplet $(q^*, \xi^*, \omega)$ is solution of the system of equations \eqref{eq:SystemResult2Proof}. 
\end{lemma}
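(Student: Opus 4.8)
The plan is to exploit the structural observation that equations \eqref{eq:SystemProof1} and \eqref{eq:SystemProof2} depend on $\omega$ only through $\max(q,\omega)$, so that they become independent of $\omega$ --- and hence of $\kappa$ --- as soon as $\omega \le q$. Concretely, at $\kappa = 1$ equation \eqref{eq:SystemProof0} reads $1 = \int_\omega \d\mu_\xi(x)$, which forces the threshold below the support of $\mu_\xi$ (one takes $\omega = -\infty$, as noted in the statement), so that $\max(q^*,\omega) = q^*$. Therefore a pair $(q^*,\xi^*)$ solving the system at $\kappa = 1$ is exactly a pair satisfying \eqref{eq:SystemProof1} and \eqref{eq:SystemProof2} with integration domain $\{x \ge q^*\}$. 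Note that $\xi^* > 0$: since $\lambda > 0$, the MSE $= 2\alpha\xi^* - \Delta/2$ of equation \eqref{eq:LossValue} is strictly positive, and with $\Delta \ge 0$ this gives $\xi^* > 0$; consequently $\mu_{\xi^*}$ is the free additive convolution of $\mu^*$ with a semicircular law of positive variance and is absolutely continuous (\cref{App:Subsubsec:FreeConvolution}), in particular atomless.

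Next I would choose the threshold for general $\kappa$. Set $G(\omega) = \int_\omega \d\mu_{\xi^*}(x) = \mu_{\xi^*}\big([\omega,\infty)\big)$. Since $\mu_{\xi^*}$ is a probability measure with a density, $G$ is continuous and non-increasing, with $G(\omega) \to 1$ as $\omega \to -\infty$ and $G(q^*) = \kappa_{\min}$ by definition of $\kappa_{\min}$; in particular $\kappa_{\min} \le 1$. Fixing $\kappa \ge \kappa_{\min}$, we have $\min(\kappa,1) \in [\kappa_{\min},1]$, so the intermediate value theorem yields $\omega$ with $G(\omega) = \min(\kappa,1)$; and one may take $\omega \le q^*$, since $G$ is non-increasing with $G(q^*) = \kappa_{\min} \le \min(\kappa,1)$ (with the convention $\omega = -\infty$ when $\min(\kappa,1) = 1$).

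Finally I would conclude: equation \eqref{eq:SystemProof0} holds by the choice of $\omega$; and since $\omega \le q^*$ one has $\max(q^*,\omega) = q^*$, so equations \eqref{eq:SystemProof1} and \eqref{eq:SystemProof2} for the triplet $(q^*,\xi^*,\omega)$ coincide with those already satisfied by $(q^*,\xi^*)$ at $\kappa = 1$. Hence $(q^*,\xi^*,\omega)$ solves \eqref{eq:SystemResult2Proof}, which is the claim. The only point needing care is the intermediate value argument: it relies on $\mu_{\xi^*}$ being atomless so that $G$ is continuous and the prescribed mass $\min(\kappa,1)$ is attained exactly; this is precisely what $\xi^* > 0$ buys us, and it is the main (mild) obstacle, the remainder being bookkeeping around the disappearance of the $\omega$-dependence on $\{\omega \le q\}$.
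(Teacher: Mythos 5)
Your proposal is correct and follows essentially the same route as the paper: choose $\omega$ solving $\min(\kappa,1)=\int_\omega \d\mu_{\xi^*}(x)$, note $\omega \le q^*$ follows from $\kappa_{\min} \le \min(\kappa,1)$ and monotonicity of the tail mass, and conclude from the fact that \eqref{eq:SystemProof1} and \eqref{eq:SystemProof2} only see $\omega$ through $\max(q,\omega)=q^*$. The only added value in your write-up is that you spell out the existence of $\omega$ via continuity of the tail function (hence the remarks on $\xi^*>0$ and absolute continuity of $\mu_{\xi^*}$), which the paper leaves implicit.
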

This lemma shows that for $\kappa$ larger than $\kappa_{\min}$ we can pick the same solution $q^*, \xi^*$ as when solving with $\kappa = 1$. Provided that for given values of our parameters $\alpha, \lambda, \kappa, \kappa^*, \Delta$, there is a unique solution in terms of the variables $(q, \xi)$, this guarantees that for $\kappa \geq \kappa_{\min}$, the solution of the system \eqref{eq:SystemResult2} does not depend on $\kappa$. 
\begin{proof}
    The key point is that the system of equations~\eqref{eq:SystemResult2Proof} only depends on $\kappa$ through the variable $\omega$. Let us consider these equations for some $\kappa \geq \kappa_{\min}$. We consider $\omega$ to be solution of:
    \begin{equation*}
        \min(\kappa, 1) = \int_{\omega} \d \mu_{\xi^*}(x).
    \end{equation*}
    Remark that we used $\xi^*$ to define $\omega$. Since $\min(\kappa, 1) \geq \kappa_{\min}$ and due to the definition of $\kappa_{\min}$ we immediately get that $\omega \leq q^*$. We now plug the triplet $(q^*, \xi^*, \omega)$ into the system of equations \eqref{eq:SystemResult2Proof}, and remark that:
    \begin{itemize}
        \item Equation \eqref{eq:SystemProof0} is verified due to the definition of $\omega$. 
        \item Since $\omega \leq q^*$, equations \eqref{eq:SystemProof1}, \eqref{eq:SystemProof2} are exactly the same as in the $\kappa = 1$ case. Therefore they are solved by $(\xi^*, q^*)$. 
    \end{itemize}
    As a conclusion, as soon as $\kappa \geq \kappa_{\min}$, we can find some value of $\omega$ such that $(q^*, \xi^*, \omega)$ indeed solves the system \eqref{eq:SystemResult2Proof}.     
\end{proof}

\subsubsection{Global Minimizer in the Overparameterized Region}

We now briefly explain why in the region $\kappa \geq \kappa_{\min}$, the gradient flow estimator corresponds to a global minimizer of the loss over all PSD matrices. The first evidence is the calculation of \cref{App:Subsubsec:MatchingERM} where it was proved earlier that the system of equations in this region matches the one of \citet{erba2025nuclear}, who worked in the empirical risk minimization setting.

The deeper reason behind this correspondence can be clarified by the following fact, which we do not prove in detail:
\begin{proposition}
    Let $L \colon \R^{d \times m} \to \R$ be such that $L(W) = G(WW^\top)$ for $G \colon \mathcal{S}_d(\R) \to \R$ convex and real analytic. Let $\big( W(t) \big)_{t \geq 0}$ to be solution of the gradient flow:
    \begin{equation*}
        \dot W(t) = - \nabla L \big( W(t) \big).
    \end{equation*}
    Then, if $m \geq d$, for almost all initializations, $W(t)$ converges to a point $W_\infty$ such that $W_\infty W_\infty^\top$ is a global minimizer of $G$ over $\mathcal{S}_d^+(\R)$. 
\end{proposition}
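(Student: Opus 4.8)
The plan is to split the statement into two parts: (i) for almost every initialization the gradient flow converges to a \emph{second-order stationary point} of $L$ (a critical point at which $\nabla^2 L \succeq 0$), and (ii) at any such point $W$ the matrix $Z = WW^\top$ is a global minimizer of $G$ over $\mathcal{S}_d^+(\R)$. I would work under the mild additional hypothesis that $G$ is coercive on $\mathcal{S}_d^+(\R)$, which holds for the regularized losses of interest; since $\|W\|_F^2 = \tr(WW^\top)$, this makes $L$ coercive on $\R^{d\times m}$, so $G$ attains its minimum over $\mathcal{S}_d^+(\R)$, the trajectory $W(t)$ stays in the compact sublevel set $\{L \le L(W_0)\}$, and, $L$ being real analytic, the \L ojasiewicz gradient inequality forces $W(t)$ to converge to a single critical point $W_\infty$.

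For part (i), once convergence to a critical point is established, I would invoke the same strict-saddle-avoidance mechanism already used in the proof of \cref{prop:OjaConvergence}: for a real-analytic loss, the set of initializations whose forward orbit converges to a critical point where $\nabla^2 L$ has a negative eigenvalue is Lebesgue-negligible. This follows from the center-stable manifold theorem applied locally near such points, combined with a covering argument over the critical set, which is a real-analytic variety and hence admits a locally finite stratification. Since the initialization law is absolutely continuous, almost every initialization converges to some $W_\infty$ with $\nabla^2 L(W_\infty) \succeq 0$.

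For part (ii), let $W$ be a second-order stationary point and $Z = WW^\top$. Differentiating $L(W) = G(WW^\top)$ gives $\nabla L(W) = 2\,\nabla G(Z)\,W$ (using $\nabla G(Z) \in \mathcal{S}_d(\R)$), so $\nabla L(W) = 0$ gives $\nabla G(Z)\,W = 0$ and hence $\nabla G(Z)\,Z = 0$. Expanding $L(W+tH) = G\big(Z + t(WH^\top + HW^\top) + t^2 HH^\top\big)$ to order $t^2$ yields $\tfrac12\,\nabla^2 L(W)[H,H] = \langle \nabla G(Z), HH^\top \rangle + \tfrac12\,\d^2 G_Z(\Delta,\Delta)$ with $\Delta = WH^\top + HW^\top$. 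If $\mathrm{rank}(W) < m$ (in particular whenever $m > d$), choose $u \in \ker W \setminus \{0\}$; if $\nabla G(Z)$ had a negative eigenvalue with unit eigenvector $v$, then $H = vu^\top$ gives $\Delta = v(Wu)^\top + (Wu)v^\top = 0$ and $HH^\top = \|u\|^2\,vv^\top$, so $\tfrac12\,\nabla^2 L(W)[H,H] = \|u\|^2\,v^\top \nabla G(Z)\,v < 0$, contradicting second-order stationarity; thus $\nabla G(Z) \succeq 0$. If instead $\mathrm{rank}(W) = m$, then $m \le d$ together with $m \ge d$ forces $W$ to be an invertible $d\times d$ matrix, and $\nabla G(Z)\,W = 0$ gives $\nabla G(Z) = 0 \succeq 0$. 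In all cases $\nabla G(Z) \succeq 0$ and $\langle \nabla G(Z), Z \rangle = 0$, so for every $Y \in \mathcal{S}_d^+(\R)$ one has $\langle \nabla G(Z), Y - Z \rangle = \langle \nabla G(Z), Y \rangle \ge 0$; convexity of $G$ then gives $G(Y) \ge G(Z) + \langle \nabla G(Z), Y - Z \rangle \ge G(Z)$, i.e.\ $Z$ is a global minimizer of $G$ over $\mathcal{S}_d^+(\R)$. Combining (i) and (ii) proves the proposition.

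The main obstacle is a fully rigorous treatment of part (i): the critical set of $L$ is typically a positive-dimensional analytic variety rather than a discrete set, so one must ensure that the union, over all strict-saddle critical points, of the corresponding local stable manifolds is still a null set. This requires a stratification argument together with a local application of the center-stable manifold theorem on each stratum and second countability; it is the same ingredient underlying \cref{prop:OjaConvergence}, and spelling it out in full --- in the spirit of the rigorous dynamical results cited in \cref{Sec:Conclusion} --- would be needed for a self-contained proof. Part (ii), by contrast, is elementary and classical (it is the ``no spurious local minima'' property of full-width Burer--Monteiro factorizations of convex objectives).
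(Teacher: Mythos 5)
Your proposal follows essentially the same two-step decomposition as the paper's sketch: (i) almost-sure convergence of the gradient flow to a second-order stationary point, via analyticity, \L ojasiewicz, and center-stable-manifold saddle avoidance; (ii) at any second-order stationary point of $L(W)=G(WW^\top)$ with $m\ge d$, the first-order condition $\nabla G(Z)W=0$ plus the Hessian inequality along directions $H=vu^\top$ with $u\in\ker W$ give $\nabla G(Z)\succeq 0$ and $\langle\nabla G(Z),Z\rangle=0$, so convexity yields global optimality over $\mathcal{S}_d^+(\R)$. The paper presents both steps only as bullet citations (\citet{lee2016gradient,panageas2016gradient} for the first, \citet{bach2024learning} Ex.~12.8 for the second) and explicitly declines to prove the result in detail, so your write-up is a faithful and more careful expansion of the same argument. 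Two refinements you make are worth noting: you correctly replace ``local minimizer'' by ``second-order stationary point'' (which is all that the cited saddle-avoidance theorems actually deliver, and all that part (ii) needs), and you honestly flag that for this $L$ the critical set is never discrete (due to the $W\mapsto WU$ invariance), so the naive center-stable-manifold argument needs a stratification-and-second-countability refinement --- a gap that is also present, silently, in the paper's proof of \cref{prop:OjaConvergence} which invokes \citet{smale1963stable} without addressing non-isolated critical points. Your added coercivity hypothesis (needed to guarantee bounded trajectories and hence single-limit convergence via \L ojasiewicz) is a mild but genuine strengthening relative to the paper's statement, though it holds for the regularized losses the proposition is actually applied to.
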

The proof of this can be carried out as follows:
\begin{itemize}
    \item Standard results on the convergence of gradient flow to local minimizers \citep[see for instance][]{lee2016gradient, panageas2016gradient} guarantee that for almost all initializations,
    $W(t)$ converges to a local minimizer of $L$.
    \item The convexity of $G$ allows to conclude that any local minimizer of $L$ translates into a global minimizer of $G$ over the set of PSD matrices. For instance, this result can be found in \citet[][Exercise 12.8]{bach2024learning}.
\end{itemize}
Finally, in our case of interest in \cref{Subsubsec:Regions}, the loss is indeed expressed as a convex function of $WW^\top$:
\begin{equation*}
    \L(W) = \frac{1}{4n} \sum_{k=1}^n \Big( \tr \big(X_k WW^\top \big) - z_k \Big)^2+ \frac{\lambda}{d} \tr \big( WW^\top \big). 
\end{equation*}
All of these ingredients allow us to identify an overparameterized region, where the set of equations does not depend on $\kappa$ anymore, and where the gradient flow converges to a point that minimizes the loss over all PSD matrices. As mentioned earlier, this minimizer has rank $\sim \kappa_{\min} d$, and for $\kappa < \kappa_{\min}$, the flow is unable to converge to such a point, due to the rank constraint.

\subsection{Population Limit} \label{App:Subsec:Population}

In this section, we derive the population limit of the dynamics~\eqref{eq:GFdynamics}, corresponding to the regime where the student has access to an infinite number of observations. In line with our main results in this paper, we study this limit under \cref{Assumption2}, when using the quadratic cost for the loss and generating the labels with a noisy Gaussian channel. More precisely, we derive the following results:
\begin{itemize}
    \item We start by taking the $n \to \infty$ limit (at fixed dimension) in the expression of the loss~\eqref{eq:Loss} and study the associated dynamics. 
    \item Then, we take the limit $\alpha \to \infty$ (corresponding to the regime $n \gg d^2$) in the system of equations of \cref{Result1} and show that we obtain the same dynamics as in the previous step. 
    \item Finally, we take the $\alpha \to \infty$ limit in the system of equations at long times derived in \cref{Result2}, and show that the equations we obtained are coherent with the previous calculations. This provides a proof of \cref{Prop:PopulationLimit}. 
\end{itemize}

Studying this limit from different angles leads to the following conclusions: first of all, taking the sequential limit $n \to \infty$ and then $d \to \infty$ leads to the same result as taking the joint limit $n \sim \alpha d^2$ and then sending $\alpha \to \infty$. This means that there is no intermediate scaling of $n$ with the dimension that produces a different dynamics. Secondly, the same conclusion holds between the $\alpha \to \infty$ limit and the long times, meaning that the population limit preserves the relevant timescale for the dynamics. 

\subsubsection{Sequential Limit}

Let us start with the expression of the loss~\eqref{eq:Loss} under \cref{Assumption2}:
\begin{equation} \label{eq:PopulationLoss}
    \L(W) = \frac{1}{4n} \sum_{k=1}^n \Big( \tr\big( X_k WW^\top \big) - \tr \big( X_k Z^* \big) - \sqrt{\Delta} \zeta_k \Big)^2,
\end{equation}
where $\zeta_1, \dots, \zeta_n$ are i.i.d. standard Gaussian variables, independent from all the other random variables of the problem. Taking the $n \to \infty$ limit allows to replace the empirical average over the $n$ samples by the expectation over their distribution. Since $X_k \sim \mathrm{GOE}(d)$, we obtain that:
\begin{equation} \label{eq:PopulationLoss1}
    \L_\text{pop}(W) = \frac{1}{2d} \big\| WW^\top - Z^* \big\|_F^2 + \frac{\Delta}{4}. 
\end{equation}
In this case the loss is directly related to the distance between the teacher and the student: in the population limit, we are simply optimizing the MSE. In this case, we can write the Langevin dynamics \eqref{eq:GFdynamics} with regularization $\Omega$ and inverse temperature $\beta$ as:
\begin{equation} \label{eq:DynamicsPopulationW1}
    \d W(t) = 2 \Big( Z^* - W(t) W(t)^\top \Big) W(t) \d t - \nabla \Omega \big( W(t) \big) \d t + \frac{1}{\sqrt{\beta d}} \d B(t). 
\end{equation}
Already remark that the first term is very similar to the Oja flow dynamics studied in \cref{Sec:OjaFlow}. 

\subsubsection{Limit of the Dynamical Equations}

We now consider the $\alpha \to \infty$ limit in the system of equations in \cref{Result1}, and show that it corresponds to the dynamics \eqref{eq:DynamicsPopulationW1}. To see this, first note that the evolution of $W(t)$ and the typical label in equations \eqref{eq:DMFT1W}, \eqref{eq:DMFT1Y} do not explicitly depend on $\alpha$, and this dependence enters only through the covariance of the Gaussian process $\G$ and the response $R$ that drive their evolution. We recall that:
\begin{align}
    \E \, \G_{ij}(t) \G_{i'j'}(t') &= \frac{1}{2 \alpha d} \big( \delta_{ii'} \delta_{jj'} + \delta_{ij'} \delta_{i'j} \big) \left(\frac{1}{d} \E \, \tr \Big[ \big(Z(t) - Z^* \big) \big(Z(t') - Z^* \big) \Big] + \frac{\Delta}{2} \right), \label{eq:PopulationCov1} \\
    R(t,t') &= \delta(t-t') - \frac{1}{\alpha d^2} \tr\left( \left. \frac{\partial \, \E \, Z(t)}{\partial H(t')} \right|_{H = 0} \right). \label{eq:PopulationResponse} 
\end{align}
Now, as $\alpha \to \infty$, the covariance function of $Z(t)$ and its average response should remain of order one, so that from equations \eqref{eq:PopulationCov1}, \eqref{eq:PopulationResponse} we get $\G(t) = 0$ almost surely, and $R(t,t') = \delta(t-t')$. Therefore, from equation \eqref{eq:DMFT1W}, we get that $W$ exactly solves equation \eqref{eq:DynamicsPopulationW1}. This means we have recovered the sequential limit by taking the $\alpha \to \infty$ limit. Regarding the evolution of the label, we simply get the expression from equation \eqref{eq:DMFT1Y}:
\begin{equation} \label{eq:LabelPopulation}
    y(t) = \frac{m_Z(t)}{Q_*} y^* + \xi(t), \hspace{1.5cm} y^* \sim \N(0, 2Q_*),
\end{equation}
where $\xi$ is a centered Gaussian process with covariance given in equation \eqref{eq:Covariance1xi}. We will now show that this evolution of the labels corresponds to a random Gaussian projection of the student. Indeed, consider $X \sim \mathrm{GOE}(d)$ independent of all other random variables. Then, conditionally on $Z(t), Z^*$, the random projections $\tilde y(t) = \tr \big( XZ(t) \big)$ and $\tilde y^* = \tr \big( XZ^* \big)$ are Gaussian with zero mean and statistics:
\begin{equation*}
    \E \, \tilde y(t) \tilde y(t') = \frac{2}{d} \tr \big (Z(t) Z(t') \big), \hspace{1.2cm} \E \, \tilde y(t) \tilde y^* = \frac{2}{d} \tr \big( Z(t) Z^* \big), \hspace{1.2cm} \E \, \tilde y^{*2} = \frac{2}{d} \tr \big( Z^{*2} \big). 
\end{equation*}
Now, due to the covariance of $\xi(t)$ in equation \eqref{eq:Covariance1xi}, the couple $y(t), y^*$ in equation~\eqref{eq:LabelPopulation} has precisely the same statistics as $\tilde y(t), \tilde y^*$. This conclusion is intuitive: with a finite number of observations, the student remains correlated with the training examples, leading to the evolution of the typical label in \cref{Result1}. In the population limit, the student becomes independent of the examples, and the evolution of the typical label is the same as for a label associated with a previously unseen sample.

\subsubsection{Long-Time Analysis of the Population Equations}

Let us now go back to equation \eqref{eq:DynamicsPopulationW1}, and similarly to what was done in \cref{Subsec:LongTimes}, consider the gradient flow setting ($\beta = \infty$), and the $\ell_2$-regularization $\Omega(W) = \lambda \tr(WW^\top)$. Then, the population dynamics in equation \eqref{eq:DynamicsPopulationW1} writes:
\begin{equation*}
    \dot W(t) = 2 \Big( Z^* - W(t)W(t)^\top \Big) W(t) - 2 \lambda W(t). 
\end{equation*}
This now precisely corresponds to an Oja flow (see \cref{Sec:OjaFlow}) with the target matrix $Z^* - \lambda I_d$. Under a random initialization, we can apply \cref{prop:OjaConvergence} to get:
\begin{equation*}
    W(t) W(t)^\top \xrightarrow[t \to \infty]{} \big( Z^* - \lambda I_d \big)_{(m)}^+. 
\end{equation*}
Recall that the operator $A \mapsto A_{(m)}^+$ selects the $m$ largest positive eigenvalues (see \cref{Def:EckartYoung}). In the case $\kappa \geq \min(\kappa^*, 1)$, this simply writes $\big(Z^* - \lambda I_d \big)^+$, since $Z^* - \lambda I_d$ cannot have more than $m$ positive eigenvalues. Then, denoting $\mu_1, \dots, \mu_d$ the eigenvalues of $Z^*$, the MSE in the high-dimensional limit writes:
\begin{equation*}
\begin{aligned}
    \mathrm{MSE} = \lim_{d \to \infty} \frac{1}{d} \sum_{k=1}^d \Big( (\mu_k - \lambda)^+ - \mu_k \Big)^2.
\end{aligned}
\end{equation*}
Using that $u^+ = \max(u, 0)$ and the convergence of the empirical spectral distribution of $Z^*$, we get:
\begin{equation*}
    \mathrm{MSE} = \lim_{d \to \infty} \frac{1}{d} \sum_{k=1}^d \min(\lambda, \mu_k)^2 = \int \min(\lambda, x)^2 \d \mu^*(x). 
\end{equation*}
This is precisely the expression of the MSE in \cref{Prop:PopulationLimit}. The expression of the loss is a simple consequence of equation~\eqref{eq:PopulationLoss1}. 

\subsubsection{Population Limit in the Long-Time Equations} \label{App:Subsubsec:PopulationLongTimes}

In the previous steps, we proved \cref{Prop:PopulationLimit} starting from the dynamical equations in the population limit, and then studied the long times. For completeness, we show that the same result holds when starting from the long-time result \cref{Result2} and then taking the $\alpha \to \infty$ limit. We recall the system:
\begin{equation*}
\begin{aligned}
    \min(\kappa, 1) &= \int_\omega \d \mu_\xi(x), \\
    1 &= \frac{\lambda}{q} + \frac{1}{\alpha} \int_{\max(q, \omega)} (x-q) h_\xi(x) \d \mu_\xi(x), \\
    2\alpha \xi - \frac{\Delta}{2} &= Q_* + \int_{\max(q,\omega)} (q^2 - x^2) \d \mu_\xi(x) + 4 \xi \int_{\max(q,\omega)} (x-q) h_\xi(x) \d \mu_\xi(x),
\end{aligned}
\end{equation*}
where $q, \xi$ are the unknowns. Moreover, we have:
\begin{equation} \label{eq:MSELossPopulation}
    \mathrm{MSE} = 2 \alpha \xi - \frac{\Delta}{2}, \hspace{1.5cm} \mathrm{Loss}_\mathrm{train} = \frac{\lambda^2 \alpha \xi}{q^2}, \hspace{1.5cm} Z_\infty = \Big( Z^* + \sqrt{\xi} \G - q I_d \Big)_{(m)}^+. 
\end{equation}
Now, as $\alpha \to \infty$, the finiteness of the MSE and the loss require that $\xi = \Theta(\alpha^{-1})$ and $q = \Theta(1)$. Therefore, the quantity:
\begin{equation*}
    \int_{\max(q, \omega)} (x-q) h_\xi(x) \d \mu_\xi(x),
\end{equation*}
remains of order one. Since $\alpha \to \infty$ and $\xi \to 0$, one gets the equations:
\begin{equation*}
    q = \lambda, \hspace{1.5cm} 2 \lim_{\substack{\alpha \to \infty \\ \xi \to 0}} \alpha \xi= \frac{\Delta}{2} + Q_* + \lim_{\xi \to 0} \int_{\max(q,\omega)} (q^2 - x^2) \d \mu_\xi(x). 
\end{equation*}
This means that from equation \eqref{eq:MSELossPopulation}, we again obtain $Z_\infty = \big( Z^* - \lambda I_d \big)_{(m)}^+$, and we can drop the $m$ largest eigenvalues selection when $\kappa \geq \min(\kappa^*, 1)$. As a consequence, we can pick $\omega = 0$ in the previous equations. Let us now compute the $\xi \to 0$ limit. In \cref{App:Subsec:IntegralAsymptotics}, we compute the small $\xi$ asymptotics of several integrals involving $\mu_\xi$. As a consequence of \cref{Lemma:SmallXiFunctions}, one can show that with $\omega = 0$ and $q = \lambda$, we have at leading order:
\begin{equation*}
\begin{aligned}
    \lim_{\xi \to 0} \int_{\max(q,\omega)} (q^2 - x^2) \d \mu_\xi(x) &= \lim_{\xi \to 0} \int_\lambda \big(\lambda^2 - x^2 \big) \d \mu_\xi(x) \\
    &= \int_\lambda \big(\lambda^2 - x^2 \big) \d \mu^*(x). 
\end{aligned}
\end{equation*}
Now recalling that:
\begin{equation*}
    Q_* = \int x^2 \d \mu^*(x),
\end{equation*}
one finally has the expression, using equation~\eqref{eq:MSELossPopulation}:
\begin{equation*}
\begin{aligned}
    \mathrm{MSE} &= \int^\lambda x^2 \d \mu^*(x) + \lambda^2 \int_\lambda \d \mu^*(x) \\
    &= \int \min(x, \lambda)^2 \d \mu^*(x). 
\end{aligned}
\end{equation*}
Finally, the expression of the loss can be also deduced from equation~\eqref{eq:MSELossPopulation} since $\mathrm{Loss}_\mathrm{train} = \alpha \xi$ and $\mathrm{MSE} = 2 \alpha \xi - \Delta / 2$. Again, this leads to the result of \cref{Prop:PopulationLimit}. As already mentioned, the fact that we recover the same equations starting from the dynamical equations and the long-time one reveal that both the population ($\alpha \to \infty$) and long-time ($t \to \infty$) limits commute and that the population limit does not introduce a new timescale for the dynamics. 

\section{Stability of the Steady-State Solution} \label{App:Stability}
We devote this section to the analysis of the stability of the steady-state equations presented in \cref{Subsubsec:Stability}. More precisely, we consider a perturbation of the simplified dynamics introduced in \cref{Subsubsec:SteadyState}, and investigate if this perturbation may grow and lead to instability. The plan goes as follows:
\begin{itemize}
    \item In \cref{App:Subsec:Susceptibility}, we first introduce the susceptibility operator associated with the long-time steady-state solution and compute its Frobenius norm. We identify a regime where this quantity slowly diverges as the dimension grows, leading to a potential instability, but only in the high-dimensional limit.
    \item In \cref{App:Subsec:CVRates}, we then corroborate our previous observations with a study of the convergence rates associated with the steady-state approximation, in both finite and infinite dimension. We identify some weak directions associated with a vanishing curvature of the Hessian in high dimension. 
    \item In \cref{App:Subsec:DynamicalStability}, we give the steps necessary for a thorough analysis of the dynamical stability. This could be, in principle, achieved by linearizing the true dynamical equations of \cref{Result1} around the steady-state solution. This would lead to an exact stability analysis of the high-dimensional dynamical system.
\end{itemize}

\subsection{Susceptibility Operator} \label{App:Subsec:Susceptibility}

We now consider the susceptibility operator associated with the long-time limit of the dynamics:
\begin{equation*}
    \mathcal{X} = \left. \frac{\partial Z_\infty}{\partial H} \right|_{H = 0}. 
\end{equation*}
This operator can be viewed as the differential of the map that returns, for a given $H \in \mathcal{S}_d(\R)$, the limit $Z_\infty$ of the approximated dynamics with a perturbation $H$. Following the results of \cref{App:Subsubsec:LongTimesLimitDynamics}:
\begin{equation*}
    Z_\infty = \Big( Z^* + \sqrt{\xi} \G - q I_d + H \Big)_{(m)}^+.
\end{equation*}
The susceptibility operator encodes how the system responds to perturbations. We can either analyze its spectrum to investigate the stability of each direction of the system, or its normalized Frobenius norm that gives access to a measure of the averaged susceptibility over all directions. Then, the stability criterion we consider is the finiteness of the normalized Frobenius norm. 

To compute the spectrum and the Frobenius norm of the susceptibility, one can apply the result of \cref{Lemma:DerivativeSpectral}. To do so, denote $\lambda_1, \dots, \lambda_d$ the eigenvalues of $Z^* + \sqrt{\xi} \G$. Then, the spectrum of $\mathcal{X}$, viewed as a linear map $\mathcal{S}_d(\R) \to \mathcal{S}_d(\R)$ is given by:
\begin{equation} \label{eq:SepctrumSusceptibility}
    \mathrm{Sp}(\mathcal{X}) = \left\{ \frac{\phi(\lambda_i) - \phi(\lambda_j)}{\lambda_i - \lambda_j}, 1 \leq i \leq j \leq d \right\},
\end{equation}
with:
\begin{equation} \label{eq:PhiSusceptibility}
    \phi(x) = (x-q) \1_{x \geq \max(q, \omega)},
\end{equation}
and $\omega$ is a threshold selecting the $m$ largest eigenvalues of $Z^* + \sqrt{\xi} \G$:
\begin{equation*}
    \min(m, d) = \sum_{k=1}^d \1_{\lambda_k \geq \omega}. 
\end{equation*}
For $i = j$, the corresponding eigenvalue of $\mathcal{X}$ is understood as $\phi'(\lambda_i) = \1_{\lambda_i \geq \max(q, \omega)}$. Note that \cref{Lemma:DerivativeSpectral} only applies if the matrix of interest has simple and non-zero eigenvalues. In our case this applies with probability one under the randomness of the Gaussian matrix $\G$, whenever $\xi > 0$. 

\subsubsection{Norm of the Susceptibility} \label{App:Subsubsec:SusceptibilityNorm}

We start by investigating the Frobenius norm of this susceptibility operator. As a consequence of \cref{Lemma:DerivativeSpectral}, we have:
\begin{equation} \label{eq:SusceptibilityFinited}
    \mathcal{R}_d = \frac{1}{d^2} \big\| \mathcal{X} \big\|_F^2 = \frac{1}{d^2} \sum_{1 \leq i \leq j \leq d} \left( \frac{\phi(\lambda_i) - \phi(\lambda_j)}{\lambda_i - \lambda_j} \right)^2. 
\end{equation}
In finite dimension, this quantity is almost surely finite. Let us study it in the high-dimensional limit. We have the deterministic limit:
\begin{equation*}
    \mathcal{R}_d \xrightarrow[d \to \infty]{} \frac{1}{2} \iint \left( \frac{(x-q) \1_{x \geq \max(q, \omega)} - (y-q) \1_{y \geq \max(q, \omega)}}{x-y} \right)^2 \d \mu_\xi(x) \d \mu_\xi(y) \equiv \mathcal{R}_\infty. 
\end{equation*}
Again, $\omega$ selects the $m$ largest eigenvalues:
\begin{equation*}
    \min(\kappa, 1) = \int_{\omega} \d \mu_\xi(x). 
\end{equation*}
Let us now consider two cases: 
\begin{itemize}
    \item When $q \geq \omega$, the matrix $Z^* + \sqrt{\xi} \G$ has less than $m$ eigenvalues larger than $q$. In this case, the function $\phi$ in equation \eqref{eq:PhiSusceptibility} is continuous and 1-Lipschitz, which guarantees the finiteness of $\mathcal{R}_\infty$. 
    \item On the other hand, if $q < \omega$, $\phi$ is discontinuous at $x = \omega$. Near $\omega$, it jumps from 0 to $\omega - q > 0$. To investigate the behavior of $\mathcal{R}_\infty$, we focus near $x = y = \omega$, where the jump happens. We have, changing variables $x = \omega - u$ and $y = \omega + v$:
    \begin{equation*}
        \mathcal{R}_\infty \geq \frac{1}{2} \iint \rho_\xi(\omega - u) \rho_\xi(\omega + v) \1_{u \geq 0} \1_{v \geq 0} \left( \frac{\omega + v - q}{u+v} \right)^2 \d u \d v. 
    \end{equation*}
    $\rho_\xi$ denotes the density of $\mu_\xi$. Near the points $u, v = 0$, the integrand is proportional to $(u+v)^{-2}$ which is not integrable with respect to $u,v$. Therefore, as soon as $\mu_\xi$ has positive mass on both sides of $\omega$, the quantity $\mathcal{R}_\infty$ is infinite. Finally, we claim that this situation is generic: unless the density $\rho_\xi$ splits into two parts of mass $\kappa$ and $1 - \kappa$, $\rho_\xi$ will have positive mass on both sides of $\omega$, leading to the divergence of $\mathcal{R}_\infty$. 
\end{itemize}

Therefore, it is natural to understand the typical value of $\mathcal{R}_d$ as $d \to \infty$. To do so, we start from equation \eqref{eq:SusceptibilityFinited} and focus on the eigenvalues pairs $(\lambda_i, \lambda_j)$ where the divergence happens. This corresponds to eigenvalues close to the threshold $\omega$ on each side of it. At this point, the spacing between two eigenvalues is of order $\epsilon_d \propto d^{-1}$. Taking the continuous approximation with the cutoff $\epsilon_d$, we get that:
\begin{equation*}
\begin{aligned}
    \mathcal{R}_d &\underset{d \to \infty}{\sim} \frac{1}{2} \rho_\xi(\omega)^2 (\omega-q)^2 \iint \1_{u \geq 0} \1_{v \geq 0} \1_{|u-v| \geq \epsilon_d} \frac{\d u \d v}{(u+v)^2} \\
    &\underset{d \to \infty}{\sim} \frac{1}{2} \rho_\xi(\omega)^2 (\omega-q)^2 \log d. 
\end{aligned}
\end{equation*}
Therefore, in the case $q < \omega$, the typical value of $\mathcal{R}_d$ is of order $\log d$ as $d \to \infty$, corresponding to a mild divergence. This invites us to study the spectrum of the susceptibility in order to characterize the directions of potential instability. 

Finally, to relate this conclusion to \cref{Subsubsec:Regions,Subsubsec:Stability}, recall the definition of $\kappa_{\min}$ in equation \eqref{eq:kappamin}. Then, the region $q \geq \omega$ corresponds to the case where the gradient flow dynamics converges to a rank-deficient matrix. In \cref{Subsubsec:Regions}, we have precisely identified this region to be the overparameterized one, that is $\kappa \geq \kappa_{\min}$. 

\subsubsection{Spectrum of the Susceptibility}

Recall that the spectrum of the susceptibility $\mathcal{X}$ is given in equation \eqref{eq:SepctrumSusceptibility}. As discussed earlier, the unstable modes correspond to pairs of eigenvalues $(\lambda_i, \lambda_j)$ close and on each side of the cutoff $\omega$. For such pairs such that $\lambda_i < \omega < \lambda_j$ and $\lambda_j - \lambda_i \propto \epsilon$, the associated eigenvalue for the susceptibility operator is:
\begin{equation*}
    \frac{\lambda_j - q}{\lambda_j - \lambda_i} \propto \frac{1}{\epsilon}.
\end{equation*}
As the dimension increases, there are $\Theta(\epsilon^2 d^2)$ pairs with amplitude larger than $\epsilon^{-1}$. The most unstable directions correspond to the scaling $\epsilon \propto d^{-1}$ with a susceptibility eigenvalue proportional to $d$. However, these directions are only in a finite number. Combining all the scales from $d^{-1}$ to order 1 leads to the divergence proportional to $\log d$. 

\paragraph{Eigenvalue jump.} Recall that the gradient flow selects the $m$ largest positive eigenvalues of the target matrix $A = Z^* + \sqrt{\xi} \G - q I_d$. Letting $p$ be the number of positive eigenvalues of this matrix, we have the dichotomy:
\begin{itemize}
    \item In the stable case, when $m \geq p$, the student has enough rank to select all the positive eigenvalues. In this case any infinitesimal perturbation leads to a response of the same order, indicating stability. 
    \item For $m < p$, the rank constraint imposes that several order one eigenvalues are sent to zero. Therefore, for very close eigenvalues, a perturbation can reorder the eigenvalues and change those that are selected by the dynamics: the potential instability originates from the fact that some previously selected eigenvalues (of order one) become zero, and \emph{vice versa}. 
\end{itemize}

As it was made clear in \cref{App:Subsubsec:SusceptibilityNorm}, these two cases respectively correspond to the overparameterized ($\kappa \geq \kappa_{\min}$) and underparameterized ($\kappa < \kappa_{\min}$) regions identified in \cref{Subsubsec:Regions}. 

\subsubsection{A Refined Analysis}

Despite the previous observations, simulations suggest that the gradient descent algorithm (with small stepsize) always converges toward the solution given by \cref{Result2}, even in what we called the unstable region. As a first attempt to explain this, consider $Z_\infty$ to be perturbed by some matrix $H$, and the associated response:
\begin{equation*}
    \frac{1}{d} \big\| \delta Z_\infty \big\|_F^2 = \frac{1}{d} \sum_{1 \leq i \leq j \leq d} \left( \frac{\phi(\lambda_i) - \phi(\lambda_j)}{\lambda_i - \lambda_j} \right)^2 H_{ij}^2.
\end{equation*}
The key point here is that we do not ask the system to be stable under any perturbation, but specific perturbations originating from the dynamics of \cref{Result1}. For instance, the previous equation is written in the basis that diagonalizes the susceptibility $\mathcal{X}$, which is directly related to the one that diagonalizes the matrix $Z^* + \sqrt{\xi} \G$. Now, it is easily shown that if the perturbation $H$ is generic with respect to this basis, the previous quantity is related to the Frobenius norm of the susceptibility \eqref{eq:SusceptibilityFinited} and may diverge in the unstable region.

However, a perturbation originating from the dynamics of \cref{Result1} should necessarily be correlated with the simplified dynamics. Then, the stability of the system would be guaranteed provided that this correlated perturbation puts slightly less weight on the unstable modes. For instance, one could imagine a high-dimensional behavior:
\begin{equation*}
    \E \, H_{ij}^2 \approx \frac{\epsilon^2}{d} V(\lambda_i, \lambda_j), 
\end{equation*}
where $\epsilon$ is the magnitude of the perturbation. We would then get:
\begin{equation*}
    \frac{1}{d} \big\| \delta Z_\infty \big\|_F^2 \xrightarrow[d \to \infty]{}  \frac{\epsilon^2}{2} \iint V(x,y) \left( \frac{(x-q) \1_{x \geq \max(q, \omega)} - (y-q) \1_{y \geq \max(q, \omega)}}{x-y} \right)^2 \d \mu_\xi(x) \d \mu_\xi(y). 
\end{equation*}
Now, a mild decay of $V$ around the point $(\omega, \omega)$ would guarantee the finiteness of the integral, therefore the stability of the system. On the other side, it is also possible that $H$ puts more weight on the unstable modes and amplifies the perturbation. 

In line with our numerical observations, we conjecture that this specific perturbation tends to regularize the dynamical system, i.e., it attenuates the weak modes associated with a potential instability of the approximate dynamics.

\subsection{Convergence Rates for the Approximate Dynamics} \label{App:Subsec:CVRates}

In this part, we analyze the convergence rates of the approximate dynamics of \cref{Subsubsec:SteadyState}:
\begin{equation} \label{eq:ApproximateDynamics2}
    \dot W(t) = 2 r_\infty \Big(Z^* + \sqrt{\xi} \G - q I_d - W(t) W(t)^\top \Big) W(t).
\end{equation}
The aim is twofold:
\begin{itemize}
    \item Interpret the weak or unstable directions of the susceptibility operator in terms of convergence rates and landscape curvature. 
    \item Examine \cref{Assumption3} in light of the discussion in \cref{App:Subsec:SteadyState}, that relates the steady-state assumption with the fast convergence of the dynamics. 
\end{itemize}

We build on the results for the Oja flow dynamics derived in \cref{Subsec:OjaCVFinitedim} and \cref{Subsec:OjaCVInfinitedim} that derive its convergence rates in both finite and infinite dimension. To match with these results, we define:
\begin{equation} \label{eq:targetOja}
    A = Z^* + \sqrt{\xi} \G - q I_d,
\end{equation}
the target matrix of the approximate dynamics~\eqref{eq:ApproximateDynamics2}. As soon as $\xi > 0$, the presence of the GOE matrix $\G$ guarantees that $A$ is invertible and has simple eigenvalues with probability one. Moreover, we denote by $p$ the number of positive eigenvalues of $A$. Now, the previous section has shown that the stability of the approximate dynamics is guaranteed whenever $m \geq p$ (overparameterized regime), but it is still to be clarified in the small-rank regime $m < p$. 

Finally, note that the presence of the factor $2r_\infty$ in the dynamics acts as a time renormalization and only alters the convergence rates by a constant factor (independent of the dimension). For simplicity, we set this quantity to $1$ in the following. 

\subsubsection{Finite-Dimensional Rates} \label{App:Subsubsec:FiniteDimRates}

As shown in \cref{Prop:HessianOja} and \cref{Prop:CVRates2}, when the dimension remains finite, the Oja flow converges exponentially fast. Denoting $\lambda_1 > \dots > \lambda_d$ the ordered eigenvalues of $A$ in equation~\eqref{eq:targetOja}, the convergence rates of the dynamics \eqref{eq:ApproximateDynamics2} are given by:
\begin{equation*}
    \varrho_\text{CV} = \left\{ \begin{array}{cc}
        \min(2 \lambda_m, \lambda_m - \lambda_{m+1}), & \mathrm{if} \ m \leq p, \\
        \min(\lambda_p, |\lambda_{p+1}|), & \mathrm{if} \ m > p,
    \end{array} \right.
\end{equation*}
in the sense that for all $c < \varrho_\text{CV}$:
\begin{equation*}
    \big\| Z(t) - Z_\infty \big\|_F \underset{t \to \infty}{=} o \big(e^{-ct} \big).
\end{equation*}
Here $Z(t) = W(t)W(t)^\top$ and $Z_\infty$ is the limit of $Z(t)$ as $t \to \infty$. 

We believe that an exponentially fast convergence should validate \cref{Assumption3}. This result also matches with the conclusion of \cref{App:Subsec:Susceptibility} that the steady-state approximation remains stable in finite dimension, no matter the value of $\kappa$. 

It is interesting to notice that in both cases, these rates vanish with the dimension as soon as the asymptotic spectral distribution of $A$ has mass either near zero (when $m > p$) or near its $m^\text{th}$ eigenvalue $\lambda_m$ (when $m \leq p$). In both cases the convergence rates are of order $d^{-1}$ when $d$ grows large. This observation was verified for generic values of our parameters (for instance $\kappa > \kappa^*$) with the values of $q, \xi$ obtained from the numerical integration of the system \eqref{eq:SystemResult2}. In most cases, the spectral density of $A$ possesses non-zero mass around $\lambda_m$ for $m \leq p$ and $0$ for $m > p$. 

Therefore, there exists a few directions in the space $\R^{d \times m}$ associated with a slow relaxation time. In terms of landscape, these directions become flatter as the dimension increases. Interestingly, this phenomenon happens in both the stable and unstable cases derived in \cref{App:Subsec:Susceptibility}. As already mentioned, these two regions precisely correspond to the ones identified in \cref{Subsubsec:Regions,Subsubsec:Stability}. 

\subsubsection{Infinite-Dimensional Rates} \label{App:Subsubsec:InfiniteDimRates}

Following the previous observations in finite dimension, we also study the convergence rates of the Oja flow by first letting the dimension go to infinity. As a consequence of the presence of directions with vanishing curvature, our analysis leads to the conclusion of non-exponential rates. More precisely, in \cref{Prop:HighDimCVRates}, we show that, with the same notations as in the previous section:
\begin{equation} \label{eq:AsymptoticRatesSS}
    \lim_{t \to \infty} \lim_{d \to \infty} \frac{1}{d} \big\| Z(t) - Z_\infty \big\|_F^2 \underset{t \to \infty}{=} \left\{ \begin{array}{cc}
        \Theta \big(t^{-3} \big), & \mathrm{if} \,  \kappa > \kappa_A, \\
        \Theta \big(t^{-1} \big), & \mathrm{if} \, \kappa < \kappa_A,
    \end{array} \right.
\end{equation}
where:
\begin{equation*}
    \kappa_A = \int \1_{x > 0} \d \mu_A(x),
\end{equation*}
is the fraction of positive eigenvalues of $A$. With our expression for $A$, the threshold $\kappa_A$ coincides with $\kappa_{\min}$ identified in \cref{Subsec:LongTimes}. This value separates two regimes with distinct convergence properties. In comparison with our stability result, the region we identified as stable corresponds to the fastest convergence rates, namely $\kappa > \kappa_A$. The potentially unstable region, $\kappa < \kappa_A$ is characterized by the slowest convergence rates. 

When convergence is slow, perturbations decay over longer timescales and therefore may interact more strongly with unstable modes, leading to instability. On the other hand, faster convergence limits this effect by damping perturbations more rapidly. 

Interestingly, these rates can be compared to those obtained from the gradient descent dynamics~\eqref{eq:GDdynamics}. In \cref{fig:CVRates}, we plot the function:
\begin{equation} \label{eq:LossFull}
    \mathrm{Loss}_\lambda = \frac{1}{4n} \sum_{k=1}^n \Big( \tr \big( X_k WW^\top \big) - z_k \Big)^2 + \frac{\lambda}{d} \tr \big( WW^\top \big).
\end{equation}
Since the dynamics \eqref{eq:GFdynamics} is exactly the gradient flow associated with the loss function $\mathrm{Loss}_\lambda$, this quantity is known to decrease over time along the flow. \cref{fig:CVRates} is split into two panels, separating values of $\alpha$ corresponding to the stable region $\kappa > \kappa_{\min}$ and the potentially unstable region $\kappa < \kappa_{\min}$. The observed convergence rates are then compared with the power-law asymptotic $t \mapsto t^{-3}$. Interestingly, in both regions the convergence rates are very close to this asymptotic behavior. Therefore, in terms of convergence rates alone, there is no clear difference between the stable and unstable regions. Since we conjectured in \cref{App:Subsec:SteadyState} that the stability of the steady-state solution is directly related to the convergence rates of the dynamics, this numerical observation provides evidence supporting the stability of the underparameterized region $\kappa < \kappa_{\min}$. 

\begin{figure}[ht]
    \centering
    \includegraphics[width=\linewidth]{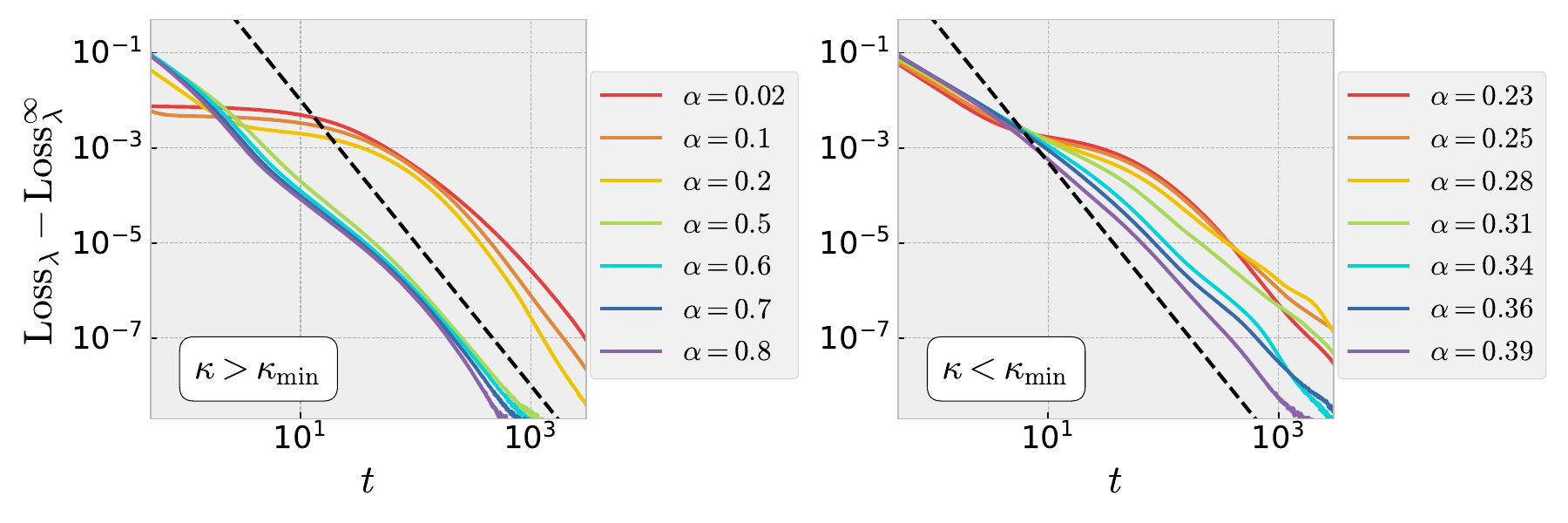}
    \vspace*{-0.7cm}
    \caption{The function $\mathrm{Loss}_\lambda$ (defined in equation \ref{eq:LossFull}), with limiting value subtracted, as a function of time $t$ for gradient descent trajectories (see equation \ref{eq:GDdynamics}). Parameters $\kappa = 0.4, \kappa^* = 0.3, \lambda = 0.01$, $\Delta = 0$. Values of $\alpha$ are chosen so that $\kappa > \kappa_{\min}$ on the left panel and $\kappa < \kappa_{\min}$ on the right panel. Black dashed line: the function $t \mapsto C t^{-3}$, where $C$ is chosen for visual comparison. Gradient descent simulations are averaged over 10 realizations of the initialization, teacher and data.}
    \label{fig:CVRates}
\end{figure}

As a technical remark, we emphasize that the cubic power-law behavior identified in \cref{fig:CVRates} is not directly related to the one in equation \eqref{eq:AsymptoticRatesSS}. Indeed, equation \eqref{eq:AsymptoticRatesSS} concerns the distance to convergence, whereas \cref{fig:CVRates} displays the loss optimized during gradient flow. A more accurate comparison would require extending the convergence rate result of \cref{Prop:HighDimCVRates} to the high-dimensional limit of the loss function. We expect this analysis to be significantly more involved and leave it for future work.

\subsection{Dynamical Stability} \label{App:Subsec:DynamicalStability}

To conclude this section on the stability of the dynamics, we outline an approach that would be worth investigating in order to characterize potential instabilities of the steady-state dynamics. Earlier in this section, we analyzed the stability of the steady-state solution under generic and worst-case perturbations. Here, we aim to go beyond this analysis by taking into account the original set of equations in \cref{Result1}. More precisely, we decompose these equations into a steady-state solution and the exact associated perturbation, and then linearize the dynamics around the steady state for this specific perturbation. This type of approach is not new and has already been applied to simpler settings, such as spin-glass models \citep{sompolinsky1982relaxational, crisanti1993spherical}.

We only sketch the main steps that would lead to such a result. While the calculation can, in principle, be carried out, it quickly becomes technically involved. For this reason, we leave a detailed implementation of this approach for future work.

Recall the high-dimensional equations of \cref{Result1}. With the $\ell_2$-regularization, $W(t)$ is solution of the dynamics:
\begin{equation} \label{eq:DMFTStability}
    \dot W(t) = 2 \left( \int_0^t R(t,t') \Big( \G(t') + Z^* - Z(t') \Big) \d t' \right) W(t) - 2 \lambda W(t).
\end{equation}
The covariance of the Gaussian process $\G$ and the response kernel $R$ are directly related to averages with respect to the dynamics, in equations \eqref{eq:Covariance1G} and \eqref{eq:AveragedQuantities}. From now on, we denote:
\begin{equation} \label{eq:DynamicalStabCovariance}
    \E \, \G_{ij}(t) \G_{i'j'}(t') = \frac{1}{d} \big( \delta_{ii'} \delta_{jj'} + \delta_{ij'} \delta_{i'j} \big) C(t,t'), 
\end{equation}

\subsubsection{Linearization of the Dynamics}

We start by linearizing the dynamical equation \eqref{eq:DMFTStability} around the steady-state solution. To do so, we introduce the functions $\delta R, \delta C$ such that:
\begin{equation*}
    R(t,t') = r_\infty \delta(t-t') + \delta R(t,t'), \hspace{1.5cm} C(t,t') = \xi + \delta C(t,t'). 
\end{equation*}
The first terms $r_\infty \delta(t-t')$ and $\xi = \lim_{t,t' \to \infty} C(t,t')$ correspond to the approximation of the response kernel and the noise covariance in the steady-state approximation. Then, the dynamics on $W(t)$ can be rewritten as:
\begin{equation*}
\begin{aligned}
    \dot W(t) &= 2 r_\infty \left( Z^* + \sqrt{\xi} \G - \frac{\lambda}{r_\infty} I_d - W(t)W(t)^\top \right) W(t) + 2 r_\infty H(t) W(t), \\
    H(t) &= V(t) - \frac{1}{r_\infty} \int_0^t \delta R(t,t') \Big( \sqrt{\xi} \G + Z^* - Z(t') \Big) \d t', 
\end{aligned}
\end{equation*}
where $V$ is a centered Gaussian process taking values in the space of symmetric matrices, with covariance:
\begin{equation*}
    \E \, V_{ij}(t) V_{i'j'}(t') = \frac{1}{d} \big( \delta_{ii'} \delta_{jj'} + \delta_{ij'} \delta_{i'j} \big) \delta C(t,t'). 
\end{equation*}
The key point of our analysis is to consider $H(t)$ as a perturbation and linearize the solution $Z(t) = W(t)W(t)^\top$ around $H(t)$:
\begin{equation} \label{eq:DecompositionSecondOrder}
\begin{aligned}
    Z(t) &= Z_0(t) + 2 r_\infty \int_0^t \left. \frac{\partial Z_0(t)}{\partial H(t')} \right|_{H = 0} \Big( H(t') \Big) \d t' \\
    &\hspace{1cm} + 2 r_\infty^2 \int_0^t \int_0^t  \left. \frac{\partial^2 Z_0(t)}{\partial H(t') \partial H(t'')} \right|_{H = 0} \Big( H(t'), H(t'') \Big) \d t' \d t'' + O \big( \big\| H \big\|^3 \big),
\end{aligned}
\end{equation}
where $Z_0(t) = W_0(t)W_0(t)^\top$ and $W_0$ is solution of the dynamics:
\begin{equation} \label{eq:DynamicsLinearized}
    \dot W_0(t) = 2 r_\infty \left( Z^* + \sqrt{\xi} \G - \frac{\lambda}{r_\infty} I_d - W_0(t)W_0(t)^\top \right) W_0(t).
\end{equation}
Using equations \eqref{eq:Covariance1G} and~\eqref{eq:DynamicalStabCovariance}, we can rewrite the covariance $C(t,t')$:
\begin{equation*}
    C(t,t') = \frac{1}{2\alpha} \left(\frac{1}{d} \E \, \tr \Big( (Z(t) - Z^*) (Z(t') - Z^*) \Big) + \frac{\Delta}{2} \right).
\end{equation*}
Plugging the solution \eqref{eq:DecompositionSecondOrder} into the expression of $C(t,t')$ will allow to obtain a self-consistent expression of $\delta C$. Since $H(t)$ involves the Gaussian process $V(t)$ whose covariance is proportional to $\delta C$, it is required that we study this perturbation up to second order. 

The same can be done for the response $R(t,t')$. Equation \eqref{eq:AveragedQuantities} shows that this function is directly related to the response of $Z$, therefore one can directly differentiate equation \eqref{eq:DecompositionSecondOrder} with respect to an external field perturbing the dynamics.  

Ultimately, it is possible to identify a $2 \times 2$ matrix $K(t,t',s,s')$ such that, after linearizing the dynamics, we have:
\begin{equation} \label{eq:DynamicalStability1}
    \begin{pmatrix} \delta C(t,t') \\ \delta R(t,t') \end{pmatrix} = \begin{pmatrix} C_0(t,t') - \xi \\ (1 - r_\infty) \delta(t-t') - \alpha^{-1} R_Z^0(t,t') \end{pmatrix} + \int \d s \d s' \, K(t,t',s,s') \begin{pmatrix} \delta C(s,s') \\ \delta R(s,s') \end{pmatrix}, 
\end{equation}
where $C_0, R_Z^0$ are the correlation and response computed from the steady-state solution. In addition, the kernel $K$ only involves functions computed from this approximate solution. This is a consequence of the linearization of the dynamics around this solution. More precisely, we can show that $K$ depends on the response kernels of the dynamics \eqref{eq:DynamicsLinearized} up to order three:
\begin{equation*}
    \mathcal{R}_1(t,t') = \left. \frac{\partial Z_0(t)}{\partial H(t')} \right|_{H = 0}, \hspace{1.5cm} \mathcal{R}_2(t,t',t'') = \left. \frac{\partial^2 Z_0(t)}{\partial H(t') \partial H(t'')} \right|_{H = 0}, 
\end{equation*}
and likewise for $\mathcal{R}_3$. 

\subsubsection{Response Kernels of the Oja Flow}

The dynamics that describes our steady-state approximation is nonlinear, but it is possible to compute its response kernels of any order. We briefly explain how this can be done, and refer to \cref{Subsec:OjaResponse} for a complete derivation of the first-order response. We consider the dynamics:
\begin{equation*}
    \dot W(t) = 2 r_\infty \Big( A - W(t) W(t)^\top + \epsilon H(t) \Big) W(t).
\end{equation*}
For simplicity we drop the factor $2r_\infty$, that can be simply recovered by a time reparameterization. We decompose the solution $Z(t) = W(t)W(t)^\top$ in powers of $\epsilon$:
\begin{equation*}
    Z(t) = Z_0(t) + \epsilon Z_1(t) + \dots + \epsilon^n Z_n(t) + \dots.
\end{equation*}
Plugging this into the equation for $W(t)$, we get the equations for each order:
\begin{equation*}
\begin{aligned}
    \dot Z_0(t) &= A Z_0(t) + Z_0(t) A - 2Z_0(t)^2, \\
    \dot Z_n(t) &= B(t) Z_n(t) + Z_n(t) B(t) + \underbrace{H(t) Z_{n-1}(t) + Z_{n-1}(t) H(t) - 2 \sum_{k=1}^{n-1} Z_k(t) Z_{n-k}(t)}_{\equiv M_n(t)},
\end{aligned}
\end{equation*}
with $B(t) = A - 2Z_0(t)$, and with initial conditions $Z_n(0) = 0$ for $n \geq 1$. The dynamics on $Z_n$ is linear but is driven by the time-dependent matrix $B(t)$. To solve this, let us introduce $P(t)$ as the solution of $\dot P(t) = - B(t) P(t)$ with $P(0) = I_d$. Then, for $n \geq 1$, it is easily seen that:
\begin{equation} \label{eq:SolutionZn}
    Z_n(t) = P(t)^{-1} \int_0^t P(s) M_n(s) P(s)^\top \d s \, P(t)^{-\top}. 
\end{equation}
Now, as already been done in \cref{App:Subsec:OjaResponse1}, the matrix $P(t)$ can be computed explicitly and writes: 
\begin{equation*}
    P(t) = e^{-tA} + Z_0 A^{-1} \big( e^{tA} - e^{-tA} \big). 
\end{equation*}
From equation \eqref{eq:SolutionZn}, this leads to an explicit recursive expression of $Z_n(t)$. It is easily shown by recursion that $Z_n(t)$ is a homogeneous polynomial of degree $n$ in $H$, and therefore the $n^{\text{th}}$-order response kernel of the Oja flow can be directly computed from $Z_n$. In \cref{Prop:OjaResponse1}, we give an explicit computation for the case $n = 1$. Then, it is possible to iterate: plug in the solution $Z_1(t)$ into the expression of $Z_2(t)$ and compute the second-order response, and so on. However, the calculations rapidly become heavy and it is not clear if one can practically obtain an expression of the third-order response, which is necessary to compute the kernel $K$. 

\subsubsection{Stability Criterion}

Once the response kernels are computed, it is possible to go back to equation \eqref{eq:DynamicalStability1} and either let the dimension grow to infinity to test the stability in the high-dimensional limit, or leave it fixed. In any case, it is practical to look for a solution of the form:
\begin{equation*}
    X(t,t') \equiv \begin{pmatrix} \delta C(t,t') \\ \delta R(t,t') \end{pmatrix} = e^{\sigma T} \begin{pmatrix} \delta C_0(\tau) \\ \delta R_0(\tau) \end{pmatrix}, \hspace{1.5cm} T = \frac{t + t'}{2}, \hspace{0.2cm} \tau = t-t'. 
\end{equation*}
This separates variables between the time-translational invariant part and a potential exponential growth as $T \to \infty$. Using that the kernel all originates from a steady-state solution (the approximate dynamics), we can integrate with respect to $T$, which should lead to an equation of the form:
\begin{equation*}
    \sigma \begin{pmatrix} \delta C_0(\tau) \\ \delta R_0(\tau) \end{pmatrix} = \sigma \epsilon(\tau) + \int_0^\infty \mathcal{K}(\tau, \tau') \begin{pmatrix} \delta C_0(\tau') \\ \delta R_0(\tau') \end{pmatrix} \d \tau'.   
\end{equation*}
Then, a sizable challenge would be to compute the spectrum of the operator $\mathcal{K}$ (that is computed from $K$), and stability would be guaranteed if its eigenvalues are all with negative real part. However, it is not clear if such a calculation is possible. 

\section{Analysis of Langevin Dynamics} \label{App:Langevin}
In this section we go beyond the gradient flow setting and analyze the Langevin dynamics, with $\beta < \infty$. We start from the dynamical equations in \cref{Result1} and study the stochastic evolution of the matrix $W(t)$. More precisely, under simplifying assumptions on the dynamics at long times, we derive the stationary measure of this process. Here, by stationary measure we mean the probability distribution reached by the process in the long-time limit. The plan of the section goes as follows:
\begin{itemize}
    \item In \cref{App:Subsec:StationaryMeasure}, we consider a stochastic evolution similar to equation \eqref{eq:DMFT1W} and derive its stationary measure.
    \item In \cref{App:Subsec:MappingDynamicsLangevin}, we build on this result to derive the stationary measure of the dynamics on $W(t)$, leading to the equations in \cref{Result:Langevin}. This result requires several assumptions on the dynamics (time-translational invariance and fluctuation--dissipation) that we detail in \cref{Assumption4}. 
    \item In \cref{App:Subsec:LabelLangevin}, under similar assumptions, we derive the stationary measure for the typical label, whose expression is given in equation \eqref{eq:DMFT1Y}. 
    \item In \cref{App:Subsec:ZeroTemperature}, we study the zero-temperature ($\beta \to \infty$) limit of the stationary measure previously derived. At positive regularization, we show that we recover the results obtained in the gradient flow setting. 
    \item In \cref{App:Subsec:BayesOptimal}, we show that an appropriate choice of the inverse temperature $\beta$ leads to the Bayes-optimal equations of \citet{maillard2024bayes}. 
\end{itemize}

\subsection{Derivation of a Stationary Measure} \label{App:Subsec:StationaryMeasure}

In this section, we introduce an auxiliary stochastic differential equation and derive its stationary measure using a Gaussian coupling.

\subsubsection{Summary of the Result} \label{App:Subsubsec:InvariantGeneral}

In the following, we consider the stochastic differential equation on $W(t) \in \R^{d \times m}$:
\begin{equation} \label{eq:SDEGeneral}
\begin{aligned}
    \d W(t) &= 2 \left[ \int_0^t R(t-t') \Big( \mathcal{H}(t') + A - W(t')W(t')^\top \Big) \d t' \right] W(t) \d t \\
    &\hspace{3cm} - \nabla \Omega \big( W(t) \big) \d t + \frac{1}{\sqrt{\beta d}} \d B(t),
\end{aligned}
\end{equation}
where $A \in \mathcal{S}_d(\R)$ is a fixed matrix, $\Omega \colon \R^{d \times m} \to \R^+$ is a continuously differentiable regularization, $B$ is a standard Brownian motion over $\R^{d \times m}$, and $\mathcal{H}$ is a Gaussian process on $\mathcal{S}_d(\R)$, independent of $B$, with zero mean and stationary covariance:
\begin{equation*}
    \E \, \mathcal{H}_{ij}(t) \mathcal{H}_{i'j'}(t') = \frac{1}{d} \big( \delta_{ii'} \delta_{jj'} + \delta_{ij'} \delta_{i'j} \big) C(t-t'). 
\end{equation*}
Moreover, we assume that:
\begin{itemize}
    \item The stationary covariance $C$ can be expressed as:
    \begin{equation*}
        C(t) = \int_0^\infty \phi(\theta) e^{- \theta |t|} \d \theta,
    \end{equation*}
    for some $\phi \colon \R^+ \to \R^+$ and one has $C(t) \xrightarrow[t \to \infty]{} 0$. 
    \item The functions $R, C$ satisfy the fluctuation--dissipation relation:
    \begin{equation} \label{eq:FDRGeneral}
        R(t) - 4 \beta \int_0^t C'(t-t') R(t') \d t' = \delta(t),
    \end{equation}
    where $\delta$ is the Dirac delta distribution supported at zero.  
\end{itemize}
When these assumptions are satisfied, we show that the stochastic equation given in equation~\eqref{eq:SDEGeneral} has stationary measure:
\begin{equation} \label{eq:InvariantMeasureGeneral}
    \P_\beta(W) \propto \exp \left( - \frac{\beta d}{1+4 \beta C(0)} \Big\| WW^\top - A \Big\|_F^2 - 2\beta d \, \Omega(W) \right). 
\end{equation}
For the following, it will be interesting to note that the fluctuation--dissipation relation in equation \eqref{eq:FDRGeneral} implies the equality:
\begin{equation*}
    \int_0^\infty R(t) \d t = \frac{1}{1 + 4 \beta C(0)}. 
\end{equation*}
The following sections are dedicated to the derivation of this result. More precisely, we introduce in the following section a Gaussian coupling that allows to derive the stationary measure in equation \eqref{eq:InvariantMeasureGeneral} from standard results on Langevin dynamics. 

\subsubsection{Gaussian Coupling}

In this part, we consider the following potential on $W \in \R^{d \times m}$ and the symmetric matrices $\Q_1, \dots, \Q_K~\in~\mathcal{S}_d(\R)$:
\begin{equation}
    U_\text{coupling} \Big( W, \big\{ \mathcal{Q}_k \big\} \Big) = \left\| WW^\top - A - \sum_{k=1}^K g_k \mathcal{Q}_k \right\|_F^2 + \sum_{k=1}^K \theta_k \big\| \mathcal{Q}_k \big\|_F^2 + 2 \Omega(W),
\end{equation}
where $A \in \mathcal{S}_d(\R)$ is a fixed matrix and $\Omega$ is a smooth and coercive regularization. While we are mainly interested in $W$, we introduce the matrices $\mathcal Q_1, \dots, \mathcal Q_K$ as auxiliary variables that couple to~$W$.

The goal is to study a Langevin dynamics on the matrices $W, \big\{ \Q_k \big\}$ such that:
\begin{itemize}
    \item The stationary distribution of the dynamics is given by the Boltzmann--Gibbs distribution associated with the potential $U_\text{coupling}$, namely:
    \begin{equation} \label{eq:InvariantMeasure1}
        \P_{\text{coupling}} \big( W, \{\Q_k\} \big) \propto \exp \left( - \beta d \, U_\text{coupling} \Big( W, \big\{ \mathcal{Q}_k \big\} \Big) \right), 
    \end{equation}
    for some inverse temperature $\beta > 0$. Remark that the potential $U_\text{coupling}$ is quadratic in the $\{ \Q_k \}$, so that these variables are Gaussian when drawn from $\P_\text{coupling}$. 
    \item The dynamics for the auxiliary variables $\{ \mathcal Q_k \}$ can be solved explicitly, and leads to an effective stochastic dynamics for $W$ that is equivalent to the dynamics \eqref{eq:SDEGeneral}.
\end{itemize}
To achieve this, we consider the stochastic differential equations:
\begin{equation} \label{eq:CouplingBO}
\begin{aligned}
    \d W(t) &= - \frac{1}{2} \nabla_W U_\text{coupling} \Big( W(t), \big\{ \mathcal{Q}_k(t) \big\} \Big) \d t + \frac{1}{\sqrt{\beta d}} \d B(t), \\
    \d \mathcal{Q}_k(t) &= - \frac{1}{2} \nabla_{\mathcal{Q}_k} U_\text{coupling} \Big( W(t), \big\{ \mathcal{Q}_k(t) \big\} \Big) \d t + \frac{1}{\sqrt{\beta d}} \d \Xi_k(t),
\end{aligned}
\end{equation}
where $B$ is a standard Brownian motion on $\R^{d \times m}$, and $\Xi_1, \dots, \Xi_K$ are independent (and independent of $B$) standard Brownian motions over $\mathcal{S}_d(\R)$. As $U_\text{coupling}$ is confining, it is well known \citep[see for instance][Proposition 4.2]{pavliotis2014stochastic} that the dynamics \eqref{eq:CouplingBO} admits a unique stationary distribution, given by \eqref{eq:InvariantMeasure1}. 

We now analyze the Langevin dynamics \eqref{eq:CouplingBO}. Computing the gradients, we arrive at:
\begin{align}
    \d W(t) &= -2\Big( W(t)W(t)^\top - A - \mathcal{Q}(t) \Big)W(t) \d t - \nabla \Omega \big( W(t) \big) \d t + \frac{1}{\sqrt{\beta d}} \d B(t), \label{eq:BOCouplingW} \\
    \d \mathcal{Q}_k(t) &= - \Big( \theta_k \mathcal{Q}_k(t) + g_k \mathcal{Q}(t) - g_k \Big( W(t)W(t)^\top - Z^* \Big) \Big) \d t + \frac{1}{\sqrt{\beta d}} \d \Xi_k(t), \label{eq:BOCouplingQK} \\
    \mathcal{Q}(t) &= \sum_{k=1}^K g_k \mathcal{Q}_k(t). \label{eq:BOCouplingQ}
\end{align}
Equation \eqref{eq:BOCouplingQK} can be integrated, leading to the expression of $\Q_k(t)$ depending on $\Q(t)$ and $W(t)$:
\begin{equation*}
\begin{aligned}
    \mathcal{Q}_k(t) &= e^{- \theta_k t} \mathcal{Q}_k(0) - g_k \int_0^t e^{-\theta_k(t-t')} \mathcal{Q}(t') \d t' + g_k \int_0^t e^{- \theta_k(t-t')} \Big( W(t') W(t')^\top - A \Big) \d t' \\
    &\hspace{5cm}+ \frac{1}{\sqrt{\beta d}} \int_0^t e^{-\theta_k(t-t')} \d \Xi_k(t'). 
\end{aligned}
\end{equation*}
Plugging this expression into equation \eqref{eq:BOCouplingQ}, we obtain the self-consistent expression of $\mathcal{Q}$:
\begin{equation*}
\begin{aligned}
    \mathcal{Q}(t) &= \sum_{k=1}^K g_k e^{- \theta_k t} \mathcal{Q}_k(0) - \int_0^t \left( \sum_{k=1}^K g_k^2 e^{-\theta_k(t-t')} \right) \Big( \mathcal{Q}(t') + A - W(t')W(t')^\top \Big) \d t' \\
    &\hspace{5cm} + \frac{1}{\sqrt{\beta d}} \sum_{k=1}^K g_k \int_0^t e^{- \theta_k(t-t')} \d \Xi_k(t'). 
\end{aligned}
\end{equation*}
To solve for $\Q$, we introduce the following scalar function and random process:
\begin{align}
    \Gamma(t) &= \sum_{k=1}^K g_k^2 e^{- \theta_k t}, \label{eq:defKernelBO} \\
    \mathcal{H}(t) &= \sum_{k=1}^K g_k e^{- \theta_k t} \Q_k(0) + \frac{1}{\sqrt{\beta d}} \sum_{k=1}^K g_k \int_0^t e^{- \theta_k(t-t')} \d \Xi_k(t'), \label{eq:defNoiseBO}
\end{align}
so that we have the self-consistent expression for $\Q$:
\begin{equation*}
    \Q(t) = -\int_0^t \Gamma(t-t') \Big( \Q(t') + A - W(t') W(t')^\top \Big) \d t' + \mathcal{H}(t). 
\end{equation*}
We can write $\Q$ explicitly by introducing $R$ such that:
\begin{equation} \label{eq:RelationGammaK}
    R(t) + \int_0^t \Gamma(t-t') R(t') \d t' = \delta(t). 
\end{equation}
As a consequence, $\Q$ can be expressed as:
\begin{equation*}
    \Q(t) = W(t) W(t)^\top - A - \int_0^t R(t-t') \Big( W(t')W(t')^\top - A - \mathcal{H}(t') \Big) \d t'. 
\end{equation*}
Then, plugging this expression into the dynamics on $W$ in equation \eqref{eq:BOCouplingW}, we precisely obtain the stochastic equation \eqref{eq:SDEGeneral}. 

\subsubsection{Covariance of the Noise} \label{App:Subsubsec:CovarianceCoupling}

We now study the random process $\mathcal{H}(t)$ defined in equation~\eqref{eq:defNoiseBO}. Provided that the initializations $\{\Q_k^0 \}$ are Gaussian, $\mathcal{H}(t)$ is a matrix-valued Gaussian process. More precisely, we assume that the $\{ \Q_k^0 \}$ are Gaussian matrices with zero mean and a covariance of the form:
\begin{equation*}
    \E \, \big( \Q_k^0 \big)_{ij} \big( \Q_{k'}^0 \big)_{i'j'} = \frac{\xi_k^0}{d} \delta_{kk'} \big( \delta_{ii'} \delta_{jj'} + \delta_{ij'} \delta_{i'j} \big). 
\end{equation*}
Then, since $\Xi_1, \dots, \Xi_K$ are independent Brownian motions over $\mathcal{S}_d(\R)$, one has the expression for the covariance of $\mathcal{H}$:
\begin{equation*}
    \E \, \mathcal{H}_{ij}(t) \mathcal{H}_{i'j'}(t') = \frac{1}{d} \big( \delta_{ii'} \delta_{jj'} + \delta_{ij'} \delta_{i'j} \big) \sum_{k=1}^K g_k^2 e^{ -\theta_k(t+t')} \left( \xi_k^0 + \frac{1}{4 \theta_k \beta} \big( e^{2 \theta_k \min(t,t')} - 1 \big) \right). 
\end{equation*}
This covariance has a transient part that vanishes as $t,t' \to \infty$, and a time-translational invariant part which is only a function of $t-t'$. Since we are only interested in the dynamics of $W(t)$ at long times, and the initialization of the $\big\{ \Q_k \big\}$ is not relevant, we choose the variance:
\begin{equation*}
    \xi_k^0 = \frac{1}{4 \theta_k \beta}. 
\end{equation*}
In this case the covariance of $\mathcal{H}$ is time-translational invariant:
\begin{equation*}
    \E \, \mathcal{H}_{ij}(t) \mathcal{H}_{i'j'}(t') = \frac{1}{4\beta d} \big( \delta_{ii'} \delta_{jj'} + \delta_{ij'} \delta_{i'j} \big) \sum_{k=1}^K \frac{g_k^2}{\theta_k} e^{ -\theta_k|t-t'|}. 
\end{equation*}
We then consider the function:
\begin{equation} \label{eq:CovarianceNoiseLangevin}
    C(\tau) = \frac{1}{4 \beta} \sum_{k=1}^K \frac{g_k^2}{\theta_k} e^{ -\theta_k|\tau|},  
\end{equation}
so that we have the fluctuation--dissipation relation between $C$ and the function $\Gamma$, defined in equation \eqref{eq:defKernelBO}:
\begin{equation*}
    - C'(\tau) = \frac{1}{4\beta} \Gamma(\tau), 
\end{equation*}
for $\tau > 0$. Thanks to the relationship between $R$ and $\Gamma$ in equation \eqref{eq:RelationGammaK}, we arrive at the fluctuation--dissipation relation:
\begin{equation} \label{eq:FDR1}
    R(t) - 4 \beta \int_0^t C'(t-t') R(t') \d t' = \delta(t). 
\end{equation}
This relationship fully characterizes $R$. In addition, equation \eqref{eq:CovarianceNoiseLangevin} can be interpreted as the discretization (at finite $K$) of the continuous representation:
\begin{equation*}
    C(\tau) = \int_0^\infty \phi(\theta) e^{- \theta |\tau|} \d \theta,
\end{equation*}
for $\phi \colon \R^+ \to \R^+$. Combining this remark with the fluctuation--dissipation relation \eqref{eq:FDR1}, this leads to the result given in \cref{App:Subsubsec:InvariantGeneral}. 

\subsubsection{Stationary Measure} \label{App:Subsubsec:StationaryTTI}

We shall now derive the stationary measure given in equation \eqref{eq:InvariantMeasureGeneral}. Recall that we integrated the variables $\{\Q_k\}$ in the Langevin dynamics~\eqref{eq:CouplingBO} to obtain an equation solely on $W(t)$. Now, to access the stationary measure on $W$ itself, we apply the same procedure and average the joint distribution~\eqref{eq:InvariantMeasure1} with respect to the $\{\Q_k\}$:
\begin{equation*}
    \P_\beta(W) \propto \int \exp \left( - \beta d \, U_\text{coupling} \Big( W, \big\{ \mathcal{Q}_k \big\} \Big) \right) \d \Q_1 \dots \d \Q_K. 
\end{equation*}
Due to the expression of the potential $U_\text{coupling}$, we define:
\begin{equation*}
\begin{aligned}
    \mathcal{M} &= \begin{pmatrix} g_1 \big( WW^\top - A \big) \\ \vdots \\ g_K \big( WW^\top - A \big) \end{pmatrix} \in \mathcal{S}_d(\R)^K, \hspace{1.2cm} \mathcal{T} =  \mathrm{diag} \Big( \theta_1 I_d, \dots, \theta_K I_d \Big) + \Big( g_k g_l I_d \Big)_{1 \leq k,l \leq K}, 
\end{aligned}
\end{equation*}
where $\mathcal{T}$ is viewed as a linear map on $\mathcal{S}_d(\R)^K$. Then, denoting $\langle \, , \, \rangle$ the Euclidean inner product on $\mathcal{S}_d(\R)^K$, we have:
\begin{equation} \label{eq:InvariantW1}
\begin{aligned}
    \P_\beta(W) &\propto \exp \left( - \beta d \big\| WW^\top - A \big\|_F^2 - 2 \beta d \, \Omega(W) \right) \\
    &\hspace{3cm}\int_{\mathcal{S}_d(\R)^K} \d \Q \, \exp \Big( 2 \beta d \, \langle \mathcal{M}, \Q \rangle - \beta d \, \langle \Q, \mathcal{T}(\Q) \rangle \Big) \\
    &\propto \exp \left( - \beta d \big\| WW^\top - A \big\|_F^2 - 2 \beta d \, \Omega(W) \right) \exp \Big( \beta d \, \langle \mathcal{M}, \mathcal{T}^{-1}(\mathcal{M}) \rangle \Big).    
\end{aligned}
\end{equation}
We computed the Gaussian integral with respect to $\Q$ and ignored the term proportional to $\det(\mathcal{T})$ that is independent of $W$. In the following, we view $\mathcal{S}_d(\R)^K$ as the tensor product $\R^K \otimes \mathcal{S}_d(\R)$. This induces a tensor structure for the space of linear maps on $\mathcal{S}_d(\R)^K$. With this identification, we write:
\begin{equation*}
    \mathcal{T} = \big( \Theta + gg^\top \big) \otimes \mathrm{Id}, \hspace{1.5cm} \mathcal{M} = g \otimes \big( WW^\top - A \big).
\end{equation*}
where $\mathrm{Id}$ denotes the identity map on the space of symmetric matrices, $g = \big( g_1, \dots, g_K \big)^\top \in \R^K$ and $\Theta = \mathrm{diag}(\theta_1, \dots, \theta_K) \in \R^{K \times K}$. Therefore, we have:
\begin{equation*}
    \langle \mathcal{M}, \mathcal{T}^{-1}(\mathcal{M}) \rangle = g^\top \big( \Theta + gg^\top \big)^{-1} g \, \big\| WW^\top - A \big\|_F^2. 
\end{equation*}
As a consequence of the Sherman--Morrison formula \citep[see][]{hager1989updating}:
\begin{equation*}
    g^\top \big( \Theta + gg^\top \big)^{-1} g = \frac{g^\top \Theta^{-1} g}{1 + g^\top \Theta^{-1} g}. 
\end{equation*}
Computing this last quantity, along with the expression of $\P_\beta(W)$ in equation \eqref{eq:InvariantW1}, we finally obtain:
\begin{equation*}
    \P_\beta(W) \propto \exp \left( - \frac{\beta d}{1+c} \big\|WW^\top - A \big\|_F^2 - 2 \beta d \, \Omega(W) \right),
\end{equation*}
with:
\begin{equation*}
    c = \sum_{k=1}^K \frac{g_k^2}{\theta_k}. 
\end{equation*}
From the expression of the covariance of the noise in equation \eqref{eq:CovarianceNoiseLangevin}, we have that $c = 4 \beta C(0)$. This leads to equation \eqref{eq:InvariantMeasureGeneral}. 

\subsection{Mapping onto the Dynamics} \label{App:Subsec:MappingDynamicsLangevin}

In this section, we apply the result of \cref{App:Subsec:StationaryMeasure} to compute the stationary measure of the dynamics given in \cref{Result1}. We recall that $W(t)$ is solution of the equation:
\begin{equation} \label{eq:DynamicsWLangevin}
    \d W(t) = 2 \left( \int_0^t R(t,t') \Big( \G(t') + Z^* - Z(t')\Big) \d t' \right) W(t) \d t - \nabla \Omega \big( W(t) \big) \d t + \frac{1}{\sqrt{\beta d}} \d B(t),
\end{equation}
where the kernel $R$ and the covariance of $\G$ are self-consistently computed from equation~\eqref{eq:DynamicsWLangevin}:
\begin{align}
    \E \, \G_{ij}(t) \G_{i'j'}(t') &= \frac{1}{2\alpha d} \big( \delta_{ii'} \delta_{jj'} + \delta_{ij'} \delta_{i'j} \big) \left( \frac{1}{d} \E \, \tr \Big( (Z(t) - Z^*)(Z(t') - Z^*) \Big) + \frac{\Delta}{2} \right), \label{eq:CovarianceLangevin} \\
    R(t,t') &= \delta(t-t') - \frac{1}{\alpha d^2}\tr \left( \left. \frac{\partial \, \E \, Z(t)}{\partial H(t')} \right|_{H = 0} \right) \label{eq:ResponseLangevin},
\end{align}
and the derivative in equation \eqref{eq:ResponseLangevin} is defined under a perturbation $\G(t') \mapsto \G(t') + H(t')$. The stochastic differential equation on $W$ is very similar to the one in equation \eqref{eq:SDEGeneral}, with some differences. In the dynamics \eqref{eq:SDEGeneral}, it is required that the noise $\mathcal{H}$ decouples at large times, therefore we decompose $\G(t)$ as a static part and an independent Gaussian noise $\H(t)$:
\begin{equation} \label{eq:DecompositionNoiseStatic}
    \G(t) = \sqrt{\xi} \G + \mathcal{H}(t),
\end{equation}
where $\G \sim \mathrm{GOE}(d)$ and $\H(t)$ is a centered Gaussian process such that:
\begin{equation} \label{eq:Lan gevinCovarianceH}
    \E \, \H_{ij}(t) \H_{i'j'}(t') = \frac{1}{d} \big( \delta_{ii'} \delta_{jj'} + \delta_{ij'} \delta_{i'j} \big) C(t,t'),
\end{equation}
with $C(t,t')$ going to zero as $t-t' \to \infty$. The variable $\xi$ then corresponds to the covariance of the static part of the noise. Therefore, in order to match the setting of \cref{App:Subsec:StationaryMeasure}, we set $A = Z^* + \sqrt{\xi} \G$, so that equation \eqref{eq:DynamicsWLangevin} matches \eqref{eq:SDEGeneral}. 

Additionally, we introduce an assumption regarding the system of equations \eqref{eq:DynamicsWLangevin},~\eqref{eq:CovarianceLangevin},~\eqref{eq:ResponseLangevin} allowing to apply the results obtained in \cref{App:Subsubsec:InvariantGeneral}. 

\begin{assumption} \label{Assumption4}

\begin{assumpitem}[Asymptotic time-translational invariance.]\label{Assumption4:TTI} There exists $C_\text{TTI}, R_\text{TTI} \colon \R^+ \to \R$ such that:
\begin{equation} \label{eq:ApproximateTTI}
\begin{aligned}
    \sup_{t \geq t'} \big| C(t,t') - C_\text{TTI}(t-t') \big| &\xrightarrow[t' \to \infty]{} 0, \\
    \int_0^t \big| R(t,t') - R_\text{TTI}(t-t') \big| \d t' &\xrightarrow[t \to \infty]{} 0. 
\end{aligned}
\end{equation}
\end{assumpitem} 
\begin{assumpitem}[Fluctuation--dissipation.]\label{Assumption4:FDT} $C_\text{TTI}, R_\text{TTI}$ are linked by the fluctuation--dissipation relation:
\begin{equation} \label{eq:FDTAssumption}
    \delta(t) = R_\text{TTI}(t) - 4 \beta \int_0^t C_\text{TTI}'(t-t') R_\text{TTI}(t') \d t'. 
\end{equation}
\end{assumpitem}
\end{assumption}

Indeed, the setting of \cref{App:Subsubsec:InvariantGeneral} requires the covariance of $\mathcal{H}$ and the function $R(t,t')$ to be time-translational invariant, i.e., to only depend on the time difference $t-t'$. This assumption is not satisfied by the dynamics \eqref{eq:DynamicsWLangevin}, for which non-stationary effects may persist at early times.

However, it is standard in dynamical mean-field theory and generalized Langevin equations to assume that  deviations of time-translational invariance (TTI) become negligible in the long-time regime. Following prior works such as those of \citet{fan2025dynamical}, \citet{chen2025learning}, we assume that the correlation $C(t,t')$ and response $R(t,t')$ converge to TTI limits in the sense of equation~\eqref{eq:ApproximateTTI}. In addition, the conditions in equation \eqref{eq:ApproximateTTI} ensure that all the non-TTI contributions vanish in the long-time limit. These subleading corrections do not contribute to the stationary measure, which is solely determined by the asymptotic of the TTI components $C_\text{TTI}$ and $R_\text{TTI}$.

\subsubsection{Fluctuation--Dissipation Relation} \label{App:Subsubsec:FDR}

In this part, we explain how we can recover the fluctuation--dissipation relation given in \cref{Assumption4:FDT} (and obtained in equation \ref{eq:FDRGeneral}) from the system of equations \eqref{eq:DynamicsWLangevin}, \eqref{eq:CovarianceLangevin}, \eqref{eq:ResponseLangevin}. To do so, we consider the perturbed dynamics:
\begin{align}
    \d W(t) &= 2 M(t) W(t) \d t - \nabla \Omega \big( W(t) \big) \d t + 2 \tilde H(t) W(t) \d t + \frac{1}{\sqrt{\beta d}} \d B(t), \label{eq:FDTPerturbed} \\
    M(t) &= \int_0^t R(t,t') \Big( \G(t') + Z^* - Z(t')\Big) \d t'. 
\end{align}
Formally, the additional drift term originating from $\tilde H$ can be interpreted as the gradient of the quadratic function:
\begin{equation*}
    2 \tilde H W = \nabla_W \tr \big( \tilde H WW^\top \big),
\end{equation*}
except that $\tilde H$ is time-dependent. In this case we say that $\tilde H$ is conjugate to the observable $Z(t) = W(t)W(t)^\top$. In the setting of Langevin dynamics driven to equilibrium, it is well known that the linear response to a perturbation conjugate to an observable is related to the correlation function of that observable through the fluctuation--dissipation theorem \citep{kubo1966fluctuation}. 

Assuming that the unperturbed dynamics converges to a stationary state and satisfies the asymptotic time-translational invariance in \cref{Assumption4:TTI}, we therefore assume that the standard fluctuation--dissipation relation holds for the observable $Z(t)$. More precisely:
\begin{equation*}
    \left. \frac{\partial \, \E \, Z_{ij}(t)}{\partial \tilde H_{ij}(t')} \right|_{\tilde H = 0} = 2 \beta d \, \partial_{t'} \E \big[ Z_{ij}(t) Z_{ij}(t') \big]. 
\end{equation*}
This relation expresses that, at equilibrium, the response of $Z_{ij}$ to a small perturbation is governed by its spontaneous fluctuations. The factor $2 \beta d$ is the inverse of the diffusion constant in the dynamics \eqref{eq:FDTPerturbed}. Then, summing over the indices $i,j$, we get the relationship:
\begin{equation} \label{eq:FDT1}
    \frac{1}{d^2} \tr \left( \left. \frac{\partial \, \E \, Z(t)}{\partial \tilde H(t')} \right|_{\tilde H = 0} \right) = 2 \beta \partial_{t'} C_Z(t,t'), 
\end{equation}
with:
\begin{equation*}
    C_Z(t,t') = \frac{1}{d} \E \, \tr \big( Z(t) Z(t') \big). 
\end{equation*}
Now, recall that the fluctuation--dissipation relation in equation \eqref{eq:FDTAssumption} relates the response in equation \eqref{eq:ResponseLangevin} and the covariance of the noise $\mathcal{H}(t)$. Considering the asymptotic TTI regime, we may write $C_Z(t,t') \approx C_Z^\text{TTI}(t-t')$ for large values of $t,t'$ and obtain that:
\begin{equation*}
    C_\text{TTI}(t) = \frac{1}{2 \alpha} \Big( C_Z^\text{TTI}(t) - C_Z^\text{TTI}(\infty) \Big). 
\end{equation*}
This relationship is a consequence of the expression of the covariance of $\G$ in equation \eqref{eq:CovarianceLangevin}, as well as the decomposition of $\G$ in equation \eqref{eq:DecompositionNoiseStatic}. Therefore:
\begin{equation} \label{eq:DerivativeTTI}
    \partial_{t'} C_Z(t,t') = - 2 \alpha C_\text{TTI}'(t-t').
\end{equation}
We now consider the left side of equation \eqref{eq:FDT1}. Recall equations \eqref{eq:FDTPerturbed} and \eqref{eq:ResponseLangevin} that respectively define how the perturbations $\tilde H(t)$ and $H(t)$ appear in the dynamics. Then, one can relate the associated responses by setting: 
\begin{equation*}
    \tilde H(t) = \int_0^t R(t,t') H(t') \d t'. 
\end{equation*}
This leads to the relationship:
\begin{equation} \label{eq:ResponseFDR1}
    \frac{1}{d^2} \tr \left( \left. \frac{\partial \, \E \, Z(t)}{\partial H(t')} \right|_{H = 0} \right) = \int_{t'}^t \frac{1}{d^2} \tr \left( \left. \frac{\partial \, \E \, Z(t)}{\partial \tilde H(t'')} \right|_{\tilde H = 0} \right) R(t'',t') \d t''. 
\end{equation}
Now, the left side of equation \eqref{eq:ResponseFDR1} can be reexpressed using equation \eqref{eq:ResponseLangevin} to obtain:
\begin{align}
    \delta(t-t') &= R(t,t') + \frac{1}{\alpha} \int_{t'}^t \tilde R_Z(t,t'') R(t'',t') \d t'', \\
    \tilde R_Z(t,t') &= \frac{1}{d^2} \tr \left( \left. \frac{\partial \, \E \, Z(t)}{\partial \tilde H(t')} \right|_{\tilde H = 0} \right). \label{eq:LangevinResponse2}
\end{align}
In the asymptotic TTI regime, we precisely get equation \eqref{eq:FDRGeneral} when combining this previous equation, the fluctuation--dissipation relation \eqref{eq:FDT1} and the identity \eqref{eq:DerivativeTTI}.

\subsubsection{Self-Consistent Equations} \label{App:Subsubsec:SelfConsistentLangevin}

Recall that in the dynamics \eqref{eq:DynamicsWLangevin}, the covariance of $\G$ and the response $R$ are computed self-consistently with respect to averaged quantities of $W$. This does not change the stationary measure of equation \eqref{eq:InvariantMeasureGeneral}, with $A = Z^* + \sqrt{\xi} \G$:
\begin{equation} \label{eq:InvariantmeasureDMFT}
    \P_\beta(W) \propto \exp \left( - \frac{\beta d}{1 + 4 \beta C_\text{TTI}(0)} \Big\| WW^\top - Z^* - \sqrt{\xi} \G \Big\|_F^2 - 2 \beta d \, \Omega(W) \right). 
\end{equation}
However, the scalars $\xi, C_\text{TTI}(0)$ need to be computed self-consistently from the distribution $\P_\beta$.  

\paragraph{Self-consistency of the covariances.} From the decomposition of the Gaussian processes in equation \eqref{eq:DecompositionNoiseStatic} and the expression for the covariance of $\G(t)$ in equation \eqref{eq:CovarianceLangevin}, we have the expressions:
\begin{equation} \label{eq:SelfConsistentCovLangevin}
    \xi = \frac{1}{2\alpha} \left( \mathrm{MSE} + \frac{\Delta}{2} \right), \hspace{1.5cm} C_\text{TTI}(0) = \frac{1}{2\alpha} \Big( \overline{\mathrm{MSE}} - \mathrm{MSE} \Big),
\end{equation}
where the quantities $\mathrm{MSE}, \overline{\mathrm{MSE}}$ are computed as expectations over the distribution of $W$ in equation \eqref{eq:InvariantmeasureDMFT}, the GOE matrix $\G$ and the teacher matrix $Z^*$:
\begin{equation} \label{eq:MSEsLangevin}
    \mathrm{MSE} = \frac{1}{d} \E_{\G, Z^*} \Big\| \E_\beta\big[ WW^\top \big] - Z^* \Big\|_F^2, \hspace{1.5cm} \overline{\mathrm{MSE}} = \frac{1}{d} \E_{\G, Z^*} \E_\beta \big\| WW^\top - Z^* \big\|_F^2,
\end{equation}
where $\E_\beta$ designates the average solely with respect to $\P_\beta$ in equation~\eqref{eq:InvariantmeasureDMFT}. 

\paragraph{Self-consistency of the response.} In addition, one can impose the self-consistency of the response in equation \eqref{eq:ResponseLangevin} by integrating with respect to $t'$:
\begin{equation*}
    \lim_{t \to \infty} \int_0^t R(t,t') \d t' = 1 - \frac{1}{\alpha} \lim_{t \to \infty} \int_0^t \frac{1}{d^2}\tr \left( \left. \frac{\partial \, \E \, Z(t)}{\partial H(t')} \right|_{H = 0} \right) \d t'. 
\end{equation*}
Now, it is clear that integrating the response over $\R^+$ is the same as having a time-independent perturbation $H$ (see \cref{App:Subsubsec:LongTimesResponse}). This leads to:
\begin{equation} \label{eq:ResponseInvariantMeasure}
    \lim_{t \to \infty} \int_0^t R(t,t') \d t' = 1 - \frac{1}{\alpha d^2} \tr \left( \left. \frac{\partial \, \E_H \big[WW^\top\big]}{\partial H} \right|_{H = 0} \right),
\end{equation}
when considering the perturbed distribution:
\begin{equation*}
    \P_{\beta, H}(W) \propto \exp \left( - \frac{\beta d}{1 + 4 \beta C_\text{TTI}(0)} \Big\| WW^\top - Z^* - \sqrt{\xi} \G - H \Big\|_F^2 - 2 \beta d \, \Omega(W) \right).
\end{equation*}
Now, it is possible to compute the derivative in equation \eqref{eq:ResponseInvariantMeasure}. By taking into account the fact that the normalization constant in $\P_{\beta, H}$ also depends on $H$, we obtain:
\begin{equation*}
\begin{aligned}
    \frac{1}{d^2} \tr \left( \left. \frac{\partial \, \E_H \big[WW^\top\big]}{\partial H} \right|_{H = 0} \right) = \frac{2 \beta}{1 + 4 \beta C_\text{TTI}(0)} \bigg[& \E_W \, \tr \Big( \big( WW^\top - Z^* - \sqrt{\xi} \G \big) WW^\top \Big) \\
    &- \tr \Big( \E_W\big[WW^\top] \E_W \big[WW^\top - Z^* - \sqrt{\xi} \G \big] \Big) \bigg]. 
\end{aligned}
\end{equation*}
Since the matrices $Z^*, \G$ are fixed when averaging with respect to $W$, we get that:
\begin{equation*}
    \frac{1}{d^2} \E_{\G, Z^*} \tr \left( \left. \frac{\partial \, \E_H \big[WW^\top\big]}{\partial H} \right|_{H = 0} \right) = \frac{2 \beta}{1 + 4 \beta C_\text{TTI}(0)} \Big( \overline{\mathrm{MSE}} - \mathrm{MSE} \Big). 
\end{equation*}
Therefore, with the relationship \eqref{eq:SelfConsistentCovLangevin} between $C_\text{TTI}(0)$ and $\overline{\mathrm{MSE}} - \mathrm{MSE}$, we get the identity:
\begin{equation} \label{eq:IntegratedResponseLangevin}
    \lim_{t \to \infty} \int_0^t R(t,t') \d t' = \frac{1}{1 + 4 \beta C_\text{TTI}(0)}. 
\end{equation}
It is interesting to note that this identity is consistent with the fluctuation--dissipation relation~\eqref{eq:FDTAssumption}. To see this, remark that our assumptions for the asymptotic TTI in equation \eqref{eq:ApproximateTTI} imply that:
\begin{equation*}
    \lim_{t \to \infty} \int_0^t R(t,t') \d t' = \int_0^\infty R_\text{TTI}(t') \d t'. 
\end{equation*}
Then, integrating the fluctuation--dissipation relation \eqref{eq:FDTAssumption} with respect to $t$ and using that $C_\text{TTI}(\infty) = 0$ exactly yields the relation \eqref{eq:IntegratedResponseLangevin}. This correspondence confirms the fluctuation--dissipation relation. 

\paragraph{Conclusion.} To conclude, it is easily seen that we have:
\begin{equation} \label{eq:DefVarianceLangevin}
    \overline{\mathrm{MSE}} - \mathrm{MSE} = \frac{1}{d} \E_{\G, Z^*} \E_\beta \Big\| \E_\beta \big[WW^\top] - WW^\top \Big\|_F^2 \equiv V_\beta. 
\end{equation}
Then, given the expression of the integrated response in equation \eqref{eq:IntegratedResponseLangevin} (that we call $r$ from now on), and replacing the expression of $C_\text{TTI}(0)$ by its expression in equation \eqref{eq:SelfConsistentCovLangevin}, we obtain the self-consistent equations of \cref{Result:Langevin}, along with the stationary measure in equation \eqref{eq:InvariantmeasureDMFT}. 

\subsection{Label Equation} \label{App:Subsec:LabelLangevin}

In this section, we perform a similar calculation to derive the stationary measure for the dynamics of the typical label, given in equation \eqref{eq:DMFT1Y}. To study this dynamics, we rather use the equivalent formulation given in equation \eqref{eq:EvolutionLabels}:
\begin{equation} \label{eq:LangevinY1}
    y(t) + \xi(t) - \chi_Z(t) y^* + \frac{2}{\alpha} \int_0^t \tilde R_Z(t,t') \Big( y(t') - y^* - \sqrt{\Delta} \zeta \Big) \d t' = 0,
\end{equation}
where $\tilde R_Z$ is the response function defined in equation \eqref{eq:LangevinResponse2}, $\zeta \sim \N(0,1)$ and $\xi$ is a Gaussian process with mean and covariance:
\begin{equation*}
    \E \, \xi(t) \xi(t') = 2 C_Z(t,t') - 2Q_* \chi_Z(t)\chi_Z(t') \equiv K_Z(t,t'),
\end{equation*}
and we have the statistics given in \cref{Result1}:
\begin{equation} \label{eq:AveragedLangevin}
\begin{aligned}
    C_Z(t,t') &= \frac{1}{d} \E \, \tr \big( Z(t) Z(t') \big), &\hspace{1.5cm} \chi_Z(t) = \frac{1}{Q_*} \frac{1}{d} \E \, \tr \big( Z(t) Z^* \big), \\
    Q_* &= \frac{1}{d} \E \, \tr(Z^{*2}). 
\end{aligned}
\end{equation}
Moreover, at long times, under time-translational invariance, equation \eqref{eq:FDT1} guarantees the fluctuation--dissipation relation:
\begin{equation} \label{eq:FDRlabels}
    \tilde R_Z(\tau) = - \beta K_Z'(\tau),
\end{equation}
for $\tau > 0$. In the following, we shall introduce a coupling similar to the one of \cref{App:Subsec:StationaryMeasure} that can be mapped onto the dynamics for $y$. 

\subsubsection{Gaussian Coupling}

We introduce auxiliary variables $\mathfrak{q}_1, \dots, \mathfrak{q}_k$ and the potential:
\begin{equation*}
    U_y \big( \{\mathfrak{q}_k\} \big) = \sum_{k=1}^K \theta_k \mathfrak{q}_k^2 + \frac{1}{\alpha} \left( \sum_{k=1}^K g_k \mathfrak{q}_k \right)^2,
\end{equation*}
for some constants $\theta_1, \dots, \theta_K > 0$ and $g_1, \dots, g_K \in \R$. We then consider the Langevin dynamics:
\begin{equation} \label{eq:LangevinQk}
    \d \mathfrak{q}_k(t) = - \frac{1}{2} \frac{\partial U_y}{\partial \mathfrak{q}_k} \Big( \big\{ \mathfrak{q}_k(t) \big\} \Big) \d t + \frac{1}{\sqrt{\beta}} \d B_k(t), 
\end{equation}
where $B_1, \dots, B_K$ are independent Brownian motions. Then, it is clear that the stationary measure for the process $\{\mathfrak{q}_k(t) \}$ is given by the Boltzmann--Gibbs distribution associated with $U_y$. Then, computing the partial derivative of $U_y$, we can integrate the $\mathfrak{q}_k$ to get:
\begin{equation*}
\begin{aligned}
    \mathfrak{q}_k(t) &= e^{-\theta_k t} \mathfrak{q}_k(0) - \frac{1}{\alpha} g_k \int_0^t e^{-\theta_k(t-t')} \mathfrak{q}(t') \d t' + \frac{1}{\sqrt{\beta}} \int_0^t e^{-\theta_k(t-t')} \d B_k(t'), \\
    \mathfrak{q}(t) &= \sum_{k=1}^K g_k \mathfrak{q}_k(t). 
\end{aligned}
\end{equation*}
Then, we have the expression for $\mathfrak{q}(t)$:
\begin{equation} \label{eq:qdefaging}
    \mathfrak{q}(t) = - \frac{1}{\alpha} \sum_{k=1}^K g_k^2 \int_0^t e^{-\theta_k(t-t')} \mathfrak{q}(t') \d t' + \sum_{k=1}^K g_k e^{-\theta_k t} \mathfrak{q}_k(0) + \frac{1}{\sqrt{\beta}} \sum_{k=1}^K g_k \int_0^t e^{-\theta_k(t-t')} \d B_k(t'). 
\end{equation}
We then choose the constants $g_1, \dots, g_K$ and $\theta_1, \dots, \theta_K$ so that:
\begin{equation} \label{eq:ResponseAging1}
    \tilde R_Z(t) = \frac{1}{2} \sum_{k=1}^K g_k^2 e^{-\theta_k t}. 
\end{equation}
Of course it is not obvious that we can decompose this function as a finite sum of exponentials, but as argued in \cref{App:Subsubsec:CovarianceCoupling} one can in principle take the $K \to \infty$ limit once the stationary measure is obtained. We then set:
\begin{equation} \label{eq:CovarianceAging1}
    \xi_0(t) = \sum_{k=1}^K g_k e^{-\theta_k t} \mathfrak{q}_k(0) + \frac{1}{\sqrt{\beta}} \sum_{k=1}^K g_k \int_0^t e^{-\theta_k(t-t')} \d B_k(t').  
\end{equation}
Assuming that the $\mathfrak{q}_k$ are initialized as independent Gaussians with zero mean and variance:
\begin{equation*}
    \E \big[\mathfrak{q}_k(0)^2 \big] = \frac{1}{2 \theta_k \beta},
\end{equation*}
we obtain that the covariance function of $\xi_0$ is time-translational invariant:
\begin{equation*}
    \E \, \xi_0(t) \xi_0(t') = \frac{1}{2\beta} \sum_{k=1}^K \frac{g_k^2}{\theta_k} e^{-\theta_k |t-t'|}. 
\end{equation*}
Now, as a consequence of the fluctuation--dissipation relation in equation \eqref{eq:FDRlabels}, we have for $t > 0$:
\begin{equation*}
    \partial_t \frac{1}{2\beta} \sum_{k=1}^K \frac{g_k^2}{\theta_k} e^{-\theta_k|t|} = - \frac{1}{\beta} \tilde R_Z(t) = K_Z'(t). 
\end{equation*}
Therefore, the covariances of $\xi_0$ and $\xi$ appearing in equation \eqref{eq:LangevinY1} only differ by a constant. We now define $y_\text{eff}(t) = \mathfrak{q}(t) + m$ for some constant $m$. Then, using the expressions of $\mathfrak{q}(t), \tilde R_Z^\text{TTI}(t)$ and $\xi(t)$ in equations \eqref{eq:qdefaging}, \eqref{eq:ResponseAging1}, \eqref{eq:CovarianceAging1}, we have:
\begin{equation*}
    y_\text{eff}(t) + \frac{2}{\alpha} \int_0^t \tilde R_Z(t-t') y_\text{eff}(t') \d t' - \xi_0(t) - \left(1 + \frac{2}{\alpha} r_Z(t) \right) m = 0,
\end{equation*}
where:
\begin{equation*}
    r_Z(t) = \int_0^t \tilde R_Z(s) \d s. 
\end{equation*}
In order to match the noises $\xi$ and $\xi_0$ remark that the covariance of $\xi_0$ between $t,t'$ vanishes when $t-t' \to \infty$. Then, in distribution, we can decompose $\xi(t) = \xi_\infty + \xi_0(t)$, where $\xi_\infty$ is a Gaussian variable with variance:
\begin{equation*}
    \E \, \xi_\infty^2 = \lim_{t \to \infty} K_Z(t). 
\end{equation*}
Then, at long times, we shall replace the one-time functions by their final values, and the dynamics on $y_\text{eff}$ matches equation \eqref{eq:LangevinY1} if we choose:
\begin{equation} \label{eq:meanlabelaging}
    m = \frac{1}{\alpha + 2 r_Z} \Big( \big( \alpha \chi_Z + 2 r_Z \big) y^* + \alpha \xi_\infty + 2 r_Z \sqrt{\Delta} \zeta \Big). 
\end{equation}
with $r_Z, \chi_Z$ being respectively the long-time limits of $r_Z(t), \chi_Z(t)$. Therefore, the previous calculation shows that the dynamics \eqref{eq:LangevinY1} can be mapped onto the Langevin equation \eqref{eq:LangevinQk}. Now, the stationary measure for the variables $\{\mathfrak{q}_k\}$ is given by:
\begin{equation*}
\begin{aligned}
    \P \big( \{\mathfrak{q}_k\} \big) &\propto \exp \left( - \beta \sum_{k=1}^K \theta_k \mathfrak{q}_k^2 - \frac{\beta}{\alpha} \left( \sum_{k=1}^K g_k \mathfrak{q}_k \right)^2 \right) \\
    &= \exp \left( - \beta \mathfrak{q}^\top \Theta \mathfrak{q} - \frac{\beta}{\alpha} (g^\top \mathfrak{q})^2 \right),
\end{aligned}
\end{equation*}
with $g = \big( g_1, \dots, g_K \big)^\top \in \R^K$ and $\Theta = \mathrm{diag}(\theta_1, \dots, \theta_K) \in \R^{K \times K}$. Since $y_\text{eff} = g^\top \mathfrak{q} + m$, it can be shown that $y_\text{eff}$ is Gaussian with mean $m$ and variance:
\begin{equation*}
    \mathrm{Var} \, y_\text{eff} = \frac{1}{2\beta} \frac{\alpha c}{\alpha + c}, \hspace{1.5cm} c = \sum_{k=1}^K \frac{g_k^2}{\theta_k}. 
\end{equation*}
Now, from equations \eqref{eq:FDRlabels}, \eqref{eq:ResponseAging1}, we have the identity:
\begin{equation} \label{eq:coeffsLabels}
    c = 2r_Z = 2 \beta \big(K_Z(0) - K_Z^\infty \big).
\end{equation} 

\subsubsection{Self-Consistent Equations} \label{App:Subsubsec:LabelLangevinSummary}

The previous calculation shows that we cannot write a self-consistent set of equations on the variable $y$ only. Indeed, the distribution of $y$ itself depends on averaged quantities with respect to $W$. Let us now express the quantities involved in the distribution of $y$ as averages over the distribution of $W$ in equation \eqref{eq:ResultInvariantMeasure}:
\begin{align}
    \chi_Z &= \frac{1}{Q_*} \frac{1}{d} \E_{\G, Z^*} \E_\beta \, \tr \big( WW^\top Z^* \big), \\
    K_Z^\infty &= \frac{2}{d} \E_{Z^*, \G} \, \tr \Big( \E_\beta \big[WW^\top \big]^2 \Big) - 2 Q_* \chi_Z^2, \\
    K_Z(0) - K_Z^\infty &= 2 V_\beta,
\end{align}
where $V_\beta$ is defined in equation \eqref{eq:DefVarianceLangevin}. With these notations, the stationary measure for the label dynamics is Gaussian with mean and variance (conditionally on $y^*, \xi, \zeta$):
\begin{align}
    \E \big[y \, \big| \, y^*, \xi, \zeta \big] &= \frac{1}{\alpha + 4 \beta V_\beta} \Big( \big( \alpha \chi_Z + 4 \beta V_\beta \big) y^* + \alpha \sqrt{K_Z^\infty} \xi + 4 \beta V_\beta \sqrt{\Delta} \zeta \Big), \label{eq:LabelLangevinMean} \\
    \mathrm{Var} \big( y \, \big| \, y^*, \xi, \zeta \big) &= \frac{2\alpha V_\beta}{\alpha + 4 \beta V_\beta}, \label{eq:LabelLangevinVar}
\end{align}
where $y^* \sim \N(0, 2Q_*)$ and $\xi, \zeta \sim \N(0,1)$ are independent. 

\subsection{Zero-Temperature Limit} \label{App:Subsec:ZeroTemperature}

In this part, we focus on the stationary distribution for the matrix $W$, and take the $\beta \to \infty$ limit with the choice of the $\ell_2$-regularization $\Omega(W) = \lambda \tr(WW^\top)$. This choice allows to compare these results with the ones obtained in the gradient flow setting, mentioned in \cref{Subsec:LongTimes}. 

We can rewrite the stationary distribution for $W$ in equation \eqref{eq:InvariantmeasureDMFT}:
\begin{equation*}
    \P_\beta(W) \propto \exp \left( - \beta r_\beta d \, \Big\| WW^\top - Z^* - \sqrt{\xi} \G + q I_d \Big\|_F^2 \right),
\end{equation*}
where $q = \lambda / r_\beta$ and $r_\beta$ is the integrated response given in equation \eqref{eq:IntegratedResponseLangevin}:
\begin{equation} \label{eq:IntegratedResponseBeta}
    r_\beta = \int_0^\infty R_\text{TTI}(t') \d t' = \frac{\alpha}{\alpha + 2 \beta V_\beta}.
\end{equation}

\subsubsection{Positive Regularization}

When $\lambda > 0$, the expression of $q$ indicates that the integrated response $r_\beta$ remains of order one. Therefore, the distribution $\P_\beta$ collapses onto the set:
\begin{equation*}
    \underset{W \in \R^{d \times m}}{\mathrm{argmin}} \Big\| WW^\top - Z^* - \sqrt{\xi} \G + q I_d \Big\|_F^2. 
\end{equation*}
As a consequence of \cref{Lemma:MinEckartYoung}, this is precisely the set of the $W \in \R^{d \times m}$ such that:
\begin{equation} \label{eq:LimitDeterministicLangevin}
    WW^\top = \Big( Z^* + \sqrt{\xi} \G - q I_d \Big)_{(m)}^+. 
\end{equation}
The spectral operator $X \mapsto X_{(m)}^+$ selects the $m$ largest positive eigenvalues of $X$. We refer to \cref{Def:EckartYoung} for more details. Since $WW^\top$ is now a deterministic function of the matrices $Z^*, \G$, the two quantities $\mathrm{MSE}$ and $\overline{\mathrm{MSE}}$ (defined in equation \ref{eq:MSEsLangevin}) collapse onto the same value. Combining the definition of $\xi$, the expression of response and the limit of the dynamics, respectively in equations \eqref{eq:SelfConsistentCovLangevin}, \eqref{eq:ResponseInvariantMeasure} and \eqref{eq:LimitDeterministicLangevin}, we arrive at the same system of equations as in \cref{App:Subsubsec:LongTimesLimitDynamics,App:Subsubsec:LongTimesResponse}, that eventually yields \cref{Result2}. 

This suggests that in the regularized case, the zero temperature limit of the long-time equations for the Langevin dynamics matches the gradient flow setting, which is a non-trivial result. Indeed, in the zero-temperature limit, the stationary measure of the Langevin dynamics collapses onto the set of global minimizers of the regularized empirical loss. This agreement implies that, under our dynamical assumptions, the gradient flow also converges to a global minimizer of the loss. 

\subsubsection{Unregularized Dynamics}

We now consider the unregularized case $\lambda = 0$. In this setting, it is now possible for $r_\beta$ to vanish as $\beta \to \infty$. 

Let us start by considering the case where $r_\beta$ remains of order one as $\beta \to \infty$. Then, the same argument as earlier applies: the distribution $\P_\beta$ collapses, and we recover the same equations as in \cref{Result2} with $\lambda, q = 0$. Interestingly, this leads to the same result as in the second part of \cref{Prop:EquationsSmallReg} (corresponding to $\alpha$ larger than the interpolation threshold). As the calculation remains identical as in the gradient flow setting, we can conclude that:
\begin{itemize}
    \item Combining equation \eqref{eq:ResponseInvariantMeasure} with the calculation carried out in \cref{App:Subsubsec:LongTimesResponse} leads to the same set of equations characterizing the interpolation thresholds as in \cref{Prop:InterpolationThreshold}. 
    \item Since $r_\beta$ is precisely the integrated response of equation \eqref{eq:ResponseInvariantMeasure}, we obtain the same characterization of the interpolation threshold as in \cref{Subsubsec:InterpolationThreshold}: it is the smallest value of $\alpha$ for which the integrated response remains positive in the small-regularization limit. 
\end{itemize}
In this regime, it is interesting to note the agreement between the unregularized Langevin dynamics in the zero-temperature limit and the small regularization limit of the gradient flow dynamics. Moreover, as discussed in \cref{conjecture:unregularized}, we expect these results to extend to the unregularized gradient flow itself. The regime of large sample complexity $\alpha$ therefore appears to be associated with a simple landscape, in which the different dynamics we studied converge to the same solution.   

In the case where $r_\beta$ vanishes as $\beta \to \infty$, equation \eqref{eq:IntegratedResponseBeta} yields:
\begin{equation*}
    \beta r_\beta \xrightarrow[\beta \to \infty]{} \frac{\alpha}{2V},
\end{equation*}
where $V$ is the positive limit of the variance $V_\beta$. Note that the intermediate scalings of $r_\beta$ with $\beta$ would also lead to the collapse of $\P_\beta$. When $r_\beta = \Theta(\beta^{-1})$, the distribution $\P_\beta$ does not concentrate:
\begin{equation*}
    \lim_{\beta \to \infty} \P_\beta(W) \propto \exp \left( - \frac{\alpha d}{2V} \Big\| WW^\top - Z^* - \sqrt{\xi} \G \Big\|_F^2 \right). 
\end{equation*}
The scalars $\xi, V$ are self-consistently computed from $W$ thanks to equations \eqref{eq:SelfConsistentCovLangevin}, \eqref{eq:DefVarianceLangevin}. This distribution describes the zero-temperature limit for $\alpha \leq \alpha_\text{inter}$. In this regime we know that the empirical loss \eqref{eq:Loss} exhibits many global minimizers, so the random nature of the Langevin predictor is no surprise. 

This observation is confirmed by the statistics of the typical label in equations \eqref{eq:LabelLangevinMean}, \eqref{eq:LabelLangevinVar}. Taking the $\beta \to \infty$ limit in these equations, with $V_\beta = \Theta(1)$, leads to:
\begin{equation*}
    \E \, y = y^* + \sqrt{\Delta} \zeta, \hspace{1.5cm} \mathrm{Var} \, y = 0. 
\end{equation*}
In this region the noisy labels are perfectly fitted. 

However, this result relies on the asymptotic TTI assumption (see \cref{Assumption4:TTI}), whose validity in this regime remains unclear. In particular, as shown numerically in \cref{fig:FigInit1}, the gradient flow dynamics displays a strong dependence on initialization, even at long times. Such a dependence is incompatible with a TTI structure of the dynamics. Therefore, it remains an open question whether the Langevin dynamics satisfies the asymptotic TTI assumption in the small $\alpha$ regime. Overall, these observations suggest that, in this regime, the zero-temperature limit of the stationary measure of the Langevin dynamics does not coincide with the long-time asymptotics of the gradient flow dynamics.

\subsection{Link with Bayes-Optimal Learning} \label{App:Subsec:BayesOptimal}

In this part, we make the connection with the Bayes-optimal analysis of \citet{maillard2024bayes}. To do so, we consider the stationary measure associated with our Langevin dynamics, given in equation~\eqref{eq:ResultInvariantMeasure}:
\begin{equation*}
    \P_\beta(W) \propto \exp \left( - r\beta d \, \Big\|WW^\top - Z^* - \sqrt{\xi} \G \Big\|_F^2 -2 \beta d \, \Omega(W) \right). 
\end{equation*}
From a Bayesian perspective, this distribution can be interpreted as a posterior measure over $W$, that includes a prior distribution:
\begin{equation*}
    \P_\text{prior}(W) \propto \exp \Big( - 2 \beta d \, \Omega(W) \Big). 
\end{equation*}
In \citet{maillard2024bayes}, the prior on the weights $W$ is chosen to be Gaussian, inducing a Wishart distribution for $WW^\top$. In our framework, this corresponds to a $\ell_2$-regularization:
\begin{equation*}
    \Omega(W) = \frac{\kappa}{4\beta} \tr \big( WW^\top \big). 
\end{equation*}
We recall that $\kappa$ is the width ratio $m/d$ as $d \to \infty$. The Bayes-optimal setting corresponds to the matched case in which the inference model coincides with the true model. In particular, the width of the student must match the one of the teacher, i.e., $\kappa = \kappa^*$. 

In the following, we show two properties:
\begin{itemize}
    \item We prove that a specific choice of the inverse temperature $\beta$ allows to match our setting with the one of \citet{maillard2024bayes}. This leads to the relationship $\beta = \alpha / \Delta$. We recall that $\Delta$ corresponds to the label noise variance when generating the teacher's labels. 
    \item Using some known results in the Bayes-optimal setting, we show that the following relationship should hold:
    \begin{equation} \label{eq:Factor2MSEs}
        \mathrm{MSE} = \frac{1}{2} \overline{\mathrm{MSE}}. 
    \end{equation}
\end{itemize}
We recall that these two quantities are defined in equation \eqref{eq:MSEsLangevin}. Combining these two identities with the ones obtained in \cref{App:Subsubsec:SelfConsistentLangevin}:
\begin{equation*}
    \xi = \frac{1}{2\alpha} \left( \mathrm{MSE} + \frac{\Delta}{2} \right), \hspace{1.5cm} r = \frac{\alpha}{\alpha + 2 \beta \big(\overline{\mathrm{MSE}} - \mathrm{MSE} \big)},
\end{equation*}
we obtain the simple identity:
\begin{equation*}
    r = \frac{1}{4 \beta \xi}.
\end{equation*}
This leads to the posterior distribution:
\begin{equation*}
    \P_\beta(W) \propto \exp \left( - \frac{d}{4\xi} \big\| WW^\top - Z^* - \sqrt{\xi} \G \big\|_F^2 \right) \P_\text{prior}(W). 
\end{equation*}
Therefore, with the choice $\hat q = \xi^{-1}$, we obtain the same posterior distribution for $W$ as in \citet[][equation 46]{maillard2024bayes}. In addition, up to a normalization convention, our self-consistent relation between $\xi$ and the MSE coincides with their equation (7). 

\subsubsection{The Bayes-Optimal Temperature}

We first fix the value of the inverse temperature $\beta$ to match our stationary measure with the Bayes-optimal posterior distribution, at the level of the empirical model. In \citet{maillard2024bayes}, the Bayes posterior distribution over $W$ is given by:
\begin{equation} \label{eq:BO2}
    \P_\text{Bayes}(W) \propto \P_\text{prior}(W) \prod_{k=1}^n P\Big( z_k \, \big| \, \tr(X_k WW^\top) \Big). 
\end{equation}
As we have chosen the square loss in our analysis, we assume this noisy channel to be Gaussian with zero mean and variance $\Delta$:
\begin{equation*}
    P\big( z \, \big| \, y \big) = \frac{1}{\sqrt{2\pi \Delta}} \exp \left( - \frac{(y-z)^2}{2\Delta} \right).
\end{equation*}
Then, the noisy labels can be written as $z_k = \tr(X_k Z^*) + \sqrt{\Delta} \xi_k$, where $\xi_1, \dots, \xi_n \overset{\text{i.i.d.}}{\sim} \N(0,1)$. On our side, the Langevin dynamics in equation \eqref{eq:GFdynamics} can be rewritten as:
\begin{equation} \label{eq:BOLangevin}
    \d W(t) = -d \, \nabla_W\left( \frac{1}{4n} \sum_{k=1}^n \Big( \tr(X_k WW^\top) - z_k \Big)^2 \right) \d t - \nabla \Omega \big( W(t) \big) \d t + \frac{1}{\sqrt{\beta d}} \d B(t).
\end{equation}
Then, the stationary measure for this dynamics writes:
\begin{equation*}
    \P_\beta(W) \propto \exp \left( - \frac{\beta d^2}{2n} \sum_{k=1}^n \Big( \tr(X_k WW^\top) - z_k \Big)^2 - 2 \beta d \, \Omega(W) \right). 
\end{equation*}
The regularization term $\Omega$ has already been chosen to match the prior distribution. Therefore, to identify $\P_\beta$ with the Bayes posterior $\P_\text{Bayes}$ in equation \eqref{eq:BO2}, it suffices to match the empirical terms. This leads to the expression of the inverse temperature:
\begin{equation*}
    \beta = \frac{\alpha}{\Delta}. 
\end{equation*}
With this choice of $\beta$, the stationary measure of the Langevin dynamics coincides with the Bayes-optimal posterior distribution. This provides an \textit{a priori} guarantee that the Langevin dynamics at long times samples from the Bayes-optimal posterior measure. 

\subsubsection{The Nishimori Condition}

In the Bayes-optimal setting, with a planted teacher $Z^*$, the following relationship holds:
\begin{equation} \label{eq:NishimoriCondition}
    \E_{W_1, W_2} \, f \big( W_1W_1^\top, W_2W_2^\top \big) = \E_{W, Z^*} f \big( WW^\top, Z^* \big),
\end{equation}
where $W_1, W_2, W$ are drawn from the Bayes-optimal posterior distribution, with $W_1$ independent of $W_2$. This identity is known as the Nishimori condition, and a general proof of this fact can be found in \citet[][Section I.B.3]{zdeborova2016statistical}.

Let us apply this identity to relate the quantities $\mathrm{MSE}$ and $\overline{\mathrm{MSE}}$. For simplicity, we denote by $\E_W$ the expectation with respect to $\P_\beta$, and $\E$ the joint expectation over the random variables $W, \G, Z^*$. Denoting by $\hat Z$ the posterior mean $\E_W\big[WW^\top\big]$, we have:
\begin{align}
    \overline{\mathrm{MSE}} &= \frac{1}{d} \E \big\| WW^\top - Z^* \big\|_F^2 \notag \\
    &= \frac{1}{d} \E \big\|WW^\top - \hat Z \big\|_F^2 + \frac{1}{d} \E \big\| \hat Z - Z^* \big\|_F^2 - \frac{2}{d} \E_W \, \tr \Big( \big(WW^\top - \hat Z \big) \big( Z^* - \hat Z \big) \Big) \notag \\
    &= \frac{1}{d} \E \big\|WW^\top - \hat Z \big\|_F^2 + \mathrm{MSE}.  \label{eq:MSEsNishimori}
\end{align}
It is clear that the crossed term vanishes. Now, if $W_1, W_2$ are independently drawn from the posterior distribution, one has, inserting again $\hat Z$:
\begin{equation*}
\begin{aligned}
    \frac{1}{d} \E_{W_1, W_2} \big\| W_1W_1^\top - W_2W_2^\top \big\|_F^2 &= \frac{1}{d} \E_{W_1} \big\|W_1W_1^\top - \hat Z \big\|_F^2 + \frac{1}{d} \E_{W_2} \big\|W_2W_2^\top - \hat Z \big\|_F^2 \\
    &\hspace{2cm} - \frac{2}{d} \E_{W_1, W_2} \tr \Big( \big( W_1W_1^\top - \hat Z \big) \big( W_2W_2^\top - \hat Z \big) \Big) \\
    &= \frac{2}{d} \E_W \big\|WW^\top - \hat Z \big\|_F^2. 
\end{aligned}
\end{equation*}
The last term vanishes since $W_1, W_2$ are independent and $\E_{W_1} \, W_1W_1^\top = \E_{W_2} \, W_2W_2^\top = \hat Z$. Then using the Nishimori condition in equation \eqref{eq:NishimoriCondition} with $f(Z_1, Z_2) = \|Z_1 - Z_2\|_F^2$, this yields:
\begin{equation*}
    \frac{1}{d} \E \big\| WW^\top - \hat Z \big\|_F^2 = \frac{1}{2} \overline{\mathrm{MSE}}. 
\end{equation*}
Plugging this identity into equation \eqref{eq:MSEsNishimori} leads to the relationship \eqref{eq:Factor2MSEs}.

\newpage
\section{Aging Ansatz} \label{App:Aging}
In this section, we go beyond the assumptions made in \cref{Subsec:LongTimes} and introduce a more general ansatz on the dynamics, that is usually referred to as aging. This framework is designed to capture situations in which the system does not reach equilibrium on the timescales of interest, and where relaxation becomes increasingly slow as time evolves. In this regime, memory effects remain and the system keeps evolving without settling into a stationary state. Additionally, correlations decay on timescales comparable to the age of the system. This behavior is characteristic of glassy and mean-field systems, and has been extensively studied in the context of dynamical mean-field theory and spin-glass dynamics \citep{bouchaud1992weak, cugliandolo1993analytical, cugliandolo1994out, bouchaud1998out}. Additionally, our calculation follows the same steps than of \citet{altieri2020dynamical} and uses results that can be found in \citet{cugliandolo2000scenario}. 

In what follows, we decompose the stochastic equations of \cref{Result1} into two timescales: a fast converging part that verifies time-translational invariance (for which results have been derived in \cref{App:Langevin}), and an aging component that captures the slow relaxation of the dynamics. Our main assumptions for the section are:
\begin{itemize}
    \item In the TTI regime, the slow variables evolving on aging timescales can be considered as effectively frozen parameters. In this regime, the system is locally at equilibrium at inverse temperature $\beta$, and the corresponding correlations and response functions satisfy a fluctuation--dissipation relation with inverse temperature $\beta$. 
    \item For time separations comparable to the age of the system, correlations and responses enter an aging regime characterized by a violation of time-translational invariance. In this regime, we assume that a generalized fluctuation--dissipation relation holds, with an effective temperature $\beta_e < \beta$. We give more details on this parameter in \cref{App:Subsec:EffectiveTemperature}. 
\end{itemize}

In order to derive the aging equations (stated in \cref{App:Subsec:SummaryAging}), we perform a finite-temperature analysis in \cref{App:Subsec:DerivationAging} under the two-timescale ansatz. The self-consistent equations are then obtained by taking the zero-temperature limit (\cref{App:Subsubsec:ZeroTempAging}). We conclude with a discussion of the effective temperature $\beta_e$ in \cref{App:Subsec:EffectiveTemperature} and whether it can be imposed at a finite value.

\subsection{Summary of the Result} \label{App:Subsec:SummaryAging}

In the following, we derive a set of self-consistent equations describing the long-time limit, under the aging ansatz, of the system given in \cref{Result1}, in the case of $\ell_2$-regularization and zero label noise ($\Delta = 0$). While we derive general equations at finite temperature $(\beta < \infty)$ in \cref{App:Subsec:DerivationAging}, we simplify them in the zero-temperature limit ($\beta \to \infty$) in \cref{App:Subsubsec:ZeroTempAging}. In this limit, we show that the self-consistent equations involve a random matrix $X \in \mathcal{S}_d(\R)$ whose distribution is given by:
\begin{align}
    \P(X) &\propto \exp \left( - d \left( 2r \beta_e + \frac{\alpha}{V_Z} \right) \|X\|_F^2 - \alpha r \beta_e d \, \big\| X_{(m)}^+ \big\|_F^2 + 2 r \beta_e d \, \tr \big( M^* X \big) \right), \label{eq:DistribAging1} \\
    M^* &= \alpha Z^* + \left( 2 + \frac{\alpha}{r \beta_e V_Z} \right) \Big( Z^* + \sigma \G - q I_d.  \Big), \label{eq:TargetMatrixAging}
\end{align}
where $\G \sim \mathrm{GOE}(d)$ and $q = \lambda / r$. The variables $r, V_Z, \sigma$ can be self-consistently computed as:
\begin{align}
    r &= 1 - \frac{1}{\alpha d^2} \E_{Z^*, \G} \E_X \tr \left( \left. \frac{\partial}{\partial H} \big( X + H \big)_{(m)}^+ \right|_{H = 0} \right), \label{eq:EqsAgingResponse} \\ 
    V_Z &= \frac{2}{d} \E_{Z^*, \G} \E_X \Big\| \E_X \big[ X_{(m)}^+ \big] - X_{(m)}^+ \Big\|_F^2, \label{eq:EqsAgingCovariance1} \\
    \sigma^2 &= \frac{\alpha}{2} \frac{1}{(2r \beta_e V_Z + \alpha)^2} \frac{1}{d} \E_{Z^*, \G} \Big\| \E_X \big[ X_{(m)}^+ \big] - Z^* \Big\|_F^2. \label{eq:EqsAgingCovariance2}
\end{align}
The notation $X_{(m)}^+$ is defined in \cref{Def:EckartYoung}, and the partial derivative corresponds to the differential on the space of symmetric matrices. Note that, in the high-dimensional limit, the limit of $r$ can be computed by using similar arguments as in \cref{App:Subsubsec:LongTimesResponse}. However, these equations still involve high-dimensional objects, and it seems challenging to analyze the distribution \eqref{eq:DistribAging1} in the high-dimensional limit. We leave this investigation for future work. 

The only quantity that is not self-consistently determined is the effective inverse temperature $\beta_e$. This parameter is fixed by imposing a marginality condition on the TTI part of the dynamics. We give more details in \cref{App:Subsec:EffectiveTemperature}. 

Finally, the gradient flow predictor is simply given by $Z_\infty = X_{(m)}^+$. This allows, in addition to a set of equations on the typical label that is derived in the following, to access all the relevant averaged quantities of the dynamics in the long-time limit. 

\subsection{Derivation of the Aging Equations} \label{App:Subsec:DerivationAging}

This section is dedicated to the derivation of the system of equations given in \cref{App:Subsec:SummaryAging}. 

\subsubsection{Dynamical Equations}

We start by giving the set of self-consistent equations that we start from in order to derive the aging equations. Instead of directly using those of \cref{Result1}, we go back to \cref{App:Subsec:DMFTSimplification}. In this section, it has been shown that the evolution of the student matrix $W(t)$ and typical label $y(t)$ can be written as:
\begin{align}
    \d W(t) &= 2 \left( \H(t) + \int_0^t R_y(t,t') \big( Z^* - Z(t') \big) \d t' \right) W(t) \d t - \nabla \Omega \big( W(t) \big) \d t + \frac{1}{\sqrt{\beta d}} \d B(t), \label{eq:aging1W} \\
    0 &= y(t) - \chi_Z(t) y^* + \xi(t) + \frac{2}{\alpha} \int_0^t \tilde R_Z(t,t') \big( y(t') - y^* \big) \d t', \label{eq:aging1y}
\end{align}
where $Z(t) = W(t)W(t)^\top$ and $\H(t)$ and $\xi(t)$ are independent centered Gaussian processes with covariances:
\begin{align}
    \E \, \H_{ij}(t) \H_{i'j'}(t') &= \frac{1}{2 \alpha d} \big( \delta_{ii'} \delta_{jj'} + \delta_{ij'} \delta_{i'j} \big) \int_0^t \int_0^{t'} R_y(t,s) R_y(t',s') \mathcal{M}_Z(s,s') \d s' \d s, \label{eq:CovHAging} \\
    \mathcal{M}_Z(s,s') &=  \frac{1}{d} \E \, \tr \Big[ \big( Z(s) - Z^* \big) \big( Z(s') - Z^* \big) \Big], \\
    \E \, \xi(t) \xi(t') &= 2 C_Z(t,t') - 2 Q_* \chi_Z(t) \chi_Z(t'), \label{eq:CovxiAging} 
\end{align}
and:
\begin{equation} \label{eq:AveragedAging}
\begin{aligned}
    C_Z(t,t') &= \frac{1}{d} \E \, \tr \big( Z(t) Z(t') \big), &\hspace{1.5cm} \xi_Z(t) = \frac{1}{Q_*} \frac{1}{d} \E \, \tr \big( Z(t) Z^* \big), \\
    Q_* &= \frac{1}{d} \E \, \tr(Z^{*2}). 
\end{aligned}
\end{equation}
The responses $R_Z, \tilde R_Z$ are defined as:
\begin{equation} \label{eq:ResponsesAging}
    R_Z(t,t') = \frac{1}{d^2} \tr \left( \left. \frac{\partial \, \E \, Z(t)}{\partial H(t')} \right|_{H = 0} \right), \hspace{1.5cm} \tilde R_Z(t,t') = \frac{1}{d^2} \tr \left( \left. \frac{\partial \, \E \, Z(t)}{\partial \tilde H(t')} \right|_{H = 0} \right),
\end{equation}
in response to respective perturbations $H, \tilde H$ in the dynamics \eqref{eq:aging1W}:
\begin{equation*}
    \d W(t) = \dots + 2 \int_0^t R_y(t,t') H(t') \, \d t' W(t) \d t, \hspace{1.5cm} \d W(t) = \dots + \tilde H(t) W(t) \d t. 
\end{equation*}
Likewise, $R_y$ is the response associated with $y$, that can be written:
\begin{equation*}
    R_y(t,t') = - \frac{\partial \, \E \, y(t)}{\partial \xi(t')}. 
\end{equation*}
This leads to the relationships between $R_y, R_Z, \tilde R_Z$:
\begin{equation} \label{eq:RelationResponses}
\begin{aligned}
    \delta(t-t') &= R_y(t,t') + \frac{1}{\alpha} R_Z(t,t'), \\
    \delta(t-t') &= R_y(t,t') + \frac{2}{\alpha} \int_{t'}^t \tilde R_Z(t,t'') R_y(t'',t') \d t''.
\end{aligned}
\end{equation}
Finally, one can derive the following relationship, starting from equation \eqref{eq:aging1y} and using equations \eqref{eq:CovHAging}, \eqref{eq:CovxiAging}, \eqref{eq:AveragedAging}:
\begin{equation} \label{eq:CovarianceHY}
    \E \, \H_{ij}(t) \H_{i'j'}(t') = \frac{1}{4\alpha d} \big( \delta_{ii'} \delta_{jj'} + \delta_{ij'} \delta_{i'j} \big) \E \Big[ \big( y(t) - y^* \big) \big( y(t') - y^* \big) \Big]. 
\end{equation}
In the following we shall denote $C_y$ the covariance function of $y-y^*$. 

\subsubsection{Timescale Decomposition and Fluctuation--Dissipation}

We now start from the previous set of equations and decompose the responses and correlations depending on two timescales, that we call time-translational invariant (TTI) and aging. For the responses, we write:
\begin{equation} \label{eq:TTIAgingResponse}
\begin{aligned}
    R_y(t,t') &= R_y^\text{TTI}(t-t') + R_y^A(t,t'), \\ 
    \tilde R_Z(t,t') &= \tilde R_Z^\text{TTI}(t-t') + \tilde R_Z^A(t,t'),
\end{aligned}
\end{equation}
and likewise for the correlations:
\begin{equation} \label{eq:TTIAgingCovariance}
\begin{aligned}
    C_y(t,t') &= C_y^\text{TTI}(t-t') + C_y^A(t,t'), \\ 
    K_Z(t,t') &= K_Z^\text{TTI}(t-t') + K_Z^A(t,t'),
\end{aligned}
\end{equation}
where $K_Z$ is the covariance of $\xi$ given in equation \eqref{eq:CovxiAging}. As we will be interested in the $t,t' \to \infty$ limit, one-time functions can be replaced by their limit, which yields in particular $K_Z(t,t') = 2 C_Z(t,t') - 2 Q_* (\chi_Z^\infty)^2$. In the previous equations, the TTI contributions vary when $t-t'$ is of order one, whereas the aging regime corresponds to $t-t'$ diverging. In this regime we choose the TTI covariances to be zero, that is:
\begin{equation*}
     C_y^\text{TTI}(\tau) \xrightarrow[\tau \to \infty]{} 0, \hspace{1.5cm} K_Z^\text{TTI}(\tau) \xrightarrow[\tau \to \infty]{} 0. 
\end{equation*}
In addition, equation \eqref{eq:TTIAgingCovariance} on the covariances induces the decomposition of $\xi$ and $\H$ as sums of independent Gaussian processes:
\begin{equation*}
    \xi(t) = \xi_\text{TTI}(t) + \xi_A(t), \hspace{1.5cm} \H(t) = \H_\text{TTI}(t) + \H_A(t),
\end{equation*}
where $\xi_\text{TTI}$ and $\xi_A$ have respective covariances $K_Z^\text{TTI}$ and $K_Z^A$. The same holds for $\H_\text{TTI}$ and $\H_A$, with the additional normalization:
\begin{align} 
    \E \, \big( \H_\text{TTI}(t) \big)_{ij} \big( \H_\text{TTI}(t') \big)_{i'j'} &= \frac{1}{4\alpha d} \big( \delta_{ii'} \delta_{jj'} + \delta_{ij'} \delta_{i'j} \big) C_y^\text{TTI}(t-t'), \label{eq:CovarianceWTTI} \\
    \E \, \big( \H_A(t) \big)_{ij} \big( \H_A(t') \big)_{i'j'} &= \frac{1}{4\alpha d} \big( \delta_{ii'} \delta_{jj'} + \delta_{ij'} \delta_{i'j} \big) C_y^A(t-t'),\label{eq:CovarianceWAging}
\end{align}
where we used the covariance structure of $\H$ in equation \eqref{eq:CovarianceHY}. 

In addition, we assume the fluctuation--dissipation relations in the TTI regime:
\begin{equation} \label{eq:FDRTTIregime}
    R_y^\text{TTI}(\tau) = - \beta \big(C_y^\text{TTI} \big)'(\tau), \hspace{1.5cm} \tilde R_Z^\text{TTI}(\tau) = - \beta \big( K_Z^\text{TTI} \big)'(\tau). 
\end{equation}
While the second expression is a consequence of the calculation carried out in \cref{App:Subsubsec:FDR} (see in particular equation \ref{eq:FDT1}), along with the definition of $\tilde R_Z, C_Z$ and $K_Z$, the first one can be derived in a similar fashion. 

In the aging regime, fluctuation--dissipation does not hold at inverse temperature $\beta$, but with an effective inverse temperature, that we denote $\beta_e$:
\begin{equation} \label{eq:FDRAgingregime}
    R_y^A(t,t') = \beta_e \partial_{t'} C_y^A(t,t'), \hspace{1.5cm} \tilde R_Z^A(t,t') = \beta_e \partial_{t'} K_Z^A(t,t'). 
\end{equation}

\subsubsection{TTI Regime}

Using the previous decompositions, we can write the joint dynamics on $W(t)$ and $y(t)$ given in equations \eqref{eq:aging1W}, \eqref{eq:aging1y}:
\begin{align}
    \d W(t) &= 2 \left( \mathcal{H}_\text{TTI}(t) + \int_0^t R_y^\text{TTI}(t-t') \big( Z^* - Z(t') \big) \d t' \right) W(t) \d t - \nabla \Omega \big( W(t) \big) \d t \label{eq:aging2W} \\
    &\hspace{2cm}+ 2 \Psi(t) W(t) \d t + \frac{1}{\sqrt{\beta d}} \d B(t) \notag, \\
    0 &= y(t) - \chi_Z(t) y^* + \xi_\text{TTI}(t) + \frac{2}{\alpha} \int_0^t \tilde R_Z^\text{TTI}(t - t') \big( y(t') - y^* \big) \d t' + h(t), \label{eq:aging2y}
\end{align}
where the slow fields $\Psi(t), h(t)$ are given by:
\begin{align}
    \Psi(t) &= \H_A(t) + \int_0^t R_y^A(t,t') \big( Z^* - Z(t') \big) \d t', \label{eq:slowfieldW} \\
    h(t) &= \xi_A(t) + \frac{2}{\alpha} \int_0^t \tilde R_Z^A(t,t') \big( y(t') - y^* \big) \d t'. \label{eq:slowfieldy}
\end{align}
The key point is to consider the slow fields $\Psi(t), h(t)$ as frozen in the TTI regime, that corresponds to timescales of order one. On the other hand, we assume that the slow aging variables typically vary on a much larger timescale, comparable to the age of the system. 

\paragraph{Student equation.} Let us start by analyzing the dynamics of $W(t)$ in equation \eqref{eq:aging2W}. In the TTI regime, this equation can be written exactly as the one studied in \cref{App:Subsec:StationaryMeasure}, with an additional gradient term involving $\Psi$. Since the covariance of $\mathcal{H}_\text{TTI}$ between instants $t,t'$ vanishes as $t-t' \to \infty$, this implies the stationary distribution, conditionally on $\Psi(t)$:
\begin{equation} \label{eq:StationaryTTIW}
    \P_W^\text{TTI} \big( W \, | \, \Psi(t) \big) \propto \exp \left( - r \beta d \, \big\| WW^\top - Z^* \big\|_F^2 - 2 \beta d \, \Omega(W) + 2 \beta d \, \tr \Big( \Psi(t) \big( WW^\top - Z^* \big) \Big) \right).     
\end{equation}
In addition, $r$ is given by:
\begin{equation} \label{eq:IntegratedResponse}
    r = \int_0^\infty R_y^\text{TTI}(\tau) \d \tau = \beta C_y^\text{TTI}(0), 
\end{equation}
where we used the fluctuation--dissipation relation for $y$ in equation \eqref{eq:FDRTTIregime}. 

\paragraph{Label equation.} To compute the stationary distribution of the label in the TTI regime, we introduce the same coupling as the one studied in \cref{App:Subsec:LabelLangevin}. We obtain that conditionally on the slow field $h(t)$, the typical label $y$ is Gaussian, with statistics:
\begin{equation*}
\begin{aligned}
    \E \, y &= \frac{1}{\alpha + 2r_Z} \Big( \big( \alpha \chi_Z + 2 r_Z \big) y^* - \alpha h(t) \Big), \\
    \mathrm{Var} \, y &= \frac{1}{\beta} \frac{\alpha r_Z}{\alpha + 2r_Z}, 
\end{aligned}
\end{equation*}
where $\chi_Z$ is the limit of $\chi_Z(t)$ as $t \to \infty$ and we have from equation \eqref{eq:FDRTTIregime}:
\begin{equation*}
    r_Z = \int_0^\infty \tilde R_Z^\text{TTI}(t) \d t = \beta K_Z^\text{TTI}(0). 
\end{equation*}
Therefore, we can write the distribution of $y$ in the TTI regime as:
\begin{equation} \label{eq:StationaryTTIy}
    \P_y^\text{TTI} \big( y \, | \, h(t) \big) \propto \exp \left( - \frac{\beta}{2 \alpha r_Z} \Big( (\alpha + 2r_Z) y^2 - 2(\alpha \chi_Z + 2 r_Z) yy^* + 2 \alpha h(t) (y-y^*) \Big) \right). 
\end{equation}

\subsubsection{An Auxiliary Coupling} \label{App:Subsubsec:CouplingAging}

In order to study the aging regime and derive the stationary distribution of the slow variables $\Psi(t), h(t)$, defined in equations \eqref{eq:slowfieldW}, \eqref{eq:slowfieldy}, we introduce an auxiliary coupling. This coupling is inspired by the one introduced by \citet{cugliandolo2000scenario} in the context of aging in spin-glass models. 

We consider a smooth function $F \colon \mathcal{S}_d(\R) \to \R$, a fixed matrix $A \in \mathcal{S}_d(\R)$, and define the potential on the variables $\Q_1, \dots, \Q_K \in \mathcal{S}_d(\R)$:
\begin{equation*}
    U_\text{aging}\big( \{\Q_k\} \big) = \rho F \left( A + \sum_{k=1}^K g_k \Q_k \right) + \sum_{k=1}^K \theta_k \|\Q_k\|_F^2.
\end{equation*}
We then study the associated Langevin dynamics:
\begin{equation} \label{eq:LangevinCouplingAging}
    \d \Q_k(t) = - \frac{\rho g_k}{2} \nabla F \left( A + \sum_{j=1}^K g_j \Q_j(t) \right) \d t - \theta_k \Q_k(t) \d t + \frac{1}{\sqrt{\beta_e d}} \d \Xi_k(t),
\end{equation}
where $\Xi_1, \dots, \Xi_k$ are independent standard Brownian motions over $\mathcal{S}_d(\R)$. The presence of $\beta_e$ is motivated by the assumptions of the section: in the aging regime, correlations and responses obey a generalized fluctuation--dissipation relation with effective temperature $\beta_e$. The stationary measure associated with the dynamics \eqref{eq:LangevinCouplingAging} is given by:
\begin{equation} \label{eq:stationaryAging1}
    \P_\text{aging} \big( \{\Q_k\} \big)  \propto \exp \Big( - \beta_e d \, U_\text{aging} \big( \{\Q_k\} \big)  \Big). 
\end{equation}
Moreover, we can integrate the dynamics \eqref{eq:LangevinCouplingAging} to get:
\begin{equation*}
    \Q_k(t) = e^{-\theta_k t} \Q_k(0) - \frac{\rho g_k}{2} \int_0^t e^{-\theta_k(t-t')} \nabla F \left( A + \sum_{j=1}^K g_j \Q_j(t') \right) \d t' +  \frac{1}{\sqrt{\beta_e d}} \int_0^t e^{-\theta_k(t-t')} \d \Xi_k(t').
\end{equation*}
We then define:
\begin{equation*}
\begin{aligned}
    \Psi(t) &= A + \sum_{k=1}^K g_k \Q_k(t), \hspace{1.5cm} R_0(t) = \sum_{k=1}^K g_k^2 e^{-\theta_k t}, \\
    \G(t) &= \sum_{k=1}^K g_k e^{-\theta_k t} \Q_k(0) + \frac{1}{\sqrt{\beta_e d}} \sum_{k=1}^K g_k \int_0^t e^{-\theta_k(t-t')} \d \Xi_k(t'),
\end{aligned}
\end{equation*}
so that $\Psi$ is solution of the equation:
\begin{equation} \label{eq:CouplingPsi}
    \Psi(t) = A + \G(t) - \frac{\rho}{2} \int_0^t R_0(t-t') \nabla F \big( \Psi(t') \big) \d t'. 
\end{equation}
Similarly to what was done in \cref{App:Subsubsec:CovarianceCoupling}, we pick the initializations of the $\Q_k$ as independent centered Gaussian matrices, with covariance:
\begin{equation*}
        \E \, \big( \Q_k^0 \big)_{ij} \big( \Q_{k'}^0 \big)_{i'j'} = \frac{1}{4 \theta_k \beta_e d} \delta_{kk'} \big( \delta_{ii'} \delta_{jj'} + \delta_{ij'} \delta_{i'j} \big). 
\end{equation*}
This choice guarantees that the covariance of the Gaussian process $\G$ is time-translational invariant:
\begin{equation} \label{eq:CovarianceCouplingAging}
    \E \, \G_{ij}(t) \G_{i'j'}(t') = \frac{1}{d} \big( \delta_{ii'} \delta_{jj'} + \delta_{ij'} \delta_{i'j} \big) C(t-t'), \hspace{1.5cm} 
    C(t) = \frac{1}{4\beta_e} \sum_{k=1}^K \frac{g_k^2}{\theta_k} e^{-\theta_k |t|}.
\end{equation}
From this it is clear that we have the fluctuation--dissipation relation, for $t > 0$:
\begin{equation} \label{eq:FDRCouplingAging}
    R_0(t) = - 4 \beta_e C'(t). 
\end{equation}
Then, following a similar reasoning as in \cref{App:Subsubsec:StationaryTTI}, we can integrate the stationary distribution in equation \eqref{eq:stationaryAging1} to get the one on $\Psi$:
\begin{equation} \label{eq:stationaryAging2}
    \P_\text{aging}(\Psi) \propto \exp \left( - \rho \beta_e d \, F(\Psi) - \frac{\beta_e d}{c} \big\|\Psi - A \big\|_F^2 \right), \hspace{1.5cm} c = \sum_{k=1}^K \frac{g_k^2}{\theta_k}. 
\end{equation}
With the previous equations, we have:
\begin{equation*}
    c = \int_0^\infty R_0(t) \d t = 4 \beta_e C(0). 
\end{equation*}

\subsubsection{Aging Regime}

We shall now apply the previous result to the aging regime. More precisely, we now derive the stationary measures for the slow fields $\Psi(t), h(t)$. First recall the TTI stationary distributions given in equations \eqref{eq:StationaryTTIW}, \eqref{eq:StationaryTTIy}, and define the free energies:
\begin{align}
    F_W(\Psi) &= \frac{1}{\beta d} \log \int \d W \exp \left( - r \beta d \, \Big\|WW^\top - Z^* \Big\|_F^2 - 2 \beta d \, \Omega(W)  + 2 \beta d \, \tr \Big( \Psi \big( WW^\top - Z^* \big) \Big) \right), \label{eq:FreeenergyW} \\
    F_y(h) &= \frac{r_Z}{\beta} \log \int \d y \exp \left( - \frac{\beta}{2 \alpha r_Z} \Big( (\alpha + 2r_Z) y^2 - 2(\alpha \chi_Z + 2 r_Z) yy^* + 2 \alpha h (y-y^*) \Big) \right). \label{eq:FreeenergyY}
\end{align}
Then, we have:
\begin{equation*}
    \nabla F_W(\Psi) = 2 \, \E \big[ WW^\top - Z^* \big], \hspace{1.5cm} F_y'(h) = - \E \big[ y-y^* \big],
\end{equation*}
where the expectations are computed with respect to the TTI distributions in \eqref{eq:StationaryTTIW}, \eqref{eq:StationaryTTIy}. Now, in the slow aging regime, the variables $W(t)W(t)^\top, y(t)$ can be replaced by their averages with respect to fluctuations on short timescales, that is with respect to the TTI distributions. This means we can rewrite:
\begin{align}
    \Psi(t) &= \H_A(t) - \frac{1}{2} \int_0^t R_y^A(t,t') \nabla F_W \big( \Psi(t') \big) \d t', \label{eq:EquivalentAgingPsi} \\ 
    h(t) &= \xi_A(t) - \frac{2}{\alpha} \int_0^t \tilde R_Z^A(t,t') F_y' \big( h(t') \big) \d t'. \label{eq:EquivalentAgingH}
\end{align}
In order to match with the setting of the previous section, we decompose the Gaussian noises:
\begin{equation*}
    \H_A(t) = \tilde \H_A(t) + \H_A^\infty, \hspace{1.5cm} \xi_A(t) = \tilde \xi_A(t) + \xi_A^\infty,
\end{equation*}
so that $\tilde \H_A, \tilde \xi_A$ have vanishing covariances between instants $t,t'$ whose difference diverges beyond the aging regime. 

Let us now identify the dynamics \eqref{eq:EquivalentAgingPsi} with the one of the auxiliary coupling in equation~\eqref{eq:CouplingPsi}. First of all, we identify the constant Gaussian noise $\H_A^\infty$ with the matrix $A$, as well as $\G(t) = \tilde \H_A(t)$ in equation \eqref{eq:CouplingPsi}, which implies the identity between covariances, using equations \eqref{eq:CovarianceWAging} and \eqref{eq:CovarianceCouplingAging}:
\begin{equation*}
    C = \frac{1}{4 \alpha} \big( C_y^A - C_y^A(\infty) \big). 
\end{equation*}
Now, in order to match the expressions of $\Psi$ in equations \eqref{eq:CouplingPsi} and \eqref{eq:EquivalentAgingPsi}, we choose $F = F_W$ and $\rho R_0 = R_y^A$. Then, we can identify the fluctuation-dissipation relations in equations \eqref{eq:FDRAgingregime} and \eqref{eq:FDRCouplingAging}, that leads to the choice $\rho = \alpha$. These identities show that we can apply the result of the previous section to the dynamics \eqref{eq:EquivalentAgingPsi}, and obtain, thanks to equation \eqref{eq:stationaryAging2}, that $\Psi$ has stationary measure:
\begin{equation} \label{eq:InvariantWAging}
    \P_W^\mathrm{aging}(\Psi) \propto \exp \left( - \alpha \beta_e d \, F_W(\Psi) - \frac{\alpha d}{C_y^A(0) - C_y^A(\infty)} \big\| \Psi - \tilde \H_A^\infty \big\|_F^2 \right). 
\end{equation}
The same identification can be performed for the dynamics of $h(t)$ in equation \eqref{eq:EquivalentAgingH}, in one dimension. In this case, we choose:
\begin{equation*}
    F = F_y, \hspace{1.2cm} A = \xi_A^\infty, \hspace{1.2cm} C = \frac{K_Z - K_Z(\infty)}{2}, \hspace{1.2cm} R_0 = \frac{4}{\alpha \rho} \tilde R_Z^A. 
\end{equation*}
The third equation originates from matching the covariance of $\tilde \xi_A^\infty$ with the one of $\G$ in equation~\eqref{eq:CovarianceCouplingAging}, equal to $2C$ in one dimension. Finally, imposing the equivalence of the fluctuation-dissipation relations in equations \eqref{eq:FDRAgingregime}, \eqref{eq:FDRCouplingAging} leads to the relation $\alpha \rho = 2$. Using equation~\eqref{eq:stationaryAging2}, this shows that $h$ has stationary distribution:
\begin{equation} \label{eq:InvariantyAging}
    \P_y^\text{aging}(h) \propto \exp \left( - \frac{2 \beta_e}{\alpha} F_y(h) - \frac{1}{2} \frac{1}{K_Z^A(0) - K_Z^A(\infty)} \big( h - \tilde \xi_A^\infty \big)^2 \right). 
\end{equation}
Already it is interesting to remark that the distribution for $y$ conditionally to $h$ in equation~\eqref{eq:StationaryTTIy} is Gaussian, and so is the distribution of $h$ itself (this can be shown by computing $F_y$). Therefore, the typical label $y$ is itself Gaussian, which is consistent with the observation made in \cref{Subsubsec:SimplifiedDMFT}. 

\subsubsection{Closing the System of Equations}

The previous calculations in the TTI and aging regimes allow to access the distribution of $W$ and $y$ under the aging ansatz. These are given by:
\begin{equation*}
\begin{aligned}
    \P(W) &\propto \int \P_W^\text{TTI} \big( W \, | \, \Psi \big) \P_W^\text{aging}(\Psi) \d \Psi,  \\
    \P(y) &\propto \int \P_y^\text{TTI} \big( y \, | \, h \big) \P_y^\text{aging}(h) \d h . 
\end{aligned}
\end{equation*}
Then, all the quantities appearing in these four distributions can be expressed as expectations over these distributions. We have:
\begin{align}
    r &= \beta C_y^\text{TTI}(0) = \beta  \, \E_h \E_y \big[ ( \overline y - y)^2 \big],  \label{eq:SCAging1} \\
    r_Z &= \beta K_Z^\text{TTI}(0) = \frac{2\beta}{d} \E_\Psi \E_W \Big[ \big\| \overline Z - WW^\top \big\|_F^2 \Big], \label{eq:SCAging2} \\
    \chi_Z &= \frac{1}{Q_*} \frac{1}{d} \E_\Psi \E_W \Big[ \tr \big( WW^\top Z^* \big) \Big], \label{eq:SCAging3} \\
    C_y^A(0) - C_y^A(\infty) &= \E_h \Big[ \big( \E_h[\overline y] - \overline y \big)^2 \Big] \equiv V_y, \label{eq:SCAging4} \\
    K_Z^A(0) - K_Z^A(\infty) &= \frac{2}{d} \E_\Psi \Big[ \big\| \E_\Psi[\overline Z] - \overline Z \big\|_F^2 \Big] \equiv V_Z, \label{eq:SCAging5}
\end{align}
where $\E_W, \E_y$ denote the expectations with respect to the TTI distributions, conditionally on $\Psi, h$, as well as $\overline y = \E_y[y]$ and $\overline Z = \E_W[WW^\top]$. Moreover, in the following, we denote the Gaussian noises in equations \eqref{eq:InvariantWAging}, \eqref{eq:InvariantyAging} by:
\begin{equation*}
    \tilde \H_A^\infty = \sigma_y \G, \hspace{1.5cm} \tilde \xi_A^\infty = \sigma_W \phi,
\end{equation*}
where $\G \sim \mathrm{GOE}(d), \phi \sim \N(0,1)$, and:
\begin{equation} \label{eq:SigmaAging}
    \sigma_y^2 = \frac{1}{4\alpha} \big( \E_h[\overline y] - y^* \big)^2, \hspace{1.5cm} \sigma_W^2 = \frac{2}{d} \tr \big( \E_\Psi[\overline Z]^2 \big) - 2 Q_* \chi_Z^2. 
\end{equation}
This provides a set of self-consistent equations describing the long-time behavior of the Langevin dynamics under the aging ansatz. In the following, we take the zero-temperature limit ($\beta \to \infty$) in order to simplify these equations. 

\subsection{Zero-Temperature Limit} \label{App:Subsubsec:ZeroTempAging}

In the $\beta \to \infty$ limit, we analyze the previous equations with the choice of the regularization $\Omega(W) = \lambda \tr(WW^\top)$. In this limit, the equations can be simplified and lead to the result given in \cref{App:Subsec:SummaryAging}. However, it is not guaranteed that taking the zero-temperature limit of the stationary equations (i.e., after taking $t \to \infty$) leads to the long-time behavior of gradient flow dynamics. Nevertheless, as shown in \cref{App:Subsec:ZeroTemperature} in the TTI analysis (with positive regularization), both lead to the same limiting behavior, which suggests that they may remain closely related under the aging ansatz. 

\subsubsection{Student Equations}

Let us start by computing the free energy $F_W$ in equation \eqref{eq:FreeenergyW}. The first thing to notice is that the density:
\begin{equation*}
    \exp \left( - r \beta d \, \Big\|WW^\top - Z^* \Big\|_F^2 - 2 \lambda \beta d \, \tr(WW^\top)  + 2 \beta d \, \tr \Big( \Psi \big( WW^\top - Z^* \big) \Big) \right),
\end{equation*}
concentrates as $\beta \to \infty$ on the set:
\begin{equation} \label{eq:ProbaCollapseAging}
    \left\{ W \in \R^{d \times m}, WW^\top = \left( Z^* + \frac{1}{r} \Psi - \frac{\lambda}{r} I_d \right)_{(m)}^+ \right\},
\end{equation}
where the operator $X \mapsto X_{(m)}^+$ is defined in \cref{Def:EckartYoung} and selects the $m$ largest positive eigenvalues of $X$. The concentration of the previous density is a consequence of \cref{Lemma:MinEckartYoung}. Therefore, under the TTI distribution \eqref{eq:StationaryTTIW}, $WW^\top$ is a deterministic function of $\Psi$. 

Now, the free energy defined in equation \eqref{eq:FreeenergyW} has the asymptotic:
\begin{equation*}
    F_W(\Psi) \xrightarrow[\beta \to \infty]{} r \Big\| \big( Z^*_\lambda + r^{-1} \Psi \big)_{(m)}^+ \Big\|_F^2 - 2 \tr \big( \Psi Z^* \big) - r \|Z^*\|_F^2,
\end{equation*}
with $Z_\lambda^* = Z^* - \lambda r^{-1} I_d$. We can conclude the expression of the distribution of $\Psi$ given in equation \eqref{eq:InvariantWAging}:
\begin{equation} \label{eq:InvariantWAging2}
    \P_W^\text{aging}(\Psi) \propto \exp \left( - \alpha r \beta_e d \, \Big\| \big( Z_\lambda^* + r^{-1} \Psi \big)_{(m)}^+ \Big\|_F^2 + 2 \alpha \beta_e d \, \tr \big( \Psi Z^* \big) - \frac{\alpha d}{V_y} \big\| \Psi - \sigma_y \G \big\|_F^2 \right). 
\end{equation}

As a consequence of this concentration, the variance term in equation \eqref{eq:SCAging2} is of order $\beta^{-1}$, leading to $r_Z$ of order one. To compute $r_Z$, we can go back to equation \eqref{eq:RelationResponses} that links the responses $\tilde R_Z, R_y$. In the TTI regime, this shows the relation:
\begin{equation*}
    \alpha + 2r_Z = \frac{\alpha}{r}. 
\end{equation*}
Moreover, we can apply the same argument as in \cref{App:Subsubsec:LongTimesResponse} to compute the integrated response as a derivative over the stationary distribution with respect to a constant perturbation:
\begin{equation} \label{eq:IntegratedResponseAging}
    r = 1 - \frac{1}{\alpha d^2} \tr \left( \left. \frac{\partial}{\partial H} \left( Z^* + \frac{1}{r} \Psi - \frac{\lambda}{r} I_d + H \right)_{(m)}^+ \right|_{H = 0} \right),
\end{equation}
where $H \in \mathcal{S}_d(\R)$ and the trace is to be interpreted as taken over the linear maps on the space of symmetric matrices. 

\subsubsection{Label Equations}

Let us now move on to the label equation. In the $\beta \to \infty$ limit, the distribution of $y$ conditionally on $h$ in equation \eqref{eq:StationaryTTIy} concentrates around:
\begin{equation*}
    y = \frac{\alpha \chi_Z + 2r_Z}{\alpha + 2r_Z} y^* - \frac{\alpha}{\alpha + 2r_Z} h. 
\end{equation*}
Therefore, $y$ is a deterministic function of $h$. In addition, we have the limit for the free energy in equation \eqref{eq:FreeenergyY}:
\begin{equation*}
    F_y(h) \xrightarrow[\beta \to \infty]{} hy^* + \frac{1}{2\alpha(\alpha + 2r_Z)} \Big( (\alpha \chi_Z + 2r_Z) y^* - \alpha h \Big)^2.
\end{equation*}
Plugging this expression into the distribution of $h$ in equation \eqref{eq:InvariantyAging}, we obtain that:
\begin{equation*}
    \P_y^\text{aging}(h) \propto \exp \left( - \frac{2 \beta_e}{\alpha} hy^* - \frac{r \beta_e}{\alpha} \left( \left( \chi_Z + \frac{2r_Z}{\alpha} \right) y^* - h \right)^2 - \frac{1}{2 V_Z} \big( h - \sigma_W \phi \big)^2 \right). 
\end{equation*}
Therefore $h$ is Gaussian, with mean and variance:
\begin{equation*}
\begin{aligned}
    \E \, h &= \frac{1}{2r \beta_e V_Z + \alpha} \Big( - 2r \beta_e V_Z (1 - \chi_Z) y^* + \alpha \sigma_W \phi \Big), \\
    \mathrm{Var} \, h &= \frac{\alpha V_Z}{2r \beta_e V_Z + \alpha}. 
\end{aligned}
\end{equation*}
This leads to the equation on $V_y$ in equation \eqref{eq:SCAging4}:
\begin{equation} \label{eq:VaryAging}
    V_y = r^2 \mathrm{Var} \, h = \frac{\alpha r^2 V_Z}{2r \beta_e V_Z + \alpha}. 
\end{equation}
In addition, the variance of the GOE noise given in equation \eqref{eq:SigmaAging} writes:
\begin{equation} \label{eq:sigma2aging}
    \sigma_y^2 = \frac{\alpha r^2}{2} \frac{1}{(2r \beta_e V_Z + \alpha)^2} \frac{1}{d} \Big\| \E_\Psi \big[WW^\top \big] - Z^* \Big\|_F^2. 
\end{equation}
These expressions guarantee that we can write a closed set of equations on the variable $\Psi$. 

\subsubsection{Set of Equations}

We now explain how to arrive at the system of equations given in \cref{App:Subsec:SummaryAging}. Let us start by defining:
\begin{equation*}
    X = Z^* - \frac{\lambda}{r} I_d + \frac{1}{r} \Psi,
\end{equation*}
so that $X$ is a linear transform of $\Psi$. Then:
\begin{itemize}
    \item Equation \eqref{eq:ProbaCollapseAging} guarantees that the limit of the dynamics is simply $WW^\top = X_{(m)}^+$. 
    \item Equation \eqref{eq:EqsAgingResponse} is easily derived from equation \eqref{eq:IntegratedResponseAging}. 
    \item Equation \eqref{eq:EqsAgingCovariance1} is a consequence of equation \eqref{eq:SCAging5}. 
    \item Letting $\sigma = \sigma_y / r$ in equation~\eqref{eq:sigma2aging} yields equation \eqref{eq:EqsAgingCovariance2}. 
    \item Plugging equation \eqref{eq:VaryAging} and the expression of $X$ into the distribution \eqref{eq:InvariantWAging2} leads to equation~\eqref{eq:DistribAging1}. 
\end{itemize}

\subsection{The Effective Temperature} \label{App:Subsec:EffectiveTemperature}

The previous calculation leads to a set of self-consistent equations relating the distribution of $X$ in \eqref{eq:DistribAging1} to the parameters appearing in this distribution. The only quantity that remains undetermined is the effective inverse temperature $\beta_e$. 

The value of $\beta_e$ cannot be determined from the aging equations alone and must be fixed by an additional condition. To do so, we impose that the time-translational invariant (TTI) solution should be \emph{marginally stable}, meaning that small perturbations around this solution relax on arbitrarily long timescales. This marginality condition selects a unique value of $\beta_e$ and ensures a consistent matching between the TTI and aging regimes. 

To impose the effective temperature, recall that we studied in \cref{App:Subsec:Susceptibility} the susceptibility operator associated with the TTI solution at zero temperature. Since the aging solution is also given by a matrix of the form $Z = X_{(m)}^+$, a similar computation leads to an expression for the Frobenius norm of the susceptibility in the high-dimensional limit:
\begin{equation*}
    \mathcal{R} = \frac{1}{2} \iint \left( \frac{x \1_{x \geq \max(0, \omega)} - y \1_{y \geq \max(0, \omega)}}{x-y} \right)^2 \d \mu_X(x) \d \mu_X(y),
\end{equation*}
where $\mu_X$ is the asymptotic spectral distribution of $X$, and $\omega$ is a threshold selecting a fraction $\kappa$ of the largest eigenvalues of $X$. In \cref{App:Subsec:Susceptibility}, we have seen that this quantity may remain finite because of two mechanisms: if $\omega \leq 0$ or if the measure $\mu_X$ does not have mass in the vicinity of $\omega$. 

Due to the form of the distribution $\P(X)$ in equation \eqref{eq:DistribAging1}, we expect $\mu_X$ to have mass near the threshold $\omega$, at least when it is positive. Indeed, the second term in equation \eqref{eq:DistribAging1} can be interpreted as a penalization of the eigenvalues exceeding $\omega$, which should favor an accumulation of eigenvalues close to this threshold. 

This observation suggests that the marginal stability of the TTI solution cannot be achieved by requiring $\mu_X$ to vanish near the threshold $\omega$. We therefore propose that the marginality condition should be imposed by setting $\omega = 0$, i.e., by constraining the matrix $X$ to have exactly $m$ positive eigenvalues. This condition implicitly defines the effective inverse temperature $\beta_e$.

It is, however, not obvious if this marginality condition can be imposed. Addressing this question requires understanding the effect of the parameter $\beta_e$ on the distribution in equation \eqref{eq:DistribAging1}, and in particular how it controls the number of positive eigenvalues of the matrix $X$. If such a control is not possible, the marginality condition cannot be imposed, and the time-translational invariant solution would remain stable, leading to the absence of aging in the dynamics. We leave this challenging question for further work. 

\paragraph{Perfect recovery thresholds.} A more reasonable analysis could help derive the perfect recovery thresholds from the system of equations in \cref{App:Subsec:SummaryAging}. In this limit, we expect that the distribution of $X$ can be understood using a Laplace method and a perturbative analysis, leading to a simplification of the self-consistent equations. However, it remains open whether the thresholds obtained from this aging system coincide with those of \cref{Prop:PRThresholdRegularization} and \cref{Conjecture:PRThreshold}.

\section{Small Regularization Limit} \label{App:SmallReg}
In this section, we derive the results presented in \cref{subsec:SmallReg} on the small regularization limit. The section is organized as follows:
\begin{itemize}
    \item In \cref{App:Subsec:InterpolationThreshold} and \cref{App:Subsec:PRThreshold}, we compute the interpolation and perfect recovery thresholds respectively introduced in \cref{Subsubsec:InterpolationThreshold} and \cref{Subsubsec:PRThresholdReg}. 
    \item In \cref{App:Subsec:MinRegInterpolator}, for the sake of completeness, we give a proof of \cref{Prop:MinRegularizationInterpolator} regarding the notion of minimal regularization interpolator. 
    \item In \cref{App:Subsec:IntegralAsymptotics}, we derive the asymptotics associated with the functions introduced in \cref{Result2} in the limit of the small noise $\xi$. These technical results allow to compute the interpolation and perfect recovery thresholds in the noiseless case. 
\end{itemize}

In the following, and in agreement with the factorization of the teacher $Z^* = W^* W^{*\top}$, where $W^* \in \R^{d \times m^*}$, we adopt the following decomposition for the limiting distribution of the teacher $\mu^*$:
\begin{equation*}
    \mu^* = (1 - \min(\kappa^*, 1)) \delta + \min(\kappa^*, 1) \nu^*,
\end{equation*}
where $\nu^*$ has support in $\R^+$ bounded away from zero. In the following we will also assume that $\nu^*$ admits a smooth density with respect to the Lebesgue measure. However, since the results we give do not depend on this property, we believe they can be extended in the general case.

This representation will be useful to study our system of equations in the $\xi \to 0$ limit. In this limit, provided that the support of $\nu^*$ is bounded away from zero, the support of the measure $\mu_\xi$ (the free additive convolution between $\mu^*$ and a semicircular density of variance $\xi$) splits into two parts when $\kappa^* < 1$: a semicircular density with radius $\propto \sqrt{\xi}$ and mass $1 - \kappa^*$, and a measure of mass $\kappa^*$ that resembles $\nu^*$. More quantitative details can be found in \cref{App:Subsec:IntegralAsymptotics}, and we also refer to \citet[][Section F]{maillard2024bayes}. 

\subsection{Interpolation Threshold} \label{App:Subsec:InterpolationThreshold}

In this section we prove the expressions of the interpolation threshold given in \cref{Prop:InterpolationThreshold} and \cref{Prop:InterpolationThreshold1}. 

\subsubsection{Derivation of the Interpolation Threshold}

We shall now derive the system of equations that characterizes the interpolation threshold in \cref{Prop:InterpolationThreshold}. As a consequence, we also access the system of equations in the small regularization limit given in \cref{Prop:EquationsSmallReg}. 

Let us recall the system of equations \eqref{eq:SystemResult2}. The equations on the variables $(q, \xi)$ can be written:
\begin{align}
    1 &= \frac{\lambda}{q} + \frac{1}{\alpha} I_\omega(q, \xi), \label{eq:SystemInterp1}\\
    2 \alpha \xi - \frac{\Delta}{2} &= Q_* + \int_{\max(q, \omega)} (q^2 - x^2) \d \mu_\xi(x) + 4 \xi I_\omega(q, \xi), \label{eq:SystemInterp2}
\end{align}
with:
\begin{align}
    I_\omega(q, \xi) &= \int_{\max(q, \omega)} (x-q) h_\xi(x) \d \mu_\xi(x), \label{eq:defIomega} \\
    \min(\kappa, 1) &= \int_\omega \d \mu_\xi(x). 
\end{align}
From the expression of the MSE in equation \eqref{eq:LossValue}, we have $\mathrm{MSE} \leq 2 \alpha \xi$. This implies that when $\alpha$ is smaller than the perfect recovery threshold, $\xi$ remains positive in the small regularization limit. Regarding the behavior of $q$ in this limit, we consider two cases in the following. 

We work with the scaling $q = r\lambda$, with fixed $r > 0$. Here $r$ corresponds to the variable $r_\infty$ introduced in \cref{Subsubsec:SteadyState}. Then, equation \eqref{eq:SystemInterp2} rewrites:
\begin{equation} \label{eq:Interpolation1}
    2 \alpha \xi - \frac{\Delta}{2} = Q_* - \int_{\max(0, \omega)} x^2 \d \mu_\xi(x) + 4 \xi I_\omega(\xi),
\end{equation}
with $I_\omega(\xi) = I_\omega(0, \xi)$. This corresponds to the second part of \cref{Prop:EquationsSmallReg}. In this case the first equation of \eqref{eq:SystemInterp1} simply gives the expression of $r$:
\begin{equation*}
    r = 1 - \frac{1}{\alpha} I_\omega(\xi). 
\end{equation*}
This is only valid if $r > 0$, which leads to the constraint $\alpha > I_\omega(\xi)$. This leads to the definition of the interpolation threshold as the limiting value for which these equations are verified. Combining equation~\eqref{eq:Interpolation1} with the equality $\alpha_\text{inter} = I_\omega(\xi)$ leads to the result of \cref{Prop:InterpolationThreshold}. 

Now considering the regime where $q$ remains of order one as $\lambda \to 0^+$, we obtain the first part of \cref{Prop:EquationsSmallReg} for $\alpha < \alpha_\text{inter}$. Note that one could also consider intermediate regimes for the dependence between $q$ and $\lambda$, but all of these collapse into the equations at $\alpha = \alpha_\text{inter}$. 

We also derive the expression of the in-sample error that is defined in equation \eqref{eq:insampleerror} and plotted in \cref{fig:FigUnregularized1}. To do so, we go back to the expression of the label $y_\infty$ at long times in equation~\eqref{eq:LimitLabel}. In the same way, we computed the empirical loss on the noisy labels, the in-sample error writes:
\begin{equation*}
\begin{aligned}
    \mathrm{Err}_\mathrm{in} &= \frac{1}{4} \E \, \big(y_\infty - y^* \big)^2 \\
    &= \frac{r_\infty^2}{2} \mathrm{MSE} + \frac{\Delta}{4} (1- r_\infty)^2. 
\end{aligned}
\end{equation*}
For $\alpha < \alpha_\text{inter}$, in the small regularization limit, we have $r_\infty = 0$ and therefore $\mathrm{Err}_\mathrm{in} = \Delta / 4$. Above the interpolation threshold, we have $r_\infty = 1 - I_\omega(\xi) / \alpha$, and replacing the value of the MSE, we get:
\begin{equation*}
    \mathrm{Err}_\mathrm{in} = \alpha \xi \left( 1 - \frac{I_\omega(\xi)}{\alpha} \right)^2 + \frac{\Delta}{4}\left( \frac{2 I_\omega(\xi)}{\alpha} - 1 \right). 
\end{equation*}

\subsubsection{The Noiseless Case}

We now compute the interpolation threshold in the case where $\Delta = 0$, proving \cref{Prop:InterpolationThreshold1}. In this case we show that the interpolation threshold is reached at $\xi = 0$, meaning that in the small regularization limit, the interpolation threshold is larger than the perfect recovery one.  

To prove the result, we can combine equation \eqref{eq:Interpolation1} at $\Delta = 0$ with the identity $\alpha_\text{inter} = I_\omega(\xi)$ to obtain a relationship solely on $\xi$:
\begin{equation} \label{eq:ConditionInter}
     I_\omega(\xi) = \frac{1}{2\xi} \left( \int_{\max(0, \omega)} x^2 \d \mu_\xi(x) - Q_* \right).
\end{equation}
Therefore, to prove the result, it is enough to show that this relationship is verified in the limit $\xi \to 0$. As shown in \cref{Lemma:SmallXiOmega}, for $\kappa^* < 1$, we have $\omega \sim \sqrt{(1-\kappa^*) \xi} \tilde \omega$ as $\xi \to 0$, with:
\begin{equation*}
    \frac{\min(\kappa, 1) - \kappa^*}{1 - \kappa^*} = \int_{\tilde \omega} \d \sigma(x). 
\end{equation*}
Now using \cref{Lemma:SmallXiFunctions} with $q, h = 0$, and using the definition of $I_\omega(\xi)$ in equation \eqref{eq:defIomega}, we have:
\begin{align}
    \int_{\max(0, \omega)} x^2 \d \mu_\xi(x) &\underset{\xi \to 0}{=} Q_* + 2 \xi \left( \kappa^* - \frac{\kappa^{*2}}{2} \right) + \xi (1 - \kappa^*)^2 \int_{\max(0, \tilde \omega)} x^2 \d \sigma(x) + o(\xi), \\
    I_\omega(\xi) &\underset{\xi \to 0}{=} \kappa^* - \frac{\kappa^{*2}}{2} + \frac{(1- \kappa^*)^2}{2} \int_{\max(0, \tilde \omega)} x^2 \d \sigma(x) + o(1). 
\end{align}
Rearranging, this shows that equation \eqref{eq:ConditionInter} is verified in the $\xi \to 0$ limit. We then obtain the result of \cref{Prop:InterpolationThreshold1} since $\alpha_\text{inter}$ is now given by $\lim_{\xi \to 0} I_\omega(\xi)$. 

For the case $\kappa, \kappa^* \geq 1$, the result is simply an application of \cref{Lemma:SmallXiFunctionsOverparam}. Indeed, since $\kappa \geq 1$, there is no need for the threshold $\omega$. We then find the desired value $\alpha_\text{inter}(\kappa, \kappa^*) = 1/2$. 

\subsection{Perfect Recovery Threshold} \label{App:Subsec:PRThreshold}

In this section we show \cref{Prop:PRThresholdRegularization} that gives the expression of the perfect recovery threshold in the small regularization limit. To do so, we consider the first part of \cref{Prop:EquationsSmallReg} with $\Delta = 0$. 

To derive the value of the perfect recovery threshold, remark that as $\alpha \to \alpha_\text{PR}^+$, we have $Z_\infty \to Z^*$ and from equation \eqref{eq:SystemSmallReg1} we can conclude that the variables $q, \xi$ vanish. Moreover, when $\kappa^* < 1$, we show in \cref{Lemma:SmallXiOmega} that the cutoff $\omega$ needs to scale with $\xi$ as $\omega \sim \sqrt{(1-\kappa^*)\xi} \tilde \omega$. Finally, we choose the scaling $q \sim \sqrt{(1-\kappa^*) \xi} h$, with $h = \Theta(1)$. Using the system of equations in the small regularization limit, we then have the equations describing the perfect recovery threshold:
\begin{align}
    \alpha_\text{PR}^+ &= \lim_{\xi \to 0} \int_{\max(q, \omega)} (x-q) h_\xi(x) \d \mu_\xi(x), \label{eq:SystemPR1} \\
    \alpha_\text{PR}^+ &= \lim_{\xi \to 0} \frac{1}{2\xi} \left( Q_* + \int_{\max(q, \omega)} (q^2 - x^2) \d \mu_\xi(x) + 4 \xi \int_{\max(q, \omega)} (x-q) h_\xi(x) \d \mu_\xi(x) \right), \label{eq:SystemPR2}
\end{align}
and one has to solve this for the variables $\alpha_\text{PR}^+, q$. Then, taking into account the scalings of $\omega, q$ as $\xi \to 0$, we have the asymptotics from \cref{Lemma:SmallXiFunctions}, for $\kappa^* < 1$:
\begin{equation*}
\begin{aligned}
    \frac{1}{\xi} \left( Q_* + \int_{\max(q, \omega)} (q^2 - x^2) \d \mu_\xi(x) \right) &\underset{\xi \to 0}{=} (1 - \kappa^*) h^2 \left( \kappa^* + (1 - \kappa^*) \int_{\max(h , \tilde \omega)} \d \sigma(x) \right) \\
    &-2 \left( \kappa^* - \frac{\kappa^{*2}}{2} \right)  - (1 - \kappa^*)^2 \int_{\max(h, \tilde \omega)} x^2 \d \sigma(x) + o(1), \\
    \int_{\max(q, \omega)} (x-q) h_\xi(x) \d \mu_\xi(x) &\underset{\xi \to 0}{=} \kappa^* - \frac{\kappa^{*2}}{2} + \frac{(1 - \kappa^*)^2}{2} \int_{\max(h, \tilde \omega)} x(x-h) \d \sigma(x) + o(1). 
\end{aligned}
\end{equation*}
Rearranging, we get the system:
\begin{equation*}
\begin{aligned}
    \alpha_\text{PR}^+ &= \kappa^* - \frac{\kappa^{*2}}{2} + \frac{(1 - \kappa^*)^2}{2} \int_{\max(h, \tilde \omega)} x(x-h) \d \sigma(x), \\
    \alpha_\text{PR}^+ &= \kappa^* - \frac{\kappa^{*2}}{2} + \frac{\kappa^*(1 - \kappa^*)}{2} h^2 + \frac{(1-\kappa^*)^2}{2} \int_{\max(h, \tilde \omega)} (x-h)^2 \d \sigma(x). 
\end{aligned}
\end{equation*}
This leads to the following equation on $h$:
\begin{equation*}
    \kappa^* h^2 = (1 - \kappa^*) h \int_{\max(h, \tilde \omega)} (x-h) \d \sigma(x). 
\end{equation*}
Now, there are two possible solutions to this equation: the one given in \cref{Prop:PRThresholdRegularization}, and $h = 0$, which leads back to the interpolation threshold. Now, one needs to take the solution that leads to the smallest value of $\alpha_\text{PR}^+$, since the perfect recovery threshold is defined as the minimal value of $\alpha$ such that the MSE vanishes. Therefore we need to choose the solution $h > 0$ and obtain the result of \cref{Prop:PRThresholdRegularization}. 

Whenever $\kappa, \kappa^* \geq 1$, we simply apply \cref{Lemma:SmallXiFunctionsOverparam}, in which case there is no need for the threshold $\omega$. Then, equation \eqref{eq:SystemPR1} leads to $\alpha_\text{PR}^+ = 1/2$. 

\subsection{Minimal Regularization Interpolator} \label{App:Subsec:MinRegInterpolator}

In this section, we prove \cref{Prop:MinRegularizationInterpolator}, which concerns the minimal regularization interpolator. This result is already known, but we give a proof for completeness. We recall that the loss we consider here writes $\L_\lambda = \L + \lambda \Omega$. We denote $\mathcal{S}^* = \mathrm{argmin}(\L)$ and assume without loss of generality that $\mathcal{S}^* = \L^{-1}(\{0\})$. We let $(W_\lambda)_{\lambda > 0}$ to be such that $W_\lambda$ is a global minimizer of $\L_\lambda$ for all $\lambda > 0$. 

Due to the optimality of $W_\lambda$ for $\L_\lambda$, we have for any $V \in \mathcal{S}^*$:
\begin{equation} \label{eq:MinReg1}
    \L(W_\lambda) + \lambda \Omega(W_\lambda) \leq \lambda \Omega(V),
\end{equation}
where we used that $\L(V) = 0$. Therefore $\Omega(W_\lambda) \leq \Omega(V)$, i.e., $W_\lambda$ has a smaller regularization than any element of $\mathcal{S}^*$. In addition, since $\Omega$ is coercive, the sublevel set $\big\{ W, \, \Omega(W) \leq \Omega(V) \big\}$ is compact, and so is the family $(W_\lambda)_{\lambda > 0}$. Finally, taking the $\lambda \to 0$ limit in equation \eqref{eq:MinReg1} leads to the fact that $\L_\lambda(W_\lambda) \xrightarrow[\lambda \to 0]{} 0$. 

Now, we let $\overline W$ be a cluster point of $W_\lambda$, and a sequence $\lambda_k \xrightarrow[k \to \infty]{} 0$ such that $W_{\lambda_k} \xrightarrow[k \to \infty]{} \overline W$. By continuity of $\L$ and since $\L(W) \leq \L_\lambda(W)$ for all $W$, we have:
\begin{equation*}
    \L\big( \overline W \big) = \lim_{k \to \infty} \L(W_{\lambda_k}) \leq \lim_{k \to \infty} \L_{\lambda_k} \big( W_{\lambda_k} \big) = 0.
\end{equation*}
Therefore $\overline W \in \mathcal{S}^*$, and $\overline W$ minimizes $\Omega$ on $\mathcal{S}^*$ since $\Omega(W_\lambda) \leq \Omega(V)$ for all $V \in \mathcal{S}^*$ and $\lambda > 0$. Now, if such an element is unique, the family $(W_\lambda)_{\lambda > 0}$ is compact and has a single cluster point, therefore it converges to $\overline W$. 

\subsection{Small Noise Asymptotics} \label{App:Subsec:IntegralAsymptotics}

In the following, we derive the small $\xi$ asymptotics used in the proofs of \cref{Prop:InterpolationThreshold1} and \cref{Prop:PRThresholdRegularization}. These computations rely on the understanding of the measure $\mu_\xi$ at small $\xi$. We recall that we define the measure $\mu_\xi$ and give some basic properties in \cref{App:Subsubsec:FreeConvolution}. The following lemma gives an expansion of the density $\rho_\xi$ and the Hilbert transform of this measure up to the needed order. 
\begin{lemma} \label{Lemma:ExpansionSmallXi}
    Consider $\kappa^* \in (0,1)$. Let $\rho_\xi$ be the density of $\mu_\xi$ and $h_\xi$ its Hilbert transform (see \cref{Def:HilbertStieltjes}). Decompose:
    \begin{equation*}
        \mu^* = (1 - \kappa^*) \delta + \kappa^* \nu^*,
    \end{equation*}
    where $\nu^*$ has a smooth density with a compact support away from zero (that we also denote $\nu^*$ for the sake of simplicity). Then, for all $x \neq 0$, as $\xi \to 0$:
    \begin{equation*}
    \begin{aligned}
        \rho_\xi(x) &= \kappa^* \nu^*(x) + \xi \kappa^*(1 - \kappa^*) \left( \frac{\nu^*(x)}{x^2} - \frac{\partial_x \nu^*(x)}{x} \right) + \frac{\xi \kappa^{*2}}{\pi} \mathrm{Im} \Big( m_{\nu^*}(x) \partial_x m_{\nu^*}(x) \Big) + O(\xi^2), \\
        h_\xi(x) &= \kappa^* h_{\nu^*}(x) + \frac{1 - \kappa^*}{x} + O(\xi),
    \end{aligned}
    \end{equation*}
    where $h_{\nu^*}$ and $m_{\nu^*}$ are respectively the Hilbert and Stieltjes transforms of $\nu^*$ (see \cref{Def:HilbertStieltjes}), and $m_{\nu^*}(x)$ is understood as the limit $m_{\nu^*}(x + i \eta)$ as $\eta \to 0^+$. 
    
    In addition, for any $|y| < 2$, with $s = 1 - \kappa^*$:
    \begin{equation*}
    \begin{aligned}
        \rho_\xi \big( \sqrt{s\xi} y \big) &= \sqrt{\frac{s}{\xi}} \frac{1}{2\pi} \sqrt{4-y^2} + O(1), \\
        h_\xi \big(\sqrt{s \xi} y \big) &= \sqrt{\frac{s}{\xi}}\frac{y}{2} + O(1).
    \end{aligned}
    \end{equation*}
\end{lemma}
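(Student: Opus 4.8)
The plan is to analyze the free additive convolution $\mu_\xi = \mu^* \boxplus \sigma_\xi$ via the subordination identity of \cref{Lemma:SubordinationSemicircular}, namely $m_\xi(z) = m_*\big(z - \xi m_\xi(z)\big)$ for $z$ off the support, and then extract the density $\rho_\xi$ and Hilbert transform $h_\xi$ from the boundary values $\lim_{\eta \to 0^+} m_\xi(x + i\eta) = h_\xi(x) - i\pi\rho_\xi(x)$. The two regimes asked for — $x$ fixed and nonzero versus $x = \sqrt{s\xi}\,y$ with $s = 1-\kappa^*$ — correspond respectively to the ``outlier bulk'' that perturbatively tracks $\nu^*$, and the microscopic semicircular blob of width $\sim \sqrt{\xi}$ and mass $1-\kappa^*$ that emerges from the atom at zero. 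I would treat them separately.

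For the fixed-$x$ expansion, first I would write $m_* = (1-\kappa^*)\,\frac{1}{z} + \kappa^*\, m_{\nu^*}(z)$ from the decomposition $\mu^* = (1-\kappa^*)\delta + \kappa^* \nu^*$, and posit an ansatz $m_\xi(z) = m_0(z) + \xi\, m_1(z) + O(\xi^2)$. At order $\xi^0$ the subordination equation gives $m_0 = m_*$, consistent with $\mu_0 = \mu^*$ (\cref{lemma:BurgerStieltjes} with $m_0(z) = m_*(z)$). Expanding $m_*\big(z - \xi m_\xi(z)\big) = m_*(z) - \xi\, m_*(z)\, m_*'(z) + O(\xi^2)$ and matching orders yields $m_1(z) = -m_*(z)\, m_*'(z)$. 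Taking $z = x + i\eta$, $\eta \to 0^+$, and reading off imaginary and real parts gives $\rho_\xi$ and $h_\xi$: the term $-(1-\kappa^*)/z \cdot \big(-(1-\kappa^*)/z^2\big)$ and the cross terms with $m_{\nu^*}$ produce exactly the claimed coefficients, once one uses $\mathrm{Im}\,\frac{1}{x+i0} = 0$ for $x \neq 0$, $\mathrm{Im}\, m_{\nu^*}(x) = -\pi\nu^*(x)$, and the identity $\partial_x\big(m_{\nu^*}(x)\big)$ splitting into $h_{\nu^*}' - i\pi \nu^{*\prime}$. The $h_\xi$ formula is just the real part at leading order, $h_\xi(x) = \kappa^* h_{\nu^*}(x) + (1-\kappa^*)/x + O(\xi)$, which is immediate. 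Care is needed: $\nu^*$ has compact support bounded away from $0$, so for fixed $x \neq 0$ outside a neighborhood of that support the density vanishes and only the principal-value parts survive — I would state the expansion pointwise for $x \notin \{0\}$ and note it holds locally uniformly away from the edges of $\mathrm{Supp}(\nu^*)$ and away from $0$.

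For the microscopic regime $x = \sqrt{s\xi}\, y$, the idea is that near $0$ the atom $(1-\kappa^*)\delta_0$ broadened by a semicircle of variance $\xi$ behaves, to leading order, like a rescaled semicircle. I would substitute $z = \sqrt{s\xi}\, w$ with $w$ in the upper half-plane and $|\mathrm{Re}\, w|$ of order one, and set $g(w) = \sqrt{s\xi}\, m_\xi(\sqrt{s\xi}\, w)$. In the subordination equation, the contribution of $\kappa^* m_{\nu^*}$ at argument $z - \xi m_\xi = \sqrt{s\xi}\, w - \xi m_\xi$ is regular and $O(\sqrt{\xi})$ smaller since $\nu^*$ lives away from $0$; the dominant balance comes from $(1-\kappa^*)/(z - \xi m_\xi)$, giving (after rescaling and using $s = 1-\kappa^*$) the fixed-point equation $g(w) = \dfrac{1}{w - g(w)}$, i.e. $g^2 - w g + 1 = 0$, whose physical branch is the standard-semicircle Stieltjes transform $g(w) = \tfrac12\big(w - \sqrt{w^2 - 4}\big)$. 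Reading off boundary values then gives $\rho_\xi(\sqrt{s\xi}\,y) = \frac{1}{\sqrt{s\xi}}\cdot\frac{1}{2\pi}\sqrt{4-y^2} + O(1)$ and $h_\xi(\sqrt{s\xi}\,y) = \frac{1}{\sqrt{s\xi}}\cdot\frac{y}{2} + O(1)$, matching the claim. The main obstacle will be making this matched-asymptotics argument rigorous — justifying that the solution branch of the subordination equation depends smoothly on $\xi$ down to $\xi = 0^+$ in each regime, controlling the error terms uniformly on the relevant (shrinking or fixed) sets, and handling the crossover region where $x$ interpolates between the two scales; in particular one must rule out that the implicit-function-theorem solution picks up a different branch near the band edges $y = \pm 2$, where $\sqrt{w^2-4}$ is singular. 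I would isolate these analyticity-and-branch arguments (leaning on \cref{lemma:BurgerStieltjes} for $\mathcal{C}^1$-dependence in $\xi$) as the technical core and treat the coefficient bookkeeping as routine Taylor expansion.
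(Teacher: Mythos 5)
Your proposal follows essentially the same route as the paper's proof: expand the subordination identity $m_\xi(z) = m_*(z - \xi m_\xi(z))$ perturbatively in $\xi$ with the split $m_* = (1-\kappa^*)/z + \kappa^* m_{\nu^*}$ to get $m_1 = -m_* m_*'$ (which equals the paper's $m_0\cdot(\tfrac{1-\kappa^*}{z^2} - \kappa^*\partial_z m_{\nu^*})$), then rescale $z = \sqrt{s\xi}\,y$ and extract the dominant balance $\tilde m^2 - y\tilde m + 1 = 0$ for the microscopic semicircle. Your caveats about branch selection near the band edges and uniformity across the crossover region are reasonable concerns beyond what the paper addresses, but the core argument is the same.
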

\begin{proof}
    Let us start with the subordination equation between the Stieltjes transforms of $\mu^*$ and $\mu_\xi$ (see \cref{Lemma:SubordinationSemicircular}):
    \begin{equation*}
        m_\xi(z) = m_* \big( z - \xi m_\xi(z) \big). 
    \end{equation*}
    Due to the decomposition of $\mu^*$, this also writes:
    \begin{equation} \label{eq:DevelopmentStieltjes}
        m_\xi(z) = \frac{1-\kappa^*}{z - \xi m_\xi(z)} + \kappa^* m_{\nu^*}\big( z - \xi m_\xi(z) \big),
    \end{equation}
    Then decomposing $m_\xi(z) = m_0(z) + \xi m_1(z)$ at leading order, we arrive at:
    \begin{equation*}
    \begin{aligned}
        m_0(z) &= \frac{1 - \kappa^*}{z} + \kappa^* m_{\nu^*}(z), \\
        m_1(z) &= m_0(z) \left( \frac{1-\kappa^*}{z^2} - \kappa^* \partial_z m_{\nu^*}(z) \right). 
    \end{aligned}
    \end{equation*}    
    Taking $z = x + i \eta$ with $\eta$ going to $0^+$, we use the identity (see \cref{Def:HilbertStieltjes}):
    \begin{equation*}
        m_\xi(x + i \eta) \xrightarrow[]{} h_\xi(x) - i \pi \rho_\xi(x). 
    \end{equation*}
    This immediately gives the relationship at first order for $h_\xi$, $\rho_\xi$ (by considering $m_0$). Now at second order for $\rho_\xi$, we have:
    \begin{equation*}
    \begin{aligned}
        \rho_1(x) &= - \frac{1}{\pi} \lim_{\eta \to 0^+} \mathrm{Im} \, m_1(x+ i \eta) \\
        &= \frac{\kappa^*(1-\kappa^*)}{x^2} \nu^*(x) - \frac{\kappa^*(1-\kappa^*)}{x} \partial_x \nu^*(x) + \frac{\kappa^{*2}}{\pi} \mathrm{Im} \Big(  m_{\nu^*}(x) \partial_x m_{\nu^*}(x) \Big).        
    \end{aligned}
    \end{equation*}
    This gives the result knowing that $\rho_\xi(x) = \kappa^* \nu^*(x) + \xi \rho_1(x) + O(\xi^2)$. 
    
    Let us now consider the microscopic part, i.e., when $x = \sqrt{s \xi} y$ for some $y = O(1)$. To do so, we start from equation \eqref{eq:DevelopmentStieltjes} with $z = \sqrt{s \xi} y + i \eta$ and take the limit $\eta \to 0$, to obtain:
    \begin{equation*}
    \begin{aligned}
        h_\xi \big( \sqrt{s\xi} y \big) - i \pi \rho_\xi \big( \sqrt{s\xi} y \big) &= \frac{1-\kappa^*}{\sqrt{s\xi} y - \xi h_\xi \big( \sqrt{s\xi} y \big) + i \pi \xi \rho_\xi \big( \sqrt{s\xi} y \big)} \\
        &\hspace{1.5cm}+ \kappa^* m_{\nu^*} \Big( \sqrt{s\xi} y - \xi h_\xi \big( \sqrt{s\xi} y \big) + i \pi \xi \rho_\xi \big( \sqrt{s\xi} y \big) \Big). 
    \end{aligned}
    \end{equation*}
    Looking at the first term, we should have $h_\xi \big( \sqrt{s\xi} y \big)$ and $\rho_\xi \big( \sqrt{s\xi} y \big)$ of order $\xi^{-1/2}$. In this case the second term remains of order one and we can ignore it at leading order. Let us define:
    \begin{equation} \label{eq:SmallXiTilde}
        \tilde h(y) = \lim_{\xi \to 0} \sqrt{\frac{\xi}{s}} h_\xi \big( \sqrt{s\xi} y \big), \hspace{1.5cm} \tilde \rho(y) = \lim_{\xi \to 0} \sqrt{\frac{\xi}{s}} \rho_\xi \big( \sqrt{s\xi} y \big).
    \end{equation}
    Then, we get the equation:
    \begin{equation*}
        \tilde h(y) - i \pi \tilde \rho(y) = \frac{1}{y - \tilde h(y) + i\pi \tilde \rho(y)}. 
    \end{equation*}
    Solving for $\tilde m(y) = \tilde h(y) - i\pi \tilde \rho(y)$, we get that $\tilde m(y)^2 - y \tilde m(y) + 1 = 0$, and solving for $|y| \leq 2$, we obtain:
    \begin{equation*}
        \tilde h(y) = \frac{y}{2}, \hspace{1.5cm} \tilde \rho(y) = \frac{1}{2\pi} \sqrt{4-y^2}. 
    \end{equation*}
    This leads to the result with the definition of $\tilde h, \tilde \rho$ in equation \eqref{eq:SmallXiTilde}. Note that we have recovered the density and Hilbert transform of the semicircular density.
\end{proof}

Interestingly, such a result holds for arbitrary measure $\nu^*$, provided that it admits a smooth density. Before computing the asymptotics of the relevant quantities in our high-dimensional equations, we recall the definition of the semicircular distribution:
\begin{equation*}
    \d \sigma(x) = \frac{1}{2\pi} \sqrt{4-x^2} \1_{|x| \leq 2} \d x. 
\end{equation*}
As underlined by the previous lemma, this corresponds to the limiting measure of $\mu_\xi$ (with an appropriate rescaling) in the microscopic part $x = \Theta \big( \sqrt{\xi} \big)$. 

\begin{lemma} \label{Lemma:SmallXiOmega}
    Assume $0 < \kappa^* < \kappa < 1$ and consider $\omega$ to be solution of the equation:
    \begin{equation*}
        \kappa = \int_{\omega} \d \mu_\xi(x). 
    \end{equation*}
    Then, if $\mu^*$ is decomposed as $\mu^* = (1 - \kappa^*) \delta + \kappa^* \nu^*$, we have as $\xi \to 0$, $\omega \sim \sqrt{(1-\kappa^*)\xi} \tilde \omega$, where $\tilde \omega$ is solution of the equation:
    \begin{equation*}
        \frac{\kappa - \kappa^*}{1 - \kappa^*} = \int_{\tilde \omega} \d \sigma(x). 
    \end{equation*}
\end{lemma}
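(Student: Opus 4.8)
\emph{Setup.} The plan is to split the mass of $\mu_\xi$ lying above the cutoff $\omega$ into a ``microscopic'' contribution coming from the semicircular-type bulk of radius $\Theta(\sqrt{\xi})$, and a ``macroscopic'' contribution coming from the part of $\mu_\xi$ near the support of $\nu^*$. Recall from \cref{Lemma:ExpansionSmallXi} and the discussion preceding it that as $\xi\to 0$ the measure $\mu_\xi=\mu^*\boxplus\sigma_\xi$ (with $\mu^*=(1-\kappa^*)\delta+\kappa^*\nu^*$ and $\nu^*$ supported away from zero) develops two well-separated components: a component of mass $1-\kappa^*$ that, after rescaling $x\mapsto x/\sqrt{(1-\kappa^*)\xi}$, converges to the semicircle $\sigma$, and a component of mass $\kappa^*$ supported near $\mathrm{Supp}(\nu^*)\subset(0,\infty)$, bounded away from zero. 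Since $0<\kappa^*<\kappa<1$, the mass $\kappa$ we must collect strictly exceeds $\kappa^*$, so the cutoff $\omega$ cannot sit above the macroscopic component; it must cut \emph{into} the microscopic semicircular bulk. This forces the scaling $\omega=\Theta(\sqrt{\xi})$, and in particular $\omega\to 0$.

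\emph{Main steps.} First I would fix $s=1-\kappa^*$ and write $\omega=\sqrt{s\xi}\,\tilde\omega$ as an ansatz, with $\tilde\omega$ of order one to be determined. Second, I would decompose
\begin{equation*}
\kappa=\int_\omega \d\mu_\xi(x)=\underbrace{\int_{x\geq\omega,\ x\leq M\sqrt\xi}\d\mu_\xi(x)}_{\text{microscopic}}+\underbrace{\int_{x>M\sqrt\xi}\d\mu_\xi(x)}_{\text{macroscopic}}
\end{equation*}
for a cutoff scale $M\sqrt\xi$ with $M\to\infty$ but $M\sqrt\xi\to 0$, chosen so as to sit in the gap between the two components. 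For the macroscopic term, since $\nu^*$ is supported away from zero and $\mu_\xi\to\mu^*$ weakly, together with the local expansion $\rho_\xi(x)=\kappa^*\nu^*(x)+O(\xi)$ of \cref{Lemma:ExpansionSmallXi} away from zero, the integral over $\{x>M\sqrt\xi\}$ converges to $\kappa^*\int\d\nu^*=\kappa^*$ as $\xi\to 0$ (the point mass at zero of $\mu^*$ has been smeared into the microscopic bulk and contributes nothing here). For the microscopic term, I would change variables $x=\sqrt{s\xi}\,y$ and use the second part of \cref{Lemma:ExpansionSmallXi}, namely $\rho_\xi(\sqrt{s\xi}\,y)=\sqrt{s/\xi}\,\frac{1}{2\pi}\sqrt{4-y^2}+O(1)$, so that $\d\mu_\xi(\sqrt{s\xi}\,y)\,\sqrt{s\xi}\,\d y \to s\,\d\sigma(y)$ uniformly on compacts. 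This gives
\begin{equation*}
\int_{x\geq\omega,\ x\leq M\sqrt\xi}\d\mu_\xi(x)\ \xrightarrow[\xi\to 0]{}\ (1-\kappa^*)\int_{\tilde\omega}\d\sigma(y),
\end{equation*}
using that $M\sqrt\xi$ corresponds to $y\to\infty$, i.e.\ beyond the support $[-2,2]$ of $\sigma$, so the upper cutoff is harmless. Combining the two limits yields $\kappa=\kappa^*+(1-\kappa^*)\int_{\tilde\omega}\d\sigma(y)$, i.e.\ $\tfrac{\kappa-\kappa^*}{1-\kappa^*}=\int_{\tilde\omega}\d\sigma(x)$, which pins down $\tilde\omega$ uniquely (the right-hand side is a continuous strictly decreasing bijection from $[-2,2]$ onto $[0,1]$, and $\tfrac{\kappa-\kappa^*}{1-\kappa^*}\in(0,1)$ under the hypothesis $\kappa^*<\kappa<1$). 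Finally, a monotonicity/continuity argument — the map $\omega\mapsto\int_\omega\d\mu_\xi$ is continuous and strictly decreasing at each fixed $\xi>0$, hence invertible — upgrades this convergence of the mass integral to the asserted asymptotic $\omega\sim\sqrt{s\xi}\,\tilde\omega$ for the actual solution.

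\emph{Main obstacle.} The delicate point is the matching at the intermediate scale: one must verify that there genuinely is a ``gap'' region $M\sqrt\xi\ll x\ll 1$ on which $\mu_\xi$ carries negligible mass as $\xi\to 0$, so that the two components are cleanly separated and the choice of the splitting scale $M\sqrt\xi$ does not affect the limit. This requires controlling the transition regime of $\rho_\xi$ between the semicircular regime $x=\Theta(\sqrt\xi)$ and the regime $x=\Theta(1)$, which is not directly given by either expansion in \cref{Lemma:ExpansionSmallXi} — both expansions hold in their own regime but one needs a uniform tail bound bridging them (e.g.\ from the subordination equation $m_\xi(z)=m_*(z-\xi m_\xi(z))$ one can show $\rho_\xi$ decays like $\xi/x^{2}$ or faster in the gap, giving mass $O(1/M)\to 0$). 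Establishing such a uniform estimate — or citing it from the free-probability literature on outliers of deformed semicircle laws, e.g.\ \citep{biane1997free} — is where the real work lies; the rest is change of variables and monotone inversion.
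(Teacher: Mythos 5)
Your proposal follows essentially the same approach as the paper's proof: you split the mass above $\omega$ into a macroscopic contribution from the $\nu^*$ component (giving $\kappa^*$) and a microscopic contribution from the semicircular bulk at scale $\sqrt{(1-\kappa^*)\xi}$ (giving $(1-\kappa^*)\int_{\tilde\omega}\d\sigma$), each controlled by Lemma~\ref{Lemma:ExpansionSmallXi}, and then read off the self-consistent equation for $\tilde\omega$. The paper glosses over the intermediate matching region $M\sqrt\xi \ll x \ll 1$ and the uniqueness of $\tilde\omega$, both of which you flag and discuss explicitly, so your version is somewhat more careful about the gaps, but the underlying argument is identical.
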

\begin{proof}
For $\kappa > \kappa^*$, it is clear that $\omega$ goes to zero with $\xi$, since $\omega$ selects a proportion $\kappa$ of the mass of $\mu_\xi$. If $\omega$ were to remain of order 1, it would select either a proportion $\kappa^*$ corresponding to the mass of $\nu^*$ (if $\omega > 0$) or a mass of one if $\omega < 0$. Therefore, we let $\omega \sim \sqrt{s\xi} \tilde \omega$, with $s = 1 - \kappa^*$.

Then, using \cref{Lemma:ExpansionSmallXi}, we decompose the integral between $x = O(1)$ and $x = O(\sqrt{\xi})$ and obtain the asymptotics:
\begin{equation*}
\begin{aligned}
    \int \1_{x \geq \omega} \d \mu_\xi(x) &\underset{\xi \to 0}{=} \kappa^* \int \d \nu^*(x) +\sqrt{s \xi}  \int_{|y| \leq 2} \1_{y \geq \tilde \omega}\rho_\xi(\sqrt{s \xi} y) \d y + o(1) \\
    &\xrightarrow[\xi \to 0]{} \kappa^* + (1 - \kappa^*) \int \1_{x \geq \tilde \omega} \d \sigma(x). 
\end{aligned}
\end{equation*}
Since the integral on the left side is equal to $\kappa$, we get the derived self-consistent equation on $\tilde \omega$. When $\tilde \omega$ ranges from $-2$ to $2$ (the edges of $\sigma$), $\kappa$ goes from $1$, in which case it selects the whole spectrum, to $\kappa^*$ where it only retains the positive mass of $\nu^*$. 
\end{proof}

Although we assumed $\kappa^* < \kappa < 1$, the previous result still applies to the limit cases:
\begin{itemize}
    \item $\kappa = \kappa^* < 1$. In this case there is no need to cut through the semicircular density, and we can pick any scaling $\omega \sim \sqrt{s \xi} \tilde \omega$ with $\tilde \omega > 2$. 
    \item $\kappa^* < 1 \leq \kappa$. Now $\omega$ needs to select the whole spectrum of $\mu_\xi$, so we can take the same scaling $\omega \sim \sqrt{s \xi} \tilde \omega$ with $\tilde \omega < -2$. 
\end{itemize}

With this understanding of the threshold $\omega$ in the small $\xi$ limit, we can now compute the following asymptotics:
\begin{lemma} \label{Lemma:SmallXiFunctions}
    Consider $\kappa^* \in (0,1)$ and $\kappa \geq \kappa^*$. Then, we have, with $\omega \sim \sqrt{s \xi} \tilde \omega$ and $q \sim \sqrt{s \xi} h$:
    \begin{align}
        \int_{\max(q, \omega)} \d \mu_\xi(x) &\underset{\xi \to 0}{=} \kappa^* + (1 - \kappa^*) \int_{\max(h , \tilde \omega)} \d \sigma(x) + o(1), \label{eq:SmallXi1} \\
        \int_{\max(q, \omega)} x^2 \d \mu_\xi(x)  &\underset{\xi \to 0}{=} Q_* + 2\xi \left( \kappa^* - \frac{\kappa^{*2}}{2} \right) + \xi (1 - \kappa^*)^2 \int_{\max(h, \tilde \omega)} x^2 \d \sigma(x) + o(\xi), \label{eq:SmallXi2} \\
        \int_{\max(q, \omega)} (x-q) h_\xi(x) \d \mu_\xi(x) &\underset{\xi \to 0}{=} \kappa^* - \frac{\kappa^{*2}}{2} + \frac{(1 - \kappa^*)^2}{2} \int_{\max(h, \tilde \omega)} x(x-h) \d \sigma(x) + o(1). \label{eq:SmallXi3} 
    \end{align}
\end{lemma}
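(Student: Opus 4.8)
\subsection*{Proof proposal for \cref{Lemma:SmallXiFunctions}}

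The plan is to avoid splitting each integral into a ``macroscopic'' and a ``microscopic'' piece and instead write it as an \emph{exact total minus a tail}, where the tail is supported entirely in the near-zero bulk of $\mu_\xi$. Indeed, each of the three integrands has a closed form when integrated against $\mu_\xi$ over all of $\R$:
\begin{equation*}
\int \d\mu_\xi = 1, \qquad \int x^2 \, \d\mu_\xi(x) = Q_* + \xi, \qquad \int (x-q)\,h_\xi(x)\,\d\mu_\xi(x) = \frac{1}{2}.
\end{equation*}
The first is trivial. The second follows from \cref{Lemma:SubordinationSemicircular} by expanding $m_\xi(z) = 1/z + M_1/z^2 + M_2/z^3 + \cdots$ at $z = \infty$: matching coefficients in $m_\xi(z) = m_*(z - \xi m_\xi(z))$ gives $M_2 = \int x^2 \d\mu^* + \xi = Q_* + \xi$. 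The third follows from \cref{Lemma:Hilbert2} applied to $\mu_\xi$ (which for every fixed $\xi>0$ is compactly supported, atomless, with bounded square-integrable density, by \cref{Lemma:ExpansionSmallXi} and standard regularity of free convolution with a semicircle), which gives $\int x h_\xi(x)\d\mu_\xi(x) = \frac12$, together with $\int h_\xi(x)\d\mu_\xi(x) = 0$ (antisymmetry of $1/(x-y)$, i.e.\ \cref{Lemma:Hilbert} with $f \equiv 1$), so that $\int q\, h_\xi(x)\d\mu_\xi(x) = 0$.

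Next I would evaluate the tail $\int_{-\infty}^{\max(q,\omega)}(\cdots)\d\mu_\xi$. For small $\xi$, the support of $\mu_\xi = \mu^* \boxplus \sigma_\xi$ decomposes into a bulk of width $\Theta(\sqrt\xi)$ around $0$ carrying mass $1-\kappa^*$, and a component close to $\mathrm{supp}(\nu^*)$, which is bounded away from zero (this is the small-$\xi$ picture of \citet{maillard2024bayes}, also recalled before \cref{App:Subsec:InterpolationThreshold}). Since $\max(q,\omega) = \Theta(\sqrt\xi) \to 0$ lies well below $\mathrm{supp}(\nu^*)$ for $\xi$ small, the tail is supported entirely in the near-zero bulk. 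On that bulk I change variables $x = \sqrt{s\xi}\,y$ with $s = 1-\kappa^*$, using the microscopic part of \cref{Lemma:ExpansionSmallXi}, namely $\rho_\xi(\sqrt{s\xi}\,y) = \sqrt{s/\xi}\,\frac{1}{2\pi}\sqrt{4-y^2} + O(1)$ and $h_\xi(\sqrt{s\xi}\,y) = \sqrt{s/\xi}\,\frac{y}{2} + O(1)$, and noting that the rescaled lower limit is $\max(h,\tilde\omega)$ (the argument of \cref{Lemma:SmallXiOmega}, applied to the threshold $\max(q,\omega)$ rather than $\omega$, since $q$ and $\omega$ share the same $\sqrt{s\xi}$ scaling). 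Counting powers of $\sqrt\xi$ then yields the tail values $(1-\kappa^*)\int_{-2}^{\max(h,\tilde\omega)}\d\sigma + o(1)$, $\ \xi(1-\kappa^*)^2\int_{-2}^{\max(h,\tilde\omega)}x^2\,\d\sigma + o(\xi)$, and $\frac{(1-\kappa^*)^2}{2}\int_{-2}^{\max(h,\tilde\omega)}x(x-h)\,\d\sigma + o(1)$ for the three integrands respectively.

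Subtracting each tail from the corresponding total finishes the proof. Using $\int_{-2}^{2}\d\sigma = 1$, $\int_{-2}^{2}x\,\d\sigma = 0$, $\int_{-2}^{2}x^2\,\d\sigma = 1$ to write $\int_{-2}^{\max(h,\tilde\omega)} = \int_{-2}^{2} - \int_{\max(h,\tilde\omega)}^{2}$, and $1-(1-\kappa^*) = \kappa^*$, $\frac{1-(1-\kappa^*)^2}{2} = \kappa^* - \frac{\kappa^{*2}}{2}$, one recovers exactly \eqref{eq:SmallXi1}, \eqref{eq:SmallXi2}, \eqref{eq:SmallXi3}. (For \eqref{eq:SmallXi3} the total is $\frac12$, and $\frac12 - \frac{(1-\kappa^*)^2}{2}\int_{-2}^{2}y(y-h)\,\d\sigma = \frac12 - \frac{(1-\kappa^*)^2}{2} = \kappa^* - \frac{\kappa^{*2}}{2}$, which supplies the constant term.) The degenerate cases $\kappa = \kappa^*$ and $\kappa \ge 1$ are handled uniformly by letting $\tilde\omega \to 2$ or $\tilde\omega \to -2$, as in the remark following \cref{Lemma:SmallXiOmega}.

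The main obstacle is making rigorous the claim that, for $\xi$ small, the near-zero bulk and the $\nu^*$-component of $\mu_\xi$ are genuinely disjoint, so that (i) the tail $\int_{-\infty}^{\max(q,\omega)}$ captures no mass from $\nu^*$, and (ii) the microscopic expansion of \cref{Lemma:ExpansionSmallXi} is valid uniformly over the whole tail up to the bulk edge $y = \pm 2$, where the density has a square-root vanishing and the $O(1)$ error terms must be controlled in an integrable way; this is the quantitative version of the small-$\xi$ support picture of \citet{maillard2024bayes}. A secondary, lighter point is to make precise the two exact identities used at the start (the second-moment identity via the $R$-transform/subordination expansion, and the applicability of \cref{Lemma:Hilbert2}, which requires boundedness and square-integrability of $\rho_\xi$ at each fixed $\xi>0$, both of which follow from \cref{Lemma:ExpansionSmallXi}).
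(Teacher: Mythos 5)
Your argument is correct, and it takes a genuinely different route from the paper's. The paper proves each of the three expansions by splitting the integral directly into a macroscopic region ($x$ of order one) and a microscopic one ($x = O(\sqrt{\xi})$), and plugging in the corresponding expansions from \cref{Lemma:ExpansionSmallXi}; for \eqref{eq:SmallXi2} this requires the order-$\xi$ correction $\rho_1$ of $\rho_\xi$ on the macroscopic bulk (the term involving $\partial_x \nu^*$ and $\mathrm{Im}\big(m_{\nu^*}\partial_x m_{\nu^*}\big)$), two integrations by parts to reduce those integrals to $\int x h_{\nu^*}\d\nu^*$, and then \cref{Lemma:Hilbert2}. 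You instead write each integral as an \emph{exact total} minus a \emph{tail}: the totals $\int\d\mu_\xi = 1$, $\int x^2\d\mu_\xi = Q_* + \xi$ (additivity of means and variances under free convolution, equivalently your $R$-transform/subordination expansion), and $\int(x-q)h_\xi\d\mu_\xi = \tfrac12$ (\cref{Lemma:Hilbert2} plus the antisymmetry identity $\int h_\xi\d\mu_\xi = 0$), while the tail over $(-\infty,\max(q,\omega))$ sits entirely in the near-zero bulk and only needs the \emph{leading-order} microscopic expansion. The practical gain is that your route never touches the $O(\xi)$ macroscopic correction and the associated integrations by parts, so it is both shorter and less demanding on regularity of $\nu^*$ (the paper's path implicitly uses $\partial_x\nu^*$). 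You also correctly flag the step that would need tightening in a fully rigorous treatment: that for small $\xi$ the bulk and the $\nu^*$-component of $\mu_\xi$ are genuinely separated, and that the microscopic expansion holds uniformly up to the bulk edge with integrable error — a point the paper's own proof also glosses over.
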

\begin{proof}
The derivation of equation \eqref{eq:SmallXi1} is precisely the same as in \cref{Lemma:SmallXiOmega} with a slightly different threshold. Then, let us show equation \eqref{eq:SmallXi2}. We have, as a consequence of \cref{Lemma:ExpansionSmallXi}:
\begin{equation} \label{eq:PRCalculation1}
\begin{aligned}
    \int_{\max(q, \omega)} x^2 \d \mu_\xi(x)&= \kappa^* \int x^2 \d \nu^*(x) - \xi \kappa^* (1 - \kappa^*) \int x^2 \frac{\d}{\d x} \left( \frac{\nu^*(x)}{x} \right) \d x \\
    &+ \frac{\xi \kappa^{*2}}{\pi} \int x^2 \, \mathrm{Im} \Big( m_{\nu^*}(x) \partial_x m_{\nu^*}(x) \Big) \d x \\
    &+ (1-\kappa^*)^2 \xi \int_{\max(h, \tilde \omega)} x^2 \d \sigma(x) + o(\xi).
\end{aligned}
\end{equation}
In the last expression, the three first term come from the expansion of $\rho_\xi(x)$ for $x$ fixed and non-zero, and the second comes from the microscopic behavior of $\rho_\xi(x)$ with $x = O(\sqrt{\xi})$. Also $m_{\nu^*}(x)$ is understood as the limit of $m_*(x+i\eta)$ as $\eta \to 0^+$. Now, integrating by parts:
\begin{equation*}
    \int x^2 \, \mathrm{Im} \Big( m_{\nu^*}(x) \partial_x m_{\nu^*}(x) \Big) \d x = \left[ \frac{x^2}{2} \mathrm{Im} \big(m_{\nu^*}(x)^2 \big) \right] - \int x  \, \mathrm{Im} \big(m_{\nu^*}(x)^2 \big) \d x. 
 \end{equation*}
 Now, since $m_{\nu^*}(x+i\eta) = h_{\nu^*}(x) -i\pi \nu^*(x)$ as $\eta \to 0^+$, we have $\mathrm{Im} \big(m_{\nu^*}(x)^2 \big) = -2 \pi \nu^*(x) h_{\nu^*}(x)$, so the bracket vanishes. Therefore:
 \begin{equation*}
     \int x^2 \, \mathrm{Im} \Big( m_{\nu^*}(x) \partial_x m_{\nu^*}(x) \Big) \d x = 2 \pi \int x h_{\nu^*}(x) \d \nu^*(x). 
 \end{equation*}
 Now, using \cref{Lemma:Hilbert2}, we have:
\begin{equation*}
     \int x h_{\nu^*}(x) \d \nu^*(x) = \frac{1}{2}.
\end{equation*}
Now going back to equation \eqref{eq:PRCalculation1}, we have, integrating by parts:
\begin{equation*}
    \int x^2 \frac{\d}{\d x} \left( \frac{\nu^*(x)}{x} \right) \d x = -2.
\end{equation*}
Therefore, we have:
\begin{equation*}
    \int_{\max(q, \omega)} x^2 \d \mu_\xi(x) = Q_* + 2\xi \left( \kappa^* - \frac{\kappa^{*2}}{2} \right) + \xi (1-\kappa^*)^2 \int_{\max(h, \tilde \omega)} x^2 \d \sigma(x) + o(\xi),
\end{equation*}
which is precisely equation \eqref{eq:SmallXi2}.

Let us show equation \eqref{eq:SmallXi3}. We know from \cref{Lemma:ExpansionSmallXi} that:
\begin{equation*}
    h_\xi(x) \xrightarrow[\xi \to 0]{} \kappa^* h_{\nu^*}(x) + \frac{1-\kappa^*}{x}, \hspace{1.5cm} h_\xi \big( x \sqrt{s \xi} \big) \underset{\xi \to 0}{\sim} \sqrt{\frac{s}{\xi}} \frac{x}{2}. 
\end{equation*}
Therefore at first order:
\begin{equation*}
\begin{aligned}
    \int_{\max(q, \omega)} (x-q) h_\xi(x) \d \mu_\xi(x) &= \kappa^{*2} \int x h_{\nu^*}(x) \d \nu^*(x) + \kappa^* (1 - \kappa^*) \\
    &\hspace{1cm}+ \frac{(1 - \kappa^*)^2}{2} \int_{\max(h, \tilde \omega)} x(x-h) \d \sigma(x) + o(1).
\end{aligned}
\end{equation*}
As shown before, the first integral is simply $1/2$. Combining the first and second term, we get the expected result. 
\end{proof}

To conclude these computations, we consider the case where $\kappa^* \geq 1$, and derive the same asymptotics as previously.

\begin{lemma} \label{Lemma:SmallXiFunctionsOverparam}
    Consider the case $\kappa, \kappa^* \geq 1$. Then, as $\xi \to 0$, with $q \to 0$:
    \begin{equation*}
    \begin{aligned}
        \int_q \d \mu_\xi(x) &\xrightarrow[\xi \to 0]{} 1, \\
        \int_q x^2 \d \mu_\xi(x)  &\underset{\xi \to 0}{=} Q_* + \xi + o(\xi), \\
        \int_q (x-q) h_\xi(x) \d \mu_\xi(x) &\xrightarrow[\xi \to 0]{} \frac{1}{2}. 
    \end{aligned}
    \end{equation*}
\end{lemma}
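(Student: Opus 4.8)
The plan is to reduce all three claims to the single observation that, for $\kappa^* \geq 1$, the teacher measure $\mu^*$ is supported away from zero, so that after free convolution with a narrow semicircular law the truncations at $q$ become trivial. First I would note that $\mu_\xi = \mu^* \boxplus \sigma_\xi$ has support contained in an $O(\sqrt{\xi})$-neighbourhood of $\mathrm{Supp}(\mu^*)$; since $\mathrm{Supp}(\mu^*) \subset (c, \infty)$ for some $c > 0$, for $\xi$ small enough $\inf \mathrm{Supp}(\mu_\xi)$ stays positive, and since $q \to 0$ we eventually have $q$ strictly below the spectrum of $\mu_\xi$. Consequently each integral $\int_q (\cdot)\, \d \mu_\xi$ coincides with the corresponding full integral $\int (\cdot)\, \d \mu_\xi$, and the first claim follows immediately since $\mu_\xi$ is a probability measure.

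For the second claim I would extract the low-order moments of $\mu_\xi$ from the subordination relation of \cref{Lemma:SubordinationSemicircular}, namely $m_\xi(z) = m_*(z - \xi m_\xi(z))$. Expanding both sides at $z = \infty$, with $m_\xi(z) = z^{-1} + m_1^\xi z^{-2} + m_2^\xi z^{-3} + O(z^{-4})$ where $m_k^\xi = \int x^k\, \d \mu_\xi(x)$, and matching the coefficients of $z^{-2}$ and $z^{-3}$, gives $m_1^\xi = m_1^*$ and $m_2^\xi = Q_* + \xi$. (Equivalently, this is the additivity of the first two free cumulants under $\boxplus$ together with the fact that $\sigma_\xi$ has vanishing mean and variance $\xi$.) Combined with the first paragraph, this yields the stated identity, in fact with no error term.

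For the third claim I would split $\int_q (x - q) h_\xi(x)\, \d \mu_\xi(x) = \int x\, h_\xi(x)\, \d \mu_\xi(x) - q \int h_\xi(x)\, \d \mu_\xi(x)$. The second integral vanishes identically by the antisymmetry of the principal-value kernel $\tfrac{1}{x-y}$, so it drops as $q \to 0$. For the first integral I would invoke \cref{Lemma:Hilbert2}, which gives $\int x\, h_\xi(x)\, \d \mu_\xi(x) = \tfrac{1}{2}$; its hypotheses are met because, for $\xi > 0$, the measure $\mu_\xi$ has no atoms and admits a bounded square-integrable density (the usual regularizing effect of free convolution with a semicircular law, already used in \cref{App:LongTimes}). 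Letting $\xi \to 0$ then finishes the argument.

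The computation is essentially routine; the only point deserving a little care is the appeal to the regularity of $\mu_\xi$ — that it has no atoms, a bounded square-integrable density, and support bounded away from zero for small $\xi$ — which I would either cite from the free-probability literature (\cite{biane1997free}) or read off from the explicit small-$\xi$ expansion of $\rho_\xi$ in \cref{Lemma:ExpansionSmallXi} (restricted to the case where $\mu^*$ has no atom at zero). So the main obstacle, such as it is, is bookkeeping these regularity hypotheses rather than any substantive estimate.
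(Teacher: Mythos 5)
Your proof is correct, and it takes a genuinely different route from the paper's. The paper's proof proceeds by expanding $\rho_\xi$ and $h_\xi$ at small $\xi$ (adapting \cref{Lemma:ExpansionSmallXi} to $\kappa^* \geq 1$), integrating the expansion against $x^2$, then performing an integration by parts to relate $\int x^2\,\mathrm{Im}(m_* \partial_x m_*)\,\d x$ to $2\int x\,h^*\,\d\mu^*$, and finally applying \cref{Lemma:Hilbert2} to $\mu^*$. You bypass the density expansion entirely: once the support of $\mu_\xi$ is bounded away from zero (which follows from $\mathrm{Supp}(\mu^*)\subset(c,\infty)$, plus the $O(\sqrt{\xi})$-widening of the support under $\boxplus\,\sigma_\xi$), the truncation at $q$ is vacuous, and the second moment identity $\int x^2\,\d\mu_\xi = Q_* + \xi$ is \emph{exact} by additivity of free cumulants — sharper than the $o(\xi)$ claimed. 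For the third claim you apply \cref{Lemma:Hilbert2} directly to $\mu_\xi$ (which has a bounded square-integrable density for any $\xi > 0$) together with the antisymmetry of the principal-value kernel, rather than passing to the limit measure $\mu^*$. This is both shorter and more robust: it avoids the paper's implicit reliance on $\mu^*$ having a smooth density and on adapting \cref{Lemma:ExpansionSmallXi} outside the parameter range $\kappa^* \in (0,1)$ in which it was proved. One small stylistic note: your appeal to \cref{Lemma:ExpansionSmallXi} at the end is unnecessary and in fact out of scope (that lemma is stated for $\kappa^* < 1$); the citation of \cite{biane1997free} alone suffices for the regularity of $\mu_\xi$.
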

\begin{proof}
    In this case, the measure $\mu_\xi$ converges smoothly to $\mu^*$ and there is no semicircular density near zero. Therefore, we can adapt the result of \cref{Lemma:ExpansionSmallXi} and obtain for all $x \geq 0$:
    \begin{equation*}
    \begin{aligned}
        \rho_\xi(x) &= \mu^*(x) + \frac{\xi}{\pi} \mathrm{Im} \Big( m_*(x) \partial_x m_*(x) \Big) + O(\xi^2) \\
        h_\xi(x) &= h^*(x) + o(1),
    \end{aligned}
    \end{equation*}
    with $h^*, m_*$ being the Hilbert and Stieltjes transform of $\mu^*$ (see \cref{Def:HilbertStieltjes}). Therefore, at the necessary orders:
    \begin{equation*}
    \begin{aligned}
        \int_q \d \mu_\xi(x) &\xrightarrow[\xi \to 0]{} \int \d \mu^*(x) = 1, \\
        \int_q x^2 \d \mu_\xi(x) &\underset{\xi \to 0}{=} \int x^2 \d \mu^*(x) + \frac{\xi}{\pi} \int x^2 \, \mathrm{Im} \Big( m_*(x) \partial_x m_*(x) \Big) \d x + o(\xi), \\
        \int_q (x-q) h_\xi(x) \d \mu_\xi(x) &\xrightarrow[\xi \to 0]{} \int x h^*(x) \d \mu^*(x).
    \end{aligned}
    \end{equation*}
    Now we have shown in the proof of \cref{Lemma:SmallXiFunctions} that:
    \begin{equation*}
         \frac{1}{\pi} \int x^2 \, \mathrm{Im} \Big( m_*(x) \partial_x m_*(x) \Big) \d x = 2 \int x h^*(x) \d \mu^*(x). 
    \end{equation*}
    Finally, as a consequence of \cref{Lemma:Hilbert2}, we have:
    \begin{equation*}
        \int x h^*(x) \d \mu^*(x) = \frac{1}{2},
    \end{equation*}
    and putting everything together, this concludes the proof. 
\end{proof}

\section{Proofs of the Results on the Oja Flow} \label{App:Oja}
In this section, we prove the results on the Oja flow presented in \cref{Sec:OjaFlow}. More precisely:
\begin{itemize}
    \item In \cref{App:Subsec:OjaConvergence}, we characterize the limit of the Oja flow by studying the local minimizers of the potential in equation \eqref{eq:PotentialOja}.
    \item We then derive finite-dimensional convergence rates, both in the full-rank setting (\cref{App:Subsec:CVRates1,App:Subsec:HessianOja}) and in the rank-deficient case (\cref{App:Subsec:CVRates2}).
    \item Next, we study the high-dimensional regime by proving the limit of the distance to convergence (\cref{App:Subsec:HighDimLimit}) and the corresponding long-time convergence rates (\cref{App:Subsec:LongTime}).
    \item Finally, we analyze the linear response of the Oja flow, first in finite dimension (\cref{App:Subsec:OjaResponse1}) and then in the high-dimensional limit (\cref{App:Subsec:OjaResponse2}).
\end{itemize}

\proofsubsection{prop:OjaConvergence}
\label{App:Subsec:OjaConvergence}

In the following, we prove \cref{prop:OjaConvergence}, giving the limit of the Oja flow dynamics \eqref{eq:OjaFlow} at long times, under a random initialization. To do so, we study the local minimizers of the associated loss and invoke the stable manifold theorem \citep[see][]{smale1963stable}. We recall that the dynamics we study is the gradient flow associated with the loss defined in equation \eqref{eq:PotentialOja}:
\begin{equation*}
    U(W) = \frac{1}{4} \big\|WW^\top - A \big\|_F^2. 
\end{equation*}
The gradient and Hessian of $U$ are given by:
\begin{align}
    \nabla U(W) &= \big(WW^\top - A \big) W, \label{eq:gradientOja} \\
    \tr \big( \d^2 U_W(K)K^\top \big) &= \tr \Big( \big( WW^\top - A \big) KK^\top \Big) + \tr \big( KW^\top KW^\top \big) + \tr \big(KW^\top WK^\top \big), \label{eq:hessianOja1}
\end{align}
for $K \in \R^{d \times m}$. Now, if $\nabla U(W) = 0$, then with $Z = WW^\top$, we have $Z^2 = AZ$, so that $Z$ and $A$ commute. Thus, we can diagonalize $Z$ in the same basis as $A$. Then, we write the eigenvalues and singular values decompositions for $A$ and $W$:
\begin{equation*}
    A = \sum_{k=1}^d \lambda_k u_k u_k^\top, \hspace{1.5cm} W = \sum_{k=1}^{\min(m,d)} \sqrt{\mu_k} v_k u_k^\top.
\end{equation*}
Plugging these expressions into the equation $\nabla U(W) = 0$ leads to $\mu_k \in \{0, \lambda_k\}$ for all $k \in \{1, \dots, \min(m,d)\}$. Of course if $\lambda_k < 0$ then $\mu_k = 0$ since $Z$ is positive semidefinite. We now assume that $W \in \R^{d \times m}$ is a local minimizer and evaluate equation \eqref{eq:hessianOja1} with $K = v_j u_i^\top$. Under the constraint that $\tr \big( \d^2 U_W(K)K^\top \big) \geq 0$, we get:
\begin{equation*}
    \mu_j - \lambda_j + \mu_i \delta_{ij} + \mu_i \geq 0. 
\end{equation*}
We then choose the indices $i \neq j$ such that $\mu_i = \lambda_i > 0$ and $\lambda_j > \lambda_i$. Then, we obtain that $\mu_j \geq \lambda_j - \lambda_i > 0$ so that necessarily $\mu_j = \lambda_j$. Therefore, as soon as $Z$ matches a positive eigenvalue of $A$, it needs to match the larger eigenvalues of $A$. 

We now show that $Z$ recovers as many eigenvalues as it is possible under the rank constraint. To do so, we assume that $\mathrm{rank}(W) < m$ and introduce $u \in \R^m$ non-zero such that $Wu = 0$. Then evaluating equation \eqref{eq:hessianOja1} with $K = v_j u^\top$ we get $\mu_j \geq \lambda_j$. Therefore if $\lambda_j > 0$, necessarily $\mu_j = \lambda_j$ and $Z$ matches all the positive eigenvalues of $A$. Otherwise $\mathrm{rank}(W) = m$ and the previous reasoning shows that $Z$ recovers the $m$ largest positive eigenvalues of $A$. Therefore, we can conclude the equivalence:
\begin{equation*}
    W \, \text{local minimizer of } U \Longleftrightarrow WW^\top = A_{(m)}^+. 
\end{equation*}
We now use the stable manifold theorem \citep{smale1963stable}: since $U$ is analytic in the coefficients of $W$, and the gradient flow $W(t)$ associated with $U$ is initialized with a random matrix $W_0$ whose distribution is absolutely continuous with respect to the Lebesgue measure on $\R^{d \times m}$, then with probability one, $W(t)$ converges toward some local minimizer of $U$. This leads to the result. 

\proofsubsection{prop:CVRates1} \label{App:Subsec:CVRates1}

We now prove \cref{prop:CVRates1} that gives the exponentially fast convergence of the gradient flow dynamics $\dot W(t) = - \nabla F(W(t))$ when $F$ is of the form $F(W) = G(WW^\top)$, for some twice continuously differentiable $G \colon \mathcal{S}_d(\R) \to \R$. Note that the result we show in the following only applies whenever $W(t)$ converges to a full-rank matrix. We start with a few useful lemmas. 

\begin{lemma} \label{Lemma:CVRates1}
    Consider the family of subspaces indexed by $W \in \R^{d \times m}$:
    \begin{equation} \label{eq:HorizontalSpace}
        \mathcal{H}_W = \Big\{ K \in \R^{d \times m}, \, W^\top K = K^\top W \Big\},
    \end{equation}
    and $\pi_W$ to be the orthogonal projection onto $\mathcal{H}_W$. Then, for $m \leq d$, the map $W \mapsto \pi_W$ is continuous on the open set $\big\{ W \in \R^{d \times m}, \, \mathrm{rank}(W) = m \big\}$. 
\end{lemma}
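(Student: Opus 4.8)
The plan is to rewrite $\mathcal{H}_W$ as the kernel of a linear map whose rank is constant on the full-rank locus, and then invoke the closed-form expression for orthogonal projection onto a kernel. Define $\Phi_W \colon \R^{d \times m} \to \mathrm{Skew}_m(\R)$ (the space of skew-symmetric $m\times m$ matrices) by $\Phi_W(K) = W^\top K - K^\top W$, so that $\mathcal{H}_W = \ker \Phi_W$. First I would check that $\Phi_W$ is surjective whenever $\mathrm{rank}(W) = m$: for a skew matrix $S$, the matrix $K = \tfrac12 W (W^\top W)^{-1} S$ satisfies $W^\top K = \tfrac12 S$ and $K^\top W = \tfrac12 S^\top = -\tfrac12 S$, hence $\Phi_W(K) = S$. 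In particular $\dim \mathcal{H}_W = dm - \binom{m}{2}$ is locally constant on $\{W : \mathrm{rank}(W) = m\}$; this constant-rank property is the structural fact making the projection continuous, and the hypothesis $m \le d$ is exactly what guarantees this set is open and nonempty.

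Next I would compute the adjoint of $\Phi_W$ for the Frobenius inner products. A short computation gives $\langle \Phi_W(K), S \rangle = 2\langle K, WS\rangle$ for all $K \in \R^{d\times m}$ and all skew $S$, so $\Phi_W^*(S) = 2WS$; equivalently $\mathcal{H}_W^\perp = \mathrm{im}\,\Phi_W^* = \{WS : S \in \mathrm{Skew}_m(\R)\}$, the vertical space of the $O(m)$-action $W \mapsto WQ$ discussed after \cref{prop:CVRates1}. Then $\Phi_W \Phi_W^*(S) = 2\big(W^\top W\, S + S\, W^\top W\big)$ is a Lyapunov operator on $\mathrm{Skew}_m(\R)$, and $\langle \Phi_W\Phi_W^*(S), S\rangle = 4\,\tr\!\big(W^\top W\, S^\top S\big)$, which is strictly positive for $S \neq 0$ precisely when $W^\top W \succ 0$, i.e.\ when $\mathrm{rank}(W) = m$. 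Hence on the full-rank locus $\Phi_W \Phi_W^*$ is invertible, and the orthogonal projection onto $\mathcal{H}_W$ is given by the standard formula
\begin{equation*}
    \pi_W = \mathrm{Id} - \Phi_W^* \, \big(\Phi_W \Phi_W^*\big)^{-1} \Phi_W .
\end{equation*}

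Finally, continuity follows by composition: $W \mapsto \Phi_W$, $W \mapsto \Phi_W^*$ and $W \mapsto \Phi_W\Phi_W^*$ depend polynomially (in fact at most quadratically) on the entries of $W$, hence are continuous into the relevant spaces of linear maps; the operator $\Phi_W\Phi_W^*$ is invertible on the open set $\{\mathrm{rank}(W) = m\}$ and inversion is continuous on invertible operators; therefore $W \mapsto \pi_W$ is continuous there. The one point genuinely requiring care — and the reason the restriction to full rank cannot be dropped — is this constant-rank property: on the rank-deficient set $\dim \mathcal{H}_W$ jumps and $\pi_W$ is actually discontinuous, so establishing surjectivity of $\Phi_W$ (equivalently invertibility of the Lyapunov operator $\Phi_W\Phi_W^*$) is the crux of the argument; the rest is routine.
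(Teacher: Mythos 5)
Your proof is correct and arrives at exactly the same closed formula as the paper: writing $\Phi_W^*(S) = 2WS$ and $\Phi_W\Phi_W^* = 2\phi_{W^\top W}$ restricted to $\mathrm{Skew}_m(\R)$, your expression $\pi_W = \mathrm{Id} - \Phi_W^*(\Phi_W\Phi_W^*)^{-1}\Phi_W$ is literally $\pi_W(K) = K - W\,\phi_{W^\top W}^{-1}\big(W^\top K - K^\top W\big)$, which is the paper's formula; both proofs then finish by continuity of inversion. The only substantive difference is how the formula is obtained: the paper decomposes $K = H + W\Omega$ and solves the Sylvester equation for $\Omega$, while you invoke the generic projection-onto-kernel formula for a surjective map, which is cleaner and also makes visible a small imprecision in the paper's wording — the paper asserts that $\phi_M$ is invertible ``provided that $M$ is invertible,'' but in fact the Lyapunov operator $\phi_M(X) = MX + XM$ can be singular for invertible indefinite $M$ (eigenvalues $\lambda_i + \lambda_j$ can vanish); your observation that $\langle\Phi_W\Phi_W^*(S),S\rangle = 4\,\tr(W^\top W\, S^\top S) > 0$ correctly isolates positive definiteness of $W^\top W$ as the property actually used.
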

\begin{proof}
    It is known \citep[see][]{massart2020quotient} that the orthogonal complement $\mathcal{V}_W$ of $\mathcal{H}_W$ is given by:
    \begin{equation} \label{eq:VerticalSpace}
        \mathcal{V}_W = \big\{ W\Omega, \, \Omega \in \mathcal{A}_m(\R) \big\},
    \end{equation}
    where $\mathcal{A}_m(\R)$ is the set of $m \times m$ skew-symmetric matrices. Now decomposing some $K \in \R^{d \times m}$ as $K = H + W\Omega$, where $H \in \mathcal{H}_W$ and $\Omega \in \mathcal{A}_m(\R)$, we obtain:
    \begin{equation*}
        W^\top K - K^\top W = \phi_{W^\top W}(\Omega), \hspace{1.5cm} \phi_M(X) = MX + XM.
    \end{equation*}
    Now, provided that $M$ is invertible, $\phi_M$ is also invertible, and we get the expression:
    \begin{equation*}
        \pi_W(K) = K - W \phi_{W^\top W}^{-1} \big( W^\top K - K^\top W \big). 
    \end{equation*}
    This leads to the result thanks to the continuity of the inverse. 
\end{proof}

Before proving \cref{prop:CVRates1}, we give the structure of the Hessian of the function $F$ at a critical point. 
\begin{lemma} \label{lemma:CVRates2}
    Let $W \in \R^{d \times m}$ such that $\nabla F(W) = 0$ and $\mathrm{rank}(W) = m$. Then the Hessian $\d^2 L_W$ has the following representation:
    \begin{equation*}
    \begin{pNiceMatrix}[first-row, last-row=3, first-col, last-col=3, nullify-dots]
    & \mathcal{H}_W & \mathcal{V}_W &\\
    & * & 0 & \mathcal{H}_W\\
     & 0 & 0 & \mathcal{V}_W\\
    &&&
\end{pNiceMatrix},
\end{equation*}
where the orthogonal subspaces $\mathcal{H}_W$ and $\mathcal{V}_W$ are defined in equations \eqref{eq:HorizontalSpace}, \eqref{eq:VerticalSpace}. 
\end{lemma}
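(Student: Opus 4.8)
The plan is to identify the vertical space $\mathcal{V}_W$ with the kernel of the Hessian operator and then use self-adjointness to read off the block structure. First I would write $F(W) = G(WW^\top)$ and set $Z = WW^\top$. Since $\frac{\d}{\d t}(W+tK)(W+tK)^\top\big|_{t=0} = WK^\top + KW^\top$, the chain rule gives $\nabla F(W) = 2\, \nabla G(WW^\top)\, W$, consistent with $\nabla U(W) = (WW^\top - A)W$ in equation~\eqref{eq:gradientOja} when $G(Z) = \tfrac14\|Z - A\|_F^2$. Differentiating once more along $K \in \R^{d \times m}$ yields the Hessian operator
\[
    \d^2 F_W(K) = 2\, \nabla G(WW^\top)\, K + 2\, \d^2 G_{WW^\top}\!\big( WK^\top + KW^\top \big)\, W ,
\]
where $\d^2 G_{WW^\top} \colon \mathcal{S}_d(\R) \to \mathcal{S}_d(\R)$ is the Hessian of $G$ at $WW^\top$. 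Being the Hessian of a smooth function, the bilinear form $(K,K') \mapsto \tr\big( \d^2 F_W(K)\, K'^\top \big)$ is symmetric, so $\d^2 F_W$ is self-adjoint for the Frobenius inner product.

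Next I would show $\mathcal{V}_W \subseteq \ker \d^2 F_W$. By \cref{Lemma:CVRates1} we have $\mathcal{V}_W = \{ W\Omega : \Omega \in \mathcal{A}_m(\R)\}$, and since $m \le d$ and $\mathrm{rank}(W) = m$ the orthogonal decomposition $\R^{d\times m} = \mathcal{H}_W \oplus \mathcal{V}_W$ is valid. For $K = W\Omega$ with $\Omega$ skew-symmetric, $WK^\top + KW^\top = W\Omega^\top W^\top + W\Omega W^\top = W(\Omega + \Omega^\top)W^\top = 0$; equivalently, the differential of $W \mapsto WW^\top$ annihilates $\mathcal{V}_W$, so the second term of $\d^2 F_W(K)$ vanishes on $\mathcal{V}_W$. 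For the first term, the critical-point hypothesis $\nabla F(W) = 2\, \nabla G(WW^\top)\, W = 0$ gives $\nabla G(WW^\top)\, K = \nabla G(WW^\top)\, W\Omega = 0$. Hence $\d^2 F_W(K) = 0$ for every $K \in \mathcal{V}_W$.

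Finally, self-adjointness combined with $\d^2 F_W|_{\mathcal{V}_W} = 0$ forces the range of $\d^2 F_W$ into $\mathcal{H}_W$: for any $K$ and any $K'' \in \mathcal{V}_W$, $\tr\big( \d^2 F_W(K)\, K''^\top\big) = \tr\big( K\, (\d^2 F_W(K''))^\top\big) = 0$. Writing $\d^2 F_W$ in the decomposition $\R^{d\times m} = \mathcal{H}_W \oplus \mathcal{V}_W$ (identifying the Hessian $\d^2 L_W$ of the statement with $\d^2 F_W$), the $\mathcal{V}_W$-row and $\mathcal{V}_W$-column blocks vanish, leaving only the $\mathcal{H}_W \times \mathcal{H}_W$ corner undetermined, which is exactly the claimed representation. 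The only delicate point is bookkeeping with the symmetrization conventions for $\nabla G$ and $\d^2 G$ so that the two chain-rule identities are literally correct; everything else is elementary linear algebra, and no long computation is needed since the $*$ block is characterized separately for the Oja flow in \cref{Prop:HessianOja}.
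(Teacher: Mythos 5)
Your proof is correct and takes a genuinely different route from the paper's. The paper proceeds in the opposite order: it first establishes that the \emph{range} of $\d^2 F_W$ lies in $\mathcal{H}_W$, by a soft perturbative argument — expand $\nabla F(W+\epsilon H) = \epsilon \, \d^2 F_W(H) + o(\epsilon)$, use the structural identity $\nabla F(W_\epsilon) \in \mathcal{H}_{W_\epsilon}$, and invoke the \emph{continuity} of $V \mapsto \pi_V^\perp$ from \cref{Lemma:CVRates1} to bound $\big|\tr\big(\nabla F(W_\epsilon) K^\top\big)\big|$ for $K \in \mathcal{V}_W$ — and only then deduces $\mathcal{V}_W \subseteq \ker \d^2 F_W$ via self-adjointness. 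You instead compute the explicit second-order chain rule for $\d^2 F_W$ and observe that both terms die on $K = W\Omega$: the $\d^2 G$-term because the differential of $W \mapsto WW^\top$ annihilates $\mathcal{V}_W$, and the $\nabla G$-term because $\nabla G(WW^\top)W = 0$ at the critical point; self-adjointness then gives the range constraint. Your route avoids the perturbative/continuity argument entirely (you need only the parametrization $\mathcal{V}_W = \{W\Omega : \Omega \in \mathcal{A}_m(\R)\}$ from the proof of \cref{Lemma:CVRates1}, not the continuity of the projections), at the cost of the explicit Hessian formula, whose symmetrization conventions you rightly flag as the one point requiring care. Both arguments are comparably short; yours is a clean direct computation, while the paper's limiting argument touches fewer explicit derivatives and may port more easily to settings where the Hessian of $G$ is awkward to write down.
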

\begin{proof}
    Let us show first of all that for all $H \in \R^{d \times m}$, $\d^2 L_W(H) \in \mathcal{H}_W$, whenever $W$ has full rank and $\nabla F(W) = 0$. First of all, remark that, due to the structure of $F(W) = G(WW^\top)$:
    \begin{equation*}
        \nabla F(W) = 2 \nabla G(WW^\top) W \in \mathcal{H}_W. 
    \end{equation*}
    Now, as $\epsilon \to 0$, since $\nabla F(W) = 0$ and $F$ is $\mathcal{C}^2$:
    \begin{equation} \label{eq:projectiongrad1}
        \nabla F(W + \epsilon H) = \epsilon \d^2 F_W(H) + o(\epsilon). 
    \end{equation}
    Now, let $K \in \mathcal{V}_W$ and denote $W_\epsilon = W + \epsilon H$. We have:
    \begin{equation*}
        K = \pi_W^\perp(K) = \pi_{W_\epsilon}^\perp(K) + \big( \pi_W^\perp - \pi_{W_\epsilon}^\perp \big)(K), 
    \end{equation*}
    where $\pi_W^\perp$ denotes the projection onto $\mathcal{V}_W$. Then, since $\nabla F(W_\epsilon) \in \mathcal{H}_{W_\epsilon}$ is orthogonal to $\mathcal{V}_{W_\epsilon}$:
    \begin{equation*}
        \Big| \tr \big( \nabla F(W_\epsilon) K^\top) \Big| \leq \big\| \pi_W^\perp - \pi_{W_\epsilon}^\perp \big\|_{op} \big\| \nabla F(W_\epsilon) \big\| \big\|K\big\|. 
    \end{equation*}
    Now by \cref{Lemma:CVRates1}, $V \mapsto \pi_V^\perp$ is continuous at $V = W$, and using equation \eqref{eq:projectiongrad1}, we have that the previous quantity is $o(\epsilon)$. Taking the scalar product with $K$ in equation \eqref{eq:projectiongrad1}, we obtain that:
    \begin{equation*}
        \tr \big( \d^2 F_W(H) K^\top \big) = 0. 
    \end{equation*}
    Therefore, for all $H \in \R^{d \times m}$, $\d^2 L_W(H) \in \mathcal{H}_W$. Now, since $\d^2 L_W$ is self-adjoint, for any $H \in \R^{d \times m}$ and $K \in \mathcal{V}_W$:
    \begin{equation*}
        \tr \big(\d^2 L_W(K) H^\top \big) = \tr \big( \d^2 L_W(H) K^\top \big) = 0,
    \end{equation*}
    since $\d^2 L_W(H) \in \mathcal{H}_W$. This shows that $\d^2 L_W$ is zero on $\mathcal{V}_W$ and concludes the proof. 
\end{proof}

With these tools, we are now ready to prove \cref{prop:CVRates1}. In the following, we will denote $\pi_t, \pi_t^\perp, \pi_\infty, \pi_\infty^\perp$ the orthogonal projections onto $\mathcal{H}_{W(t)}, \mathcal{V}_{W(t)}, \mathcal{H}_{W_\infty}$ and $\mathcal{V}_{W_\infty}$. First of all, we start with the standard identity for gradient flow:
\begin{equation} \label{eq:CVRatesProj1}
    F(W(t)) - F(W_\infty) = \int_t^\infty \big\| \nabla F(W(s)) \big\|_F^2 \d s. 
\end{equation}
This can be proven by differentiating the function $t \mapsto F(W(t))$ and using the gradient flow structure. Now, since $\nabla F(W(s)) \in \mathcal{H}_{W(s)}$, and by continuity of the projection derived in \cref{Lemma:CVRates1}, we have:
\begin{equation} \label{eq:CVRatesProj2}
    \big\| \nabla F(W(s)) \big\|_F^2 \underset{s \to \infty}{\sim} \big\| \pi_\infty \big( \nabla F(W(s)) \big) \big\|_F^2. 
\end{equation}
We now define:
\begin{equation*}
    \phi(t) = \frac{1}{2} \big\| \pi_\infty \big( \nabla F(W(t)) \big) \big\|_F^2. 
\end{equation*}
It is easily shown that:
\begin{equation}  \label{eq:Gronwall1}
    \dot \phi(t) = - \tr \Big( \d^2 F_{W(t)} \big( \nabla F(W(t)) \big) \pi_\infty \big( \nabla F(W(t)) \big)^\top \Big). 
\end{equation}
We decompose this term by introducing $\d^2 F_{W_\infty}$. We have, using the Cauchy--Schwarz inequality:
\begin{equation} \label{eq:Gronwall2}
\begin{aligned}
    &\Big| \tr \Big( \big(\d^2 F_{W(t)} - \d^2 F_{W_\infty} \big) \big( \nabla F(W(t)) \big) \pi_\infty \big(\nabla F(W(t)) \big)^\top \Big) \Big| \\
    &\hspace{1.5cm} \leq \big\| \d^2 F_{W(t)} - \d^2 F_{W_\infty} \big\|_{op} \big\| \nabla F(W(t)) \big\|_F \big\|  \pi_\infty \big(\nabla F(W(t)) \big) \big\|_F.
\end{aligned}
\end{equation}
Since $F$ is $\mathcal{C}^2$ and using equation \eqref{eq:CVRatesProj2}, this quantity is of the form $\epsilon(t) \phi(t)$ with $\epsilon(t) \xrightarrow[t \to \infty]{} 0$. Moreover, with the representation of the Hessian in \cref{lemma:CVRates2} and the fact that it is positive definite on $\mathcal{H}_{W_\infty}$ with smallest eigenvalue $\varrho$, we have the lower bound:
\begin{equation} \label{eq:Gronwall3}
    \tr \Big( \d^2 F_{W_\infty} \big( \nabla F(W(t)) \big) \pi_\infty \big( \nabla F(W(t)) \big)^\top \Big) \geq 2 \varrho \phi(t).
\end{equation}
Therefore, gathering equations \eqref{eq:Gronwall1}, \eqref{eq:Gronwall2}, \eqref{eq:Gronwall3}, we have:
\begin{equation*}
    \dot \phi(t) \leq -2 \big( \varrho - \epsilon(t) \big) \phi(t).
\end{equation*}
Now, given some $c < \varrho$, integrating the previous inequality leads to:
\begin{equation*}
    \phi(t) = o(e^{-2ct}),
\end{equation*}
as $t \to \infty$. Combining this with equations \eqref{eq:CVRatesProj1}, \eqref{eq:CVRatesProj2}, we get:
\begin{equation*}
    F(W(t)) - F(W_\infty) \underset{t \to \infty}{\sim} 2 \int_t^\infty \phi(s) \d s = o(e^{-2ct}). 
\end{equation*}
For the distance to convergence, we have due to the gradient flow structure:
\begin{equation*}
    \big\|W(t) - W_\infty\big\|_F \leq \int_t^\infty \big\| \nabla F(W(s)) \big\|_F \d s = o(e^{-ct}). 
\end{equation*}
Therefore, with $Z(t) = W(t)W(t)^\top$ and $Z_\infty = W_\infty W_\infty^\top$:
\begin{equation*}
\begin{aligned}
    \big\| Z(t) - Z_\infty \big\|_F &\leq \big\| (W(t) - W_\infty) W(t)^\top \big\|_F + \big\| W_\infty(W(t) - W_\infty)^\top \big\|_F \\
    &\leq \big( \|W(t)\|_F + \|W_\infty\|_F \big) \big\| W(t) - W_\infty\|_F \\
    &= o(e^{-ct}). 
\end{aligned}
\end{equation*}
This concludes the proof. 

\proofsubsection{Prop:HessianOja} \label{App:Subsec:HessianOja}

Let us now conclude the proof of the convergence rates in the case where the Oja flow dynamics converges to a full-rank local minimizer. To do so, we study the eigenvalues of the Hessian:
\begin{equation*}
    \d^2 U_W(H) = \big( WW^\top - A \big) H + H W^\top W + WH^\top W.
\end{equation*}
Given \cref{prop:OjaConvergence}, we write $A = Q \Lambda Q^\top$ and $W = QDU^\top$, where:
\begin{equation} \label{eq:representationMatrices}
    \Lambda = \begin{pmatrix} \Lambda_1 & 0 \\ 0 & \Lambda_2
    \end{pmatrix}, \hspace{1.5cm} D = \begin{pmatrix}
        \Lambda_1^{1/2} \\ 0
    \end{pmatrix},
\end{equation}
where $\Lambda_1 \in \R^{m \times m}$ is diagonal and gathers the $m$ largest positive eigenvalues of $A$, and $\Lambda_2 \in \R^{(d-m) \times (d-m)}$ contains all the remaining eigenvalues. Moreover, $Q \in O_d(\R)$ and $U \in O_m(\R)$. Then, changing variables $H = QK U^\top$, we get:
\begin{equation*}
    \d^2 U_W(QKU^\top) = Q \Big( \big( DD^\top - \Lambda) K + K D^\top D + DK^\top D \Big) U^\top. 
\end{equation*}
Since the mapping $K \in \R^{d \times m} \mapsto QKU^\top  \in \R^{d \times m}$ is invertible, we can diagonalize the Hessian by considering the representation \eqref{eq:representationMatrices}. We therefore consider the eigenvalue problem:
\begin{equation} \label{eq:EigvalsHessian1}
    \big( DD^\top - \Lambda) K + K D^\top D + DK^\top D  = \mu K. 
\end{equation}
To solve this, we decompose:
\begin{equation*}
    K = \begin{pmatrix}
        K_1 \\ K_2
    \end{pmatrix}, \hspace{1.5cm} K_1 \in \R^{m \times m}, \, \hspace{0.5cm} K_2 \in \R^{(d-m) \times m}.
\end{equation*}
Since we restrict $K \in \mathcal{H}_W$, we have the relationship $\Lambda_1^{1/2} K_1 = K_1^\top \Lambda_1^{1/2}$. Then, equation \eqref{eq:EigvalsHessian1} writes:
\begin{align}
    K_1 \Lambda_1 + \Lambda_1^{1/2} K_1^\top \Lambda_1^{1/2} &= \mu K_1, \label{eq:eigvalsOrtho1} \\
    K_2 \Lambda_1 - \Lambda_2 K_2 &= \mu K_2.  \label{eq:eigvalsOrtho2}
\end{align}
This corresponds to two separate eigenvalue equations that we can solve independently. Starting with equation \eqref{eq:eigvalsOrtho2}, it is easily seen that the solutions are of the form $\lambda_i - \lambda_j$ for $i \in \{1, \dots, m\}$ and $j \in \{m+1, \dots, d\}$, with $\lambda_1 \geq \dots \geq \lambda_d$ being the ordered eigenvalues of $A$. Now considering equation \eqref{eq:eigvalsOrtho1}, we choose, for $1 \leq i \leq j \leq d$:
\begin{equation*}
    K_1 = e_i e_j^\top + \sqrt{\frac{\lambda_i}{\lambda_j}} e_j e_i^\top, 
\end{equation*}
so that:
\begin{equation*}
    \Lambda_1^{1/2} K_1 = \sqrt{\lambda_i} \big( e_i e_j^\top + e_j e_i^\top \big) = K_1^\top \Lambda^{1/2},
\end{equation*}
and:
\begin{equation*}
\begin{aligned}
    K_1 \Lambda_1 + \Lambda_1^{1/2} K_1^\top \Lambda_1^{1/2} &= (\lambda_i + \lambda_j) e_ie_j^\top + \left( \lambda_i \sqrt{\frac{\lambda_i}{\lambda_j}} + \sqrt{\lambda_i \lambda_j} \right) e_je_i^\top \\
    &= (\lambda_i + \lambda_j) K_1. 
\end{aligned}
\end{equation*}
This corresponds to $\dfrac{1}{2}m(m+1)$ eigenvalues, which is the dimension of the admissible $K_1$ under the symmetry constraint $\Lambda_1^{1/2} K_1 = K_1^\top \Lambda_1^{1/2}$. This concludes the proof. 

\proofsubsection{Prop:CVRates2} \label{App:Subsec:CVRates2}

Let us now prove the convergence rates of the Oja flow in the case where it converges toward a rank-deficient matrix. To do so, we decompose $A = Q \Lambda Q^\top$, with:
\begin{equation} \label{eq:LambdaDiagonal}
    \Lambda = \begin{pmatrix} \Lambda_1 & 0 \\ 0 & -\Lambda_2 \end{pmatrix}.
\end{equation}
In this case $\Lambda_1 \in \R^{p \times p}, \Lambda_2 \in \R^{(d-p) \times (d-p)}$ are diagonal and are both positive definite. Indeed, we assumed here that $A$ was invertible. If this is not the case, it is known \citep[see for instance][]{martin2024impact, sarao2020optimization} that the convergence is not exponentially fast anymore. From the gradient flow dynamics:
\begin{equation*}
    \dot W(t) = \big( A - W(t)W(t)^\top \big) W(t),
\end{equation*}
we get the equation on $Z(t) = W(t)W(t)^\top$:
\begin{equation*}
    \dot Z(t) = \big( A - Z(t) \big) Z(t) + Z(t) \big( A - Z(t) \big). 
\end{equation*}
Now, we consider $Y(t) = Q^\top Z(t) Q$ and obtain:
\begin{equation} \label{eq:CVRatesOja1}
    \dot Y(t) = \big( \Lambda - Y(t) \big) Y(t) + Y(t) \big( \Lambda - Y(t) \big). 
\end{equation}
In addition, we have $\|Z(t) - Z_\infty\|_F^2 = \|Y(t) - Y_\infty\|_F^2$, so we restrict ourselves to the study of $Y(t)$. In this case, we have from \cref{prop:OjaConvergence}:
\begin{equation*}
    Y_\infty = \begin{pmatrix} \Lambda_1 & 0 \\ 0 & 0 \end{pmatrix}.
\end{equation*}
We then decompose:
\begin{equation*}
    Y(t) = \begin{pmatrix} Y_1(t) & Y_2(t) \\ Y_2(t)^\top & Y_3(t)   \end{pmatrix},
\end{equation*}
where $Y_1(t) \in \mathcal{S}_p^+(\R), Y_3(t) \in \mathcal{S}_{d-p}^+(\R)$ and $Y_2(t) \in \R^{p \times (d-p)}$. Then, equation \eqref{eq:CVRatesOja1} leads to the dynamics:
\begin{equation*}
\begin{aligned}
    \dot Y_1(t) &= (\Lambda_1 - Y_1(t)) Y_1(t) + Y_1(t) (\Lambda_1 - Y_1(t)) - 2 Y_2(t) Y_2(t)^\top, \\
    \dot Y_2(t) &= (\Lambda_1 - 2 Y_1(t)) Y_2(t) - Y_2(t)( \Lambda_2 + 2 Y_3(t)), \\
    \dot Y_3(t) &= -(\Lambda_2 + Y_3(t)) Y_3(t) - Y_3(t)(\Lambda_2 + Y_3(t)) - 2 Y_2(t)^\top Y_2(t).
\end{aligned}
\end{equation*}
In order to study the distance to $Y_\infty$, we define the functions:
\begin{equation*}
    \phi_1(t) = \big\| Y_1(t)  -\Lambda_1 \big\|_F^2, \hspace{1.2cm} \phi_2(t) = \big\| Y_2(t) \big\|_F^2, \hspace{1.2cm} \phi_3(t) = \big\| Y_3(t) \big\|_F^2.
\end{equation*}
We obtain the dynamics:
\begin{equation*}
\begin{aligned}
    \frac{1}{2} \dot \phi_1(t) &= - 2 \tr \Big( Y_1(t) (Y_1(t) - \Lambda_1)^2 \Big)- 2 \tr \Big( Y_2(t) Y_2(t)^\top (Y_1(t) - \Lambda_1) \Big), \\
    \frac{1}{2} \dot \phi_2(t) &= \tr \Big( Y_2(t) Y_2(t)^\top (\Lambda_1 - 2 Y_1(t)) \Big) -  \tr \Big( Y_2(t)^\top Y_2(t)(\Lambda_2 + 2 Y_3(t)) \Big), \\
    \frac{1}{2} \dot \phi_3(t) &= - 2 \tr \Big( Y_3(t)^2 ( Y_3(t) + \Lambda_2) \Big) - 2 \tr \Big( Y_2(t)^\top Y_2(t) Y_3(t) \Big).
\end{aligned}
\end{equation*}
Since $Y_2^\top Y_2$ and $Y_3$ are both PSD, we have $\tr \big( Y_2^\top Y_2 Y_3) \geq 0$ and $\tr(Y_3^3) \geq 0$, so that:
\begin{equation*}
\begin{aligned}
    \dot \phi_1(t) &\leq -4\lambda_{\min} \big(Y_1(t) \big) \phi_1(t) - 4 \lambda_{\min} \big( Y_1(t) - \Lambda_1 \big) \phi_2(t), \\
    \dot \phi_2(t) &\leq -2 \Big( \lambda_{\min}\big( Y_1(t) - \Lambda_1 \big) + \lambda_{\min}\big( Y_1(t) \big) + \lambda_{\min}(\Lambda_2) \Big) \phi_2(t), \\
    \dot \phi_3(t) &\leq -4 \lambda_{\min}(\Lambda_2) \phi_3(t),
\end{aligned}
\end{equation*}
where we used the inequality:
\begin{equation*}
    \lambda_{\min}(B) \tr(A) \leq \tr(AB),
\end{equation*}
for $B \in \mathcal{S}_d(\R)$ and $A \in \mathcal{S}_d^+(\R)$. Therefore, defining:
\begin{equation*}
\begin{aligned}
    \alpha_1(t) &= \int_0^t \lambda_{\min}\big( Y_1(s) \big) \d s, \\
    \alpha_2(t) &= \alpha_1(t) + \lambda_{\min}(\Lambda_2) t + \int_0^t  \lambda_{\min}\big( Y_1(s) - \Lambda_1 \big) \d s,
\end{aligned}
\end{equation*}
we can integrate the previous bounds and get:
\begin{align}
    \phi_1(t) &\leq e^{-4 \alpha_1(t)} \left( \phi_1(0) - 4 \int_0^t \lambda_{\min}\big( Y_1(s) - \Lambda_1 \big) e^{4 \alpha_1(s)} \phi_2(s) \d s \right), \label{eq:boundconvergence1} \\ 
    \phi_2(t) &\leq e^{-2 \alpha_2(t)} \phi_2(0), \label{eq:boundconvergence2} \\
    \phi_3(t) &\leq e^{-4 \lambda_{\min}(\Lambda_2) t} \phi_3(0). \label{eq:boundconvergence3}
\end{align}
Since $Y_1(t)$ converges to $\Lambda_1$, we have the asymptotics:
\begin{equation*}
\begin{aligned}
    \alpha_1(t) &= \lambda_{\min}(\Lambda_1) t + o(t), \\
    \alpha_2(t) &= \big(  \lambda_{\min}(\Lambda_1) +  \lambda_{\min}(\Lambda_2) \big) t + o(t).   
\end{aligned}
\end{equation*}
For simplicity, let us now write $\rho_1 = \lambda_{\min}(\Lambda_1)$ and $\rho_2 = \lambda_{\min}(\Lambda_2)$. Then, from equations \eqref{eq:boundconvergence2}, \eqref{eq:boundconvergence3}, as $t \to \infty$:
\begin{equation} \label{eq:boundsphi23}
    \phi_2(t) \leq e^{-2(\rho_1 + \rho_2)t} e^{o(t)}, \hspace{1.5cm} \phi_3(t) \leq e^{-4 \rho_2 t} e^{o(t)}.  
\end{equation}
In addition, the integrand in equation \eqref{eq:boundconvergence1} verifies:
\begin{equation*}
    \Big| \lambda_{\min}\big( Y_1(t) - \Lambda_1 \big) e^{4 \alpha_1(t)} \phi_2(t) \Big| \leq e^{2(\rho_1 - \rho_2)t} e^{o(t)}.
\end{equation*}
Therefore, we have as $t \to \infty$:
\begin{equation*}
    \left| \int_0^t \lambda_{\min}\big( Y_1(s) - \Lambda_1 \big) e^{4 \alpha_1(s)} \phi_2(s) \d s \right| \leq \left\{ \begin{array}{cc}
         e^{2(\rho_1 - \rho_2)t} e^{o(t)}, & \mathrm{if} \ \rho_1 > \rho_2, \\
         e^{o(t)}, & \mathrm{otherwise.}
    \end{array} \right. 
\end{equation*}
Plugging this into equation \eqref{eq:boundconvergence1}, we obtain:
\begin{equation} \label{eq:boundsphi1}
    \phi_1(t) \leq e^{-2\rho_1 t} e^{-2 \min(\rho_1, \rho_2) t} e^{o(t)}. 
\end{equation}
Combining the bounds \eqref{eq:boundsphi23} and \eqref{eq:boundsphi1}, we obtain that the three functions $\phi_1(t), \phi_2(t)$ and $\phi_3(t)$ are all bounded by $e^{-4 \min(\rho_1, \rho_2) t} e^{o(t)}$ as $t \to \infty$. To conclude the proof, we use the expressions:
\begin{equation*}
\begin{aligned}
    U(W(t)) - U(W_\infty) &= \frac{1}{4} \Big( \phi_1(t) + 2 \phi_2(t) + \phi_3(t) + 2 \tr \big( Y_3(t) \Lambda_2 \big) \Big), \\
    \big\| Z(t) - Z_\infty \big\|_F^2 &= \frac{1}{4} \Big( \phi_1(t) + 2 \phi_2(t) + \phi_3(t) \Big). 
\end{aligned}
\end{equation*}
Using the Cauchy--Schwarz inequality guarantees that:
\begin{equation*}
     \tr \big( Y_3(t) \Lambda_2 \big) \leq \| \Lambda_2 \|_F \sqrt{\phi_3(t)} \leq e^{-2 \rho_2 t} e^{o(t)}. 
\end{equation*}
Now introducing $c < \min( 2 \rho_1, \rho_2)$, we obtain that as $t \to \infty$:
\begin{equation*}
    U(W(t)) - U(W_\infty) = o(e^{-2ct}), \hspace{1.5cm} \big\| Z(t) - Z_\infty \big\|_F = o(e^{-ct}). 
\end{equation*}
To conclude the proof, it is clear that we have $\rho_1 = \lambda_p$ and $\rho_2 = |\lambda_{p+1}|$, where $\lambda_1 > \dots > \lambda_d$ are the ordered eigenvalues of $A$. 

\proofsubsection{Prop:ConvergenceHighDimOja} \label{App:Subsec:HighDimLimit}

Before proving \cref{Prop:ConvergenceHighDimOja}, we give two essential lemmas for our proof. The first lemma establishes high-dimensional limits for traces of functions of a covariance matrix composed with a Gaussian transformation.

\begin{lemma} \label{Lemma:ConvergenceRMT1}
    Let $W \in \R^{d \times m}$ be a Gaussian matrix with i.i.d. coefficients of variance $1/m$, and $K \in \mathcal{S}_d^+(\R)$ with converging empirical spectral distribution as $d \to \infty$ to some probability measure $\mu$ with compact support. Let $V = K^{1/2} W \in \R^{d \times m}$ and:
    \begin{equation*}
        G = V \big( I_m + V^\top V \big)^{-1} V^\top. 
    \end{equation*}
    Then, given $\phi$ a spectral function on $\mathcal{S}_d(\R)$, we have in the $d \to \infty$ limit, with $m \sim \kappa d$:
    \begin{equation*}
    \begin{aligned}
        \frac{1}{d} \E \, \tr \big( \phi(K) G \big) &\xrightarrow[d \to \infty]{} \mathfrak{g} \int \frac{x \phi(x)}{1 + \mathfrak{g} x} \d \mu(x), \\
        \frac{1}{d} \E \, \tr \big( \phi(K) G \phi(K) G \big) &\xrightarrow[d \to \infty]{} \mathfrak{g}^2 \int  \phi(x)^2 \frac{x^2}{(1+x \mathfrak{g})^2} \d \mu(x) \\
        & \hspace{1.5cm}+ \mathfrak{g} \left( \int \frac{x \phi(x)}{1 +x \mathfrak{g}} \d \mu(x) \right)^2 \left( \kappa + \int \frac{x}{(1+x \mathfrak{g})^2} \d \mu(x) \right)^{-1},
    \end{aligned}
    \end{equation*}
    where $\mathfrak{g}$ is the unique solution of the equation:
    \begin{equation*}
        \kappa \mathfrak{g} + 1 - \kappa = \int \frac{\d \mu(x)}{1 + x \mathfrak{g}}. 
    \end{equation*}
\end{lemma}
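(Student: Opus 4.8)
The plan is to reduce everything to the Marchenko--Pastur-type deformed model $G = V(I_m + V^\top V)^{-1}V^\top$ with $V = K^{1/2}W$, and to track the relevant traces through their Stieltjes-transform description. First I would introduce the resolvent $\mathcal{G}(z) = (zI_d - G)^{-1}$ and note that $G$ is a rational function of the sample covariance $S = VV^\top = K^{1/2}WW^\top K^{1/2}$: since $V(I_m + V^\top V)^{-1}V^\top = VV^\top (I_d + VV^\top)^{-1}$ (push-through identity), one has $G = S(I_d + S)^{-1}$, hence $G = \psi(S)$ with $\psi(s) = s/(1+s)$. The matrix $S$ is a deformed Wishart matrix with population covariance $K$ in the regime $m \sim \kappa d$; its limiting spectral distribution and, more importantly, the deterministic equivalent for functionals of the form $\frac{1}{d}\tr(\phi(K)\,h(S))$ are governed by the standard self-consistent (Marchenko--Pastur) equation. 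Concretely, writing $\mathfrak{g}$ for the limit appearing in the statement, one identifies $\mathfrak{g}$ as (a normalization of) the companion Stieltjes transform of $S$ evaluated at the relevant spectral point, and the fixed-point equation $\kappa\mathfrak{g} + 1 - \kappa = \int (1+x\mathfrak{g})^{-1}\,d\mu(x)$ is exactly the MP equation after the substitution $s = 0$ in the resolvent, i.e. it encodes $\E\frac{1}{d}\tr\big((I_d+S)^{-1}\big)$ and its $K$-weighted analogue.

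For the first limit, $\frac{1}{d}\E\tr(\phi(K)G) = \frac{1}{d}\E\tr\big(\phi(K)\,S(I_d+S)^{-1}\big) = \frac{1}{d}\E\tr(\phi(K)) - \frac{1}{d}\E\tr\big(\phi(K)(I_d+S)^{-1}\big)$, so it suffices to compute the deterministic equivalent of $\frac{1}{d}\E\tr\big(\phi(K)(I_d+S)^{-1}\big)$. By the deterministic-equivalent theory for deformed Wishart matrices (see, e.g., \citet{bun2017cleaning}), this converges to $\int \phi(x)(1+x\mathfrak{g})^{-1}\,d\mu(x)$, and subtracting from $\int\phi(x)\,d\mu(x)$ gives $\mathfrak{g}\int \frac{x\phi(x)}{1+x\mathfrak{g}}\,d\mu(x)$ after using the MP relation — this matches the claimed expression. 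For the second (quadratic) functional, I would first write $\phi(K)G\phi(K)G$ in terms of $(I_d+S)^{-1}$ and then use the standard variance/fluctuation computation: the quadratic trace splits into a ``diagonal'' term, which is the square of the linear functional with an extra factor $(1+x\mathfrak{g})$ in the denominator, and an ``off-diagonal'' (fluctuation) term that produces the characteristic denominator $\big(\kappa + \int \frac{x}{(1+x\mathfrak{g})^2}\,d\mu(x)\big)^{-1}$. This denominator is precisely $1$ minus the subordination-type quantity that appears when differentiating the MP fixed-point equation in $\mathfrak{g}$, so the cleanest route is: (i) compute $\frac{\partial}{\partial \mathfrak{g}}$ of the relevant implicit relations to get the fluctuation kernel, (ii) apply the known CLT-type formula (bilinear deterministic equivalent) for $\frac{1}{d}\tr(A(I_d+S)^{-1}B(I_d+S)^{-1})$ with $A = B = \phi(K)$.

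The main obstacle I expect is establishing the \emph{bilinear} deterministic equivalent for $\frac{1}{d}\tr\big(\phi(K)(I_d+S)^{-1}\phi(K)(I_d+S)^{-1}\big)$ with the correct fluctuation term, rather than just the leading (linear) term; the linear deterministic equivalents are textbook, but the quadratic one requires either a careful resolvent-expansion / interpolation argument or citing the appropriate result on second-order deterministic equivalents for sample covariance matrices with general population covariance. A secondary technical point is controlling concentration: one wants not just convergence of expectations but almost-sure convergence (or at least vanishing variance), which follows from standard Lipschitz-concentration / Poincaré inequalities for Gaussian $W$ once $K$ has bounded support, but should be stated. I would handle the linear case in full and, for the quadratic case, reduce to the cited second-order deterministic-equivalent theory (analogous to Appendix~D of \citet{bun2017cleaning}), checking that the substitution $s \mapsto (1+s)^{-1}$ and the identification of $\mathfrak{g}$ are done consistently.
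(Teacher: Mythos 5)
Your proposal outlines a correct but genuinely different route from the paper's. The paper works entry-wise in the eigenbasis of $K$: writing $\frac{1}{d}\tr(\phi(K)G\phi(K)G) = \frac{1}{d}\sum_{i,j}\phi(\lambda_i)\phi(\lambda_j)G_{ij}^2$, it controls each $G_{ij}$ via leave-one-out (for $i=j$) and leave-two-out (for $i\neq j$) resolvents and the Sherman--Morrison formula, together with concentration of the Gaussian quadratic forms $w_i^\top M_i w_i$; the fluctuation denominator $\big(\kappa + \int x(1+x\mathfrak{g})^{-2}\,\d\mu(x)\big)^{-1}$ then appears directly as $\lim\frac{1}{m}\E\tr(M^2) = -\mathfrak{g}'(-1)$, obtained by differentiating the self-consistent equation. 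You instead use the push-through identity $G = S(I_d+S)^{-1} = I_d - (I_d+S)^{-1}$ (which the paper does not use) to lift everything to deterministic equivalents of the sample-covariance resolvent at $z = -1$. Your linear computation is complete and agrees with the paper's; for the quadratic functional you correctly reduce to the bilinear deterministic equivalent $\frac{1}{d}\tr\big(\phi(K)(I_d+S)^{-1}\phi(K)(I_d+S)^{-1}\big)$ and correctly anticipate that the fluctuation kernel is governed by the derivative of the fixed-point equation, but you defer this key second-order result to an unchecked citation. What the paper's entry-wise route buys is that this bilinear deterministic equivalent is never needed: the off-diagonal second moment $\E\,G_{ij}^2$ factors cleanly after two Sherman--Morrison steps, and the subtlety that individual off-diagonal entries do not concentrate (their variance is of the same order as their mean square), which the paper flags explicitly, is sidestepped by working only with expectations. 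Your concentration remark (Poincar\'e for Gaussian $W$) matches the paper's separate argument. Your route would likely be cleaner once a second-order deterministic equivalent with the precise rank-one correction is pinned down and the identification of $\mathfrak{g}$ is verified, but as written the hardest step is outsourced.
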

\begin{proof}
    In a basis where $K$ is diagonal, we have:
    \begin{align}
        \frac{1}{d} \tr \big( \phi(K) G \big) &= \frac{1}{d} \sum_{i=1} \phi(\lambda_i) G_{ii}, \label{eq:TraceDecompositions1} \\
        \frac{1}{d} \tr \big( \phi(K) G \phi(K) G \big) &= \frac{1}{d} \sum_{i,j=1}^d \phi(\lambda_i) \phi(\lambda_j) G_{ij}^2. \label{eq:TraceDecompositions2}
    \end{align}
    We now compute the statistics of the random variables $G_{ij}$. Let us denote $w_1, \dots, w_d \in \R^m$ the rows of $W$, so that:
    \begin{equation*}
        G_{ij} = \sqrt{\lambda_i \lambda_j} w_i^\top \left( I_m + \sum_{k=1}^d \lambda_k w_k w_k^\top \right)^{-1} w_j. 
    \end{equation*}
    Let us start by taking $i = j$, and define:
    \begin{equation*}
        M = \left(  I_m + \sum_{k = 1}^d \lambda_k w_k w_k^\top \right)^{-1}, \hspace{1.5cm} M_i = \left(  I_m + \sum_{k \neq i} \lambda_k w_k w_k^\top \right)^{-1}.
    \end{equation*}
    The key point is that, due to the covariance structure of the Gaussian matrix $W$, $M_i$ is independent from $w_i$. Now, as a consequence of the Sherman--Morison formula \citep[see for instance][]{hager1989updating}, we have:
    \begin{equation*}
        G_{ii} = \frac{\lambda_i w_i^\top M_i w_i}{1 + \lambda_i w_i^\top M_i w_i}. 
    \end{equation*}
    Since $M_i$ and $w_i$ are independent, we have:
    \begin{equation*}
    \begin{aligned}
        \E \, w_i^\top M_i w_i &= \frac{1}{m} \E \, \tr(M_i), \\
        \mathrm{Var} \, w_i^\top M_i w_i &= \frac{2}{m^2} \E \, \tr(M_i^2) + \frac{1}{m^2} \mathrm{Var} \, \tr(M_i). 
    \end{aligned}
    \end{equation*}
    Now, as $d \to \infty$, one can replace $M_i$ by $M$ in the previous equations since we only removed one index. Now, it is known that \citep[see for instance][]{bun2017cleaning}:
    \begin{equation*}
        \frac{1}{m} \E \, \tr(M) \xrightarrow[d \to \infty]{} \mathfrak{g}, \hspace{1.5cm} \lim_{m \to \infty} \frac{1}{m} \E \, \tr(M^2) < \infty,
    \end{equation*}
    where $\mathfrak{g}$ solves the self-consistent equation:
    \begin{equation} \label{eq:StieltjesSelfConsistent1}
        \kappa \mathfrak{g} + 1 - \kappa = \int \frac{\d \mu(x)}{1 + x \mathfrak{g}}. 
    \end{equation}
    Therefore, $w_i^\top M_i w_i$ concentrates as $d \to \infty$ toward $\mathfrak{g}$, and:
    \begin{equation*}
        \E \, G_{ii} \xrightarrow[d \to \infty]{} \frac{\lambda_i \mathfrak{g}}{1 + \lambda_i \mathfrak{g}}. 
    \end{equation*}
    This shows that, using equation \eqref{eq:TraceDecompositions1}:
    \begin{equation*}
        \frac{1}{d} \E \, \tr \big( \phi(K) G \big) \xrightarrow[d \to \infty]{} \mathfrak{g} \int \frac{x \phi(x)}{1 + \mathfrak{g} x} \d \mu(x). 
    \end{equation*}
    For the off-diagonal case, the same can be done by removing the rows $i,j$. Then using again the Sherman--Morrison formula with the rank-two matrix update $\lambda_i w_iw_i^\top + \lambda_j w_jw_j^\top$, we get:
    \begin{equation*}
    \begin{aligned}
        G_{ij} = \sqrt{\lambda_i \lambda_j} \frac{w_i^\top M_{ij} w_j}{(1 + \lambda_i w_i^\top M_{ij} w_i)(1 + \lambda_j w_j^\top M_{ij} w_j) - \lambda_i \lambda_j (w_i^\top M_{ij} w_j)^2},
    \end{aligned}
    \end{equation*}
    with:
    \begin{equation*}
        M_{ij} = \left(  I_m + \sum_{k \neq i,j} \lambda_k w_k w_k^\top \right)^{-1}.
    \end{equation*}
    Again, $w_i^\top M_{ij} w_i$ and $w_j^\top M_{ij} w_j$ concentrate around $\mathfrak{g}$. Now, we have:
    \begin{equation*}
    \begin{aligned}
        \E \, (w_i^\top M_{ij} w_j)^2 &= \frac{1}{m^2} \E \, \tr(M_{ij}^2), \\
        \mathrm{Var} \, (w_i^\top M_{ij} w_j)^2 &= \frac{3}{m^4} \E\big[\tr(M_{ij}^2)^2 \big] - \frac{1}{m^4} \E \big[ \tr(M_{ij}^2) \big]^2 + \frac{6}{m^4} \E \, \tr(M_{ij}^4). 
    \end{aligned}
    \end{equation*}
    This shows that:
    \begin{equation} \label{eq:AsymptoticOffdiag1}
        d \, \E \, G_{ij}^2 \xrightarrow[d \to \infty]{} \frac{1}{\kappa} \frac{\lambda_i \lambda_j}{(1 + \lambda_i \mathfrak{g})^2(1 + \lambda_j \mathfrak{g})^2} \lim_{d \to \infty} \frac{1}{m} \E \, \tr(M^2). 
    \end{equation}
    However, note that $G_{ij}^2$ has a variance which is of the same order than its expectation. This means that this quantity does not concentrate in the high-dimensional limit. 

    Let us compute the expectation of $\tr(M^2)$ in the limit. From \citet[][Chapter 3]{bun2017cleaning}, if we consider $M(z) = \big( z I_m - V^\top V \big)^{-1}$, then we have the asymptotic:
    \begin{equation*}
        \frac{1}{m} \E \, \tr \, M(z)  \xrightarrow[d \to \infty]{} \mathfrak{g}(z), \hspace{1.5cm} \kappa z \mathfrak{g}(z) + 1 - \kappa = \int \frac{\d \mu(x)}{1 - x \mathfrak{g}(z)}. 
    \end{equation*}
    Then:
    \begin{equation*}
        \frac{1}{m} \E \, \tr(M(z)^2) \xrightarrow[d \to \infty]{} - \mathfrak{g}'(z),
    \end{equation*}
    and we can compute $\mathfrak{g}'(z)$ by differentiating the self-consistent equation for $\mathfrak{g}(z)$:
    \begin{equation*}
        \mathfrak{g}'(z) = - \kappa \mathfrak{g}(z) \left( \kappa z - \int \frac{x \d \mu(x)}{(1-x \mathfrak{g}(z))^2} \right)^{-1}.
    \end{equation*}
    Then, taking $z = -1$, we obtain:
    \begin{equation} \label{eq:AsymptoticOffdiag2}
         \lim_{d \to \infty} \frac{1}{m} \E \, \tr(M^2) = \kappa \mathfrak{g} \left( \kappa + \int \frac{x \d \mu(x)}{(1 + x \mathfrak{g})^2} \right)^{-1},
    \end{equation}
    where $\mathfrak{g}$ is the solution of the self-consistent equation \eqref{eq:StieltjesSelfConsistent1}. Then, back to equation \eqref{eq:TraceDecompositions2}, using that $G_{ii}$ concentrates around its mean as well as equations \eqref{eq:AsymptoticOffdiag1}, \eqref{eq:AsymptoticOffdiag2}:
    \begin{equation*}
    \begin{aligned}
        \frac{1}{d} \E \, \tr \big( \phi(K) G \phi(K) G \big) &= \frac{1}{d} \sum_{i=1}^d \phi(\lambda_i)^2 \E \, G_{ii}^2 + \frac{1}{d} \sum_{i \neq j} \phi(\lambda_i) \phi(\lambda_j) \E \, G_{ij}^2 \\
        &\xrightarrow[d \to \infty]{} \mathfrak{g}^2 \int \phi(x)^2 \frac{x^2}{(1+x \mathfrak{g})^2} \d \mu(x)  \\
        & \hspace{1.5cm}+ \mathfrak{g} \left( \int \frac{x \phi(x)}{(1 +x \mathfrak{g})^2} \d \mu(x) \right)^2 \left( \kappa + \int \frac{x \d \mu(x)}{(1+x \mathfrak{g})^2} \right)^{-1},
    \end{aligned}
    \end{equation*}
    which is the desired. 
\end{proof}

The previous lemma established the convergence of the expectation of our quantities of interest. The following guarantees concentration in the high-dimensional limit.

\begin{lemma} \label{Lemma:ConvergenceRMT2}
    Consider the same setup as in \cref{Lemma:ConvergenceRMT1}. Then the two quantities:
    \begin{equation*}
        \frac{1}{d} \tr \big( \phi(K) G \big), \hspace{1.5cm} \frac{1}{d}\tr \big( \phi(K) G \phi(K) G \big),
    \end{equation*}
    have vanishing variance as $d \to \infty$. 
\end{lemma}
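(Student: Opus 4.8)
The plan is to use concentration of measure for the Gaussian matrix $W$, after reducing both functionals to linear statistics of a resolvent. By the Woodbury identity,
\[
G \;=\; V\big(I_m + V^\top V\big)^{-1}V^\top \;=\; I_d - \big(I_d + VV^\top\big)^{-1} \;=\; I_d - R,
\qquad R := \big(I_d + K^{1/2}WW^\top K^{1/2}\big)^{-1},
\]
and $0 \preceq R \preceq I_d$, so in particular $\|R\|_{op} \le 1$. Since $\phi(K)$ is deterministic,
\[
\tfrac1d \tr\big(\phi(K)G\big) = \tfrac1d\tr\phi(K) - \tfrac1d\tr\big(\phi(K)R\big),
\]
\[
\tfrac1d\tr\big(\phi(K)G\phi(K)G\big) = \tfrac1d\tr\phi(K)^2 - \tfrac2d\tr\big(\phi(K)^2R\big) + \tfrac1d\tr\big(\phi(K)R\phi(K)R\big),
\]
so it suffices to prove vanishing variance for $\tfrac1d\tr(\psi(K)R)$ with $\psi \in \{\phi,\phi^2\}$ and for $\tfrac1d\tr(\phi(K)R\phi(K)R)$.

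Set $A = K^{1/2}$, so that $AWW^\top A = R^{-1} - I_d \preceq R^{-1}$. A direct differentiation gives, for $g(W) = \tr(\psi(K)R(W))$, the gradient $\nabla_W g = -2\,A R\,\psi(K)\,R A\,W$, hence
\[
\|\nabla_W g\|_F^2 \;=\; 4\,\tr\Big(AWW^\top A\cdot R\psi(K)R\,K\,R\psi(K)R\Big) \;\le\; 4\,\tr\Big(R\psi(K)R\,K\,R\psi(K)R\Big) \;\le\; 4d\,\|\psi(K)\|_{op}^2\,\|K\|_{op},
\]
where we used that $R\psi(K)RKR\psi(K)R = \big(K^{1/2}R\psi(K)R\big)^\top\big(K^{1/2}R\psi(K)R\big) \succeq 0$ together with $\|R\|_{op}\le 1$. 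The same manipulation applied to $h(W) = \tr(\phi(K)R\phi(K)R)$ gives $\nabla_W h = -4\,AR\phi(K)R\phi(K)RAW$ and $\|\nabla_W h\|_F^2 \le 16d\,\|\phi(K)\|_{op}^4\|K\|_{op}$. Since $W$ has i.i.d.\ $\N(0,1/m)$ entries, the Gaussian Poincar\'e inequality yields, for $f \in \{g/d,\,h/d\}$,
\[
\mathrm{Var}(f) \;\le\; \frac1m\,\E\,\|\nabla_W f\|_F^2 \;=\; O\!\Big(\frac{1}{md}\Big) \;=\; O\!\Big(\frac{1}{d^2}\Big),
\]
which proves the lemma.

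An equivalent route, closer to the Sherman--Morrison manipulations in the proof of \cref{Lemma:ConvergenceRMT1}, is a leave-one-column-out Efron--Stein bound: writing $WW^\top = \sum_{j=1}^m c_jc_j^\top$, replacing a column $c_j$ by an independent copy perturbs $K^{1/2}WW^\top K^{1/2}$ by a matrix of rank at most two, so $R$ changes by a rank-$\le 2$ matrix of operator norm $O(1)$ (using that $\|c_j\|^2$ concentrates around $d/m=O(1)$), whence each functional changes by $O(1/d)$; summing the $m\sim\kappa d$ contributions in Efron--Stein again gives variance $O(1/d)$. The only genuine point of care --- and the natural place where a hypothesis is used --- is that all the bounds above carry factors $\|K\|_{op}$, $\|\psi(K)\|_{op}$: weak convergence of the empirical spectral distribution of $K$ to the compactly supported $\mu$ does not by itself rule out escaping eigenvalues, so one must additionally invoke (as holds in the applications of this lemma, where $K$ is built from matrices with almost surely bounded spectrum) that $\limsup_{d\to\infty}\|K\|_{op}<\infty$ almost surely and that $\phi$ is continuous on a neighbourhood of $\mathrm{Supp}(\mu)$, so that $\|\psi(K)\|_{op}$ is bounded uniformly in $d$. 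I expect this spectral-boundedness bookkeeping, rather than the concentration step itself, to be the main (and rather mild) obstacle.
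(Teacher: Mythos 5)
Your proof is correct and uses the same core tool as the paper — the Gaussian Poincar\'e inequality for functions of $W$ — but your execution is genuinely tighter. The paper writes $G = K^{1/2} E(W) K^{1/2}$ with $E(W) = W(I_m + W^\top K W)^{-1}W^\top$, computes the gradient via $\d E_W^*$, and then asserts without detailed justification that the resulting traces are ``of order $d$'', giving variance $O(d^{-2})$. You instead pass to $G = I_d - R$ with $R = (I_d + K^{1/2}WW^\top K^{1/2})^{-1}$ via the Woodbury identity, which buys you the two operator inequalities $0 \preceq R \preceq I_d$ and $AWW^\top A \preceq R^{-1}$ (with $A=K^{1/2}$). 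These turn the gradient-norm estimate into a \emph{deterministic} bound $\|\nabla_W g\|_F^2 \le 4d\,\|\psi(K)\|_{op}^2\|K\|_{op}$ (and similarly for $h$), eliminating the hand-waving about expected traces entirely; the Poincar\'e step then closes immediately. The Efron--Stein leave-one-column-out variant you sketch is a legitimate alternative route, aligned with the Sherman--Morrison manipulations in the proof of \cref{Lemma:ConvergenceRMT1}, though the Poincar\'e argument is shorter. Finally, your closing caveat is a real observation: both your bound and the paper's implicitly require $\limsup_d \|K\|_{op} < \infty$ and boundedness of $\phi$ on a neighbourhood of $\mathrm{Supp}(\mu)$, which does not follow from weak convergence of the empirical spectral distribution alone; the paper does not state this hypothesis explicitly, although it holds in all the applications where the lemma is invoked (the matrix $K = \Sigma(t)$ there is a spectral function of $A$, whose spectrum is bounded by assumption).
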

\begin{proof}
    We see both of these quantities as functions of $W$. Following from the Poincaré inequality, we have for a $\mathcal{C}^1$ function $F \colon \R^{d \times m} \to \R$:
    \begin{equation*}
        \mathrm{Var}(F(W)) \leq \frac{1}{m} \E \, \big\| \nabla F(W) \big\|_F^2. 
    \end{equation*}
    Writing $G = K^{1/2} E(W) K^{1/2}$ with $E(W) = W(I_m + W^\top K W)^{-1} W^\top$ we have the identities:
    \begin{equation*}
    \begin{aligned}
        \nabla_W \frac{1}{d} \tr \big( \phi(K) G \big) &= \frac{1}{d} \d E_W^* \big( K \phi(K) \big), \\
         \nabla_W \frac{1}{d} \tr \big( \phi(K) G \phi(K) G\big) &= \frac{2}{d} \d E_W^* \big( K \phi(K) E(W) K \phi(K) \big),        
    \end{aligned}
    \end{equation*}
    where $\d E_W^*$ is the adjoint of the differential of $E$ at $W$. Now, with $M = I_m + W^\top K W$, one has for $U \in \mathcal{S}_d(\R)$:
    \begin{equation*}
        \d E_W^*(U) = 2 U W M^{-1} - 2 K W M^{-1} W^\top U W M^{-1}. 
    \end{equation*}
    Now computing the squared norms of the gradients, one can write in both cases:
    \begin{equation*}
        \mathrm{Var}\big( F(W) \big) \leq \frac{1}{md^2} \E \, \tr \Big( \mathcal{H} \big( K, \phi(K), E, \tilde E \big) \Big), \hspace{1.5cm} \tilde E = W M^{-2} W^\top,
    \end{equation*}
    and $\mathcal{H}$ is a sum of products of its arguments. Since $E, \tilde E$ only depend on $K$ and the Gaussian matrix $W$ whose covariance scales as $d^{-1}$, in the high-dimensional limit, these traces are of order $d$ as $d \to \infty$. This leads to a variance of order $d^{-2}$. 
\end{proof}

We are now ready to prove \cref{Prop:ConvergenceHighDimOja}, that derives the high-dimensional limit associated with a scalar quantity of the Oja flow:
\begin{equation*}
    \frac{1}{d} \big\| Z(t) - \phi(A) \big\|_F^2,
\end{equation*}
where $Z(t)$ is solution of the Oja flow \eqref{eq:OjaFlow} with target matrix $A$, and $\phi$ is a spectral function on $\mathcal{S}_d(\R)$. 

We start by using the explicit solution of the Oja flow in \cref{prop:OjaSolution}. Due to the invertibility of $A$, we have:
\begin{equation*}
    Z(t) = e^{tA} W_0 \left( I_m + W_0^\top A^{-1} \big( e^{2tA} - I_d \big) W_0 \right)^{-1} W_0^\top e^{tA}. 
\end{equation*}
Let us now define:
\begin{equation*}
    \Sigma(t) = A^{-1} \big( e^{2tA} - I_d \big) \succeq 0, \hspace{1.5cm} R(t) = e^{tA} \Sigma(t)^{-1/2},
\end{equation*}
which are both spectral functions of $A$. Now, with $V(t) = \Sigma(t)^{1/2} W_0$, we have:
\begin{equation*}
    Z(t) = R(t) \underbrace{V(t) \Big( I_m + V(t)^\top V(t) \Big)^{-1} V(t)^\top}_{\displaystyle \equiv G(t)} R(t).
\end{equation*}
Then, given $\phi$ a spectral function of $A$:
\begin{equation*}
    \frac{1}{d} \big\| Z(t) - \phi(A) \big\|_F^2 = \frac{1}{d} \tr \Big( R(t)^2 G(t) R(t)^2 G(t) \Big) - \frac{2}{d} \tr \Big( \phi(A) R(t)^2 G(t) \Big) + \frac{1}{d} \tr \big( \phi(A)^2 \big). 
\end{equation*}
Since $\Sigma(t), R(t)$ are both spectral functions of $A$, we can apply \cref{Lemma:ConvergenceRMT1} and \cref{Lemma:ConvergenceRMT2} to get that the first two terms concentrate in the $d \to \infty$ limit, for fixed $t \geq 0$:
\begin{equation*}
\begin{aligned}
    \frac{1}{d} \tr \Big( \phi(A) R(t)^2 G(t) \Big) &\xrightarrow[d \to \infty]{} \mathfrak{g}(t) \int \phi(x) \frac{x e^{2tx}}{(e^{2xt}-1)\mathfrak{g}(t) + x} \d \mu_A(x), \\
    \frac{1}{d} \tr \Big( R(t)^2 G(t) R(t)^2 G(t) \Big) &\xrightarrow[d \to \infty]{} \mathfrak{g}(t)^2 \int \left( \frac{x e^{2tx}}{(e^{2xt}-1)\mathfrak{g}(t) + x} \right)^2 \d \mu_A(x) \\
    &+ \mathfrak{g}(t) \left( \int \left( \frac{x e^{tx}}{(e^{2xt}-1)\mathfrak{g}(t) + x} \right)^2 \d \mu_A(x) \right)^2 \\
    &\hspace{1cm} \times \left( \kappa + \int \frac{x(e^{2xt} - 1)}{((e^{2xt}-1)\mathfrak{g}(t) + x)^2} \d \mu_A(x) \right)^{-1},
\end{aligned}
\end{equation*}
where we used that the eigenvalues of $\Sigma(t)$ are of the form:
\begin{equation*}
    \frac{e^{2\lambda t} - 1}{\lambda},
\end{equation*}
for $\lambda \in \mathrm{Sp}(A)$. Moreover, $\mathfrak{g}(t)$ is solution of the self-consistent equation:
\begin{equation} \label{eq:selfconsistentStieltjes}
    \kappa \mathfrak{g}(t) + 1 - \kappa = \int \frac{x}{(e^{2xt}-1) \mathfrak{g}(t) + x} \d \mu_A(x).
\end{equation}
Finally, since we have:
\begin{equation*}
    \frac{1}{d} \tr(\phi(A)^2) \xrightarrow[d \to \infty]{} \int \phi(x)^2 \d \mu_A(x),
\end{equation*}
we get the desired by rearranging the terms.

\proofsubsection{Prop:HighDimCVRates} \label{App:Subsec:LongTime}

Let us now show \cref{Prop:HighDimCVRates}, that gives access to the high-dimensional convergence rates for the Oja flow. To do so, we apply the result of \cref{Prop:ConvergenceHighDimOja} to the spectral function:
\begin{equation*}
    \phi(x) = x \1_{x \geq \max(0, \omega)}, \hspace{1.5cm} \kappa = \int \1_{x \geq \omega} \d \mu_A(x). 
\end{equation*}
The following lemma describes the behavior of the function $\mathfrak{g}(t)$, solution of the self-consistent equation \eqref{eq:selfconsistentStieltjes}.

\begin{lemma} \label{Lemma:LongtimesStieltjes}
    Consider $\mathfrak{g}(t)$ to be the unique solution of equation \eqref{eq:selfconsistentStieltjes}. Define:
    \begin{equation*}
        \kappa_A = \int \1_{x > 0} \d \mu_A(x). 
    \end{equation*}
    \begin{itemize}
        \item If $\kappa > \kappa_A$, $\mathfrak{g}(t) \xrightarrow[t \to \infty]{} \mathfrak{g}_\infty \in (0,1)$, such that:
        \begin{equation} \label{eq:AsymptoticStieltjes1}
            \kappa \mathfrak{g}_\infty + 1 - \kappa = \int_{x < 0} \frac{x}{x - \mathfrak{g}_\infty} \d \mu_A(x). 
        \end{equation}
        \item If $\kappa < \kappa_A$, $\mathfrak{g}(t) \underset{t \to \infty}{\sim} \omega e^{-2\omega t}$, where $\omega > 0$ is such that:
        \begin{equation} \label{eq:AsymptoticStieltjes2}
            \kappa = \int \1_{x \geq \omega} \d \mu_A(x). 
        \end{equation}
    \end{itemize}
\end{lemma}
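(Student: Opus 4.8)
The plan is to analyze the scalar fixed-point equation through the auxiliary function $\Phi(t,g)=\int \frac{x}{(e^{2xt}-1)g+x}\,\d\mu_A(x)$, so that $\mathfrak{g}(t)$ is the value solving $\Phi(t,\mathfrak{g}(t))=\kappa\mathfrak{g}(t)+1-\kappa$. First I would settle well-posedness and monotonicity. Since $A$ is invertible, $\mu_A$ has no atom at $0$, so for $g>0$ the denominator $(e^{2xt}-1)g+x$ stays bounded away from $0$ on $\mathrm{Supp}(\mu_A)$, and differentiating under the integral gives $\partial_g\Phi<0$ and $\partial_t\Phi<0$ on $\{g>0\}$ (the numerator $x(e^{2xt}-1)$ is nonnegative for all $x$). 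Together with $\Phi(t,0^+)=1>1-\kappa$ and $\Phi(t,g)\to 0$ as $g\to\infty$, monotonicity of $\Phi(t,\cdot)$ yields a unique root $\mathfrak{g}(t)\in(0,\infty)$, with $\mathfrak{g}(0)=1$; implicit differentiation then gives $\mathfrak{g}'(t)=-\partial_t\Phi/(\partial_g\Phi-\kappa)<0$, so $\mathfrak{g}(t)$ decreases to a limit $\mathfrak{g}_\infty\in[0,1)$.

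Next I would identify $\mathfrak{g}_\infty$ by passing to the limit in the equation. The integrand $\frac{x}{(e^{2xt}-1)\mathfrak{g}(t)+x}$ has absolute value at most $1$ on $\mathrm{Supp}(\mu_A)$ and converges pointwise to $0$ for $x>0$ and to $\frac{x}{x-\mathfrak{g}_\infty}$ for $x<0$ whenever $\mathfrak{g}_\infty>0$; dominated convergence then forces $\mathfrak{g}_\infty$ to solve $\kappa g+1-\kappa=\int_{x<0}\frac{x}{x-g}\,\d\mu_A(x)$. Writing $\psi(g)$ for the difference of the two sides, one checks $\psi(0)=\kappa-\kappa_A$ and that $\psi$ is strictly decreasing with $\psi(g)\to-\infty$, so $\psi$ has a (necessarily unique) root in $(0,1)$ if and only if $\kappa>\kappa_A$. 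Hence, if $\kappa>\kappa_A$ then $\mathfrak{g}_\infty$ is this root, which is exactly \eqref{eq:AsymptoticStieltjes1}; and if $\kappa<\kappa_A$ then no positive root exists, so $\mathfrak{g}_\infty=0$. (The case $\kappa>\kappa_A$ with $\mathfrak{g}_\infty=0$ is ruled out because, by the rate analysis below, it would require $\mu_A([\omega,\infty))=\kappa$ for some $\omega\ge 0$, which is impossible since $\mu_A([\omega,\infty))\le\mu_A((0,\infty))=\kappa_A<\kappa$.)

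The substantive part is the sharp rate when $\kappa<\kappa_A$, where $\mathfrak{g}_\infty=0$. Define $\omega>0$ by $\mu_A([\omega,\infty))=\kappa$ — unique since $\mu_A$ has no atoms and $\kappa<\kappa_A$, and this is exactly \eqref{eq:AsymptoticStieltjes2} — and set $G(t)=\mathfrak{g}(t)e^{2\omega t}$. For $x>0$ one has $(e^{2xt}-1)\mathfrak{g}(t)=G(t)(e^{2(x-\omega)t}-e^{-2\omega t})$, so up to exponentially small errors the integrand equals $1$ on $(0,\omega)$ and $0$ on $(\omega,\infty)$, the transition occurring in a window of width $\Theta(1/t)$ around $\omega$. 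A boundary-layer estimate — substituting $x=\omega+s/(2t)$, using $\d\mu_A\approx\rho_A(\omega)\,\d s/(2t)$ near $\omega$, and the elementary identity $\int_{\R}\big(\frac{1}{1+ce^s}-\1_{s<0}\big)\,\d s=-\ln c$ — gives
$$\int_{x>0}\frac{x}{(e^{2xt}-1)\mathfrak{g}(t)+x}\,\d\mu_A(x)=\mu_A((0,\omega))-\frac{\rho_A(\omega)}{2t}\ln\frac{G(t)}{\omega}+o(1/t),$$
while the $x<0$ part equals $1-\kappa_A+O(\mathfrak{g}(t))$, exponentially smaller than $1/t$. Substituting into $\kappa\mathfrak{g}(t)+1-\kappa=\Phi(t,\mathfrak{g}(t))$ and cancelling the $\Theta(1)$ terms via $1-\kappa=(1-\kappa_A)+\mu_A((0,\omega))$ leaves $\kappa\mathfrak{g}(t)=-\frac{\rho_A(\omega)}{2t}\ln(G(t)/\omega)+o(1/t)$, and since the left side is exponentially small this forces $\ln(G(t)/\omega)\to 0$, i.e. $\mathfrak{g}(t)\sim\omega e^{-2\omega t}$. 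To make this rigorous one first shows $G(t)$ is bounded and bounded away from $0$ — otherwise the crossover point of the $x>0$ integral converges to a value different from $\omega$, breaking the leading-order balance $1-\kappa=(1-\kappa_A)+\mu_A((0,\omega))$ — and then applies the boundary-layer expansion along subsequences on which $G(t)$ converges.

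The main obstacle is precisely this last step: the leading-order balance is automatically satisfied and is insensitive to the multiplicative constant, so pinning down $G_\infty=\omega$ requires a genuine second-order Laplace-type expansion of the integral, together with mild regularity of $\mu_A$ near $\omega$ (a positive density $\rho_A(\omega)$) — which is why \cref{Prop:HighDimCVRates} is stated under the density assumption. By contrast, the monotonicity argument and the dichotomy in the case $\kappa>\kappa_A$ are routine consequences of the sign of $\partial_g\Phi$ and of dominated convergence.
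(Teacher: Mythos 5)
Your proof is correct and follows the same overall strategy as the paper's: passing to the limit by dominated convergence for $\kappa>\kappa_A$, and a boundary-layer rescaling $x=\omega+s/(2t)$ together with the explicit log identity to pin down the exponential prefactor for $\kappa<\kappa_A$. The main difference is that the paper's argument is formulated as a self-consistent ansatz — it posits $\mathfrak{g}(t)\to\mathfrak{g}_\infty$ in the first case and $\mathfrak{g}(t)=h(t)e^{-2\omega t}$ with $h$ sub-exponential in the second, then verifies consistency via the same computation you perform (the paper's $\theta(\omega/h)-\theta(h/\omega)=\tfrac12\log(\omega/h)$ is exactly your $\int_\R\bigl(\tfrac{1}{1+ce^s}-\1_{s<0}\bigr)\d s=-\ln c$ after $s=2u$). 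You add a genuine layer of rigor that the paper omits: the monotonicity $\mathfrak{g}'(t)<0$ via implicit differentiation and sign analysis of $\partial_t\Phi$, $\partial_g\Phi$, which guarantees the limit $\mathfrak{g}_\infty\in[0,1)$ exists before any ansatz is invoked; and the explicit well-posedness of the fixed-point equation with $\mathfrak{g}(0)=1$. This is worth having. One remark: your exclusion of $\mathfrak{g}_\infty=0$ when $\kappa>\kappa_A$ is more roundabout than necessary — rather than appealing forward to the rate analysis, it follows directly from a sign contradiction: if $\mathfrak{g}_\infty=0$ then the $x<0$ piece of $\Phi(t,\mathfrak{g}(t))$ tends to $1-\kappa_A$ while the whole expression tends to $1-\kappa$, forcing the manifestly nonnegative $x>0$ piece to converge to $\kappa_A-\kappa<0$, which is impossible.
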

\begin{proof}
For the first case, we simply start to assume that $\mathfrak{g}(t)$ converges to a non-zero value, and take the pointwise limit in the integral of equation \eqref{eq:selfconsistentStieltjes}. We directly obtain equation \eqref{eq:AsymptoticStieltjes1}. 

Now, the positivity of $\mathfrak{g}_\infty$ imposes the inequalities:
\begin{equation*}
    1 - \kappa < \kappa \mathfrak{g}_\infty + 1 - \kappa = \int_{x < 0} \frac{x}{x - \mathfrak{g}_\infty} \d \mu_A(x) < 1 - \kappa_A.
\end{equation*}
Therefore this asymptotic behavior is only suitable for $\kappa > \kappa_A$. 

Now considering the case $\kappa < \kappa_A$, we assume an asymptotic of the form $\mathfrak{g}(t) = h(t) e^{-2\omega t}$ for some $\omega > 0$ and a sub-exponential function $h$, that is:
\begin{equation*}
    \frac{1}{t} \log h(t) \xrightarrow[t \to \infty]{} 0. 
\end{equation*}
Then, going back to the self-consistent equation \eqref{eq:selfconsistentStieltjes}, we get:
\begin{equation*}
    1 - \kappa = \int^{\omega} \d \mu_A(x).
\end{equation*}
We then obtain equation \eqref{eq:AsymptoticStieltjes2}. Since $\kappa < \kappa_A$, we have that $\omega > 0$. Let us now compute the asymptotic of $h(t)$. We have, replacing the expression of $\mathfrak{g}(t)$ in equation \eqref{eq:selfconsistentStieltjes} and using equation \eqref{eq:AsymptoticStieltjes2}:
\begin{equation*}
    \kappa h(t) e^{-2\omega t} = \int \left( \frac{x}{(e^{2xt} - 1) h(t) e^{-2\omega t} + x} - \1_{x < \omega} \right) \d \mu_A(x).
\end{equation*}
Changing variables $x = \omega + u/t$ and getting rid of the subleading terms, we get:
\begin{equation*}
    \kappa h(t) e^{-2wt} \underset{t \to \infty}{\sim} \frac{1}{t} \rho_A(\omega) \int_{-\infty}^{+\infty} \left( \frac{\omega}{e^{2u} h(t) + \omega} - \1_{u < 0} \right) \d u.  
\end{equation*}
Now, since $h(t)$ is sub-exponential, the integral converges to zero so that the LHS can indeed decay exponentially fast. Now, one can rearrange this integral to get:
\begin{equation*}
    0 = \lim_{t \to \infty} \int_{-\infty}^{+\infty} 
    \left( \frac{\omega}{e^{2u} h(t) + \omega} - \1_{u < 0} \right) \d u  = \lim_{t \to \infty} \left[\theta \left( \frac{\omega}{h(t)} \right) - \theta \left( \frac{h(t)}{\omega} \right) \right],
\end{equation*}
with:
\begin{equation*}
    \theta(x) = \int_0^{+\infty} \frac{x}{e^{2u} + x} \d u = \frac{1}{2} \log(x+1). 
\end{equation*}
Therefore, necessarily $\lim_{t \to \infty} h(t) = \omega$. This concludes the proof. 
\end{proof}

Let us now prove the asymptotics given in \cref{Prop:HighDimCVRates}. We start by the case $\kappa > \kappa_A$, in which case $\phi(x) = x \1_{x > 0}$, since the Oja flow selects all the positive eigenvalues of $A$. We start with the first term of equation \eqref{eq:ConvergenceHighDimOja}. We have, setting $u = tx$:
\begin{equation*}
\begin{aligned}
    \int \left( \frac{xe^{xt}}{q_t(x)} \right)^2 \d \mu_A(x) &= \frac{1}{t^3} \int \rho_A \left( \frac{u}{t} \right) u^2 e^{2u} \left( \big( e^{2u} - 1 \big) \mathfrak{g}(t) + \frac{u}{t} \right)^{-2} \d u \\
    &\underset{t \to \infty}{\sim} \frac{\rho_A(0)}{\mathfrak{g}_\infty^2 t^3} \int_{-\infty}^{+\infty} \frac{u^2 e^{2u}}{(e^{2u} - 1)^2} \d u. 
\end{aligned}
\end{equation*}
Now, since the integral with respect to $z$ in equation \eqref{eq:ConvergenceHighDimOja} remains positive, we obtain for the first term: 
\begin{equation*}
    \mathfrak{g}(t) \left( \int \left( \frac{x e^{xt}}{q_t(x)} \right)^2 \d \mu_A(x) \right)^2 \left( \kappa + \int \frac{z(e^{2zt} - 1)}{q_t(z)^2} \d \mu_A(z) \right)^{-1} = O \left( \frac{1}{t^6} \right). 
\end{equation*}
Now, for the second term, we perform the same change of variables:
\begin{equation*}
\begin{aligned}
    \int \left( \mathfrak{g}(t) \frac{xe^{2xt}}{q_t(x)} - \phi(x) \right)^2 \d \mu_A(x) &= \frac{1}{t^3} \int  \rho_A \left( \frac{u}{t} \right) u^2 \left( \mathfrak{g}(t) e^{2u} \left( \big( e^{2u} - 1 \big) \mathfrak{g}(t) + \frac{u}{t} \right)^{-1} - \1_{u > 0} \right)^2 \d u \\
    &\underset{t \to \infty}{\sim} \frac{\rho_A(0)}{t^3} \int_{-\infty}^{+\infty} u^2 \left( \frac{e^{2u}}{e^{2u} - 1} - \1_{u > 0} \right)^2 \d u. 
\end{aligned}
\end{equation*}
This gives the asymptotic for $\kappa > \kappa_A$ since the integral is finite. 

Regarding the underparameterized region $\kappa < \kappa_A$, we have $\phi(x) = x \1_{x \geq \omega}$, and we split the expression \eqref{eq:ConvergenceHighDimOja} as:
\begin{equation} \label{eq:asymptoticIntegralDecomp}
    \lim_{d \to \infty} \frac{1}{d} \big\| Z(t) - Z_\infty \big\|_F^2 = \mathfrak{g}(t) \frac{I_1(t)^2}{\kappa + I_2(t)} + I_3(t),
\end{equation}
with:
\begin{equation*}
\begin{aligned}
    I_1(t) &= \int \frac{x^2 e^{2xt}}{((e^{2xt}-1) \mathfrak{g}(t) +x)^2} \d \mu_A(x), \\
    I_2(t) &= \int \frac{x(e^{2xt} - 1)}{((e^{2xt}-1) \mathfrak{g}(t) +x)^2} \d \mu_A(x), \\
    I_3(t) &= \int \left( \mathfrak{g}(t) \frac{xe^{2xt}}{(e^{2xt}-1) \mathfrak{g}(t) +x} - x \1_{x \geq \omega} \right)^2 \d \mu_A(x).     
\end{aligned}
\end{equation*}
Now using the asymptotic of $\mathfrak{g}(t)$ and changing variables $x = \omega + u/t$, we get the asymptotics:
\begin{equation*}
\begin{aligned}
    I_1(t) &\underset{t \to \infty}{\sim} \frac{e^{2\omega t}}{t} \rho_A(\omega) \int_{-\infty}^{+\infty} \frac{e^{2u}}{(e^{2u} + 1)^2} \d u, \\
    I_2(t) &\underset{t \to \infty}{\sim} \frac{e^{2\omega t}}{t} \frac{\rho_A(\omega)}{\omega} \int_{-\infty}^{+\infty} \frac{e^{2u}}{(e^{2u} + 1)^2} \d u, \\
    I_3(t) &\underset{t \to \infty}{\sim} \frac{1}{t} \rho_A(\omega) \omega^2 \int_{-\infty}^{+\infty} \left( \frac{e^{2u}}{e^{2u} + 1} - \1_{u \geq 0} \right)^2 \d u. 
\end{aligned}
\end{equation*}
Therefore, combining these asymptotics with equation \eqref{eq:asymptoticIntegralDecomp}:
\begin{equation*}
    \lim_{d \to \infty} \frac{1}{d} \big\| Z(t) - Z_\infty \big\|_F^2 \underset{t \to \infty}{\sim} \frac{\omega^2 \rho_A(\omega)}{t} \left( \int_{-\infty}^{+\infty} \frac{e^{2u}}{(e^{2u} + 1)^2} \d u + \int_{-\infty}^{+\infty} \left( \frac{e^{2u}}{e^{2u} + 1} - \1_{u \geq 0} \right)^2 \d u \right). 
\end{equation*}
The result follows from the finiteness of the integrals. 

Since we plot in \cref{fig:CVOja} the convergence rates obtained in \cref{Prop:ConvergenceHighDimOja} with the exact constants, we give the values of the above integrals:
\begin{equation*}
\begin{aligned}
    \int_{-\infty}^{+\infty} u^2 \left( \frac{e^{2u}}{e^{2u} - 1} - \1_{u > 0} \right)^2 \d u &= \frac{\pi^2}{12} - \frac{1}{2} \zeta(3), \\
    \int_{-\infty}^{+\infty} \frac{e^{2u}}{(e^{2u} + 1)^2} \d u &= \frac{1}{2}, \\
     \int_{-\infty}^{+\infty} \left( \frac{e^{2u}}{e^{2u} + 1} - \1_{u \geq 0} \right)^2 \d u &= \log 2 - \frac{1}{2}. 
\end{aligned}
\end{equation*}

\proofsubsection{Prop:OjaResponse1} \label{App:Subsec:OjaResponse1}

In this section we compute the linear response associated with the perturbed Oja flow dynamics:
\begin{equation*}
    \dot W(t) = \Big( A - W(t)W(t)^\top \Big) W(t) + H(t) W(t),
\end{equation*}
where $A \in \mathcal{S}_d(\R)$ and $H(t) \in \mathcal{S}_d(\R)$ is the
perturbation. We are interested in computing:
\begin{equation*}
    R(t,t') = \left. \frac{\partial Z(t)}{\partial H(t')} \right|_{H = 0}, \hspace{1.5cm} Z(t) = W(t)W(t)^\top.
\end{equation*}
$R(t,t')$ is an operator $\mathcal{S}_d(\R) \to \mathcal{S}_d(\R)$ quantifying the change of $Z(t)$ in response to the introduction of $H$ at time $t'$. To compute it, we replace $H$ by $\epsilon H$ and decompose the solution $Z(t) = Z_0(t) + \epsilon Y(t)$ at first order in $\epsilon$. We then get the equations:
\begin{align}
    \dot Z_0(t) &= A Z_0(t) + Z_0(t) A - 2Z_0(t)^2, \label{eq:DynamicsResponseOrder0} \\
    \dot Y(t) &= Y(t) (A - 2 Z_0(t)) + (A - 2Z_0(t)) Y(t) + H(t) Z_0(t) + Z_0(t) H(t), \label{eq:DynamicsResponseOrder1}
\end{align}
with $Y(0) = 0$. Remark that the dynamics on $Y(t)$ is linear, but driven by a time-dependent matrix. This type of equation cannot be solved in general, but remarkably it is possible in our case. First of all, the dynamics \eqref{eq:DynamicsResponseOrder0} is an Oja flow, and following \cref{prop:OjaSolution}, we have the expression for $Z_0(t)$:
\begin{equation} \label{eq:solOja1}
    Z_0(t) = e^{tA} W_0 \left( I_m + 2 W_0^\top \int_0^t e^{2sA} \d s \, W_0 \right)^{-1} W_0^\top e^{tA},
\end{equation}
where $W_0W_0^\top = Z_0(t = 0)$. To solve the equation on $Y(t)$, we set $B(t) = A - 2Z_0(t)$. Then, it is easily seen that $B(t)$ is solution of the dynamics:
\begin{equation} \label{eq:ODEB}
    \dot B(t) = B(t)^2 - 4 A^2.
\end{equation}
Now, from equation \eqref{eq:DynamicsResponseOrder1} on $Y(t)$ and since $Y(0) = 0$, we have the solution:
\begin{equation} \label{eq:integral2}
    Y(t) = P(t)^{-1} \int_0^t P(s) \Big( H(s) Z_0(s) + Z_0(s) H(s) \Big) P(s)^\top \, \d s \, P(t)^{-\top}, 
\end{equation}
where $P(t)$ is such that $\dot P(t) = - P(t) B(t)$ with $P(0) = I_d$. Differentiating this equation together with the differential equation \eqref{eq:ODEB} on $B(t)$, we obtain:
\begin{equation*}
    \ddot P(t) = 4 P(t) A^2. 
\end{equation*}
We can integrate this equation with the initial conditions $P(0) = I_d$ and $\dot P(0) = 4Z_0 - 2A$. We obtain:
\begin{equation} \label{eq:solOja2}
    P(t) = e^{-tA} + Z_0 A^{-1} \big( e^{tA} - e^{-tA}\big). 
\end{equation}
This expression remains well defined when $A$ is not invertible if $s_t(\lambda) = \lambda^{-1} (e^{t\lambda} - e^{-t \lambda})$ is continuously extended to $\lambda = 0$ by setting $s_t(0) = 2t$. Now, in order to compute $R(t,t')$, we differentiate $Y$ with respect to $H$ using equation \eqref{eq:integral2}. We need to take into account that the derivative is taken on the space $\mathcal{S}_d(\R)$. To do so, we define, for $1 \leq i \leq j \leq d$:
\begin{equation} \label{eq:ElementarySym}
    E_{ij} = \frac{e_i e_j^\top + e_j e_i^\top}{2},
\end{equation}
where $(e_1, \dots, e_d)$ is the standard basis of $\R^d$. Then, setting:
\begin{equation*}
    U(t,t') = P(t)^{-1} P(t'), \hspace{1.5cm} V(t,t') = P(t)^{-1} P(t') Z_0(t'),
\end{equation*}
and evaluating equation \eqref{eq:integral2} along $E_{kl}$, we get for $t' \leq t$:
\begin{equation*}
\begin{aligned}
    R_{ijkl}(t,t') &\equiv \tr \left( \left. \frac{\partial Z(t)}{\partial H(t')} \right|_{H = 0} \big( E_{kl} \big) E_{ij} \right) \\
    &= \frac{1}{2} \Big(U_{ik}(t,t') V_{jl}(t,t') + U_{il}(t,t') V_{jk}(t,t') + U_{jk}(t,t') V_{il}(t,t') + U_{jl}(t,t') V_{ik}(t,t') \Big). 
\end{aligned}
\end{equation*}
To conclude, combining equations \eqref{eq:solOja1} and \eqref{eq:solOja2}, we get:
\begin{equation*}
    P(t) Z_0(t) = W_0 W_0^\top e^{tA},
\end{equation*}
and the result of \cref{Prop:OjaResponse1} follows. 

\proofsubsection{Prop:DiagonalResponse} \label{App:Subsec:OjaResponse2}

To prove \cref{Prop:DiagonalResponse}, we compute the functions:
\begin{equation*}
    R_\text{diag}(t,t') = \lim_{d \to \infty} \frac{1}{d^2} \tr \left( \left. \frac{\partial Z(t)}{\partial H(t')} \right|_{H = 0} \right), \hspace{1.5cm} r_\text{diag}(t) = \int_0^t R_\text{diag}(t,t') \d t'. 
\end{equation*}
To compute the trace, we use the orthogonal basis of $\mathcal{S}_d(\R)$ defined in equation \eqref{eq:ElementarySym}. It is easily seen that $\|E_{ij}\|_F^2 = (1 + \delta_{ij})/2$. Then, we have, with the notations of \cref{Prop:OjaResponse1}:
\begin{align}
    \frac{1}{d^2} \tr \left( \left. \frac{\partial Z(t)}{\partial H(t')} \right|_{H = 0} \right) &= \sum_{1 \leq i \leq j \leq d} \frac{1}{\|E_{ij}\|_F^2} \tr \left( \left. \frac{\partial Z(t)}{\partial H(t')} \right|_{H = 0} \big( E_{ij} \big) E_{ij} \right) \notag \\
    &= \frac{1}{d^2} \sum_{i,j=1}^d \tr \left( \left. \frac{\partial Z(t)}{\partial H(t')} \right|_{H = 0} \big( E_{ij} \big) E_{ij} \right) \notag \\
    &=\frac{1}{d^2} \tr \Big( U(t,t') \Big) \tr \Big( V(t,t') \Big) + \frac{1}{d^2} \tr \Big( V(t,t') U(t,t')^\top \Big). \label{eq:Rdiag1}
\end{align}
Now, $U(t,t'), V(t,t')$ are $d$-independent functions of the matrices $Z_0$, $A$. Since the scalings we chose guarantee that both $Z_0$ (as a Wishart matrix) and $A$ have convergent empirical spectral distributions in the high-dimension limit, all traces of scalar (and independent of $d$) functions of these matrices should be of order $d$. This implies that, as $d \to \infty$:
\begin{equation} \label{eq:Rdiag2}
    \frac{1}{d^2} \tr \Big( V(t,t') U(t,t')^\top \Big) = O \left( \frac{1}{d} \right). 
\end{equation}
Therefore, only the first term in equation \eqref{eq:Rdiag1} contributes in the high-dimensional limit. Now using the expressions of $U(t,t'), V(t,t')$, we have, denoting by $\lambda_1 \geq \dots \geq \lambda_d$ the eigenvalues of $A$, and working in a basis where $A$ is diagonal:
\begin{equation} \label{eq:TracesResponse}
\begin{aligned}
    \frac{1}{d} \tr \big( U(t,t') \big) &= \frac{1}{d} \sum_{i=1}^d \big( P(t)^{-1} Z_0 \big)_{ii} \frac{e^{\lambda_i t'} - e^{-\lambda_i t'}}{\lambda_i} + \frac{1}{d}\sum_{i=1}^d \big( P(t)^{-1} \big)_{ii} e^{-\lambda_i t'}, \\
    \frac{1}{d} \tr \big( V(t,t') \big) &= \frac{1}{d} \sum_{i=1}^d \big( P(t)^{-1} Z_0 \big)_{ii} e^{\lambda_i t'}. 
\end{aligned}
\end{equation}
We now compute the limit of these traces as $d \to \infty$ by replacing the averages by integrals with respect to $\mu_A$. The key point is to deal with the coefficients $\big( P(t)^{-1} Z_0 \big)_{ii}$, $\big( P(t)^{-1} \big)_{ii}$, which need to be expressed as functions of the eigenvalues of $A$. To do so, we define:
\begin{equation*}
    \Sigma(t) = A^{-1} \big( e^{2tA} - I_d \big) \succeq 0, \hspace{1.5cm} V(t) = \Sigma(t)^{1/2} W_0. 
\end{equation*}
Then, we can express:
\begin{equation*}
\begin{aligned}
    P(t)^{-1} &= e^{tA} \Big( I_d + W_0 W_0^\top \Sigma(t) \Big)^{-1} \\
    &= e^{tA} \Sigma(t)^{-1/2} \Big( I_d + V(t)V(t)^\top \Big)^{-1} \Sigma(t)^{1/2}.  
\end{aligned}
\end{equation*}
Therefore, since we are working in a basis where $A$ (and therefore $\Sigma(t)$) is diagonal, we have the expressions:
\begin{align}
    \big( P(t)^{-1} Z_0 \big)_{ii} &= \frac{\lambda_i e^{\lambda_i t}}{e^{2\lambda_i t} - 1} e_i^\top \Big( I_d + V(t)V(t)^\top \Big)^{-1} V(t) V(t)^\top e_i, \label{eq:AsymptoticCoeffs1} \\
    \big( P(t)^{-1} \big)_{ii} &= e^{\lambda_i t} e_i^\top\Big( I_d + V(t)V(t)^\top \Big)^{-1}e_i. \label{eq:AsymptoticCoeffs2}
\end{align}
Now, since:
\begin{equation*}
    \big( I_d + V(t) V(t)^\top \big)^{-1} V(t) V(t)^\top = I_d - \big(I_d + V(t)V(t)^\top \big)^{-1},
\end{equation*}
we can rewrite:
\begin{equation} \label{eq:AsymptoticCoeffs3}
    \big( P(t)^{-1} Z_0 \big)_{ii} = \frac{\lambda_i e^{\lambda_i t}}{e^{2\lambda_i t} - 1} \Big( 1 -  e_i^\top \big( I_d + V(t) V(t)^\top \big)^{-1} e_i \Big). 
\end{equation}
Therefore we only need to compute $e_i^\top \big( I_d + V(t) V(t)^\top \big)^{-1} e_i$. We now invoke the result of \citet[][Section 4]{bun2017cleaning}, that yields in our case:
\begin{equation} \label{eq:AsymptoticCoeffsStieltjes}
    e_i^\top \big( I_d + V(t) V(t)^\top \big)^{-1} e_i \longrightarrow \frac{1}{1 + \mu_i(t) \mathfrak{g}(t)}, \hspace{1.5cm} \mu_i(t) = \frac{e^{2\lambda_i t} - 1}{\lambda_i},
\end{equation}
where:
\begin{equation*}
    \mathfrak{g}(t) = \lim_{d \to \infty} \frac{1}{m} \tr \, \Big(I_m + V(t)^\top V(t)\Big)^{-1}, 
\end{equation*}
that solves the self-consistent equation:
\begin{equation*}
    \kappa \mathfrak{g}(t) + 1 - \kappa = \int \frac{x}{(e^{2xt} - 1) \mathfrak{g}(t) + x} \d \mu_A(x).
\end{equation*}
Therefore, in the high-dimensional limit, combining equations \eqref{eq:AsymptoticCoeffs2}, \eqref{eq:AsymptoticCoeffs3}, we have the expression:
\begin{equation} \label{eq:Rdiag3}
\begin{aligned}
    \big( P(t)^{-1} Z_0 \big)_{ii} &\xrightarrow[d \to \infty]{} \mathfrak{g}(t) \frac{\lambda_i e^{\lambda_i t}}{\mathfrak{g}(t)(e^{2\lambda_i t} - 1) + \lambda_i}, \\
    \big( P(t)^{-1} \big)_{ii} &\xrightarrow[d \to \infty]{} \frac{\lambda_i e^{\lambda_i t}}{\mathfrak{g}(t)(e^{2\lambda_i t} - 1) + \lambda_i}.
\end{aligned}
\end{equation}
Finally, using equations \eqref{eq:Rdiag1}, \eqref{eq:Rdiag2}, \eqref{eq:TracesResponse}, \eqref{eq:Rdiag3}, we get the expression of $R_\text{diag}$:
\begin{equation*}
\begin{aligned}
    R_\text{diag}(t,t') &= \mathfrak{g}(t) \iint \frac{1}{q_t(x) q_t(y)} y e^{y(t+t')} \Big( \big(x - \mathfrak{g}(t) \big) e^{x(t-t')} + \mathfrak{g}(t) e^{x(t+t')} \Big) \d \mu_A(x) \d \mu_A(y),
\end{aligned}
\end{equation*}
with $q_t(x) = \mathfrak{g}(t) (e^{2xt} - 1) + x$. Symmetrizing with respect to $x,y$ we get:
\begin{equation*}
\begin{aligned}
    R_\text{diag}(t,t') &= \frac{\mathfrak{g}(t)}{2} \iint \frac{1}{q_t(x) q_t(y)} e^{(x+y)t} \Big( y(x - \mathfrak{g}(t)) e^{(y-x)t'} + x(y - \mathfrak{g}(t)) e^{(x-y)t'} \\
    &\hspace{6cm} + (x+y) \mathfrak{g}(t) e^{(x+y)t'} \Big) \d \mu_A(x) \d \mu_A(y).
\end{aligned}
\end{equation*}
This is precisely equation \eqref{eq:Rdiagprop1} in \cref{Prop:DiagonalResponse}. Let us now compute $r_\text{diag}$:
\begin{equation*}
\begin{aligned}
    r_\text{diag}(t) &= \int_0^t R_\text{diag}(t,t') \d t' \\
    &= \frac{\mathfrak{g}(t)}{2} \iint \frac{1}{q_t(x) q_t(y)} e^{(x+y)t} \Bigg[ \frac{y(x - \mathfrak{g}(t))}{y-x} \Big( e^{(y-x)t} - 1 \Big) + \frac{x(y - \mathfrak{g}(t))}{x-y} \Big( e^{(x-y)t} - 1 \Big) \\
    &\hspace{6cm} + \mathfrak{g}(t) \Big( e^{(x+y)t} - 1 \Big) \Bigg] \d \mu_A(x) \d \mu_A(y). 
\end{aligned}
\end{equation*}
First considering the terms with no exponential in the bracket, we have:
\begin{equation*}
    \frac{y(x - \mathfrak{g}(t))}{y-x} +  \frac{x(y - \mathfrak{g}(t))}{x-y} + \mathfrak{g}(t) = 0. 
\end{equation*}
Now splitting the last term into two, we can write the bracket as:
\begin{equation*}
\begin{aligned}
    &\frac{y}{y-x} e^{yt} \Big( e^{xt} + (x - \mathfrak{g}(t)) e^{-xt} \Big) + \frac{x}{x-y} e^{xt} \Big( e^{yt} + (y - \mathfrak{g}(t)) e^{-yt} \Big) \\
    &=\frac{y}{y-x} e^{(y-x)t} q_t(x) + \frac{x}{x-y} e^{(x-y)t} q_t(y).  
\end{aligned}
\end{equation*}
Putting everything together, this leads to equation \eqref{eq:Rdiagprop2} and concludes the proof.  

\section{Numerical Simulations} \label{App:Simulations}
In this section, we give more details on our numerical simulations and provide additional figures that complement the results presented in the main text.

\subsection{Details on the Numerics}

We now detail our simulation setup, including the gradient descent algorithm and the numerical integration of the system of equations \eqref{eq:SystemResult2}.  

\subsubsection{Gradient Descent Simulations} \label{App:Subsubsec:SimulationsGD}

We refer to \cref{Subsec:optimization} for the details of the gradient descent simulations performed in this paper. All numerical simulations were implemented in \texttt{PyTorch} and executed on NVIDIA RTX6000, RTX8000, and H100 GPUs provided by the CLEPS infrastructure at INRIA Paris. 

The dimension $d$, stepsize $\eta$ and total number of gradient descent steps were selected after preliminary experiments. We found that the chosen dimension captures the high-dimensional regime considered in our theoretical analysis, while the stepsize is sufficiently small for discrete gradient descent to approximate the corresponding gradient flow dynamics. Simulations were run for a time horizon long enough to observe convergence of the gradient descent trajectories. 

Unless otherwise specified, all numerical results are averaged over several independent realizations of the random initialization, teacher matrix, and sensing matrices. Error bars in the figures represent two standard deviations across these realizations. 

\subsubsection{Simulating the System of Equations} \label{App:Subsubsec:SimulationSystem}

Throughout this work, we compared gradient descent simulations with the numerical integration of the system of equations \eqref{eq:SystemResult2}. We now explain how it is possible to simulate this set of equations. 

In this system of equations, the unknowns are the variables $\xi, q, \omega$, and the parameters are $\alpha, \kappa, \kappa^*, \lambda, \Delta$. To numerically integrate the equations, the key point is to consider $\xi$ itself as a parameter. Therefore $\omega$ can be directly computed by solving:
\begin{equation*}
    \min(\kappa, 1) = \int_\omega \d \mu_\xi(x). 
\end{equation*}
Then, we rather consider $\alpha$ as an unknown, and rewrite equation \eqref{eq:ResultLongTimesMSE}:
\begin{equation*}
    \alpha = \frac{1}{2\xi} \left( \frac{\Delta}{2} + Q_* + \int_{\max(q, \omega)} \big( q^2 - x^2 \big) \d \mu_\xi(x) + 4 \xi \int_{\max(q, \omega)} (x-q) h_\xi(x) \d \mu_\xi(x) \right). 
\end{equation*}
Then, this expression can be plugged in equation \eqref{eq:ResultLongTimesIntegral}, and leads to an equation solely on the variable $q$. This equation can then be solved using a numerical root solver (we used \texttt{scipy.brentq}). Then, the previous equation allows to deduce the value of $\alpha$. 

Still, the equation on $q$ involves integrals (of simple functions) with respect to the measure $\mu_\xi$, which is the free additive convolution between the teacher's spectral distribution and a semicircular density with variance $\xi$. For simplicity, we chose the teacher distribution to be the Marchenko--Pastur distribution with parameter $\kappa^*$. In this setting, the density of $\mu_\xi$ can be computed by solving a third-degree polynomial equation. We refer to \citet[][Appendix H.1]{maillard2024bayes} for details on this result. Then, the integration with respect to $\mu_\xi$ can be computed efficiently using \texttt{scipy.quad}. 

\begin{figure}[ht]
    \centering
    \includegraphics[width=\linewidth]{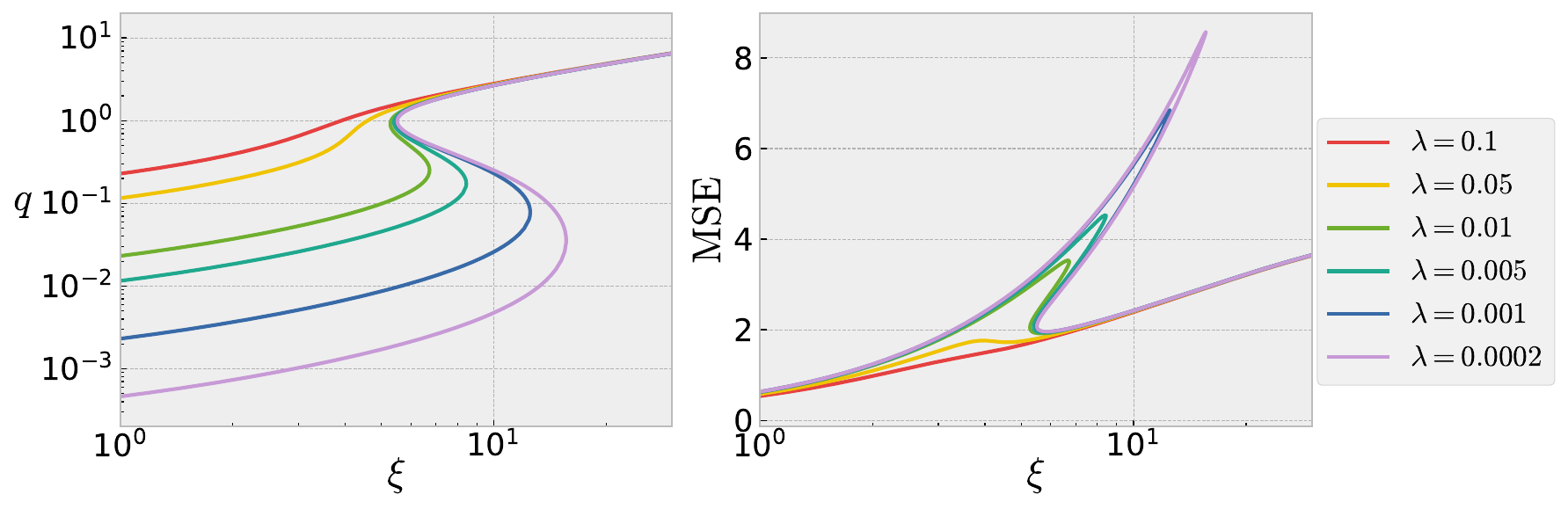}
    \vspace*{-0.6cm}
    \caption{Left: value of $q$ as a function of $\xi$ when simulating the system of equations~\eqref{eq:SystemResult2}. Right: corresponding value of the MSE, for different values of $\lambda$ and with $\kappa = 0.4, \kappa^* = 0.3$ and $\Delta = 1.0$. As $\lambda$ decreases, a single value of $\xi$ may lead to several solutions for $q$.}
    \label{fig:functionsxi}
\end{figure}

\cref{fig:functionsxi} displays $q$ and the MSE as a function of $\xi$ in the noisy setting $\Delta > 0$, for several choices of the regularization strength $\lambda$. We have observed that for most values of the parameters $\kappa, \kappa^*, \lambda, \Delta$, there was a unique solution $q$ for a given $\xi$. However, for small $\lambda$ and large $\Delta$, as underlined by \cref{fig:functionsxi}, several solutions $q$ may exist simultaneously. This makes it more difficult to integrate numerically the system of equations. To circumvent this problem, we used a technique known as arc-length continuation, that allows to compute the curve $q(\xi)$ in the 2D space and therefore capture the whole solution. Finally, we remark that this degeneracy in the solutions reveals the double descent phenomenon in our equations: the non-monotonicity of the MSE as a function of $\alpha$ is linked to the fact that a single value of $\xi$ may allow several solutions~$q$.

\newpage

\subsection{Additional Experiments and Figures} \label{App:Subsec:AdditionalExperiments}

In this section we present additional figures to support claims made in the main text. 

\subsubsection{Learning Curves} \label{App:Subsubsec:SimusLearningCurves}

\cref{Result2} provides expressions for the MSE and the training loss $\mathrm{Loss}_\mathrm{train}$ as a function of the parameters $\alpha, \kappa, \kappa^*, \lambda, \Delta$. These expressions are obtained from the high-dimensional equations of \cref{Result1} using non-rigorous arguments. The figures below provide more numerical evidence supporting the validity of our results. In \cref{fig:Curves1,fig:Curves2,fig:Curves3,fig:CurvesDD}, we plot the MSE, loss, and in-sample error (the latter two coincide in the noiseless case $\Delta = 0$) as a function of $\alpha$ for a wide range of parameter values, and compare gradient descent simulations with the numerical integration of the system of equations of \cref{Result2}. In addition, \cref{fig:Curves1,fig:CurvesDD} feature results for values of $\kappa$ close to $\kappa^* = 0.3$. As discussed in \cref{Subsubsec:Stability}, this corresponds to a region where the steady-state solution may be unstable. Together with the numerical evidence of \cref{fig:FigReg2}, these figures support our theoretical predictions in this regime. 

Moreover, \cref{fig:Curves1} highlights the dependence of the MSE and the training loss on $\kappa$. Increasing $\kappa$ leads to a lower empirical loss, reflecting the increased representational capacity of the model. However, this improvement leads to a higher MSE and a poorer generalization. This effect becomes more pronounced as the regularization strength $\lambda$ decreases. 

\begin{figure}[ht]
    \centering
    \includegraphics[width=\linewidth]{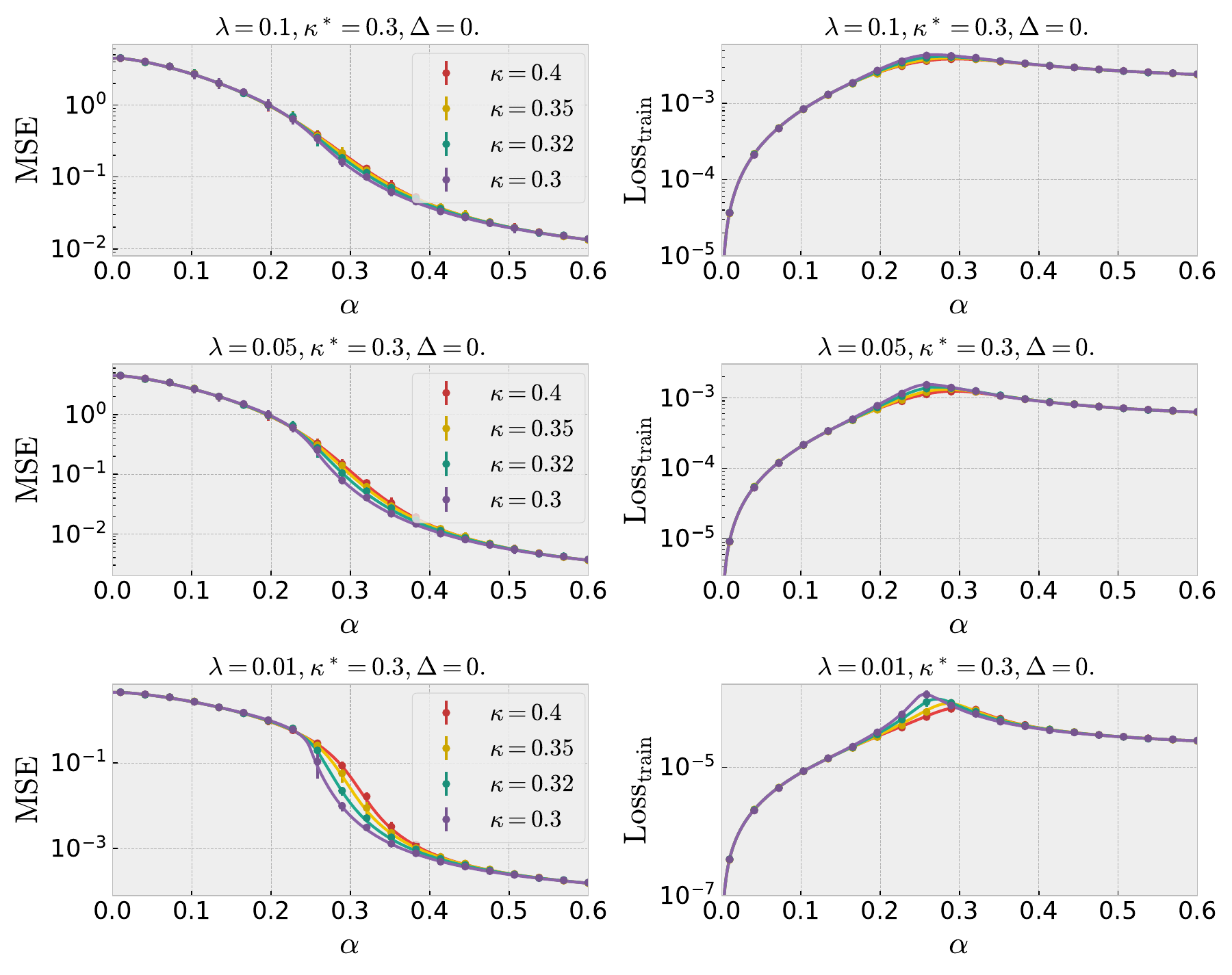}
    \vspace*{-0.7cm}
    \caption{Comparison between simulations of gradient descent, defined in \eqref{eq:GDdynamics}, and numerical integration of the system of equations~\eqref{eq:SystemResult2}, for $\kappa^* = 0.3, \Delta = 0$ and several values of the width $\kappa$ and regularization strength $\lambda$. MSE (left) and training loss (right) as a function of the sample complexity $\alpha$. Gradient descent simulations are averaged over 10 realizations of the initialization, teacher and data.}
    \label{fig:Curves1}
\end{figure}

\cref{fig:Curves2,fig:Curves3} display the same learning curves for several values of $\kappa, \kappa^*$, respectively in the noiseless ($\Delta = 0$) and noisy ($\Delta = 0.5$) settings. These figures not only confirm the strong agreement between our theory and gradient descent simulations, but also highlight the role of regularization on the performance of gradient flow. In the noiseless case (\cref{fig:Curves2}), reducing the regularization leads to simultaneous improvements in both training and generalization performance. This monotonic behavior holds in the range of regularization values considered here ($0.05 \leq \lambda \leq 0.5$) and suggests that in this regime, regularization mainly limits generalization. In the noisy setting (\cref{fig:Curves3}), the dependence on the regularization strength is less clear, suggesting the existence of an optimal level of regularization. We leave this question for future work.

\begin{figure}[ht]
    \centering
    \includegraphics[width=\linewidth]{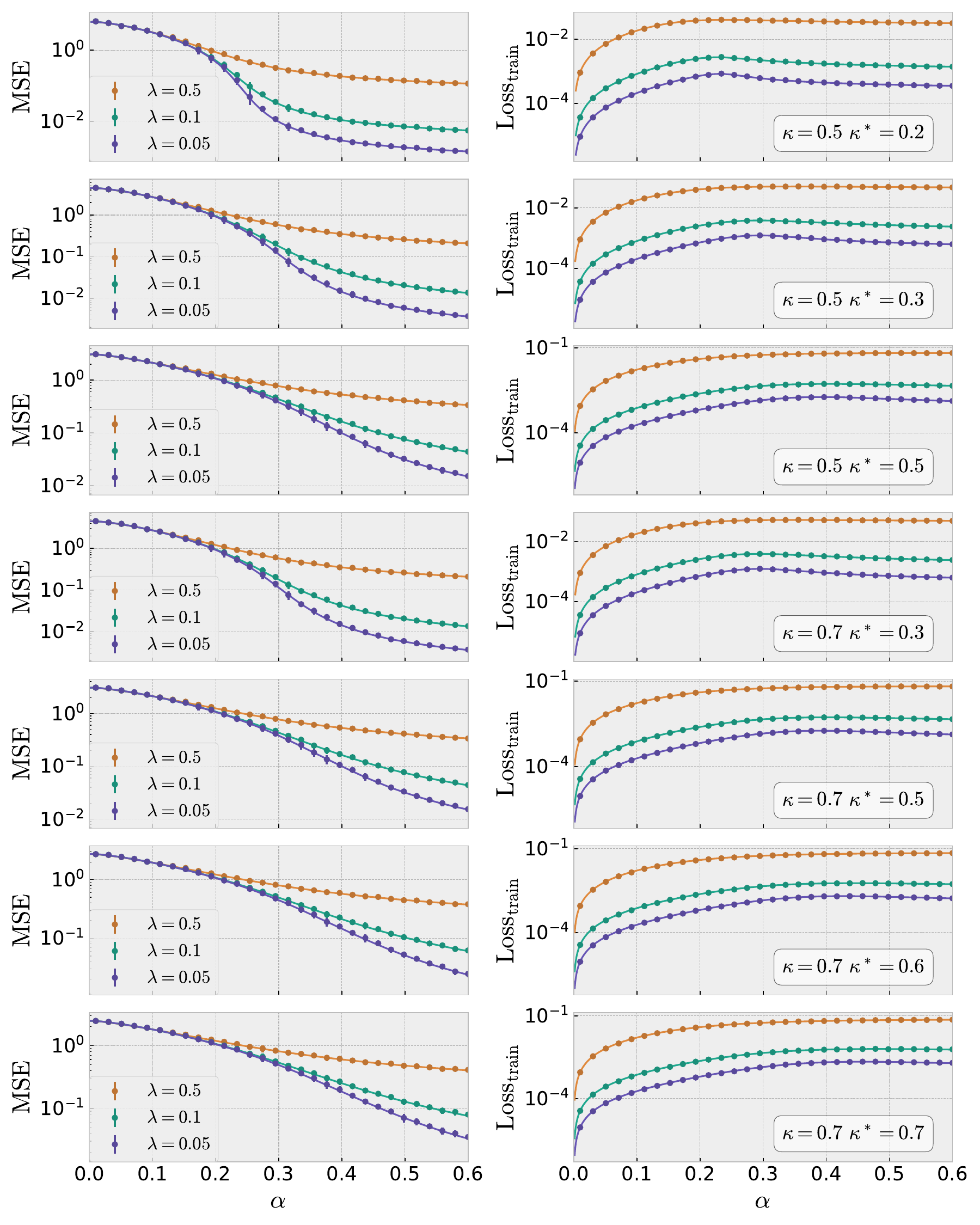}
    \vspace*{-0.3cm}
    \caption{Comparison between simulations of gradient descent, defined in \eqref{eq:GDdynamics}, and numerical integration of the system of equations~\eqref{eq:SystemResult2}, for $\Delta = 0$ and several values of the widths $\kappa, \kappa^*$ and regularization strength $\lambda$. MSE (left) and training loss (right) as a function of the sample complexity $\alpha$. Gradient descent simulations are averaged over 10 realizations of the initialization, teacher and data.}
    \label{fig:Curves2}
\end{figure}

\begin{figure}[ht]
    \centering
    \includegraphics[width=\linewidth]{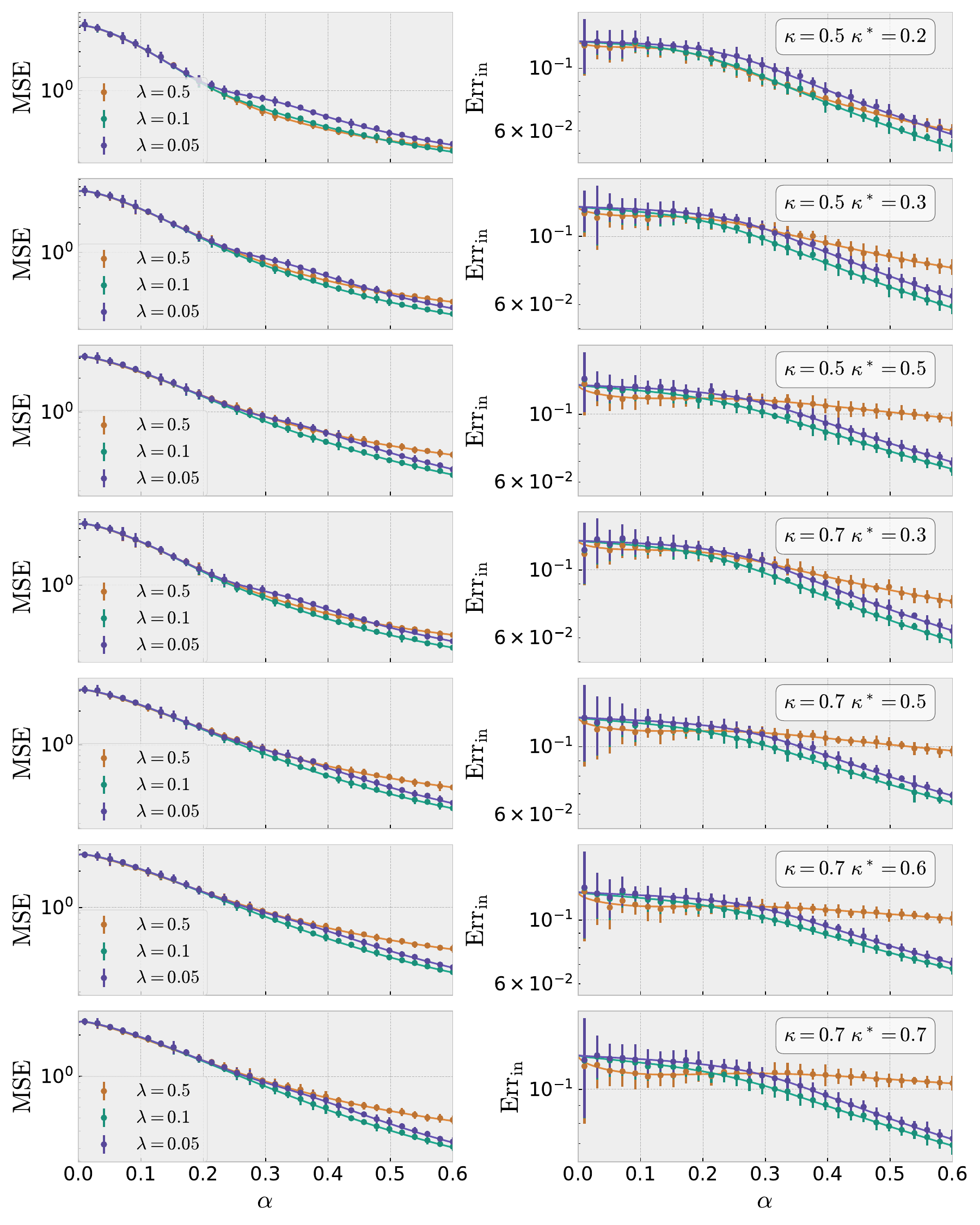}
    \vspace*{-0.3cm}
    \caption{Comparison between simulations of gradient descent, defined in \eqref{eq:GDdynamics}, and numerical integration of the system of equations~\eqref{eq:SystemResult2}, for $\Delta = 0.5$ and several values of the widths $\kappa, \kappa^*$ and regularization strength $\lambda$. MSE (left) and in-sample error (right), defined in equation \eqref{eq:insampleerror}, as a function of the sample complexity $\alpha$. Gradient descent simulations are averaged over 10 realizations of the initialization, teacher and data.}
    \label{fig:Curves3}
\end{figure}

\begin{figure}[ht]
    \centering
    \includegraphics[width=\linewidth]{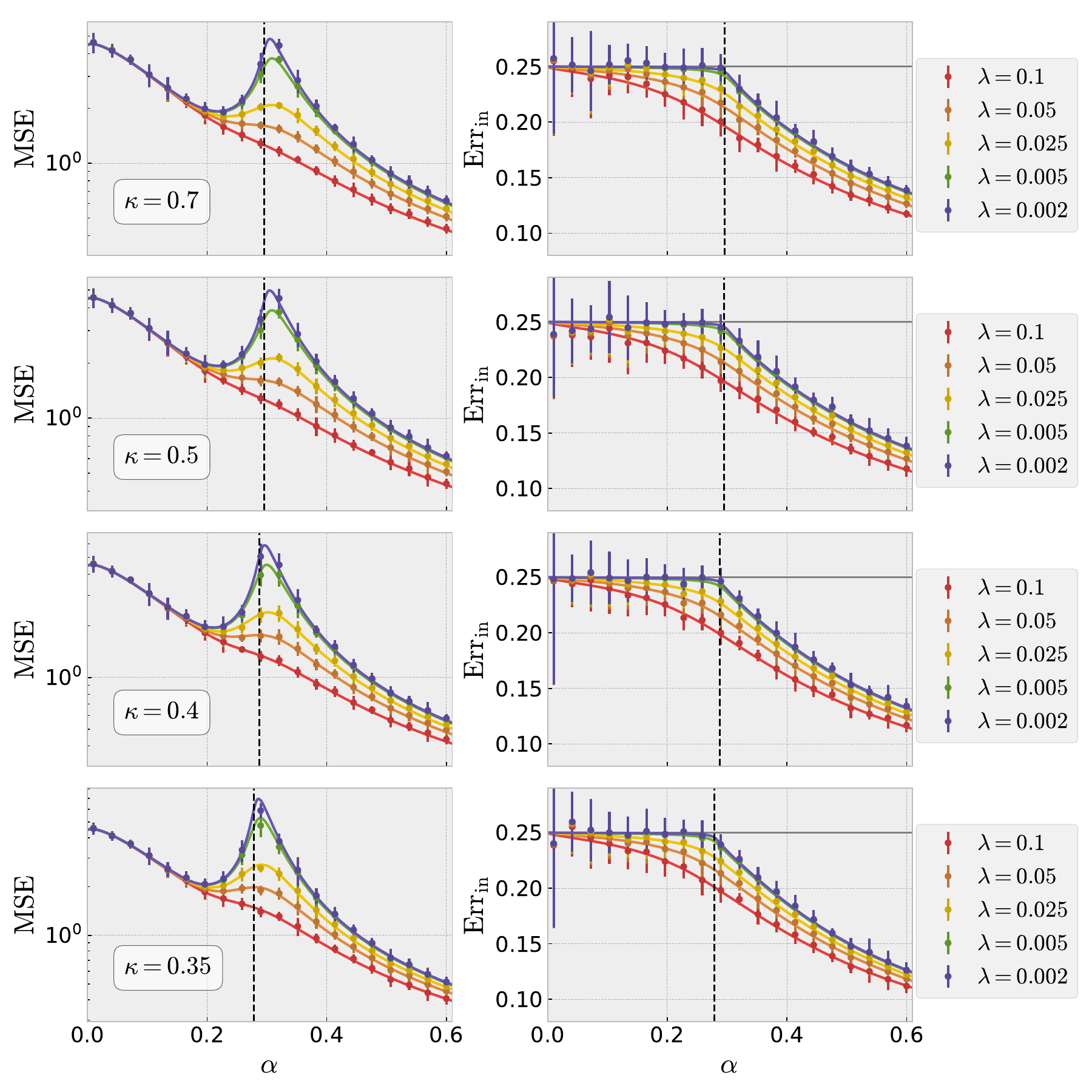}
    \vspace*{-0.3cm}
    \caption{Comparison between simulations of gradient descent, defined in \eqref{eq:GDdynamics}, and numerical integration of the system of equations~\eqref{eq:SystemResult2}, for $\kappa^* = 0.3, \Delta = 1.0$ and several values of the width $\kappa$ and regularization strength $\lambda$. MSE (left) and in-sample error (right), defined in equation \eqref{eq:insampleerror}, as a function of the sample complexity $\alpha$. Gradient descent simulations are averaged over 10 realizations of the initialization, teacher and data. The vertical dashed line indicates the interpolation threshold in the small regularization limit (see \cref{Prop:InterpolationThreshold}), and the horizontal gray line is the value $\Delta / 4$.}
    \label{fig:CurvesDD}
\end{figure}

Finally, \cref{fig:CurvesDD} shows the MSE and in-sample error (see equation \ref{eq:insampleerror}) in the double descent regime (introduced in \cref{Subsubsec:Overfitting}), corresponding to large values of $\Delta$ and small values of $\lambda$. In contrast with the previous cases, decreasing the regularization strength leads to larger values of both the MSE and the in-sample error. This indicates that under weak regularization, the gradient flow predictor fits the noise present in the training labels. Beyond the interpolation threshold, the in-sample error and MSE starts to decrease again, suggesting that the structure of the teacher is progressively learned. 

\FloatBarrier
\subsubsection{Perfect Recovery Threshold} \label{App:Subsubsec:SimusPRThreshold}

In this section, we give some numerical evidence regarding the perfect recovery threshold whose expression was conjectured in \cref{Conjecture:PRThreshold}. Because of the finite dimension and the finite time horizon at which our numerical simulations have been carried, the MSE always remains non-zero, but it is still possible to numerically investigate the perfect recovery threshold. 

\begin{figure}[H]
    \centering
    \includegraphics[width=\linewidth]{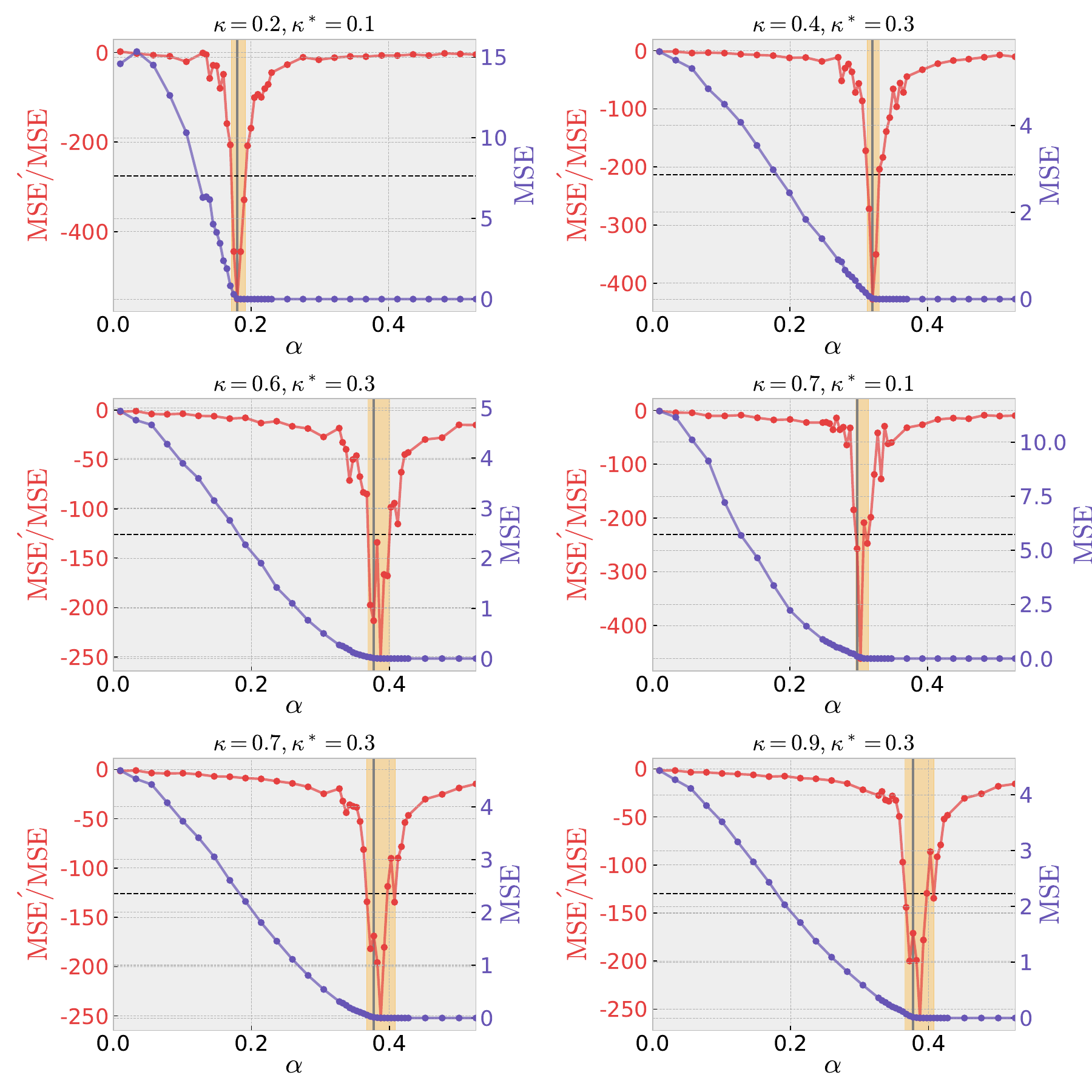}
    \vspace*{-0.5cm}
    \caption{Measure of the perfect recovery threshold from gradient descent simulations. Red: the logarithmic derivative $\mathrm{MSE}' / \mathrm{MSE}$ as a function of $\alpha$. This function exhibits a very sharp minimizer that coincides with the value of $\alpha$ where the MSE (purple curve) approaches zero. Horizontal dashed line: half depth of the minimizer, allowing to compute the uncertainty on the perfect recovery threshold (yellow region). Vertical gray line: conjectured value of the perfect recovery threshold in \cref{Conjecture:PRThreshold}.}
    \label{fig:PRMeasure}
\end{figure}

As $\alpha \to \alpha_\text{PR}$, the MSE goes continuously to zero, but non-smoothly. In the statistical physics vocabulary, this corresponds to a second-order phase transition. Such transitions are often associated with a scaling of the form:
\begin{equation} \label{eq:CriticalMSE}
    \mathrm{MSE}(\alpha) \underset{\alpha \to \alpha_\text{PR}}{\sim} \big( \alpha_\text{PR} - \alpha \big)^\theta, 
\end{equation}
for $\alpha < \alpha_\text{PR}$ and $\theta > 0$. Then:
\begin{equation*}
    \frac{\mathrm{MSE}'(\alpha)}{\mathrm{MSE}(\alpha)} \underset{\alpha \to \alpha_\text{PR}}{\sim} - \frac{\theta}{\alpha_\text{PR} - \alpha} \xrightarrow[\alpha \to \alpha_\text{PR}]{} - \infty. 
\end{equation*}
Therefore, the perfect recovery threshold can be located by identifying a point where the above quantity exhibits a sharp minimum. This is illustrated in \cref{fig:PRMeasure}. By computing the discrete derivative of $\log(\mathrm{MSE})$ with respect to $\alpha$, we identify this minimum and use it as a measure of the perfect recovery threshold. In addition, we quantify the uncertainty of this estimation by measuring the width of the minimum at half depth. 

This procedure, repeated over a large number of values for $(\kappa, \kappa^*)$, allows to compare the numerical value of the perfect recovery threshold with the one given in \cref{Conjecture:PRThreshold}. In \cref{fig:PRSummary}, we compare the predictions of \cref{Conjecture:PRThreshold} (continuous curves) with our numerical experiments (dots and error bars). This leads to a very convincing match and brings a numerical confirmation of our conjecture.

\begin{figure}[ht]
    \begin{minipage}{0.65\textwidth}
        \centering
        \includegraphics[width=\linewidth]{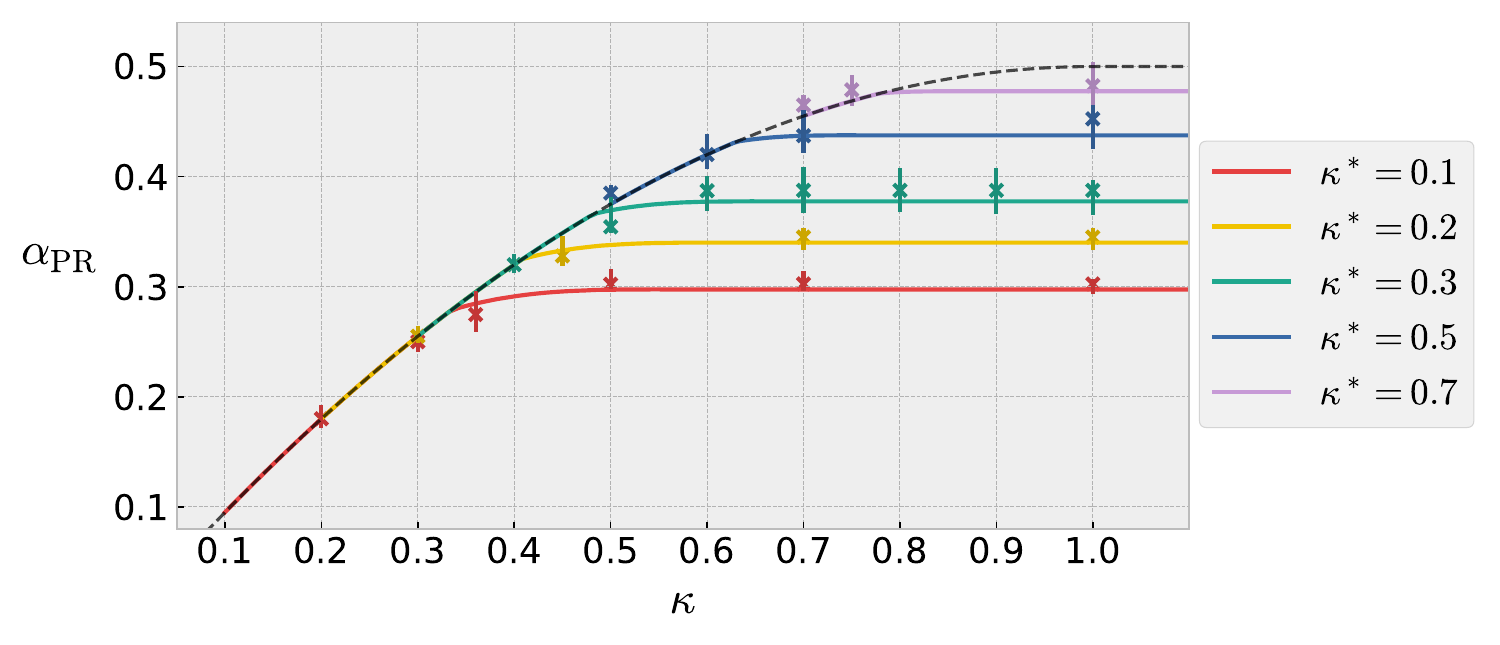}
    \end{minipage}
    \hfill
    \begin{minipage}{0.33\textwidth}
    \caption{Perfect recovery threshold $\alpha_\text{PR}$ as a function of $\kappa$ for different values of $\kappa^*$. Comparison between the prediction in equation \eqref{eq:PRThresholdUnreg} and the numerical estimation of $\alpha_\text{PR}$. The vertical bars indicate the uncertainty on the measure computed from \cref{fig:PRMeasure}.}
    \label{fig:PRSummary}
    \end{minipage}
\end{figure}

Additionally, the exponent $\theta$ in equation \eqref{eq:CriticalMSE} can be numerically estimated using standard linear regression techniques. Based on the data obtained from simulations, we observe that $\theta$ depends on the parameters $\kappa, \kappa^*$, but further work is required to formulate a precise conjecture and investigate this dependence. 

From a theoretical perspective, we expect that it is technically possible (although quite challenging) to derive an expression of this exponent. It would require working under the steady-state assumption, which under \cref{Conjecture:PRThreshold} should hold for large values of $\kappa$ and close enough to perfect recovery. In the end, such a calculation could be made possible by extending the results of \cref{App:Subsec:IntegralAsymptotics}.  

\end{document}